\theoremstyle{plain}
\newtheorem{theorem}{Theorem}
\newtheorem{proposition}[theorem]{Proposition}
\newtheorem{lemma}[theorem]{Lemma}
\newtheorem{corollary}[theorem]{Corollary}
\newtheorem{definition}[theorem]{Definition}
\theoremstyle{definition}
\newtheorem{remark}[theorem]{Remark}
\newtheorem{example}[theorem]{Example}
\numberwithin{equation}{section}
\numberwithin{theorem}{section}
\newcommand{\eps}{\varepsilon}
\newcommand{\sign}{\mathrm{sign}\text{ }}
\newcommand{\dd}[0]{\mathrm{d}}
\newcommand{\ud}[0]{\,\mathrm{d}}
\newcommand{\vertiii}[1]{{\left\vert\kern-0.25ex\left\vert\kern-0.25ex\left\vert #1
    \right\vert\kern-0.25ex\right\vert\kern-0.25ex\right\vert}}
\begin{document}

%\today
\title[Local characteristics and tangency]
{Local characteristics and tangency\\ of vector-valued martingales}

\author{Ivan S.\ Yaroslavtsev}
\address{Max Planck Institute for Mathematics in the Sciences\\
Inselstra{\ss}e 22\\
04103 Leipzig\\
Germany}
\address{Delft Institute of Applied Mathematics\\
Delft University of Technology\\
P.O. Box 5031\\
2600 GA Delft\\
The Netherlands}
\email{yaroslavtsev.i.s@yandex.ru}

\begin{abstract}
This paper is devoted to tangent martingales in Banach spaces. We provide the definition of tangency through local characteristics, basic $L^p$- and $\phi$-estimates, a precise construction of a decoupled tangent martingale, new estimates for vector-valued stochastic integrals, and several other claims concerning tangent martingales and local characteristics in infinite dimensions. This work extends various real-valued and vector-valued results in this direction e.g.\ due to Grigelionis, Hitczenko, Jacod, Kallenberg, Kwapie\'{n}, McConnell, and Woyczy\'{n}ski. The vast majority of the assertions presented in the paper is done under the necessary and sufficient UMD assumption on the corresponding Banach space.
\end{abstract}

\keywords{Tangent martingales, decoupling, local characteristics, UMD Banach spaces, canonical decomposition, stochastic integration, L\'evy-Khinchin formula, independent increments}

\subjclass[2010]{60G44, 60B11 Secondary 60G51, 60G57, 60H05, 46G12, 28A50}

\maketitle

\tableofcontents

\section{Introduction}

This paper is devoted to {\em tangent martingales}. Let us start with the discrete setting. Which martingale difference sequences do we call tangent? For a Banach space $X$ two $X$-valued martingale difference sequences $(d_n)_{n\geq 1}$ and $(e_n)_{n\geq 1}$ are {\em tangent} if for every $n\geq 1$ a.s.\footnote{see Subsection \ref{subsec:ConexponPSCondProbCondIndep} for the definition of a conditional probability}
\begin{equation}\label{eq:introtandisdagdef}
  \mathbb P(d_n | \mathcal F_{n-1})=\mathbb P(e_n | \mathcal F_{n-1}),
\end{equation}
where $\mathbb P(d_n | \mathcal F_{n-1})(A) := \mathbb E(\mathbf 1_{A}(d_n)|\mathcal F_{n-1})$ and $\mathbb P(e_n | \mathcal F_{n-1})(A):= \mathbb E(\mathbf 1_{A}(e_n)|\mathcal F_{n-1})$  for any Borel set $A \subset X$.
This notion was first introduced by Zinn in \cite{Zinn85} where he proved that if $X = \mathbb R$, then for any $p\geq 2$ the $L^p$ moments of $\sum_{n} d_n$ and $\sum_n e_n$ are comparable given $(d_n)$ and $(e_n)$ are conditionally symmetric\footnote{i.e.\ the distributions \eqref{eq:introtandisdagdef} are symmetric a.s., equivalently $(d_n)$, $(-d_n)$, $(e_n)$, and $(-e_n)$ are tangent altogether} (the general case $1\leq p<\infty$ was obtained by Hitczenko in \cite{Hit88}). The estimates of Zinn and Hitczenko have been extended by McConnell in \cite{MC} and Hitczenko in \cite{HitUP} to infinite dimensions. It turned out that such estimates characterize a certain condition concerning the geometry of a Banach space, namely, the {\em UMD property} (see Subsection \ref{subsec:prelimUMD} for the definition).

\begin{theorem}[Hitczenko, McConnell]\label{thm:intromccnnll}
 Let $X$ be a Banach space, $1\leq p<\infty$. Then $X$ is UMD if and only if for any $X$-valued tangent martingale difference sequences $(d_n)_{n\geq 1}$ and $(e_n)_{n\geq 1}$ one has that
 \begin{equation}\label{eq:strongLpfortangentmccnnll}
  \mathbb E \sup_{0\leq N < \infty} \Bigl\| \sum_{n=1}^N d_n \Bigr\|^p \eqsim_{p, X}  \mathbb E \sup_{0\leq N < \infty} \Bigl\| \sum_{n=1}^N e_n \Bigr\|^p.
 \end{equation}
\end{theorem}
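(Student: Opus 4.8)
The assertion is an equivalence, and both implications rest on the \emph{decoupled tangent martingale}. Recall the (essentially classical, here vector-valued) construction: to any $X$-valued martingale difference sequence $(d_n)_{n\ge1}$ adapted to a filtration $(\mathcal F_n)_{n\ge1}$ one associates, on a suitably enlarged filtered probability space, a sequence $(\tilde d_n)_{n\ge1}$ which is tangent to $(d_n)_{n\ge1}$ and which, conditionally on $\mathcal G:=\mathcal F_\infty$, consists of \emph{independent} random variables with $\mathbb P(\tilde d_n\mid\mathcal G)=\mathbb P(d_n\mid\mathcal F_{n-1})$ a.s.; in particular $\mathbb E(\tilde d_n\mid\mathcal G)=0$ and $(d_n,\tilde d_n)$ are conditionally i.i.d.\ given $\mathcal F_{n-1}$. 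Its law depends only on the joint law of the conditional-law process $(\mathbb P(d_n\mid\mathcal F_{n-1}))_{n\ge1}$, so two tangent martingale difference sequences have, up to equality in law, the same decoupled tangent sequence. Truncating at a finite index and letting it tend to infinity (monotone convergence for the suprema) reduces everything to finite sequences. With these reductions the implication ``$X$ UMD $\Rightarrow$ \eqref{eq:strongLpfortangentmccnnll}'' is equivalent to establishing the \emph{decoupling inequality}
\[
  \mathbb E\sup_{N\ge0}\Bigl\|\sum_{n=1}^N d_n\Bigr\|^p\eqsim_{p,X}\mathbb E\sup_{N\ge0}\Bigl\|\sum_{n=1}^N \tilde d_n\Bigr\|^p ,
\]
because applying it to $(d_n)_{n\ge1}$ and to $(e_n)_{n\ge1}$ and comparing their (equal-in-law) decoupled tangent sequences yields \eqref{eq:strongLpfortangentmccnnll}.

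To prove the decoupling inequality I would work with the \emph{interlaced} sequence $d_1,\tilde d_1,d_2,\tilde d_2,\dots$. Using that $(d_n,\tilde d_n)$ are conditionally i.i.d.\ given $\mathcal F_{n-1}$ and that $\tilde d_n$ is generated from $\mathbb P(d_n\mid\mathcal F_{n-1})$ with fresh randomness independent of $\mathcal F_\infty$, one verifies that this interlaced sequence is a genuine martingale difference sequence for the natural interlaced filtration. If $(M_k)_k$ denotes its partial sums, so $M_{2N}=\sum_{n\le N}(d_n+\tilde d_n)$, and $(T_k)_k$ the martingale transform of $(M_k)$ by the alternating multipliers $+1,-1,+1,-1,\dots$, so $T_{2N}=\sum_{n\le N}(d_n-\tilde d_n)$, then the UMD property plus Doob's maximal inequality give $\mathbb E\sup_k\|T_k\|^p\eqsim_{p,X}\mathbb E\sup_k\|M_k\|^p$; together with $2\sum_{n\le N}d_n=M_{2N}+T_{2N}$ and $2\sum_{n\le N}\tilde d_n=M_{2N}-T_{2N}$ this bounds both $\mathbb E\sup_N\|\sum_{n\le N}d_n\|^p$ and $\mathbb E\sup_N\|\sum_{n\le N}\tilde d_n\|^p$ by $\mathbb E\sup_k\|M_k\|^p$ up to constants depending on $p$ and $X$. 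The subtler half is the reverse bounds, i.e.\ estimating $\mathbb E\sup_k\|M_k\|^p$ from above by $\mathbb E\sup_N\|\sum_{n\le N}d_n\|^p$ and by $\mathbb E\sup_N\|\sum_{n\le N}\tilde d_n\|^p$ separately; here I would combine the conditional-Jensen identity $\sum_{n\le N}d_n=\mathbb E\bigl(\sum_{n\le N}(d_n-\tilde d_n)\mid\mathcal F_N\bigr)$ (valid since $\mathbb E(\tilde d_n\mid\mathcal F_N)=0$ for $n\le N$), an independent second copy $(\tilde d_n')$ of the decoupled sequence together with the Banach-space-independent contraction principle for conditionally independent mean-zero sequences, and a duality argument exploiting that $X^*$ is again UMD, so as to upgrade the one-sided estimates to the full equivalence. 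At the endpoint $p=1$, where Doob's $L^p$-maximal inequality fails, one substitutes the Davis decomposition or a weak-type argument.

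For the converse, fix an arbitrary $X$-valued martingale difference sequence $(d_n)_{n\ge1}$ and a predictable $\{-1,1\}$-valued sequence $(v_n)_{n\ge1}$, and let $(\tilde d_n)$ be the decoupled tangent sequence of $(d_n)$. Since $v_n$ is $\mathcal F_{n-1}$-measurable, $(v_n\tilde d_n)$ is a decoupled tangent sequence of $(v_nd_n)$, so two applications of \eqref{eq:strongLpfortangentmccnnll} give $\mathbb E\sup_N\|\sum_{n\le N}d_n\|^p\eqsim_{p,X}\mathbb E\sup_N\|\sum_{n\le N}\tilde d_n\|^p$ and $\mathbb E\sup_N\|\sum_{n\le N}v_nd_n\|^p\eqsim_{p,X}\mathbb E\sup_N\|\sum_{n\le N}v_n\tilde d_n\|^p$. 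Conditioning on $\mathcal G$, where the $v_n$ become fixed signs and $(\tilde d_n)$ is a sequence of independent mean-zero random variables, the elementary symmetrization/contraction inequality for independent mean-zero sequences yields $\mathbb E\sup_N\|\sum_{n\le N}v_n\tilde d_n\|^p\eqsim_p\mathbb E\sup_N\|\sum_{n\le N}\tilde d_n\|^p$ with constant independent of $X$. Chaining the three equivalences, truncating at $N$, dropping the supremum on the left and invoking Doob's maximal inequality on the right (for $1<p<\infty$), gives $\mathbb E\|\sum_{n\le N}v_nd_n\|^p\lesssim_{p,X}\mathbb E\|\sum_{n\le N}d_n\|^p$, i.e.\ the UMD$_p$ martingale-transform inequality; hence $X$ is UMD. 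The case $p=1$ follows through the corresponding weak-$(1,1)$ estimate.

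The main obstacle is the decoupled tangent martingale itself: constructing it and verifying its defining properties in the Banach-valued setting requires regular conditional distributions on $X$ (whence the measurability/separability hypotheses), the product-space ``fresh randomness'' construction, and careful conditional-probability bookkeeping, both to see that $(d_n,\tilde d_n)$ are conditionally i.i.d.\ given $\mathcal F_{n-1}$ and that the interlaced sequence is a martingale difference sequence. The second delicate point is the lower half of the decoupling inequality---the part that genuinely uses UMD beyond a single transform estimate---where symmetrization and duality must be combined with care; once these are settled, the UMD property enters only as a black box (boundedness of martingale transforms plus the ensuing maximal and weak-type inequalities).
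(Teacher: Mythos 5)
The paper does not prove Theorem \ref{thm:intromccnnll}; it is quoted with references to Hitczenko, McConnell, and Cox--Veraar, and the closest internal analogue is Theorem \ref{thm:Ephifordiscretetantdandee}, whose proof first inserts independent Rademachers (a step that already consumes UMD), observes that $(r_nd_n)$ and $(r_ne_n)$ remain tangent but are now conditionally symmetric, and then invokes the conditionally symmetric case from the cited literature. Your proposal deliberately avoids this symmetrization reduction, constructing the decoupled tangent sequence $(\tilde d_n)$ and running the interlaced $\pm1$-transform argument. That is a genuinely different strategy, but the key step does not close.

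Write $a:=\mathbb E\sup_N\bigl\|\sum_{n\le N}d_n\bigr\|^p$, $b:=\mathbb E\sup_N\bigl\|\sum_{n\le N}\tilde d_n\bigr\|^p$, $c:=\mathbb E\sup_k\|M_k\|^p$ for the interlaced martingale. The interlaced UMD transform plus the triangle inequality give $\max(a,b)\lesssim c$, and the trivial bound $c\lesssim a+b$; together these only force $\max(a,b)\eqsim a+b$, which is compatible with $a\gg b$ and does not yield $a\eqsim b$. The two inequalities you still need, $c\lesssim a$ and $c\lesssim b$ \emph{separately}, are precisely the recoupling ($\mathrm{UMD}^+$) and decoupling ($\mathrm{UMD}^-$) inequalities, and they are substantial theorems. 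The tools you list do not produce them: conditional Jensen gives $a\leq\mathbb E\|\sum(d_n-\tilde d_n)\|^p$, which you have already used and is the \emph{easy} direction $a\lesssim c$, not $c\lesssim a$; there is no contraction principle for conditionally independent mean-zero (but non-symmetric) sequences in a general Banach space -- any such statement already hides a symmetrization by an independent copy; and duality between $X$ and $X^*$ transfers a one-sided decoupling inequality to a one-sided inequality on the dual, not to the reverse inequality on $X$. In the paper these one-sided bounds are obtained exactly by the Rademacher symmetrization you are trying to avoid (compare the proof of Theorem \ref{thm:Ephifordiscretetantdandee} and Proposition \ref{prop:recouplequivUMD+}), so the ``subtler half'' remains a genuine gap. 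The converse direction of your proposal is essentially sound, but note that the final equivalence $\mathbb E\sup_N\|\sum v_n\tilde d_n\|^p\eqsim_p\mathbb E\sup_N\|\sum\tilde d_n\|^p$ for conditionally independent mean-zero sequences with fixed signs is itself proved by introducing an independent copy and Rademacher symmetrization (cf.\ \cite[Proposition 6.1.12]{HNVW2}), so symmetrization cannot be avoided even there.
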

(Note that the paper \cite{MC} did not cover the case $p=1$, and  \cite{HitUP} was never published. Nevertheless, the reader can find this case in \cite[pp.\ 424--425]{CV07} and in Theorem \ref{thm:Ephifordiscretetantdandee}). 

A classical example of tangent martingale different sequences is provided by independent mean zero random variables. Let $(\xi_n)_{n\geq 1}$ be real-valued mean zero independent random variables, let $(v_n)_{n\geq 1}$ be $X$-valued bounded predictable (i.e.\ $v_n$ depends only on $\xi_1,\ldots,\xi_{n-1}$). Then $(v_n \xi_n)_{n\geq 1}$ is a martingale difference sequence. Moreover, then $(v_n \xi'_n)_{n\geq 1}$ is a tangent martingale difference sequence for $(\xi'_n)_{n\geq 1}$ being an independent copy of $(\xi_n)_{n\geq 1}$ (see Example \ref{ex:tangstandarddecxinvn-->xi'nvn}), so in the UMD case \eqref{eq:strongLpfortangentmccnnll} yields
\begin{equation}\label{eq:introv_nxi_neqsimv_nxi'_mb}
   \mathbb E \sup_{0\leq N < \infty} \Bigl\| \sum_{n=1}^N v_n \xi_n \Bigr\|^p \eqsim_{p, X}  \mathbb E \sup_{0\leq N < \infty} \Bigl\| \sum_{n=1}^N v_n\xi'_n \Bigr\|^p.
\end{equation}
It turned out that \eqref{eq:introv_nxi_neqsimv_nxi'_mb} characterizes the UMD property if one sets $(\xi_n)_{n\geq 1}$ to be Rademachers\footnote{see Definition \ref{def:ofRadRV}} (see Bourgain \cite{Bour83} and Garling \cite{Gar90,Gar85}), Gaussians (see Garling \cite{Gar85} and McConnell \cite{MC}), or Poissons (see Proposition \ref{prop:XUMDiffdecouplforPois}). In the Gaussian and Poisson cases the equivalence of \eqref{eq:introv_nxi_neqsimv_nxi'_mb} and the UMD property basically says that the following estimates hold for $X$-valued stochastic integrals
\begin{equation}\label{eq:inftoUMDintwrtBrandindBr}
  \mathbb E \sup_{t\geq 0} \Bigl\|  \int_0^t \Phi \ud W\Bigr\|^p \eqsim_{p, X} \mathbb E \sup_{t\geq 0} \Bigl\|  \int_0^t \Phi \ud \widetilde W\Bigr\|^p,
\end{equation}
\begin{equation}\label{eq:inftoUMDintwrtPoisbyindPois}
 \mathbb E \sup_{t\geq 0} \Bigl\|  \int_0^t F \ud \widetilde N\Bigr\|^p \eqsim_{p, X}  \mathbb E \sup_{t\geq 0} \Bigl\|  \int_0^t F \ud \widetilde N_{\rm ind}\Bigr\|^p
\end{equation}
(here $\Phi$ and $F$ are $X$-valued elementary predictable, $W$ is a Brownian motion, $\widetilde N$ is a compensated standard Poisson process, $\widetilde W$ and $\widetilde N_{\rm ind}$ are independent copies of $W$ and $\widetilde N$ respectively), which allows one to change the driving Brownian or Poisson noise in a stochastic integral by an independent copy without losing the information about strong $L^p$-norms of the stochastic integral, are {\em equivalent} to your Banach space $X$ having the UMD property. Estimates of the form \eqref{eq:inftoUMDintwrtBrandindBr} turned out to be exceptionally important in vector-valued stochastic integration theory as the right-hand side of \eqref{eq:inftoUMDintwrtBrandindBr} is nothing but a {\em $\gamma$-norm} (see Subsection \ref{subsec:gammanorm}) of $\Phi$ which is a natural extension of the Hilbert-Schmidt norm to general Banach spaces (see McConnell \cite{MC} and van Neerven, Veraar, and Weis \cite{NVW}, see also \cite{VY16,Ver,vNW08} for a general continuous martingale case and Dirksen \cite{Dirk14} for the Poisson case). Estimates \eqref{eq:inftoUMDintwrtBrandindBr} and \eqref{eq:inftoUMDintwrtPoisbyindPois} justify that tangent martingales are extremely important for vector-valued stochastic integration.

The procedure of changing the noise by an independent copy (in our case this was $(\xi_n) \mapsto (\xi'_n)$) together with extending the filtration in the corresponding way (i.e.\ $\mathcal F_n' := \sigma(\mathcal F_n, \xi'_1, \ldots, \xi'_n)$) creates a special tangent martingale difference sequence, namely a {\em decoupled} one which can be defined in the following way: $(e_n)$ is a decoupled tangent martingale difference sequence to $(d_n)$ if $(e_n)$ are conditionally independent given $\mathcal G:= \sigma \bigl((d_n)\bigr)$, i.e.\ for any Borel $B_1, \ldots, B_N \subset X$ a.s.\
\[
\mathbb P(e_1 \in B_1,\ldots, e_N \in B_N|\mathcal G) = \mathbb P(e_1 \in B_1|\mathcal G)\cdot \ldots \cdot \mathbb P( e_N \in B_N|\mathcal G),
\]
and $\mathbb P(e_n|\mathcal F_{n-1}) = \mathbb P(e_n | \mathcal G)$ for any $n\geq 1$. Note that such a martingale difference sequence might not exist on the probability space with the original filtration, so one may need to extend the probability space and filtration in such a way that $(d_n)$ preserves its martingale property.
Existence and uniqueness of such a decoupled $(e_n)$ was proved by Kwapie\'{n} and Woyczy\'{n}ski in \cite{KwW91} (see also de la Pe\~{n}a \cite{dlP94}, de la Pe\~{n}a and Gin\'{e} \cite{dlPG}, especially  \cite[Section 6.1]{dlPG} for a detailed proof, Kallenberg \cite{Kal17}, and S.G.\ Cox and Geiss \cite{CG}). The goal of the present paper is to extend Theorem \ref{thm:intromccnnll} to the continuous-time setting and to discover in this case the explicit form of a decoupled tangent local martingale.

Let us start with explaining what continuous-time tangent local martingales are. To this end we will need L\'evy martingales.
What do we know about them? Well, one of the most fundamental features of  L\'evy processes is the {\em L\'evy-Khinchin formula} which is the case of a L\'evy martingale $L$ with $L_0=0$ has the following form (see e.g.\ \cite{Sato,JS})
\begin{equation}\label{eq:LHformulaforLevyptocINTRO}
\mathbb E e^{i\theta L_t} = \exp\Bigl\{ t\Bigl(-\frac 12 \sigma^2 \theta^2 + \int_{\mathbb R} e^{i\theta x} - 1 -i\theta x \ud \nu(\dd x)\Bigr)\Bigr\},\;\;\; t\geq 0,\;\;\theta\in \mathbb R,
\end{equation}
for some fixed $\sigma \geq 0$ and for some fixed measure $\nu$ on $\mathbb R$. It turns out that the pair $(\sigma, \nu)$ characterizes the distribution of a L\'evy martingale, and it has the following analogue for a general real-valued martingale $M$: $([M^c], \nu^M)$, where $M^c$ is the {\em continuous part} of $M$ (see Subsection \ref{subsec:candec}) with $[M^c]$ being the quadratic variation of $M^c$ (see Subsection \ref{subsec:quadrvar}), and $\nu^M$ is a {\em compensator} of a random measure $\mu^M$ defined on $\mathbb R_+ \times \mathbb R$ by
\begin{equation}\label{eq:defofmuMINTRO}
\mu^M([0, t] \times B) := \sum_{0\leq s\leq t} \mathbf 1_{B\setminus\{0\}} (\Delta M_t),\;\;\; t\geq 0, \;\; B \in \mathcal B(\mathbb R)
\end{equation}
(see Subsection \ref{subsec:ranmeasures}). In the case $M=L$ we have that $[M^c]_t = \sigma^2 t$ and $\nu^M = \lambda \otimes \nu$, where $\lambda$ is the Lebesgue measure on $\mathbb R_+$. This pair $([M^c], \nu^M)$ is called to be the {\em local characteristics} (a.k.a.\ {\em Grigelionis characteristics} or {\em Jacod-Grigelionis characteristics}) of $M$, and two continuous-time local martingales are called {\em tangent} if their local characteristics coincide.  Continuous-times tangent martingales and local characteristics 
were intensively studied by Jacod \cite{Jac79,Jac83,Jac84}, Jacod and Shiryaev \cite{JS}, Jacod and Sadi \cite{JH87}, Kwapie\'{n} and Woyczy\'{n}ski \cite{KwW91,KwW86,KwW92,KwW89}, and Kallenberg \cite{Kal17} (see also \cite{Now02,Now03,MC,NVW}).
In particular, Kallenberg proved in \cite{Kal17} that for any real-valued continuous-time tangent martingales $M$ and $N$ one has that
\begin{equation}\label{eq:introKalLpest}
 \mathbb E \sup_{t\geq 0} |M_t|^p \eqsim_{p} \mathbb E \sup_{t\geq 0} |N_t|^p,\;\;\;\;\; 1\leq p<\infty,
\end{equation}
with more general inequalities (including concave functions of moderate growth) under additional assumptions on $M$ and $N$ (e.g.\ conditional symmetry).
Furthermore, in \cite{Jac84,JH87,KwW91,Kal17} it was shown that any real-valued martingale $M$ has a {\em decoupled tangent local martingale} $N$, i.e.\ a tangent local martingale $N$ defined on an enlarged probability space with an enlarged filtration such that $N(\omega)$ is a martingale with independent increments and with local characteristics $([M^c](\omega), \nu^M(\omega))$ for a.e.\ $\omega \in \Omega$ from the original probability space. Moreover, in the quasi-left continuous setting it was shown in  \cite{Jac84,JH87,KwW91} that such a martingale can be obtained via the following procedure: if we discretize $M$ on $[0, T]$, i.e.\ consider a discrete martingale $(f_k^n)_{k=1}^n = (M_{Tk/n})_{k=1}^n$,
and consider a decoupled tangent martingale $\tilde f^n := (\tilde f_k^n)_{k=1}^n$, then $\tilde f^n$ converges in distribution to $N$ as random variables with values in the Skorokhod space $\mathcal D([0, T], \mathbb R)$ (see Definition \ref{def:defofskotokhodspaxc}) as $n\to \infty$. This in particular justifies the definition of a continuous-time decoupled tangent martingale.

\bigskip

In the present paper we are going to explore various facts concerning vector-valued continuous-time tangent martingales. 
We will mainly focus on the following three questions:
\begin{itemize}
 \item How do local characteristics look like in Banach spaces?
 \item What is a decoupled tangent martingale in this case?
 \item Can we extend decoupling inequalities \eqref{eq:introKalLpest} to infinite dimensions?
\end{itemize}
We will also try to answer all the supplementary and related problems appearing while working on these three questions.
Let us outline the structure of the paper section-by-section.

In Section \ref{sec:prelim} we present some preliminaries to the paper, i.e.\ certain assertions (e.g.\ concerning martingales, random measures, stochastic integration, et cetera) which we will heavily need throughout the paper. 

Our main Section \ref{sec:tangmartconttime} is devoted to the definition of vector-valued continuous-time tangent martingales, basic $L^p$-estimates for these martingales, and the construction of a decoupled tangent martingale. How do we define tangent martingales in the vector-valued case? As we saw in Theorem \ref{thm:intromccnnll}, a Banach space $X$ having the UMD property plays an important r\^ole for existence of $L^p$-bounds for discrete tangent martingales. This also turned out to be equivalent to existence of local characteristics of a general $X$-valued martingale $M$. Namely, due to \cite{Y17MartDec,Y17GMY} $X$ has the UMD property if and only if a general $X$-valued martingale $M$ has the {\em Meyer-Yoeurp decomposition}, i.e.\ it can be uniquely decomposed into a sum of a continuous local martingale $M^c$ and a purely discontinuous local martingale $M^d$ (see Remark \ref{rem:MYdecBanach}). In this case we define the {\em local characteristics} of $M$ to be the pair $([\![M^c]\!], \nu^M)$, where $[\![M^c]\!]$ is a {\em covariation bilinear form}, i.e.\ a symmetric bilinear form-valued process satisfying 
\[
[\![M^c]\!]_t(x^*, x^*) = [\langle M, x^*\rangle]_t,\;\;\; t\geq 0,
\]
for any $x^*\in X^*$ a.s.\ (such a process exists because of Remark \ref{rem:ifUMDthencovbilform}), and $\nu^M$ is a compensator of a random measure $\mu^M$ defined on $\mathbb R_+ \times X$ analogously to \eqref{eq:defofmuMINTRO} (see Subsection \ref{subsec:quadrvar} and \ref{subsec:ranmeasures}). Similarly to the real-valued case, two $X$-valued martingales are {\em tangent} if they have the same local characteristics.

Next, we present $L^p$-estimates for UMD-valued tangent martingales.
In Theorem \ref{thm:tangentgencaseUMDuhoditrotasoldat} we extend the result \eqref{eq:introKalLpest} of Kallenberg to any UMD Banach space $X$, i.e.\ we prove that for any UMD Banach space $X$ and for any $X$-valued tangent martingales $M$ and $N$ one has that
\begin{equation}\label{eq:LpineqfortangmartinUMDINTRO}
\mathbb E \sup_{t\geq 0} \|M_t\|^p \eqsim_{p, X}\mathbb E \sup_{t\geq 0} \|N_t\|^p,\;\;\; 1\leq p<\infty.
\end{equation}
 Let us say a couple of words about how do we gain \eqref{eq:LpineqfortangmartinUMDINTRO}. To this end we need {\em the canonical decomposition}. Thanks to Meyer \cite{Mey76} and Yoeurp \cite{Yoe76} any real-valued martingale $M$ can be uniquely decomposed into a sum of a continuous local martingale $M^c$ (the Wiener-like part), a purely discontinuous quasi-left continuous  local martingale $M^q$ (the Poisson-like part), and a purely discontinuous local martingale $M^a$ with accessible jumps (the discrete-like part). It turned out that this decomposition can be expanded to the vector-valued case if and only if $X$ has the UMD property (see \cite{Y17MartDec,Y17GMY}). Moreover, as it is shown in Subsection \ref{subsec:charandthecandecbe} if $M=M^c+ M^q + M^a$ and $N=N^c + N^q + N^a$ are the canonical decompositions of tangent martingales $M$ and $N$, then $M^i$ and $N^i$ are tangent for any $i\in \{c,q,a\}$, and thus by strong $L^p$-estimates for the canonical decomposition presented in \cite{Y18BDG} (see Theorem \ref{thm:candecXvalued}) we need to show \eqref{eq:LpineqfortangmartinUMDINTRO} separately for each of these three cases. Then the continuous case immediately follows from weak differential subordination inequalities obtained in \cite{Y17MartDec,OY18,Y18BDG} and the discrete-like case can be shown via a standard discretization trick (see Subsection \ref{subsec:appforMarAppPDMAJ}) and Theorem \ref{thm:intromccnnll}.
 
 The most complicated and the most interesting mathematically is the Poisson-like case. First we show that \eqref{eq:inftoUMDintwrtPoisbyindPois} holds true not just for a compensated Poisson process, but for any stochastic integral with respect to a Poisson random measure (see Proposition \ref{prop:XUMDiffdecouplforPois}). Next we prove that any UMD-valued quasi-left continuous purely discontinuous martingale can be presented as a stochastic integral with respect to a quasi-left continuous compensated random measure (see Theorem \ref{thm:XisUMDiffMisintxwrtbarmuMvain}). Finally, by exploiting a certain approximation argument, we may assume that this random measure is defined over a finite jump space, and hence this is a time-changed Poisson random measure thanks to a fundamental result by Meyer \cite{Mey71} and Papangelou \cite{Pap72} (see e.g.\ also \cite{Kal,BN88,AH78}) which says that {\em any} quasi-left continuous integer random measure after a certain time change becomes a Poisson random measure. As this time change depends only on the compensator measure (which is one of local characteristics and which is the same for $M^q$ and $N^q$), \eqref{eq:inftoUMDintwrtPoisbyindPois} immediately yields \eqref{eq:LpineqfortangmartinUMDINTRO} for the quasi-left continuous purely discontinuous case.
 
 \smallskip

Another highlight point of Section \ref{sec:tangmartconttime} is existence, uniqueness, and construction of a {\em decoupled tangent martingale}. First, in Theorem \ref{thm:mainforDTMgencasenananana} we extend the result of Jacod \cite{Jac84}, Kwapie\'{n} and Woyczy\'{n}ski \cite{KwW91}, and Kallenberg \cite{Kal17} on existence of a decoupled tangent martingale to general UMD-valued martingales (recall that they have shown this existence only in the real-valued
case). Next in Subsection \ref{subsec:uniqofDTM} we show that a decoupled tangent martingale is unique in distribution (which extends the discrete case, see \cite{dlPG,KwW91}). Finally, in Subsection \ref{subsec:indincgivenLC} we prove that if $N$ is a decoupled tangent martingale of $M$, then $N$ has independent increments given the local characteristics $([\![M^c]\!], \nu^M)$ of $M$ which e.g.\ generalizes \cite[Theorem 3.1]{Kal17}. 

It is of interest to take a closer look at the structure of tangent martingales. Let us consider a particular case of \eqref{eq:inftoUMDintwrtBrandindBr} and \eqref{eq:inftoUMDintwrtPoisbyindPois}.
Intuitively it seems that stochastic integrals $\int \Phi \ud \widetilde W$ and $\int F \ud \widetilde N_{\rm ind}$ occurring in \eqref{eq:inftoUMDintwrtBrandindBr} and \eqref{eq:inftoUMDintwrtPoisbyindPois} should be decoupled tangent martingales to $\int \Phi \ud W$ and $\int F \ud \widetilde N$ respectively. And this is true as $\int \Phi (\omega) \ud \widetilde W$ is a.s.\ a martingale with independent increments and with the local characteristics $(\Phi (\omega) \Phi^*(\omega), 0)$ (here we can consider $\Phi\in \mathcal L(L^2(\mathbb R_+), X)$ instead of $\Phi:\mathbb R_+ \to X$ a.s.\ as $\Phi$ is elementary predictable, see Subsection \ref{subsec:prelimstint} and Section~\ref{sec:intwrtgenmart}), and $\int F(\omega) \ud \widetilde N_{\rm ind}$ has a.s.\ independent increments and the local characteristics $(0, \nu^F(\omega))$ with the measure $\nu^F(\omega)$ defined on $\mathbb R_+\times X$ by
\[
 \nu^F(\omega)([0, t]\times B) = \int_0^t \mathbf 1_{B}\bigl(F(s,\omega) \bigr)\ud s,\;\;\;\;\; t\geq 0,\;\; B\in \mathcal B(X).
\]
For a general martingale we have an expanded version the this construction. Recall that for a given UMD Banach space $X$ any $X$-valued martingale $M$ has the canonical decomposition $M = M^c + M^q + M^a$. Let us present a corresponding decoupled tangent martingale $N^c$, $N^q$, and $N^a$ for each of the cases separately (in the end we can simply sum up $N := N^c + N^q + N^a$ these cases, see Subsection \ref{subsec:CIprocess}). It turns out that by Subsection \ref{subsec:dectangcontmart} we have that $M^c \circ \tau^c = \int \Phi \ud W_H$ for some time-change $\tau^c$, some Hilbert space $H$, some $H$-cylindrical Brownian motion $W_H$ (see Subsection \ref{subsec:prelimstint}), and some $\Phi:\Omega \to \gamma(L^2(\mathbb R_+; H), X)$ (see Subsection \ref{subsec:gammanorm}; we are allowed to integrate such functions due to \cite{NVW}). Then it is sufficient to set $N^c:= \int \Phi \ud \widetilde W_H \circ A^c$ (where $A^c$ is the {\em inverse} time change to $\tau$, i.e.\ $\tau \circ A_t = A \circ \tau_t = t$ a.s.\ for any $t\geq 0$) to be the corresponding decoupled tangent martingale $N^c$ to $M^c$ for some independent $H$-cylindrical Brownian motion $\widetilde W_H$. Therefore $N^c(\omega)$ is a time-changed Wiener integral with a deterministic integrator, which agrees with \eqref{eq:inftoUMDintwrtBrandindBr}. The construction of a decoupled tangent martingale $N^a$ to $M^a$ simply copies the one done in the discrete case due to the approximation argument presented in Proposition \ref{prop:MmapproxMinLpforacccase} (see \cite{dlPG,KwW91,KwW92,dlP94} and Subsection \ref{subsec:dectanPDwithAJ}).

The most intriguing thing happens in the quasi-left continuous case. Recall that $M^q$ can be presented as an integral with respect to a compensated random measure, namely
\begin{equation}\label{eq:Mq=intxdbarmuMqINTRO}
M^q_t = \int_{[0, t]\times X} x \ud\bar{\mu}^{M^q}(\cdot, x),\;\;\; t\geq 0,
\end{equation} 
where $\mu^{M^q}$ is defined by \eqref{eq:defofmuMINTRO}, $\nu^{M^q}$ is the corresponding compensator, $\bar{\mu}^{M^q} = {\mu}^{M^q} - {\nu}^{M^q}$ (see Theorem \ref{thm:XisUMDiffMisintxwrtbarmuMvain}). It turns out that in this case
\begin{equation}\label{eq:Nq=intxdbarmuMqCoxINTRO}
N^q_t := \int_{[0, t]\times X} x \ud\bar{\mu}^{M^q}_{\rm Cox}(\cdot, x),\;\;\; t\geq 0,
\end{equation} 
is a decoupled tangent martingale to $M^q$, where $\mu^{M^q}_{\rm Cox}(\cdot, x)$ is a {\em Cox process} directed by $\nu^{M^q}$, $\bar{\mu}^{M^q}_{\rm Cox} = {\mu}^{M^q}_{\rm Cox} - {\nu}^{M^q}$. Cox processes were introduced by D.R.\ Cox in \cite{Cox55}, and in the present case this is a random measure on an enlarged probability space such that $\mu^{M^q}_{\rm Cox} (\omega)$ is a {\em Poisson random measure} on $\mathbb R_+ \times X$ with the {\em intensity} (or {\em compensator}, see Subsection \ref{subsec:PoissRMprelim}) $\nu^{M^q}(\omega)$ for a.e.\ $\omega \in \Omega$ from the original probability space. Thus $N^q(\omega)$ is a Poisson integral with deterministic integrator, which corresponds to \eqref{eq:inftoUMDintwrtPoisbyindPois}. The idea of employing Cox processes for creating decoupled tangent processes here is not new (see e.g.\ \cite{Kal17}), but what is the most difficult in the vector-valued case is to show that both integrals \eqref{eq:Mq=intxdbarmuMqINTRO} and \eqref{eq:Nq=intxdbarmuMqCoxINTRO} make sense and tangent (see Subsection \ref{subsec:dectanPDQLC}).

It is worth noticing that in Subsection \ref{subsec:RMstochintinBanspaxcewithCOx} we are discussing $L^p$-estimates for {\em general} vector-valued integrals with respect to {\em general} random measures. Recall that this type of estimates goes back to Novikov \cite{Nov75}, where he upper bounded an $L^p$-moment of a real-valued stochastic integral $\int F \ud \bar{\mu}$ by integrals in terms of $F$ and the compensator $\nu$ of $\mu$ (here $\bar{\mu} = \mu-\nu$; see Lemma \ref{subsec:RMstochintinBanspaxcewithCOx}). Later on sharp estimates of this form have been proven by Marinelli and R\"ockner \cite{MarRo} in the Hilbert space case and by Dirksen and the author \cite{DY17} in the $L^q$ case ($1<q<\infty$). In Theorem \ref{thm:mumuCoxcomparable} we show that for any UMD-valued elementary predictable $F$ and for any quasi-left continuous random measure $\mu$ one has that 
\begin{equation}\label{eq:intwrtmuandmuCOXINTRO}
\mathbb E \sup_{t\geq 0} \Bigl\| \int_{[0,t]\times J} F \ud \bar\mu \Bigr\|^p \eqsim_{p, X} \mathbb E  \Bigl\| \int_{\mathbb R_+\times J} F \ud \bar \mu_{\rm Cox} \Bigr\|^p,\;\;\;\; 1\leq p<\infty,
\end{equation}
where $\nu$ is a compensator of $\mu$, $\bar{\mu}:= \mu -\nu$, $ \mu_{\rm Cox}$ is a Cox process directed by $\nu$, and $\bar \mu_{\rm Cox} :=  \mu_{\rm Cox} -\nu$. Note that though it seems that the right-hand side of \eqref{eq:intwrtmuandmuCOXINTRO} depends on $F$ and $\mu_{\rm Cox}$, the distribution of the Cox process entirely depends on $\nu$ (in particular, $\mu_{\rm Cox}(\omega)$ is a Poisson random measure with the intensity $\nu(\omega)$), and so on the right-hand side of \eqref{eq:intwrtmuandmuCOXINTRO} we in fact have $\mathbb E \|F\|_{p, X, \nu}^p$, where $\|F(\omega)\|_{p, X, \nu(\omega)}$ is the $L^p$-norm of a stochastic integral of a deterministic function $F(\omega)$ with respect to the corresponding compensated Poisson random measure (see Subsection \ref{subsec:PoissRMprelim} and \cite{App07,ApRi}). Thus even though \eqref{eq:intwrtmuandmuCOXINTRO} does not provide an explicit formula for a stochastic integral in terms of $F$ and $\nu$, as it was  done in \cite{Nov75,MarRo,DY17}, nevertheless it semigeneralizes the papers \cite{Nov75,MarRo,DY17} as it tells us that in order to get $L^p$ bounds for UMD-valued stochastic integrals with respect to a general random measure we need only to prove the corresponding estimates for the Poisson case with deterministic integrands (see e.g.\ Remark \ref{rem:CoxidlikePoisson}).

\smallskip

 In Section \ref{sec:Uppbdsanddecprop} we show that if $X$ satisfies the so-called {\em decoupling property} (e.g.\ if $X = L^1$), then inequalities of the form 
 \begin{equation}\label{eq:INTROdecpropreno}
   \mathbb E \sup_{0\leq t \leq T} \|M_t\|^p \lesssim_{p, X}   \mathbb E  \|N_T\|^p ,\;\;\; T>0, \;\; p\in[1,\infty),
 \end{equation}
are possible for an $X$-valued martingale $M$ satisfying broad assumptions (see e.g.\ Remark \ref{rem:stochinforgenmartfordecpropet}), where $N$ is a corresponding decoupled tangent local martingale. Recall that the decoupling property was introduced by S.G.\ Cox and Veraar in \cite{CV07,CV} as a natural property while working with discrete decoupled tangent martingales and stochastic integrals.

\smallskip

In \cite{Kal17} Kallenberg also has shown $\phi$-inequalities for tangent continuous martingales (where $\phi$ is a convex function of moderate growth; recall that one can even omit the convexity assumption for conditionally symmetric martingales). In Section \ref{sec:Cfwithmodg} we extend these inequalities to full generality (i.e.\ general martingales in UMD Banach spaces). Though \cite{Kal17} also treats the semimartingale case, it is not known to the author how to prove such inequalities for vector-valued semimartingales.

In Section \ref{sec:intwrtgenmart} we present estimates for vector-valued stochastic integrals with respect to a general martingale which extend both \eqref{eq:inftoUMDintwrtBrandindBr} and \eqref{eq:inftoUMDintwrtPoisbyindPois}. Namely, we show that for a general $H$-valued martingale $\widetilde M$ (where $H$ is  a Hilbert space) and an $\mathcal L(H, X)$-valued elementary predictable process $\Phi$ one has that for any $1\leq p<\infty$
\begin{align}\label{eq:stochintestwithpredRHSINTRO}
 \mathbb E \sup_{t\geq 0} \Bigl\| \int_0^t \Phi \ud \widetilde M \Bigr\|^p &\eqsim_{p, X} \mathbb E \|\Phi q_{ \widetilde M^c}^{1/2}\|_{\gamma(L^2(\mathbb R_+, [ M^c]; H), X)}^p\nonumber\\ 
 &\quad\quad \quad+ \mathbb E  \Bigl\|\int_{\mathbb R_+ \times H}\Phi(s) h \ud \bar \mu^{\widetilde M^q}_{\rm Cox}(s, h) \Bigr\|^p\\
 &\quad \quad \quad \quad\quad \quad+ \mathbb E \Bigl\|\sum_{0\leq t < \infty} \Phi \Delta \widetilde N_t^{a}\Bigr\|^p,\nonumber
\end{align}
where $\widetilde M = \widetilde M^c + \widetilde M^q + \widetilde M^a$ is the canonical decomposition, $q_{\widetilde M^c}$ is a {\em quadratic variation derivative} of $\widetilde M^c$ (see Subsection \ref{subsec:quadrvar}), and $\widetilde N^a$ is a decoupled tangent martingale to $\widetilde M^a$.
Note that the right-hand side of \eqref{eq:stochintestwithpredRHSINTRO} in fact can be seen as an $L^p$ moment of a {\em predictable} process. Such estimates are in the spirit of works of Novikov \cite{Nov75} and Dirksen and the author in \cite{DY17}, and they are very different from the classical vector-valued Burkholder-Davis-Gundy inequalities presented e.g.\ in \cite{Y18BDG,VY18,BDG,MarRo16}. Note that the upper bound of \eqref{eq:stochintestwithpredRHSINTRO} characterizes the decoupling property (see Section \ref{sec:Uppbdsanddecprop} and Remark \ref{rem:stochinforgenmartfordecpropet}).

As it was discussed above, the notion of tangency heavily exploits the Meyer-Yoeurp decomposition, which existence for a general $X$-valued martingale is equivalent to $X$ obtaining the UMD property. But what if we have {\em weak tangency}, i.e.\ what if for a given Banach space $X$ and a pair of $X$-valued martingales $M$ and $N$ we have that $\langle M, x^*\rangle$ and $\langle N, x^*\rangle$ are tangent for any $x^*\in X^*$? How does this correspond to the tangency property and will we then have $L^p$-estimates for a family of Banach spaces different from the UMD one? In Section \ref{sec:WTversusT} we show that in the UMD case weak tangency and tangency coincide. Moreover, in the non-UMD setting no estimate of the form \eqref{eq:LpineqfortangmartinUMDINTRO} for weakly tangent martingales is possible.

In Section \ref{sec:LCharandRecPro} we discuss for which Banach spaces it is possible
to extend the definition of
decoupled tangent local martingales (and prove their existence)
via using weak local characteristics. It turns out that this is possible for Banach spaces with the so-called {\em recoupling property} which is dual to the decoupling property \eqref{eq:INTROdecpropreno} and which occurs to be equivalent to the well-discovered {\em UMD$^+$ property}. Moreover, the converse holds true, i.e.\ a Banach space $X$ having the recoupling property is necessary for any $X$-valued local martingale to have a decoupled tangent local martingale (see Theorem \ref{thm:existofdecoupltagivenerqw} and Remark \ref{rem:decweaktangmartopenq}). It remains open whether recoupling and UMD are identical (see e.g.\ \cite[Section O]{HNVW1}).

In Section \ref{sec:indincrements} we consider vector-valued martingales with independent increments. First recall that one of the inventors of local characteristics was Grigelionis (that is why local characteristics are sometimes called {\em Grigelionis characteristics}). In particular, in \cite{Grig77} he proved that a real-valued martingale has independent increments if and only if it has deterministic local characteristics (this result was extended by Jacod and Shiryaev in \cite{JS} to multi dimensions). In Section \ref{sec:indincrements} we extend this celebrated result to infinite dimensions. In preliminary Subsection \ref{subsec:weakloccharandMII} we show that for any Banach space $X$, an $X$-valued local martingale $M$ has independent increments if and only if it has deterministic {\em weak local characteristics}, i.e.\ the family $([\langle M, x^* \rangle^c, \nu^{\langle M, x^* \rangle}])_{x^*\in X^*}$ is deterministic (such an object always exists since $\langle M, x^* \rangle$ has local characteristics as a real-valued local martingale). Next in Subsection \ref{subsec:martwithindincrgenform} we prove that if this is the case, then $M$ actually has local characteristics (which are of course deterministic), and moreover, $M$ has the canonical decomposition $M = M^c + M^q + M^a$ so that $M^c$, $M^q$, and $M^a$ are mutually independent, and there exists a deterministic time-change $\tau^c$ such that $M^c\circ \tau^c = \int \Phi \ud W_H$ is a stochastic integral of some deterministic $\Phi \in \gamma(L^2(\mathbb R_+; H), X)$ with respect to some $H$-cylindrical Brownian motion $W_H$, $M^q  = \int x \ud \widetilde N(\cdot, x)$ for some fixed Poisson random measure $N$ on $\mathbb R_+ \times X$, and $M^a$ is a sum of its independent jumps which occur at deterministic family of times $(t_n)_{n\geq 1}$.
Note that throughout Section~\ref{sec:indincrements} $X$ is a general Banach space and there is no need in the UMD property.

Recall that Jacod \cite{Jac84} and Kwapie\'{n} and Woyczy\'{n}ski \cite{KwW91} proved that for a real-valued quasi-left continuous martingale $M$ a decoupled tangent martingale $N$ on $[0, T]$ is nothing but a limit in distribution of discrete decoupled tangent martingales $\tilde f^n$ as $n\to \infty$, where for each $n\geq 1$ a martingale $\tilde f^n := (\tilde f_k^n)_{k=1}^n$ is a decoupled tangent martingale to a discrete martingale $(f_k^n)_{k=1}^n = (M_{Tk/n})_{k=1}^n$, and the limit is considered as a limit in distribution of random variables with values in the Skorokhod space $\mathcal D([0, T], \mathbb R)$ (see Definition \ref{def:defofskotokhodspaxc}). In Section \ref{sec:appJKW} we extend this result to general UMD-valued martingales (thus somehow mixing together the discrete works of McConnell \cite{MC}, Hitczenko \cite{HitUP}, and de la Pe\~{n}a \cite{dlP94} and quasi-left continuous works of Jacod  \cite{Jac84,Jac83} and Kwapie\'{n} and Woyczy\'{n}ski \cite{KwW91}). In our setting such a limit theorem is possible since we know what the limiting object is (i.e.\ how does a decoupled tangent martingale look like) due to Section \ref{sec:tangmartconttime}, because of certain approximation techniques, and thanks to properties of stochastic integrals and the canonical decomposition.

Section \ref{sec:LevyHinchinFormula} is devoted to a characterization of the local characteristics of a general UMD-valued martingale via an exponential formula which can be considered as an extension of the L\'evy-Khinchin formula. There we show that for any UMD-valued martingale $M$ with the local characteristics $([\![M^c]\!], \nu^M)$ and for any $x^*\in X^*$
\begin{equation}\label{eq:LHforgenmartstrangeformINTRO}
t\mapsto e^{i\langle M_t, x^*\rangle} / {G_t(x^*)},\;\;\; t\geq 0,
\end{equation}
is a local martingale on $[0,\tau_{G(x^*)})$, where
\begin{multline*}
A_t(x^*) := - \frac 12 [\![M^c]\!]_t(x^*, x^*) + \int_{[0,t]\times X}(e^{i\langle x, x^*\rangle} - 1 - i\langle x, x^*\rangle) \ud \nu^M(s, x),\;\;\; t\geq 0,
\end{multline*}
\[
G_t(x^*):= e^{A_t(x^*)}\Pi_{0\leq s\leq t}(1 + \Delta A_s(x^*)) e^{-\Delta A_s(x^*)},\;\;\; t\geq 0,
\]
and $\tau_{G(x^*)} := \inf \{t\geq 0: G_t(x^*) = 0\} =  \inf \{t\geq 0: \Delta A_t(x^*) = -1\}$.
Moreover, $([\![M^c]\!], \nu^M)$ are unique bilinear form-valued  predictable process and predictable random measure such that \eqref{eq:LHforgenmartstrangeformINTRO} is a local martingale on $[0,\tau_{G(x^*)})$. This is a natural generalization of the L\'evy-Khinchin formula \eqref{eq:LHformulaforLevyptocINTRO} as if we set $M$ to be quasi-left continuous with independent increments, then $\tau_{G(x^*)} = \infty$ and $G(x^*)$ is deterministic, and consequently  \eqref{eq:LHforgenmartstrangeformINTRO} being a local martingale implies \eqref{eq:LHformulaforLevyptocINTRO}. 
The proof of the fact that \eqref{eq:LHforgenmartstrangeformINTRO} is a local martingale  on $[0,\tau_{G(x^*)})$ presented in Section \ref{sec:LevyHinchinFormula} follows directly from the multidimensional case shown by Jacod and Shiryaev in \cite{JS}.

In Section \ref{sec:charsubandchardomofmart} we discover $L^p$-inequalities for {\em characteristically subordinated} and {\em characteristically dominated} martingales. These notions are predictable versions of {\em weak differential subordination} of martingales (see \cite{Y17FourUMD,Y17MartDec,Y17GMY,OY18}) and {\em martingale domination} (see \cite{Y18BDG,Os11b,Burk87}) and have the following form: for a Banach space $X$ an $X$-valued martingale $N$ is characteristically subordinate to an $X$-valued martingale $M$ if for any $x^*\in X^*$ we have that a.s.\ 
\begin{enumerate}[(i)]
\item $|\langle N_0, x^* \rangle| \leq |\langle M_0, x^* \rangle|$,
\item $[\langle N^c, x^* \rangle]_t - [\langle N^c, x^* \rangle]_s \leq [\langle M^c, x^* \rangle]_t - [\langle M^c, x^* \rangle]_s$ for any $0\leq s\leq t$, and
\item $\nu^{\langle N, x^* \rangle} \leq \nu^{\langle M, x^* \rangle}$,
\end{enumerate}
and $N$ is characteristically dominated by $M$ if a.s.\
\begin{enumerate}[(i)]
\item $|\langle N_0, x^* \rangle| \leq |\langle M_0, x^* \rangle|$ for any $x^*\in X^*$,
\item $[\![N^c]\!]_{\infty} \leq [\![M^c]\!]_{\infty}$, and
\item $\nu^{N}(\mathbb R_+ \times \cdot) \leq \nu^{M}(\mathbb R_+ \times \cdot)$
\end{enumerate}
(here $M^c$ and $N^c$ are the continuous parts of $M$ and $N$, see Subsection \ref{subsec:candec}). In Subsection \ref{subsec:charsub} we compare weak differential subordination and characteristic subordination (these properties turn out to be incomparable) and show inequalities \eqref{eq:LpineqfortangmartinUMDINTRO} for characteristically subordinated martingales. In Subsection \ref{subsec:chardom} we show  inequalities \eqref{eq:LpineqfortangmartinUMDINTRO} for {\em quasi-left continuous} characteristically dominated martingales (both estimates are proven in the UMD setting). $L^p$-estimates for general characteristically dominated martingales remain open (see Remark \ref{rem:chardomfordiscretecase}) as the author does not know how to gain such estimates in the discrete case, though this case is very much in the spirit of the original work of Zinn  \cite{Zinn85}.

\smallskip

In the end of the present paper we have appendix Sections \ref{sec:tangunderlinoper} and \ref{sec:appmartapprox} where we collect some technical facts concerning tangency and martingale approximations.

\medskip

All over this section we used to talk about some mysterious UMD spaces. Recall that UMD spaces were introduced by Burkholder in 1980's while working with martingale transforms (see e.g.\ \cite{Burk84,Burk81,Burk83,Burk01}), and nowadays these spaces are used abundantly in vector-valued stochastic and harmonic analysis (see e.g.\ \cite{HNVW1,Bour83,Rubio86,NVW,Y18BDG,Y17FourUMD,GM-SS}). Let us shortly outline here where exactly the UMD property is needed/used in the present paper.
\begin{itemize}
\item Theorem \ref{thm:intromccnnll} due to Hitczenko and McConnell,
\item Burkholder's works \cite{Burk84,Burk81} on martingale transforms,
\item existence of the Meyer-Yoeurp and the canonical decomposition and the corresponding $L^p$- and $\phi$-estimates (see \cite{Y17MartDec,Y17GMY,Y18BDG}),
\item vector-valued stochastic integration with respect to a cylindrical Brownian noise thanks to van Neerven, Veraar, and Weis \cite{NVW},
\item Burkholder-Davis-Gundy inequalities (see \cite{Y18BDG,VY18}),
\item existence of a covariation bilinear form $[\![M]\!]$ (see \cite{Y18BDG}).
\end{itemize}
On the other hand, we obtain several new characterizations of the UMD property, such as
\begin{itemize}
\item estimate \eqref{eq:LpineqfortangmartinUMDINTRO} for continuous-time tangent martingales,
\item existence of a decoupled tangent martingale (see Theorem \ref{thm:mainforDTMgencasenananana} and Section \ref{sec:LCharandRecPro}),
\item estimate \eqref{eq:inftoUMDintwrtPoisbyindPois},
\item the fact that for a purely discontinuous quasi-left continuous martingale $M^q$ the integral $\int x \ud \bar{\mu}^{M^q}(\cdot, x)$ exists and coincides with $M^q$ (see Theorem \ref{thm:XisUMDiffMisintxwrtbarmuMvain}),
\item estimate \eqref{eq:intwrtmuandmuCOXINTRO},
\item $L^p$-estimates for characteristically subordinated and characteristically dominated martingales (see Section \ref{sec:charsubandchardomofmart}).
\end{itemize}
This demonstrates once again that the UMD property is not just a technical assumption, but a key player in any game involving martingales in Banach spaces.

\medskip

\emph{Acknowledgment} -- The author thanks Sjoerd Dirksen, Stefan Geiss, Jan van Neerven, and Mark Veraar for fruitful discussions and helpful comments. The author is grateful to the reviewer for their careful reading and detailed comments, which helped to significantly improve the presentation of the paper.

\section{Preliminaries}\label{sec:prelim}

Throughout the present article any Banach space is considered to be over the scalar field $\mathbb R$. (This is done as we are going to work with continuous-time martingales, which properties are well discovered only in the case of the real scalar field, see e.g.\ \cite{Kal,JS,Prot}.)

\smallskip

Let $X$ be a Banach space, $B\subset X$ be Borel. Then we denote the $\sigma$-algebra of all Borel subsets of $B$ by $\mathcal B(B)$.

\smallskip

For $a, b\in \mathbb R$ we write $a \lesssim_A b$ if there exists a constant $c$ depending only on $A$ such that
$a \leq cb$. $\gtrsim_A$ is defined analogously. We write $a \eqsim_A b$ if both $a \lesssim_A b$ and $a \gtrsim_A b$ hold simultaneously.

\smallskip

We will need the following definitions.

\begin{definition}\label{def:ofRadRV}
 A random variable $\xi:\Omega \to \mathbb R$ is called {\em Rademacher} if $\mathbb P(\xi=1) = \mathbb P(\xi = -1) = 1/2$.
\end{definition}

\begin{definition}\label{def:defofskotokhodspaxc}
 Let $X$ be a Banach space, $A \in \mathbb R$ be an interval (finite or infinite). The linear space $\mathcal D(A, X)$ of all $X$-valued c\`adl\`ag (i.e.\ right continuous with left limits) functions on $A$ is called the {\em Skorokhod space}.
\end{definition}

Recall that $\mathcal D(A, X)$ endowed with the sup-norm is a Banach space (see e.g.\ \cite{Y17FourUMD,VerPhD}).

\smallskip

For a Banach space $X$ and for a measurable space $(S, \Sigma)$ a function $f:S \to X$ is called {\em strongly measurable} if there exists a sequence $(f_n)_{n\geq 1}$ of simple functions such that $f_n \to f$ pointwise on $S$ (see \cite[Section 1.1]{HNVW1}). In the sequel we will call a function $f$ {\em strongly predictable} if it is strongly measurable with respect to the predictable $\sigma$-algebra (which is either $\mathcal P$, see Subsection \ref{subsec:BSvmart}, or $\widetilde{\mathcal P}$, see Subsection~\ref{subsec:ranmeasures}, depending on the underlying $S$).

 For a Banach space $X$ and a function $A:\mathbb R_+  \to X$ we set $A^*\in \overline{\mathbb R}_+$ to be $A^* := \sup_{t\geq 0} \|A_t\|$.

 Throughout the paper, unless stated otherwise, the probability space and filtration are assumed to be generated by all the processes involved.

\subsection{Enlargement of a filtered probability space}\label{subsec:enlargofFsPS}

We will need the following definition of an enlargement of a filtered probability space (see e.g.\ \cite[pp.\ 172--174]{KwW91}).

\begin{definition}\label{def:enlagoffiltprobspace}
Let $(\Omega, \mathcal F, \mathbb P)$ be a probability space with a filtration $\mathbb F = (\mathcal F_t)_{t\geq 0}$. Then a probability space $(\overline{\Omega}, \overline{\mathcal F}, \overline{\mathbb P})$ with a filtration $\overline{\mathbb F} = (\overline {\mathcal F}_t)_{t\geq 0}$ is called to be {\em an enlargement} of $(\Omega, \mathcal F, \mathbb P)$ and  $\mathbb F$ if there exists a measurable space $(\widehat{\Omega}, \widehat{\mathcal F})$ such that $\overline{\Omega} = \Omega \times \widehat{\Omega}$ and $ \overline{\mathcal F} = \mathcal F \otimes \widehat{\mathcal F}$, if there exists a family of probability measures $(\widehat {\mathbb P}_{\omega})_{\omega\in \Omega}$ such that $\omega \mapsto \widehat {\mathbb P}_{\omega}(B)$ is $\mathcal F$-measurable for any $B \in \widehat{\mathcal F}$ and
\[
\overline{\mathbb P}(A \times B) = \int_{A} \widehat{\mathbb P}_{\omega}(B) \ud \mathbb P(\omega),\;\;\; A\in \mathcal F, \;B\in  \widehat{\mathcal F},
\]
and if for any $\omega\in \Omega$ there exists a filtration $\widehat{\mathbb F}^{\omega} = (\widehat{ \mathcal F}_t^{\omega})_{t\geq 0}$ such that for any $B \in \widehat{\mathcal F}$ the process 
$$
(t, \omega) \mapsto \mathbf 1_{\widehat{\mathcal F}^{\omega}_t}(B),\;\;\; t\geq 0,\;\; \omega\in\Omega,
$$ 
is $\mathbb F$-adapted, and such that $\overline {\mathcal F}_t = \mathcal F_t \otimes \widehat{ \mathcal F}^{\cdot}_t$ for any $t\geq 0$, i.e.\
\[
A \times B \in \overline {\mathcal F}_t \;\; \text{if}\;\; A \in \mathcal F_t  \;\; \text{and}\;\; B \in \widehat{ \mathcal F}^{\omega}_t \;\; \text{for any}\;\; \omega \in A.
\]
\end{definition}

\begin{example}
A classical example of an enlargement of a filtered probability space can be a {\em product space}, i.e.\ the case when $\widehat {\mathbb P}_{\omega} = \widehat {\mathbb P}$ and $\widehat{ \mathcal F}_t^{\omega} = \widehat{ \mathcal F}_t$, $t\geq 0$, for any $\omega\in \Omega$ for some fixed measure $\widehat {\mathbb P}$ and some fixed filtration $\widehat{ \mathbb F} = (\widehat{ \mathcal F}_t)_{t\geq 0}$.
\end{example}

\subsection{Conditional expectation on a product space. Conditional probability and conditional independence}\label{subsec:ConexponPSCondProbCondIndep}

Let $(\Omega, \mathcal F, \mathbb P)$ be a probability space, and assume that there exist probability spaces $(\Omega', \mathcal F', \mathbb P')$ and $(\Omega'', \mathcal F'', \mathbb P''_{\omega'})_{\omega'\in \Omega'}$ (where $\mathbb P''_{\omega'}$ depends on $\omega'\in \Omega'$ in $\mathcal F'$-measurable way, see Subsection \ref{subsec:enlargofFsPS}) such that
\begin{equation}\label{eq:OmegaFPistheprodofOmega'F'P'Omega''F''P''}
(\Omega, \mathcal F, \mathbb P) = (\Omega' \times \Omega'', \mathcal F' \otimes \mathcal F'', \mathbb P' \otimes \mathbb P ''),
\end{equation}
i.e.\  $\mathbb P''_{\omega'}(A_2)$ is $\mathcal F'$-measurable for any $A_2 \in \mathcal F''$ and
$$
\mathbb P(A_1 \times A_2) = \int_{A_1} \mathbb P''_{\omega'}(A_2) \ud \mathbb P'(\omega'),\;\;\; A_1 \in \mathcal F',\;\; A_2 \in \mathcal F''.
$$
A particular example would be if $\mathbb P''_{\omega'} = \mathbb P''$ is a probability measure which does not depend on $\omega'\in \Omega'$.
Let $X$ be a Banach space, and let $f\in L^1(\Omega; X)$ (see \cite[Section 1.2]{HNVW1} for the definition of $L^p(\Omega; X)$). Then $\mathbb E (f| \mathcal F')$ is well defined (see \cite[Section 2.6]{HNVW1}; by $\mathbb E (\cdot |\mathcal F')$ here we mean $\mathbb E (\cdot | \mathcal F'\otimes \{\Omega '', \varnothing\})$), and moreover, by Fubini's theorem $f(\omega', \cdot)$ exists and strongly measurable for a.e.\ $\omega'\in \Omega'$ (the proof is analogous to the one provided by \cite[Section 3.4]{BMT07}). It is easy to see that for a.e.\ $\omega'\in \Omega'$
\begin{equation}\label{eq:defofmathbbEOmega''orE}
\mathbb E (f| \mathcal F')(\omega', \cdot) = \int_{\Omega''} f(\omega',\omega '') \ud \mathbb P''_{\omega'}(\omega'') =: \mathbb E_{\Omega ''} f(\omega',\cdot),
\end{equation}
where the notation $\mathbb E_{\Omega''}$ means {\em averaging for every fixed $\omega'\in \Omega '$ over $\Omega ''$}. Indeed, for any $A\in \mathcal F'$ by Fubini's theorem we have that
\[
\int_{A \times \Omega''} f \ud \mathbb P = \int_A \mathbb E_{\Omega''} f(\omega',\cdot) \ud \mathbb P'(\omega'),
\]
so \eqref{eq:defofmathbbEOmega''orE} follows by the definition of a conditional expectation. 

\begin{example}\label{ex:defofExiforrvxis}
 If there exists an $\mathcal F''$-measurable $\xi:\Omega'' \to \mathbb R$ such that $\mathcal F'' = \sigma(\xi)$ for a.e.\ $\omega' \in \Omega'$, then we will often write $\mathbb E_{\xi} :=\mathbb  E_{\Omega''} = \mathbb E( \cdot | \mathcal F' )$ (i.e.\ {\em averaging over all the values of $\xi$}).
\end{example}

Let $(\Omega, \mathcal F, \mathbb P)$ be a probability space, $(S, \Sigma)$ be a measurable space, $\xi:\Omega \to S$ be a random variable. Let $\mathcal G\subset \mathcal F$ be a sub-$\sigma$-algebra. Then we define the {\em conditional probability} $\mathbb P(\xi|\mathcal G):\Sigma \to L^1(\Omega)$ to be as follows
\begin{equation}\label{eq:condprob}
\mathbb P(\xi|\mathcal G)(A) := \mathbb E \bigl(\mathbf 1_{A}(\xi)|\mathcal G\bigr),\;\;\; A\in \Sigma.
\end{equation}

Now let $N\geq 1$, $(\xi_n)_{n= 1}^N$ be $S$-valued random variables. Then $\xi_1, \ldots, \xi_N$ are called {\em conditionally independent given $\mathcal G$} if for any sets $B_1,\ldots, B_N\in \Sigma$ we have that
\begin{equation}\label{eq:defofcondinderp}
 \mathbb P\bigl( (\xi_n)_{n=1}^N \big| \mathcal G \bigr)(B_1 \times \cdots \times B_N)= \Pi_{n=1}^N \mathbb P(\xi_n |\mathcal G)(B_n).
\end{equation}

In the sequel we will need the following proposition.

\begin{proposition}\label{prop:Omega'Omega';arecondindepifondforanyomega'as}
Let $(\Omega, \mathcal F, \mathbb P)$ be defined by \eqref{eq:OmegaFPistheprodofOmega'F'P'Omega''F''P''} for some $(\Omega', \mathcal F', \mathbb P')$ and for some family $(\Omega'', \mathcal F'', \mathbb P''_{\omega'})_{\omega'\in\Omega'}$. Let $(\xi_n)_{n=1}^N$ be as above. Assume that for almost any fixed $\omega'\in \Omega'$, $\bigl(\xi_n(\omega',\cdot)\bigr)_{n=1}^N$ are independent. Then $(\xi_n)_{n=1}^N$ are conditionally independent given $\mathcal F'$.
\end{proposition}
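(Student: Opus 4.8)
The plan is to reduce the whole statement to the fibrewise formula for conditional expectation on a product space, namely \eqref{eq:defofmathbbEOmega''orE}, applied twice. Fix Borel sets $B_1,\ldots,B_N \in \Sigma$ and set $f := \prod_{n=1}^N \mathbf 1_{B_n}(\xi_n)$, which is a bounded $\mathcal F$-measurable function, hence in $L^1(\Omega)$. First I would note that, since $\mathcal F = \mathcal F' \otimes \mathcal F''$, for \emph{every} $\omega' \in \Omega'$ the section $\xi_n(\omega',\cdot):\Omega''\to S$ is $\mathcal F''$-measurable (sections of sets in a product $\sigma$-algebra lie in the corresponding factor), so $f(\omega',\cdot)$ is a genuine measurable function on $(\Omega'',\mathcal F'',\mathbb P''_{\omega'})$. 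Then \eqref{eq:defofmathbbEOmega''orE} gives, for a.e.\ $\omega'\in\Omega'$,
\[
\mathbb P\bigl((\xi_n)_{n=1}^N \mid \mathcal F'\bigr)(B_1\times\cdots\times B_N)(\omega') = \mathbb E(f\mid\mathcal F')(\omega',\cdot) = \int_{\Omega''}\prod_{n=1}^N \mathbf 1_{B_n}\bigl(\xi_n(\omega',\omega'')\bigr)\ud \mathbb P''_{\omega'}(\omega'').
\]

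Next I would invoke the hypothesis: for a.e.\ $\omega'\in\Omega'$ the random variables $\xi_1(\omega',\cdot),\ldots,\xi_N(\omega',\cdot)$ are independent under $\mathbb P''_{\omega'}$, so the integral on the right factorises as $\prod_{n=1}^N \int_{\Omega''}\mathbf 1_{B_n}(\xi_n(\omega',\omega''))\ud\mathbb P''_{\omega'}(\omega'')$. Applying \eqref{eq:defofmathbbEOmega''orE} once more, now to each single-coordinate function $\mathbf 1_{B_n}(\xi_n)$, identifies the $n$-th factor with $\mathbb E(\mathbf 1_{B_n}(\xi_n)\mid\mathcal F')(\omega',\cdot) = \mathbb P(\xi_n\mid\mathcal F')(B_n)(\omega')$. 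Combining the two displays yields
\[
\mathbb P\bigl((\xi_n)_{n=1}^N \mid \mathcal F'\bigr)(B_1\times\cdots\times B_N) = \prod_{n=1}^N \mathbb P(\xi_n\mid\mathcal F')(B_n)
\]
as elements of $L^1(\Omega)$ (both sides being $\mathcal F'$-measurable, a.s.\ equality is the same as equality for a.e.\ $\omega'$), which is precisely \eqref{eq:defofcondinderp} with $\mathcal G = \mathcal F'$. Since $B_1,\ldots,B_N$ were arbitrary, $(\xi_n)_{n=1}^N$ are conditionally independent given $\mathcal F'$.

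I do not expect a genuine obstacle here. The only two places where ``almost every $\omega'$'' enters are the defining identity \eqref{eq:defofmathbbEOmega''orE} and the independence hypothesis, and for a single fixed tuple $(B_1,\ldots,B_N)$ the union of the two exceptional null sets is still null. The one point deserving a word of care is the $\mathcal F'$-measurability of $\omega'\mapsto\int_{\Omega''}\mathbf 1_{B_n}(\xi_n(\omega',\omega''))\ud\mathbb P''_{\omega'}(\omega'')$, but this is built into the product-space structure \eqref{eq:OmegaFPistheprodofOmega'F'P'Omega''F''P''}: the map $\omega'\mapsto\mathbb P''_{\omega'}(A)$ is $\mathcal F'$-measurable for $A\in\mathcal F''$, and this passes to simple and then bounded measurable integrands by a standard monotone-class argument (exactly as in the derivation of \eqref{eq:defofmathbbEOmega''orE} already carried out in the excerpt).
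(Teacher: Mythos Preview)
Your proof is correct and follows essentially the same approach as the paper: apply the fibrewise identity \eqref{eq:defofmathbbEOmega''orE} to $\prod_{n=1}^N \mathbf 1_{B_n}(\xi_n)$, use the assumed independence under $\mathbb P''_{\omega'}$ to factor the integral, and then apply \eqref{eq:defofmathbbEOmega''orE} again to each factor. The paper's version is more terse and omits your remarks on section measurability and null sets, but the argument is the same.
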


\begin{proof}
By the definition of conditional independence we need to show that for any sets $B_1,\ldots, B_n \in \Sigma$
\[
 \mathbb P\bigl( (\xi_n)_{n=1}^N \big| \mathcal F' \bigr)(B_1 \times \cdots \times B_N)= \Pi_{n=1}^N \mathbb P(\xi_n |\mathcal F')(B_n).
\]
To this end note that by \eqref{eq:defofmathbbEOmega''orE} for $\mathbb P'$-a.e.\ $\omega'\in \Omega'$
\begin{align*}
 \mathbb P\bigl( (\xi_n)_{n=1}^N \big| \mathcal F' \bigr)(B_1 \times \cdots \times B_N)&(\omega', \cdot) = \mathbb E\bigl (\mathbf 1_{B_1 \times \cdots \times B_N}(\xi_1,\ldots, \xi_N)\big|\mathcal F'\bigr)(\omega',\cdot)\\
 &=\int_{\Omega''} \Pi_{n=1}^N \mathbf 1_{B_n} \bigl(\xi_n(\omega', \omega '')\bigr)\ud \mathbb P''_{\omega'}(\omega'')\\
 &= \Pi_{n=1}^N\int_{\Omega''} \mathbf 1_{B_n} \bigl(\xi_n(\omega', \omega '')\bigr)\ud \mathbb P''_{\omega'}(\omega'')\\
 &= \Pi_{n=1}^N \mathbb P(\xi_n|\mathcal F')(\omega', \cdot)(B_n),
\end{align*}
which terminates the proof.
\end{proof}

We will also need the following consequence of the proposition.

\begin{corollary}\label{cor:condindgivenRVsuffcond}
Let $(S, \Sigma)$ and $(T, \mathcal{T})$ be measurable spaces, let $(\Omega, \mathcal F, \mathbb P)$ be defined by \eqref{eq:OmegaFPistheprodofOmega'F'P'Omega''F''P''}, and let $\xi:\Omega'\to S$ and $\eta:\Omega \to T$ be measurable. Assume that $\eta$ is measurable with respect to $\sigma(\xi)\otimes \mathcal F''$. Let $F_1, \ldots, F_N:S\times T \to \mathbb R$. Then $\bigl(F_n(\xi, \eta)\bigr)_{n=1}^N$ are conditionally independent given $\sigma(\xi)$ if there exists $A\in \Sigma$ with $\mathbb P(\xi \in A) = 1$ such that $\bigl(F_n(a, \eta(a, \cdot))\bigr)_{n=1}^N$ are independent for any $a\in A$.
\end{corollary}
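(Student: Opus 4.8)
The plan is to reduce the statement about $\bigl(F_n(\xi,\eta)\bigr)_{n=1}^N$ to Proposition~\ref{prop:Omega'Omega';arecondindepifondforanyomega'as} by identifying the random variables $\zeta_n := F_n(\xi,\eta)$ as functions on the product space $(\Omega',\mathcal F',\mathbb P') \times (\Omega'',\mathcal F'',\mathbb P'')$ and checking that, for almost every fixed $\omega'\in\Omega'$, the maps $\zeta_n(\omega',\cdot)$ are independent. Once that is established, Proposition~\ref{prop:Omega'Omega';arecondindepifondforanyomega'as} gives conditional independence given $\mathcal F'$, and since $\eta$ is $\sigma(\xi)\otimes\mathcal F''$-measurable while $\xi$ is $\mathcal F'$-measurable, each $\zeta_n$ is $\mathcal F'\otimes\mathcal F''$-measurable; combined with the fact that each $\zeta_n$ is a function of $(\xi,\eta)$, this will let me replace conditioning on $\mathcal F'$ by conditioning on $\sigma(\xi)$. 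More precisely, I will verify that for every $B\in\mathcal B(\mathbb R)$ one has $\mathbb P(\zeta_n\in B\mid\mathcal F') = \mathbb P(\zeta_n\in B\mid\sigma(\xi))$ a.s., which follows because $\zeta_n(\omega',\cdot)$ depends on $\omega'$ only through $\xi(\omega')$ (as $\eta$ factors through $\sigma(\xi)\otimes\mathcal F''$), so the inner integral $\int_{\Omega''}\mathbf 1_B(\zeta_n(\omega',\omega''))\,\mathrm d\mathbb P''(\omega'')$ is a $\sigma(\xi)$-measurable function of $\omega'$.

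The key steps, in order, are as follows. First, fix $a\in A$, where $A\in\Sigma$ is the set from the hypothesis with $\mathbb P(\xi\in A)=1$. For $\omega'\in\Omega'$ with $\xi(\omega')=a$, the measurability hypothesis on $\eta$ means $\eta(\omega',\omega'')$ can be written as $\eta(\omega',\omega'') = \widetilde\eta(\xi(\omega'),\omega'') = \widetilde\eta(a,\omega'')$ for a suitable $\sigma(\xi)\otimes\mathcal F''$-measurable representative; hence $\zeta_n(\omega',\omega'') = F_n(a,\widetilde\eta(a,\omega'')) = F_n(a,\eta(a,\cdot))(\omega'')$ for such $\omega'$. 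Second, by the hypothesis the random variables $\bigl(F_n(a,\eta(a,\cdot))\bigr)_{n=1}^N$ on $\Omega''$ are independent (with respect to $\mathbb P''$, or $\mathbb P''_{\omega'}$ which here does not depend on $\omega'$). Third, since $\mathbb P(\xi\in A)=1$, for $\mathbb P'$-a.e.\ $\omega'\in\Omega'$ we have $\xi(\omega')\in A$, so for $\mathbb P'$-a.e.\ $\omega'$ the family $\bigl(\zeta_n(\omega',\cdot)\bigr)_{n=1}^N$ is independent on $(\Omega'',\mathcal F'',\mathbb P''_{\omega'})$. Fourth, apply Proposition~\ref{prop:Omega'Omega';arecondindepifondforanyomega'as} with $\xi_n$ there replaced by our $\zeta_n$ to conclude that $(\zeta_n)_{n=1}^N$ are conditionally independent given $\mathcal F'$. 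Fifth, upgrade "given $\mathcal F'$" to "given $\sigma(\xi)$" using \eqref{eq:defofmathbbEOmega''orE}: each conditional probability $\mathbb P(\zeta_n\in B_n\mid\mathcal F')(\omega',\cdot) = \int_{\Omega''}\mathbf 1_{B_n}(\zeta_n(\omega',\omega''))\,\mathrm d\mathbb P''_{\omega'}(\omega'')$ depends on $\omega'$ only through $\xi(\omega')$, hence equals $\mathbb P(\zeta_n\in B_n\mid\sigma(\xi))$, and the same replacement applies to the joint conditional probability, so the factorization identity \eqref{eq:defofcondinderp} transfers verbatim from $\mathcal F'$ to $\sigma(\xi)$.

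The main obstacle I anticipate is a bookkeeping one rather than a deep one: making precise the assertion that "$\eta$ is $\sigma(\xi)\otimes\mathcal F''$-measurable" lets one write $\eta(\omega',\omega'')$ as a genuine function of $(\xi(\omega'),\omega'')$. This requires the Doob–Dynkin-type factorization lemma for the product $\sigma$-algebra $\sigma(\xi)\otimes\mathcal F''$ on $\Omega'\times\Omega''$ — identifying $\sigma(\xi)\otimes\mathcal F''$-measurable maps with $\mathcal B(S)\otimes\mathcal F''$-measurable maps composed with $(\xi\times\mathrm{id}_{\Omega''})$. One must also be slightly careful that the "for any $a\in A$" hypothesis is stated for the map $\eta(a,\cdot)$, which only makes literal sense once this factorization is invoked (otherwise $\eta(a,\cdot)$ is not defined, since $a\in S$, not $a\in\Omega'$); I would make a one-line remark fixing a version $\widetilde\eta:S\times\Omega''\to T$ with $\eta = \widetilde\eta\circ(\xi\times\mathrm{id})$ and interpret $\eta(a,\cdot) := \widetilde\eta(a,\cdot)$. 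Everything else is a routine application of Proposition~\ref{prop:Omega'Omega';arecondindepifondforanyomega'as} and Fubini, so the proof should be short.
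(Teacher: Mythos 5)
Your proposal follows the same overall strategy as the paper (reduce to Proposition~\ref{prop:Omega'Omega';arecondindepifondforanyomega'as}) but takes a longer route, and that longer route has a gap at the end.

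The paper's own proof is a one-line re-parametrization: replace the probability space by one in which the first coordinate \emph{is} the value of $\xi$. Concretely, set $\Omega' := A \subset S$ with $\mathbb P' := \mathcal L(\xi)$, and take $\mathbb P''_{\omega'} := \mathcal L(\eta(\omega'))$ for $\omega' = a \in A$ (a regular conditional distribution, invoking \cite[Theorem 10.2.2]{Dud89}). On this new space $\mathcal F' = \Sigma|_A$ is literally $\sigma(\xi)$, so the conclusion of Proposition~\ref{prop:Omega'Omega';arecondindepifondforanyomega'as} — conditional independence given $\mathcal F'$ — is already the statement to be proved; no ``upgrade'' from $\mathcal F'$ to a sub-$\sigma$-algebra is needed. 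Your proof stays on the original $(\Omega',\mathcal F',\mathbb P')\times(\Omega'',\mathcal F'',\mathbb P''_{\omega'})$, applies Proposition~\ref{prop:Omega'Omega';arecondindepifondforanyomega'as} there, and then tries to pass from conditioning on $\mathcal F'$ to conditioning on the smaller $\sigma$-algebra $\sigma(\xi)$. Conceptually this is the same idea, but the second step is exactly what the paper's re-parametrization is designed to avoid.

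The gap is in Step~5. You assert that
\[
\mathbb P(\zeta_n\in B_n\mid\mathcal F')(\omega') = \int_{\Omega''}\mathbf 1_{B_n}\bigl(\zeta_n(\omega',\omega'')\bigr)\,\mathrm d\mathbb P''_{\omega'}(\omega'')
\]
depends on $\omega'$ only through $\xi(\omega')$. You justify this by noting that the \emph{integrand} $\zeta_n(\omega',\omega'') = F_n(\xi(\omega'),\widetilde\eta(\xi(\omega'),\omega''))$ factors through $\xi(\omega')$, which is correct. But the \emph{measure} $\mathbb P''_{\omega'}$ is only assumed to depend on $\omega'$ in an $\mathcal F'$-measurable way (see the construction \eqref{eq:OmegaFPistheprodofOmega'F'P'Omega''F''P''} and Definition~\ref{def:enlagoffiltprobspace}), not through $\xi(\omega')$ alone. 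Without that extra assumption the integral need not be $\sigma(\xi)$-measurable, so the tower-property argument does not go through, and conditional independence given the larger $\mathcal F'$ does not by itself imply conditional independence given $\sigma(\xi)$. You caught the analogous bookkeeping issue for $\eta$ (the Doob--Dynkin factorization) but missed the same issue for the kernel. The paper's trick of taking $\mathbb P' := \mathcal L(\xi)$ and $\mathbb P''_a := \mathcal L(\eta(a,\cdot))$ resolves this automatically: on the new space the kernel depends only on $a\in A$ by construction, so the question never arises. To repair your argument you would either have to add the hypothesis that $\mathbb P''_{\omega'}$ factors through $\xi$, or replace $\mathbb P''_{\omega'}$ by its $\sigma(\xi)$-conditional average and re-check the pointwise-independence hypothesis for the averaged kernel — which is, in effect, doing the paper's re-parametrization by hand.
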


\begin{proof}
The corollary follows from Proposition \ref{prop:Omega'Omega';arecondindepifondforanyomega'as} if one sets $\Omega'  := A$, $\mathbb P' := \mathcal L(\xi)$, and 
$$
\mathbb P''_{\omega'} := \mathcal L\bigl(\eta(\omega')\bigr),\;\;\; \omega'\in\Omega',
$$ 
where the latter exists by \cite[Theorem 10.2.2 and pp.\ 344, 386]{Dud89}
(here $\mathcal L$ means the distribution).
\end{proof}

We refer the reader to \cite{HNVW1} for further details on vector-valued integration and vector-valued conditional expectation.

\subsection{The UMD property}\label{subsec:prelimUMD}
A Banach space $X$ is called a {\em UMD\footnote{UMD stands for {\em unconditional martingale differences}} space} if for some (equivalently, for all)
$p \in (1,\infty)$ there exists a constant $\beta>0$ such that
for every $N \geq 1$, every martingale
difference sequence $(d_n)^N_{n=1}$ in $L^p(\Omega; X)$, and every $\{-1,1\}$-valued sequence
$(\varepsilon_n)^N_{n=1}$
we have
\[
\Bigl(\mathbb E \Bigl\| \sum^N_{n=1} \varepsilon_n d_n\Bigr\|^p\Bigr )^{\frac 1p}
\leq \beta \Bigl(\mathbb E \Bigl \| \sum^N_{n=1}d_n\Bigr\|^p\Bigr )^{\frac 1p}.
\]
The least admissible constant $\beta$ is denoted by $\beta_{p,X}$ and is called the {\em UMD$_p$ constant} of $X$ (or just the {\em UMD constant} of $X$ if the value of $p$ is understood). It is well known (see \cite[Chapter 4]{HNVW1}) that $\beta_{p, X}\geq p^*-1$ and that $\beta_{p, H} = p^*-1$ for a Hilbert space $H$ and any $1<p<\infty$ (here $p^* := \max\{p, p/(p-1)\}$). 

We will also frequently use the following equivalent definition of the UMD property. $X$ is UMD if and only if for any $1\leq p<\infty$ and for any $(d_n)_{n=1}^N$ and $(\eps_n)_{n=1}^N$ as above we have that
\[
 \mathbb E \sup_{1\leq m\leq N} \Bigl\| \sum^m_{n=1} \varepsilon_n d_n\Bigr\|^p
\eqsim_{p, X} \mathbb E\sup_{1\leq m\leq N} \Bigl \| \sum^m_{n=1}d_n\Bigr\|^p.
\]
Note that a similar definition of the UMD property can be provided for a general convex function of moderate growth (see e.g.\ \cite[p.\ 1000]{Burk81}).
 We refer the reader to \cite{Burk81,Burk01,HNVW1,Rubio86,Pis16,LVY18,HNVW2,GM-SS,dlPG,Y18BDG,GY19} for details on UMD Banach spaces.
 
 \subsection{Stopping times}\label{subsec:prelimsttimes}

A stopping time $\tau$ is called {\em predictable} if there exists a sequence of stopping times $(\tau_n)_{n\geq 1}$ such that $\tau_n<\tau$ a.s.\ on $\{\tau > 0\}$ and $\tau_n \nearrow \tau$ a.s.\ as $n\to \infty$. A stopping time $\tau$ is called {\em totally inaccessible} if $\mathbb P(\tau=\sigma \neq \infty)=0$ for any predictable stopping time $\sigma$. 

With a predictable stopping time $\tau$ we associate a $\sigma$-field $\mathcal F_{\tau-}$ which has the following form
\begin{equation}\label{eq:defofftau-}
 \mathcal F_{\tau-} := \sigma\{\mathcal F_0 \cup (\mathcal F_{t}\cap \{t<\tau\}), t> 0\} = \sigma\{\mathcal F_{\tau_n}, n\geq 1\},
\end{equation}
where $(\tau_n)_{n\geq 1}$ is a sequence of stopping time announcing $\tau$ (see \cite[p.\ 491]{Kal} for details).

Later on we will work with different types of martingales based on the properties of their jumps, and in particular we will frequently use the following definition (see e.g.\ Subsection \ref{subsec:candec}). Recall that for a c\`adl\`ag process $A$ and for a stopping time $\tau$ we set $\Delta A_{\tau}:= A_{\tau} - \lim_{\eps \searrow 0} A_{0 \vee (\tau - \eps)} $ on $\{\tau<\infty\}$.

\begin{definition}
 Let $X$ be a Banach space, $A:\mathbb R_+ \times \Omega \to X$ be a c\`adl\`ag process. Then $A$ is called {\em quasi-left continuous} if $\Delta A_{\tau}=0$ a.s.\ on $\{t<\infty\}$ for any predictable stopping time $\tau$. $A$ is called to have {\em accessible jumps} if $\Delta A_{\tau}=0$ a.s.\ on $\{t<\infty\}$ for any totally inaccessible stopping time $\tau$.
\end{definition}
 We refer the reader to \cite{Kal,JS,DY17,Y17MartDec,Y17GMY} for further details.

\subsection{Martingales: real- and Banach space-valued}\label{subsec:BSvmart}

Let $(\Omega,\mathcal F, \mathbb P)$ be a probability space with a filtration $\mathbb F = (\mathcal F_t)_{t\geq 0}$ which satisfies the usual conditions (see \cite{Prot,Kal,JS}). Then particularly $\mathbb F$ is right-continuous. A {\em predictable $\sigma$-algebra $\mathcal P$} is a $\sigma$-algebra on $\mathbb R_+\times \Omega$ generated by all predictable rectangles of the form $(s, t] \times B$, where $0\leq s < t$ and $B \in \mathcal F_s$.

Let $X$ be a Banach space. An adapted process $M:\mathbb R_+ \times \Omega \to X$ is called a {\em martingale} if $M_t\in L^1(\Omega; X)$ and $\mathbb E (M_t|\mathcal F_s) = M_s$ for all $0\leq s\leq t$. $M$ is called a {\em local martingale} if there exists a nondecreasing sequence $(\tau_n)_{n\geq 1}$ of stopping times such that $\tau_n \nearrow \infty$ a.s.\ as $n\to \infty$ and $M^{\tau_n}$ is a martingale for any $n\geq 1$ (recall that for a stopping time $\tau$ we set $M^{\tau}_t := M_{\tau\wedge t}$, $t\geq 0$, which is a local martingale given $M$ is a local martingale, see \cite{Kal,Prot,JS}). It is well known that in the real-valued case any local martingale is {\em c\`adl\`ag} (i.e.\ has a version which is right-continuous and that has limits from the left-hand side). The same holds for a general $X$-valued local martingale $M$ as well (see e.g.\ \cite{Y17FourUMD,VerPhD}), so for any stopping time $\tau$ one can define $\Delta M_{\tau} := M_{\tau} - \lim_{\eps \searrow 0} M_{0 \vee (\tau - \eps)}$ on $\{\tau<\infty\}$.

Let $1\leq p\leq \infty$. A martingale $M:\mathbb R_+ \times \Omega \to X$ is called an {\em $L^p$-bounded martingale} if $M_t \in L^p(\Omega; X)$ for each $t\geq 0$ and there exists a limit $M_{\infty} := \lim_{t\to \infty} M_t\in L^p(\Omega; X)$ in $L^p(\Omega; X)$-sense. 

Since $\|\cdot\|:X \to \mathbb R_+$ is a convex function, and $M$ is a martingale, $\|M\|$ is a submartingale by Jensen's inequality, and hence by Doob's inequality (see e.g.\ \cite[Theorem 1.3.8(i)]{KS}) we have that for all $1\leq p< \infty$
\begin{equation}\label{eq:DoobsineqXBanach}
 \mathbb E \|M_t\|^p \leq \mathbb E \sup_{0\leq s\leq t} \|M_s\|^p \leq \frac{p}{p-1}\mathbb E \|M_t\|^p,\;\;\; t\geq 0.
\end{equation}
In fact, the following theorem holds for martingales having strong $L^p$-moments (see e.g.\ \cite{Wei94,Wei92} for the real-valued case, the infinite dimensional case can be proven analogously, see e.g.\ \cite{VerPhD,YPhD,Y17FourUMD, DY17,Y18BDG,Y17MartDec}). Recall that Skorokhod spaces were defined in Definition \ref{def:defofskotokhodspaxc}.
\begin{theorem}\label{thm:strongLpmartforamaBanacoace}
Let $X$ be a Banach space, $1\leq p <\infty$. Then the family of all martingales $M:\mathbb R_+ \times \Omega \to X$ satisfying $\mathbb E \sup_{t\geq 0}\|M_t\|^p<\infty$ forms a closed subspace of $L^p(\Omega; \mathcal D(\mathbb R_+, X))$.
\end{theorem}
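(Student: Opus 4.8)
The plan is to verify that the stated family --- call it $\mathcal M^p$ --- is a linear subspace of $L^p(\Omega;\mathcal D(\mathbb R_+,X))$ and then that it is closed; note that the requirement $\mathbb E \sup_{t\geq 0}\|M_t\|^p<\infty$ is exactly the statement that $M$, viewed as an $\Omega$-indexed family of paths, belongs to $L^p(\Omega;\mathcal D(\mathbb R_+,X))$ (recall that $\mathcal D(\mathbb R_+,X)$ with the sup-norm is a Banach space). Linearity is immediate: a finite linear combination of $X$-valued martingales is again a martingale, and the $L^p(\Omega;\mathcal D)$-norm of the combination is finite by the triangle inequality. So everything reduces to closedness.

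For closedness, let $(M^{(n)})_{n\geq 1}\subset\mathcal M^p$ converge in $L^p(\Omega;\mathcal D(\mathbb R_+,X))$ to some $Y$; I must show that $Y$ (which automatically satisfies $\mathbb E\sup_{t\geq 0}\|Y_t\|^p<\infty$) admits a martingale representative, equivalently that $Y$ itself, being c\`adl\`ag by membership in $\mathcal D(\mathbb R_+,X)$, is adapted and satisfies $\mathbb E(Y_t\mid\mathcal F_s)=Y_s$ for all $0\leq s\leq t$. First I would pass to a subsequence $(M^{(n_k)})_k$ converging $\mathbb P$-a.s.\ to $Y$ in $\mathcal D(\mathbb R_+,X)$, i.e.\ \emph{uniformly on $\mathbb R_+$}; in particular, for every fixed $t\geq 0$ one has $M^{(n_k)}_t\to Y_t$ a.s., and since each $M^{(n_k)}_t$ is $\mathcal F_t$-measurable and the filtration satisfies the usual conditions, $Y_t$ is $\mathcal F_t$-measurable. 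Integrability of $Y_t$ is clear from $\|Y_t\|\leq\sup_{s\geq 0}\|Y_s\|\in L^p(\Omega)\subset L^1(\Omega)$. Thus $Y$ is an adapted, c\`adl\`ag, integrable process.

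It remains to pass the martingale identity to the limit. Fix $0\leq s\leq t$. Since $\|M^{(n)}_t-Y_t\|\leq\sup_{r\geq 0}\|M^{(n)}_r-Y_r\|$ and $L^p$-convergence on a probability space implies $L^1$-convergence, we get $M^{(n)}_t\to Y_t$ in $L^1(\Omega;X)$, and likewise $M^{(n)}_s\to Y_s$ in $L^1(\Omega;X)$. Because $f\mapsto\mathbb E(f\mid\mathcal F_s)$ is a contraction on $L^1(\Omega;X)$, we obtain $\mathbb E(M^{(n)}_t\mid\mathcal F_s)\to\mathbb E(Y_t\mid\mathcal F_s)$ in $L^1(\Omega;X)$; but the left-hand side equals $M^{(n)}_s$, which converges to $Y_s$. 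Hence $\mathbb E(Y_t\mid\mathcal F_s)=Y_s$ a.s., so $Y$ is a martingale and $Y\in\mathcal M^p$. This proves that $\mathcal M^p$ is closed. (Alternatively, Doob's inequality \eqref{eq:DoobsineqXBanach} shows that on $\mathcal M^p$ the norm $M\mapsto(\mathbb E\sup_t\|M_t\|^p)^{1/p}$ is equivalent to $M\mapsto\sup_t(\mathbb E\|M_t\|^p)^{1/p}$, but the argument above does not require this.)

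The only genuinely delicate points are bookkeeping ones: extracting the a.s.-convergent subsequence in the Banach space $\mathcal D(\mathbb R_+,X)$ (Definition \ref{def:defofskotokhodspaxc}) and translating uniform path convergence into pointwise-in-$t$ convergence so as to secure adaptedness of the c\`adl\`ag representative $Y$, together with the (standard) $L^1$-continuity of the conditional expectation. I do not anticipate a substantive obstacle beyond these measurability technicalities.
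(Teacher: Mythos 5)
Your proof is correct and is exactly the standard argument; the paper itself gives no explicit proof, merely citing references for the real-valued case and remarking that the infinite-dimensional case follows analogously, which is precisely what you have carried out (extract an a.s.-uniformly-convergent subsequence to secure a c\`adl\`ag adapted representative, then pass the martingale identity through the $L^1$-contraction $\mathbb E(\cdot\mid\mathcal F_s)$). The only caveat worth noting for the reader is that the parenthetical remark about Doob's inequality yielding an equivalent norm holds only for $p>1$, but as you observe the main argument works uniformly for $1\leq p<\infty$ and does not use it.
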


\begin{remark}\label{rem:locmartislocallyL1DR+X}
 Recall that any local martingale $M:\mathbb R_+ \times \Omega \to X$ is locally in $L^1(\Omega; \mathcal D(\mathbb R_+, X))$. Indeed, set $(\tau_n)_{n\geq 1}$ be a localizing sequence and for each $n\geq 1$ set $\sigma_n := \inf\{t\geq 0: \|M_t\| \geq n\}$. Then $\sigma_n\to \infty$ as $n\to \infty$ a.s.\ since $M$ has c\`adl\`ag paths, and thus $\tau_n\wedge\sigma_n\wedge n\to \infty$ as $n\to \infty$ a.s.\ as well. On the other hand we have that for each $n\geq 1$
 \begin{align*}
  \mathbb E \sup_{t\geq 0}\|M^{\tau_n\wedge \sigma_n\wedge n}_t\|& =  \mathbb E \sup_{ 0 \leq t\leq \tau_n\wedge \sigma_n\wedge n}\|M_t\| \leq \mathbb E n \wedge \| M_{\tau_n\wedge \sigma_n\wedge n}\|\\
  &\leq  n \wedge  \mathbb E \|M_{\tau_n\wedge \sigma_n\wedge n}\| = \leq  n \wedge  \mathbb E \|M^{\tau_n\wedge \sigma_n}_n\| <\infty,
 \end{align*}
where we used the fact that $M^{\tau_n\wedge \sigma_n}$ is a martingale as $M^{\tau_n}$ is a martingale (see e.g.\ \cite{Kal}).
\end{remark}

 Later we will need the following lemma proven e.g.\ in \cite[Subsection 5.3]{DY17} (see also \cite{Kal,Y18BDG}).
 \begin{lemma}\label{lem:DeltaMtaugiventau-=0}
  Let $X$ be a Banach space, $M:\mathbb R_+ \times \Omega \to X$ be a martingale such that $\limsup_{t\to \infty}\mathbb E \|M_t\|<\infty$. Let $\tau$ be a finite predictable stopping time. Then $\Delta M_{\tau}$ is integrable and
  \[
   \mathbb E (\Delta M_{\tau}|_{\mathcal F_{\tau-}}) = 0,
  \]
  where $\mathcal F_{\tau-}$ is defined by \eqref{eq:defofftau-}. Equivalently, $t\mapsto \Delta M_{\tau} \mathbf 1_{[\tau ,\infty)}(t)$, $t\geq 0$, is a martingale.
 \end{lemma}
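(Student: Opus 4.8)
The plan is to reduce everything to the scalar case and to a Doob-type argument on a suitable stopped/truncated version of $M$. Fix the finite predictable stopping time $\tau$ and choose a sequence $(\tau_n)_{n\geq 1}$ of stopping times announcing $\tau$, i.e.\ $\tau_n < \tau$ on $\{\tau>0\}$ and $\tau_n \nearrow \tau$. The first step is to show that $\Delta M_\tau$ is integrable. Here I would use that $\limsup_{t\to\infty}\mathbb E\|M_t\|<\infty$ together with the submartingale property of $\|M\|$ and Doob's inequality \eqref{eq:DoobsineqXBanach}: this gives $\mathbb E\sup_{t\geq 0}\|M_t\| = \lim_{t\to\infty}\mathbb E\sup_{0\leq s\leq t}\|M_s\| \leq \tfrac{p}{p-1}\limsup_{t\to\infty}\mathbb E\|M_t\|<\infty$ (with $p$ replaced by a limiting argument, or more simply via the $L\log L$/maximal inequality, but boundedness in $L^1$ of the maximal function is what matters). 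Since $\Delta M_\tau = M_\tau - M_{\tau-}$ with both $M_\tau$ and $M_{\tau-} = \lim_n M_{\tau_n}$ dominated by $\sup_{t\geq0}\|M_t\|\in L^1$, integrability of $\Delta M_\tau$ follows, and in fact $M_{\tau_n}\to M_{\tau-}$ in $L^1(\Omega;X)$ by dominated convergence.

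The core step is the identity $\mathbb E(\Delta M_\tau \mid \mathcal F_{\tau-}) = 0$. Recall from \eqref{eq:defofftau-} that $\mathcal F_{\tau-} = \sigma\{\mathcal F_{\tau_n}: n\geq 1\}$, so it suffices to check $\mathbb E(\mathbf 1_A \Delta M_\tau) = 0$ for every $A\in \mathcal F_{\tau_m}$ and every $m$ (these generate $\mathcal F_{\tau-}$ as an algebra, and one passes to the generated $\sigma$-algebra by the usual $\pi$-$\lambda$/monotone class argument once uniform integrability is in place). For such $A$ and for $n\geq m$ we have $A\in\mathcal F_{\tau_n}$, and since $M$ is a (true) martingale and $\tau_n$ is a bounded-by-$\tau$ stopping time — more precisely, one first replaces $M$ by the stopped martingale $M^\tau$, which is a uniformly integrable martingale closed by $M_\tau$ because $\sup_{t}\|M^\tau_t\|\leq\sup_t\|M_t\|\in L^1$ and $\tau<\infty$ a.s.\ — optional stopping gives $\mathbb E(M_\tau\mid\mathcal F_{\tau_n}) = M_{\tau_n}$. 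Hence $\mathbb E(\mathbf 1_A(M_\tau - M_{\tau_n})) = 0$ for all $n\geq m$, and letting $n\to\infty$ using $M_{\tau_n}\to M_{\tau-}$ in $L^1$ yields $\mathbb E(\mathbf 1_A\Delta M_\tau)=0$. This proves $\mathbb E(\Delta M_\tau\mid\mathcal F_{\tau-})=0$.

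For the final ``equivalently'' clause, set $Y_t := \Delta M_\tau\,\mathbf 1_{[\tau,\infty)}(t)$. Adaptedness: $\Delta M_\tau$ is $\mathcal F_\tau$-measurable and $\{\tau\leq t\}\in\mathcal F_t$, so $Y_t$ is $\mathcal F_t$-measurable; integrability of $Y_t$ follows from that of $\Delta M_\tau$. For the martingale property one checks $\mathbb E(Y_t\mid\mathcal F_s)=Y_s$ for $s\leq t$ by splitting on the events $\{\tau\leq s\}$ (where both sides equal $\Delta M_\tau$), $\{\tau> t\}$ (both sides zero), and $\{s<\tau\leq t\}$; on the last event the conditional expectation $\mathbb E(\Delta M_\tau\mathbf 1_{\{\tau\leq t\}}\mid\mathcal F_s)$ must be shown to vanish, which is exactly the content of $\mathbb E(\Delta M_\tau\mid\mathcal F_{\tau-})=0$ combined with the fact that $\{s<\tau\}\in\mathcal F_{\tau-}$ (indeed $\{\tau>s\} = \Omega\setminus\{\tau\leq s\}$ and $\{\tau\leq s\}\in\mathcal F_{s}\cap\{s<\tau\}^c$... more cleanly: $\mathbf 1_{\{\tau>s\}}$ is the limit of $\mathbf 1_{\{\tau_n\geq s\}}$-type $\mathcal F_{\tau-}$-measurable functions, or one invokes that the optional process $\mathbf 1_{[0,\tau)}$ is predictable so $\{s<\tau\}\in\mathcal F_{s}$ suffices after conditioning). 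The expected main obstacle is the bookkeeping in this last equivalence — carefully justifying that the relevant conditioning sets lie in $\mathcal F_{\tau-}$ and handling the event $\{s<\tau\leq t\}$ — but since the excerpt cites \cite[Subsection 5.3]{DY17} and \cite{Kal,Y18BDG} for this lemma, the cleanest route is to state the integrability and the identity $\mathbb E(\Delta M_\tau\mid\mathcal F_{\tau-})=0$ in detail and then refer to those sources, or to the standard real-valued argument applied coordinate-wise via $x^*\in X^*$ after noting $\langle M,x^*\rangle$ is a real martingale with $\Delta\langle M,x^*\rangle_\tau = \langle\Delta M_\tau,x^*\rangle$ and $\mathcal F_{\tau-}$ unchanged.
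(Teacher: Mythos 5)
Your argument has a genuine gap at its very first step, and the gap propagates into the optional-stopping argument later on.

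You claim that $\limsup_{t\to\infty}\mathbb E\|M_t\|<\infty$ together with Doob's inequality \eqref{eq:DoobsineqXBanach} gives $\mathbb E\sup_{t\geq0}\|M_t\|<\infty$. This is false. The constant $\tfrac{p}{p-1}$ in Doob's $L^p$ maximal inequality blows up as $p\downarrow 1$, and there is no ``limiting argument'' that survives: for a nonnegative submartingale (such as $\|M\|$), $L^1$-boundedness does \emph{not} imply that the maximal function is in $L^1$. The standard example is $\|M_n\|=2^n\mathbf 1_{\{U\le 2^{-n}\}}$ with $U$ uniform on $[0,1]$: here $\mathbb E\|M_n\|=1$ for every $n$, yet $\sup_n\|M_n\|\sim 1/U\notin L^1$. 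So your line
\[
\mathbb E\sup_{t\geq 0}\|M_t\|\leq \tfrac{p}{p-1}\limsup_{t\to\infty}\mathbb E\|M_t\|
\]
is not available, and neither is the $L\log L$ route (that would require an $L\log L$ bound, which is strictly stronger than $L^1$). Consequently the later assertion that ``$M^\tau$ is a uniformly integrable martingale closed by $M_\tau$ because $\sup_t\|M^\tau_t\|\leq\sup_t\|M_t\|\in L^1$'' is unsupported: uniform integrability of $M^\tau$ is precisely what needs to be established, and it does not follow from the stated hypothesis by your reasoning. This also affects your claim that $M_{\tau_n}\to M_{\tau-}$ in $L^1$ by dominated convergence, which you again deduce from the nonexistent dominating function $\sup_t\|M_t\|$.

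Two comments on how to partially repair this. The \emph{integrability} of $\Delta M_\tau$ does not need the maximal function: since $\|M\|$ is a submartingale and $\limsup_t\mathbb E\|M_t\|=\sup_t\mathbb E\|M_t\|=:C<\infty$, one has $\mathbb E\|M_{\tau\wedge T}\|\le\mathbb E\|M_T\|\le C$ for all $T$, and Fatou as $T\to\infty$ gives $\mathbb E\|M_\tau\|\le C$; the same computation applied to the announcing sequence $\tau_n$ (which may be taken bounded, replacing $\tau_n$ by $\tau_n\wedge n$) gives $\mathbb E\|M_{\tau_n}\|\le C$ and, after another Fatou passage, $\mathbb E\|M_{\tau-}\|\le C$. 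Hence $\Delta M_\tau=M_\tau-M_{\tau-}\in L^1$, with no appeal to a maximal function. For the identity $\mathbb E(\Delta M_\tau\mid\mathcal F_{\tau-})=0$, however, some genuine extra input is needed: once $\tau_n$ are bounded, $(M_{\tau_n})_n$ is a discrete martingale with respect to $(\mathcal F_{\tau_n})_n$ by bounded optional stopping, but passing to the limit requires either that this discrete martingale be closed by $M_\tau$ (which is again an optional-stopping statement that does not come for free under mere $L^1$-boundedness), or the stronger hypothesis $\mathbb E\sup_t\|M_t\|<\infty$ (under which your entire argument works and which is in fact available in every application of this lemma in the paper, owing to the localization in Remark~\ref{rem:locmartislocallyL1DR+X} and Theorem~\ref{thm:MNtangent==>MtauNtautangent+ifNCIINtauCII}). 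The paper itself does not argue the lemma; it cites \cite[Subsection 5.3]{DY17} and \cite{Kal}. You should either invoke those sources directly, or reformulate your proof to work under the more generous standing assumption $\mathbb E\sup_{t\ge0}\|M_t\|<\infty$; as written, the proof does not establish the lemma under the stated hypothesis.
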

 
 We refer the reader to \cite{HNVW1,Pis16,YPhD,Kal,Prot,MP,MetSemi,VerPhD,Os12} for further information on martingales.

\subsection{Quadratic variation}\label{subsec:quadrvar}

Let $H$ be a Hilbert space, $M:\mathbb R_+ \times \Omega \to H$ be a local martingale. We define a {\em quadratic variation} of $M$ in the following way:
\begin{equation}\label{eq:defquadvar}
 [M]_t  := \mathbb P-\lim_{{\rm mesh}\to 0}\sum_{n=1}^N \|M(t_n)-M(t_{n-1})\|^2,
\end{equation}
where the limit in probability is taken over partitions $0= t_0 < \ldots < t_N = t$. Note that $[M]$ exists and is nondecreasing a.s.
The reader can find more information on quadratic variations in \cite{MetSemi,MP,VY16} for the vector-valued setting, and in \cite{Kal,Prot,MP} for the real-valued setting.

As it was shown in \cite[Proposition 1]{Mey77} (see also \cite[Theorem 2.13]{Roz90} and \cite[Example 3.19]{VY16} for the continuous case), for any $H$-valued martingale $M$ there exists an adapted process $q_M:\mathbb R_+ \times \Omega \to \mathcal L(H)$ which we will call a {\em quadratic variation derivative}, such that the trace of $q_M$ does not exceed $1$ on $\mathbb R_+ \times \Omega$, $q_M$ is self-adjoint nonnegative on $\mathbb R_+ \times \Omega$, and for any $h,g\in H$ a.s.\
\begin{equation*}\label{eq:whyq_Misinportant5element}
  [\langle M, h\rangle,\langle M, g\rangle]_t =\int_0^t \langle q_M^{1/2}(s)h, q_M^{1/2}(s) g \rangle \ud [M]_s,\;\;\; t\geq 0.
\end{equation*}

\smallskip

For any martingales $M, N:\mathbb R_+ \times \Omega \to H$ we can define a {\em covariation} $[M,N]:\mathbb R_+ \times \Omega \to \mathbb R$ as $[M,N] := \frac{1}{4}([M+N]-[M-N])$.
Since $M$ and $N$ have c\`adl\`ag versions, $[M]$ and $[M,N]$ have c\`adl\`ag versions as well (see \cite[Theorem I.4.47]{JS} and \cite{MetSemi,Kal}).

\medskip

\begin{definition}\label{def:prelimdefcovbilform}
Let $X$ be a Banach space, $M:\mathbb R_+ \times \Omega \to X$ be a local martingale. Fix $t\geq 0$. Then $M$ is said to have a {\em covariation bilinear from} $[\![M]\!]_t$ at $t\geq 0$ if there exists a continuous bilinear form-valued random variable $[\![M]\!]_t:X^* \times X^* \times \Omega \to \mathbb R$ such that for any fixed $x^*, y^*\in X^*$ a.s.\ $[\![M]\!]_t(x^*, y^*) = [\langle M, x^*\rangle, \langle M, y^*\rangle]_t$. 
\end{definition}

\begin{remark}\label{rem:ifUMDthencovbilform}
 It is known due to \cite{Y18BDG} that if $X$ has the UMD property, then any $X$-valued local martingale $M$ has a covariation bilinear form $[\![M]\!]$. Moreover, $[\![M]\!]$ has a c\`adl\`ag adapted version, and if $M$ is continuous, then $[\![M]\!]$ has a continuous version as well, and for a general local martingale $M$ one has that $\gamma([\![M]\!]_{t})<\infty$ a.s., where for a bilinear form $V:X^* \times X^* \to X$ we set the {\em Gaussian characteristic} $\gamma(V)$ to be
 \begin{itemize}
 \item the $L^2$-norm of a Gaussian random variable $\xi$ having $V$ as its bilinear covariance form, i.e.\ $\mathbb E\langle \xi, x^*\rangle \langle \xi, y^*\rangle = V(x^*, y^*)$ for any $x^*, y^*\in X^*$, if such $\xi$ exists,
 \item $\infty$, if such $\xi$ does not exist.
 \end{itemize}
 We refer the reader to \cite{Y18BDG} for further details.
\end{remark}

\subsection{The canonical decomposition}\label{subsec:candec}

In this subsection we discuss the so-called {\em canonical decomposition} of martingales. First let us start with the following technical definitions. Recall that a c\`adl\`ag function $A:\mathbb R_+ \to X$ is called {\em pure jump} if $A_t = A_0+ \sum_{0<s\leq t} \Delta A_s$ for any $t\geq 0$, where the latter sum converges absolutely.

\begin{definition}\label{def:purelydiscmart}
 Let $X$ be a Banach space. A local martingale $M:\mathbb R_+\times \Omega \to X$ is called {\em purely discontinuous} if $[\langle M, x^*\rangle]$ is pure jump a.s.\ for any $x^*\in X^*$.
\end{definition}

\begin{definition}\label{def:candec}
 Let $X$ be a Banach space, $M:\mathbb R_+ \times \Omega \to X$ be a local martingale. Then $M$ is called to have {\em the canonical decomposition} if there exist local martingales $M^c, M^q, M^a:\mathbb R_+ \times \Omega \to X$ such that $M^c$ is continuous, $M^q$ is purely discontinuous quasi-left continuous, $M^a$ is purely discontinuous with accessible jumps, $M^c_0=M^q_0=0$ a.s., and $M = M^c + M^q + M^a$.
\end{definition}

\begin{remark}\label{rem:candec==>candecforanyx*}
Note that if $M = M^c + M^q + M^a$ is the canonical decomposition, then $\langle M, x^*\rangle = \langle M^c, x^*\rangle + \langle M^q, x^*\rangle + \langle M^a, x^*\rangle$ is the canonical decomposition for any $x^*\in X^*$ (see e.g.\ \cite{Y17MartDec,Y17GMY,DY17}).
\end{remark}

 \begin{remark}\label{rem:candecsplitsjumps}
 Note that by \cite{Y17MartDec,Y17GMY,Kal,JS} if the canonical decomposition of a local martingale $M$ exists, then $M^q$ and $M^a$ collect different jumps of $M$, i.e.\ a.s.\
 \begin{equation}\label{eq:candecsplitsjumps}
 \begin{split}
  \{t\geq 0: \Delta M^q_t\neq 0\} \cup \{t\geq 0: \Delta M^a_t\neq 0\} &= \{t\geq 0: \Delta M_t\neq 0\},\\
\{t\geq 0: \Delta M^q_t\neq 0\} \cap \{t\geq 0: \Delta M^a_t\neq 0\} &= \varnothing.
 \end{split}
 \end{equation}
\end{remark}

Then the following theorem holds, which was first proved in \cite{Mey76,Yoe76} in the real-valued case, and in \cite{Y17MartDec,Y17GMY,Y18BDG} in the vector-valued case (see also \cite[Chapter~25]{Kal}). 

\begin{theorem}[The canonical decomposition]\label{thm:candecXvalued}
 Let $X$ be a Banach space. Then $X$ is UMD if and only if any local martingale $M:\mathbb R_+ \times \Omega \to X$ has the canonical decomposition $M = M^c + M^q + M^a$. Moreover, if this is the case, then the canonical decomposition is unique, and for any $1\leq p<\infty$
 \begin{equation}\label{eq:candecstrongLpestmiaed}
  \mathbb E \sup_{t\geq 0} \|M^c_t\|^p + \mathbb E \sup_{t\geq 0} \|M^q_t\|^p + \mathbb E \sup_{t\geq 0} \|M^a_t\|^p \eqsim_{p, X} \mathbb E \sup_{t\geq 0} \|M_t\|^p.
 \end{equation}
\end{theorem}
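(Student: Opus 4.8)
The plan is to split the statement into three layers: (1) the \emph{Meyer--Yoeurp} part, $M = M^c + M^d$ into a continuous and a purely discontinuous local martingale, together with the assertion that its availability for \emph{all} $X$-valued martingales characterizes the UMD property; (2) the \emph{Yoeurp} part, $M^d = M^q + M^a$ splitting a purely discontinuous local martingale into a quasi-left continuous part and a part with accessible jumps, which I expect to be available over an arbitrary Banach space; (3) uniqueness and the strong $L^p$-equivalence \eqref{eq:candecstrongLpestmiaed}. Note first that the ``$\gtrsim$'' half of \eqref{eq:candecstrongLpestmiaed} is immediate from the triangle inequality and Doob's inequality \eqref{eq:DoobsineqXBanach}, so the real content of the $L^p$-part is the reverse bound, equivalently the boundedness on the space of $L^p$-martingales from Theorem \ref{thm:strongLpmartforamaBanacoace} of the three projections $M\mapsto M^c$, $M\mapsto M^q$, $M\mapsto M^a$.

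For the \emph{necessity} of UMD: if every $X$-valued local martingale has a canonical decomposition, then setting $M^d:=M^q+M^a$ (which is purely discontinuous, since $M^q$ and $M^a$ have disjoint jumps and both are purely discontinuous) yields a Meyer--Yoeurp splitting of $M$, and I would invoke the known equivalence (Remark \ref{rem:MYdecBanach}, i.e.\ \cite{Y17MartDec,Y17GMY}) that existence of Meyer--Yoeurp splittings for all martingales forces $X$ to be UMD. For the \emph{sufficiency}, i.e.\ the construction of $M^c$ when $X$ is UMD: after localization one may assume $M$ is $L^p$-bounded for a fixed $p\in(1,\infty)$. For each $x^*\in X^*$ the real martingale $\langle M,x^*\rangle$ has a scalar Meyer--Yoeurp continuous part $\langle M,x^*\rangle^c$ by \cite{Mey76,Yoe76}, and $x^*\mapsto\langle M,x^*\rangle^c$ is linear; the task is to realize this family by a single continuous $X$-valued martingale $M^c$. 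This is exactly where UMD enters: after discretizing $M$ in time, the passage from $M$ to its ``purely discontinuous part'' is, up to a limit, a martingale transform applied with $\{0,1\}$-valued predictable multipliers (selecting increments that carry a jump), and the UMD inequality (in the $\sup$-version of Subsection \ref{subsec:prelimUMD}) supplies the uniform $L^p$-bound needed to pass to the limit and to identify the limit as an $X$-valued martingale; equivalently one uses the strong-type weak differential subordination $L^p$-estimates of \cite{OY18,Y18BDG}. This is carried out in \cite{Y17MartDec,Y17GMY,Y18BDG}, and I expect this step to be where essentially all the difficulty and all the UMD-dependence is concentrated.

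For the Yoeurp splitting of $M^d$: the jump times of $M^d$ decompose, up to evanescence, into a part exhausted by the graphs of a sequence of predictable stopping times $(\tau_n)$ with disjoint graphs (the accessible part) and a part met by no predictable stopping time (the totally inaccessible part). By Lemma \ref{lem:DeltaMtaugiventau-=0}, each accessible jump $\Delta M^d_{\tau_n}$ satisfies $\mathbb E(\Delta M^d_{\tau_n}\mid\mathcal F_{\tau_n-})=0$, so $t\mapsto\Delta M^d_{\tau_n}\mathbf 1_{[\tau_n,\infty)}(t)$ is already a martingale and no further compensation is needed; one sets $M^a:=\sum_n \Delta M^d_{\tau_n}\mathbf 1_{[\tau_n,\infty)}$ and $M^q:=M^d-M^a$. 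Convergence of the series in $L^p(\Omega;\mathcal D(\mathbb R_+,X))$ is controlled by summability of the scalar quadratic variations, which are captured by the covariation bilinear form $[\![M^d]\!]$ from Remark \ref{rem:ifUMDthencovbilform}; then $M^q$ has only totally inaccessible jumps, hence is quasi-left continuous. Uniqueness follows from Remarks \ref{rem:candec==>candecforanyx*} and \ref{rem:candecsplitsjumps}: for a difference $N^c+N^q+N^a=0$ of two decompositions and any $x^*$, the scalar identity and uniqueness of the scalar canonical decomposition force $\langle N^c,x^*\rangle\equiv 0$, whence $\langle N^q,x^*\rangle=-\langle N^a,x^*\rangle$ is simultaneously quasi-left continuous and accessible-jump, hence continuous and purely discontinuous, hence $\equiv 0$; since $x^*$ was arbitrary, $N^c=N^q=N^a=0$.

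Finally, the reverse inequality in \eqref{eq:candecstrongLpestmiaed} I would present as a corollary of the boundedness of the three projections. The projection $M\mapsto M^a$ onto accessible jumps is, after discretization, literally a $\{0,1\}$-valued martingale transform and is bounded on $L^p(\Omega;\mathcal D(\mathbb R_+,X))$ by Burkholder's UMD inequality; combined with the scalar identification this also controls $M\mapsto M^q$ once $M\mapsto M^c$ is handled. The boundedness of $M\mapsto M^c$ (equivalently of $M\mapsto M^d$) is again the analytically delicate point: one shows that $M^c$ and $M^d$ are weakly differentially subordinate to $M$ in the sense of \cite{Y17MartDec,OY18,Y18BDG} and invokes the strong-type $L^p$ weak differential subordination estimates proved there, which hold precisely because $X$ is UMD; the final form with $\sup_t$ follows via Doob's inequality \eqref{eq:DoobsineqXBanach}. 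Thus the main obstacle throughout is not the existence of the decomposition as an algebraic splitting but the construction and $L^p$-boundedness of the continuous/purely-discontinuous projection, which is exactly the content carrying the UMD hypothesis and which I would cite from \cite{Y17MartDec,Y17GMY,Y18BDG}.
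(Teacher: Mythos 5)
The paper does not prove Theorem \ref{thm:candecXvalued}: it is stated and then cited to Meyer, Yoeurp, and the author's earlier papers \cite{Mey76,Yoe76,Y17MartDec,Y17GMY,Y18BDG}. Your reconstruction of what those proofs do is structurally on target — Meyer--Yoeurp equivalent to UMD via Remark \ref{rem:MYdecBanach}, Yoeurp split of $M^d$ into $M^q+M^a$ via the accessible/inaccessible jump time dichotomy, $L^p$-bounds via weak differential subordination, uniqueness via the scalar canonical decomposition applied to $\langle\cdot,x^*\rangle$ — and that is the route the cited references take, so the outline is a fair proof sketch for a result that this paper presents as background.

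There is, however, one genuine misstatement you should fix. You write that you ``expect the Yoeurp splitting $M^d=M^q+M^a$ to be available over an arbitrary Banach space.'' This is not the case: the Yoeurp decomposition of a purely discontinuous local martingale is itself equivalent to $X$ being UMD (this is one of the main points of \cite{Y17GMY}; in a non-UMD space one can interleave a Paley--Walsh martingale at predictable times with one at totally inaccessible times so that the sum is $L^p$-bounded but the accessible part diverges, just as in the ``if'' direction of Theorem \ref{thm:XisUMDiffMisintxwrtbarmuMvain}). Your own argument betrays this: to control the convergence of $\sum_n \Delta M^d_{\tau_n}\mathbf 1_{[\tau_n,\infty)}$ in $L^p(\Omega;\mathcal D(\mathbb R_+,X))$ you invoke the covariation bilinear form $[\![M^d]\!]$ from Remark \ref{rem:ifUMDthencovbilform}, which is only available when $X$ is UMD. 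In fact, without UMD the partial sums $\sum_{n\le N}\Delta M^d_{\tau_n}\mathbf 1_{[\tau_n,\infty)}$ are $\{0,1\}$-martingale transforms of a discretization of $M^d$, and their $L^p$-boundedness \emph{is} Burkholder's UMD inequality, not a consequence of the series being termwise a martingale. So you should present both splits as UMD-dependent rather than only the first; otherwise nothing in your outline conflicts with the cited proofs.

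Two minor points: the ``$\gtrsim$'' half of \eqref{eq:candecstrongLpestmiaed} needs only the triangle inequality and convexity of $t\mapsto t^p$, not Doob; and the uniqueness argument could be stated more directly by applying uniqueness of the scalar canonical decomposition of $0=\langle N^c+N^q+N^a,x^*\rangle$ to all three terms at once, rather than first killing $\langle N^c,x^*\rangle$ and then arguing separately about the remaining sum.
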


If we will have a closer look on each of the parts of the canonical decomposition, then we will figure out that $M^c$ is in fact a time changed stochastic integral with respect to a cylindrical Brownian motion (see Subsection \ref{subsec:dectangcontmart}), $M^q$ is a time changed stochastic integral with respect to a Poisson random measure (see Subsection \ref{subsec:PoissRMprelim}), while $M^a$ can be represented as a discrete martingale if it has finitely many jumps (see Subsection \ref{subsec:dectanPDwithAJ} and \ref{subsec:appforMarAppPDMAJ}; see also \cite{DY17,Y17MartDec,Kal}). Thus we often call $M^c$ the {\em Wiener-like part}, $M^q$ the {\em Poisson-like part}, while $M^a$ is often called a {\em discrete-like part} of $M$: in many cases the corresponding techniques help in finding required inequalities for $M^c$, $M^q$, and $M^a$.

Note that the canonical decomposition plays an important r\^ole in stochastic integration theory (see e.g.\ \cite{DY17,DMY18,Y18BDG}).

\begin{remark}\label{rem:MYdecBanach}
Often we will use the so-called {\em Meyer-Yoeurp decomposition} which splits a local martingale $M$ into a continuous part $M^c$ and a purely discontinuous part $M^d$. This decomposition is unique if it exists, and in the case of existence of the canonical decomposition $M= M^c + M^q + M^a$ one obviously has $M^d = M^q + M^a$. Analogously to Theorem \ref{thm:candecXvalued} one can show that for a given Banach space $X$ every $X$-valued local martingale has the Meyer-Yoeurp decomposition if and only if $X$ has the UMD property (see \cite{Y17MartDec,Y17GMY,YPhD}). 
\end{remark}

Later we will need the following lemma shown  in \cite[Subsection 5.1]{DY17} (see \cite{Kal} for the real-valued version). Recall that two stopping times $\tau$ and $\sigma$ have {\em disjoint graphs} if $\mathbb P(\tau = \sigma<\infty) =0$.

\begin{lemma}\label{lem:PDmAJhasjumpsatprsttimes}
 Let $X$ be a Banach space, $M:\mathbb R_+ \times \Omega \to X$ be a purely discontinuous local martingale with accessible jumps. Then there exist a sequence $(\tau_n)_{n\geq 1}$ of finite predictable stopping times with disjoint graphs such that a.s.\
 \[
  \{t\geq 0: \Delta M_t \neq 0\} \subset \{\tau_1,\ldots, \tau_n,\ldots\}.
 \]
\end{lemma}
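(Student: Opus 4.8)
The plan is to reduce the statement to a known decomposition result for optional processes with accessible jumps. First I would recall that for a purely discontinuous local martingale $M$ with accessible jumps, the jump times of $M$ form a thin set whose graph is contained in a countable union of graphs of stopping times; this is a general fact about optional random sets (see e.g.\ \cite[Proposition I.1.32]{JS}). So we may fix a sequence $(\sigma_m)_{m\geq 1}$ of stopping times with $\{t\geq 0:\Delta M_t\neq 0\}\subset\bigcup_m[\![\sigma_m]\!]$ a.s. The issue is that the $\sigma_m$ need neither be predictable nor have disjoint graphs.

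Next I would handle accessibility: since $M$ has accessible jumps, for each $m$ the jump $\Delta M_{\sigma_m}\mathbf 1_{\{\sigma_m<\infty\}}$ vanishes on the totally inaccessible part of $\sigma_m$. Decomposing $\sigma_m$ into its accessible and totally inaccessible parts (restricting $\sigma_m$ to a set in $\mathcal F_{\sigma_m}$), and then using the classical fact that an accessible stopping time can itself be covered by countably many predictable stopping times (see \cite[Section 25]{Kal} or \cite[Proposition I.2.24]{JS}), I would replace the family $(\sigma_m)$ by a new countable family $(\rho_k)_{k\geq 1}$ of \emph{predictable} stopping times still covering $\{t:\Delta M_t\neq 0\}$ up to a null set. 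The remaining step is to disjointify the graphs: given predictable stopping times $(\rho_k)$, set $\tau_1:=\rho_1$ and inductively $\tau_{k}$ to be $\rho_k$ restricted to the set $\{\rho_k\neq\rho_1,\ldots,\rho_k\neq\rho_{k-1}\}\in\mathcal F_{\rho_k-}$; the restriction of a predictable stopping time to a set in $\mathcal F_{\rho_k-}$ is again predictable, and by construction the $\tau_k$ have pairwise disjoint graphs while $\bigcup_k[\![\tau_k]\!]=\bigcup_k[\![\rho_k]\!]$. Finally, one can make each $\tau_k$ finite by replacing it with its restriction to $\{\tau_k<\infty\}$ (still predictable, still disjoint graphs) and discarding the part at $\infty$, which carries no jump.

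The main obstacle I anticipate is the accessibility-to-predictability reduction: passing from "$M$ has accessible jumps" to an explicit countable family of \emph{predictable} stopping times exhausting the jumps requires care, because a single accessible stopping time is in general only covered by predictable ones rather than being predictable itself, and one must check that along the way the covering property $\{t:\Delta M_t\neq 0\}\subset\bigcup_k[\![\tau_k]\!]$ survives each restriction and countable recombination. Since this is precisely the content attributed to \cite[Subsection 5.1]{DY17} (with \cite{Kal} for the scalar case), in the write-up I would invoke those references for this reduction and then only spell out the elementary disjointification and finiteness steps in detail.
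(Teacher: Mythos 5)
The paper does not actually prove this lemma: it states it and immediately cites \cite[Subsection 5.1]{DY17} (and \cite{Kal} for the scalar case). So there is no ``paper's proof'' to compare against; what you should be measured against is the standard argument that those references contain, and your outline is essentially it -- cover the jump times of the c\`adl\`ag process $M$ by countably many stopping-time graphs, use the accessible-jumps hypothesis to discard the totally inaccessible parts and replace each remaining time by a countable family of predictable times covering its accessible graph, then disjointify and restrict to $\{\cdot<\infty\}$.

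One step to tighten, since it is where the hypothesis ``predictable'' is actually used: in the disjointification you restrict $\rho_k$ to $B_k:=\bigcap_{j<k}\{\rho_k\neq\rho_j\}$ and claim $B_k\in\mathcal F_{\rho_k-}$, so that $(\rho_k)_{B_k}$ is again predictable. This is true, but it is not an elementary measurability check: for two \emph{predictable} times $\rho,\rho'$ one has $\{\rho=\rho'\}\in\mathcal F_{\rho-}\cap\mathcal F_{\rho'-}$, which is precisely the content of \cite[Proposition I.2.18(b)]{JS} (or \cite[Lemma 25.2]{Kal}); if the $\rho_k$ were merely accessible or optional this would fail, and the restricted times would only be accessible. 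It is worth recording explicitly that you invoke that proposition there, since together with ``restriction of a predictable time to an $\mathcal F_{\tau-}$-set is predictable'' it is exactly what makes the disjointification preserve predictability. With that citation in place the argument is complete, and the finiteness step (passing to $(\tau_k)_{\{\tau_k<\infty\}}$) is indeed harmless since the graph at $\infty$ carries no jump.
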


\subsection{Random measures}\label{subsec:ranmeasures}
Let $(J, \mathcal J)$ be a measurable space so that $\mathcal J$ is countably generated. A family 
$$
\mu = \{\mu(\omega; \ud t, \ud x), \omega \in \Omega\}
$$
of nonnegative measures on $(\mathbb R_+ \times J, \mathcal
B(\mathbb R_+)\otimes \mathcal J)$ is called a {\em random
measure}. A~random measure $\mu$ is called {\em integer-valued} if it takes values in
$\mathbb N\cup\{\infty\}$, i.e.\ for each $A \in \mathcal
B(\mathbb R_+)\otimes \mathcal J$ one has that
$\mu(A) \in \mathbb N\cup\{\infty\}$ a.s., and if $\mu(\{t\}\times
J)\in \{0,1\}$ a.s.\ for all $t\geq 0$ (so $\mu$ is a sum of atoms with a.s.\ disjoint supports, see \cite[Proposition II.1.14]{JS}). We say that $\mu$ is \emph{non-atomic in
time} if $\mu(\{t\}\times J) = 0$ a.s.\ for all $t\geq 0$.

Let $\mathcal
O$ be the {\em optional $\sigma$-algebra}  on $\mathbb R_+ \times \Omega$, i.e.\  the $\sigma$-algebra generated by all c\`adl\`ag adapted processes. Let $\widetilde {\mathcal O} := \mathcal O \otimes \mathcal J$, $\widetilde {\mathcal P} := \mathcal P \otimes \mathcal J$ (see Subsection \ref{subsec:BSvmart} for the definition of $\mathcal P$). A random measure $\mu$ is called {\em optional} (resp.\ {\em predictable}) if for any
$\widetilde{\mathcal O}$-measurable (resp.\ $\widetilde{\mathcal
P}$-measurable) nonnegative $F:\mathbb R_+ \times \Omega \times J
\to \mathbb R_+$ the stochastic integral
\[
  (t, \omega) \mapsto \int_{\mathbb R_+\times J}\mathbf 1_{[0,t]}(s)F(s,\omega,x) \mu(\omega;\ud s, \ud x),\;\;\; t\geq 0, \ \omega \in \Omega,
\]
as a function from $\mathbb R_+ \times \Omega$ to
$\overline{\mathbb R}_+$ is optional (resp.\ predictable).

Let $X$ be a Banach space. Then we can extend stochastic
integration with respect to random measures to \mbox{$X$-valued} processes in the following way.
Let $F:\mathbb R_+ \times \Omega \times J  \to X$ be elementary predictable, i.e.\ there exists partition $B_1, \ldots, B_N$ of $J$, $0= t_0 < t_1 \ldots < t_L$, and simple $X$-valued random variables $(\xi_{n,\ell})_{n=1, m=1}^{N,L}$ such that $\xi_{n,\ell}$ is $\mathcal F_{t_{\ell-1}}$-measurable for any $1\leq \ell \leq L$ and $1\leq n \leq N$ and
\[
F(t, \cdot, j) = \sum_{n=1}^N \sum_{\ell=1}^L \mathbf 1_{(t_{\ell-1}, t_{\ell}]}(t) \mathbf 1_{B_n}(j) \xi_{n,\ell}.
\]
Let $\mu$ be a
random measure. The integral
\begin{equation}\label{eq:stochintwrtranmeasdefof}
\begin{split}
 t\mapsto \int_{\mathbb R_+ \times J} F(s,\cdot, x)&\mathbf 1_{[0,t]}(s)\mu(\cdot; \ud s, \ud x)\\
 &:= \sum_{n=1}^N \sum_{\ell=1}^L  \mu\bigl( (t_{\ell-1}\wedge t, t_{\ell}\wedge t] \times B_n \bigr) \xi_{n,\ell},\;\;\; t\geq 0,
 \end{split}
\end{equation}
is well-defined and optional (resp.\ predictable) if $\mu$ is
optional (resp.\ predictable), and $\int_{\mathbb R_+ \times
J}\|F\|\ud\mu$ is a.s.\ bounded.

A random measure $\mu$ is called $\widetilde{\mathcal
P}$-$\sigma$-finite if there exists an increasing sequence of sets
$(A_n)_{n\geq 1}\subset \widetilde{\mathcal P}$ such that
$\int_{\mathbb R_+ \times J} \mathbf
1_{A_n}(s,\omega,x)\mu(\omega; \ud s, \ud x)$ is finite a.s.\ and
$\cup_n A_n = \mathbb R_+ \times \Omega \times J$. According to
\cite[Theorem II.1.8]{JS} every $\widetilde{\mathcal
P}$-$\sigma$-finite optional random measure $\mu$ has a {\em
compensator}: a~{\em unique} $\widetilde{\mathcal P}$-$\sigma$-finite
predictable random measure $\nu$ such that
\begin{equation}\label{eq:defofcompofrandsamdsaer}
\mathbb E \int_{\mathbb R_+ \times J}F \ud \mu = \mathbb E \int_{\mathbb R_+ \times J}F \ud \nu
\end{equation}
for each $\widetilde{\mathcal P}$-measurable
real-valued nonnegative $F$. For any optional
$\widetilde{\mathcal P}$-$\sigma$-finite measure $\mu$ we define
the associated compensated random measure by $\bar{\mu}: = \mu
-\nu$.

For each $\widetilde{\mathcal P}$-strongly-measurable $F:\mathbb
R_+ \times \Omega \times J \to X$ such that 
$$
\mathbb E \int_{\mathbb R_+ \times J}\|F\|\ud
\mu< \infty
$$ 
(or, equivalently, $\mathbb E \int_{\mathbb R_+ \times J}\|F\|\ud
\nu< \infty$, see the definition of a compensator above)
we can define a process
\begin{equation}\label{eq:defofstochintwrtbarmu}
t\mapsto \int_{[0,t]\times J}F\ud \bar{\mu} := \int_{[0,t]\times J}F\ud \mu -  \int_{[0,t]\times J}F\ud \nu,\;\;\;t\geq 0,
\end{equation}
which turns out to be a purely discontinuous martingale (see Proposition \ref{prop:defofstochintwrtranmcaseodFoffinitemuintds}, \cite[Theorem II.1.8]{JS}, and \cite{DY17}). 

\smallskip

We will need  the following classical result of Novikov \cite[Theorem 1]{Nov75}.
\begin{lemma}[A.A.\ Novikov]\label{lemma:Nov}
 Let $\mu$ be an integer-valued optional random measure on $\mathbb R_+ \times J$ with a compensator $\nu$ being non-atomic in time, $F:\mathbb R_+ \times \Omega \times J \to \mathbb R$ be $\widetilde{\mathcal P}$-measurable. Then
 \begin{equation}\label{eq:NovRV}
 \begin{split}
  \mathbb E \sup_{t\geq 0} \Bigl| \int_{[0,t]\times J}f\ud \bar{\mu}\Bigr|^p &\lesssim_{p} \mathbb E  \int_{\mathbb R_+ \times \Omega}|f|^p\ud \nu \text{ if }\; 1\leq p\leq 2,\\
\mathbb E \sup_{t\geq 0}  \Bigl| \int_{[0,t] \times J}f\ud \bar{\mu}\Bigr|^p  &\lesssim_{p} \Bigl(\mathbb E \int_{\mathbb R_+ \times J}|f|^2 \ud\nu\Bigr)^{\frac p2}+ \mathbb E  \int_{\mathbb R_+ \times \Omega}|f|^p\ud \nu \text{ if }\;  p\geq 2.
\end{split}
 \end{equation}
\end{lemma}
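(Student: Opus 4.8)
The plan is to follow the classical martingale-inequality route: reduce to the maximal inequality via the Burkholder--Davis--Gundy (BDG) inequalities for the real-valued purely discontinuous martingale $t \mapsto \int_{[0,t]\times J} f \ud \bar\mu$, and then bound its quadratic variation pathwise in terms of $\nu$. First I would note that by the monotone convergence theorem and a localization/truncation argument it suffices to prove the estimate for bounded $f$ supported on a set of finite $\nu$-measure, so that all integrals below are finite; the general case follows by approximating $f$ by $f \mathbf 1_{A_n}$ with $(A_n)$ as in the definition of $\widetilde{\mathcal P}$-$\sigma$-finiteness and letting $n\to\infty$. For such $f$, the process $Y_t := \int_{[0,t]\times J} f \ud \bar\mu$ is a genuine purely discontinuous martingale by \eqref{eq:defofstochintwrtbarmu}, with jumps $\Delta Y_t = \int_{\{t\}\times J} f \ud \mu$ (the compensator $\nu$ being non-atomic in time contributes nothing to the jumps), and its quadratic variation is $[Y]_t = \sum_{0<s\le t} (\Delta Y_s)^2 = \int_{[0,t]\times J} f^2 \ud \mu$, using that $\mu$ is integer-valued so $\mu(\{s\}\times J)\in\{0,1\}$ and hence $(\int_{\{s\}\times J} f\ud\mu)^2 = \int_{\{s\}\times J} f^2 \ud \mu$.

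Next I would apply the BDG inequality $\mathbb E \sup_{t\ge 0}|Y_t|^p \lesssim_p \mathbb E [Y]_\infty^{p/2} = \mathbb E \bigl(\int_{\mathbb R_+\times J} f^2 \ud \mu\bigr)^{p/2}$. For $1\le p\le 2$, since $x\mapsto x^{p/2}$ is concave on $\mathbb R_+$, I would use subadditivity of $x\mapsto x^{p/2}$ together with the fact that $\mu$ decomposes as a sum of atoms with disjoint supports to get $\bigl(\int f^2 \ud\mu\bigr)^{p/2} \le \int |f|^p \ud \mu$ pathwise; taking expectations and invoking the compensator identity \eqref{eq:defofcompofrandsamdsaer} with the $\widetilde{\mathcal P}$-measurable nonnegative integrand $|f|^p$ yields $\mathbb E \int |f|^p \ud \mu = \mathbb E \int |f|^p \ud \nu$, which is the first bound. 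For $p\ge 2$ I would instead split $[Y]_\infty = \int f^2 \ud\bar\mu + \int f^2 \ud \nu$ (both integrals finite for our truncated $f$), so $\mathbb E [Y]_\infty^{p/2} \lesssim_p \mathbb E \bigl|\int f^2\ud\bar\mu\bigr|^{p/2} + \mathbb E\bigl(\int f^2 \ud\nu\bigr)^{p/2}$. The second term is already of the desired form. For the first term, if $p/2 \le 2$ (i.e.\ $2\le p\le 4$) I apply the case $1\le q\le 2$ of the lemma already proven (with $q=p/2$ and integrand $f^2$) to bound it by $\mathbb E \int |f|^p \ud \nu$; for general $p$ I would iterate this dichotomy, or more cleanly run an induction on the dyadic scale of $p$, peeling off a factor of $2$ in the exponent at each step and always landing the leftover quadratic-variation remainder into either an $L^{p/2}(\nu)$-type term or, eventually, the $\int|f|^p\ud\nu$ term.

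The main obstacle I anticipate is making the induction on $p\ge 2$ fully rigorous: one must track how the residual term $\mathbb E|\int f^2 \ud \bar\mu|^{p/2}$ generated at each stage is controlled, ensuring that repeated application of the lower-order cases only ever produces the two allowed terms $\bigl(\mathbb E\int f^2\ud\nu\bigr)^{p/2}$ and $\mathbb E\int |f|^p\ud\nu$ (via Hölder/Young when intermediate powers $\int f^2 \ud\nu$ raised to non-matching exponents appear), with constants depending only on $p$. A clean way around this is to quote directly the sharp form of these estimates from \cite{DY17} or \cite{MarRo} rather than reproving the $p\ge 2$ case by hand; alternatively one uses the Doob maximal inequality to pass from $\mathbb E|Y_\infty|^p$ to $\mathbb E\sup_t|Y_t|^p$ and only proves the estimate for $Y_\infty$, which slightly simplifies the bookkeeping. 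A secondary technical point is justifying the passage to the limit in the truncation $f\mathbf 1_{A_n}\to f$: monotone convergence handles the right-hand sides, while on the left one uses that the stochastic integrals converge in $L^p$ (when the right-hand side is finite; if it is infinite there is nothing to prove) together with Fatou's lemma for the supremum.
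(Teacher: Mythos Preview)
The paper does not give its own proof of this lemma; it is simply quoted as a classical result from Novikov \cite[Theorem 1]{Nov75}. Your sketch follows exactly the standard route (which is essentially Novikov's original argument): apply the BDG inequality to reduce $\mathbb E\sup_t|Y_t|^p$ to $\mathbb E[Y]_\infty^{p/2}=\mathbb E\bigl(\int f^2\ud\mu\bigr)^{p/2}$, then for $1\le p\le 2$ use the pathwise subadditivity $\bigl(\sum a_i^2\bigr)^{p/2}\le\sum a_i^p$ on the atoms of $\mu$ and pass to $\nu$ via the compensator identity, while for $p\ge2$ write $\int f^2\ud\mu=\int f^2\ud\bar\mu+\int f^2\ud\nu$ and feed the first term back into the already-proved lower-exponent case. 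The dyadic induction you outline is precisely how one handles $p>4$, and the intermediate terms do collapse to the two stated ones (one can use Young's inequality $ab\le a^r/r+b^{r'}/r'$ to absorb mixed powers of $\int f^2\ud\nu$).

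One remark: your argument produces the term $\mathbb E\bigl(\int f^2\ud\nu\bigr)^{p/2}$, whereas the lemma as transcribed in the paper has $\bigl(\mathbb E\int f^2\ud\nu\bigr)^{p/2}$. The form you obtain is the correct one; the version with the expectation inside the power is in general false (e.g.\ take $J$ a singleton, $f\equiv1$ on $[0,1]$, and $\nu=\lambda\,\ud t$ with $\lambda$ an $\mathcal F_0$-measurable random variable satisfying $\mathbb P(\lambda=N^2)=1/N$, $\mathbb P(\lambda=0)=1-1/N$: then the left side is of order $N^{p-1}$ while the claimed right side is only of order $N^{p/2}+N$). So the statement in the paper appears to contain a typo, and what you prove is the genuine Novikov inequality, which is also what the later applications in the paper actually require.
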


\smallskip

For an $X$-valued martingale $M$ we associate a {\em jump measure} $\mu^M$ which is a random measure on $\mathbb R_+ \times X$ that counts the jumps of $M$
\begin{equation}\label{eq:defofmuM}
\mu^M([0, t] \times B) := \sum_{0\leq s\leq t} \mathbf 1_{B\setminus\{0\}} (\Delta M_t),\;\;\; t\geq 0, \;\; B \in \mathcal B(X).
\end{equation}
Note that $\mu^M$ is $\widetilde {\mathcal P}$-$\sigma$-finite and we will frequently use the following fact which was proved in \cite[Corollary II.1.19]{JS} (see also \cite{Kal,DY17,KalRM}).
\begin{lemma}\label{lem:nuMisnonatomiffMqlc}
Let $X$ be a Banach space, $M:\mathbb R_+ \times \Omega \to X$ be a local martingale. Let $\mu^M$ be the associated jump measure. Then $M$ is quasi-left continuous if and only if the corresponding compensator $\nu^M$ of  $\mu^M$ is non-atomic in time.
\end{lemma}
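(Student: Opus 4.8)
The plan is to obtain both implications directly from the defining relation \eqref{eq:defofcompofrandsamdsaer} of the compensator, applied to indicators supported on the graph of a predictable stopping time. The guiding observation is that $M$ is quasi-left continuous exactly when $\Delta M_\tau=0$ a.s.\ on $\{\tau<\infty\}$ for every predictable stopping time $\tau$, whereas $\nu^M$ being non-atomic in time means that the predictable process $t\mapsto\nu^M(\{t\}\times X)$ vanishes identically, equivalently (by the predictable section theorem) that $\nu^M(\{\tau\}\times X)=0$ a.s.\ on $\{\tau<\infty\}$ for every predictable stopping time $\tau$. Since $\Delta M_\tau=0$ is exactly the statement that $\mu^M$ puts no mass on $\{\tau\}\times(X\setminus\{0\})$, and $\mu^M$, $\nu^M$ are tied together by \eqref{eq:defofcompofrandsamdsaer}, matching these two descriptions along the graph of each predictable $\tau$ is the whole proof.

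Concretely, I would fix a predictable stopping time $\tau$ and, for $k\in\N$, put $B_k:=\{x\in X:\|x\|>1/k\}$; then $F_k(s,\omega,x):=\mathbf 1_{\{s=\tau(\omega)<\infty\}}\mathbf 1_{B_k}(x)$ is bounded, nonnegative and $\widetilde{\mathcal P}$-measurable (the graph of a predictable stopping time is a predictable subset of $\R_+\times\Omega$, and $B_k$ is Borel), so \eqref{eq:defofcompofrandsamdsaer} yields
\[
\E\!\int F_k\ud\nu^M \;=\; \E\!\int F_k\ud\mu^M \;=\; \P\bigl(\tau<\infty,\ \|\Delta M_\tau\|>1/k\bigr).
\]
For the forward direction: if $M$ is quasi-left continuous the right-hand side is $0$ for every $k$, so $\int F_k\ud\nu^M=0$ a.s.; letting $k\to\infty$, and using that $\nu^M$ charges no subset of $\R_+\times\Omega\times\{0\}$ (apply \eqref{eq:defofcompofrandsamdsaer} to $\mathbf 1_{\R_+\times\Omega\times\{0\}}$, which $\mu^M$ annihilates by construction), I get $\nu^M(\{\tau\}\times X)=0$ a.s.\ on $\{\tau<\infty\}$; since this holds for every predictable $\tau$ and $\{(t,\omega):\nu^M(\omega;\{t\}\times X)>0\}$ is a predictable set, the predictable section theorem forces it to be evanescent, i.e.\ $\nu^M$ is non-atomic in time. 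For the converse: if $\nu^M$ is non-atomic in time then $\int F_k\ud\nu^M\le\nu^M(\{\tau\}\times X)\mathbf 1_{\{\tau<\infty\}}=0$ a.s., so the displayed identity gives $\P(\tau<\infty,\ \|\Delta M_\tau\|>1/k)=0$ for all $k$, and $k\to\infty$ yields $\Delta M_\tau=0$ a.s.\ on $\{\tau<\infty\}$; as $\tau$ was arbitrary, $M$ is quasi-left continuous.

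The routine parts are the $\widetilde{\mathcal P}$-measurability of $F_k$ and the truncation bookkeeping (handling $B_k$ rather than $X\setminus\{0\}$ at once, which is forced by $\widetilde{\mathcal P}$-$\sigma$-finiteness). The one genuinely nontrivial ingredient is the predictable section theorem — and, behind it, the identification of $t\mapsto\nu^M(\{t\}\times X)$ as a predictable process — needed in the forward direction to pass from ``$\nu^M(\{\tau\}\times X)=0$ a.s.\ for every predictable $\tau$'' to ``the time-atom process of $\nu^M$ vanishes identically''. This is classical (see \cite{JS,Kal}); indeed the entire statement is \cite[Corollary~II.1.19]{JS}, so one may alternatively simply deduce it from there.
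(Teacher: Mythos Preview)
Your argument is correct and in fact supplies more than the paper does: the paper gives no proof of this lemma at all, simply citing \cite[Corollary~II.1.19]{JS} (and \cite{Kal,DY17,KalRM}) for it. Your direct argument via the compensator identity \eqref{eq:defofcompofrandsamdsaer} on indicators of the graph of a predictable stopping time, together with the predictable section theorem, is exactly the standard route and matches the proof one finds in \cite{JS}; since you also note the citation at the end, your proposal is fully aligned with the paper.
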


We refer the reader to \cite{JS,Kal,DY17,Y18BDG,KalRM,MarRo,Marinelli13,Jac84,Now02,Nov75,Grig71} for details on random measures and stochastic integration with respect to random measures.

\subsection{Poisson random measures}\label{subsec:PoissRMprelim}

An important example of random measures is a {\em Poisson random measure}.

 Let $(S, \Sigma, \rho)$ be  a measure space, $\rho$ be $\sigma$-finite. Then we can define a {\em Poisson random measure} (a.k.a.\ {\em Poisson point process}) $N_{\rho}$ with {\em intensity} (or {\em compensator}) $\rho$, i.e.\ a function $\Sigma\mapsto L^0(\Omega, \mathbb N_0 \cup\{+\infty\})$ satisfying the following properties
\begin{enumerate}[(i)]
\item $N_{\rho}(A)$ has the Poisson distribution with a parameter $\rho(A)$ for any $A \in \Sigma$ such that $\rho(A)<\infty$,
\item $N_{\rho}(A_1), \ldots,N_{\rho}(A_n)$ are independent for any disjoint $A_1,\ldots, A_n \in \Sigma$,
\item $N_{\rho}$ is a.s.\ a measure on $\Sigma$
\end{enumerate}
(see \cite[Chapter 4]{Sato} and \cite{KingPois} for details).  We can also define the {\em compensated Poisson random measure} $\widetilde N_{\rho}$ to be $\widetilde N_{\rho}(A) := N_{\rho}(A) - \rho(A)$ for any $A\in \Sigma$ satisfying $\rho(A)<\infty$. 

\begin{remark}
 If we set $S = \mathbb R_+ \times J$ and $\rho = \nu = \lambda \otimes \nu_0$ (so that we have the setting which we used to work above) with $\lambda$ being the Lebesgue measure on $\mathbb R_+$ and $\nu_0$ being some fixed $\sigma$-finite measure  on $J$, then we come up with Poisson measures that are often exploited as a noncontinuous noise for SPDE's (see e.g.\ \cite{Dirk14,BH09,FTT10,MPR10,Grig71,Nov75,PZ,ZBL19} and references therein).
\end{remark}

In the sequel we will need the following definition of an integral with respect to a Poisson random measure.

\begin{definition}\label{def:defofstochintwrtPoisspoirnprod}
Let $X$ be a Banach space, $(S, \Sigma, \rho)$ be a measure space, $N_{\rho}$ be a Poisson random measure on $S$ with the intensity $\rho$. Then a strongly $\Sigma$-measurable function $F:S\to X$ is called {\em integrable} with respect to $\widetilde N_{\rho} = N_{\rho} - \rho$ if there exist an increasing family of sets $(A_n)_{n\geq 1} \in \Sigma$ such that $\cup_{A_n} = S$, $\int_{A_n} \|F\| \ud \rho <\infty$, and $\int_{A_n} F \ud \widetilde N_{\rho}$ converges in $L^1(\Omega; X)$ as $n\to \infty$.
\end{definition}

\begin{remark}\label{rem:defofdtochintwrtpoisrmforFoffinitenumes}
Let $G:S \to X$ be strongly $\Sigma$-measurable such that $\int_{S} \|G\| \ud \rho<\infty$. Then $G\in L^1(S, \rho;X)$, and as  for any step function $H\in L^1(S, \rho;X)$ we have that $\mathbb E \int_S \|H\| \ud N_{\rho} = \|H\|_{L^1(S, \rho;X)}$ by the definition of $N_{\rho}$ (in particular, $\mathbb E N_{\rho}(A) = \rho(A)$ for any $A \in \Sigma$), we can extend the stochastic integral definition to $G$ by a standard expanding operator procedure. Thus $\int_{A_n} F \ud \widetilde N_{\rho} := \int_{A_n} F \ud N_{\rho} - \int_{A_n} F \ud \rho $ in the definition above is well defined.
\end{remark}

\begin{remark}\label{rem:sticintwrtPoisrmonercanchosreeanysetws}
Definition \ref{def:defofstochintwrtPoisspoirnprod} is quite different from the one given in Subsection \ref{subsec:dectanPDQLC} as we do not have a time scale (so there is no martingale structure) and since we are working with Poisson random measures. Moreover, notice that if such a family $(A_n)_{n\geq 1}$ exists, then for any other increasing family $(A'_n)_{n\geq 1}$ having the same properties as $(A_n)_{n\geq 1}$ we will have that  $\int_{A'_n} F \ud \widetilde N_{\rho}$ converges in $L^1(\Omega;X)$ as $n\to \infty$. Indeed, let 
\begin{equation}\label{eq:defofintwrtPoisrndmars}
\xi = \int_{S} F \ud \widetilde N_{\rho}:= L^1(\Omega; X)-\lim_{n\to \infty} \int_{A_n} F \ud \widetilde N_{\rho}.
\end{equation}
 Then 
$$
(\xi_n)_{n\geq 1} := \Bigl( \int_{A'_n} F\ud \widetilde N_{\rho}\Bigr)_{n\geq 1},
$$
is a martingale with independent increments as $\xi_{n+1} -\xi_n = \int_{A'_n \setminus A'_{n-1}} F  \ud \widetilde N_{\rho}$ is independent of $\sigma(N_{\rho}|_{A'_{n}})$, and hence independent of $\xi_1, \ldots, \xi_n$. Thus we have that for any $x^*\in X^*$, $\mathbb E (\langle \xi , x^* \rangle|\sigma(N_{\rho}|_{A'_n})) = \langle \xi_n , x^* \rangle$ for any $n\geq 1$ (which follows from the fact that 
$$
\int_{A'_n \cap A_m} \langle F, x^* \rangle \ud \widetilde N_{\rho} \to \int_{A'_n} \langle F, x^* \rangle \ud \widetilde N_{\rho}\; \;\;\text{in}\;\;\; L^1(\Omega)\; \;\;\text{as}\; \;\;m\to \infty,
$$
from \cite[Theorem 3.3.2]{HNVW1}, and from the definition \eqref{eq:defofintwrtPoisrndmars} of $\xi$), so $\langle \xi_n , x^* \rangle$ converges to $\langle \xi , x^* \rangle$ by \cite[Theorem 3.3.2]{HNVW1}, thus by the It\^o-Nisio theorem \cite[Theorem 6.4.1]{HNVW2} we have that $\xi_n$ converges to $\xi$ in $L^1(\Omega; X)$.
\end{remark}

\subsection{Stochastic integration}\label{subsec:prelimstint}

Let $H$ be a Hilbert space, $X$ be a Banach space. For each $x\in X$ and $h \in H$ we denote the linear operator $g\mapsto \langle g, h\rangle x$, $g\in H$, by $h\otimes x$. The process $\Phi: \mathbb R_+ \times \Omega \to \mathcal L(H,X)$ is called  \textit{elementary predictable}
with respect to the filtration $\mathbb F = (\mathcal F_t)_{t \geq 0}$ if it is of the form
\begin{equation}\label{eq:elprogPhi}
 \Phi(t,\omega) = \sum_{k=1}^K\sum_{\ell=1}^L \mathbf 1_{(t_{k-1},t_k]\times B_{\ell k}}(t,\omega)
\sum_{n=1}^N h_n \otimes x_{k\ell n},\;\;\; t\geq 0,\;\; \omega \in \Omega,
\end{equation}
where $0 = t_0 < \ldots < t_K <\infty$, for each $k = 1,\ldots, K$ the sets
$B_{1k},\ldots,B_{Lk}$ are in $\mathcal F_{t_{k-1}}$, the vectors $h_1,\ldots,h_N$ are in $H$, and $(x_{k\ell n})_{k,\ell,n=1}^{K,L,M}$ are elements of $X$.
Let $\widetilde M:\mathbb R_+ \times \Omega \to H$ be a local martingale. Then we define the {\em stochastic integral} $\Phi \cdot \widetilde M:\mathbb R_+ \times \Omega \to X$ of $\Phi$ with respect to $\widetilde M$ as follows:
\begin{equation*}\label{eq:defofstochintwrtM}
 (\Phi \cdot \widetilde M)_t := \sum_{k=1}^K\sum_{\ell=1}^L \mathbf 1_{B_{\ell k}}
\sum_{n=1}^N \langle(\widetilde M(t_k\wedge t)- \widetilde M(t_{k-1}\wedge t)), h_n\rangle x_{k\ell n},\;\; t\geq 0.
\end{equation*}

A map $W_H:\mathbb R_+  \times H\to L^2(\Omega) $ is called an {\em $H$-cylindrical Brownian motion} (see \cite[Chapter 4.1]{DPZ}) if 
\begin{itemize}
 \item $W_H(\cdot)h$ is a Brownian motion for any $h\in H$,
 \item $\mathbb E W_H(t)h\,W_H(s)g = \langle h,g\rangle \min\{t,s\}$
for all $h$, $g \in H$ and $t$, $s \geq 0$.
\end{itemize}

For an $H$-cylindrical Brownian motion $W_H$ we can define a stochastic integral of $\Phi$ of the form \eqref{eq:elprogPhi} in the following way
\begin{equation*}
 (\Phi \cdot W_H)_t := \sum_{k=1}^K\sum_{\ell=1}^L \mathbf 1_{B_{k}}
\sum_{n=1}^N (W_H(t_k\wedge t)h_n- W_H(t_{k-1}\wedge t)h_n) x_{k\ell n},\;\; t\geq 0.
\end{equation*}
Further, if $X=\mathbb R$, then due to \cite[Theorem 4.12]{DPZ} (see also \cite{Kal,VY16,NVW}) it is known that a.s.\
\begin{equation}\label{eq:qvvarofstintwrtHcylbrmot}
 [\Phi \cdot W_H]_t = \int_{0}^t \|\Phi\|^2 \ud s,
\end{equation}
and in particular by the Burkholder-Davis-Gundy inequalities \cite[Theorem 17.7]{Kal} we have that for any $0<p<\infty$
\begin{equation}\label{eq:LpboundsforstochintwrtHcylbrm}
\mathbb E \sup_{t\geq 0}\bigl\|(\Phi \cdot W_H)_t\bigr\|^p \eqsim_{p} \mathbb E \Bigl(\int_{0}^t \|\Phi\|^2 \ud s \Bigr)^{p/2}.
\end{equation}

We refer the reader to \cite{Kal,JS,DPZ,VY16,Y18BDG,DY17,MP,MetSemi,Mey76,NVW} for further details on stochastic integration and cylindrical Brownian motions.

\subsection{$\gamma$-radonifying operators}\label{subsec:gammanorm}

 Let $H$ be a separable Hilbert space and let $X$ be a Banach space. Let $T\in \mathcal L(H, X)$. Then $T$ is called {\em $\gamma$-radonifying} if
\begin{equation}\label{eq:defofgammanormsnove}
 \|T\|_{\gamma(H,X)} := \Bigl(\mathbb E \Bigl\|\sum_{n=1}^{\infty} \gamma_n Th_n\Bigr\|^2\Bigr)^{\frac 12} <\infty,
\end{equation}
where $(h_n)_{n\geq 1}$ is an orthonormal basis of $H$, and $(\gamma_n)_{n\geq 1}$ is a sequence of independent standard Gaussian random variables (if the series on the right-hand side of \eqref{eq:defofgammanormsnove} does not converge, then we set $\|T\|_{\gamma(H,X)}:=\infty$). Note that $\|T\|_{\gamma(H,X)}$ does not depend on the choice of the orthonormal basis $(h_n)_{n\geq 1}$ (see \cite[Section 9.2]{HNVW2} and \cite{Ngamma} for details). Often we will call $\|T\|_{\gamma(H, X)}$ the {\em $\gamma$-norm} of $T$. Note that if $X$ is a Hilbert space, then $\|T\|_{\gamma(H,X)}$ coincides with the Hilbert-Schmidt norm of $T$.
$\gamma$-norms are exceptionally important in analysis as they are easily computable and enjoy a number of useful properties such as the ideal property, $\gamma$-multiplier theorems, Fubini-type theorems, etc., see \cite{HNVW2,Ngamma}.

\subsection{Tangent martingales: the discrete case}\label{subsec:tanmartdiscase}

Let $X$ be a Banach space, $(d_n)_{n\geq 1}$ and $(e_n)_{n\geq 1}$ be $X$-valued martingale difference sequences. 
\begin{definition}\label{def:preldefoftangdiscrXvalmart}
$(d_n)_{n\geq 1}$ and $(e_n)_{n\geq 1}$ are called {\em tangent} if
\begin{equation}\label{eq:disctang}
\mathbb P(d_n|\mathcal F_{n-1}) = \mathbb P(e_n|\mathcal F_{n-1}),\;\;\; n\geq 1.
\end{equation}
\end{definition}
(Recall that conditional probabilities have been defined in Subsection \ref{subsec:ConexponPSCondProbCondIndep}.)
\begin{example}\label{ex:tangstandarddecxinvn-->xi'nvn}
Let $(v_n)_{n\geq 1}$ be a predictable uniformly bounded $X$-valued sequence, $(\xi_n)_{n\geq 1}$ and $(\xi'_n)_{n\geq 1}$ be adapted sequences of mean-zero real-valued independent random variables such that $\xi_n$ and $\xi_n'$ are equidistributed, integrable, and independent of $\mathcal F_{n-1}$ for any $n\geq 1$.
Then martingale difference sequences $(\xi_nv_n)_{n\geq 1}$ and $(\xi'_nv_n)_{n\geq 1}$ are tangent. Indeed, for any $n\geq 1$ and $A\in \mathcal B(X)$ we have that a.s.
\begin{multline*}
\mathbb P(\xi_nv_n|\mathcal F_{n-1})(A) = \mathbb E(\mathbf 1_A(\xi_nv_n)(A)|\mathcal F_{n-1}) = \mathbb E(\mathbf 1_{A/v_n}(\xi_n)(A)|\mathcal F_{n-1})\\
 \stackrel{(*)}= \mathbb E(\mathbf 1_{A/v_n}(\xi'_n)(A)|\mathcal F_{n-1}) = \mathbb E(\mathbf 1_A(\xi'_nv_n)(A)|\mathcal F_{n-1}) = \mathbb P(\xi'_nv_n|\mathcal F_{n-1})(A),
\end{multline*}
where $(*)$ follows from the fact that $\xi_n$ and $\xi'_n$ are i.i.d.\ and independent from $\mathcal F_{n-1}$, and the fact that $v_n$ is $\mathcal F_{n-1}$ measurable, where for $A \subset X$ and $x\in X$ we define $A/x\subset \mathbb R$ by
\[
A/x := \{t\in \mathbb R:tx\in A\}.
\]
\end{example}
It was shown by Hitczenko in \cite{HitUP} (see also \cite{HNVW1,KwW91,dlP94,dlPG,DY17,CG}) that any $X$-valued martingale difference sequence $(d_n)_{n\geq 1}$ has a {\em decoupled} tangent martingale difference sequence on an enlarged probability space with an enlarged filtration, i.e.\ there exists an enlarged filtration $\overline {\mathbb F}$ w.r.t.\ which $(d_n)$ remains being a martingale difference sequence, an $\overline {\mathbb F}$-adapted martingale difference sequence $(e_n)_{n\geq 1}$, and a $\sigma$-algebra $\mathcal G \subset {\mathcal F}_{\infty}$ such that
\[
\mathbb P(e_n|\mathcal F_{n-1}) = \mathbb P(e_n|\mathcal G) ,\;\;\; n\geq 1,
\]
and $(e_n)_{n\geq 1}$ are conditionally independent given $\mathcal G$ (see Subsection \ref{subsec:ConexponPSCondProbCondIndep}). Moreover, $(e_n)_{n\geq 1}$ is unique up to probability.
Later in Section \ref{sec:tangmartconttime} we will extend a construction of such a martingale to the continuous-time case.

\begin{remark}\label{rem:discrdefislikecontdafa}
 Note that due to Proposition \ref{prop:Omega'Omega';arecondindepifondforanyomega'as}, the construction of a decoupled tangent martingale difference sequence  \cite{HitUP,dlP94,dlPG}, and the uniqueness of its distribution  we can give the following equivalent definition: $(e_n)_{n\geq 1}$ is a  decoupled tangent martingale difference sequence to $(d_n)_{n\geq 1}$ if and only if for a.e.\ $\omega \in \Omega$ the sequence $(e_n(\omega))_{n\geq 1}$ is a sequence of mean zero random variables so that $c_n(\omega)$ has $\mathbb P(d_n|\mathcal F_{n-1})(\omega)$ as its distribution (see \cite{dlP94,dlPG} or the proof of Theorem \ref{thm:detangmartforMXVpdwithaccjumps} for the construction of $\mathbb P(d_n|\mathcal F_{n-1})(\omega)$).
\end{remark}

\section{Tangent martingales: the continuous-time case}\label{sec:tangmartconttime}

This section is devoted to continuous-time tangent martingales and their properties. As the notion of tangency in the continuous-times case (see Definition \ref{def:tangconttimecase} below) only cares about the jumps of a process and the quadratic variation of its continuous part, throughout this section we will assume that any martingale starts in zero. Also, in the sequel we will frequently use the {\em stopping times argument} which is allowed by Theorem \ref{thm:MNtangent==>MtauNtautangent+ifNCIINtauCII}. In particular, while talking about tangent local martingales $M$ and $N$ we can automatically assume that these martingales have finite strong $L^1$-moment, i.e.\ $\mathbb E \sup_{t\geq 0}\|M_t\|$ and $\mathbb E \sup_{t\geq 0}\|N_t\|$ can be presumed to be finite unless stated otherwise (see Remark \ref{rem:locmartislocallyL1DR+X}).

\subsection{Local characteristics and tangency}\label{subsec:loccharandtangds}

In order to define tangent martingales in the continuous-time case we need {\em local characteristics}.

Let $M:\mathbb R_+ \times \Omega \to \mathbb R$ be a local martingale, $M = M^c + M^d$ be the Meyer-Yoeurp decomposition of $M$ (see Remark \ref{rem:MYdecBanach}). Then the pair $([M^c], \nu^M)$, where $[M^c]$ is the quadratic variation of $M^c$ (see Subsection \ref{subsec:quadrvar}), and $\nu^M$ is a compensator of the random measure $\mu^M$ defined by \eqref{eq:defofmuM}, is called the {\em local characteristics} of $M$.

Let $X$ be a Banach space, $M$ be an $X$-valued local martingale. Assume that $M$ admits the Meyer-Yoeurp decomposition $M = M^c + M^d$  (see Remark \ref{rem:MYdecBanach}) and that $M^c$ has a covariation bilinear form $[\![M^c]\!]$ (see Subsection \ref{subsec:quadrvar}). Then we define the local characteristics of $M$ to be the pair $([\![M^c]\!], \nu^M)$, where $\nu^M$ is a compensator of the random measure $\mu^M$ defined by \eqref{eq:defofmuM}.

\begin{definition}\label{def:tangconttimecase}
 Two $X$-valued local martingales $M$ and $N$ are called {\em tangent} if both local martingales have local characteristics which coincide. 
\end{definition}

\begin{remark}\label{rem:disctangagreesconttime}
Note that this definition of tangency agrees with the one for discrete martingales given in Subsection \ref{subsec:tanmartdiscase}. Indeed, let $(d_n)_{n\geq 1}$ and $(e_n)_{n\geq 1}$ be tangent martingale difference sequences. Then they are tangent in the continuous-time case if for any $n\geq 1$ compensators of random measures $\mu^{d_n}$ and $\mu^{e_n}$ defined on $X$ by 
\[
\mu^{d_n}(A) = \mathbf 1_{A}(d_n),\;\;\mu^{e_n}(A) = \mathbf 1_{A}(e_n),\;\;\; A \in \mathcal B(X),
\]
coincide. But these compensators exactly coincide with 
$\mathbb P(d_n|\mathcal F_{n-1})$  and with $\mathbb P(e_n|\mathcal F_{n-1})$ respectively
as by the definition \eqref{eq:condprob} of $\mathbb P(d_n|\mathcal F_{n-1})$ and $\mathbb P(e_n|\mathcal F_{n-1})$ and by \eqref{eq:disctang} for any Borel set $A\subset X$ one has that
\begin{equation}\label{eq:dnentangentindiscrete-->incont}
\begin{split}
\mathbb E ( \mu^{d_n}(A) | &\mathcal F_{n-1}) = \mathbb E ( \mathbf 1_{A}(d_n) | \mathcal F_{n-1}) = \mathbb P(d_n|\mathcal F_{n-1})(A) \\ &= \mathbb P(e_n|\mathcal F_{n-1})(A) = \mathbb E ( \mathbf 1_{A}(e_n) | \mathcal F_{n-1}) = \mathbb E ( \mu^{e_n}(A) | \mathcal F_{n-1}).
\end{split}
\end{equation}
The converse direction can be shown similarly.
\end{remark}

Now we are ready to define a decoupled tangent local martingale. Recall that conditional independence was defined in \eqref{eq:defofcondinderp} and an enlargement of a filtered probability space was defined in Definition \ref{def:enlagoffiltprobspace}.

\begin{definition}\label{def:dectanglocmartcontimecased}
Let $(\Omega, \mathcal F, \mathbb P)$ be a probability space endowed with a filtration $\mathbb F = (\mathcal F_{t})_{t\geq 0}$. Let $M:\mathbb R_+ \times \Omega \to X$ be a local martingale. A process $N:\mathbb R_+ \times \overline{\Omega} \to X$ over an enlarged probability space $(\overline{\Omega}, \overline{\mathcal F},\overline{ \mathbb P})$ with an enlarged filtration $\overline{\mathbb F} = (\overline{\mathcal F}_t)_{t\geq 0}$ is called  a {\em decoupled tangent local martingale} of $M$ if $M$ is a local $\overline{\mathbb F}$-martingale with the same local characteristics $([\![M^c]\!], \nu^M)$, $N$ is a local martingale, $M$ and $N$ are tangent, and $N(\omega)$ is a local martingale with independent increments and local characteristics $([\![M^c]\!](\omega), \nu^M(\omega))$ for a.e.\ $\omega\in \Omega$.
\end{definition}

Note that we can always set $\mathcal F :=\mathcal F_{\infty}$, and that $N$ may be assumed to have independent increments given $\mathcal F$ due to Proposition \ref{prop:Omega'Omega';arecondindepifondforanyomega'as}.
We refer the reader to \cite[p.\ 174]{KwW91} and \cite{KwW92,KwW86,JH87,CG,dlPG,dlP94,Kal17} for further details on decoupled tangent local martingales.

\begin{remark}\label{rem:Mmightchangelocalcharacp}
 Note that the martingale properties of $M$ in Definition \ref{def:dectanglocmartcontimecased} could change while enlarging the filtration, so it is extremely important to presume that $M$ is a local $\overline{\mathbb F}$-martingale having the same local characteristics as it used to obtain. Indeed, first $M$ can lose its martingality, e.g.\ if $\overline{\mathcal F}_t$ contains ${\mathcal F}_{\infty}$ for any $t\geq 0$ (then $M_t$ is $\overline{\mathcal F}_0$-measurable). But even if $M$ remains a local martingale, it could change its local characteristics. Though $[\![M^c]\!]$ would stay the same for any filtration (due to the fact that $[\![M^c]\!]$ is a continuous part of $[\![M]\!]$, see Subsection \ref{subsec:candec}, \cite[Theorem 26.14]{Kal}, and \cite{Y17MartDec}, and the fact that $[\![M]\!]$ does not depend on the enlargement by its definition \eqref{eq:defquadvar}), $\nu^M$ can change. E.g.\ let $X=\mathbb R$ and let $M = N^1-N^2$, where $N^1$ and $N^2$ are independent standard Poisson processes. Let $\mathbb F = ({\mathcal F}_t)_{t\geq 0}$ be generated by $M$ and let $\overline{\mathcal F}_t$ be generated by ${\mathcal F}_t$ and $\sigma\bigl((\tau_n)_{n\geq 1}\bigr)$, where $(\tau_n)_{n\geq 1}$ are all jump times of $M$. Then $M$ is both an $\mathbb F$- and $\overline{\mathbb F}$-martingale, but for any $A\subset \mathbb R$ and $I \subset \mathbb R_+$ in the first case we have that $\nu^M(I\times A) = \lambda_{\mathbb R_+}(I) (\mathbf 1_{A}(-1) + \mathbf 1_{A}(1))$, but in the second case we obtain $\nu^M(I\times A) = \frac 12\sum_{n\geq 1} \mathbf 1_{I}(\tau_n)(\mathbf 1_{A}(-1) + \mathbf 1_{A}(1))$.
\end{remark}

\begin{remark}\label{rem:locmartwithIIisamartwithII}
Note that every local martingale with independent increments is a martingale by \cite[Theorem 2.5.1]{dlPG}. Indeed, let $M:\mathbb R_+ \times \Omega \to X$ be a local martingale with independent increments. Then there exists a sequence $(\tau_n)_{n\geq 1}$ of stopping times such that $\tau_n \nearrow \infty$ a.s.\ as $n\to \infty$ and $M^{\tau_n}$ is a martingale. Moreover, by strengthening stopping times $(\tau_n)_{n\geq 1}$ we can assume that $\mathbb E \sup_{t\geq 0}\|M^{\tau_n}_{t}\|<\infty$. Then by \cite[Theorem 2.5.1]{dlPG} we have that for any $n\geq 1$
\[
\mathbb E \sup_{t\geq 0}\|M^{\tau_n}_{t}\| \eqsim \mathbb E \sup_{t\geq 0}\|\widetilde M^{\tau_n}_{t}\|<\infty,
\]
where $\widetilde M$ is an independent copy of $M$. As $\tau_n$ and $\widetilde M$ are independent, we have that
\begin{equation}\label{eq:forMlocwithindincismartcsmama}
 \mathbb E \sup_{0\leq s\leq t}\|\widetilde M_{s}\| = \mathbb E \sup_{0\leq s\leq t}\|M_{s}\| <\infty,
\end{equation}
for any $t\geq 0$ safistying $\mathbb P(\tau_n > t)>0$. Since $\tau_n \to \infty$ a.s.\ as $n\to \infty$, \eqref{eq:forMlocwithindincismartcsmama} holds for any $t\geq 0$, and thus $M$ is a martingale by a standard argument. Consequently, $N(\omega)$ in the definition above is can be assume to be a martingale instead of a local martingale.
\end{remark}

\begin{remark}\label{rem:defofdectanglmartidiscreteandfac,q}
Let us show that this definition of a decoupled tangent martingale agrees with the one given Subsection \ref{subsec:tanmartdiscase}, i.e.\ if we have two martingales $M, N:\mathbb R_+ \times\Omega \to X$ which are purely discontinuous and have jumps only at natural points, then $N$ s a decoupled tangent martingale to $M$ if and only if the same holds for the corresponding differences. Let $(d_n)_{n\geq 1}$ be an $X$-valued martingale difference sequence, $(c_n)_{n\geq 1}$ be a decoupled tangent martingale difference sequence. Let $M$ and $N$ be martingales with respect to the filtration $\mathbb F = (\mathcal F_t)_{t\geq 0} := (\mathcal F_{[t]})_{t\geq 0}$ (here $(\mathcal F_n)_{n\geq 1}$ is the filtration where $(d_n)_{n\geq 1}$ lives and $[a]$ is the integer part of $a \geq 0$) of the form 
$$
M_t = \sum_{n\leq t} d_n,\;\;N_t = \sum_{n\leq t} c_n,\;\;\; t\geq 0.
$$
Then $M$ and $N$ are tangent by \eqref{eq:dnentangentindiscrete-->incont}, and $N(\omega)$ is a martingale with independent increments and local characteristics $(0, \nu^M(\omega))$ as the same holds for $(c_n)_{n\geq 1}$ thanks to Remark \ref{rem:discrdefislikecontdafa}, so $N$ is a decoupled tangent martingale to $M$. The converse can be shown analogously.
\end{remark}

Now we are going to state two main results of the paper.

\begin{theorem}\label{thm:tangentgencaseUMDuhoditrotasoldat}
 Let $X$ be a Banach space. Then $X$ is UMD if and only if any $X$-valued local martingale has local characteristic. Moreover, if this is the case, then for any tangent $X$-valued local martingales $M$ and $N$ and for any $1\leq p < \infty$ we have that
 \begin{equation}\label{eq:Lpboundsforgentangmarraz}
  \mathbb E \sup_{t\geq 0} \|M_t\|^p \eqsim_{p, X}  \mathbb E \sup_{t\geq 0} \|N_t\|^p.
 \end{equation}
\end{theorem}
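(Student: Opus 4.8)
The plan is to reduce the $L^p$-comparison to the three pieces of the canonical decomposition and treat each separately. First, I would record the easy direction of the equivalence: if every $X$-valued local martingale has local characteristics, then in particular every $X$-valued local martingale admits the Meyer-Yoeurp decomposition (the continuous part is needed to even define $[\![M^c]\!]$) and has a covariation bilinear form; by Theorem~\ref{thm:candecXvalued} (equivalently Remark~\ref{rem:MYdecBanach}) this forces $X$ to be UMD. Conversely, if $X$ is UMD, then by Theorem~\ref{thm:candecXvalued} the canonical decomposition $M = M^c + M^q + M^a$ exists (hence so does Meyer-Yoeurp), and by Remark~\ref{rem:ifUMDthencovbilform} a covariation bilinear form $[\![M^c]\!]$ exists, so $M$ has local characteristics. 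This settles the "if and only if" part.

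For the estimate \eqref{eq:Lpboundsforgentangmarraz}, by the stopping-times remark at the start of Section~\ref{sec:tangmartconttime} I may assume $M, N$ have finite strong $L^1$-moments (and after a further stopping/truncation argument, finite strong $L^p$-moments, using Theorem~\ref{thm:strongLpmartforamaBanacoace} to pass to the limit at the end). Write the canonical decompositions $M = M^c + M^q + M^a$, $N = N^c + N^q + N^a$. The first key step is to show that tangency is inherited by the canonical pieces: since $M$ and $N$ share local characteristics $([\![M^c]\!], \nu^M) = ([\![N^c]\!], \nu^N)$, the continuous parts have the same covariation bilinear form, and the jump measures $\mu^M, \mu^N$ share a compensator; moreover the compensator being non-atomic in time on the quasi-left-continuous part versus atomic on the accessible part (Lemma~\ref{lem:nuMisnonatomiffMqlc}, Remark~\ref{rem:candecsplitsjumps}) lets one split $\nu^M = \nu^{M^q} + \nu^{M^a}$ canonically, so that $M^i$ and $N^i$ are tangent for each $i \in \{c, q, a\}$. (This is what the paper attributes to Subsection~\ref{subsec:charandthecandecbe}; I would invoke it.) Then by the strong $L^p$-equivalence \eqref{eq:candecstrongLpestmiaed} of Theorem~\ref{thm:candecXvalued}, applied to both $M$ and $N$, it suffices to prove \eqref{eq:Lpboundsforgentangmarraz} separately for each of the three tangent pairs $(M^c, N^c)$, $(M^q, N^q)$, $(M^a, N^a)$.

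The three cases are then handled as follows. For the continuous case, tangency means $[\![M^c]\!] = [\![N^c]\!]$, so $M^c$ and $N^c$ are mutually weakly differentially subordinate (in fact with equal bilinear quadratic variations), and the weak differential subordination $L^p$-inequalities for continuous martingales from \cite{Y17MartDec,OY18,Y18BDG} give \eqref{eq:Lpboundsforgentangmarraz}. For the accessible (discrete-like) case, by Lemma~\ref{lem:PDmAJhasjumpsatprsttimes} the jumps occur along a sequence of predictable stopping times with disjoint graphs, so a discretization along these times (using Lemma~\ref{lem:DeltaMtaugiventau-=0} to see the increments form a martingale difference sequence and that tangency of $M^a, N^a$ translates into tangency of the associated difference sequences via \eqref{eq:dnentangentindiscrete-->incont}) reduces the claim to the discrete Hitczenko-McConnell theorem~\ref{thm:intromccnnll}, followed by a limiting argument. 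The quasi-left-continuous (Poisson-like) case is the main obstacle and is the heart of the proof: one must show $M^q$ can be written as a stochastic integral $\int x \, \dd \bar\mu^{M^q}$ against a quasi-left-continuous compensated random measure (this requires UMD and is Theorem~\ref{thm:XisUMDiffMisintxwrtbarmuMvain}), reduce via approximation to the case of a random measure on a finite jump space, apply the Meyer-Papangelou time-change turning any quasi-left-continuous integer random measure into a Poisson random measure (the time change depending only on the compensator, which is common to $M^q$ and $N^q$), and then invoke the decoupling inequality \eqref{eq:inftoUMDintwrtPoisbyindPois} in its general form (Proposition~\ref{prop:XUMDiffdecouplforPois}) to transfer the $L^p$-norm. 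I expect the bookkeeping in this Poisson-like step — keeping the time change, the approximation of the jump space, and the stochastic-integral representation compatible with one another, and ensuring everything passes to the limit in strong $L^p$ via Theorem~\ref{thm:strongLpmartforamaBanacoace} — to be the technically demanding part; the continuous and accessible cases are comparatively routine given the cited machinery.
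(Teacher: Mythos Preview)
Your proposal is correct and follows essentially the same route as the paper: the equivalence is handled via the Meyer--Yoeurp/canonical decomposition characterization of UMD (Theorem~\ref{thm:candecXvalued}, Remark~\ref{rem:MYdecBanach}, Remark~\ref{rem:ifUMDthencovbilform}), and the $L^p$-estimate is reduced via Theorem~\ref{thm:candecfortangaretang} and \eqref{eq:candecstrongLpestmiaed} to the three cases, each treated exactly as you describe (Proposition~\ref{prop:domcontcase} for the continuous part, Proposition~\ref{prop:pdwajjusttang} for the accessible part, and Corollary~\ref{cor:LstropboundforPDQLStangmartUMD}---built on Theorem~\ref{thm:XisUMDiffMisintxwrtbarmuMvain}, Theorem~\ref{thm:mumuCoxcomparable}, and the Meyer--Papangelou time-change inside it---for the quasi-left continuous part). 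Your anticipation that the Poisson-like case carries the technical weight is accurate; the paper packages the time-change and approximation bookkeeping into the proof of Theorem~\ref{thm:mumuCoxcomparable} via Cox processes rather than invoking Meyer--Papangelou directly in the tangency argument, but the underlying mechanism is the one you outline.
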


\begin{theorem}\label{thm:mainforDTMgencasenananana}
 Let $X$ be a UMD Banach space. Then for any $X$-valued local martingale there exists a decoupled tangent local martingale.
\end{theorem}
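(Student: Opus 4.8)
The plan is to use the canonical decomposition $M = M^c + M^q + M^a$, which exists by Theorem~\ref{thm:candecXvalued} since $X$ is UMD, and to construct a decoupled tangent local martingale $N^i$ for each of the three parts $i \in \{c,q,a\}$ separately, then to set $N := N^c + N^q + N^a$. This works because (as asserted in the introduction and to be established in the preliminary subsections on the canonical decomposition and tangency) the local characteristics of a sum of the three types of martingales split accordingly, and conversely independent increments of each summand given $\mathcal F_\infty$ plus their joint conditional independence yield independent increments of the sum. So the construction reduces to three model cases, which are precisely the three structure results advertised in the introduction: $M^c$ is a time-changed stochastic integral $\int \Phi \ud W_H$ against an $H$-cylindrical Brownian motion; $M^q = \int x \ud \bar\mu^{M^q}(\cdot,x)$ is an integral against a quasi-left continuous compensated random measure (Theorem~\ref{thm:XisUMDiffMisintxwrtbarmuMvain}); and $M^a$ has jumps only at a sequence of predictable times $(\tau_n)$ with disjoint graphs (Lemma~\ref{lem:PDmAJhasjumpsatprsttimes}).

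First I would handle the continuous part. Using the time-change representation $M^c \circ \tau^c = \int \Phi \ud W_H$ with $\Phi : \Omega \to \gamma(L^2(\mathbb R_+;H),X)$ deterministic in time (legitimate since $\Phi$ arises from $q_{M^c}$ and $[\![M^c]\!]$, which are $\overline{\mathbb F}$-invariant by Remark~\ref{rem:Mmightchangelocalcharacp}), I would adjoin an independent $H$-cylindrical Brownian motion $\widetilde W_H$ on a product enlargement and define $N^c := \bigl(\int \Phi \ud \widetilde W_H\bigr) \circ A^c$, where $A^c$ is the inverse time change. By construction $N^c(\omega)$ is a Wiener integral of a deterministic kernel, hence a continuous Gaussian martingale with independent increments and covariation bilinear form $[\![M^c]\!](\omega)$, so $([\![(N^c)^c]\!], \nu^{N^c}) = ([\![M^c]\!], 0)$, matching $M^c$. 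For the discrete-like part $M^a$, I would simply transcribe the discrete construction: by Lemma~\ref{lem:PDmAJhasjumpsatprsttimes} we have a martingale difference sequence $d_n := \Delta M^a_{\tau_n}$ relative to $(\mathcal F_{\tau_n-})$, take a decoupled tangent martingale difference sequence $(e_n)$ (existence from Subsection~\ref{subsec:tanmartdiscase}), and reassemble $N^a_t := \sum_{\tau_n \le t} e_n$; Remark~\ref{rem:defofdectanglmartidiscreteandfac,q} already records that this is a decoupled tangent martingale to $M^a$, and Proposition~\ref{prop:MmapproxMinLpforacccase} justifies any approximation needed to reduce to finitely many jumps.

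The substantive case is the quasi-left continuous part $M^q$. Here I would direct a Cox process $\mu^{M^q}_{\mathrm{Cox}}$ by the (predictable, non-atomic in time by Lemma~\ref{lem:nuMisnonatomiffMqlc}) compensator $\nu^{M^q}$: on a product enlargement, for a.e.\ $\omega$ let $\mu^{M^q}_{\mathrm{Cox}}(\omega)$ be a Poisson random measure on $\mathbb R_+ \times X$ with intensity $\nu^{M^q}(\omega)$, and set $N^q := \int x \ud \bar\mu^{M^q}_{\mathrm{Cox}}(\cdot,x)$ with $\bar\mu^{M^q}_{\mathrm{Cox}} = \mu^{M^q}_{\mathrm{Cox}} - \nu^{M^q}$. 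Then $N^q(\omega)$ is a Poisson integral with deterministic intensity, so it has independent increments and local characteristics $(0, \nu^{M^q}(\omega))$, and its jump measure has compensator $\nu^{M^q}$, i.e.\ $N^q$ and $M^q$ are tangent; the independence of increments given $\mathcal F_\infty$ follows from Proposition~\ref{prop:Omega'Omega';arecondindepifondforanyomega'as} (or Corollary~\ref{cor:condindgivenRVsuffcond}). \textbf{The main obstacle} is precisely showing that the two stochastic integrals $\int x \ud \bar\mu^{M^q}(\cdot,x)$ and $\int x \ud \bar\mu^{M^q}_{\mathrm{Cox}}(\cdot,x)$ are well defined as $X$-valued (local) martingales and genuinely have the claimed local characteristics — the integrand $x \mapsto x$ is unbounded, so one cannot appeal to the elementary-predictable theory of Subsection~\ref{subsec:ranmeasures} directly but must approximate (truncating jump sizes, passing to a finite jump space, invoking the Meyer--Papangelou time change to reduce the Cox-side integral to the Poisson case with a deterministic kernel), and then control the limits in $L^1(\Omega;\mathcal D(\mathbb R_+,X))$ using the UMD-valued estimates \eqref{eq:intwrtmuandmuCOXINTRO} from Theorem~\ref{thm:mumuCoxcomparable} together with Theorem~\ref{thm:XisUMDiffMisintxwrtbarmuMvain}. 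Once the three parts are in place, summing gives $N = N^c + N^q + N^a$; its local characteristics are $([\![M^c]\!], \nu^M)$ because jumps of $M^q$ and $M^a$ are supported on disjoint random sets (Remark~\ref{rem:candecsplitsjumps}), and $N(\omega)$ has independent increments as a sum of three conditionally independent processes each with independent increments given $\mathcal F_\infty$, which is exactly Definition~\ref{def:dectanglocmartcontimecased}.
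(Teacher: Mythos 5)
Your proposal matches the paper's proof essentially step for step: the paper also splits $M = M^c + M^q + M^a$ via the canonical decomposition, constructs $N^c$ by a Brownian representation after a time change (Theorem~\ref{thm:DTMforcontcase}), $N^q$ as $\int x \, \dd\bar\mu^{M^q}_{\rm Cox}$ via a Cox process (Corollary~\ref{thm:XUMDPDQLCdectangmart}, resting on Theorems~\ref{thm:mumuCoxcomparable} and~\ref{thm:XisUMDiffMisintxwrtbarmuMvain}), and $N^a$ by transcribing the discrete decoupled construction along a sequence of predictable jump times (Theorem~\ref{thm:detangmartforMXVpdwithaccjumps}), and then sums the three on a common product enlargement. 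You also correctly identify the substantive difficulty as showing integrability of the unbounded integrand $x\mapsto x$ with respect to $\bar\mu^{M^q}$ and $\bar\mu^{M^q}_{\rm Cox}$ and, after summing, verifying that $M$ remains a local martingale with unchanged local characteristics on the enlarged filtration — the two points the paper treats in Theorem~\ref{thm:XisUMDiffMisintxwrtbarmuMvain}, Remark~\ref{rem:intFdmuisamaringeiththesameloccharenlfi}, and Subsection~\ref{subsec:CIprocess}.
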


In order to prove these theorems we will need to treat each of the cases of the canonical decompositions separately in Subsection \ref{subsec:dectangcontmart}, \ref{subsec:dectanPDQLC}, \ref{subsec:dectanPDwithAJ}, and then combine them using Subsection \ref{subsec:charandthecandecbe} in Subsection \ref{subsec:CIprocess}.

\subsection{Local characteristics and canonical decomposition}\label{subsec:charandthecandecbe}

Let us first show that different parts of the canonical decomposition are responsible for different parts of the corresponding local characteristics, and in particular that if two martingales are tangent, then the same holds for the corresponding parts of the canonical decomposition.

\begin{theorem}\label{thm:candecfortangaretang}
Let $X$ be a Banach space, $M, N:\mathbb R_+ \times \Omega \to X$ be local martingales that have the canonical decompositions $M=M^c + M^q + M^a$ and $N=N^c + N^q + N^a$. Assume also that both $M$ and $N$ have local characteristics $([\![M^c]\!], \nu^{M})$ and $([\![N^c]\!], \nu^{N})$ respectively, and that $M$ and $N$ are tangent. Then the corresponding terms of the canonical decomposition have local characteristics and are tangent as well.
\end{theorem}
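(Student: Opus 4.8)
The plan is to reduce everything to the one-dimensional case and to the uniqueness of the canonical decomposition. The starting observation is that tangency of $M$ and $N$ means equality of local characteristics, which by definition means $[\![M^c]\!] = [\![N^c]\!]$ (as bilinear form-valued processes) and $\nu^M = \nu^N$ (as predictable random measures on $\mathbb R_+\times X$). Testing against a functional $x^*\in X^*$, and using Remark \ref{rem:candec==>candecforanyx*} (so that $\langle M^c,x^*\rangle + \langle M^q,x^*\rangle + \langle M^a,x^*\rangle$ is the canonical decomposition of $\langle M,x^*\rangle$, and similarly for $N$), it suffices to identify, for a real-valued local martingale, which parts of its local characteristics $([\langle M,x^*\rangle^c],\nu^{\langle M,x^*\rangle})$ are contributed by the continuous, the quasi-left continuous purely discontinuous, and the accessible-jump purely discontinuous part. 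This is classical (Jacod--Shiryaev \cite{JS}, Kallenberg \cite{Kal}): $[\langle M,x^*\rangle^c] = [\langle M^c,x^*\rangle]$; the restriction of $\nu^{\langle M,x^*\rangle}$ to its \emph{non-atomic-in-time} part is $\nu^{\langle M^q,x^*\rangle}$ (by Lemma \ref{lem:nuMisnonatomiffMqlc}, since $M^q$ is quasi-left continuous, and $M^c$ has no jumps); and the restriction to the atoms in time is $\nu^{\langle M^a,x^*\rangle}$ (by Lemma \ref{lem:PDmAJhasjumpsatprsttimes}, the jumps of $M^a$ sit at a countable family of predictable times, so $\mu^{\langle M^a,x^*\rangle}$ and hence $\nu^{\langle M^a,x^*\rangle}$ charges only a countable time set).

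So the key steps, in order, are: (1) using the decomposition $\nu^{\langle M,x^*\rangle} = \nu^{\langle M,x^*\rangle}_{\mathrm{qlc}} + \nu^{\langle M,x^*\rangle}_{\mathrm{acc}}$ into the time-non-atomic and the time-atomic parts --- a canonical splitting that depends only on $\nu^{\langle M,x^*\rangle}$ itself --- and the analogous one for $N$; (2) since $\nu^M = \nu^N$ we get $\nu^{\langle M,x^*\rangle} = \nu^{\langle N,x^*\rangle}$ for every $x^*$, hence the two pieces coincide separately, i.e.\ $\nu^{\langle M^q,x^*\rangle} = \nu^{\langle N^q,x^*\rangle}$ and $\nu^{\langle M^a,x^*\rangle} = \nu^{\langle N^a,x^*\rangle}$; (3) from $[\![M^c]\!] = [\![N^c]\!]$ we get $[\langle M^c,x^*\rangle] = [\langle N^c,x^*\rangle]$ for every $x^*$. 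Then I must observe that $M^c$ is continuous, so its only local characteristics is $[\![M^c]\!]$ and $\nu^{M^c} = 0$; that $M^q$ is purely discontinuous quasi-left continuous, so its continuous-part covariation form is $0$ and $\nu^{M^q}$ is non-atomic in time; and that $M^a$ is purely discontinuous with accessible jumps, so again its continuous-part form is $0$ and $\nu^{M^a}$ is purely atomic in time. Once one knows $M^q$ (resp.\ $M^a$, resp.\ $N^q$, $N^a$) \emph{has} local characteristics --- which follows because it is a real-summand reduction: each $\langle M^q,x^*\rangle$ has local characteristics as a real local martingale, and the bilinear form piece is simply $0$, so the pair $(0,\nu^{M^q})$ serves --- the coincidence statements in (2)--(3) are exactly the assertion that $M^c$ and $N^c$ are tangent, $M^q$ and $N^q$ are tangent, $M^a$ and $N^a$ are tangent.

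There is one point requiring a little care: I should argue that $M^q$ and $N^q$ (and $M^a$, $N^a$) genuinely possess covariation bilinear forms so that "having local characteristics" is legitimate and not vacuous. For $M^c$ this is part of the hypothesis that $M$ has local characteristics together with the fact that $[\![M^c]\!]$ is, by construction, the bilinear form attached to the continuous martingale $M^c$. For $M^q$ and $M^a$ the bilinear form is identically zero (each $\langle M^q,x^*\rangle$ and $\langle M^a,x^*\rangle$ is purely discontinuous, so its continuous quadratic variation vanishes), so the "$[\![\cdot]\!]$" half of the local characteristics is the zero form, which trivially exists; the measure half $\nu^{M^q}$, $\nu^{M^a}$ exists because $\mu^{M^q}$, $\mu^{M^a}$ are $\widetilde{\mathcal P}$-$\sigma$-finite optional random measures and hence admit compensators by \cite[Theorem II.1.8]{JS} (cf.\ Subsection \ref{subsec:ranmeasures}).

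The main obstacle I anticipate is step (2): making precise, and rigorous in the Banach-valued setting, the claim that the compensator $\nu^M$ of the $X$-valued jump measure \emph{splits canonically} into a time-non-atomic part and a time-atomic part that correspond respectively to $M^q$ and $M^a$. Concretely one needs: $\mu^M = \mu^{M^q} + \mu^{M^a}$ off the origin (which is Remark \ref{rem:candecsplitsjumps}, since $M^q$ and $M^a$ collect disjoint families of jumps and together exhaust the jumps of $M$), hence $\nu^M = \nu^{M^q} + \nu^{M^a}$ by uniqueness of compensators; then $\nu^{M^q}$ is the non-atomic-in-time part and $\nu^{M^a}$ the purely-atomic-in-time part, by Lemma \ref{lem:nuMisnonatomiffMqlc} and Lemma \ref{lem:PDmAJhasjumpsatprsttimes} respectively, and this decomposition of a predictable random measure into its time-continuous and time-discrete parts is intrinsic (depends only on $\nu^M$). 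Therefore $\nu^M = \nu^N$ forces $\nu^{M^q} = \nu^{N^q}$ and $\nu^{M^a} = \nu^{N^a}$. Combined with $[\langle M^c,x^*\rangle] = [\langle N^c,x^*\rangle]$ for all $x^*$ (equivalently $[\![M^c]\!] = [\![N^c]\!]$), this gives that $M^i$ and $N^i$ are tangent for each $i\in\{c,q,a\}$, completing the proof. I would present the argument by first stating the canonical splitting of $\nu^M$ as a lemma-like paragraph (citing \cite{JS,Kal}), then deducing the three tangency statements.
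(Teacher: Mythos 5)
Your final paragraph is essentially the paper's own proof: the paper combines Propositions \ref{prop:Grigcharcontcase}, \ref{prop:GigcharforPDQLC}, \ref{prop:GrigcharforPDmAJ}, \ref{prop:Grigcharforcandechowdoesitlooklike} with Lemma \ref{lem:decompofmeasuresontwoparts} and Remark \ref{rem:whichwhatpartofrmmeans} to argue exactly as you do --- that $\nu^M=\nu^{M^q}+\nu^{M^a}$ splits $\nu^M$ into its time-non-atomic and time-atomic pieces, that this split is canonical (depending only on $\nu^M$), and that the bilinear-form half is $[\![M^c]\!]$ for the continuous part and $0$ for the two purely discontinuous parts, whence $\nu^M=\nu^N$ and $[\![M^c]\!]=[\![N^c]\!]$ force tangency of the three pairs. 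One caution about the opening reduction: passing to $\langle M,x^*\rangle$ and concluding $\nu^{\langle M^q,x^*\rangle}=\nu^{\langle N^q,x^*\rangle}$ for all $x^*$ would, on its own, only give \emph{weak} tangency of $M^q,N^q$ in the sense of Section \ref{sec:WTversusT}, which is strictly weaker in general and would require the separate Theorem \ref{thm:tangiffwtangUMDcase} to upgrade; the argument you give at the end --- working directly with the $X$-valued measures $\nu^{M^q},\nu^{M^a}$ --- is the one that yields tangency and is the one the paper actually uses.
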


We will prove the theorem by using the following elementary propositions.

\begin{proposition}\label{prop:Grigcharcontcase}
Let $X$ be a Banach space, $M:\mathbb R_+ \times \Omega \to X$ be a continuous local martingale which has a covariation bilinear form $[\![M]\!]$. Then the local characteristics of $M$ are $([\![ M ]\!], 0)$.
\end{proposition}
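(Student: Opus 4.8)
The plan is to observe that this proposition is essentially immediate once one unwinds the definition of local characteristics in the continuous case, the only substantive point being that continuity of $M$ forces the jump measure and hence its compensator to vanish. First I would record that a continuous local martingale is trivially its own Meyer--Yoeurp decomposition: writing $M = M + 0$, the first term is a continuous local martingale and $0$ is (vacuously) a purely discontinuous local martingale, so by the uniqueness of the Meyer--Yoeurp decomposition (Remark \ref{rem:MYdecBanach}) we have $M^c = M$ and $M^d = 0$. In particular $M^c$ has a covariation bilinear form, namely $[\![M^c]\!] = [\![M]\!]$, which is exactly the first component claimed, and $M$ indeed admits the decomposition required to even speak of its local characteristics.

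For the second component I would argue that the jump measure $\mu^M$ of \eqref{eq:defofmuM} vanishes identically. Since $M$ has continuous paths, $\Delta M_s = 0$ for every $s \geq 0$ a.s., hence
$\mu^M([0,t] \times B) = \sum_{0 \leq s \leq t} \mathbf 1_{B \setminus \{0\}}(\Delta M_s) = 0$ a.s.\ for all $t \geq 0$ and $B \in \mathcal B(X)$; that is, $\mu^M$ is a.s.\ the zero measure. Consequently $\mathbb E \int_{\mathbb R_+ \times X} F \ud \mu^M = 0$ for every nonnegative $\widetilde{\mathcal P}$-measurable $F$, so the zero measure — which is trivially $\widetilde{\mathcal P}$-predictable and $\widetilde{\mathcal P}$-$\sigma$-finite — satisfies the defining identity \eqref{eq:defofcompofrandsamdsaer} of a compensator of $\mu^M$. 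By the uniqueness of the compensator (\cite[Theorem II.1.8]{JS}) we conclude $\nu^M = 0$.

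Combining the two points, the local characteristics of $M$ are $([\![M^c]\!], \nu^M) = ([\![M]\!], 0)$, as claimed. I do not expect any genuine obstacle here; the statement is a sanity check that isolates the "Wiener-like" contribution to the local characteristics, and it is precisely the fact that continuity annihilates $\mu^M$ that distinguishes it from the genuinely delicate computations for $M^q$ and $M^a$ carried out later in the section.
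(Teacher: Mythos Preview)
Your proof is correct and follows essentially the same approach as the paper: identify $M = M + 0$ as the Meyer--Yoeurp decomposition so that $M^c = M$, and observe that continuity forces $\mu^M = 0$ a.s.\ and hence $\nu^M = 0$. The paper's proof is simply a two-line compression of exactly this argument.
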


\begin{proof}
As $M$ does not have jumps, $M=M+0$ is the Meyer-Yoeurp decomposition; moreover, $\mu^M = 0$ a.s., and hence $\nu^M=0$ a.s.
\end{proof}

\begin{proposition}\label{prop:GigcharforPDQLC}
Let $X$ be a Banach space, $M:\mathbb R_+ \times \Omega \to X$ be a purely discontinuous quasi-left continuous local martingale. Then the local characteristics of $M$ are $(0, \nu^M)$, where $\nu^M$ is non-atomic in time.
\end{proposition}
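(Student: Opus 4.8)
The plan is to establish Proposition~\ref{prop:GigcharforPDQLC} by identifying the Meyer--Yoeurp decomposition of a purely discontinuous quasi-left continuous local martingale and then invoking Lemma~\ref{lem:nuMisnonatomiffMqlc}. The first (and essentially only) point is that for such an $M$, the decomposition $M = 0 + M$ into a continuous part and a purely discontinuous part is \emph{the} Meyer--Yoeurp decomposition. This is immediate: $M$ is by hypothesis purely discontinuous, so it coincides with its own purely discontinuous part $M^d = M$, and the continuous part $M^c$ is zero. By uniqueness of the Meyer--Yoeurp decomposition (Remark~\ref{rem:MYdecBanach}) there is nothing more to check on the continuous side, and since $M^c = 0$ trivially has covariation bilinear form $[\![M^c]\!] = 0$, the first component of the local characteristics of $M$ is $0$.

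Next I would address the second component. By definition the second component of the local characteristics of $M$ is $\nu^M$, the compensator of the jump measure $\mu^M$ defined by \eqref{eq:defofmuM}; this exists because $\mu^M$ is $\widetilde{\mathcal P}$-$\sigma$-finite (see Subsection~\ref{subsec:ranmeasures}). So the only remaining assertion is that $\nu^M$ is non-atomic in time. This is exactly the content of Lemma~\ref{lem:nuMisnonatomiffMqlc}: since $M$ is quasi-left continuous, its compensator $\nu^M$ is non-atomic in time. Hence the local characteristics of $M$ are $(0, \nu^M)$ with $\nu^M$ non-atomic in time, which is the claim.

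There is essentially no obstacle here; the proposition is a bookkeeping consequence of the definition of local characteristics together with two earlier facts (uniqueness of the Meyer--Yoeurp decomposition and Lemma~\ref{lem:nuMisnonatomiffMqlc}), exactly paralleling the proof of Proposition~\ref{prop:Grigcharcontcase}. The only thing worth stating carefully is that a purely discontinuous local martingale is its own purely discontinuous part in the Meyer--Yoeurp sense --- i.e.\ that ``purely discontinuous'' in Definition~\ref{def:purelydiscmart} is the same notion used in the Meyer--Yoeurp decomposition --- but this is built into the terminology of the paper and needs no further argument. So the proof I would write is just two or three sentences: identify $M = 0 + M$ as the Meyer--Yoeurp decomposition, conclude $[\![M^c]\!] = 0$ hence the first characteristic is $0$, and cite Lemma~\ref{lem:nuMisnonatomiffMqlc} for non-atomicity in time of $\nu^M$.
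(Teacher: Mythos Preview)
Your proposal is correct and follows essentially the same approach as the paper's proof: identify $M^c = 0$ from pure discontinuity so that $[\![M^c]\!] = 0$, and invoke Lemma~\ref{lem:nuMisnonatomiffMqlc} for the non-atomicity of $\nu^M$. The paper's version is simply a two-sentence compression of what you wrote.
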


\begin{proof}
First notice that $M$ is purely discontinuous, hence $M^c = 0$, and thus $[\![M^c]\!] =0$. The fact that $\nu^M$ is non-atomic in time follows from Lemma \ref{lem:nuMisnonatomiffMqlc}.
\end{proof}

\begin{proposition}\label{prop:GrigcharforPDmAJ}
Let $X$ be a Banach space, $M:\mathbb R_+ \times \Omega \to X$ be a purely discontinuous local martingale with accessible jumps. Then the local characteristics of $M$ are $(0, \nu^M)$, where for $\nu^M$ there exist predictable stopping times $(\tau_n)_{n\geq 1}$ such that for any $A\in \widetilde {\mathcal P}$ we have that a.s.\
\begin{equation}\label{eq:nuMhasfinitesuppforMPDwAJ}
\int_{\mathbb R_+ \times X} \mathbf 1_A(t,\cdot,x) \ud \nu^M(t,x) = \int_{\mathbb R_+ \times X} \mathbf 1_A(t,\cdot,x)\mathbf 1_{\{\tau_1, \ldots, \tau_n,\ldots\}}(t) \ud \nu^M(t,x).
\end{equation}
In other words,  $\{\tau_1,\ldots,\tau_n,\ldots\}\times X$ is a set of full $\nu^M$-measure a.s.
\end{proposition}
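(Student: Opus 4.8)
The plan is to reduce the statement to the structure of the jumps of $M$ supplied by Lemma~\ref{lem:PDmAJhasjumpsatprsttimes}, combined with the defining property~\eqref{eq:defofcompofrandsamdsaer} of a compensator. Since $M$ is purely discontinuous we have $M^c=0$, so the covariation bilinear form of $M^c$ is the zero form and $M=0+M$ is the Meyer--Yoeurp decomposition of $M$; hence $M$ has local characteristics $(0,\nu^M)$, where $\nu^M$ is the compensator of the jump measure $\mu^M$, which exists because $\mu^M$ is optional and $\widetilde{\mathcal P}$-$\sigma$-finite (see Subsection~\ref{subsec:ranmeasures} and \cite[Theorem~II.1.8]{JS}).

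Next I would apply Lemma~\ref{lem:PDmAJhasjumpsatprsttimes} to obtain finite predictable stopping times $(\tau_n)_{n\geq 1}$ with disjoint graphs such that a.s.\ $\{t\geq 0:\Delta M_t\neq 0\}\subset\{\tau_1,\ldots,\tau_n,\ldots\}$. By the definition~\eqref{eq:defofmuM} of $\mu^M$, the random measure $\mu^M(\omega;\cdot)$ is carried by $\{t:\Delta M_t(\omega)\neq 0\}\times(X\setminus\{0\})$, hence a.s.\ by the set $D:=\bigl(\bigcup_{n\geq 1}[\![\tau_n]\!]\bigr)\times X$, where $[\![\tau_n]\!]:=\{(t,\omega):\tau_n(\omega)=t\}$ denotes the graph of $\tau_n$. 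The crucial point is that each $[\![\tau_n]\!]$ is a \emph{predictable} subset of $\mathbb R_+\times\Omega$ because $\tau_n$ is a predictable stopping time (see e.g.\ \cite{JS,Kal}), so that $D\in\widetilde{\mathcal P}$ and $D^c\in\widetilde{\mathcal P}$, and therefore $\mathbf 1_{D^c}$ is an admissible nonnegative $\widetilde{\mathcal P}$-measurable test function in~\eqref{eq:defofcompofrandsamdsaer}.

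Since $\int_{\mathbb R_+\times X}\mathbf 1_{D^c}\ud\mu^M=0$ a.s., property~\eqref{eq:defofcompofrandsamdsaer} of the compensator gives
\[
\mathbb E\int_{\mathbb R_+\times X}\mathbf 1_{D^c}\ud\nu^M=\mathbb E\int_{\mathbb R_+\times X}\mathbf 1_{D^c}\ud\mu^M=0,
\]
and as the integrand is nonnegative we conclude $\int_{\mathbb R_+\times X}\mathbf 1_{D^c}\ud\nu^M=0$ a.s., i.e.\ $\nu^M$ is a.s.\ carried by $D=\{\tau_1,\ldots,\tau_n,\ldots\}\times X$. Finally, for a fixed $A\in\widetilde{\mathcal P}$, splitting the integral over $D$ and $D^c$ yields
\begin{multline*}
\int_{\mathbb R_+\times X}\mathbf 1_A(t,\cdot,x)\ud\nu^M(t,x)\\
=\int_{\mathbb R_+\times X}\mathbf 1_A(t,\cdot,x)\mathbf 1_D(t,\cdot,x)\ud\nu^M(t,x)\\
=\int_{\mathbb R_+\times X}\mathbf 1_A(t,\cdot,x)\mathbf 1_{\{\tau_1,\ldots,\tau_n,\ldots\}}(t)\ud\nu^M(t,x)
\end{multline*}
a.s., which is exactly~\eqref{eq:nuMhasfinitesuppforMPDwAJ}. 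The only genuinely nontrivial step is the transfer of the carrying set from $\mu^M$ to its compensator $\nu^M$; this works precisely because the carrier $D$ of $\mu^M$ produced by Lemma~\ref{lem:PDmAJhasjumpsatprsttimes} is predictable, so that $\mathbf 1_{D^c}$ may legitimately be inserted into~\eqref{eq:defofcompofrandsamdsaer}.
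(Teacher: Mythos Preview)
Your proof is correct and follows essentially the same approach as the paper: invoke Lemma~\ref{lem:PDmAJhasjumpsatprsttimes} to obtain the predictable stopping times, observe that the union of their graphs is predictable, and transfer the carrier of $\mu^M$ to $\nu^M$ via the compensator identity~\eqref{eq:defofcompofrandsamdsaer}. Your execution is in fact slightly more direct than the paper's --- the paper routes the transfer through an auxiliary single-stopping-time Lemma~\ref{lem:onejumpformu-->onejumpfornu} together with $\widetilde{\mathcal P}$-$\sigma$-finiteness and monotone convergence, whereas you test once against $\mathbf 1_{D^c}$ and use nonnegativity to conclude.
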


For the proof we will need the following lemma, which follows from \cite[Subsection 5.3]{DY17}.

\begin{lemma}\label{lem:onejumpformu-->onejumpfornu}
Let $(J, \mathcal J)$ be a measurable space, $\mu$ be an integer-valued random measure on $\mathbb R_+ \times \Omega$, and let $\tau$ be a predictable stopping times such that $\mu \mathbf 1_{\tau} = \mu$. Then for  the corresponding compensator $\nu$ we have we have that  $\nu \mathbf 1_{\tau} = \nu$, i.e.\ $\{\tau\}\times X$ is a set of full $\nu^M$-measure a.s.
\end{lemma}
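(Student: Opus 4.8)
\emph{Plan.} The whole argument is a one-line application of the defining identity of the compensator, once one observes that the \emph{graph} of a predictable stopping time is a predictable set. Recall that $\mathbf 1_\tau$ stands for the indicator of the graph $\Gamma_\tau := \{(t,\omega)\in\mathbb R_+\times\Omega : t=\tau(\omega)\}$, viewed as a time--space function, so that the hypothesis $\mu\mathbf 1_\tau=\mu$ says precisely that a.s.\ the random measure $\mu(\omega)$ is concentrated on $\{\tau(\omega)\}\times J$, equivalently $\int_{\mathbb R_+\times J}(1-\mathbf 1_\tau)\ud\mu=0$ a.s. The first step is to record that $\Gamma_\tau$ is $\widetilde{\mathcal P}$-measurable: if $(\tau_n)_{n\geq 1}$ is a sequence announcing $\tau$, then each stochastic interval $\{(t,\omega):0\leq t\leq\tau_n(\omega)\}$ is predictable (its indicator is a left-continuous adapted process), hence $\bigcup_n\{0\leq t\leq\tau_n\}=\{0\leq t<\tau\}$ is predictable, and $\Gamma_\tau=\{0\leq t\leq\tau\}\setminus\{0\leq t<\tau\}$ is predictable as well (see e.g.\ \cite{JS,Kal}). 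Consequently $F(t,\omega,x):=1-\mathbf 1_\tau(t,\omega)$ is a nonnegative $\widetilde{\mathcal P}$-measurable function on $\mathbb R_+\times\Omega\times J$.

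The second step is to feed $F$ into the compensator identity \eqref{eq:defofcompofrandsamdsaer} for the compensator $\nu$ of $\mu$ (which exists and is $\widetilde{\mathcal P}$-$\sigma$-finite by \cite[Theorem II.1.8]{JS}, $\mu$ being optional and $\widetilde{\mathcal P}$-$\sigma$-finite). Since $F$ need not be $\mu$-integrable, I would first apply \eqref{eq:defofcompofrandsamdsaer} to $F\mathbf 1_{A_n}$, where $(A_n)_{n\geq1}\subset\widetilde{\mathcal P}$ is an increasing exhaustion of $\mathbb R_+\times\Omega\times J$ with $\int\mathbf 1_{A_n}\ud\mu<\infty$ a.s., and then let $n\to\infty$ by monotone convergence to get
\[
\mathbb E\int_{\mathbb R_+\times J}(1-\mathbf 1_\tau)\ud\nu=\mathbb E\int_{\mathbb R_+\times J}(1-\mathbf 1_\tau)\ud\mu=0,
\]
the last equality because $\mu\mathbf 1_\tau=\mu$. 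As the integrand is nonnegative, $\int_{\mathbb R_+\times J}(1-\mathbf 1_\tau)\ud\nu=0$ a.s., i.e.\ a.s.\ $\nu(\omega)$ is concentrated on $\{\tau(\omega)\}\times J$; this is exactly $\nu\mathbf 1_\tau=\nu$, which is the claim.

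\emph{Main obstacle.} There is no real obstacle here — the statement is essentially the elementary fact that the dual predictable projection cannot create mass off a predictable carrier of $\mu$. The only points deserving a little care are the two routine technicalities used above: the preliminary fact that the graph of a predictable stopping time is predictable, and the localization-plus-monotone-convergence passage needed to apply \eqref{eq:defofcompofrandsamdsaer} to the (possibly non-$\mu$-integrable) test function $1-\mathbf 1_\tau$. One should also make sure at the outset that $\mu$ is such that a compensator exists (e.g.\ optional); in the intended application $\mu=\mu^M$ and this holds automatically, as noted in Subsection~\ref{subsec:ranmeasures}.
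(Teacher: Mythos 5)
Your proof is correct. The paper does not actually supply its own argument for this lemma: it simply states that the result ``follows from \cite[Subsection 5.3]{DY17},'' so there is no internal proof to compare against, but the argument you give is the natural self-contained one and is almost certainly the same idea as in that reference (predictability of the graph $[\![\tau]\!]$ plus the defining property of the compensator). Two small remarks. First, the compensator identity \eqref{eq:defofcompofrandsamdsaer}, as stated in the paper and in \cite[Theorem II.1.8]{JS}, holds for \emph{every} nonnegative $\widetilde{\mathcal P}$-measurable $F$, with both sides possibly equal to $+\infty$; so your preliminary localization by $F\mathbf 1_{A_n}$ followed by monotone convergence is unnecessary (though harmless) — you can apply the identity directly to $F=1-\mathbf 1_\tau$. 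Second, as you note at the end, the hypothesis should really read ``optional integer-valued $\widetilde{\mathcal P}$-$\sigma$-finite random measure on $\mathbb R_+\times J$'' for the compensator $\nu$ to exist; this is indeed the case in the intended application $\mu=\mu^M$, and the lemma's ``$\mathbb R_+\times\Omega$'' and ``$\{\tau\}\times X$'' are evidently typos for ``$\mathbb R_+\times J$'' and ``$[\![\tau]\!]\times J$''.
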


\begin{proof}[Proof of Proposition \ref{prop:GrigcharforPDmAJ}]
First notice that $[\![M^c]\!] =0$ analogously to Proposition~\ref{prop:GigcharforPDQLC}. As $M$ is purely discontinuous with accessible jumps, by Lemma \ref{lem:PDmAJhasjumpsatprsttimes} there exists a sequence $(\tau_n)_{n\geq 1}$ of finite predictable stopping times with disjoint graphs such that a.s.\
\[
\{t\geq 0: \Delta M_{t} \neq 0\} \subset \{\tau_1,\ldots, \tau_n,\ldots\}.
\]
Then the desired follows from the fact that $\mu^M = \sum_{n\geq 1} \mu^{M}\mathbf 1_{\tau_n}$ a.s., Lemma \ref{lem:onejumpformu-->onejumpfornu}, and the fact that $\nu^M = \sum_{n\geq 1} \nu^{M}\mathbf 1_{\tau_n}$, which follows e.g.\ from \cite[Chapter II.1]{JS}. Indeed, $\mu^M$ is $\widetilde {\mathcal P}$-$\sigma$-finite, and if we fix $A\in \widetilde {\mathcal P}$ such that $\mathbb E\int_{\mathbb R_+ \times X} \mathbf 1_{A} \ud \mu^M <\infty$, then by the monotone convergence theorem and by the definition of a compensator measure (see Subsection \ref{subsec:ranmeasures} and \cite[Chapter II.1]{JS})
\begin{multline*}
\mathbb E\int_{\mathbb R_+ \times X} \mathbf 1_{A} \ud \nu^M = \mathbb E\int_{\mathbb R_+ \times X} \mathbf 1_{A} \ud \mu^M= \sum_{n\geq 1}\mathbb E\int_{\mathbb R_+ \times X} \mathbf 1_{A} \mathbf 1_{\tau_n} \ud \mu^{M}\\
 =\mathbb E\int_{\mathbb R_+ \times X} \mathbf 1_{A} \mathbf 1_{\{\tau_1, \ldots,\tau_n\ldots\}} \ud \mu^{M}= \mathbb E\int_{\mathbb R_+ \times X} \mathbf 1_{A} \mathbf 1_{\{\tau_1, \ldots,\tau_n\ldots\}} \ud \nu^{M},
\end{multline*}
so \eqref{eq:nuMhasfinitesuppforMPDwAJ} follows immideately.
\end{proof}

\begin{proposition}\label{prop:Grigcharforcandechowdoesitlooklike}
Let $X$ be a Banach space, $M:\mathbb R_+ \times \Omega \to X$ be a local martingale. Assume that $M$ admits the canonical decomposition $M = M^c + M^q + M^a$. Then we have that $\nu^M = \nu^{M^q} + \nu^{M^a}$.
\end{proposition}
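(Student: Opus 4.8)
The plan is to reduce the identity $\nu^M = \nu^{M^q} + \nu^{M^a}$ to the analogous identity for the jump measures $\mu^M = \mu^{M^q} + \mu^{M^a}$, which in turn follows from Remark \ref{rem:candecsplitsjumps}. First I would recall that, since $M^c$ is continuous, it has no jumps, so the jumps of $M$ are exactly those of $M^q + M^a$; and by \eqref{eq:candecsplitsjumps} the jump sets of $M^q$ and $M^a$ are a.s.\ disjoint and their union is the jump set of $M$. Consequently, for every $t\geq 0$ and every Borel set $B \subset X$ with $0 \notin \bar B$ (so that the jump contributions do not overlap at a shared non-jump), we have a.s.\
\[
\mu^M([0,t]\times B) = \mu^{M^q}([0,t]\times B) + \mu^{M^a}([0,t]\times B),
\]
because at any fixed time $s$ at most one of $\Delta M^q_s$, $\Delta M^a_s$ is nonzero and $\Delta M_s$ equals whichever one is. By a monotone class / $\pi$-$\lambda$ argument this extends to all $\widetilde{\mathcal P}$-measurable nonnegative integrands $F$ vanishing near $x=0$, and then (since $\mu^M$, $\mu^{M^q}$, $\mu^{M^a}$ all assign no mass to $\mathbb R_+\times\{0\}$ by the definition \eqref{eq:defofmuM}) to all nonnegative $\widetilde{\mathcal P}$-measurable $F$; thus $\mu^M = \mu^{M^q}+\mu^{M^a}$ as random measures on $\mathbb R_+\times X$.

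Next I would pass to compensators. All three jump measures $\mu^M$, $\mu^{M^q}$, $\mu^{M^a}$ are optional and $\widetilde{\mathcal P}$-$\sigma$-finite, so each admits a compensator $\nu^M$, $\nu^{M^q}$, $\nu^{M^a}$ by \cite[Theorem II.1.8]{JS}. The defining relation \eqref{eq:defofcompofrandsamdsaer} gives, for every nonnegative $\widetilde{\mathcal P}$-measurable $F$ in a $\sigma$-finite exhausting family,
\[
\mathbb E \int_{\mathbb R_+\times X} F \ud \nu^M = \mathbb E \int_{\mathbb R_+\times X} F \ud \mu^M = \mathbb E \int_{\mathbb R_+\times X} F \ud \mu^{M^q} + \mathbb E \int_{\mathbb R_+\times X} F \ud \mu^{M^a} = \mathbb E \int_{\mathbb R_+\times X} F \ud \nu^{M^q} + \mathbb E \int_{\mathbb R_+\times X} F \ud \nu^{M^a}.
\]
Since $\nu^{M^q}+\nu^{M^a}$ is itself a $\widetilde{\mathcal P}$-$\sigma$-finite predictable random measure, and the compensator is unique with respect to property \eqref{eq:defofcompofrandsamdsaer}, we conclude $\nu^M = \nu^{M^q} + \nu^{M^a}$. (Here one uses that $\mu^{M^q}$ and $\mu^{M^a}$ share a common $\widetilde{\mathcal P}$-$\sigma$-finite exhausting sequence with $\mu^M$, which can be arranged since each is dominated by $\mu^M$.)

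The only genuinely delicate point — and the one I expect to be the main obstacle — is the careful handling of the behaviour near $x = 0$ and the $\sigma$-finiteness bookkeeping: one must make sure that the additivity $\mu^M = \mu^{M^q} + \mu^{M^a}$ is established as an identity of measures (not merely on a generating family of rectangles) and that the passage from $\mu$'s to $\nu$'s uses the correct common exhausting sequence so that the uniqueness clause of \cite[Theorem II.1.8]{JS} genuinely applies. Everything else is a routine consequence of Remark \ref{rem:candecsplitsjumps} and the definition of a compensator; in particular no UMD assumption is needed here, only the \emph{existence} of the canonical decomposition, which is part of the hypothesis.
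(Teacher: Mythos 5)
Your proposal is correct and follows essentially the same route as the paper's proof: use Remark \ref{rem:candecsplitsjumps} to get $\mu^M = \mu^{M^q} + \mu^{M^a}$, then pass to compensators via the defining relation \eqref{eq:defofcompofrandsamdsaer} and the uniqueness clause of \cite[Theorem II.1.8]{JS}. The paper states this very tersely; your version simply spells out the bookkeeping at $x=0$ and the choice of a common exhausting sequence, which the paper leaves implicit.
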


\begin{proof}
Note that by Definition \ref{def:candec} of the canonical decomposition and by Remark~\ref{rem:candecsplitsjumps}, $M^q$ and $M^a$ collect different jumps of $M$, i.e.\ \eqref{eq:candecsplitsjumps} holds true, and hence $\mu^{M^q} + \mu^{M^a} = \mu^M$, so $\nu^{M^q} + \nu^{M^a} = \nu^M$ by the definition of a compensator (see Subsection \ref{subsec:ranmeasures}).
\end{proof}

We will also need the following lemma, which follows from \cite[Theorem 9.22]{KalRM}. Recall that a random measure $\mu$ with a compensator $\nu$ is called {\em quasi-left continuous} if any stochastic integral with respect to $\bar{\mu}=\mu -\nu$ defined by \eqref{eq:defofstochintwrtbarmu} is quasi-left continuous. A random measure $\mu$ with a compensator $\nu$ is called {\em accessible} if any stochastic integral with respect to $\bar{\mu}$ has accessible jumps.

\begin{lemma}\label{lem:decompofmeasuresontwoparts}
 Let $(J,\mathcal J)$ be a measurable space, $\mu$ be an integer-valued $\widetilde P$-$\sigma$-finite random measure on $\mathbb R_+ \times \Omega$. Then there exist unique random measures $\mu^{q}$ and $\mu^a$ such that $\mu = \mu^q + \mu^a$ and such that $\mu^q$ is quasi-left continuous and $\mu^a$ is accessible.
\end{lemma}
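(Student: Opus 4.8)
The plan is to build $\mu^q$ and $\mu^a$ path-by-path by separating the atoms of $\mu$ that sit at predictable times from those that sit at totally inaccessible times, and then to verify that this split is measurable, that the two pieces are genuine random measures with the required continuity properties, and that it is the only such split. First I would reduce to the case where $\mu$ is a finite integer-valued random measure: using the $\widetilde{\mathcal P}$-$\sigma$-finiteness, fix an increasing sequence $(A_n)\subset\widetilde{\mathcal P}$ exhausting $\mathbb R_+\times\Omega\times J$ with $\int \mathbf 1_{A_n}\ud\mu<\infty$ a.s., decompose each $\mathbf 1_{A_n}\mu$, and reassemble; uniqueness for the pieces will make the reassembly consistent. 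So from now on assume $\mu$ counts finitely many atoms, located (in time) at random times $\sigma_1,\sigma_2,\ldots$.

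Next I would invoke the classification of stopping times: for an integer-valued random measure the ``time-projection'' $\mu(\cdot\times J)$ is an integer-valued optional random measure on $\mathbb R_+$, whose atoms can be enumerated by stopping times $(\sigma_k)_{k\geq 1}$ (see \cite[Chapter II.1]{JS}, \cite{KalRM}), and each such $\sigma_k$ admits a \emph{decomposition into an accessible and a totally inaccessible part}: $\sigma_k=\sigma_k^a\wedge\sigma_k^i$ on disjoint sets $D_k^a,D_k^i\in\mathcal F_{\sigma_k}$, where $\sigma_k$ restricted to $D_k^a$ is covered by a countable family of predictable stopping times and $\sigma_k$ restricted to $D_k^i$ is totally inaccessible (this is the classical accessible/totally inaccessible dichotomy, e.g.\ \cite[Chapter 25]{Kal}). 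Then set
\[
\mu^a := \sum_{k\geq 1}\mathbf 1_{D_k^a}\,\mu\,\mathbf 1_{\{\sigma_k\}},\qquad \mu^q := \mu-\mu^a = \sum_{k\geq 1}\mathbf 1_{D_k^i}\,\mu\,\mathbf 1_{\{\sigma_k\}}.
\]
Both are integer-valued random measures (the supports $\{\sigma_k\}$ are a.s.\ disjoint), and $\mu^a$ is carried by a countable union of graphs of predictable stopping times while $\mu^q$ is carried by totally inaccessible times, hence non-atomic in time at every predictable stopping time. To identify these with the quasi-left continuous and accessible measures of the statement, I would use Lemma \ref{lem:nuMisnonatomiffMqlc} together with Lemma \ref{lem:onejumpformu-->onejumpfornu}: the compensator $\nu^q$ of $\mu^q$ is non-atomic in time (since $\mu^q$ loads no predictable time), so any integral against $\bar\mu^q$ is quasi-left continuous; and the compensator $\nu^a$ of $\mu^a$ is carried by $\{\sigma_k^a\}_k\times J$ by Lemma \ref{lem:onejumpformu-->onejumpfornu}, whence every integral against $\bar\mu^a$ has accessible jumps. (Alternatively this identification is exactly the content of \cite[Theorem 9.22]{KalRM}, which I would cite as the clean reference.)

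For \emph{uniqueness}, suppose $\mu=\mu^q+\mu^a=\tilde\mu^q+\tilde\mu^a$ are two such splittings. Since all four are integer-valued with atoms among those of $\mu$, at each atom of $\mu$ (located by a stopping time $\sigma$, with a corresponding $\mathcal F_\sigma$-measurable mark $Y\in J$) the atom is assigned entirely to the quasi-left-continuous part or entirely to the accessible part in each decomposition; so it suffices to show the assignment is forced. If an atom at $\sigma$ were assigned to the quasi-left continuous piece in one decomposition but to the accessible piece in the other, then on the event carrying that atom $\sigma$ would simultaneously be (after restriction) a totally inaccessible time and an accessible time, which forces that event to be null — this is precisely the defining disjointness of the accessible/totally inaccessible decomposition of $\sigma$. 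Hence $\mu^q=\tilde\mu^q$ and $\mu^a=\tilde\mu^a$ a.s. The one genuinely delicate point — and the step I expect to cost the most care — is the \emph{measurability of the splitting}: one must check that $D_k^a,D_k^i$ can be chosen in $\mathcal F_{\sigma_k}$ and that the resulting $\mu^q,\mu^a$ depend $\widetilde{\mathcal O}$-measurably on $(\omega;t,x)$ so that they are bona fide optional random measures to which a compensator applies; this is where I would lean on the measurable enumeration of the atoms of an integer-valued random measure in \cite[Proposition II.1.14]{JS} and the measurable accessible/inaccessible decomposition of stopping times, rather than re-proving it from scratch.
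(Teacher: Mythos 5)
Your proposal is correct and lands on exactly the same citation the paper uses: the paper's entire treatment of this lemma is to say it ``follows from \cite[Theorem 9.22]{KalRM}'' with no independent argument, and you identify at the end that this is the clean reference to invoke. Your from-scratch sketch — split each atom time $\sigma_k$ via the accessible/totally-inaccessible decomposition on $D_k^a \sqcup D_k^i \in \mathcal F_{\sigma_k}$, assemble $\mu^a$ from the accessible parts, verify quasi-left continuity of $\mu^q$ via its compensator being non-atomic in time, get uniqueness from essential uniqueness of that stopping-time decomposition, and handle measurability via \cite[Proposition II.1.14]{JS} — is an accurate outline of the argument underlying that Kallenberg theorem, so this is the same route with the added bonus of an explicit construction.
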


\begin{remark}\label{rem:whichwhatpartofrmmeans}
 Note that analogously to Lemma \ref{lem:nuMisnonatomiffMqlc} one can show that $\mu$ is quasi-left continuous if and only if $\nu$ is non-atomic in time. Similarly, by the fact that any accessible measure is supported by countably many predictable stopping times (see \cite[Theorem 9.22]{KalRM}) and by applying techniques from the proof of Proposition~\ref{prop:GrigcharforPDmAJ} it follows that $\mu$ is accessible if and only if its compensator $\nu$ has a.s.\ a support of countably many points on $\mathbb R_+$.
\end{remark}

\begin{proof}[Proof of Theorem \ref{thm:candecfortangaretang}]
The theorem follows from Proposition \ref{prop:Grigcharcontcase}, \ref{prop:GigcharforPDQLC}, \ref{prop:GrigcharforPDmAJ}, and \ref{prop:Grigcharforcandechowdoesitlooklike}, Lemma \ref{lem:decompofmeasuresontwoparts}, and Remark \ref{rem:whichwhatpartofrmmeans}.
\end{proof}

\subsection{Continuous martingales}\label{subsec:dectangcontmart}

First let us consider continuous martingales. The theory of continuous martingales is already classical and in particular due to \cite{Y18BDG,NVW,Gar85} the following proposition holds true.

\begin{proposition}\label{prop:domcontcase}
Let $X$ be a Banach space, $0< p<\infty$. Then $X$ is UMD if and only if for any pair $M, N:\mathbb R_+ \times \Omega \to X$ of continuous martingales with $[\![N]\!]_{\infty} \leq  [\![M]\!]_{\infty}$ a.s.\ one has that
\begin{equation}\label{eq:domcontcase}
\mathbb E \sup_{1\leq t < \infty} \|N_t\|^p \lesssim_{p, X}\mathbb E \sup_{1\leq t < \infty} \|M_t\|^p.
\end{equation}
\end{proposition}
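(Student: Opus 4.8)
The plan is to reduce the statement to known weak differential subordination estimates for continuous martingales in UMD spaces. Recall that for a continuous local martingale the covariation bilinear form $[\![M]\!]$ is, coordinatewise, $[\langle M, x^*\rangle, \langle M, y^*\rangle]$, so the hypothesis $[\![N]\!]_\infty \leq [\![M]\!]_\infty$ precisely says that for every $x^*\in X^*$ one has $[\langle N, x^*\rangle]_\infty \leq [\langle M, x^*\rangle]_\infty$ a.s., and moreover (using the bilinear form structure, evaluating on $x^*\pm y^*$) that the increments $[\langle N, x^*\rangle]_t - [\langle N, x^*\rangle]_s \leq [\langle M, x^*\rangle]_t - [\langle M, x^*\rangle]_s$ for $s\leq t$, since both $[\![M]\!]$ and $[\![N]\!]$ are nondecreasing in the appropriate cone sense and continuous. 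In other words, $N$ is \emph{weakly differentially subordinate} to $M$ in the sense of \cite{Y17MartDec,OY18,Y18BDG}. For the converse direction, one simply takes a concrete pair witnessing failure of UMD (e.g.\ the classical Brownian martingale transform example giving unbounded weak differential subordination constants), noting that the martingale transform example is continuous when realized via stochastic integrals against a Brownian motion.

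First I would handle the ``if'' direction. Assume $X$ is UMD. By a standard localization and stopping argument (permitted by the conventions of this section, and by Remark \ref{rem:locmartislocallyL1DR+X} together with Theorem \ref{thm:strongLpmartforamaBanacoace}) it suffices to prove the inequality for $L^p$-bounded martingales, and by density/approximation it is enough to treat the case $p\geq 1$. Then I would invoke the weak differential subordination $L^p$-estimates: for UMD $X$ and continuous martingales $M, N$ with $N$ weakly differentially subordinate to $M$, one has $\mathbb E\sup_t\|N_t\|^p \lesssim_{p,X} \mathbb E\sup_t\|M_t\|^p$; this is exactly what is recorded in \cite{Y17MartDec,OY18,Y18BDG} (in the continuous case this is the sharp form of Burkholder's method adapted to weak differential subordination, and it is stated in the introduction of the present paper as the input ``the continuous case immediately follows from weak differential subordination inequalities''). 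The only thing to check carefully is that $[\![N]\!]_\infty \leq [\![M]\!]_\infty$ genuinely implies weak differential subordination, i.e.\ the increment inequality and not merely the endpoint inequality; for continuous martingales this follows because $t\mapsto [\langle M,x^*\rangle]_t$ is continuous nondecreasing and $[\langle M,x^*\rangle]_\infty - [\langle N,x^*\rangle]_\infty \geq [\langle M,x^*\rangle]_t - [\langle N,x^*\rangle]_t \geq 0$ would need the pointwise-in-$t$ version, which one gets by applying the endpoint hypothesis to the stopped martingales $M^t, N^t$ — and stopping preserves the hypothesis since $[\![M^t]\!]_\infty = [\![M]\!]_t$.

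For the ``only if'' direction I would argue contrapositively: if $X$ is not UMD, then by Burkholder's characterization \cite{Burk81} (see also the UMD preliminaries above) there exist, for every $C$, a finite martingale difference sequence $(d_n)$ and signs $(\varepsilon_n)$ violating the UMD inequality with constant $C$. Embedding such a discrete martingale transform into continuous time via stochastic integration against an $H$-cylindrical (or one-dimensional) Brownian motion — write the discrete martingale as a stochastic integral of a piecewise-constant predictable $\Phi$ and the transform as the integral of $\pm\Phi$ — produces continuous martingales $M, N$ with $[\![N]\!]_\infty = [\![M]\!]_\infty$ a.s.\ (the $\pm 1$ transform does not change the quadratic variation), yet $\mathbb E\sup_t\|N_t\|^p / \mathbb E\sup_t\|M_t\|^p$ is not bounded by any constant. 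This contradicts \eqref{eq:domcontcase}, so \eqref{eq:domcontcase} forces $X$ to be UMD. The main obstacle is purely bookkeeping: making sure the continuous-time embedding of the discrete counterexample has the claimed quadratic-variation identity and the claimed blow-up of $L^p$-norms simultaneously, and checking that the case $0<p<1$ (where $\sup_t\|M_t\|$ is still well-defined and Doob-type comparisons hold in the form \eqref{eq:DoobsineqXBanach} only for $p\geq 1$) is handled — here I would reduce $0<p<1$ to $p=1$ by Lenglart's inequality / the standard good-$\lambda$ or Burkholder-Davis-Gundy type extrapolation, which is routine and already implicit in the cited references.
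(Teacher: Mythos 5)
Your ``if'' direction has a real gap: the hypothesis $[\![N]\!]_\infty \leq [\![M]\!]_\infty$ a.s.\ does \emph{not} imply that $N$ is weakly differentially subordinate to $M$. Weak differential subordination requires the increment condition, i.e.\ that $t\mapsto [\langle M,x^*\rangle]_t - [\langle N,x^*\rangle]_t$ be nondecreasing for every $x^*$, which is strictly stronger than the endpoint comparison. Concretely, take two independent Brownian motions $W$ and $W'$ and set $M_t = W_{t\wedge 1}$, $N_t = W'_{((t-1)\vee 0)\wedge 1}$: then $[M]_\infty = [N]_\infty = 1$, but $[M]_t - [N]_t$ decreases from $1$ to $0$ on $[1,2]$, so $N$ is not differentially subordinate to $M$ even though the endpoint inequality holds with equality. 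Your remark about ``evaluating on $x^*\pm y^*$'' and about stopping preserving the hypothesis does not repair this; stopping at $t_0$ produces a pair for which one would need $[\![N]\!]_{t_0}\leq[\![M]\!]_{t_0}$, and this is not implied by the $t=\infty$ inequality.

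The intended route, which the paper's citation to \cite{Y18BDG} points at, is not weak differential subordination but the two-sided vector-valued Burkholder--Davis--Gundy inequality for continuous UMD-valued local martingales, $\mathbb E\sup_{t\geq 0}\|M_t\|^p \eqsim_{p,X} \mathbb E\,\gamma([\![M]\!]_\infty)^p$ for all $0<p<\infty$, where $\gamma(\cdot)$ is the Gaussian characteristic from Remark~\ref{rem:ifUMDthencovbilform}. Together with the elementary monotonicity of $\gamma$ under pointwise domination of nonnegative bilinear forms (decompose a $V_2$-Gaussian as an independent sum of a $V_1$-Gaussian and a $(V_2-V_1)$-Gaussian, then apply Jensen), the estimate \eqref{eq:domcontcase} follows at once; this also handles $0<p<1$ uniformly, so the Lenglart/good-$\lambda$ extrapolation you invoke becomes unnecessary. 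Your ``only if'' direction, embedding a discrete UMD counterexample into Brownian time and using a $\pm 1$ transform of the integrand (which leaves $\Phi\Phi^*$ and hence $[\![\cdot]\!]_\infty$ unchanged), is sound and is consistent with what one extracts from \cite{Gar85,Burk81}.
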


What we are interested in is constructing for an $X$-valued continuous martingale $M$ a decoupled tangent martingale $N$ (see Definition \ref{def:dectanglocmartcontimecased}), which we will need later in Subsection \ref{subsec:CIprocess}.

\begin{theorem}\label{thm:DTMforcontcase}
Let $X$ be a UMD Banach space, $M:\mathbb R_+ \times \Omega \to X$ be a continuous local martingale. Then there exists an enlarged probability space $(\overline {\Omega}, \overline{\mathcal F}, \overline {\mathbb P})$ endowed with an enlarged filtration $\overline {\mathbb F} = (\overline {\mathcal F}_t)_{t\geq 0}$, and an $\overline {\mathbb F}$-adapted continuous local martingale $N:\mathbb R_+ \times \overline {\Omega} \to X$ such that $M$ is a local $\overline {\mathbb F}$-martingale with the same local characteristics, $M$ and $N$ are tangent, and $N(\omega)$ is a martingale with independent increments and with local characteristics $([\![M]\!](\omega), 0)$ for a.e.\ $\omega \in \Omega$.
\end{theorem}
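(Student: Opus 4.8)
The plan is to use the representation of a continuous UMD-valued local martingale as a time-changed stochastic integral against a cylindrical Brownian motion. First I would recall (from \cite{Y18BDG} and the theory of the covariation bilinear form, Remark \ref{rem:ifUMDthencovbilform}) that $M$ has a continuous covariation bilinear form $[\![M]\!]$, and that the scalar process $t \mapsto \gamma([\![M]\!]_t)$ is continuous, nondecreasing, and finite a.s. Set $A^c_t := t + \gamma([\![M]\!]_t)$ (the extra $t$ guarantees strict increase and no explosion), let $\tau^c$ be its right-continuous inverse time change, and consider $M \circ \tau^c$. Using the ideal property of $\gamma$-norms and a measurable-selection/disintegration argument one obtains a separable Hilbert space $H$, an $H$-cylindrical Brownian motion $W_H$ (on a possibly enlarged space, built from the martingale part of $M \circ \tau^c$ together with an independent cylindrical Brownian motion to fill up the degenerate directions), and a strongly predictable $\Phi : \mathbb{R}_+ \times \Omega \to \gamma(L^2(\mathbb{R}_+;H),X)$ — here viewed pointwise in the time variable as an element of $\gamma(H,X)$ — such that $M \circ \tau^c = \Phi \cdot W_H$; this is exactly the content cited in Subsection \ref{subsec:dectangcontmart} via \cite{NVW}. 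Undoing the time change, $M = (\Phi \cdot W_H) \circ A^c$.

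Next I would build the decoupled tangent martingale on the product enlargement. Take $\widetilde{W}_H$ an independent copy of $W_H$, living on $(\widehat{\Omega}, \widehat{\mathcal F}, \widehat{\mathbb P})$, and set $\overline{\Omega} := \Omega \times \widehat{\Omega}$ with the product measure and the enlarged filtration $\overline{\mathcal F}_t := \mathcal F_{\tau^c_t}$ enlarged by $\sigma(\widetilde{W}_H(s)h : s \le t, h \in H)$, then time-changed back by $A^c$; concretely $\overline{\mathbb F}$ is the filtration with respect to which both $M$ and $\Phi$ are adapted and $\widetilde{W}_H \circ A^c$ is a cylindrical Brownian motion independent of $\mathcal F_\infty$. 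Define
\[
N := (\Phi \cdot \widetilde{W}_H) \circ A^c.
\]
Since $\Phi$ is $\mathcal F$-measurable (hence, for a.e.\ fixed $\omega$, deterministic) and $\widetilde{W}_H$ is an independent cylindrical Brownian motion, for a.e.\ $\omega \in \Omega$ the path $N(\omega)$ is a Gaussian process, namely a time-changed Wiener integral $\bigl(\Phi(\omega) \cdot \widetilde{W}_H\bigr)\circ A^c(\omega)$ with deterministic integrand; such a process is a continuous martingale with independent increments. Its covariation bilinear form is, for $x^*,y^* \in X^*$,
\[
[\![N(\omega)]\!]_t(x^*,y^*) = \int_0^{A^c_t(\omega)} \langle \Phi(\omega,s)^* x^*, \Phi(\omega,s)^* y^* \rangle_H \,\mathrm{d}s = [\![M]\!]_t(x^*,y^*),
\]
because the same identity computes $[\![M]\!]$ from $\Phi \cdot W_H$; hence $N(\omega)$ has local characteristics $([\![M]\!](\omega),0)$, which by Proposition \ref{prop:Grigcharcontcase} is exactly the local characteristics of $M(\omega)$. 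Therefore $M$ and $N$ are tangent (Definition \ref{def:tangconttimecase}), and $N$ is automatically a continuous local $\overline{\mathbb F}$-martingale because $\Phi$ is $\overline{\mathbb F}$-elementary-predictable-approximable and $\widetilde{W}_H \circ A^c$ is an $\overline{\mathbb F}$-cylindrical Brownian motion. It remains to note that $M$ stays a local $\overline{\mathbb F}$-martingale with unchanged local characteristics: this is the easy direction since the enlargement is a product enlargement by an independent factor, so conditioning does not alter $M$'s conditional law; and $[\![M]\!]$ is invariant under any filtration enlargement as observed in Remark \ref{rem:Mmightchangelocalcharacp}, while $\nu^M = 0$ is trivially preserved because $M$ is continuous.

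The main obstacle I anticipate is the \emph{measurability and representation step}: producing the Hilbert space $H$, the integrand $\Phi$, and the cylindrical Brownian motion $W_H$ with the required joint measurability so that $\Phi$ is genuinely $\mathcal F$-measurable (a function of $\omega$ alone after the construction), so that the time change $\tau^c$ is well behaved, and so that the degenerate directions of $[\![M]\!]$ can be filled in by an auxiliary independent noise without destroying adaptedness. One must also be careful that $M$ possibly fails to be $L^1$-bounded; this is handled by the standing localization reduction explained at the start of Section \ref{sec:tangmartconttime} (Remark \ref{rem:locmartislocallyL1DR+X} and Theorem \ref{thm:MNtangent==>MtauNtautangent+ifNCIINtauCII}), passing to $M^{\sigma_n}$ with $\sigma_n \nearrow \infty$, constructing $N^{\sigma_n}$ compatibly, and taking a limit. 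All the remaining verifications — that $N$ is a martingale (use Remark \ref{rem:locmartwithIIisamartwithII}), that it has independent increments given $\mathcal F$ (Proposition \ref{prop:Omega'Omega';arecondindepifondforanyomega'as}), and the bilinear-form computation above — are routine given the representation.
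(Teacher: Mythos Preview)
Your overall strategy coincides with the paper's: reduce by localization, perform a strictly increasing continuous time change so that the time-changed $M$ admits a representation $\Phi\cdot W_H$ against an $H$-cylindrical Brownian motion, then set $N:=(\Phi\cdot \widetilde W_H)\circ A^c$ for an independent copy $\widetilde W_H$ and verify the tangency and conditional-independence properties exactly as you describe. The verification steps you list (invariance of $[\![M]\!]$ under the product enlargement, $\nu^M=0$ preserved, $N(\omega)$ a Gaussian martingale with covariation $[\![M]\!](\omega)$) are the same as in the paper and are correct.

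There is, however, one concrete point where your sketch diverges from the paper and where a gap may open. You propose the time change built from $A^c_t=t+\gamma([\![M]\!]_t)$. After this time change you control $\gamma([\![M]\!]_{\tau^c_t})-\gamma([\![M]\!]_{\tau^c_s})\le t-s$, but for the Brownian representation you need a Lipschitz bound on each scalar quadratic variation, i.e.\ on $([\![M]\!]_{\tau^c_t}-[\![M]\!]_{\tau^c_s})(x^*,x^*)$. The latter is bounded by $\|x^*\|^2\,\gamma\bigl([\![M]\!]_{\tau^c_t}-[\![M]\!]_{\tau^c_s}\bigr)$, and in general $\gamma(V_1-V_2)\ge \gamma(V_1)-\gamma(V_2)$, the wrong direction; so your time change does not obviously yield the required absolute continuity of the scalar brackets. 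The paper sidesteps this by choosing instead $A_t=t+\sum_{n\ge1}n^{-2}[\langle M,x_n^*\rangle]_t$ for a dense sequence $(x_n^*)$ in the unit ball of $X^*$, which directly forces $[\langle M\circ\tau,x_n^*\rangle]_t-[\langle M\circ\tau,x_n^*\rangle]_s\le n^2(t-s)$ and makes Lemma~\ref{lem:brrepres} applicable.

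You are right that the ``measurability and representation step'' is the crux, and the paper's mechanism for it is worth knowing: it does \emph{not} invoke a black-box representation theorem from \cite{NVW} (that reference supplies the integration theory, not the representation). Instead it applies the scalar Brownian representation (Lemma~\ref{lem:brrepres}) simultaneously to the countable family $\langle M\circ\tau,x_n^*\rangle$, obtaining deterministic-up-to-$\omega$ integrands $f_n\in L^2(\mathbb R_+;H)$; then extends $x^*\mapsto f_{x^*}$ to all of $X^*$ by density, checks $\mathbb Q$-linearity on a countable Hamel basis to get an honest linear operator $T(\omega):X^*\to L^2(\mathbb R_+;H)$ for a.e.\ $\omega$, and finally identifies $\Phi:=T^*\in\gamma(L^2(\mathbb R_+;H),X)$ via the finiteness of $\gamma([\![M]\!]_\infty)$. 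This replaces the ``measurable-selection/disintegration'' you allude to with an explicit countable construction.
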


For the proof we will need the following statement concerning {\em Brownian representations}.

\begin{lemma}[Brownian representation]\label{lem:brrepres}
 Let $(M^n)_{n\geq 1}$ be a family of continuous martingales and $(a_n)_{n\geq 1}$ be a real-valued sequence such that a.s.\ for any $n\geq 1$ and any $0\leq s\leq t$
 \[
  [M^n]_t - [M^n]_s \leq a_n(t-s).
 \]
 Then there exists a Hilbert space $H$, an enlarged probability space $(\overline {\Omega}, \overline{\mathcal F}, \overline {\mathbb P})$ endowed by an enlarged filtration $\overline {\mathbb F} = (\overline {\mathcal F}_t)_{t\geq 0}$, an $\overline {\mathbb F}$-adapted cylindrical Brownian motion $W_H$, and a family of predictable $H$-valued processes $(f_n)_{n\geq 1}$ depending only on the family of processes $([M^n, M^m])_{n, m\geq 1}$ such that
 \[
  M^n = f_n \cdot W_H,\;\;\;\; n\geq 1.
 \]
\end{lemma}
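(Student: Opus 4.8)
The plan is to encode all the relevant information in the covariation densities, factor these through a Hilbert space, and then recover a driving cylindrical Brownian motion on an enlarged space by a Moore--Penrose pseudo-inverse trick. \emph{Step 1 (covariation densities).} The hypothesis says that each $t\mapsto[M^n]_t$ is Lipschitz with constant $a_n$, hence absolutely continuous; by the Kunita--Watanabe inequality the same holds for every covariation $[M^n,M^m]$, with $|[M^n,M^m]_t-[M^n,M^m]_s|\leq\sqrt{a_na_m}\,(t-s)$. Passing to predictable versions of the densities, one obtains a predictable matrix-valued process $q=(q_{nm})_{n,m\geq 1}$ with $q_{nm}=q_{mn}$, $|q_{nm}|\leq\sqrt{a_na_m}$, with $q(s,\omega)$ symmetric and nonnegative definite on finitely supported sequences for a.e.\ $(s,\omega)$, and with $[M^n,M^m]_t=\int_0^t q_{nm}(s)\ud s$ a.s.\ for all $n,m$; note that $q$ is a measurable functional of $([M^n,M^m])_{n,m\geq 1}$ alone. \emph{Step 2 (factorization).} Applying a measurable Gram--Schmidt/Cholesky procedure to $q$ one then produces a separable Hilbert space $H$ (one may take $H=\ell^2$ with orthonormal basis $(h_k)_{k\geq 1}$) together with predictable processes $f_n=\sum_{k\leq n}\ell_{nk}h_k$ having lower-triangular predictable coefficients such that $\langle f_n(s),f_m(s)\rangle_H=q_{nm}(s)$ for a.e.\ $(s,\omega)$; in particular $\|f_n(s)\|_H^2\leq a_n$, so that $f_n\cdot W_H$ is well defined for every cylindrical Brownian motion $W_H$. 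Writing $f(s)\in\mathcal L(H,\ell^2)$ for the operator with rows $f_n(s)$, this reads $f(s)f(s)^*=q(s)$, and since the construction used only $q$, the processes $f_n$ depend only on $([M^n,M^m])_{n,m\geq 1}$, as required.

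For the driving noise I would pass to the product enlargement $\overline\Omega=\Omega\times\Omega'$ equipped with the product filtration, where $\Omega'$ carries an $H$-cylindrical Brownian motion $\widetilde W_H$ independent of $\mathcal F_\infty$; on $\overline\Omega$ the family $M=(M^n)_{n\geq 1}$ retains its covariations. Let $\Pi(s)$ be the orthogonal projection of $H$ onto $\overline{\Ran}\,f(s)^*$ and $g(s):=f(s)^+$ the (predictably chosen) Moore--Penrose pseudo-inverse, and set
\[
W_H:=g\cdot M+(I-\Pi)\cdot\widetilde W_H.
\]
For each $e\in H$ the real-valued integral $(g^*e)\cdot M$ is well defined since its energy equals $\int_0^t\langle g(s)q(s)g(s)^*e,e\rangle_H\ud s=\int_0^t\langle\Pi(s)e,e\rangle_H\ud s\leq t\|e\|_H^2$. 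A direct computation of covariations, using $gqg^*=\Pi$, $(I-\Pi)^2=I-\Pi$, and the independence of $M$ and $\widetilde W_H$, yields $[W_H(\cdot)e,W_H(\cdot)e']_t=\langle e,e'\rangle_H\,t$, so that $W_H$ is an $\overline{\mathbb F}$-adapted $H$-cylindrical Brownian motion by L\'evy's characterization. Finally, by associativity of the stochastic integral, $f\cdot W_H=(fg)\cdot M+(f(I-\Pi))\cdot\widetilde W_H$ at the level of the $\ell^2$-valued integrals; here $f(I-\Pi)=0$ because $I-\Pi$ projects onto $\ker f(s)$, while $fg$ is the orthogonal projection onto $\overline{\Ran}\,f(s)$, which acts as the identity on $M$ since $(I-fg)\cdot M$ has covariation density $(I-fg)q(I-fg)^*=0$. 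Hence $f_n\cdot W_H=M^n$ for every $n$.

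I expect the genuinely delicate points to be (i) carrying out the factorization of Step 2 measurably and canonically in $q$, handling the degenerate instants where a Gram--Schmidt vector vanishes, so that $(f_n)_{n\geq 1}$ still depends only on the covariations; and (ii) the fact that $f(s)$ is generally non-invertible, with an $s$-dependent and possibly non-closed range and a nontrivial kernel --- this is precisely why $W_H$ cannot be recovered from $M$ alone and forces the enlargement by the independent noise $\widetilde W_H$ together with the pseudo-inverse/projection bookkeeping above. Once these are in place, the verification through quadratic variation and L\'evy's theorem is routine; and since the enlargement is a product space, $M$ automatically remains a local $\overline{\mathbb F}$-martingale with unchanged covariations (cf.\ Remark \ref{rem:Mmightchangelocalcharacp}).
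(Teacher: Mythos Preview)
Your argument is correct and takes a genuinely different route from the paper. The paper packages the whole family into a single $H$-valued martingale $M:=\sum_{n\geq 1}\frac{1}{\sqrt{a_n}\,n}M^n h_n$, checks that $[M]_t-[M]_s\lesssim t-s$, and then invokes an existing Brownian representation theorem for Hilbert-space-valued martingales (\cite[Proposition~4.7]{VY16}, \cite{Ond}) to obtain $M=\Phi\cdot W_H$; the integrands are recovered as $f_n=\sqrt{a_n}\,n\,\Phi^*h_n$, and the dependence of $\Phi$ on the covariations alone is inherited from the cited construction. Your approach is more self-contained: you extract the covariation density $q$, factor it explicitly via Cholesky, and build $W_H$ by hand through the pseudo-inverse and an independent complement. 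This makes the dependence of $(f_n)$ on $([M^n,M^m])_{n,m}$ completely transparent, at the price of the measurable-selection issues you correctly flag in (i). The paper's route is shorter because it outsources exactly those issues to \cite{VY16}; yours would work equally well in contexts where that reference is unavailable. One small point worth making explicit in your write-up: the integral $(g^*e)\cdot M$ is against an infinite family $(M^n)$ that is not itself $\ell^2$-valued, so you are using vector stochastic integration in the sense that the series converges because its quadratic variation $\int_0^t\langle q(s)g(s)^*e,g(s)^*e\rangle\,\ud s$ is finite---you compute this, but it deserves a sentence of justification.
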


\begin{proof}
 Let $H$ be a separable Hilbert space, $(h_n)_{n\geq 1}$ be an orthonormal basis of $H$. Let us define an $H$-valued process $M := \sum_{n\geq 1} \frac{1}{\sqrt {a_n}n}M^nh_n$. First let us show that this process is well defined. To this end we need to show that $\sum_{n=1}^{\infty} \bigl|\frac{1}{\sqrt{a_n}n}M^n_t\bigr|^2$ converges a.s.\ for any $t\geq 0$. Note that by the monotone convergence theorem one has that a.s.\
  \begin{multline*}
 \mathbb E \sum_{n=1}^{\infty} \left|\frac{1}{\sqrt{a_n}n}M^n_t\right|^2  = \lim_{N\to \infty} \mathbb E \sum_{n=1}^{N}\frac{1}{a_nn^2}|M^n_t|^2\\
  \stackrel{(*)}= \lim_{N\to \infty}\mathbb E  \sum_{n=1}^{N}\frac{1}{a_nn^2} [M^n]_t \leq \mathbb E  \sum_{n=1}^{\infty}\frac{1}{n^2} t \lesssim {t},
 \end{multline*}
where $(*)$ holds by \cite[Theorem 26.12]{Kal}. Therefore $\sum_{n=1}^{\infty} \bigl|\frac{1}{\sqrt{a_n}n}M^n_t\bigr|^2$ converges in measure, and as this is a sum of nonnegative random variables, it converges a.s. For the similar reason and by the continuity of the conditional expectation \cite[Section 2.6]{HNVW1} one has that $M$ is an $H$-valued martingale so that by e.g.\ \cite[(3.8)]{VY16} for any $0\leq s\leq t$ a.s.\
\begin{align*}
 [M]_t - [M]_s = \sum_{n=1}^{\infty} [\langle M, h_n\rangle]_t - [\langle M, h_n\rangle]_s &= \sum_{n=1}^{\infty} \frac{1}{a_nn^2}\bigl([M^n]_t - [M^n]_s\bigr)\\
 &\leq \sum_{n=1}^{\infty} \frac {1}{n^2}(t-s) \lesssim t-s.
\end{align*}
Consequently, by \cite[Example 3.18 and Proposition 4.7]{VY16} (see also \cite[Theorem 2]{Ond}, \cite{JKFR,Yar16Br}, \cite[Theorem 8.2]{DPZ}, and \cite[Theorem 3.4.2]{KS}) there exist an enlarged probability space $(\overline {\Omega}, \overline{\mathcal F}, \overline {\mathbb P})$ endowed by an enlarged filtration $\overline {\mathbb F} = (\overline {\mathcal F}_t)_{t\geq 0}$, an $\overline {\mathbb F}$-adapted cylindrical Brownian motion $W_H$, and a predictable process $\Phi :\mathbb R_+ \times \Omega \to \mathcal L(H)$ such that $M = \Phi \cdot W_H$. Notice that by the construction of $\Phi$ presented in \cite[Proposition 4.7]{VY16} it depends only on the family of processes $([\langle M,  h\rangle, \langle M,  g\rangle])_{h, g\in H}$, and the latter by the fact that covariation is bilinear depends only on the family of processes
\[
 ([\langle M,  h_n\rangle, \langle M,  h_m\rangle])_{n,m\geq 1} = ([M^n, M^m])_{n, m\geq 1}.
\]
The desired follows by setting $f_n := \sqrt{a_n} n\Phi^*h_n$ for any $n\geq 1$, so
\[
 f_n \cdot W_H =\sqrt{a_n} n \, \Phi^*h_n \cdot W_H = \sqrt{a_n} n\langle \Phi \cdot W_H, h_n \rangle = \sqrt{a_n} n\langle M, h_n \rangle = M^n.
\]
\end{proof}

\begin{proof}[Proof of Theorem \ref{thm:DTMforcontcase}]
Without loss of generality by the Pettis measurability theorem \cite[Theorem 1.1.20]{HNVW1} we may assume that $X$ is separable (and as $X$ is UMD, it is reflexive, so $X^*$ is separable as well). By a stopping time argument we may assume that $M$ is uniformly bounded a.e.\ on $\mathbb R_+ \times \Omega$. Let $(x^*_n)_{n\geq 1}$ be a dense subset of a unit ball in $X^*$, and let us define a random time-change
\begin{equation}\label{eq:defoftautforcontcase}
\tau_t := \inf\Bigl\{s\geq 0: \sum_{n=1}^{\infty} \tfrac{1}{n^2} [ \langle M, x_n^*\rangle ]_s + s\geq t\Bigr\},\;\;\; t\geq 0
\end{equation}
(the latter time-change is well defined as $([\langle M, x_n^* \rangle])_{n\geq 1}$ are a.s.\ uniformly bounded since by \cite{Y18BDG} 
$$
[ \langle M, x^*\rangle ]_s \leq \|[\![M]\!]_s\| \leq \gamma([\![M]\!]_s)<\infty,\;\;\; s\geq 0,
$$ 
a.s.\ for any $x^*\in X^*$, $\|x^*\|\leq 1$, see Remark \ref{rem:ifUMDthencovbilform} for the definition of $\gamma(\cdot)$). Note that as stricktly increasing, this time-change is invertible, i.e.\ for a time changed filtration $\mathbb G = (\mathcal G_t)_{t\geq 0} := (\mathcal F_{\tau_t})_{t\geq 0}$ there exists a strictly increasing $\mathbb F$-predictable continuous process $A:\mathbb R_+ \times \Omega \to X$ such that $(A\circ \tau)_t = (\tau \circ A)_t = t$ a.s.\ for any $t\geq 0$. (Note that in fact $A_t = \sum_{n=1}^{\infty} \tfrac{1}{n^2} [ \langle M, x_n^*\rangle ]_t + t$ as it is defined by \eqref{eq:defoftautforcontcase}, see e.g.\ \cite[Theorem 2.16]{Y17MartDec} or \cite[Subsection 2.4.4]{YPhD}). Let $\widetilde M := M \circ \tau$. Then by the Kazamaki theorem \cite[Theorem 17.24]{Kal} for any $n\geq 1$ a.s.\
\begin{align*}
[\langle \widetilde M,x_n^* \rangle ]_t - [\langle \widetilde M,x_n^* \rangle ]_s  &= [\langle  M,x_n^* \rangle ]_{\tau_t} - [\langle M,x_n^* \rangle ]_{\tau_s}\\
&\leq n^2(t-s),\;\;\; 0\leq s \leq t,
\end{align*}
thus by Lemma \ref{lem:brrepres} we can show that there exist an enlarged probability space $(\overline {\Omega}, \overline{\mathcal F}, \overline {\mathbb P})$, an enlarged filtration $\overline{\mathbb G} = (\overline{\mathcal G}_t)_{t\geq 0}$, a Hilbert space $H$, a $\overline {\mathbb G}$-adapted cylindrical Wiener process $W_H$, and a family of $H$-valued $\overline {\mathbb G}$-predictable  processes $(f_n)$ such that 
\begin{equation}\label{eq:Mxn*isrepresentable}
 \langle \widetilde M_t, x_n^*\rangle = \int_0^{t} f_n\ud W_H,\;\;\;\; n\geq 1.
\end{equation}
Let us first show that for any $x^*\in X^*$ there exists a $\overline {\mathbb G}$-predictable process $f_{x^*}:\mathbb R_+ \times \Omega \to H$ such that $\langle \widetilde M_t, x^*\rangle = f_{x^*}\cdot W_H$. Let $Y = {\text {span}} \{x_n^*, n\geq 1\}\subset X^* $. By the definition of $(x_n^*)_{n\geq 1}$, $Y$ is dense in $X^*$, so there exists a sequence $(y_m)_{m\geq 1}\in Y$ that converges to $x^*$. By the definition of $Y$, by the linearity of a stochastic integral, and by \eqref{eq:Mxn*isrepresentable} for each $m\geq 1$ there exists a $\overline {\mathbb G}$-predictable process $f_{y_m}:\mathbb R_+ \times \Omega \to H$ such that $\langle \widetilde M_t, y_m\rangle = f_{y_m}\cdot W_H$. Moreover, $f_{y_m}\cdot W_H$ converges to $\langle \widetilde M, x^*\rangle$ in strong $L^p$ sense for any $1\leq p <\infty$ as
\begin{align*}
  \mathbb E \sup_{t\geq 0} \Bigl|\int_0^tf_{y_m}\ud W_H - \langle \widetilde M_t, x^*\rangle\Bigr|^p &= \mathbb E \sup_{t\geq 0} |\langle \widetilde M, x^*-y_m\rangle|^p\\
  &\leq\|x^*-y_m\|^p \mathbb E \sup_{t\geq 0} \| \widetilde M\|^p  \to 0,\;\;\; m\to \infty,
\end{align*}
where the latter follows from the fact that $\widetilde M$ is uniformly bounded. Therefore the existence of the desired $f_{x^*}$ follows e.g.\ from \eqref{eq:LpboundsforstochintwrtHcylbrm} and the fact that the space $L^p(\overline{\Omega}; L^2(\mathbb R_+; H))$ restricted to a proper predictable $\sigma$-algebra is a Banach space. Moreover, it follows from \eqref{eq:qvvarofstintwrtHcylbrmot} that  for any $x^* \in X^*$ for a.e.\ $\omega \in \overline\Omega$
\begin{equation}\label{eq:L2notmoffx*}
 \int_0^{\infty}\|f_{x^*}\|^2 \ud s  = [ \widetilde M, x^* ]_{\infty}  = [\![ \widetilde M ]\!]_{\infty} (x^*, x^*)\lesssim_{M,\omega} \|x^*\|^2.
\end{equation}
Let us now show that for a.e.\ $\omega \in\overline \Omega$ the mapping $x^*\mapsto f_{x^*}(\omega)$ can be assumed to be linear. As $Y$ is a linear subspace of $X^*$ generated by a countable set, it has a countable Hamel basis $(z_n)_{n\geq 1}\subset X^*$. Let $Z$ be a $\mathbb Q$-span of $(z_n)_{n\geq 1}$. Then by the linearity of a stochastic integral and by the fact that $Z$ is countable for any $z^1,\ldots,z^k\in Z$ and for any $r_1,\ldots, r_k\in \mathbb Q$ we can  assume that $f_{r_1z^1 + \ldots + r_kz^k} = r_1 f_{z^1} + \ldots + r_k f_{z^k}$ everywhere on $\overline{\Omega}$. Moreover, without loss of generality since $Z$ is countable we know that \eqref{eq:L2notmoffx*} holds a.s.\ for any $z\in Z$. Thus there exists $\overline\Omega_0\subset\overline \Omega$  of full measure such that for any $\omega \in \overline\Omega_0$ we have a bounded linear operator $T(\omega):Z \to L^2(\mathbb R_+ ;H)$ with $T(\omega)z = f_{z}(\omega)$. By extending the operator $T(\omega)$ to the whole $X^*$ and by the construction of $f_{x^*}$ for a general $x^*\in X^*$ we have that $T(\omega)x^* = f_{x^*}(\omega)$ for a.e.\ $\omega\in \overline\Omega$, and the desired holds true.

Now note that by \eqref{eq:L2notmoffx*}, $T^*$ corresponds to a bilinear form $[\![M]\!]_{\infty}$ having a finite Gaussian characteristic (i.e.\ a.s.\ $[\![M]\!]_{\infty}(x^*, x^*) = \langle T x^*, T x^* \rangle$ for any $x^*\in X^*$ with $\gamma([\![M]\!]_{\infty})<\infty$, see \cite[Subsection 3.2]{Y18BDG} and Remark \ref{rem:ifUMDthencovbilform}), so by \cite[Subsection 3.2]{Y18BDG} a.s.\ there exists $\Phi = T^* \in \gamma(L^2(\mathbb R_+; H), X)$ such that a.s.\ 
\[
 \langle \widetilde M_t, x^*\rangle = \int_0^{t} f_{x^*} \ud W_H = \int_0^{t}\Phi^* x^* \ud W_H,\;\;\; x^*\in X^*,
\]
and thus by \cite[Theorem 3.5]{NVW} and the fact that $\Phi^* x^* = Tx^* = f_{x^*}$ is a predictable process for any $x^* \in X^*$ we have that $\widetilde M = \Phi \cdot W_H$. 

Finally, let us construct $N$. Let $W'_H$ be an independent cylindrical Brownian motion, $\widetilde N  := \Phi \cdot W'_H$, and let $N := \widetilde N \circ A$. Then $N$ is a martingale on an enlarged probability space $(\widehat {\Omega}, \widehat{\mathcal F}, \widehat{\mathbb P})$ and an enlarged filtration $\widehat {\mathbb F} = (\widehat {\mathcal F}_t)_{t\geq 0}$ (which is now generated by the original filtration and time changed cylindrical Brownian motions $W_H$ and $W'_H$) by Kazamaki theorem \cite[Theorem 17.24]{Kal} and the stochastic integration theory \cite{NVW}, so we need to show that $M$ is a local martingale with the same local characteristics $([\![M]\!], 0)$, $[\![N]\!] = [\![ M]\!]$ a.s., and that $N(\omega)$ is a martingale with independent increments and with the local characteristics $([\![ M]\!](\omega), 0)$ for a.e.\ $\omega \in \Omega$. $M$ is a martingale because of the fact that $M$ is independent of $W_H'$ and the fact that $M\circ \tau = \Phi \cdot W_H$ is a $\overline {\mathbb G}$-martingale, so when we time-change it back by using $A$ we get a martingale with respect to the filtration generated by the original filtration  $\mathbb F$ and a time-changed cylidrical process $W_H' \circ A$ (see Kazamaki theorem \cite[Theorem 17.24]{Kal}), while $M$ does not change its local characteristics as it remains continuous and due to the definition of a quadratic variation which does not depend on enlargement of filtration (see Subsection \ref{subsec:quadrvar}). The second holds by \eqref{eq:qvvarofstintwrtHcylbrmot} and due to the fact that for any $x^*\in X^*$ a.s.\
\[
[\![N]\!]_t(x^*, x^*) = [\![\widetilde N]\!]_{A_t}(x^*, x^*) = \int_0^{A_t} \|\Phi^*x^*\|^2 \ud s =  [\![\widetilde M]\!]_{A_t}(x^*, x^*) = [\![M]\!]_t(x^*, x^*).
\]
Now let us prove that $N(\omega)$ is a martingale with independent increments and with the local characteristics $([\![ M]\!](\omega), 0)$. 
This directly follows from the construction of a stochastic integral (see Subsection \ref{subsec:prelimstint} and \cite{NVW}), the fact that $\Phi(\omega)$ is deterministic and is in $\gamma(L^2(\mathbb R_+; H), X)$ for a.e.\ $\omega \in \Omega$, the fact that the time change $(\tau_t(\omega))_{t\geq 0}$ is deterministic for a.e.\ $\omega \in \Omega$, and the fact that $W'_H$ does not depend on $\omega \in \Omega$.
\end{proof}

\begin{remark}\label{rem:condectangmarthasindinrccond[[M]]}
One can straighten the latter proposition in the following way: $N$ has independent increments given $[\![M^c]\!]$. Indeed, due to the construction of $\Phi$ (see Lemma \ref{lem:brrepres} and its proof) and $(\tau_t)_{t\geq 0}$ we have that these random elements are $\sigma([\![M^c]\!])$-measurable, so the desired follows from Corollary \ref{cor:condindgivenRVsuffcond} as for a.e.\ fixed $[\![M^c]\!]$ both $\Phi$ and $(\tau_t)_{t\geq 0}$ are fixed.
\end{remark}

\subsection{Stochastic integrals with respect to random measures}\label{subsec:RMstochintinBanspaxcewithCOx}

Before treating the case of purely discontinuous quasi-left continuous martingales in Subsection \ref{subsec:dectanPDQLC} we will need to prove a similar result for stochastic integrals with respect to random measures (see Subsection \ref{subsec:ranmeasures}). This case will be done via {\em Cox processes}.

\subsubsection{Cox process}\label{subsubsec:Coxprocess}
Let $(J, \mathcal J)$ be a measurable space, $\mu$ be an optional integer-valued random measure on $\mathbb R_+ \times J$ with a compensator $\nu$ which is non-atomic in time (which is equivalent to the fact that $\mu$ is quasi-left continuous, see \cite[Theorem 9.22]{KalRM} or \cite[Corollary II.1.19]{JS}).
 Due to Cox \cite{Cox55} (see also \cite{Kal,KalRM,CI80,JS,KingPois}) it is known that there exists an enlarged probability space $(\overline \Omega, \overline {\mathcal F}, \overline {\mathbb P}) = (\Omega \times \widehat {\Omega}, \mathcal F \otimes \widehat {\mathcal F}, \mathbb P \otimes \widehat {\mathbb P})$ (where $(\widehat{\Omega}, \widehat {\mathcal F}, \widehat {\mathbb P})$ is an independent probability space where the corresponding Poisson random measure lives, see Example \ref{ex:CoxprocessfiniteJ}), an enlarged filtration $\overline {\mathbb F}= (\overline{\mathcal F}_t)_{t\geq 0}$, and a unique up to distribution integer-valued random measure $\mu_{\rm Cox}$ on $\mathbb R_+ \times J$ optional with respect to $\overline {\mathbb F}$ having $\nu$ as a compensator so that $\mu_{\rm Cox}$ is conditionally Poisson given $\mathcal F$, i.e.\ for any $C\in \widetilde {\mathcal P}$ (see Subsection \ref{subsec:ranmeasures} for the definition of $\widetilde {\mathcal P}$) with $\mathbb E \int_{\mathbb R_+ \times J} \mathbf 1_C \ud \nu<\infty$
 \begin{itemize}
 \item processes $\mu_{\rm Cox}\bigl(([0,\cdot]\times A_1)\cap C\bigr)$ and $\mu_{\rm Cox}\bigl(([0,\cdot]\times A_2)\cap C\bigr)$ are conditionally independent given $\mathcal F$ for any disjoint $A_1, A_2 \in \mathcal J$,
 \item the random measure $\mu_{\rm Cox}(\omega)$ is a Poisson random measure and for almost any fixed $\omega \in \Omega$ (see Subsection \ref{subsec:PoissRMprelim}).
 \end{itemize} 
 Such a random measure $\mu_{\rm Cox}$ is called a {\em Cox process directed by $\nu$}.

\begin{example}\label{ex:CoxprocessfiniteJ}
Let $J$ be finite, $J = \{1, \ldots, n\}$, $\mathcal J$ be a $\sigma$-algebra generated by all atoms of $J$. Let $\mu$ be a random measure on $\mathbb R_+ \times J$ with a compensator $\nu$ such that $\nu([0,t] \times J)<\infty$ a.s.\ for any $t\geq 0$. Then the construction of the Cox process $\mu_{\rm Cox}$ directed by $\nu$ has the following form. Let $N$ be a standard Poisson random measure on $\mathbb R_+ \times J$, i.e.\ $\int \mathbf1_{\{0\}} \ud N, \ldots, \int \mathbf1_{\{n\}} \ud N$ are independent Poissons and 
\[
\mathbb E \int_{[0, t]\times J} \mathbf1_{\{m\}} \ud N = t,\;\;\;\;\; 1\leq m\leq n.
\]
Then the desired measure has the following form
\begin{equation}\label{eq:CoxifJfinite}
\mu_{\rm Cox}([0,t]\times \{m\}) := N\bigl([0,\nu([0,t]\times \{m\})]\times \{m\}\bigr),\;\;\;\; 1\leq m\leq n.
\end{equation}
In the case of a general $\widetilde P$-$\sigma$-finite compensator $\nu$ the latter can be expressed as the sum $\nu = \sum_{k\geq 1} \nu^k$ of compensators $(\nu^k)_{k\geq 1}$ with disjoint domains in $\widetilde {\mathcal P}$, where each of $\nu^k$ satisfies $\nu^k(\mathbb R_+ \times J)<\infty$ a.s., and then the Cox process $\mu_{\rm Cox}$ will have the form $\mu_{\rm Cox} = \sum_{k\geq 1}\mu^k_{\rm Cox}$, where each of $\mu^k_{\rm Cox}$ is constructed analogously \eqref{eq:CoxifJfinite}, but with using independent standard Poisson random measures $N^k$ on $\mathbb R_+ \times J$ and compensators $\nu^k$ respectively.
\end{example}

\subsubsection{Random measures: tangency and decoupling}\label{subsubsec:RMtanganddec}

It turns out that Cox processes play an important r\^ole in random measure theory and in particular if one changes a random measure by the corresponding Cox process then the strong $L^p$-norm does not change much. Recall that a stochastic integral with respect to a random measure is defined by \eqref{eq:stochintwrtranmeasdefof}.

\begin{theorem}\label{thm:mumuCoxcomparable}
Let $X$ be a Banach space, $1\leq p<\infty$. Then $X$ is UMD if and only if for any measurable space $(J, \mathcal J)$ and any integer-valued random measure $\mu$ on $\mathbb R_+ \times J$ with a compensator measure $\nu$ which is  non-atomic in time one has that for any elementary predictable $F:\mathbb R_+ \times \Omega \times J\to X$
\begin{equation}\label{eq:CoxmeasiffUMD}
\mathbb E \sup_{t\geq 0} \Bigl\| \int_{[0,t]\times J} F \ud \bar\mu \Bigr\|^p \eqsim_{p, X} \mathbb E  \Bigl\| \int_{\mathbb R_+\times J} F \ud \bar \mu_{\rm Cox} \Bigr\|^p,
\end{equation}
where  $\mu_{\rm Cox}$ is the Cox process directed by $\nu$, $\bar \mu = \mu - \nu$, and $\bar \mu_{\rm Cox} = \mu_{\rm Cox} - \nu$.
\end{theorem}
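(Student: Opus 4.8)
The plan is to deduce both directions from the decoupling inequality for Poisson random measures, Proposition~\ref{prop:XUMDiffdecouplforPois}, with the Meyer--Papangelou time change \cite{Mey71,Pap72} serving as the bridge between a general quasi-left continuous integer-valued random measure and a Poisson one. First I would reduce to the case of a finite jump space. Since $F$ is elementary predictable it has the form $\sum_{n,\ell}\mathbf 1_{(t_{\ell-1},t_\ell]}\mathbf 1_{B_n}\xi_{n,\ell}$ for a finite partition $B_1,\dots,B_N$ of $J$, so pushing $\mu$, $\nu$, and $\mu_{\rm Cox}$ forward under the map $\pi\colon J\to\{1,\dots,N\}$ with $\pi(B_n)=\{n\}$ changes neither side of \eqref{eq:CoxmeasiffUMD}: the pushforward $\pi_*\mu$ is again integer-valued (as $\mu(\{t\}\times J)\in\{0,1\}$), its compensator is $\pi_*\nu$ and is still non-atomic in time, and by the explicit construction of Example~\ref{ex:CoxprocessfiniteJ} together with uniqueness in distribution of the Cox process, $\pi_*\mu_{\rm Cox}$ is a Cox process directed by $\pi_*\nu$. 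Restricting further to the time-support of $F$, we may assume $J=\{1,\dots,n\}$ and $\nu([0,t]\times J)<\infty$ a.s.

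\textit{Forward implication.} Assume $X$ is UMD and set $\nu^m_t:=\nu([0,t]\times\{m\})$. By Meyer--Papangelou \cite{Mey71,Pap72} there is a standard Poisson random measure $\widehat N$ on $\mathbb R_+\times J$, adapted to a time-changed filtration $\widehat{\mathbb F}$, with $\mu([0,t]\times\{m\})=\widehat N([0,\nu^m_t]\times\{m\})$, so $\mu$ is the time-change of $\widehat N$ along the $\mathbb F$-predictable, $\nu$-determined clocks $t\mapsto\nu^m_t$; by Example~\ref{ex:CoxprocessfiniteJ} the Cox process admits the representation $\mu_{\rm Cox}([0,t]\times\{m\})=N([0,\nu^m_t]\times\{m\})$ with the \emph{same} clocks but with $N$ a standard Poisson random measure \emph{independent} of $\mathcal F$. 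Changing time accordingly turns the left-hand side of \eqref{eq:CoxmeasiffUMD} into $\mathbb E\sup_s\|\int_{[0,s]\times J}\widehat F\ud\bar{\widehat N}\|^p$ and the integral on the right into $\int_{\mathbb R_+\times J}\widehat F\ud\bar N$, where $\widehat F$ is the time-changed integrand, still predictable with respect to $\widehat{\mathbb F}$. Since $N$ is independent of $\mathcal F$, relative to $\widehat{\mathbb F}$ enlarged by $N$ it is an independent copy of $\widehat N$, so Proposition~\ref{prop:XUMDiffdecouplforPois} gives $\mathbb E\sup_s\|\int_{[0,s]\times J}\widehat F\ud\bar{\widehat N}\|^p\eqsim_{p,X}\mathbb E\sup_s\|\int_{[0,s]\times J}\widehat F\ud\bar N\|^p$. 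Finally, the supremum on the right is superfluous: the process $s\mapsto\int_{[0,s]\times J}\widehat F\ud\bar N$ is a martingale with independent increments conditionally on $\widehat{\mathbb F}_\infty$, and for such processes $\mathbb E\sup_s\|\cdot\|^p\eqsim_p\mathbb E\|\cdot_\infty\|^p$ by Doob's inequality for $p>1$ and an Ottaviani-type argument for $p=1$. This proves \eqref{eq:CoxmeasiffUMD}.

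\textit{Converse implication.} If \eqref{eq:CoxmeasiffUMD} holds, specialize it to $\mu=N_0$, a standard Poisson random measure on $\mathbb R_+\times J$ with a deterministic intensity $\nu=\lambda\otimes\rho$ (e.g.\ $J=\{1\}$, $\rho$ the counting measure). Then $\nu$ is non-atomic in time, and by Example~\ref{ex:CoxprocessfiniteJ} the Cox process directed by $\nu$ is simply an independent copy $\widetilde N$ of $N_0$, so \eqref{eq:CoxmeasiffUMD} reads $\mathbb E\sup_t\|\int_{[0,t]\times J}F\ud\bar N_0\|^p\eqsim_{p,X}\mathbb E\|\int_{\mathbb R_+\times J}F\ud\bar{\widetilde N}\|^p$ for every elementary predictable $F$; combining this with the maximal inequality above for the independent-increments process on the right recovers exactly the Poisson decoupling inequality of Proposition~\ref{prop:XUMDiffdecouplforPois}, whose converse direction forces $X$ to be UMD. (Equivalently, one embeds a Rademacher martingale transform into compensated Poisson increments as in \eqref{eq:introv_nxi_neqsimv_nxi'_mb} and invokes Theorem~\ref{thm:intromccnnll}.)

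\textit{Main obstacle.} The delicate point is the time-change bookkeeping in the forward implication. The Meyer--Papangelou clocks $t\mapsto\nu^m_t$ a priori differ across $m$, so the single supremum over the original time $t$ becomes a supremum over a one-parameter curve in the multi-parameter time of $\widehat N$, and one must check that this still matches a genuine running maximum of a stochastic integral against $\widehat N$ to which Proposition~\ref{prop:XUMDiffdecouplforPois} applies, and that the auxiliary measure $N$ from the Cox construction is an honest independent copy of $\widehat N$ with respect to the enlarged time-changed filtration. I would handle this by first reducing, via a preliminary time change along $c_t:=\sum_m\nu^m_t+t$, to the case $\nu([0,t]\times J)\le t$, and then comparing running maxima before and after the Meyer--Papangelou change; alternatively one works componentwise and sums, using the UMD property to control the interaction of the $n$ streams of jumps. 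Modulo this, the argument is routine given Proposition~\ref{prop:XUMDiffdecouplforPois} and the random-measure machinery of Subsections~\ref{subsec:ranmeasures}--\ref{subsec:PoissRMprelim}.
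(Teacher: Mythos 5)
Your overall strategy—Meyer--Papangelou to reduce $\mu$ to a time-changed standard Poisson measure, combine with the explicit time-changed construction of the Cox process, then invoke Proposition~\ref{prop:XUMDiffdecouplforPois}—is exactly the paper's strategy, and your converse implication is correct. However, the forward direction as written has a genuine gap that your ``main obstacle'' paragraph circles but does not close. Proposition~\ref{prop:XUMDiffdecouplforPois} is stated for \emph{elementary} predictable integrands, i.e.\ simple functions constant on \emph{deterministic} time intervals $(t_{\ell-1},t_\ell]$. After the Meyer--Papangelou change, your integrand $\widehat F$ is constant on intervals whose endpoints $\nu^j([0,t_{\ell-1}))$, $\nu^j([0,t_\ell))$ are random and, crucially, not measurable at the left endpoint of the interval (the value $\nu^j([0,t_\ell))$ is determined only at original time $t_\ell$). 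So $\widehat F$ is predictable but not elementary predictable, and Proposition~\ref{prop:XUMDiffdecouplforPois} cannot be applied to it as stated; the phrase ``so Proposition~\ref{prop:XUMDiffdecouplforPois} gives'' is where the argument breaks.

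The paper's resolution is precisely to manufacture elementary predictability: after a preliminary time change forcing $\nu([s,t]\times J)\le t-s$, it embeds the time-changed Poisson into a standard Poisson measure $\eta$ on physical time (so all the per-$j$ clocks live on the same axis, solving your multi-time-axis worry), and then replaces each random clock increment $\nu^j[t_{k-1},t_k)$ by its conditional expectation $\mathbb E(\nu^j[t_{k-1},t_k)\mid\mathcal F_{t_{k-1}})$, which \emph{is} $\mathcal F_{t_{k-1}}$-measurable. This turns the time-changed increments into genuine tangent discrete martingale differences to which Theorem~\ref{thm:intromccnnll} (McConnell--Hitczenko) applies. The error made by this conditional-expectation replacement is controlled via Lemma~\ref{lem:Lipshfuncapprox} (approximation of Lipschitz nondecreasing predictable processes) together with Novikov's inequalities (Lemma~\ref{lemma:Nov}), and the supremum is removed on the Cox side via Lemma~\ref{lem:MhasIIthemathbbEsupphiMeqsimphiEphiM}. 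None of this machinery appears in your proposal, and without the predictable-projection step the invocation of the Poisson decoupling inequality is not justified; ``working componentwise and summing'' also does not work, since the $n$ streams are neither independent nor handled by a triangle inequality at the level of $L^p$ norms.
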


For the proof we will need the following proposition. Recall that a Poisson measure $N$ is called {\em nontrivial} if its compensator in nonzero (equivalently, if it is nonzero itself).

\begin{proposition}\label{prop:XUMDiffdecouplforPois}
Let $X$ be a Banach space, $1\leq p<\infty$, $(J, \mathcal J)$ be a measurable space, $N$ be a nontrivial Poisson random measure on $\mathbb R_+ \times J$ with a compensator $\nu = \lambda \otimes \kappa$ with $\lambda$ being Lebesgue on $\mathbb R_+$ and $\kappa$ being a $\sigma$-finite measure on $(J, \mathcal J)$, $\widetilde N:= N - \nu$ be the corresponding compensated Poisson measure. Then $X$ has the UMD property if and only if for any elementary predictable $F:\mathbb R_+ \times \Omega \times J \to X$
\begin{equation}\label{eq:decthmniceRHS}
\mathbb E \sup_{t\geq 0} \Bigl\| \int_{[0,t]\times J} F \ud \widetilde N \Bigr\|^p \eqsim_{p, X} \mathbb E  \Bigl\| \int_{\mathbb R_+\times J} F \ud \widetilde N_{\rm ind} \Bigr\|^p,
\end{equation}
where $\widetilde N_{\rm ind}$ is an independent copy of $\widetilde N$.
\end{proposition}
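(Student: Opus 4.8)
The plan is to derive \eqref{eq:decthmniceRHS} from the discrete tangent-martingale estimate \eqref{eq:strongLpfortangentmccnnll} of Theorem \ref{thm:intromccnnll} by discretizing time, and to obtain the converse by specialising to a compensated standard Poisson process and passing to the Gaussian randomization inequality. For the ``if'' direction, write $F = \sum_{n,\ell} \mathbf 1_{(t_{\ell-1},t_{\ell}]} \mathbf 1_{B_n}\xi_{n,\ell}$; since $\int F\ud\widetilde N$ is assumed to be well defined, integrability forces $\xi_{n,\ell}=0$ whenever $\kappa(B_n)=\infty$, so after discarding those atoms we may assume $\kappa\bigl(\bigcup_n B_n\bigr)<\infty$, i.e.\ $N$ has finite intensity on the time horizon of $F$. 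Fix a nested sequence of partitions $0 = s_0 < \cdots < s_K = t_L$ refining $\{t_{\ell}\}$ with mesh tending to zero, and for the lexicographic order on pairs $(j,n)$ set $d_{j,n} := \xi_{n,\ell(j)}\bigl(N((s_{j-1},s_j]\times B_n) - \kappa(B_n)(s_j-s_{j-1})\bigr)$ and $e_{j,n} := \xi_{n,\ell(j)}\bigl(N_{\mathrm{ind}}((s_{j-1},s_j]\times B_n) - \kappa(B_n)(s_j-s_{j-1})\bigr)$, where $\ell(j)$ is determined by $(s_{j-1},s_j]\subseteq(t_{\ell(j)-1},t_{\ell(j)}]$. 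Because $N$ (resp.\ $N_{\mathrm{ind}}$) carries independent Poisson mass over disjoint product cells and the coefficients $\xi_{n,\ell(j)}$ are predictable and bounded, Example \ref{ex:tangstandarddecxinvn-->xi'nvn} shows that $(d_{j,n})$ and $(e_{j,n})$ are tangent martingale difference sequences, so Theorem \ref{thm:intromccnnll} gives
\[
\mathbb E \sup_m \Bigl\| \sum_{(j,n)\leq m} d_{j,n} \Bigr\|^p \eqsim_{p,X} \mathbb E \sup_m \Bigl\| \sum_{(j,n)\leq m} e_{j,n} \Bigr\|^p
\]
with constants independent of the partition.

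Next I would identify the two sides. Summing over a full row yields $\sum_{(j,n):\, j\leq k} d_{j,n} = \bigl(\int F\ud\widetilde N\bigr)_{s_k}$, and likewise with $e$ and $N_{\mathrm{ind}}$; in particular $\sum_{(j,n)} e_{j,n} = \int_{\mathbb R_+\times J} F\ud\widetilde N_{\mathrm{ind}}$ \emph{identically}, for every partition. Conditionally on the original $\sigma$-algebra the $e_{j,n}$ are independent and mean zero, so the L\'evy--Ottaviani maximal inequality (which holds with a constant depending only on $p$, e.g.\ by symmetrization; cf.\ \cite[Theorem 2.5.1]{dlPG} and \cite{HNVW2}) gives $\mathbb E \sup_m \| \sum_{(j,n)\leq m} e_{j,n} \|^p \eqsim_p \mathbb E \| \int F\ud\widetilde N_{\mathrm{ind}} \|^p$. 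For the left-hand side, as the partition is refined the full-row values $\|(\int F\ud\widetilde N)_{s_k}\|$ increase to $\sup_{t\geq 0}\|\int_0^t F\ud\widetilde N\|$, while the probability that a cell carries two or more jumps is of order the mesh size and $F$ is bounded; a routine approximation argument of the type in Subsection \ref{subsec:appforMarAppPDMAJ} (see also \cite{DY17}) therefore shows that $\mathbb E \sup_m \| \sum_{(j,n)\leq m} d_{j,n} \|^p$ converges to $\mathbb E \sup_{t\geq 0} \| \int_0^t F\ud\widetilde N \|^p$ as the mesh tends to zero. Combining the three displays and passing to the limit yields \eqref{eq:decthmniceRHS}.

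For the ``only if'' direction, assume \eqref{eq:decthmniceRHS} and specialise to $J=\{*\}$, $\kappa=\delta_*$, so that $\widetilde N$ is a compensated standard Poisson process; taking $F = \sum_k \mathbf 1_{(k-1,k]} v_k$ with $v_k$ predictable and bounded, the identities above---together with Doob's maximal inequality for $p>1$, resp.\ the independent-increments maximal inequality applied conditionally on $(v_k)$ for $p=1$---yield the randomization inequality $\mathbb E \| \sum_k v_k\eta_k \|^p \eqsim_{p,X} \mathbb E \| \sum_k v_k\eta_k' \|^p$ for all predictable bounded $(v_k)$, where $\eta_k := N_k - N_{k-1} - 1$ are i.i.d.\ mean zero and $(\eta_k')$ is an independent copy. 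Applying this with each $\eta_k$ replaced by a normalised block sum $m^{-1/2}\sum_{i=1}^m \eta_{k,i}$ of i.i.d.\ copies and letting $m\to\infty$, the central limit theorem---with uniform integrability supplied by the boundedness of the $v_k$, and after a preliminary reduction to coefficients depending continuously on the relevant Gaussians---upgrades this to the Gaussian randomization inequality, which characterises the UMD property by Garling \cite{Gar85,Gar90} and McConnell \cite{MC}.

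The technical heart is the passage to the limit in the ``if'' direction: one must control the discrepancy between the reordered discrete partial sums $\sum_{(j,n)\leq m} d_{j,n}$ and the running supremum of the continuous-time integral, which is exactly where the estimate ``a cell carries at least two jumps with probability of order the mesh size'' and the ensuing uniform integrability (from $F$ bounded and $N$ of finite intensity) are used; for $p>1$ one half of this is already provided by Doob's inequality, but for $p=1$ the full approximation argument is needed. A secondary point is the central-limit passage in the converse. Both are handled by soft arguments, and all the substantive input is the discrete result Theorem \ref{thm:intromccnnll}, elementary properties of Poisson random measures, and the classical Gaussian characterisation of UMD---in particular neither the covariation bilinear form nor the canonical decomposition enters this proof.
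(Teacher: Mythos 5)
Your ``only if'' direction (UMD $\Rightarrow$ \eqref{eq:decthmniceRHS}) is the same strategy the paper uses, only written out: the paper compresses the time--space discretization into a single sentence pointing to Example~\ref{ex:tangstandarddecxinvn-->xi'nvn}, Theorem~\ref{thm:intromccnnll}, the definition of the random-measure integral, and a maximal inequality for conditionally independent increments, whereas you unfold the lexicographic mesh and make the two limit passages explicit. The technical point you single out (controlling the within-row partial sums via the ``at most one jump per cell'' event, plus finite intensity and boundedness of $F$ for uniform integrability) is indeed where the care is needed, and your outline of it is correct.

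Your ``if'' direction (\eqref{eq:decthmniceRHS} $\Rightarrow$ UMD) takes a genuinely different route from the paper's. The paper first upgrades \eqref{eq:decthmniceRHS} to a sign-transform inequality \eqref{eq:stochintaFandFcomp} for Poisson stochastic integrals, by inserting a predictable $\{-1,1\}$-valued symbol $a$ and exploiting the symmetry of the decoupled noise; then it specializes to a scalar compensated Poisson process and \emph{manufactures Rademachers directly} from the quantized increments $\widetilde N_{\tau_n}-\widetilde N_{\tau_{n-1}}$ at stopping times where $|\widetilde N|$ first crosses a large level $A$, approximating an arbitrary Paley--Walsh martingale in $L^p$; for $p=1$ it runs a separate good-$\lambda$ argument to lift the $p=1$ inequality to $p>1$. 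You instead pass from the Poisson randomization inequality to the Gaussian one by a central limit argument with normalized block sums, then quote Garling and McConnell. Both routes are viable; yours is arguably softer and avoids the stopping-time quantization entirely, but it relies on the Gaussian characterization as a black box where the paper rederives the Rademacher characterization from scratch.

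There is, however, a gap in your $p=1$ treatment of the converse. To obtain $\mathbb E\|\sum_k v_k\eta_k\|^p\eqsim_{p,X}\mathbb E\|\sum_k v_k\eta_k'\|^p$ \emph{without} a supremum you need to drop $\sup_k$ on the \emph{coupled} side, and ``the independent-increments maximal inequality applied conditionally on $(v_k)$'' does not do this: conditionally on $(v_k)$ the increments $v_k\eta_k$ on that side are \emph{not} independent, since $v_k$ itself is a function of $\eta_1,\dots,\eta_{k-1}$. (On the decoupled side the argument is fine.) This is precisely why the paper does not attempt to remove the supremum for $p=1$ and resorts instead to a good-$\lambda$ inequality. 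Your CLT argument survives if you simply \emph{keep} the supremum throughout: from \eqref{eq:decthmniceRHS} with block-sum refinements you get
\[
\mathbb E\sup_{k}\Bigl\|\sum_{j\le k}v_j\hat\eta_j\Bigr\|^p\eqsim_{p,X}\mathbb E\Bigl\|\sum_{k}v_k\hat\eta_k'\Bigr\|^p,
\]
and letting the block length tend to infinity yields the Gaussian inequality in exactly the form \eqref{eq:introv_nxi_neqsimv_nxi'_mb}, which is what Garling and McConnell characterize. So the idea is sound, but the step as you wrote it for $p=1$ is not a valid justification and needs to be replaced by keeping the supremum (or by the paper's good-$\lambda$ route).
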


\begin{proof}
First notice that as $\nu = \lambda \otimes \kappa$, $\widetilde N$ is time-homogeneous, i.e.\ the distributions of $\widetilde N$ and shifted $\widetilde N(\cdot, \cdot +t,\cdot)$ are the same for any $t\geq 0$.

The ``only if'' part follows from the inequalities \eqref{eq:strongLpfortangentmccnnll} for discrete tangent martingales, Remark \ref{ex:tangstandarddecxinvn-->xi'nvn}, the definition of a stochastic integral with reapect to a random measure \eqref{eq:stochintwrtranmeasdefof}, and from the fact that by \cite[Proposition 6.1.12]{HNVW2}
\begin{multline*}
 \mathbb E  \Bigl\| \int_{\mathbb R_+\times J} F \ud \widetilde N_{\rm ind} \Bigr\|^p = \mathbb E \mathbb E_{N_{\rm ind}} \Bigl\| \int_{\mathbb R_+\times J} F \ud \widetilde N_{\rm ind} \Bigr\|^p\\
 \eqsim_p \mathbb E \mathbb E_{N_{\rm ind}} \sup_{t\geq 0} \Bigl\| \int_{[0,t]\times J} F \ud \widetilde N_{\rm ind} \Bigr\|^p=\mathbb E \sup_{t\geq 0} \Bigl\| \int_{[0,t]\times J} F \ud \widetilde N_{\rm ind} \Bigr\|^p
\end{multline*}
(here $\mathbb E_{N_{\rm ind}}$ is defined by Example \ref{ex:defofExiforrvxis}).
Let us show the ``if'' part. Let \eqref{eq:decthmniceRHS} be satisfied for some $1\leq p<\infty$ and for any elementary predictable $F$. Then
\begin{align*}
\mathbb E \sup_{t\geq 0} \Bigl\| \int_{[0,t]\times J} F \ud \widetilde N \Bigr\|^p& \eqsim_{p, X} \mathbb E  \Bigl\| \int_{\mathbb R_+\times J} F \ud \widetilde N_{\rm ind} \Bigr\|^p\\
& \stackrel{(*)}\eqsim_p \mathbb E \Bigl\| \int_{\mathbb R_+\times J} F \ud \widetilde N^1_{\rm ind}  -  \int_{\mathbb R_+\times J} F \ud \widetilde N^2_{\rm ind}\Bigr\|^p,
\end{align*}
where $\widetilde N^1_{\rm ind}$ and $\widetilde N^2_{\rm ind}$ are independent copies of $\widetilde N$, and $(*)$ follows by a triangle inequality and the $L^p$-contractility of a conditional expectation \cite[Corollary 2.6.30]{HNVW1}. Therefore for any predictable process $a:\mathbb R_+ \times \Omega \to \{-1,1\}$ independent of $\widetilde N^1_{\rm ind}$ and $\widetilde N^2_{\rm ind}$ one has that
\begin{align}\label{eq:stochintaFandFcomp}
\mathbb E \sup_{t\geq 0} \Bigl\| \int_{[0,t]\times J}  a F \ud \widetilde N \Bigr\|^p &\eqsim_{p, X} \mathbb E \Bigl\| \int_{\mathbb R_+\times J} aF \ud \widetilde N^1_{\rm ind}  -  \int_{\mathbb R_+\times J} aF \ud \widetilde N^2_{\rm ind}\Bigr\|^p\nonumber\\
&\stackrel{(*)}= \mathbb E \Bigl\| \int_{\mathbb R_+\times J} F \ud \widetilde N^1_{\rm ind}  -  \int_{\mathbb R_+\times J} F \ud \widetilde N^2_{\rm ind}\Bigr\|^p\\
&\eqsim_{p, X} \mathbb E \sup_{t\geq 0} \Bigl\| \int_{[0,t]\times J}  F \ud \widetilde N \Bigr\|^p,\nonumber
\end{align}
where $(*)$ follows from the fact that we are integrating both $aF$ and $F$ with respect to a symmetric independent noise, so $a$ does not play any role. Let us show that \eqref{eq:stochintaFandFcomp} implies the UMD property. Without loss of generality by assuming that $J:=A$ for some fixed $A \subset \mathcal J$ with $0<\kappa(A)<\infty$ and that $F$ has only steps of the form $\mathbf 1_A$, we may assume that $J$ consists only of one point and thus $\widetilde N$ is a standard compensated Poisson process with the rate parameter $\kappa(A)$ (so, by a time-change argument the rate can be assumed $1$), and thus \eqref{eq:stochintaFandFcomp} implies that for any elementary predictable $F:\mathbb R_+ \times \Omega \to X$
\begin{equation}\label{eq:atransfforstochintwrtwidetN}
\mathbb E \sup_{t\geq 0} \Bigl\| \int_{0}^t aF \ud \widetilde N \Bigr\|^p \eqsim_{p, X}\mathbb E \sup_{t\geq 0} \Bigl\| \int_0^t F \ud \widetilde N \Bigr\|^p.
\end{equation}
First assume that $p>1$. Then due to Doob's maximal inequality \eqref{eq:DoobsineqXBanach}, \eqref{eq:atransfforstochintwrtwidetN} is equivalent to
\begin{equation}\label{eq:aFandFLpineq}
\mathbb E \Bigl\| \int_{0}^{\infty} aF \ud \widetilde N \Bigr\|^p \eqsim_{p, X}\mathbb E  \Bigl\| \int_0^{\infty} F \ud \widetilde N \Bigr\|^p.
\end{equation}
Let $(r_n)_{n=1}^N$ be a sequence of independent Rademacher random variables (see Definition \ref{def:ofRadRV}), $\phi_1 \in X$, $\phi_n:\{-1,1\}^{n-1} \to X$ for $2\leq n\leq N$. Let $(\eps_n)_{n=1}^N$ be a $\{-1,1\}$-valued sequence. By the definition of the UMD property and by \cite[Theorem 4.2.5]{HNVW1} we only need to show that
\begin{equation}\label{eq:aFtoFimpliesUMDUMDprop}
\mathbb E \Bigl\| \sum_{n=1}^N \eps_n r_n \phi_n(r_1,\ldots,r_{n-1}) \Bigr\|^p \lesssim_{p, X} \mathbb E \Bigl\| \sum_{n=1}^N r_n \phi_n(r_1,\ldots,r_{n-1}) \Bigr\|^p.
\end{equation}
To this end we approximate in $L^p$-sense distributions of $\sum_{n=1}^N r_n \phi_n(r_1,\ldots,r_{n-1})$ and $\sum_{n=1}^N \eps_n r_n \phi_n(r_1,\ldots,r_{n-1})$ by $\int_0^{\infty} F \ud \widetilde N$ and $\int_0^{\infty} aF \ud \widetilde N$ respectively by finding appropriate $F$ and $a$.

Fix $\eps>0$. Let $A>0$ be such that for a stopping time
\[
\tau:= \inf\{t\geq 0:|\widetilde N_t| \geq A\}
\]
one has that 
\begin{equation}\label{eq:AsignNtauisalmostNtau}
\| \sign \widetilde N_{\tau} - \widetilde N_{\tau}/A\|_{L^p(\Omega)}<\eps.
\end{equation}
Such $A$ exists since $|\Delta \widetilde N_\tau| \leq 1$ and $\tau<\infty$ a.s.\ as $\widetilde N$ is unbounded a.s., so a.s.\ 
\begin{equation}\label{eq:valofNtau/A}
 \widetilde N_{\tau} /A\in \bigl[-1-\tfrac{1}{A}, -1\bigr] \cup \bigl[1, 1 + \tfrac{1}{A}\bigr],
\end{equation}
and since by \cite[Theorem 25.14]{Kal}
\begin{equation}\label{eq:widetildeNtaumeanzero}
\mathbb E \widetilde N_{\tau} = 0.
\end{equation}
Let $\tau_0=0$, $\tau_1 = \tau$, and for any $2\leq n\leq N$ define
\[
\tau_n := \inf\{t\geq \tau_{n-1}:\bigl|\widetilde N_t -\widetilde N_{\tau_{n-1}}\bigr| \geq A\}.
\]
By strong Markov property of L\'evy processes we have that $(\widetilde N_{\tau_n} - \widetilde N_{\tau_{n-1}})_{n=1}^N$ are independent, and thus by \eqref{eq:AsignNtauisalmostNtau} and \eqref{eq:widetildeNtaumeanzero} there exists a sequence of independent Rademacher random variables which we without loss of generality can denote by $(r_n)_{n=1}^N$ such that
\begin{equation}\label{eq:rnapproxwidetN-widetN/A}
\bigl\| r_n- (\widetilde N_{\tau_n} - \widetilde N_{\tau_n-1})/A\bigr\|_{L^p(\Omega)}\lesssim_p\eps,\;\;\;\;\;\; 1\leq n\leq N.
\end{equation}
(One just needs to use the fact that by \eqref{eq:AsignNtauisalmostNtau} and \eqref{eq:widetildeNtaumeanzero}, $|\mathbb E\sign(N_{\tau_n} - N_{\tau_{n-1}})|\lesssim_p\eps$, so by \eqref{eq:valofNtau/A} one can approximate $(\widetilde N_{\tau_n} - \widetilde N_{\tau_n-1})/A$ by a Rademacher.)
Now let us define appropriate $F$ and $a$ in the following way:
\[
F(t) := 
\begin{cases}
\phi_1/A, &\text{if}\;\;0\leq t\leq \tau_1,\\
\phi_n(r_1,\ldots,r_{n-1})/A, &\text{if}\;\;\tau_{n-1} <t\leq \tau_n,\;2\leq n\leq N,\\
0,&\text{if}\;\; t>\tau_N,
\end{cases}
\]
\[
a(t) := 
\begin{cases}
\eps_n, &\text{if}\;\;\tau_{n-1} <t\leq \tau_n,\;1\leq n\leq N,\\
1,&\text{if}\;\; t>\tau_N.
\end{cases}
\]
Then one has that $F$ and $a$ are predictable by \cite[Theorem I.2.2]{JS}, and moreover
\begin{align*}
\Bigl\| \int_0^{\infty} F \ud \widetilde N &- \sum_{n=1}^N r_n \phi_n(r_1,\ldots,r_{n-1})  \Bigr\|_{L^p(\Omega; X)} \\
& = \Bigl\| \sum_{n=1}^N \bigl(r_n - (\widetilde N_{\tau_n} -\widetilde N_{\tau_n-1})/A \bigr) \phi_n(r_1,\ldots,r_{n-1})  \Bigr\|_{L^p(\Omega; X)}\\
&\leq \sum_{n=1}^N \Bigl\| \bigl(r_n - (\widetilde N_{\tau_n} -\widetilde N_{\tau_n-1}) /A\bigr) \phi_n(r_1,\ldots,r_{n-1})  \Bigr\|_{L^p(\Omega; X)}\\
&\leq L\sum_{n=1}^N \Bigl\| r_n - (\widetilde N_{\tau_n} -\widetilde N_{\tau_n-1}) /A\Bigr\|_{L^p(\Omega; X)} \stackrel{(*)}\lesssim_p NL \eps,
\end{align*}
where $L>0$ is such that $\|\phi_n\|_{\infty}<L$ for any $1\leq n\leq N$, and $(*)$ follows from \eqref{eq:rnapproxwidetN-widetN/A}. For the same reason we have that
\[
\Bigl\| \int_0^{\infty} aF \ud \widetilde N - \sum_{n=1}^N \eps_nr_n \phi_n(r_1,\ldots,r_{n-1})  \Bigr\|_{L^p(\Omega; X)}  \lesssim_p NL \eps.
\]
By letting $\eps \to 0$ and by \eqref{eq:aFandFLpineq} we obtain \eqref{eq:aFtoFimpliesUMDUMDprop}.

\smallskip

Now let $p=1$. Then we need to use good-$\lambda$ inequalities in order to show that \eqref{eq:atransfforstochintwrtwidetN} holds for any $p>1$ (see Section \ref{sec:Cfwithmodg}). Let $M = \int_{[0, \cdot] \times J} F \ud \widetilde N$ and $L = \int_{[0, \cdot] \times J} aF \ud \widetilde N$ for some elementary predictable $F:\mathbb R_+ \times \Omega \to \mathbb R$ and $a:\mathbb R_+ \times \Omega \to \{0, 1\}$. Let us fix $\beta>1$, $\delta >0$, and $\lambda>0$, and let us define stopping times
\begin{align*}
\sigma&:= \inf\{t\geq 0: \|L_t\|>\lambda\},\\
\tau&:= \inf\{t\geq 0: \|M_t\|>\delta\lambda\},\\
\rho&:= \inf\{t\geq 0: \|F_t\|>\delta\lambda\}.
\end{align*}
 Define
$$
\widehat M_t:= \int_{[0, t] \times J} F \mathbf 1_{(\tau \wedge \sigma \wedge \rho, \tau \wedge \rho]} \ud \widetilde N,\;\;\; t\geq 0,
$$
$$
\widehat L_t:= \int_{[0, t] \times X} aF \mathbf 1_{(\tau \wedge \sigma \wedge \rho, \tau \wedge \rho]} \ud \widetilde N,\;\;\; t\geq 0.
$$ 
Note that $\widehat M$ coincides with $M - M^{\tau \wedge \sigma \wedge \rho}$ on $[0, \tau\wedge \rho]$, so by the definition of $\tau$ and $\rho$ we have that $\widehat M \leq 2\delta\lambda$ (note that $F$ is elementary predictable, so $\rho$ is predictable, and so $\Delta M_{\rho} = \Delta L_{\rho} = 0$ as $M$ and $L$ are quasi-left continuous), and thus by \eqref{eq:atransfforstochintwrtwidetN} for $p=1$ we have that
\begin{equation}\label{eq:MhatdominatesLhatforstochintwrtPoiss}
\mathbb E \sup_{t\geq 0} \|\widehat L_t\|\lesssim_{X}\mathbb E \sup_{t\geq 0} \|\widehat M_t\| \leq 2^p \delta^p\lambda^p.
\end{equation}
Therefore, as $\|\Delta M_t\| \leq \|F_t\|$ a.s.\ for any $t\geq 0$,
\begin{equation*}
\begin{split}
\mathbb P(L^*>\beta\lambda, \Delta M^* \vee M^*\leq \delta \lambda)  &\leq \mathbb P(L^*>\beta\lambda, \tau = \rho = \infty) \stackrel{(*)}\leq \mathbb P(\widehat L^*>(\beta - \delta-1)\lambda)\\
& \leq \frac{1}{(\beta - \delta-1)\lambda} \mathbb E \widehat L^* \stackrel{(**)}\lesssim_{X}\frac{1}{(\beta - \delta-1)\lambda} \mathbb E \widehat M^*,
\end{split}
\end{equation*}
where $(*)$ follows from the fact that if $\tau = \rho  = \infty$, then $\widehat L$ coincides with $L-L^{\tau \wedge \sigma \wedge \rho}$ on $\mathbb R_+$, and the fact that $\mathbb P(\sigma = \rho) = 0$ as $\rho$ is predictable and $\sigma$ is totally inaccessible (see \cite[Chapter 25]{Kal}), while $(**)$ holds by \eqref{eq:MhatdominatesLhatforstochintwrtPoiss}. On the other hand as $\widehat M \leq 2\delta\lambda$ a.s.
\[
 \mathbb E \widehat M^* = \mathbb E \widehat M^* \mathbf 1_{\tau \wedge \sigma \wedge \rho < \infty} \leq 2\delta\lambda \mathbb P(\sigma<\infty)  = 2\delta\lambda \mathbb P(L^* > \lambda).
\]
Consequently,
\[
 \mathbb P(L^*>\beta\lambda, \Delta M^* \vee M^*\leq \delta \lambda) \lesssim_{X}\frac{2\delta\lambda}{(\beta - \delta-1)\lambda}\mathbb P(L^* > \lambda),
\]
and
one has that \eqref{eq:atransfforstochintwrtwidetN} holds for any $p>1$ by Lemma \ref{lem:goodlgivesphi} and by the fact that $\Delta M^* \leq 2M^*$ a.s., so the UMD property follows from the case $p>1$ considered above.
\end{proof}

For the proof of Theorem \ref{thm:mumuCoxcomparable} we will also need the following technical lemma on approximation of continuous increasing predictable functions which simpler form was proven in \cite[Subsection 5.5]{DY17}.

\begin{lemma}\label{lem:Lipshfuncapprox}
Let $F:\mathbb R_+ \times \Omega \to \mathbb R_+$ be a nondecreasing continuous predictable process such that $F(t) - F(s) \leq C(t-s)$ a.s.\ for any $0\leq s \leq t$ and any $C>0$, and such that $F(0)=0$ a.s. Let $0<p<\infty$. Then for any $p\in(0, 1]$, for any $T\geq 0$, and for any $\delta>0$ these exists natural $K_0>0$ such that for any $K>K_0$ and for $(t_k)_{k=0}^K = (Tk/K)_{k=0}^K$ we have that
\begin{equation}\label{eq:lemLipshfuncapprox}
 \mathbb E \Bigl(\sum_{k=1}^K  \bigl|F(t_k)- \mathbb E (F(t_k)| \mathcal F_{t_{k-1}})\bigr|\Bigr)^p<\delta.
\end{equation}
\end{lemma}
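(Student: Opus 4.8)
The plan is to reduce \eqref{eq:lemLipshfuncapprox} to an elementary $L^1$-convergence statement for conditional expectations, using that the Lipschitz hypothesis lets us represent $F$ as a Lebesgue integral of a bounded predictable density. Concretely, since $F_\cdot(\omega)$ is Lipschitz with constant $C$ for a.e.\ $\omega$, it is absolutely continuous, and I would set
\[
 f_s := \limsup_{n\to \infty} n\bigl(F_s - F_{(s-1/n)^+}\bigr),\;\;\; s\geq 0 .
\]
Each $s\mapsto n(F_s - F_{(s-1/n)^+})$ is continuous and adapted, hence predictable, so $f$ is predictable; by Lebesgue's differentiation theorem $0\leq f\leq C$ a.e.\ on $\mathbb R_+\times\Omega$ and $F_t = \int_0^t f_s\ud s$ a.s.\ for all $t\geq 0$. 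By the conditional Fubini theorem (valid as $f\geq 0$ is product measurable), writing $t_k = Tk/K$ and $\phi_K(s):= t_{k-1}$ for $s\in(t_{k-1},t_k]$,
\[
 F(t_k) - \mathbb E\bigl(F(t_k)\mid \mathcal F_{t_{k-1}}\bigr) = \int_{t_{k-1}}^{t_k}\bigl(f_s - \mathbb E(f_s\mid \mathcal F_{t_{k-1}})\bigr)\ud s ,
\]
and therefore, by the triangle inequality for integrals,
\[
 \sum_{k=1}^{K}\bigl|F(t_k) - \mathbb E(F(t_k)\mid \mathcal F_{t_{k-1}})\bigr| \;\leq\; \int_0^T \bigl|f_s - \mathbb E\bigl(f_s\mid \mathcal F_{\phi_K(s)}\bigr)\bigr|\ud s \;=:\; G_K .
\]

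Next I would reduce the claim to $\mathbb E G_K\to 0$ as $K\to\infty$. Since $0<p\leq 1$ and $x\mapsto x^p$ is increasing and concave on $[0,\infty)$, the left-hand side of \eqref{eq:lemLipshfuncapprox} is at most $\mathbb E G_K^p$, which by Jensen's inequality is at most $(\mathbb E G_K)^p$ (note $\mathbb E G_K \leq 2\,\mathbb E F(T)\leq 2CT<\infty$). Hence it suffices to choose $K_0$ so that $(\mathbb E G_K)^p<\delta$ for all $K>K_0$, i.e.\ to prove $\mathbb E G_K\to 0$. By Tonelli's theorem $\mathbb E G_K = \int_0^T \mathbb E\bigl|f_s - \mathbb E(f_s\mid \mathcal F_{\phi_K(s)})\bigr|\ud s$; the integrand is bounded by $2C$ uniformly in $s$ and $K$, so by dominated convergence it is enough to show that for a.e.\ $s\in[0,T]$ one has $\mathbb E(f_s\mid\mathcal F_{\phi_K(s)})\to f_s$ in $L^1(\Omega)$ as $K\to\infty$.

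For this last point, fix $s\in[0,T]$ with $s/T\notin\mathbb Q$; this discards only a Lebesgue-null set, and for such $s$ one has $\phi_K(s)<s$ for every $K$ while $\phi_K(s)\to s$. Because $f$ is predictable (equivalently, because $F$ is continuous), $f_s$ is $\mathcal F_{s-}$-measurable, so $\mathbb E(f_s\mid\mathcal F_{s-})=f_s$. Moreover the c\`adl\`ag martingale $r\mapsto \mathbb E(f_s\mid\mathcal F_r)$ has left limit $\mathbb E(f_s\mid\mathcal F_{s-})$ at $s$, and a left limit, where it exists, is approached along \emph{every} sequence converging to $s$ from the left; hence $\mathbb E(f_s\mid\mathcal F_{\phi_K(s)})\to \mathbb E(f_s\mid\mathcal F_{s-})=f_s$ a.s., and therefore in $L^1(\Omega)$ by uniform boundedness by $C$. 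This finishes the plan.

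The step I expect to be the main obstacle is the first one: verifying that the a.e.\ derivative of the Lipschitz predictable process $F$ can be realized as a genuine predictable process $f$ with $F_t=\int_0^t f_s\ud s$ — that is, checking that the left difference quotients are predictable and that their $\limsup$ agrees a.s.\ with $dF_s/ds$ for a.e.\ $s$, so that $f$ is simultaneously predictable and a valid density. A secondary (and easily handled) subtlety, noted above, is that $\phi_K(s)$ need not increase to $s$ monotonically, which is why one should appeal to the existence of left limits of $r\mapsto\mathbb E(f_s\mid\mathcal F_r)$ rather than to L\'evy's upward theorem along a monotone sequence.
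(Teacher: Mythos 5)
Your proof is correct, and it takes a genuinely different route from the paper in the key analytic step. Both arguments share the opening move: represent the Lipschitz predictable $F$ as $F_t=\int_0^t f_s\,\ud s$ for a predictable density $f$ with $0\le f\le C$, and reduce to an $L^1$-smallness statement for $f - \mathbb E(f_\cdot\mid\mathcal F_{\phi_K(\cdot)})$ on $[0,T]\times\Omega$ (you construct $f$ explicitly via difference quotients; the paper cites a companion result for its existence, which amounts to the same thing). Where the paths diverge is in proving the convergence. The paper observes that $T_K\colon g\mapsto \mathbb E(g(\cdot)\mid\mathcal F_{\phi_K(\cdot)})$ is a contraction on $L^1([0,T]\times\Omega)$ and then uses a density argument: approximate $f$ in $L^1$ by its left-translates $f_n(\cdot)=f(\cdot-1/n)$ (strong continuity of translations), note $T_K f_n=f_n$ for $K$ large because $f_n(t)$ is $\mathcal F_{t-1/n}$-measurable, and conclude by uniform boundedness of $T_K$. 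You instead fix $s$ and argue that $\mathbb E(f_s\mid\mathcal F_{\phi_K(s)})\to f_s$ a.s.\ via the existence of left limits of the c\`adl\`ag martingale $r\mapsto\mathbb E(f_s\mid\mathcal F_r)$ combined with the $\mathcal F_{s-}$-measurability of $f_s$ (which is what predictability buys you), then integrate in $s$ using dominated convergence. Your route is a little more probabilistic and self-contained (it does not need the strong continuity of translation operators on $L^1$); the paper's is more operator-theoretic. For $p<1$ you also shortcut slightly, applying Jensen with concavity of $x\mapsto x^p$ to pass immediately to the $p=1$ estimate, whereas the paper argues via a pathwise bound $\sum_k |F(t_k)-\mathbb E(F(t_k)\mid\mathcal F_{t_{k-1}})|\le 2CT$ and dominated convergence. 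Both are fine. One small remark: your discarding of $s$ with $s/T\in\mathbb Q$ is actually unnecessary, since $\phi_K(s)=t_{k-1}<s$ holds for every $s\in(0,T]$ by construction of the mesh intervals $(t_{k-1},t_k]$; but the discarding is harmless.
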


\begin{proof}
Let us first show the lemma for $p= 1$.
As it was shown in \cite[Subsection 5.5]{DY17}, there exists a predictable process $f:\mathbb R_+ \times \Omega \to [0, C]$ such that a.s.\ 
\begin{equation}\label{eq:thereexistsfwvalin[0,C]sithatF=intfas}
F_t = \int_0^t f(s)\ud s,\;\;\; t\geq 0.
\end{equation}
For each $K>0$ define
\begin{equation}\label{eq:defofTKftdsahg}
 T_K f(t) := \mathbb E \bigl(f(t)\big|\mathcal F_{t_k}\bigr),\;\;\;\;\;\;t_{k-1}<t\leq t_k,\;\; k=1,\ldots, K.
\end{equation}
Then it is sufficient to show that $T_K f$ converges to $f$ in $L^1([0, T]\times \Omega, \lambda|_{[0, T]} \otimes \mathbb P)$ (where $\lambda$ is the Lebesgue measure on $\mathbb R_+$) as
\begin{align*}
 \mathbb E \sum_{k=1}^K  \bigl|F(t_k)- \mathbb E (F(t_k)| \mathcal F_{t_{k-1}})\bigr| &= \mathbb E \sum_{k=1}^K  \Bigl| \int_{t_{k-1}}^{t_k} f(t) - \mathbb E \bigl(f(t)\big|\mathcal F_{t_k}\bigr) \ud t\Bigr|\\
 &\leq \mathbb E \sum_{k=1}^K   \int_{t_{k-1}}^{t_k} \bigl|f(t) - \mathbb E \bigl(f(t)\big|\mathcal F_{t_k}\bigr)\bigr| \ud t\\
 &= \mathbb E \int_{0}^T \bigl|f(t) -  \mathbb E \bigl(f(t)\big|\mathcal F_{t_k}\bigr)\bigr| \ud t \\
 & =  \|f -T_K f\|_{L^1([0, T]\times \Omega, \lambda|_{[0, T]} \otimes \mathbb P)}.
\end{align*}
Note that $T_K$ is a bounded linear operator on $L^1([0, T]\times \Omega, \lambda|_{[0, T]} \otimes \mathbb P)$ of norm $1$ as for any $g\in L^1([0, T]\times \Omega, \lambda|_{[0, T]} \otimes \mathbb P)$  by the Fubini theorem and by the fact that a conditional expectation is a contraction on $L^1(\Omega)$
\begin{align*}
  \|T_Kg\|_{L^1([0, T]\times \Omega, \lambda|_{[0, T]} \otimes \mathbb P)} &= \mathbb E \int_0^T \bigl|\mathbb E \bigl(g(t)\big|\mathcal F_{t_k}\bigr)\bigr| \ud t =  \int_0^T \mathbb E \bigl|\mathbb E \bigl(g(t)\big|\mathcal F_{t_k}\bigr)\bigr| \ud t\\
 &\leq \int_0^T \mathbb E |g(t)| \ud t = \mathbb E \int_0^T|g(t)| \ud t = \|g\|_{L^1([0, T]\times \Omega, \lambda|_{[0, T]}}.
\end{align*}
Therefore by \cite[Lemma 9.4.7]{EMT04} it is sufficient to show that $T_k f_n \to f_n$ for a converging to $f$ sequence $(f_n)_{n\geq 1}$. To this end we need to set $f_n(\cdot) := f(\cdot - 1/n)$ on $[1/n, \infty]$ and $f(\cdot) = 0$ on $[0, 1/n]$. Then $f_n$ converges to $f$ in $L^1([0, T]\times \Omega, \lambda|_{[0, T]} \otimes \mathbb P)$ as $n\to \infty$ by \cite[Lemma 9.4.7]{EMT04} (translation operators have a strong limit, namely the identity operator, see e.g.\ \cite[Theorem 1]{AP11}) and by the dominated convergence theorem, while $T_Kf_n = f_n$ for $K \geq n$ as in this case $f_n(t)$ is $\mathcal F_{t-1/K}$-measurable. Therefore the desired follows.

Let us show the case $p\neq1$. In this case it is sufficient to notice that for any $K\geq 1$ a.s.\
\begin{align*}
\sum_{k=1}^K  \bigl|F(t_k)- \mathbb E (F(t_k)| \mathcal F_{t_{k-1}})\bigr|  &= \sum_{k=1}^K  \bigl|F(t_k) - F(t_{k-1})- \mathbb E (F(t_k) - F(t_{k-1})| \mathcal F_{t_{k-1}})\bigr| \\
&\stackrel{(i)}\leq \sum_{k=1}^K  |F(t_k) - F(t_{k-1})|+| \mathbb E (F(t_k) - F(t_{k-1})| \mathcal F_{t_{k-1}})|\\
&\stackrel{(ii)}= \sum_{k=1}^K  F(t_k) - F(t_{k-1})+ \mathbb E (F(t_k) - F(t_{k-1})| \mathcal F_{t_{k-1}})\\
&\stackrel{(iii)}= \int_0^T f(t) \ud t + \int_0^T T_K f(t) \ud t \stackrel{(iv)}\leq 2CT,
\end{align*}
where $f$ is defined by \eqref{eq:thereexistsfwvalin[0,C]sithatF=intfas}, $T_K$ is defined by \eqref{eq:defofTKftdsahg}, $(i)$ follows from a triangle inequality, $(ii)$ follows from the fact that $F$ is nondecreasing (and the same holds for the conditional expectations), $(iii)$ follows from the definition of $f$ and $T_kf$, and $(iv)$ holds by the fact that $f\in [0, C]$ a.e.\ on $\mathbb R_+ \times \Omega$, by the definition \eqref{eq:thereexistsfwvalin[0,C]sithatF=intfas} of $T_K$, and the fact that a conditional expectation is a contraction on  $L^{\infty}$ (so $T_Kf\in [0, C]$ a.e.\ on $\mathbb R_+ \times \Omega$). Therefore \eqref{eq:lemLipshfuncapprox} for $p\neq 1$ follows by the dominated convergence theorem and the case $p=1$.
\end{proof}

\begin{proof}[Proof of Theorem \ref{thm:mumuCoxcomparable}]
The ``if'' part follows from Proposition \ref{prop:XUMDiffdecouplforPois}. Let us show the ``only if'' part. As $F$ is elementary predictable we may assume that $J$ is finite, $J= \{1,\ldots,n\}$, $\mathcal J$ is generated by all atoms, $X$ is finite dimensional, and $F$ has the following form
\begin{equation}\label{eq:formofFforJsmall}
 F(t, \cdot, j) = \sum_{k=1}^K \mathbf 1_{[t_{k-1}, t_k)}(t) \xi_{k, j},\;\;\; t\geq 0,\;1\leq j\leq n,
\end{equation}
where $0\leq t_0 \leq \ldots \leq t_K$, and $\xi_{k, j}$ is elementary $X$-valued $\mathcal F_{t_{k-1}}$-measurable for any $k=1,\ldots,K$ and $1\leq j\leq n$. 

Let $\mu_{\rm Cox}$ be as constructed in Example \ref{ex:CoxprocessfiniteJ}. Then we need to show that
\begin{equation}\label{eq:mubarbyNexactform}
\begin{split}
\mathbb E &\sup_{t\geq 0} \Bigl\|\sum_{k=1}^K \sum_{j=1}^n\bar{\mu}([t_{k-1}\wedge t, t_k \wedge t) \times \{j\}) \xi_{k, j} \Bigr\|^p\\
&\eqsim_{p, X}\mathbb E \mathbb E_N \sup_{t\geq 0} \Bigl\|\sum_{k=1}^K \sum_{j=1}^n\widetilde N\bigl([\nu^j([0,t_{k-1}\wedge t)), \nu^j([0,t_k \wedge t))) \times \{j\}\bigr) \xi_{k, j} \Bigr\|^p,
\end{split}
\end{equation}
where $\nu_N$ is a compensator of $N$, $\widetilde N:= N - \nu_N$, $\mathbb E_N$ denotes expectation in $\Omega_N$ (i.e.\ the expectation taken for a fixed $\omega\in \Omega$, see Example \ref{ex:defofExiforrvxis}), and $\nu^j$ is a random measure on $\mathbb R_+$ of the form
\begin{equation}\label{eq:defofnujddsd}
\nu^j (A):= \nu(A\times \{j\}),\;\;\; A \in \mathcal B(\mathbb R_+),\;\; j=1,\ldots, n.
\end{equation}

In order to derive \eqref{eq:mubarbyNexactform} we will use the fact that any random measure is Poisson after a certain time-change (see \cite[Corollary 25.26]{Kal}) and the decoupling inequality \eqref{eq:decthmniceRHS}. The proof will be done in four steps.

{\em Step 1: $\nu([s, t) \times \{j\}) \leq t-s$, $\nu(\mathbb R_+ \times \{j\}) = \infty$ , $1\leq p\leq 2$.} First assume that a.s.
\begin{equation}\label{eq:nuisdombyLebesgue}
 \nu([s,t]\times J) \leq t-s,\;\;\;\; 0\leq s \leq t,
\end{equation}
that $ \nu(\mathbb R_+ \times \{j\}) = \infty$ a.s.\ for any $j=1,\ldots,n$, and that $1\leq p\leq 2$. By the fact that any martingale has a c\`adl\`ag version (see Subsection \ref{subsec:BSvmart}) and by adding knots to the mesh we may assume that $K$ is so big that (or the mesh is so small that)
\begin{equation*}
\mathbb E \max_{k=1}^K\Bigl\| \int_{[0, t_k]\times J} F \ud \bar\mu \Bigr\|^p \leq \mathbb E \sup_{t\geq 0}\Bigl\| \int_{[0, t]\times J} F \ud \bar\mu \Bigr\|^p \leq 2\mathbb E \max_{k=1}^K\Bigl\| \int_{[0, t_k]\times J} F \ud \bar\mu \Bigr\|^p,
\end{equation*}
so instead of \eqref{eq:CoxmeasiffUMD} it is sufficient to show that for $K$ big enough
\begin{equation}\label{eq:CoxmeasiffUMDp>1}
\mathbb E \max_{k=1}^K\Bigl\| \int_{[0, t_k]\times J} F \ud \bar\mu \Bigr\|^p \eqsim_{p, X} \mathbb E  \Bigl\| \int_{\mathbb R_+\times J} F \ud \bar \mu_{\rm Cox} \Bigr\|^p.
\end{equation}
By \cite[Corollary 25.26]{Kal} the random measure $\chi$ defined on $\mathbb R_+ \times \Omega$ by
\[
 \chi([0,s)\times \{j\}) := \mu([0, \tau_s^j)\times \{j\}),
 \;\;\; s\geq 0,\;\; 1\leq j \leq n,
\]
with
\begin{equation}\label{eq:defoftaujs}
\tau_s^j := \inf\{s\geq 0: \nu([0, s)\times \{j\}) \geq t\},\;\;\; s\geq 0,
\end{equation}
is a standard Poisson random measure with a compensator
\[
 \nu_{\chi}([0,s)\times \{j\}) = s,\;\;\; s\geq 0,\;\;1\leq j\leq n.
\]
Without loss of generality by an approximation argument we may assume that $K$ in \eqref{eq:formofFforJsmall} is so large so that there exists $T>0$ such that $t_0,\ldots,t_K$ in \eqref{eq:formofFforJsmall} are $0, \tfrac{T}{K},\ldots,\tfrac{T(k-1)}{K}, T$. Moreover, by considering a smaller mash for any $\delta>0$ we can assume that $K$ is so large that by Lemma \ref{lem:Lipshfuncapprox}, by predictability and continuity of the process $t\mapsto \nu([0,t))$, and by \eqref{eq:nuisdombyLebesgue}
\begin{equation}\label{eq:deltaforcondexpect}
 \mathbb E \max_{k=1}^K \sum_{j=1}^n |\nu([0,t_k)\times \{j\}) - \mathbb E (\nu([0,t_k)\times \{j\})| \mathcal F_{t_{k-1}})|<\delta.
\end{equation}
 Therefore the integral on the left-hand side of \eqref{eq:mubarbyNexactform} becomes
\[
 t\mapsto \sum_{k=1}^K \sum_{j=1}^n \bar{\chi}\Bigl(\bigl[\nu^j([0, t_{k-1} \wedge t)), \nu^j([0, t_{k} \wedge t))\bigr) \times \{j\}\Bigr) \xi_{k, j},
\]
where $\bar{\chi} = \chi-\nu_{\chi}$. As $\chi$ is a standard Poisson random measure, by \eqref{eq:nuisdombyLebesgue}, by adding some pieces of standard Poisson random measure within stopping times, and by using the fact that Poisson process is strong Markov and stationary, without loss of generality we may assume that there exists a standard Poisson random measure $\eta$ on $\mathbb R_+\times J$ with a compensator measure $\nu_{\eta} = \nu_{\chi}$ such that 
$$
\eta|_{\bigl[t_{k-1}, t_{k-1} +  \nu^j[t_{k-1}, t_{k})\bigr)} = \chi|_{\bigl[\nu^j([0, t_{k-1})), \nu^j([0, t_{k}))\bigr)},\;\;\;\; k=1,\ldots, K,
$$ 
and $\eta|_{[t_{k-1} +  \nu^j[t_{k-1}, t_{k}), t_k)}$ is copied from an independent from $\chi$ standard Poisson random measure.
Then the integral above becomes as follows
\begin{equation}\label{eq:Mqformhalohalolastline}
 M_t =  \sum_{k=1}^K \sum_{j=1}^n \bar{\eta}\Bigl(\bigl[t_{k-1},  t_{k-1}+ \nu^j[t_{k-1} \wedge t, t_{k} \wedge t)\bigr) \times \{j\}\Bigr) \xi_{k, j},\;\;\; t\geq 0,
\end{equation}
where $\xi_{k, j}$ is $\mathcal F_{t_{k-1}}\otimes \sigma(\eta|_{[0,t_{k-1}]})$-measurable and $\bar{\eta} := \eta - \nu_{\eta}$.
Let $L:\mathbb R_+ \times \Omega \to X$ be a process defined for every $t\geq 0$ by
\begin{multline}\label{eq:defofLapproxMq}
 L_t =  \sum_{k=1}^K \sum_{j=1}^n \bar{\eta}\Bigl(\bigl[t_{k-1} \wedge t, ( t_{k-1}
 +\mathbb E ( \nu^j[t_{k-1} , t_{k})| \mathcal F_{t_{k-1}}))\wedge t\bigr ) \times \{j\}\Bigr) \xi_{k, j}.
\end{multline}
Note that $L$ is a martingale with respect to an enlarged filtration $\mathbb F' = (\mathcal F'_t)_{t\geq 0}$ of the following form
\[
\mathcal F'_t = 
\begin{cases}
 \sigma(\eta|_{[0,t]}, \mathcal F_0),\;\;\;& 0\leq t<t_1,\\
\sigma(\eta|_{[0,t]}, \mathcal F'_{t_k}),\;\;\;& 1\leq k < K, \;\; t_{k} < t < t_{k+1},\\
\sigma(\eta|_{[0,t_k]}, \mathcal F_{t_k}),\;\;\; &1\leq k \leq K, \;\; t = t_k,\\
\mathcal F'_{t_K},\;\;\;& t>t_K.
\end{cases}
\]
$L$ is a martingale with respect to $\mathbb F'$, but it can be decomposed into two parts $L^1$ and $L^2$ which are martingales in different filtrations, in the following way. First we introduce a stopping time
\begin{equation}\label{eq:defofsigmajkrassiyane}
\sigma^j_k  := \tau^j_{\nu^j[0,t_{k-1})+ \mathbb E ( \nu^j[t_{k-1}, t_{k})| \mathcal F_{t_{k-1}})},
\end{equation}
where $\tau^j_s$ is as defined by \eqref{eq:defoftaujs}. Then let us define for any $t\geq 0$
\begin{align*}
L^1_t &:= \sum_{k=1}^K \sum_{j=1}^n \bar{\eta}\Bigl(\bigl[t_{k-1},  t_{k-1}+\mathbb E ( \nu^j[t_{k-1} , t_{k} )| \mathcal F_{t_{k-1}}) \wedge \nu^j[t_{k-1}\wedge t , t_{k} \wedge t)\bigr) \times \{j\}\Bigr)\xi_{k, j}\\
& = \sum_{k=1}^K \sum_{j=1}^n\bar{\mu}\bigl([t_{k-1}\wedge t, \sigma^j_k\wedge t) \times \{j\}\bigr) \xi_{k, j},
\end{align*}
which is a martingale with respect to the original filtration, and 
\[
L^2_t:= \sum_{k=1}^K \mathbf 1_{t\geq t_k}
 \sum_{j=1}^n \bar{\eta}\Bigl(\bigl[(t_{k-1} + \nu^j[t_{k-1}, t_{k}))\wedge t, ( t_{k-1}+\mathbb E ( \nu^j[t_{k-1}, t_{k} )| \mathcal F_{t_{k-1}}))\wedge t \bigr) \times \{j\}\Bigr)\mathbf 1_{\sigma^j_k\geq t_k} \xi_{k, j},
 \]
which is a martingale with respect to the enlarged filtration 
$$
\mathbb F'' = (\mathcal F_t'')_{t\geq 0} := \bigl( \mathcal F_{\infty} \otimes \sigma(\eta|_{A_{\eta} \cap [0, t] \times J})\bigr)_{t\geq 0},
$$ 
where $A_{\eta}:= \cup_{k=1}^K \cup_{j\in J} \bigl[t_{k-1},  t_{k-1}+ \nu^j[t_{k-1} \wedge t, t_{k} \wedge t)\bigr) \times \{j\}$ is a $\sigma$-field depending on $\Omega$ and $\otimes$ does not mean a direct product, see Subsection \ref{subsec:ConexponPSCondProbCondIndep}.
Note that $L^1$ and $L^2$ are martingales in different scales, so $L = L^1 + L^2$ not necessarily in general (unless $\nu((s, t] \times \{j\}) = t-s$), but $L_{\infty} = L^1_{\infty} + L^2_{\infty}$ and $\sup_{t\geq 0} \|L_t\| \leq \sup_{t\geq 0} \|L^1_t\| + \sup_{t\geq 0} \|L^2_t\|$ a.s.

Next, by Novikov's inequalities \eqref{eq:NovRV}, the fact that $X$ can be assumed finite dimensional, the fact that $F$ is uniformly bounded, the definition \eqref{eq:defofsigmajkrassiyane} of $\sigma^j_k$, and \eqref{eq:deltaforcondexpect}
 \begin{equation}\label{eq:L1approxintFwrtmubar}
  \begin{split}
    \mathbb E \max_{k=1}^K &\Bigl \| \int_{[0, t_k] \times J} F \ud \bar {\mu} - L^1_{t_k}\Bigr\|^p\\
 &= \mathbb E \max_{k=1}^K\Bigl \| \sum_{\ell=1}^k \sum_{j=1}^n\bar{\mu}([\sigma^j_{\ell}, t_{\ell}) \times \{j\}) \mathbf 1_{\sigma^j_{\ell}\leq t_{\ell}} \xi_{\ell, j}\Bigr\|^p\\
 &\lesssim_{p,F}\mathbb E\sum_{k=1}^K \sum_{j=1}^n\nu\bigl([\sigma^j_k, t_{k}) \times \{j\}\bigr) \mathbf 1_{\sigma^j_k\leq t_k}\\
 & \leq\mathbb E \sum_{k=1}^K \sum_{j=1}^n |\nu^j([0,t_k)) - \mathbb E (\nu^j([0,t_k))| \mathcal F_{t_{k-1}})|<\delta.
  \end{split}
 \end{equation}
On the other hand for a similar reason and the fact that $\nu_{\eta}(\cdot \times \{j\})$ is a standard Lebesgue measure on $\mathbb R_+$ for any $j=1,\ldots,n$
\begin{equation}\label{eq:L2issmall<delta}
\begin{split}
 \mathbb E \sup_{t\geq 0} \| L^2_{t}\|^p
 & = \mathbb E \sup_{t\geq 0} \mathbb E_{\eta}\Big\|\sum_{k=1}^K \mathbf 1_{t\geq t_k}
 \sum_{j=1}^n \bar{\eta}([t_{k-1} + \nu^j[t_{k-1}, t_{k}),  t_{k-1}\\
 &\quad\quad\quad\quad\quad\quad\quad+\mathbb E ( \nu^j[t_{k-1} , t_{k} )| \mathcal F_{t_{k-1}}) ) \times \{j\})\mathbf 1_{\sigma^j_k\geq t_k} \xi_{k, j} \Big\|^p\\
 &\stackrel{(*)}\lesssim_{p, F}\mathbb E \sum_{k=1}^K  \sum_{j=1}^n \nu_{\eta}([t_{k-1} + \nu^j[t_{k-1}, t_{k}),  t_{k-1}\\ &\quad\quad\quad\quad\quad\quad\quad+ \mathbb E ( \nu^j[t_{k-1}, t_{k})| \mathcal F_{t_{k-1}}) ) ) \times \{j\})\mathbf 1_{\sigma^j_k\geq t_k}\\
 &\leq \mathbb E \sum_{k=1}^K  \sum_{j=1}^n \Bigl| \mathbb E ( \nu^j[t_{k-1}, t_{k})| \mathcal F_{t_{k-1}}) ) ) -\nu^j[t_{k-1}, t_{k})\Bigr|\mathbf 1_{\sigma^j_k\geq t_k}\\
 &\leq  \mathbb E \sum_{k=1}^K \sum_{j=1}^n |\nu^j([0,t_k)) - \mathbb E (\nu^j([0,t_k))| \mathcal F_{t_{k-1}})|<\delta,
 \end{split}
\end{equation}
where $\mathbb E_{\eta}$ is defined by Example \ref{ex:defofExiforrvxis}, $(*)$ follows from the fact that $F$ is uniformly bounded, \eqref{eq:NovRV}, and the fact that the random measure constructed from
$$
\Bigl(\eta|_{\cup_{j\in J}[t_{k-1} + \nu^j[t_{k-1}, t_{k}),  t_{k})\times \{j\}}\Bigr)_{k=1}^K
$$ 
is standard Poisson with the compensator measure $\bigl(\nu_{\eta}|_{\cup_{j\in J}[t_{k-1} + \nu^j[t_{k-1}, t_{k}),  t_{k})\times \{j\}}\bigr)_{k=1}^K$.
As we can choose $K$ big enough (and $\delta$ small enough), it is sufficient to show that
\[
\mathbb E \max_{k=1}^K \|L^1_{t_k}\|^p \eqsim_{p, X} \mathbb E  \Bigr\| \int_{\mathbb R_+\times J} F \ud \bar{\mu}_{\rm Cox} \Bigl\|^p.
\]
To this end first notice that analogously to \eqref{eq:L2issmall<delta}
\begin{equation}\label{eq:forwildetildeNlikeL2also<delta}
\begin{split}
&\mathbb E \mathbb E_{N}\sup_{t\geq 0} \Big\|\sum_{k=1}^K \mathbf 1_{t\geq t_k}
 \sum_{j=1}^n \widetilde N([t_{k-1} + \nu^j[t_{k-1}\wedge t, t_{k}\wedge t),  t_{k-1}\\
 &\quad\quad\quad\quad\quad\quad\quad+\mathbb E ( \nu^j[t_{k-1} \wedge t, t_{k} \wedge t)| \mathcal F_{t_{k-1}}) ) \times \{j\})\mathbf 1_{\sigma^j_k\geq t_k} \xi_{k, j} \Big\|^p\lesssim_{p,F}\delta,
 \end{split}
\end{equation}
where $N$ is defined as in \eqref{eq:mubarbyNexactform}.
Next note that by Theorem \ref{thm:intromccnnll} (see also the proof of Proposition \ref{prop:XUMDiffdecouplforPois}), \eqref{eq:forwildetildeNlikeL2also<delta}, and the fact that $\mathbb E ( \nu^j[t_{k-1} \wedge t, t_{k} \wedge t)| \mathcal F_{t_{k-1}})$ is $ \mathcal F'_{t_{k-1}}$-measurable for any $t\geq 0$
\begin{align*}
\mathbb E  \max_{k=1}^K \|L^1_{t_k}\|^p &=\mathbb E \max_{k=1}^K\mathbb E_{\eta}  \Big\| \sum_{\ell=1}^k \sum_{j=1}^n \bar{\eta}([t_{\ell-1},  t_{\ell-1}+\mathbb E ( \nu^j[t_{\ell-1} , t_{k} )| \mathcal F_{t_{\ell-1}}) ) \times \{j\})\mathbf 1_{A_{\ell, j}} \xi_{\ell, j}\\
&\quad\quad\quad\quad + \sum_{\ell=1}^k \sum_{j=1}^n \bar{\eta}([t_{\ell-1},  t_{\ell}) \times \{j\})\mathbf 1_{\overline{A_{\ell, j}}} \xi_{\ell, j} \Big\|^p\\
&\eqsim_{p, X}\mathbb E\mathbb E_{N}  \max_{k=1}^K \Big\| \sum_{\ell=1}^k \sum_{j=1}^n \widetilde N([t_{\ell-1},  t_{\ell-1}+\mathbb E ( \nu^j[t_{\ell-1} , t_{k} )| \mathcal F_{t_{\ell-1}}) ) \times \{j\})\mathbf 1_{A_{\ell, j}} \xi_{\ell, j}\\
&\quad\quad\quad\quad + \sum_{\ell=1}^k \sum_{j=1}^n \widetilde N([t_{\ell-1},  t_{\ell}) \times \{j\})\mathbf 1_{\overline{A_{\ell, j}}} \xi_{\ell, j} \Big\|^p\\
&\stackrel{(*)} \eqsim_{p}\mathbb E \mathbb E_{N}  \Big\| \sum_{k=1}^K \sum_{j=1}^n \widetilde N ([t_{k-1},  t_{k-1}+\mathbb E ( \nu^j[t_{k-1} , t_{k} )| \mathcal F_{t_{k-1}}) ) \times \{j\})\mathbf 1_{A_{k, j}} \xi_{k, j}\\
&\quad\quad\quad\quad + \sum_{k=1}^K \sum_{j=1}^n \widetilde N ([t_{k-1},  t_{k}) \times \{j\})\mathbf 1_{\overline{A_{k, j}}} \xi_{k, j} \Big\|^p\\
&\stackrel{(**)}\eqsim_{\delta} \mathbb E \mathbb E_N  \Big\| \sum_{k=1}^K \sum_{j=1}^n \widetilde N\Bigl(\bigl[t_{k-1},  t_{k-1}+ \nu^j[t_{k-1} , t_{k} )\bigr) \times \{j\}\Bigr) \xi_{k, j} \Big\|^p\\
&= \mathbb E  \Bigl\| \int_{\mathbb R_+\times J} F \ud \bar \mu_{\rm Cox} \Bigr\|^p,
\end{align*}
where $A_{k,j} := \{\mathbb E ( \nu^j[t_{k-1}, t_{k} )| \mathcal F_{t_{k-1}}) \leq  \nu^j[t_{k-1} , t_{k} )\} \subset \Omega$, $(*)$ holds by Lemma \ref{lem:MhasIIthemathbbEsupphiMeqsimphiEphiM}, and  $(**)$ holds for $\delta$ small enough by \eqref{eq:deltaforcondexpect} e.g.\ analogously \eqref{eq:L1approxintFwrtmubar}.  Therefore \eqref{eq:CoxmeasiffUMDp>1}, and hence \eqref{eq:CoxmeasiffUMD}, follows. This terminates the proof.

{\em Step 2: $\nu([s, t) \times \{j\}) \leq t-s$, $\nu(\mathbb R_+ \times \{j\}) = \infty$ , general $1\leq p < \infty$.} In the case of a general $1\leq p<\infty$ we will have exactly the same proof as in Step 1, but with applying more complicated Novikov inequalities \eqref{eq:NovRV} for the case $p>2$.

{\em Step 3: $\nu([s, t) \times \{j\}) <\infty$, $\nu(\mathbb R_+ \times \{j\}) = \infty$ , general $1\leq p < \infty$.} Assume now that $\nu$ is infinite but finite on finite intervals. Then by a standard time change argument (see \cite[Theorems 10.27 and 10.28]{Jac79} or \cite[Subsection 5.5]{DY17}) we can assume that a.s.\
\begin{equation*}
 \nu([s,t]\times J) \leq t-s,\;\;\;\; 0\leq s \leq t,
\end{equation*}
which was considered in Step 2.

{\em Step 4: $\nu$ is general, $1\leq p < \infty$.} If we have a general measure $\nu$, then we make the following two tricks. First, instead of considering $\mu$, we consider $\mu^m :=\mu|_{A_m}$, where $(A_m)_{m\geq 1}$ is an increasing family of elements of $\widetilde {\mathcal P}$ such that $\cup_{m} A_m = \mathbb R_+\times \Omega \times J$ and $\mathbb E \mu(A_m)<\infty$ for any $m\geq 1$ (such a family exists as $\mu$ is $\widetilde {\mathcal P}$-$\sigma$-finite). Note that by Step 3 for any $m\geq 1$ we have that
 \begin{equation}\label{eq:CoxmeasiffUMDapproxbyAm}
\mathbb E \sup_{t\geq 0} \Bigl\| \int_{[0,t]\times J} F \ud \bar\mu^m \Bigr\|^p \eqsim_{p, X} \mathbb E  \Bigl\| \int_{\mathbb R_+\times J} F \ud \bar \mu^m_{\rm Cox} \Bigr\|^p.
\end{equation}
Indeed, though $\mu^m$ is finite a.s., we can add to $\mu^m$ another independent Poisson random measure $\eps \zeta$, $\eps>0$, where $\zeta$ is a standard Poisson random measure with a~compensator $\nu_{\zeta}$ satisfying $\nu_{\zeta}((s, t]\times \{j\}) = t-s$ for all $0\leq s\leq t$ and $j\in J$. Then by Step 3 we have that
 \begin{equation*}
\mathbb E \sup_{t\geq 0} \Bigl\| \int_{[0,t]\times J} F \ud (\bar\mu^m + \eps\bar{\zeta}) \Bigr\|^p \eqsim_{p, X} \mathbb E  \Bigl\| \int_{\mathbb R_+\times J} F \ud ( \bar\mu^m_{\rm Cox} + \eps\bar{\zeta}_{\rm Cox}) \Bigr\|^p,
\end{equation*}
and \eqref{eq:CoxmeasiffUMDapproxbyAm} follows by letting $\eps\to 0$, by a triangle inequality, and by the fact that
\[
 \mathbb E \sup_{t\geq 0} \Bigl\| \int_{[0,t]\times J} F \ud \eps\bar{\zeta} \Bigr\|^p \eqsim_{p, X} \mathbb E  \Bigl\| \int_{\mathbb R_+\times J} F \ud\eps\bar{\zeta}_{\rm Cox} \Bigr\|^p \lesssim_{F,X,p} \eps^p.
\]

Now notice that by Burkholder-Davis-Gundy inequalities \cite[Subsection 7.2]{Y18BDG}, by $\gamma$-dominated convergence \cite[Theorem 9.4.2]{HNVW2}, and by monotone convergence theorem (see Subsection \ref{subsec:gammanorm} for the definition of a $\gamma$-norm)
\begin{align*}
  \mathbb E \sup_{t\geq 0} \Bigl\| \int_{[0,t]\times J} F \ud \bar\mu^m - \int_{[0,t]\times J} F \ud \bar\mu \Bigr\|^p &= \mathbb E \sup_{t\geq 0} \Bigl\| \int_{[0,t]\times J} F \mathbf 1_{\mathbb R_+ \times \Omega \times J \setminus A_m} \ud \bar\mu \Bigr\|^p \\
  &\eqsim_{p, X} \mathbb E \|F \mathbf 1_{\mathbb R_+ \times \Omega \times J \setminus A_m}\|^p_{\gamma(L^2(\mathbb R_+ \times J; \mu), X)}\\
  &\to 0,\;\;\; m\to \infty
\end{align*}
(see also Section \ref{sec:appmartapprox}),
and for the same reason and the fact that we can set $\mu^m_{\rm Cox}$ to be $\mu_{\rm Cox}|_{A_m}$ (as they are equidistributed)
\[
 \mathbb E  \Bigl\| \int_{\mathbb R_+\times J} F \ud \bar \mu^m_{\rm Cox} - \int_{\mathbb R_+\times J} F \ud \bar \mu_{\rm Cox} \Bigr\|^p \to 0,\;\;\; m\to \infty.
\]
Thus \eqref{eq:CoxmeasiffUMD} follows as a limit of \eqref{eq:CoxmeasiffUMDapproxbyAm}.
\end{proof}

\begin{remark}\label{rem:intFdmuisamaringeiththesameloccharenlfi}
Note that $\int F \ud \bar{\mu}_{\rm Cox}$ is a decoupled tangent martingale to $\int F \ud \bar{\mu}$. Indeed, let $(\Omega, \mathcal F, \mathbb P)$ and $\mathbb F$ be the original probability space and filtration, let $(\overline{\Omega},\overline{ \mathcal F},\overline{ \mathbb P})$ be the extended by $\mu_{\rm Cox}$ probability space, and let $\overline{ \mathbb F} = (\overline{ \mathcal F}_t)_{t\geq 0}$ be such that $\overline{ \mathcal F}_t := \sigma(\mathcal F_t, {\mu}_{\rm Cox}|_{[0, t]})$ for any $t\geq 0$. One needs to show that $M = \int F \ud \bar{\mu}$ is an $\overline{ \mathbb F}$-martingale with the same local characteristics $(0, \nu^M)$ with $\nu^M$ defined by 
\begin{equation}\label{eq:nuMforremthatFmuCoxisdecpocw}
\nu^M(I\times B) = \int_{I} \mathbf 1_{B\setminus \{0\}}(F) \ud \nu,\;\;\; I\subset \mathbb R_+, \;\;\; B\subset X \; \text{Borel}.
\end{equation}
As $F$ is elementary predictable (but the same can be proven for any strongly $\widetilde {\mathcal P}$-measurable $F$ via exploiting an approximation argument, see Proposition \ref{prop:defofstochintwrtranmcaseodFoffinitemuintds} and Definition \ref{def:Fisintagrablewrtbarmugendef}) and as $\mathcal J$ is countably generated, there exists an increasing family of finite $\sigma$-algebras $(\mathcal J^m)_{m\geq 1}$ on $J$ such that $F$ is $\mathcal B(\mathbb R_+) \otimes \mathcal J^m \otimes \mathcal F$-measurable for any $m\geq 1$ and $\mathcal J = \sigma((\mathcal J^m)_{m\geq 1})$. Let $\overline{ \mathbb F}^m = (\overline{ \mathcal F}^m_t)_{t\geq 0}$ be such that $\overline{ \mathcal F}^m_t := \sigma(\mathcal F_t, {\mu}_{\rm Cox}|_{[0, t]\times \mathcal J^m})$ for any $t\geq 0$, where $\sigma({\mu}_{\rm Cox}|_{[0, t]\times \mathcal J^m})$ means that we are considering $\sigma$-algebras generated by processes $t\mapsto {\mu}_{\rm Cox}([0, t]\times A)$ for all $A\in \mathcal J^m$ (by approximating $F$ as it was done in Step 4 of the proof of Theorem \ref{thm:mumuCoxcomparable} we may assume that  $\mathbb E{\mu}_{\rm Cox}(\mathbb R_+\times J) = \mathbb E \nu(\mathbb R_+\times J) = \mathbb E{\mu}(\mathbb R_+\times J)<\infty$), and let us first show that $M$ is an $\overline{ \mathbb F}^m$-martingale. Let $(A_n^m)_{n=1}^{N_m}\subset J$ be the partition generating $\mathcal J^m$ and let $\tau^n_s = \inf \{t\geq 0:\nu([0, t]\times A_n^m) \geq s\}$. Then thanks to Example \ref{ex:CoxprocessfiniteJ} and the fact that the distribution of a Cox process is uniquely determined by its compensator we have that there exist independent standard Poisson processes $(N^{n})_{n=1}^{N_m}$ which are also independent of $\mathcal F$ such that $ {\mu}_{\rm Cox}([0, \tau^n_s]\times A) = N^n_s$. Fix any $t\geq r\geq 0$.
Let $\mathcal N^m := \sigma((N^n)_{n=1}^{N_m})$. Then
\begin{equation}\label{eq:Mt-MrconexpdirmuXpcsa}
\mathbb E(M_t-M_r| \overline{ \mathcal F}_r^m)\stackrel{(*)}= \mathbb E \Bigl(\mathbb E(M_t-M_r|  \mathcal F_r\otimes \mathcal N^m)\Big|\overline{ \mathcal F}_r^m\Bigr) \stackrel{(**)}= 0,
\end{equation}
where $(*)$ follows from the fact that $\overline{ \mathcal F}_r^m \subset \mathcal F_r\otimes \mathcal N^m$, and $(**)$ holds as $M_t-M_r$ is independent of $\mathcal N^m$ and as $\mathbb E(M_t-M_r|  \mathcal F_r) = 0$. Now it suffices to notice that $\mathbb E(M_t-M_r| \overline{ \mathcal F}_r) = \lim_{m\to \infty}\mathbb E(M_t-M_r| \overline{ \mathcal F}_r^m) =0$ due to the martingale convergence theorem \cite[Theorem 3.3.2]{HNVW1}. The fact that $M$ keeps the same local characteristics $(0, \nu^M)$ can be shown by \eqref{eq:nuMforremthatFmuCoxisdecpocw} via proving that $\mu$ has the same compensator after enlarging the probability space and filtration. Assume that $\mu$ has a different $\overline{ \mathbb F}$-compensator $\tilde {\nu}$. Then for any $I\subset \mathbb R_+$ and $A \subset J$ we have that $t\mapsto \mu([0, t]\cap I\times J) - \nu([0, t]\cap I\times J)$ is a martingale (as an integral w.r.t.\ $\bar{\mu}$ by the first part of the remark) and $t\mapsto \mu([0, t]\cap I\times J) - \tilde {\nu}([0, t]\cap I\times J) $ is a martingale (by the definition of a compensator), so a predictable finite variation process $t\mapsto \nu([0, t]\cap I\times J)- \tilde{\nu}([0, t]\cap I\times J) $ is a martingale thus it is zero by \cite[Lemma 25.11]{Kal} and hence $\tilde \nu = \nu$ a.s.
\end{remark}

\begin{remark}\label{rem:CoxidlikePoisson}
Inequality \eqref{eq:CoxmeasiffUMD} has an equivalent formulation in terms of Poisson random measures. Indeed, as $\nu$ is $\widetilde P$-$\sigma$-finite, it is a.s.\ $\sigma$-finite, so by Subsection \ref{subsec:PoissRMprelim} a.s.\ there exists a Poisson random measure $N_{\nu}$, which distribution by the definition coincides with the distribution of the Cox process directed by $\nu$, so we have that for a.e.\ $\omega\in \Omega$
the distributions of 
$\int_{\mathbb R_+\times J} F \ud \bar \mu_{\rm Cox}$ and $ \int_{\mathbb R_+\times J} F \ud \widetilde N_{\nu}$ coincide,
and in particular by \eqref{eq:CoxmeasiffUMD}
\begin{equation}\label{eq:Coxispoissonforfixedomwfag}
\mathbb E \sup_{t\geq 0} \Bigl\| \int_{[0,t]\times J} F \ud \bar\mu \Bigr\|^p \eqsim_{p, X} \mathbb E  \Bigl\|  \int_{\mathbb R_+\times J} F \ud \widetilde N_{\nu} \Bigr\|^p,
\end{equation}
where the right-hand side is very much in the spirit of $\gamma$-radonifying operators (see Subsection \ref{subsec:gammanorm}; see also \cite{ApRi,RieGaa}), but here instead of considering Gaussian random variables we have Poisson random measures. Note that this parallel with $\gamma$-radonifying operators might mislead as e.g.\ one has that $ \bigl\|  \int_{\mathbb R_+\times J} F \ud \widetilde N_{\nu} \bigr\|_{L^p(\Omega; X)}$ and $ \bigl\|  \int_{\mathbb R_+\times J} F \ud \widetilde N_{\nu} \bigr\|_{L^q(\Omega; X)}$ are incomparable for different $p$ and $q$ (a classical example would be $\alpha$-stable processes which can be represented as such integrals) which is of course not the case for Wiener integrals, see e.g.\ \cite[Proposition 6.3.1]{HNVW2}.

Assume that for a Banach space $X$, for some $1\leq p<\infty$, and for any Poisson random measure $N$ on $\mathbb R_+ \times J$ with a compensator $\nu$ there exists a function $R_{\nu}$ acting on deterministic $X$-valued functions on $\mathbb R_+\times J$ such that $\mathbb E \bigl\|  \int_{\mathbb R_+\times J} F \ud \widetilde N \bigr\|^p \lesssim_{p, X} R_{\nu}(F)$ (resp.\ $\mathbb E \bigl\|  \int_{\mathbb R_+\times J} F \ud \widetilde N \bigr\|^p \gtrsim_{p, X} R_{\nu}(F)$.) In this case, if $X$ is a UMD Banach space, we can conclude from \eqref{eq:Coxispoissonforfixedomwfag} that
\[
\mathbb E \sup_{t\geq 0} \Bigl\| \int_{[0,t]\times J} F \ud \bar\mu \Bigr\|^p \lesssim_{p, X} \mathbb E R_{\nu} (F)\;\; \left(\text{resp. }\mathbb E \sup_{t\geq 0} \Bigl\| \int_{[0,t]\times J} F \ud \bar\mu \Bigr\|^p \gtrsim_{p, X} \mathbb E R_{\nu} (F)\right).
\]
An example of such $R_{\nu}$ in the case of martingale type $r$ spaces was presented in \cite{ZBL19,Hau11}:
\[
R_{\nu}(F):=
\begin{cases}
\min\left\{\int_{\mathbb R_+ \times J} \|F\|^p \ud \nu, \Bigl(\int_{\mathbb R_+ \times J} \|F\|^r \ud \nu \Bigr)^{\frac{p}{r}}\right\},\;\;\; 1\leq p\leq r,\\
\int_{\mathbb R_+ \times J} \|F\|^p \ud \nu+ \Bigl(\int_{\mathbb R_+ \times J} \|F\|^r \ud \nu \Bigr)^{\frac{p}{r}},\;\;\; p>r
\end{cases}
\]
(though  \cite{ZBL19,Hau11} work only with Poisson random measures with the compensator of the form $\lambda_{\mathbb R_+} \otimes \nu_0$, in the case of deterministic $F$ these estimates can be generalized to a general compensated Poisson measure). Therefore \eqref{eq:Coxispoissonforfixedomwfag} allows us to extend \cite{ZBL19,Hau11} to stochastic integrals with respect to general random measures for martingale type $r$ Banach spaces with the UMD property.
\end{remark}

\subsection{Purely discontinuous quasi-left continuous martingales}\label{subsec:dectanPDQLC}

The present subsection is devoted to the purely discontinuous quasi-left continuous case. Our goal is to show that any $X$-valued purely discontinuous quasi-left continuous martingale $M$ coincides with $\int x \ud \bar{\mu}^M(\cdot, x)$ (where $\mu^M$ is defined by \eqref{eq:defofmuM}), so that we can reduce this case to the one considered in Subsection \ref{subsec:RMstochintinBanspaxcewithCOx}. To this end we need to define an integral of a {\em general predictable} (i.e.\ not necessarily elementary predictable) process with respect to a random measure (as $(t, x) \mapsto x$, $t\geq 0$, $x\in X$ is {\em not} elementary predictable). Let us start with the following proposition.

\begin{proposition}\label{prop:defofstochintwrtranmcaseodFoffinitemuintds}
Let $(J, \mathcal J)$ be a measurable space, $\mu$ be an integer-valued optional random measure over $\mathbb R_+ \times J$, $\nu$ be its compensator, $\bar{\mu} := \mu-\nu$. Let $X$ be a Banach space. Let $F:\mathbb R_+ \times \Omega \times J \to X$ be strongly  $\widetilde {\mathcal P}$-measurable so  that $ \mathbb E \int_{\mathbb R_+ \times J}\|F\| \ud \mu<\infty$. Then 
\begin{equation}\label{eq:defofintFwermubarforFwithgoodcond}
M_t :=\int_{[0,t] \times J}F \ud \bar{\mu} = \int_{[0,t] \times J}F \ud {\mu} - \int_{[0,t] \times J}F \ud {\nu},\;\;\; t\geq 0,
\end{equation}
is well defined and is a martingale. Moreover,
\begin{equation}\label{eq:ineqforsupnormforgoosmart}
\mathbb E \sup_{t\geq 0} \|M_t\| \leq 2\mathbb E \int_{\mathbb R_+ \times J}\|F\| \ud \mu.
\end{equation}
\end{proposition}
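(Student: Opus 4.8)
The plan is to define $M$ by the explicit formula \eqref{eq:defofintFwermubarforFwithgoodcond} and to verify in turn that (a) both integrals on the right-hand side make sense, (b) the difference is a martingale, and (c) the maximal inequality \eqref{eq:ineqforsupnormforgoosmart} holds. First I would observe that since $\mathbb E\int_{\mathbb R_+\times J}\|F\|\ud\mu<\infty$, the integral $\int_{[0,t]\times J}F\ud\mu$ is an absolutely convergent (Bochner) sum of the jumps of the process, hence well defined pathwise; and by the defining property \eqref{eq:defofcompofrandsamdsaer} of the compensator applied to the nonnegative $\widetilde{\mathcal P}$-measurable function $\|F\|$ (here one should first treat the scalar case and then use strong measurability together with Pettis' theorem to reduce to it), one gets $\mathbb E\int_{\mathbb R_+\times J}\|F\|\ud\nu=\mathbb E\int_{\mathbb R_+\times J}\|F\|\ud\mu<\infty$, so $\int_{[0,t]\times J}F\ud\nu$ is also well defined and has integrable total variation. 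In particular $M$ is well defined and $M_t\in L^1(\Omega;X)$ for each $t$, and $\sup_{t\ge0}\|M_t\|\le\int_{\mathbb R_+\times J}\|F\|\ud\mu+\int_{\mathbb R_+\times J}\|F\|\ud\nu$, which upon taking expectations and using the compensator identity yields exactly \eqref{eq:ineqforsupnormforgoosmart}.

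The martingale property is the substantive point. I would prove it by approximation: since $\mathcal J$ is countably generated, one can find elementary predictable $F_n:\mathbb R_+\times\Omega\times J\to X$ with $\mathbb E\int_{\mathbb R_+\times J}\|F-F_n\|\ud\mu\to0$ (truncate $F$ to stay in a ball, discretize the time and $X$ values using measurability of $F$ with respect to $\widetilde{\mathcal P}$, and cut down to sets $A_m$ of finite $\mu$-measure as in the $\widetilde{\mathcal P}$-$\sigma$-finiteness hypothesis). For elementary predictable $F_n$, the process $\int_{[0,t]\times J}F_n\ud\bar\mu$ is by \eqref{eq:stochintwrtranmeasdefof}–\eqref{eq:defofstochintwrtbarmu} and \cite[Theorem II.1.8]{JS} a (purely discontinuous) martingale. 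By the triangle inequality and the compensator identity applied to $\|F-F_n\|$,
\[
\mathbb E\sup_{t\ge0}\Bigl\|\int_{[0,t]\times J}F\ud\bar\mu-\int_{[0,t]\times J}F_n\ud\bar\mu\Bigr\|\le 2\,\mathbb E\int_{\mathbb R_+\times J}\|F-F_n\|\ud\mu\longrightarrow0,
\]
so $\int_{[0,\cdot]\times J}F_n\ud\bar\mu\to M$ in $L^1(\Omega;\mathcal D(\mathbb R_+,X))$; since the space of martingales with finite strong $L^1$-moment is closed in $L^1(\Omega;\mathcal D(\mathbb R_+,X))$ by Theorem \ref{thm:strongLpmartforamaBanacoace}, $M$ is a martingale (and automatically purely discontinuous, matching the remark after \eqref{eq:defofstochintwrtbarmu}). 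Alternatively, and perhaps more cleanly, one checks directly that $\mathbb E(M_t-M_s\mid\mathcal F_s)=0$ by testing against $x^*\in X^*$, reducing to the scalar integrability and the compensator identity applied to the $\widetilde{\mathcal P}$-measurable function $(r,\omega,x)\mapsto\mathbf 1_{(s,t]}(r)\mathbf 1_A(\omega)\langle F,x^*\rangle$ for $A\in\mathcal F_s$.

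I expect the main obstacle to be the approximation step in $L^1(\mu)$ by elementary predictable functions together with keeping track of measurability: one must genuinely use that $\mathcal J$ is countably generated and that $\mu$ is $\widetilde{\mathcal P}$-$\sigma$-finite to produce the $A_m$, and the strong measurability of $F$ (via a sequence of $\widetilde{\mathcal P}$-simple functions converging pointwise) is what lets one dominate and pass to the limit by the dominated convergence theorem against $\ud\mu$. Everything else — the two integrals being well defined, the bound \eqref{eq:ineqforsupnormforgoosmart}, and the closedness argument for the martingale property — is then routine given the compensator identity \eqref{eq:defofcompofrandsamdsaer} and Theorem \ref{thm:strongLpmartforamaBanacoace}.
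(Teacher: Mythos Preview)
Your proposal is correct and matches the paper's proof essentially line by line: well-definedness via the compensator identity for $\|F\|$, the maximal bound \eqref{eq:ineqforsupnormforgoosmart} by the triangle inequality, and the martingale property by $L^1(\mathbb P\otimes\nu)$-approximation with elementary predictable functions followed by closedness of martingales in $L^1(\Omega;\mathcal D(\mathbb R_+,X))$ from Theorem~\ref{thm:strongLpmartforamaBanacoace}. The only minor difference is that the paper treats the approximation step more tersely---it simply invokes density of step functions in $L^1(\Omega\times\mathbb R_+\times J,\widetilde{\mathcal P},\mathbb P\otimes\nu;X)$ without the truncation/discretization scaffolding you describe, and does not need Pettis' theorem since the compensator identity \eqref{eq:defofcompofrandsamdsaer} is applied directly to the scalar nonnegative function $\|F\|$.
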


\begin{proof}
First of all note that $M$ is well defined by formula \eqref{eq:defofintFwermubarforFwithgoodcond} as by Fubini's theorem $F$ is a.s.\ $\mathcal B(\mathbb R_+) \otimes \mathcal J$-measurable and a.s.\ integrable with respect to $\mu$ and $\nu$ (the a.s.\ integrability w.r.t.\ $\nu$ holds since $\mathbb E \int_{\mathbb R_+ \times J}\|F\| \ud \nu = \mathbb E \int_{\mathbb R_+ \times J}\|F\| \ud \mu<\infty$, see \eqref{eq:defofcompofrandsamdsaer}). Also notice that \eqref{eq:ineqforsupnormforgoosmart} follows directly from \eqref{eq:defofintFwermubarforFwithgoodcond}, from a triangle inequality, and from \eqref{eq:defofcompofrandsamdsaer}.

As $F$ is strongly  $\widetilde {\mathcal P}$-measurable, as $F\in L^1(\Omega \times \mathbb R_+ \times J, \mathbb P \otimes \nu; X)$, and as step functions are dense in $L^1(\Omega \times \mathbb R_+ \times J,\widetilde {\mathcal P}, \mathbb P \otimes \nu; X)$ (here we choose the measure $\nu$ so that $\mathbb P \otimes \nu$ is a measure on $\widetilde {\mathcal P}$), there exist elementary predictable processes $(F^n)_{n\geq 1}$, $F^n:\mathbb R_+ \times \Omega \times J \to X$ for any $n\geq 1$, such that 
$$
\mathbb E \int_{\mathbb R_+ \times J}\|F^n\| \ud \mu = \mathbb E \int_{\mathbb R_+ \times J}\|F^n\| \ud \nu<\infty
$$ 
for any $n\geq 1$ and 
$$
\mathbb E \int_{\mathbb R_+ \times J}\|F-F^n\| \ud \mu = \mathbb E \int_{\mathbb R_+ \times J}\|F-F^n\| \ud \nu \to 0,\;\;\;\;n\to \infty.
$$ 
For each $n\geq 1$ let
\[
M^n_t := \int_{[0,t] \times J}F^n \ud \bar{\mu} = \int_{[0,t] \times J}F^n \ud {\mu} - \int_{[0,t] \times J}F^n \ud {\nu},\;\;\; t\geq 0.
\]
Then $M^n$ is a martingale. On the other hand by \eqref{eq:ineqforsupnormforgoosmart} we have that
\[
\mathbb E \sup_{t\geq 0} \|M_t - M^n_t\| \leq 2 \mathbb E \int_{\mathbb R_+ \times J}\|F-F^n\| \ud \mu \to 0,\;\;\; n\to \infty,
\]
and thus, as martingales form a closed subset of $L^1(\Omega; \mathcal D(\mathbb R_+, X))$ (see Definition \ref{def:defofskotokhodspaxc} and Theorem \ref{thm:strongLpmartforamaBanacoace}), $M$ is a martingale as well.
\end{proof}

Now we are ready to define an integral of a general process with respect to a random measure.

\begin{definition}\label{def:Fisintagrablewrtbarmugendef}
 Let $(J, \mathcal J)$ be a measurable space, $\mu$ be an integer-valued optional random measure on $\mathbb R_+ \times J$, $\nu$ be its compensator, $\bar{\mu} := \mu-\nu$. Let $X$ be a Banach space. A general strongly  $\widetilde {\mathcal P}$-measurable process $F:\mathbb R_+ \times \Omega \times J \to X$ is called to be {\em integrable} with respect to $\bar{\mu}$ if for any increasing family $(A_n)_{n\geq 1}$ of elements of $\widetilde {\mathcal P}$ satisfying $\mathbb E \int_{A_n} \|F\|\ud \mu <\infty$ for any $n\geq 1$ and $\cup_{n\geq 1}A_n = \mathbb R_+ \times \Omega \times J$, we have that the processes $\int_{A_n} F \ud 
\bar{\mu}$ converge in $L^1(\Omega; \mathcal D(\mathbb R_+, X))$ as $n\to \infty$.

$F$ is called to be {\em locally integrable} with respect to $\bar{\mu}$ if there exists an increasing sequence of stopping times $(\tau_n)_{n\geq 1}$ such that $\tau_n \to \infty$ as $n\to \infty$ and $A \mathbf 1_{[0,\tau_n]}$ is integrable with respect to $\bar{\mu}$ for any $n\geq 1$.
\end{definition}

 This definition is very much in the spirit of Lebesgue integration (a function $f$ can be integrable only if its restrictions $f|_{B_n}$ are integrable and the corresponding integrals converge as the restriction domains $B_n$'s blow up) or vector-valued stochastic integration with respect to a Brownian motion, see \cite{NVW}.
 
\begin{remark}\label{rem:intAnFudmubariswelldefoinedforgoodAn}
 Notice that in this case
 \begin{equation}\label{eq:intAnFudmubar=remark}
  t \mapsto \int_{A_n\cap[0, t]\times J} F \ud \bar{\mu} = \int_{A_n\cap[0, t]\times J} F \ud 
{\mu} - \int_{A_n\cap[0, t]\times J} F \ud {\nu},\;\;\; t\geq 0,
 \end{equation}
 are well defined martingales by Proposition \ref{prop:defofstochintwrtranmcaseodFoffinitemuintds}.
 \end{remark}

Now let us formulate the main theorem of the present subsection.

\begin{theorem}\label{thm:XisUMDiffMisintxwrtbarmuMvain}
 Let $X$ be a Banach space. Then $X$ is UMD if and only if for any purely discontinuous quasi-left continuous martingale $M:\mathbb R_+ \times \Omega \to X$ with $\mathbb E \sup_{t\geq 0} \|M_t\| <\infty$ we have that $x$ is integrable with respect to $\bar{\mu}^M$. If this is the case, then $M_t = \int_{[0, t]\times X} x \ud \bar{\mu}^M$ a.s.
\end{theorem}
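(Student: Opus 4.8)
The plan is to prove both implications by reducing everything to known structural results about martingales in UMD spaces together with the martingale-closedness of stochastic integrals with respect to compensated random measures (Proposition~\ref{prop:defofstochintwrtranmcaseodFoffinitemuintds}).

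\textbf{The ``if'' direction.} First I would show that if $x$ is integrable with respect to $\bar\mu^M$ for every purely discontinuous quasi-left continuous martingale $M$ with $\mathbb E\sup_t\|M_t\|<\infty$, and moreover the resulting integral equals $M$, then $X$ must be UMD. For this it suffices to produce, for every real-valued tangent-type comparison that characterizes UMD, a purely discontinuous quasi-left continuous $X$-valued martingale whose decomposition into $\int x\ud\bar\mu^M$ fails outside the UMD class. A cleaner route: assume $X$ is not UMD; then by Theorem~\ref{thm:candecXvalued} (or Remark~\ref{rem:MYdecBanach}) there is an $X$-valued local martingale without Meyer--Yoeurp decomposition, and by localizing one obtains a purely discontinuous quasi-left continuous martingale $M$ with $\mathbb E\sup_t\|M_t\|<\infty$ for which $\int_{[0,t]\times X}x\ud\bar\mu^M$ either does not exist or does not reconstruct $M$. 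Concretely, the failure of Meyer--Yoeurp is exactly the failure of the purely discontinuous part to be well-behaved; one exploits that $\int x\ud\bar\mu^M$, when it exists, is automatically a purely discontinuous martingale by Proposition~\ref{prop:defofstochintwrtranmcaseodFoffinitemuintds}, and that $M-\int x\ud\bar\mu^M$ would then be a purely discontinuous martingale with no jumps, hence zero. So existence of the integral forces $M=\int x\ud\bar\mu^M$, and the ability to always do this for purely discontinuous quasi-left continuous $M$ is equivalent to a decoupling/decomposition property known to characterize UMD by \cite{Y17MartDec,Y17GMY,Y18BDG}.

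\textbf{The ``only if'' direction (the substantive part).} Assume $X$ is UMD and let $M$ be purely discontinuous quasi-left continuous with $\mathbb E\sup_t\|M_t\|<\infty$. First I would handle the case where $M$ has uniformly bounded jumps and $\mu^M$ lives on a bounded region: here $\int\|x\|\ud\mu^M<\infty$ in expectation (using $\mathbb E\sup_t\|M_t\|<\infty$ together with the quasi-left continuity to control $\sum_s\|\Delta M_s\|$ via Burkholder--Davis--Gundy from \cite{Y18BDG} and the fact that for purely discontinuous martingales $\sum\|\Delta M_s\|$ relates to $[\![M]\!]$), so Proposition~\ref{prop:defofstochintwrtranmcaseodFoffinitemuintds} applies directly and $N:=\int x\ud\bar\mu^M$ is a genuine martingale. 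Then $M-N$ is a purely discontinuous martingale (difference of two such) with $\Delta(M-N)_t=\Delta M_t-\Delta M_t=0$ for all $t$ by construction of $\mu^M$, hence $M-N$ is continuous and purely discontinuous, so $M-N\equiv 0$. For the general case I would approximate: take an increasing family $(A_n)\subset\widetilde{\mathcal P}$ with $\cup_n A_n=\mathbb R_+\times\Omega\times X$ and $\mathbb E\int_{A_n}\|x\|\ud\mu^M<\infty$ (this uses $\widetilde{\mathcal P}$-$\sigma$-finiteness of $\mu^M$ and that $\sum_s\|\Delta M_s\|^2<\infty$ a.s., choosing $A_n$ to cut away both small jumps near the origin and the tail). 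Set $M^n:=\int_{A_n}x\ud\bar\mu^M$, a martingale by Remark~\ref{rem:intAnFudmubariswelldefoinedforgoodAn}. The key estimate is that $M^n\to M$ in $L^1(\Omega;\mathcal D(\mathbb R_+,X))$: this follows from the Burkholder--Davis--Gundy inequalities for purely discontinuous martingales in UMD spaces \cite[Subsection~7.2]{Y18BDG} applied to $M-M^n$, whose jump measure is $\mu^M\mathbf 1_{\mathbb R_+\times\Omega\times X\setminus A_n}$, combined with $\gamma$-dominated convergence \cite[Theorem~9.4.2]{HNVW2} (exactly the argument used in Step~4 of the proof of Theorem~\ref{thm:mumuCoxcomparable}). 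Since each $M^n$ converges in $L^1(\Omega;\mathcal D(\mathbb R_+,X))$ and martingales form a closed subspace there (Theorem~\ref{thm:strongLpmartforamaBanacoace}), $x$ is integrable with respect to $\bar\mu^M$ in the sense of Definition~\ref{def:Fisintagrablewrtbarmugendef}, and the limit is a martingale equal to $\lim_n M^n=M$.

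\textbf{Main obstacle.} The delicate point is verifying that the $\gamma$-radonifying / BDG machinery of \cite{Y18BDG} really controls $\mathbb E\sup_t\|(M-M^n)_t\|$ by something that tends to $0$: one must check that the quantity $\mathbb E\|x\mathbf 1_{\mathbb R_+\times\Omega\times X\setminus A_n}\|^p_{\gamma(L^2(\mathbb R_+\times X;\mu^M),X)}$ (or its $p=1$ analogue) is finite for $M$ with only $\mathbb E\sup_t\|M_t\|<\infty$ and vanishes as $n\to\infty$, which requires knowing that the relevant $\gamma$-norm of the full integrand $x$ is finite --- this is where the UMD hypothesis and the finiteness of the Gaussian characteristic $\gamma([\![M]\!]_\infty)$ from Remark~\ref{rem:ifUMDthencovbilform} enter crucially, and where quasi-left continuity (so that $\mu^M$ has a non-atomic-in-time compensator, Lemma~\ref{lem:nuMisnonatomiffMqlc}) is needed to avoid atomic contributions that the continuous-integrand theory cannot absorb. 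A secondary subtlety is the identification $M-N\equiv 0$: one must be careful that ``purely discontinuous'' is in the sense of Definition~\ref{def:purelydiscmart} (quadratic variation of every scalar projection is pure jump), so that a purely discontinuous martingale with no jumps is indeed constant, hence zero since it starts at zero.
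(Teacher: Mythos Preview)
Your ``only if'' direction is essentially the paper's argument: approximate by $M^n=\int_{A_n}x\ud\bar\mu^M$, observe that $\Delta(M-M^n)_t=\Delta M_t\mathbf 1_{\overline{A_n}}(t,\cdot,\Delta M_t)$, and control $\mathbb E\sup_t\|M_t-M^n_t\|$ via the Burkholder--Davis--Gundy inequality for purely discontinuous martingales in UMD spaces \cite[Subsection~6.1]{Y18BDG}, which gives $\mathbb E\sup_t\|M_t-M^n_t\|\eqsim_X\mathbb E\|(\Delta M_t\mathbf 1_{\overline{A_n}})_{t\geq 0}\|_{\gamma(\ell^2(\mathbb R_+),X)}$; then $\gamma$-dominated convergence finishes. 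That is exactly what the paper does, and your identification of the main obstacle (finiteness of the $\gamma$-norm of the full integrand) is correct.

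Your ``if'' direction, however, has a real gap. You write that if $X$ is not UMD then by Theorem~\ref{thm:candecXvalued} there is a local martingale without Meyer--Yoeurp decomposition, and that ``by localizing one obtains a purely discontinuous quasi-left continuous martingale $M$'' for which the integral fails. This step is not justified: a martingale lacking a Meyer--Yoeurp decomposition is precisely one you \emph{cannot} split into continuous and purely discontinuous parts, so it does not hand you a purely discontinuous quasi-left continuous $M$ to work with. Your closing sentence, that the integrability property ``is equivalent to a decoupling/decomposition property known to characterize UMD by \cite{Y17MartDec,Y17GMY,Y18BDG}'', is circular: those references characterize UMD via the canonical or Meyer--Yoeurp decomposition, not via the present integrability statement, which is exactly what you are trying to prove.

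The paper's route for the ``if'' direction is entirely different and constructive. It uses the Burkholder characterization of UMD via $\{0,1\}$-valued martingale transforms \cite[(1.7)]{Burk81}: if $X$ is not UMD, there is a Paley--Walsh martingale $(f_n)$ with $\mathbb E\sup_n\|f_n\|=1$ and a $\{0,1\}$-sequence $(\eps_n)$ such that the transformed martingale $(g_n)$ satisfies $\mathbb E\sup_n\|g_n\|=\infty$. One then embeds the Rademachers driving $(f_n)$ into the jumps of $N^1-N^2$ for two independent standard Poisson processes, building a genuine purely discontinuous quasi-left continuous martingale $M$ with $\mathbb E M^*=1$. The key is a \emph{clever choice} of the exhausting sets $(A_m)$: they are chosen so that passing from $A_{2m-2}$ to $A_{2m-1}$ picks up precisely the jumps with $\eps_n=1$ on a block $k_m<n\leq k_{m+1}$, forcing $\mathbb E\sup_t\|\int_{A_{2m-1}}x\ud\bar\mu^M-\int_{A_{2m-2}}x\ud\bar\mu^M\|=\mathbb E\sup_n\|g_n-g_{k_m}\|\geq 1$ for every $m$. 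Thus $\int_{A_m}x\ud\bar\mu^M$ fails to be Cauchy in $L^1(\Omega;\mathcal D(\mathbb R_+,X))$, so $x$ is not integrable in the sense of Definition~\ref{def:Fisintagrablewrtbarmugendef}. The whole argument hinges on realizing that integrability must hold for \emph{every} admissible exhausting sequence $(A_n)$, and then exhibiting a bad one; your sketch never engages with this freedom.
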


\begin{proof}
 We will separately prove the ``if'' and the ``only if'' parts.
 
 {\em The ``only if'' part.} Let $X$ be a UMD Banach space, $M:\mathbb R_+ \times \Omega \to X$ be a purely discontinuous quasi-left continuous martingale with $\mathbb E \sup_{t\geq 0} \|M_t\| <\infty$, and let $(A_n)_{n\geq 1}$ be some increasing family from $\widetilde {\mathcal P}$ satisfying the properties from Definition~\ref{def:Fisintagrablewrtbarmugendef}. 
 For every $n\geq 1$ define an $X$-valued martingale 
 $$
 M^n_t := \int_{A_n \cap [0, t] \times X} x\ud \bar{\mu}^M,\;\;\;\; t\geq 0.
 $$
 We need to show that $(M^n)_{n\geq 1}$ converges in $L^1(\Omega; \mathcal D(\mathbb R_+, X))$ as $n\to \infty$.
 Note that by the definition of $M^n$ we have that $\Delta M^n_t(\omega) = \Delta M_t \mathbf 1_{A_n}(t, \omega, \Delta M_t(\omega))$ for a.e.\ $\omega \in \Omega$ for any $t\geq 0$, and thus, as $M$ and $M^n$ are purely discontinuous ($M^n$ is purely discontinuous as it is an integral with respect to a random measure, see e.g.\ \cite[$\S$II.1d]{JS} or \cite{Y17GMY,Y17MartDec}), by \cite[Subsection 6.1]{Y18BDG} we have that (see Subsection \ref{subsec:gammanorm} for the definition of a $\gamma$-norm)
 \[
   \infty > \mathbb E \sup_{t\geq 0} \|M_t\| \eqsim_{X} \mathbb E \bigl\|(\Delta M_t)_{t\geq 0}\bigr\|_{\gamma(\ell^2(\mathbb R_+), X)};
 \]
 also note that for any $n\geq 1$ by Remark \ref{rem:intAnFudmubariswelldefoinedforgoodAn} we have that $\mathbb E \sup_{t\geq 0} \|M^n_t\| < \infty$.
 Thus we have that
\[
  \mathbb E \sup_{t\geq 0} \|M_t - M^n_t\| \eqsim_{X} \mathbb E \bigl\|(\Delta M_t \mathbf 1_{\overline A_n}(t, \cdot, \Delta M_t(\cdot)))_{t\geq 0}\bigr\|_{\gamma(\ell^2(\mathbb R_+), X)},
\]
(here $\overline A_n$ means the complement of $A$ in $\mathbb R_+ \times \Omega \times J$). As $(\overline A_n)_{n\geq 1}$ is a vanishing family, by $\gamma$-dominated convergence \cite[Theorem 9.4.2]{HNVW2} and by the monotone convergence theorem we have that $ \mathbb E \sup_{t\geq 0} \|M_t - M^n_t\|  \to 0$ as $n\to \infty$. Consequently, $M^n$ converges to $M$ in $L^1(\Omega; \mathcal D(\mathbb R_+, X))$.

\smallskip

{\em The ``if'' part.} This part of the proof is based on the tricks from \cite[Subsection~4.4]{Y17GMY}. Assume that $X$ is not UMD. Our goal is to find such a purely discontinuous quasi-left continuous martingale $M$ and such an increasing family of sets $(A_n)_{n\geq 1}$ in $\widetilde {\mathcal P}$ that $\int_{A_n} \|x\| \ud \mu^M<\infty$ for any $n\geq 1$, but $\int x \mathbf 1_{A_n} \ud \bar{\mu}^M$ diverges in $L^1(\Omega; \mathcal D(\mathbb R_+, X))$. 

Due to the formula \cite[(1.7)]{Burk81} and \cite[Subsection~4.4]{Y17GMY} we have that $X$ is UMD if and only if
\begin{equation}\label{eq:XisUMDiffEf*==Eg*}
 \mathbb E \sup_n \|g_n\| \lesssim_X \mathbb E \sup_n \|f_n\|
\end{equation}
for any $X$-valued discrete martingales $f = (f_n)_{n\geq 0}$ and $g = (g_n)_{n\geq 0}$ with $g_0 = f_0=0$ and with
\begin{equation}\label{eq:notUMSDimpliesnoXwrtbarmudeffofgn}
 g_n - g_{n-1} = \eps_n(f_n-f_{n-1}),\;\;\; n\geq 1,
\end{equation}
for any fixed $\{0,1\}$-valued sequence $(\eps_n)_{n\geq 1}$. As $X$ is not UMD, \eqref{eq:XisUMDiffEf*==Eg*} does not hold. Therefore there exists a Paley-Walsh martingale $(f_n)_{n\geq 1}$ (see \cite{Burk86,HNVW1} why we can restrict to the Paley-Walsh case), i.e.\ a martingale $(f_n)_{n\geq 0}$ such that there exists a sequence $(r_n)_{n\geq 1}$ of Rademachers (see Definition \ref{def:ofRadRV}) so that $f_n - f_{n-1} = r_n \phi_n(r_1,\ldots,r_n)$ for some $\phi_n:\{-1,1\}^{n-1} \to X$ for every $n\geq 1$ and $f_0=0$, and a $\{0,1\}$-valued sequence $(\eps_n)_{n\geq 1}$, such that we have that $\mathbb E \sup_{n} \|f_n\| = 1$ and $ \mathbb E\sup_{n} \|g_n\| = \infty$ for $(g_n)_{n\geq 0}$ satisfying \eqref{eq:notUMSDimpliesnoXwrtbarmudeffofgn} and $g_0=0$.

Let $N^1$ and $N^2$ be two independent standard Poisson processes (Note that  in this case $N^1-N^2$ has a zero compensator and thus it is a martingale). Let $\tau_0=0$, and for each $n\geq 1$ define
\[
 \tau_n := \inf\{t\geq \tau_{n-1}: (N^1_t - N^1_{\tau_{n-1}}) \vee (N^2_t - N^2_{\tau_{n-1}}) \neq 0\},\;\;\; n\geq 1.
\]
Note that $\tau_n < \infty$ a.s.\ and that $\tau_n \to \infty$ a.s.\ as $n\to \infty$ (for the construction of the standard Poisson process we refer the reader e.g.\ to \cite{KingPois,Kal,Sato,ShPr2e} or in any other standard probability textbook), and that as Poisson processes have strong Markov property, 
$$
(\sigma_n)_{n\geq 1} = \bigl(N^1_{\tau_n} - N^2_{\tau_n}  - (N^1_{\tau_{n-1}} - N^2_{\tau_{n-1}})\bigr)_{n\geq 1}
$$
are i.i.d.\ random variables. Moreover, as $N^1$ and $N^2$ can have jumps of size $1$ a.s., $\sigma_n \in \{-1,1\}$ a.s., and as $N^1$ and $N^2$ are independent equidistributed, $(\sigma_n)_{n\geq 1}$ are independent Rademachers. In particular, for a simplicity of the proof we identify $(\sigma_n)_{n\geq 1}$ with $(r_n)_{n\geq 1}$.

Now let us consider a martingale $M:\mathbb R_+ \times \Omega \to X$ of the form $M = \int \Phi \ud (N^1 - N^2)$, where
\[
 \Phi(t) := \sum_{n=1}^{\infty}  \phi_n(r_1,\ldots,r_n) \mathbf 1_{(\tau_{n-1}, \tau_n]}(t),\;\;\; t\geq 0,
\]
and the integral is defined in the Riemann-Stieltjes way ($N^1-N^2$ is a.s.\ locally of finite variation).
First of all, $M$ is a local martingale since for any $n\geq 1$ we have that $\Phi^{\tau_n}$ is bounded and takes values in a finite-dimensional subspace of $X$, so the stochastic integral $M^{\tau_n} = \int \Phi^{\tau_n} \ud (N^1 - N^2)$ is well defined by a classical finite dimensional theory (see \cite{Kal,JS}). Moreover, as $M_{\tau_n} = f_n$ a.s.\ and as $M$ is a.s.\ a constant on $[\tau_{n-1}, \tau_n)$ for any $n\geq 1$, $\mathbb E M^* = \mathbb E \sup_{n} \|f_n\| = 1$, and thus by the dominated convergence theorem and by the fact that a conditional expectation is a contraction on $L^1(\Omega; X)$ (see \cite[Section 2.6]{HNVW1})
\[
 \mathbb E (M_t|\mathcal F_s) = \lim_{n\to \infty}  \mathbb E (M_{\tau_n \wedge t}|\mathcal F_s) = \lim_{n\to \infty}  M_{\tau_n \wedge s} = M_s,\;\;\; 0\leq s\leq t,
\]
so $M$ is a martingale with $\mathbb E M^* <\infty$.

Since $\mathbb E \sup_{n}\|g_n\| = \infty$, there exists a sequence $0 = k_1 < \ldots < k_m < \ldots$ such that $\mathbb E \sup_{k_m < n \leq k_{m+1}} \|g_n-g_{k_m}\|>1$ for each $m\geq 1$. Set $J = X$. Define $\mathbb R_+ \times \Omega \times J \supset A_0 =\varnothing$ and
\begin{equation}\label{eq:defofA2m-1A2msadas}
\begin{split}
 A_{2m-1}&:= A_{2m-2} \cup_{k_m < n \leq k_{m+1}, \eps_n = 1}( \tau_{n-1}, \tau_{n}] \in \widetilde {\mathcal P},\;\;\; m\geq 1, \\
 A_{2m}&:= A_{2m-1} \cup_{k_m < n \leq k_{m+1}, \eps_n = 0}( \tau_{n-1}, \tau_{n}] \in \widetilde {\mathcal P},\;\;\; m\geq 1.
\end{split}
\end{equation}
Then we have that $\cup_m A_m = \mathbb R_+ \times \Omega \times J$ and that $(A_m)_{m\geq 1}$ in increasing. Let us first show that $\mathbb E \int_{A_m} \|x\| \ud \mu^M<\infty$ for any $m\geq 1$
\begin{multline*}
 \mathbb E \int_{A_m} \|x\| \ud \mu^M \leq  \mathbb E \int_{[0, \tau_{k_{m+1}}]\times J} \|x\| \ud \mu^M \stackrel{(*)}= \mathbb E \sum_{0\leq s\leq \tau_{k_{m+1}}} \|\Delta M_s\|\\ 
 \stackrel{(**)}= \mathbb E \sum_{n=1}^{k_{m+1}} \|\Delta M_{\tau_n}\|
 = \mathbb E \sum_{n=1}^{k_{m+1}} \|f_n-f_{n-1}\| \leq 2k_{m+1} \mathbb E \sup_n \|f_n\| \leq 2k_{m+1},
\end{multline*}
where $(*)$ follows from the definition \eqref{eq:defofmuM} of $\mu^M$ and $(**)$ follows from the fact that $M$ is a.s.\ a constant on $[\tau_{n-1}, \tau_n)$ for any $n\geq 1$ and from the definition of $M$. Therefore $x \mathbf 1_{A_m}$ is integrable with respect to $\bar{\mu}^M$ by Proposition \ref{prop:defofstochintwrtranmcaseodFoffinitemuintds}.
Let us now show that $\int_{A_m} x \ud \bar {\mu}^M$ does not converge in $L^1(\Omega; \mathcal D(\mathbb R_+, X))$. It is sufficient to show that $\int_{A_{2m-1}} x \ud \bar {\mu}^M - \int_{A_{2m-2}} x \ud \bar {\mu}^M$ is big enough for any $m\geq 1$:
\begin{align*}
  \mathbb E \sup_{t\geq 0} \Bigl \| \int_{A_{2m-1} \cap[0, t]\times J} x &\ud \bar {\mu}^M - \int_{A_{2m-2}\cap[0, t]\times J} x \ud \bar {\mu}^M\Bigr\|\\
  &=  \mathbb E \sup_{t\geq 0} \Bigl \| \int_{A_{2m-1}\setminus A_{2m-2} \cap[0, t]\times J} x \ud \bar {\mu}^M \Bigr\|\\
  &\stackrel{(i)}= \mathbb E \sup_{t\geq 0} \Bigl \| \sum_{n=k_{m}+1}^{k_{m+1}} \mathbf 1_{\eps_n=1} (M_{\tau_n \wedge t} - M_{\tau_{n-1}\wedge t}) \Bigr\|\\
  &\stackrel{(ii)}= \mathbb E \sup_{n=k_{m}+1}^{k_{m+1}}  \Bigl \| \sum_{k=k_{m}+1}^{n} \mathbf 1_{\eps_n=1} (M_{\tau_n } - M_{\tau_{n-1}}) \Bigr\|\\
  &\stackrel{(iii)}= \mathbb E \sup_{n=k_{m}+1}^{k_{m+1}} \|g_n - g_{k_{m}}\| \geq 1,
\end{align*}
where in $(i)$ we used the definition \eqref{eq:defofmuM} of $\mu^M$, the fact that $M$ is pure jump, and \eqref{eq:defofA2m-1A2msadas}, $(ii)$ follows from the fact that $M$ is pure jump and has its jumps at $\{\tau_1, \ldots, \tau_n, \ldots\}$, and $(iii)$ follows from the definition of $M$ and $g$. Thus $x$ is not integrable with respect to $\bar{\mu}^M$.
\end{proof}

\begin{remark}
 Though in the sequel we will need only the ``only if'' part of the theorem above, the author decided to include the ``if'' statement with such a complicated proof as well because he found such a nontrivial characterization of the UMD property rather important and elegant.
\end{remark}

\begin{remark}
 Note that under the so-called {\em Radon-Nikod\'ym property} (many spaces have this property, e.g.\ reflexive spaces, see \cite[Definition 1.3.9]{HNVW1} and the references therein) in Theorem \ref{thm:XisUMDiffMisintxwrtbarmuMvain} there is no difference between considering martingales over $\mathbb R_+$ and over $[0, 1]$. Indeed, if $M:\mathbb R_+ \times \Omega\to X$ is such that $\mathbb E \sup_{t\geq 0} \|M_t\| <\infty$ and if $X$ has the Radon-Nikod\'ym property, then by \cite[Theorem 3.3.16]{HNVW1} $M$ has an $L^1$-limit $M_{\infty}$. Therefore we can define a time-change $t \mapsto 2\arctan t/{\pi}$, a time-changed filtration $\mathbb G = (\mathcal G_t)_{t\geq 0}$ with $\mathcal G_t = \mathcal F_{\tan(\pi t /2)}$ for $0\leq t<1$ and $\mathcal G_{t} = \mathcal F_{\infty}$ for $t\geq 1$, and a $\mathbb G$-martingale $\widetilde M$ with $\widetilde M_t = M_{\tan(\pi t /2)}$ for $0 \leq t< 1$ and $\widetilde M = M_{\infty}$ for $t\geq 1$. In this case we have that $x$ in integrable with respect to $\bar{\mu}^M$ if and only if it is integrable with respect $\bar{\mu}^{\widetilde M}$.
\end{remark}

Let us now state the corollaries we were looking for.

\begin{corollary}\label{cor:LstropboundforPDQLStangmartUMD}
 Let $X$ be a Banach space, $1\leq p< \infty$. Then $X$ has the UMD property if and only if for any pair $M, N:\mathbb R_+ \times \Omega \to X$ of tangent purely discontinuous quasi-left continuous martingales one has that
 \begin{equation}\label{eq:MNtangentPDQLCPnormineqiIFFFUMD}
    \mathbb E \sup_{t\geq 0} \|M_t\|^p \eqsim_{p,  X} \mathbb E \sup_{t\geq 0} \|N_t\|^p.
 \end{equation}
\end{corollary}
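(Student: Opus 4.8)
The plan is to prove both directions by reducing, via Theorem~\ref{thm:XisUMDiffMisintxwrtbarmuMvain}, to the comparison between a random measure and its Cox process established in Subsection~\ref{subsec:RMstochintinBanspaxcewithCOx}. Throughout, recall that the continuous parts of purely discontinuous martingales vanish, so local characteristics reduce to a single compensator.

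\emph{The ``only if'' part.} Assume $X$ is UMD and let $M,N$ be tangent purely discontinuous quasi-left continuous martingales. By the stopping time argument (Theorem~\ref{thm:MNtangent==>MtauNtautangent+ifNCIINtauCII}) together with Remark~\ref{rem:locmartislocallyL1DR+X} we may assume $\mathbb E\sup_{t\geq 0}\|M_t\|<\infty$ and $\mathbb E\sup_{t\geq 0}\|N_t\|<\infty$. Then Theorem~\ref{thm:XisUMDiffMisintxwrtbarmuMvain} yields $M_t=\int_{[0,t]\times X}x\ud\bar\mu^M$ and $N_t=\int_{[0,t]\times X}x\ud\bar\mu^N$ a.s. Since $M$ and $N$ are purely discontinuous quasi-left continuous, Proposition~\ref{prop:GigcharforPDQLC} shows their local characteristics are $(0,\nu^M)$ and $(0,\nu^N)$ with $\nu^M=\nu^N=:\nu$ non-atomic in time, as $M$ and $N$ are tangent. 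Let $\mu_{\rm Cox}$ be the Cox process directed by $\nu$. Granting a version of Theorem~\ref{thm:mumuCoxcomparable} valid for the predictable integrand $(t,\omega,x)\mapsto x$ (which is \emph{not} elementary predictable), one gets
\[
\mathbb E\sup_{t\geq 0}\|M_t\|^p\eqsim_{p,X}\mathbb E\Bigl\|\int_{\mathbb R_+\times X}x\ud\bar\mu_{\rm Cox}\Bigr\|^p\eqsim_{p,X}\mathbb E\sup_{t\geq 0}\|N_t\|^p,
\]
where the middle term is common to $M$ and $N$ because, as noted after Theorem~\ref{thm:mumuCoxcomparable}, the distribution of $\bar\mu_{\rm Cox}$ depends only on $\nu$, hence so does that of $\int_{\mathbb R_+\times X}x\ud\bar\mu_{\rm Cox}$; this is \eqref{eq:MNtangentPDQLCPnormineqiIFFFUMD}.

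The extension of Theorem~\ref{thm:mumuCoxcomparable} to the integrand $x$ is obtained by combining the approximation used in the proof of Theorem~\ref{thm:XisUMDiffMisintxwrtbarmuMvain} with Step~4 of the proof of Theorem~\ref{thm:mumuCoxcomparable}: choose an increasing family $(A_n)_{n\geq 1}\subset\widetilde{\mathcal P}$ exhausting $\mathbb R_+\times\Omega\times X$ with $\mathbb E\int_{A_n}\|x\|\ud\mu^M<\infty$, so that by the Burkholder--Davis--Gundy inequalities of \cite[Subsection~7.2]{Y18BDG}, $\gamma$-dominated convergence \cite[Theorem~9.4.2]{HNVW2} and monotone convergence one has $\int_{A_n}x\ud\bar\mu^M\to M$ and $\int_{A_n}x\ud\bar\mu_{\rm Cox}\to\int x\ud\bar\mu_{\rm Cox}$ in $L^p(\Omega;\mathcal D(\mathbb R_+,X))$; on each $A_n$ approximate $x\mathbf 1_{A_n}$ by elementary predictable $F$ in the relevant $\gamma$-norm, apply Theorem~\ref{thm:mumuCoxcomparable} to $F$, and pass to the limit. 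I expect this approximation bookkeeping—controlling the $L^p$-norm of the running supremum on both sides simultaneously—to be the main technical obstacle.

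\emph{The ``if'' part.} Suppose \eqref{eq:MNtangentPDQLCPnormineqiIFFFUMD} holds for every tangent pair of purely discontinuous quasi-left continuous martingales, and fix a nontrivial Poisson random measure $N$ on $\mathbb R_+\times J$ with compensator $\nu=\lambda\otimes\kappa$. For any elementary predictable $F$, the martingales $M:=\int F\ud\widetilde N$ and $\widetilde M:=\int F\ud\widetilde N_{\rm ind}$ (over the enlarged filtration generated by $N_{\rm ind}$) are purely discontinuous quasi-left continuous, and they are tangent: since $\nu$ is deterministic, $\widetilde N_{\rm ind}$ is precisely the Cox process directed by $\nu$, so $\widetilde M$ is a decoupled tangent martingale to $M$ by Remark~\ref{rem:intFdmuisamaringeiththesameloccharenlfi}, in particular tangent. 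Because $\widetilde M(\omega)$ has independent increments, $\mathbb E\sup_{t\geq 0}\|\widetilde M_t\|^p\eqsim_{p}\mathbb E\bigl\|\int_{\mathbb R_+\times J}F\ud\widetilde N_{\rm ind}\bigr\|^p$ (for $p>1$ by Doob's inequality \eqref{eq:DoobsineqXBanach}, and for $p=1$ by \cite[Proposition~6.1.12]{HNVW2} exactly as in the proof of Proposition~\ref{prop:XUMDiffdecouplforPois}). Hence \eqref{eq:MNtangentPDQLCPnormineqiIFFFUMD} applied to this pair yields \eqref{eq:decthmniceRHS} for every elementary predictable $F$, and Proposition~\ref{prop:XUMDiffdecouplforPois} then forces $X$ to be UMD.
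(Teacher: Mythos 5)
Your proposal is correct and follows essentially the same route as the paper: both directions reduce to Theorem~\ref{thm:XisUMDiffMisintxwrtbarmuMvain} together with the Cox-process comparison of Theorem~\ref{thm:mumuCoxcomparable}. The approximation step you flag as ``the main technical obstacle'' (extending Theorem~\ref{thm:mumuCoxcomparable} from elementary predictable integrands to the integrand $(t,\omega,x)\mapsto x$ via exhausting sets $(A_n)$) is exactly what the paper packages as Lemma~\ref{lem:Fintwrtmu1iffwrtmu2UMDcase}, which is stated and proved immediately before the corollary and cited in its proof. For the ``if'' part the paper chooses $M:=\int F\ud\bar\mu$ and $N:=\int F\ud\bar\mu_{\rm Cox}$ for a general quasi-left continuous $\mu$ and appeals to Theorem~\ref{thm:mumuCoxcomparable} directly, whereas you specialize to a Poisson random measure and its independent copy and appeal to Proposition~\ref{prop:XUMDiffdecouplforPois}; since the latter is precisely the engine behind Theorem~\ref{thm:mumuCoxcomparable}, the two are equivalent and there is no substantive difference.
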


For the proof we will need the following lemma.

\begin{lemma}\label{lem:Fintwrtmu1iffwrtmu2UMDcase}
Let $X$ be a UMD Banach space, $(J, \mathcal J)$ be a measurable space, $\mu^1$ and $\mu^2$ be optional quasi-left continuous random measures on $\mathbb R_+ \times J$. Assume that $\mu^1$ and $\mu^2$ have the same compensator $\nu$. Let $\bar{\mu}^1:= \mu^1-\nu$ and $\bar{\mu}^2 := \mu^2-\nu$. Let $F:\mathbb R_+ \times \Omega \to X$ be a strongly $\widetilde {\mathcal P}$-measurable process. Then $F$ is stochastically integrable with respect to $\bar{\mu}^1$ if and only if it is stochastically integrable with respect to $\bar{\mu}^2$. Moreover, if this is the case, then $\int F \ud\bar{\mu}^1$ and $\int F \ud\bar{\mu}^2$ are tangent.
\end{lemma}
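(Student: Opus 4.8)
The plan is to reduce everything to the Cox-process comparison of Theorem \ref{thm:mumuCoxcomparable}, exploiting that the integrability criterion in Definition \ref{def:Fisintagrablewrtbarmugendef} and the local characteristics of $\int F \ud \bar\mu^i$ depend only on $F$ and the common compensator $\nu$. First I would fix an increasing family $(A_n)_{n\geq 1}\subset \widetilde{\mathcal P}$ with $\cup_n A_n = \mathbb R_+\times\Omega\times J$ and $\mathbb E\int_{A_n}\|F\|\ud\mu^1 = \mathbb E\int_{A_n}\|F\|\ud\nu = \mathbb E\int_{A_n}\|F\|\ud\mu^2<\infty$ (the three quantities agree by \eqref{eq:defofcompofrandsamdsaer} since $\mu^1,\mu^2$ share the compensator $\nu$); so the {\em same} family $(A_n)$ is admissible for both $\bar\mu^1$ and $\bar\mu^2$. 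By Proposition \ref{prop:defofstochintwrtranmcaseodFoffinitemuintds} the truncated processes $M^{i,n} := \int_{A_n\cap[0,\cdot]\times J} F\ud\bar\mu^i$ are genuine martingales in $L^1(\Omega;\mathcal D(\mathbb R_+,X))$. The heart of the matter is then to show that $(M^{1,n})_n$ is Cauchy in $L^1(\Omega;\mathcal D(\mathbb R_+,X))$ if and only if $(M^{2,n})_n$ is; and that, when both converge, the limits are tangent.

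For the Cauchy equivalence I would argue as follows. Fix $m>n$; then $M^{i,m}-M^{i,n} = \int F\mathbf 1_{A_m\setminus A_n}\ud\bar\mu^i$, and since $F\mathbf 1_{A_m\setminus A_n}$ is strongly $\widetilde{\mathcal P}$-measurable with $\mathbb E\int\|F\mathbf 1_{A_m\setminus A_n}\|\ud\nu<\infty$, I may approximate it in $L^1(\Omega\times\mathbb R_+\times J,\mathbb P\otimes\nu;X)$ by elementary predictable processes $G_\ell$, exactly as in the proof of Proposition \ref{prop:defofstochintwrtranmcaseodFoffinitemuintds}; by \eqref{eq:ineqforsupnormforgoosmart} the errors $\mathbb E\sup_t\|\int(F\mathbf 1_{A_m\setminus A_n}-G_\ell)\ud\bar\mu^i\|$ are controlled uniformly in $i$ by $2\mathbb E\int\|F\mathbf 1_{A_m\setminus A_n}-G_\ell\|\ud\nu$, which does not see $i$. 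For elementary predictable $G_\ell$ the quasi-left continuity of $\mu^1$ and $\mu^2$ (hence non-atomicity in time of $\nu$, Lemma \ref{lem:nuMisnonatomiffMqlc}) lets me invoke Theorem \ref{thm:mumuCoxcomparable}, which gives
\[
\mathbb E\sup_{t\geq 0}\Bigl\|\int_{[0,t]\times J} G_\ell\ud\bar\mu^1\Bigr\|\eqsim_X \mathbb E\Bigl\|\int_{\mathbb R_+\times J} G_\ell\ud\bar\mu_{\rm Cox}\Bigr\|\eqsim_X \mathbb E\sup_{t\geq 0}\Bigl\|\int_{[0,t]\times J} G_\ell\ud\bar\mu^2\Bigr\|,
\]
where $\mu_{\rm Cox}$ is the Cox process directed by the common $\nu$. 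Passing $\ell\to\infty$ transfers this two-sided comparison to $F\mathbf 1_{A_m\setminus A_n}$, i.e.\
$\mathbb E\sup_t\|M^{1,m}_t-M^{1,n}_t\|\eqsim_X\mathbb E\sup_t\|M^{2,m}_t-M^{2,n}_t\|$, uniformly in $m,n$; hence one sequence is Cauchy iff the other is, proving the first assertion.

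For the tangency of $\int F\ud\bar\mu^1$ and $\int F\ud\bar\mu^2$ when both exist: both are purely discontinuous ($[\![(\cdot)^c]\!]=0$, since an integral with respect to a random measure is purely discontinuous, see $\S$II.1d of \cite{JS}), so by Definition \ref{def:tangconttimecase} I only need $\nu^{\int F\ud\bar\mu^1}=\nu^{\int F\ud\bar\mu^2}$. This follows because the jump measure of $\int F\ud\bar\mu^i$ is the pushforward of $\mu^i$ under $(t,x)\mapsto(t,F(t,\cdot,x))$ (the drift $\int F\ud\nu$ is continuous and contributes no jumps on the non-atomic-in-time part; one passes from elementary to general $F$ through the approximation above and a.s.\ convergence of jumps, cf.\ the argument in Remark \ref{rem:intFdmuisamaringeiththesameloccharenlfi} and $\S$6.1 of \cite{Y18BDG}), and the compensator of this pushforward is in turn the pushforward of $\nu$ under the same map — which is identical for $i=1,2$ since $\mu^1$ and $\mu^2$ share $\nu$. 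Concretely, one checks $\nu^{\int F\ud\bar\mu^i}(I\times B)=\int_I\int_J\mathbf 1_{B\setminus\{0\}}(F)\ud\nu$ as in \eqref{eq:nuMforremthatFmuCoxisdecpocw}, which has no $i$-dependence, and then uses uniqueness of the compensator (Subsection \ref{subsec:ranmeasures}, \cite[Lemma 25.11]{Kal}).

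The main obstacle I anticipate is the passage from elementary predictable $G_\ell$ to the general strongly $\widetilde{\mathcal P}$-measurable $F$ in the tangency claim: the $L^1(\mathbb P\otimes\nu)$-approximation controls the {\em supremum of the integrals} uniformly in the measure, but to conclude equality of {\em compensators of jump measures} one must additionally verify that the jump structure (and hence $\mu^{M^{i}}$) passes to the limit, which requires an argument along a subsequence giving a.s.\ pathwise convergence in $\mathcal D(\mathbb R_+,X)$ together with the identification of limiting jumps with $F(\cdot,\Delta)$-images of the jumps of $\mu^i$. Once the jump measures are pinned down, the compensator identity and tangency are immediate from the uniqueness of compensators.
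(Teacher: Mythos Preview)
Your proposal is correct and follows essentially the same route as the paper: the same family $(A_n)$ via \eqref{eq:defofcompofrandsamdsaer}, the Cauchy equivalence through the Cox-process comparison \eqref{eq:CoxmeasiffUMD}, and the tangency via identifying $\nu^{\int F\ud\bar\mu^i}$ as the pushforward of $\nu$ under $F$. Two minor remarks: (i) your explicit elementary-predictable approximation step for the Cauchy part is more careful than the paper, which applies Theorem \ref{thm:mumuCoxcomparable} directly to $F\mathbf 1_{A_m\setminus A_n}$ (implicitly relying on the same density); (ii) the obstacle you flag in the tangency argument is sidestepped in the paper by computing $\mathbb E\int_{\mathbb R_+\times X}\mathbf 1_A\ud\nu^M = \mathbb E\int_{\mathbb R_+\times J}\mathbf 1_A(\cdot,\cdot,F)\ud\nu$ directly from the jump identity $\Delta M_s = F$ at the atoms of $\mu^1$ (valid since $\nu$ is non-atomic in time, so the $\nu$-integral is continuous) and the definition of the compensator, rather than passing through an approximation of $F$.
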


\begin{proof}
Let us first show the ``if and only if'' statement. Recall that stochastic integrability of a general $X$-valued predictable process with respect to a random measure was defined in Definition \ref{def:Fisintagrablewrtbarmugendef}. Assume that $F$ is stochastically integrable with respect to $\bar{\mu}^1$. Let us show that $F$ is stochastically integrable with respect to $\bar{\mu}^2$. Let $(A_n)_{n\geq 1}$ be an increasing sequence of sets in $\widetilde {\mathcal P}$ with $\cup_{n}A_n = \Omega\times \mathbb R_+ \times J$ satisfying $\mathbb E \int_{A_n} \|F\| \ud \mu^1<\infty$ for any $n\geq 1$. Then by \eqref{eq:defofcompofrandsamdsaer} we have that
\[
\mathbb E \int_{A_n} \|F\| \ud \mu^2 = \mathbb E \int_{A_n} \|F\| \ud \nu = \mathbb E \int_{A_n} \|F\| \ud \mu^1 <\infty,\;\;\; n\geq 1,
\] 
so $F\mathbf 1_{A_n}$ is stochastically integrable with respect to $\bar{\mu}^1$ and $\bar{\mu}^2$ by Proposition \ref{prop:defofstochintwrtranmcaseodFoffinitemuintds}. For each $n\geq 1$ let us set
\[
M^n_t := \int_{[0, t]\times J} F \mathbf 1_{A_n} \ud \bar{\mu}^1,\;\; N^n_t := \int_{[0, t]\times J} F \mathbf 1_{A_n} \ud \bar{\mu}^2,\;\;\;\; t\geq 0.
\]
As $F$ is stochastically integrable with respect to $\bar{\mu}^1$, $M^n$ is a Cauchy sequence in $L^1(\Omega; \mathcal D(\mathbb R_+, X))$. By Theorem \ref{thm:mumuCoxcomparable} and by the fact that $\mu^1$ and $\mu^2$ have the same compensator we have that for any $m \geq n\geq 1$
\begin{multline*}
\mathbb E \sup_{t\geq 0}\|N^m_t - N^n_t \|  = \mathbb E \sup_{t\geq 0} \Bigl\| \int_{[0, t]\times J} F \mathbf 1_{A_m \setminus A_n} \ud\bar{\mu}^2 \Bigr\|\\
\eqsim_{X} \mathbb E \sup_{t\geq 0} \Bigl\| \int_{[0, t]\times J} F \mathbf 1_{A_m \setminus A_n} \ud\bar{\mu}^1 \Bigr\| = \mathbb E \sup_{t\geq 0}\|M^m_t - M^n_t \|,
\end{multline*}
so $N^n$ is a Cauchy sequence in $L^1(\Omega; \mathcal D(\mathbb R_+, X))$, and thus $F$ is integrable with respect to $\bar{\mu}^2$ by Definition \ref{def:Fisintagrablewrtbarmugendef}.

Let us show that $M := \int F \ud\bar{\mu}^1$ and $N:= \int F \ud\bar{\mu}^2$ are tangent, i.e.\ as $M$ and $N$ are purely discontinuous, we need to show that the compensators $\nu^M$ and $\nu^N$ of $\mu^M$ and $\mu^N$ respectively coincide. Fix a predictable set $A \subset \mathbb R_+ \times \Omega \times X$. Then for any $t\geq 0$ we have that a.s.\
\[
\int_{[0, t]\times X} \mathbf 1_{A}\ud \mu^M = \sum_{0\leq s\leq t} \mathbf 1_{A}(s, \cdot, \Delta M_s) = \int_{[0, t]\times J} \mathbf 1_{A}(\cdot, \cdot, F) \ud \mu^1
\]
(the latter can be infinite),
so by the definition of a compensator we have that
\[
\mathbb E \int_{\mathbb R_+\times X} \mathbf 1_A \ud \nu^M = \mathbb E \int_{\mathbb R_+ \times J} \mathbf 1_A(\cdot, \cdot, F) \ud \nu.
\]
The same can be shown for $\nu^N$. Therefore, as $A$ was arbitrary predictable, $\nu^M$ and $\nu^N$ coincide, so $M$ and $N$ are tangent.
\end{proof}

\begin{proof}[Proof of Corollary \ref{cor:LstropboundforPDQLStangmartUMD}]
 The ``only if'' part follows from the fact that $M= \int x \ud \bar{\mu}^M$ and $N= \int x \ud \bar{\mu}^N$ by Theorem \ref{thm:XisUMDiffMisintxwrtbarmuMvain}, the fact that $\nu^M = \nu^N$ as $M$ and $N$ are tangent, and so Theorem~\ref{thm:mumuCoxcomparable}, Definition \ref{def:Fisintagrablewrtbarmugendef}, and Lemma \ref{lem:Fintwrtmu1iffwrtmu2UMDcase}
 \begin{align*}
 \mathbb E \sup_{t\geq 0} \|M_t\|^p  &= \mathbb E  \sup_{t\geq 0}\Bigl\| \int_{[0, t]\times X}x \ud \bar{\mu}^M \Bigr\|^p \eqsim_{p, X} \mathbb E  \sup_{t\geq 0}\Bigl\| \int_{[0, t]\times X}x \ud \bar{\mu}_{\rm Cox} \Bigr\|^p\\
&  \eqsim_{p, X} \mathbb E  \sup_{t\geq 0}\Bigl\| \int_{[0, t]\times X}x \ud \bar{\mu}^N \Bigr\|^p  =  \mathbb E \sup_{t\geq 0} \|N_t\|^p 
 \end{align*}
 (here we use the fact that both $\mu^M$ and $\mu^N$, as they have the same compensator, can also share the same Cox process $\mu_{\rm Cox}$). The ``if'' part follows directly from Theorem~\ref{thm:mumuCoxcomparable} since we can simply set $M := \int F \ud \bar{\mu}$ and $N := \int F \ud \bar{\mu}_{\rm Cox}$. Then $M$ and $N$ are tangent, so in this case \eqref{eq:MNtangentPDQLCPnormineqiIFFFUMD} in equivalent to \eqref{eq:CoxmeasiffUMD}, and thus $X$ is UMD by Theorem~\ref{thm:mumuCoxcomparable}.
\end{proof}

\begin{corollary}\label{thm:XUMDPDQLCdectangmart}
 Let $X$ be a UMD Banach space, $M:\mathbb R_+ \times \Omega \to X$ be a purely discontinuous quasi-left continuous local martingale. Let $\mu^M$ be a random measure defined by \eqref{eq:defofmuM} with a compensator $\nu^M$. Then 
 $$
 N_t:= \int_{[0, t]\times X} x \ud \bar{\mu}_{\rm Cox},\;\;\; t\geq 0,
 $$
 is a decoupled tangent local martingale to $M$, where $\mu_{\rm Cox}$ is a Cox process directed by $\nu^M$, $\bar{\mu}_{\rm Cox} = \mu_{\rm Cox} - \nu^M$.
\end{corollary}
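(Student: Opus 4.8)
The plan is to reduce the statement to the case of a genuine martingale with finite strong $L^1$-moment and then invoke the machinery already developed in Subsections \ref{subsec:RMstochintinBanspaxcewithCOx} and \ref{subsec:dectanPDQLC}. First I would localize: by the stopping-time argument recorded at the beginning of Section \ref{sec:tangmartconttime} (and Remark \ref{rem:locmartislocallyL1DR+X}) it suffices to treat the case where $M$ is a purely discontinuous quasi-left continuous martingale with $\mathbb E\sup_{t\geq 0}\|M_t\|<\infty$, since local characteristics, tangency, and the decoupling property are all stable under the localizing sequence via Theorem \ref{thm:MNtangent==>MtauNtautangent+ifNCIINtauCII}; one recovers the general statement by passing to the limit along the localizing stopping times. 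Under this reduction, Theorem \ref{thm:XisUMDiffMisintxwrtbarmuMvain} applies and gives $M_t=\int_{[0,t]\times X}x\ud\bar{\mu}^M$ a.s., so that $M$ is literally a stochastic integral of the (non-elementary, but strongly $\widetilde{\mathcal P}$-measurable) integrand $(t,x)\mapsto x$ with respect to $\bar{\mu}^M$, and the integral $N_t:=\int_{[0,t]\times X}x\ud\bar{\mu}_{\rm Cox}$ makes sense by Lemma \ref{lem:Fintwrtmu1iffwrtmu2UMDcase}, since $\mu^M$ and $\mu_{\rm Cox}$ share the compensator $\nu^M$.

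Next I would verify the three defining properties of a decoupled tangent local martingale (Definition \ref{def:dectanglocmartcontimecased}). The key observation is that the argument in Remark \ref{rem:intFdmuisamaringeiththesameloccharenlfi}, stated there for elementary predictable $F$, extends verbatim to our integrand $(t,x)\mapsto x$ by the approximation procedure of Definition \ref{def:Fisintagrablewrtbarmugendef} (restricting to an increasing family $(A_n)_{n\geq1}\subset\widetilde{\mathcal P}$ on which $\int_{A_n}\|x\|\ud\mu^M<\infty$, applying Remark \ref{rem:intFdmuisamaringeiththesameloccharenlfi} to each $x\mathbf 1_{A_n}$, and passing to the $L^1(\Omega;\mathcal D(\mathbb R_+,X))$-limit). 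This simultaneously shows that $M$ remains an $\overline{\mathbb F}$-martingale with unchanged local characteristics $(0,\nu^M)$ on the enlarged space carrying $\mu_{\rm Cox}$, and that $N=\int x\ud\bar{\mu}_{\rm Cox}$ is a well-defined $\overline{\mathbb F}$-martingale. Tangency of $M$ and $N$ is then exactly the content of the second half of Lemma \ref{lem:Fintwrtmu1iffwrtmu2UMDcase}: both $\mu^M$ and $\mu^N$ push forward the compensator $\nu^M$ under the map $(t,x)\mapsto(t,x)$, hence $\nu^M=\nu^N$, and since both martingales are purely discontinuous this is precisely the equality of local characteristics.

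Finally I would check the defining ``decoupling'' property: $N(\omega)$ must be a martingale with independent increments and local characteristics $(0,\nu^M(\omega))$ for a.e.\ $\omega$ from the original probability space. This is where the construction of the Cox process in Subsection \ref{subsubsec:Coxprocess} and Example \ref{ex:CoxprocessfiniteJ} does the work: by definition of $\mu_{\rm Cox}$, for a.e.\ fixed $\omega\in\Omega$ the random measure $\mu_{\rm Cox}(\omega)$ is an honest Poisson random measure on $\mathbb R_+\times X$ with (deterministic) intensity $\nu^M(\omega)$, so $N(\omega)=\int x\ud(\mu_{\rm Cox}(\omega)-\nu^M(\omega))$ is a compensated Poisson integral with deterministic integrand and integrator, hence has independent increments and local characteristics $(0,\nu^M(\omega))$ by Subsection \ref{subsec:PoissRMprelim} (and Proposition \ref{prop:Grigcharforcandechowdoesitlooklike}/\ref{prop:GigcharforPDQLC}); one must of course observe that $x$ is integrable with respect to $\mu_{\rm Cox}(\omega)-\nu^M(\omega)$ for a.e.\ $\omega$, which follows from Remark \ref{rem:CoxidlikePoisson} together with the finiteness granted by the ``only if'' part of Theorem \ref{thm:XisUMDiffMisintxwrtbarmuMvain} and Theorem \ref{thm:mumuCoxcomparable}. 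The main obstacle is the bookkeeping around non-elementary integrands on an enlarged filtration: one has to make sure the conditional-independence-given-$\mathcal F$ property of $\mu_{\rm Cox}$ survives the approximation in Definition \ref{def:Fisintagrablewrtbarmugendef}, which is handled by Corollary \ref{cor:condindgivenRVsuffcond} (the integrand $x\mathbf 1_{A_n}$ is built from $\mu^M$, hence $\mathcal F$-measurable, and $\mu_{\rm Cox}$ is conditionally Poisson given $\mathcal F$), and the final localization back from martingales with finite strong $L^1$-moment to general local martingales.
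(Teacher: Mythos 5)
Your proposal is correct and follows essentially the same route as the paper: localize via the stopping-time argument and Remark \ref{rem:locmartislocallyL1DR+X}, represent $M$ via Theorem \ref{thm:XisUMDiffMisintxwrtbarmuMvain}, use Theorem \ref{thm:mumuCoxcomparable} and the associated Lemma \ref{lem:Fintwrtmu1iffwrtmu2UMDcase} for integrability and tangency, and invoke Remark \ref{rem:intFdmuisamaringeiththesameloccharenlfi} to see that $M$ stays a martingale with the same compensator after enlargement. The only small point you leave implicit is the reason Remark \ref{rem:intFdmuisamaringeiththesameloccharenlfi} applies here: it assumes $\mathcal J$ is countably generated, which for $\mathcal J=\mathcal B(X)$ requires first reducing to separable $X$ via the Pettis measurability theorem, as the paper notes explicitly.
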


\begin{proof}
By a stopping time argument presented in Theorem \ref{thm:MNtangent==>MtauNtautangent+ifNCIINtauCII} and by Remark \ref{rem:locmartislocallyL1DR+X} we may assume that $\mathbb E \sup_{t\geq 0} \|M_t\|<\infty$. Then the corollary follows immediately from Theorem~\ref{thm:mumuCoxcomparable}, \ref{thm:XisUMDiffMisintxwrtbarmuMvain} (so $x$ is integrable with respect to $ \bar{\mu}_{\rm Cox}$), and Definition \ref{def:dectanglocmartcontimecased}, where we have that after the probability space and filtration enlargement $M$ remains a martingale with the same local characteristics thanks to Remark \ref{rem:intFdmuisamaringeiththesameloccharenlfi} and the fact that the Borel $\sigma$-algebra $\mathcal B(X)$ is countably generated (recall that thanks to the Pettis measurability theorem \cite[Theorem 1.1.6]{HNVW1} we may assume that $X$ is separable).
\end{proof}

\begin{remark}\label{rem:NPDQLChascondindeindcfsgivennuM}
Note that $N$ constructed above has independent increments given $\nu^M$. Indeed, for almost any fixed $\nu^M$ we have that the corresponding Cox process $\mu_{\rm Cox}$ has a deterministic compensator, so it is a deterministically time changed standard Poisson random measure (see e.g.\ Example \ref{ex:CoxprocessfiniteJ}), and so for almost any fixed $\nu^M$ we have that $N(\nu^M) = \int x \ud \mu_{\rm Cox}$ has independent increments. Therefore the desired follows from Corollary \ref{cor:condindgivenRVsuffcond}.
 \end{remark}

\subsection{Purely discontinuous martingales with accessible jumps}\label{subsec:dectanPDwithAJ}
The present subsection is devoted to $L^p$ estimates for purely discontinuous martingales with accessible jumps and to how a decoupled tangent martingale in this case look like.
First we will start with the following elementary proposition which will provide us with $L^p$-bounds for tangent martingales.

\begin{proposition}\label{prop:pdwajjusttang}
Let $X$ be a Banach space, $1\leq p<\infty$. Then $X$ is UMD if and only if for any pair $M$ and $N$ of $X$-valued tangent purely discontinuous local martingales with accessible jumps one has that
\begin{equation}\label{eq:PDMWAJtangentLpbounds}
\mathbb E \sup_{t\geq 0} \|M_t\|^p \eqsim_{p, X}\mathbb E \sup_{t\geq 0} \|N_t\|^p.
\end{equation}
\end{proposition}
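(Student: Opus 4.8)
The plan is to reduce Proposition~\ref{prop:pdwajjusttang} to the discrete case, i.e.\ to Theorem~\ref{thm:intromccnnll}, via the discretization machinery for purely discontinuous local martingales with accessible jumps. First I would prove the ``only if'' part. By a stopping-time argument (Theorem~\ref{thm:MNtangent==>MtauNtautangent+ifNCIINtauCII}) together with Remark~\ref{rem:locmartislocallyL1DR+X} we may assume $\mathbb E\sup_{t\geq 0}\|M_t\|^p<\infty$ and $\mathbb E\sup_{t\geq 0}\|N_t\|^p<\infty$; by the Pettis measurability theorem we may assume $X$ is separable. By Lemma~\ref{lem:PDmAJhasjumpsatprsttimes} applied to $M$ and to $N$, and since (by Proposition~\ref{prop:GrigcharforPDmAJ} together with the hypothesis that $M$ and $N$ are tangent, so $\nu^M=\nu^N$) the jump times of $M$ and of $N$ are a.s.\ contained in one and the same sequence $(\tau_k)_{k\geq 1}$ of finite predictable stopping times with disjoint graphs. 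One then builds the discrete martingale differences $d_k := \Delta M_{\tau_k}$ and $e_k := \Delta N_{\tau_k}$ relative to the discrete filtration $\mathcal G_k := \mathcal F_{\tau_k}$ with $\mathcal G_0 := \mathcal F_{\tau_1-}$; Lemma~\ref{lem:DeltaMtaugiventau-=0} guarantees that $(d_k)$ and $(e_k)$ are indeed martingale difference sequences. Crucially, tangency of $M$ and $N$ at the level of local characteristics (equality of $\nu^M$ and $\nu^N$, both supported on $\{\tau_1,\tau_2,\dots\}\times X$) translates, via the identity $\nu^M = \sum_k \nu^{M}\mathbf 1_{\tau_k}$ and the description of $\nu^{M}\mathbf 1_{\tau_k}$ as the $\mathcal F_{\tau_k-}$-conditional law of $\Delta M_{\tau_k}$ (the same computation as in Lemma~\ref{lem:onejumpformu-->onejumpfornu} and Remark~\ref{rem:disctangagreesconttime}), precisely into the discrete tangency condition $\mathbb P(d_k|\mathcal G_{k-1}) = \mathbb P(e_k|\mathcal G_{k-1})$.

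Having set this up, I would invoke the strong $L^p$ Hitczenko--McConnell estimate \eqref{eq:strongLpfortangentmccnnll} for the tangent discrete martingale difference sequences $(d_k)$ and $(e_k)$ in the UMD space $X$, obtaining
\[
\mathbb E\sup_{K\geq 0}\Bigl\|\sum_{k=1}^K d_k\Bigr\|^p \eqsim_{p,X} \mathbb E\sup_{K\geq 0}\Bigl\|\sum_{k=1}^K e_k\Bigr\|^p.
\]
It then remains to match each side with $\mathbb E\sup_{t\geq 0}\|M_t\|^p$, resp.\ $\mathbb E\sup_{t\geq 0}\|N_t\|^p$. Since $M$ is purely discontinuous with jumps only at the $\tau_k$, one has $M_t = \sum_{k:\tau_k\leq t}\Delta M_{\tau_k}$ with the sum converging absolutely a.s.\ (pure-jump property), hence $\sup_{t\geq 0}\|M_t\| = \sup_{K\geq 0}\|\sum_{k=1}^K d_k\|$ a.s.\ — here one uses that the partial sums of the $\Delta M_{\tau_k}$ run through exactly the values attained by $M$, because the $\tau_k$ exhaust the jump times and $M$ is constant between consecutive jumps. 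The same identity holds for $N$. Combining gives \eqref{eq:PDMWAJtangentLpbounds}.

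For the ``if'' direction I would argue by contraposition exactly as in the analogous proofs in this section: assume $X$ is not UMD, so Theorem~\ref{thm:intromccnnll} fails, and pick tangent discrete martingale difference sequences $(d_k)$ and $(e_k)$ in $X$ (one may take a Paley--Walsh example as in the proof of Theorem~\ref{thm:XisUMDiffMisintxwrtbarmuMvain}, with $(e_k)$ even the decoupled sequence) witnessing the failure of \eqref{eq:strongLpfortangentmccnnll}. Realize them as jumps of continuous-time martingales at a fixed deterministic sequence of times $t_k = k$ (so the jump times are predictable and the martingales are purely discontinuous with accessible jumps), e.g.\ using the construction of Remark~\ref{rem:defofdectanglmartidiscreteandfac,q}: set $M_t := \sum_{k\leq t} d_k$, $N_t := \sum_{k\leq t} e_k$ with respect to $\mathbb F=(\mathcal F_{[t]})_{t\geq 0}$. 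By \eqref{eq:dnentangentindiscrete-->incont}, $M$ and $N$ are tangent purely discontinuous martingales with accessible jumps, while $\mathbb E\sup_{t\geq 0}\|M_t\|^p = \mathbb E\sup_K\|\sum_{k=1}^K d_k\|^p$ and similarly for $N$; hence \eqref{eq:PDMWAJtangentLpbounds} would force \eqref{eq:strongLpfortangentmccnnll}, a contradiction. This completes the scheme.

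The main obstacle I anticipate is the bookkeeping in the ``only if'' direction: one must be careful that a single predictable sequence $(\tau_k)$ can be chosen to carry \emph{both} the jumps of $M$ and those of $N$, and that the induced discrete filtration $(\mathcal F_{\tau_k})$ and its predictable ``$-$'' $\sigma$-fields $(\mathcal F_{\tau_k-})$ behave correctly so that the continuous-time tangency statement $\nu^M=\nu^N$ genuinely yields the discrete-time identity $\mathbb P(d_k|\mathcal F_{\tau_k-})=\mathbb P(e_k|\mathcal F_{\tau_k-})$ for every $k$. Since the $\tau_k$ have disjoint graphs and are predictable, $\nu^M\mathbf 1_{\tau_k}$ is the law of $\Delta M_{\tau_k}$ conditioned on $\mathcal F_{\tau_k-}$, which is exactly the discrete-tangency data; once this is in place the rest is the routine reductions (stopping, separability, passing from partial sums to the running supremum) that are already used repeatedly earlier in the paper.
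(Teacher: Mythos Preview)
Your overall strategy---reduce to the discrete Hitczenko--McConnell theorem---is exactly the paper's. But there is a genuine gap in how you set up the discrete martingale. The sequence $(\tau_k)_{k\geq 1}$ produced by Lemma~\ref{lem:PDmAJhasjumpsatprsttimes} has disjoint graphs, but it is \emph{not} increasing. Hence $(\mathcal F_{\tau_k})_{k\geq 0}$ is not a filtration, $\mathbb E(\Delta M_{\tau_k}\,|\,\mathcal F_{\tau_{k-1}})=0$ has no reason to hold, and the identity $\sup_{t\geq 0}\|M_t\|=\sup_{K}\bigl\|\sum_{k\leq K}\Delta M_{\tau_k}\bigr\|$ fails: the partial sums $\sum_{k\leq K}\Delta M_{\tau_k}$ do not correspond to values of $M$ at any time. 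A second, related problem is your claim that $M_t=\sum_{k:\tau_k\leq t}\Delta M_{\tau_k}$ converges \emph{absolutely}. ``Purely discontinuous'' means the quadratic variation is pure jump (Definition~\ref{def:purelydiscmart}), not that $M$ itself is a pure jump process; already for real-valued martingales with independent symmetric jumps $\pm 1/n$ at times $1-1/n$ the sum of jumps diverges absolutely.

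The paper repairs both issues at once. It first truncates to finitely many jumps via the approximation $M^m$ of \eqref{eq:defofMmforaccjase} and Proposition~\ref{prop:MmapproxMinLpforacccase} (this is where UMD is used, to guarantee $M^m\to M$ in strong $L^p$). For each fixed $m$ it then takes an \emph{increasing rearrangement} $\tau_1'<\dots<\tau_m'$ of $\tau_1,\dots,\tau_m$ and works with the alternating filtration $(\mathcal F_{\tau_1'-},\mathcal F_{\tau_1'},\mathcal F_{\tau_2'-},\dots,\mathcal F_{\tau_m'})$, padding the odd-indexed differences with zeros. In this filtration $d_{2k}=\Delta M_{\tau_k'}$ is $\mathcal F_{\tau_k'}$-measurable with $\mathbb E(d_{2k}\,|\,\mathcal F_{\tau_k'-})=0$ by Lemma~\ref{lem:DeltaMtaugiventau-=0}, and the tangency $\mathbb P(d_{2k}\,|\,\mathcal F_{\tau_k'-})=\mathbb P(e_{2k}\,|\,\mathcal F_{\tau_k'-})$ is exactly the content of Lemma~\ref{lem:predsttimetang}. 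Now $M^m$ is genuinely pure jump with finitely many jumps at the ordered $\tau_k'$, so $\sup_t\|M^m_t\|=\sup_n\bigl\|\sum_{k\leq n}d_k\bigr\|$ does hold, Theorem~\ref{thm:intromccnnll} applies, and one lets $m\to\infty$. Your ``if'' direction is fine and matches the paper.
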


For the proof we will need the following technical lemma.

\begin{lemma}\label{lem:predsttimetang}
Let $X$ be a Banach space, $M, N:\mathbb R_+ \times \Omega \to X$ be tangent local martingales. Let $\tau$ be a predictable stopping times with $\tau<\infty$ a.s. Then
$\mathbb P(\Delta M_{\tau}|\mathcal F_{\tau}) = \mathbb P(\Delta N_{\tau}|\mathcal F_{\tau})$.
\end{lemma}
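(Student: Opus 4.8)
The plan is to reduce the statement about $\Delta M_\tau$ and $\Delta N_\tau$ at a predictable time $\tau$ to the already-established tangency of $M$ and $N$, i.e.\ to the equality of their local characteristics $([\![M^c]\!],\nu^M)$ and $([\![N^c]\!],\nu^N)$. First I would recall from Lemma~\ref{lem:DeltaMtaugiventau-=0} that, after a routine stopping-time argument allowing us to assume $\mathbb E\sup_{t\ge0}\|M_t\|$ and $\mathbb E\sup_{t\ge0}\|N_t\|$ are finite (justified by Theorem~\ref{thm:MNtangent==>MtauNtautangent+ifNCIINtauCII}), both $\Delta M_\tau$ and $\Delta N_\tau$ are integrable; so the conditional probabilities $\mathbb P(\Delta M_\tau|\mathcal F_\tau)$ and $\mathbb P(\Delta N_\tau|\mathcal F_\tau)$ are well defined. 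Note that $\Delta M_\tau$ is $\mathcal F_\tau$-measurable, so really the content is the identification of this $\mathcal F_\tau$-conditional law with something computed from $\nu^M$.

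The key step is to express $\mathbb P(\Delta M_\tau\in B\mid\mathcal F_\tau)$ in terms of the compensator $\nu^M$ of the jump measure $\mu^M$. Since $\tau$ is predictable, by Proposition~\ref{prop:GrigcharforPDmAJ} and Lemma~\ref{lem:onejumpformu-->onejumpfornu} (applied to the integer-valued measure $\mu^M\mathbf 1_{\{\tau\}}$) the set $\{\tau\}\times X$ carries the mass of $\nu^M$ coming from the jump at $\tau$; concretely, for $B\in\mathcal B(X)$ with $0\notin\bar B$,
\[
\mathbb E\bigl(\mathbf 1_B(\Delta M_\tau)\mathbf 1_{\{\tau<\infty\}}\bigr)=\mathbb E\int_{\{\tau\}\times B}\ud\mu^M=\mathbb E\int_{\{\tau\}\times B}\ud\nu^M,
\]
and the same identity holds with $\mathcal F_{\tau-}$-measurable weights inserted, because $\nu^M$ is predictable and $\tau$ is predictable (so $\mathbf 1_{[\![\tau]\!]}$ times an $\mathcal F_{\tau-}$-measurable function is predictable; cf.\ \eqref{eq:defofftau-} and \cite[Chapter II.1]{JS}). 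This gives $\nu^M(\{\tau\}\times B)=\mathbb P(\Delta M_\tau\in B\mid\mathcal F_{\tau-})$ on $\{\tau<\infty\}$ for $0\notin\bar B$, and the $\mathcal F_\tau$-version follows since $\{\Delta M_\tau\ne0\}$ is $\mathcal F_\tau$-measurable and on $\{\Delta M_\tau=0\}$ both sides are the Dirac mass at $0$; more carefully one conditions further on $\mathcal F_\tau\supset\mathcal F_{\tau-}$ and uses that the atom of $\mu^M$ at $\tau$, given it occurs, has conditional law $\nu^M(\{\tau\}\times\cdot)/\nu^M(\{\tau\}\times X)$, which is the standard description of the jump of a purely discontinuous martingale with accessible jumps at a predictable time.

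Having written $\mathbb P(\Delta M_\tau|\mathcal F_\tau)$ purely as a functional of $\nu^M$ (restricted to $\{\tau\}\times X$) together with the $\mathcal F_\tau$-measurable event $\{\Delta M_\tau\ne0\}$, the conclusion is immediate: $M$ and $N$ tangent means $\nu^M=\nu^N$, and since this forces $\mu^M(\{\tau\}\times X)$ and $\mu^N(\{\tau\}\times X)$ to have the same $\mathcal F_{\tau-}$-conditional distribution as well (both equal the $\nu^\cdot(\{\tau\}\times X)$-parameterised object), one gets $\mathbb P(\Delta M_\tau\in B\mid\mathcal F_\tau)=\nu^M(\{\tau\}\times B)=\nu^N(\{\tau\}\times B)=\mathbb P(\Delta N_\tau\in B\mid\mathcal F_\tau)$ for all $B$. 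I expect the main obstacle to be the bookkeeping around conditioning on $\mathcal F_\tau$ versus $\mathcal F_{\tau-}$: one must be careful that the \emph{occurrence} of a jump at $\tau$ (the event $\{\Delta M_\tau\ne0\}$) is generally not $\mathcal F_{\tau-}$-measurable, so the clean statement is really a factorisation of the $\mathcal F_\tau$-conditional law into ``whether there is a jump'' (read off from $\mu^M(\{\tau\}\times X)$, hence $\mathcal F_\tau$-measurable) times ``the law of its size given it occurs'' (read off from the normalised $\nu^M(\{\tau\}\times\cdot)$); since the latter measure is the same for $M$ and $N$, and tangency also makes the former events have matching conditional laws, the identity of the $\mathcal F_\tau$-conditional distributions follows. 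This is precisely the content cited to \cite[Subsection~5.3]{DY17} and used implicitly in the proof of Proposition~\ref{prop:GrigcharforPDmAJ}.
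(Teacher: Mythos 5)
Your first half is correct and is essentially the paper's argument: you identify $\mathbb P(\Delta M_\tau\in B\mid\mathcal F_{\tau-})=\nu^M(\{\tau\}\times B)$ via $\mu^M\mathbf 1_{[\![\tau]\!]}$, the definition of the compensator, and predictability of $\tau$ and of $\nu^M$, and then you conclude by $\nu^M=\nu^N$. That is exactly how the paper proves it.

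However, your attempt to ``upgrade'' from $\mathcal F_{\tau-}$ to $\mathcal F_\tau$ is a genuine error, and the reason it feels like hard bookkeeping is that the stronger claim is simply false. Since $M$ is c\`adl\`ag adapted and $\tau$ is a finite predictable stopping time, $M_\tau$ is $\mathcal F_\tau$-measurable and $M_{\tau-}$ is $\mathcal F_{\tau-}\subset\mathcal F_\tau$-measurable, so $\Delta M_\tau$ is $\mathcal F_\tau$-measurable, and therefore $\mathbb P(\Delta M_\tau\mid\mathcal F_\tau)=\delta_{\Delta M_\tau}$ is the (random) Dirac mass at $\Delta M_\tau$; likewise $\mathbb P(\Delta N_\tau\mid\mathcal F_\tau)=\delta_{\Delta N_\tau}$. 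Equality of these would force $\Delta M_\tau=\Delta N_\tau$ a.s., which fails for generic tangent pairs (e.g.\ take $N$ an independent decoupled tangent copy of $M$). In particular your intermediate assertion ``$\mathbb P(\Delta M_\tau\in B\mid\mathcal F_\tau)=\nu^M(\{\tau\}\times B)$'' cannot hold, since the left-hand side is $\mathbf 1_B(\Delta M_\tau)$, a $\{0,1\}$-valued random variable, whereas the right-hand side is a genuine conditional probability. The factorisation heuristic you invoke (``whether a jump occurs'' times ``its conditional size'') would be a description of the $\mathcal F_{\tau-}$-conditional law of $\Delta M_\tau$, not of the $\mathcal F_\tau$-conditional law.

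The resolution is that the lemma statement contains a typo: the intended and provable assertion is $\mathbb P(\Delta M_\tau\mid\mathcal F_{\tau-})=\mathbb P(\Delta N_\tau\mid\mathcal F_{\tau-})$. This is also what the paper's own proof establishes, and it is the version actually used downstream: in the proof of Proposition~\ref{prop:pdwajjusttang} the conditioning is explicitly on $\mathcal F_{\tau_{n/2}'-}$, and in the computation in Section~\ref{sec:appJKW} the inner conditioning on $\mathcal F_{\rho_k}$ is vacuous (as just explained), so that step really needs only the $\mathcal F_{\rho_k-}$ version inserted inside the outer conditional expectation on $\mathcal F_{T(k-1)/N_0}\subset\mathcal F_{\rho_k-}$. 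So you should drop the upgrade step entirely; the $\mathcal F_{\tau-}$ statement you already derived is the correct lemma and matches the paper.
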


\begin{proof}
As $M$ and $N$ are tangent, $\nu^{M} = \nu^{N}$. In particular, since $\tau$ is predictable (and hence a process $t\mapsto \mathbf 1_{\tau}(t)$ is predictable as well) we have that for any Borel set $A \in X$ a.s.
\begin{multline*}
\mathbb E(\mathbf 1_{A}(\Delta M_{\tau})|\mathcal F_{\tau-}) \stackrel{(*)}= \int_{\mathbb R_+} \mathbf 1_{\tau}(t) \mathbf 1_{A}(x) \ud \nu^M(t, x)\\
 = \int_{\mathbb R_+} \mathbf 1_{\tau}(t) \mathbf 1_{A}(x) \ud \nu^N(t, x) \stackrel{(**)}= \mathbb E(\mathbf 1_{A}(\Delta N_{\tau})|\mathcal F_{\tau-}),
\end{multline*}
where $(*)$ follows from the fact that a.s.\ $\mathbf 1_{A}(\Delta M_{\tau}) =  \int_{\mathbb R_+} \mathbf 1_{\tau}(t) \mathbf 1_{A}(x) \ud \mu^M(t, x)$, \cite[Subsection 5.3]{DY17}, the definition of a compensator \cite[Theorem I.3.17]{JS}, the fact that thus both 
$$
t\mapsto \mathbf 1_{t\geq \tau}\mathbb E(\mathbf 1_{A}(\Delta M_{\tau})|\mathcal F_{\tau-})  \;\;\text{and}\;\; t\mapsto \int_{[0, t]} \mathbf 1_{\tau}(\tau) \mathbf 1_{A}(x) \ud \nu^M(t, x),\;\;\;\;\; t\geq 0,
$$ 
are compensators of $\mathbf 1_{A}(\Delta M_{t}) $ and the uniqueness of a compensator \cite[Theorem I.3.17]{JS}; $(**)$ holds for the same reason.
\end{proof}

\begin{proof}[Proof of Proposition \ref{prop:pdwajjusttang}]
First notice that  the ``if'' part follows from Lemma \ref{lem:predsttimetang}, Theorem \ref{thm:intromccnnll}, and the fact that any discrete martingale can be represented as a continuous-time martingale having jumps at natural points (see e.g.\ Remark \ref{rem:disctangagreesconttime}).

Let us show the ``only if'' part. Let $X$ be a UMD Banach space, $M$ and $N$ be $X$-valued purely discontinuous tangent martingales with accessible jumps. Then by Lemma~\ref{lem:PDmAJhasjumpsatprsttimes} there exist sequences $(\tau_n^M)_{n\geq 1}$ and $(\tau_n^N)_{n\geq 1}$ of predictable stopping times with disjoint graphs such that a.s.
\[
\{t\geq 0: \Delta M \neq 0\} \subset \{\tau_1^M, \ldots, \tau_n^M,\ldots\},
\]
\[
\{t\geq 0: \Delta N \neq 0\} \subset \{\tau_1^N, \ldots, \tau_n^N,\ldots\}.
\]
Moreover, by a standard merging procedure concerning predictable stopping times (see e.g.\ \cite{Kal,JS} and \cite[Subsection 5.1]{DY17}) we may assume that there exists a sequence $(\tau_n)_{n\geq 1}$ of predictable stopping times with disjoint graphs such that a.s.\
\[
\{\tau_1^M, \ldots, \tau_n^M,\ldots\} \cup \{\tau_1^N, \ldots, \tau_n^N,\ldots\} \subset \{\tau_1, \ldots, \tau_n,\ldots\}.
\]
For any $m\geq 1$ define martingales $M^m$ and $N^m$ by \eqref{eq:defofMmforaccjase}. Fix $\eps>0$. By Proposition~\ref{prop:MmapproxMinLpforacccase} we can fix $m\geq 1$ to be such that
\[
\mathbb E \sup_{t\geq 0} \|M_t - M^m_t\|^p <\eps,\;\;\;\;\;\;\;\; \mathbb E \sup_{t\geq 0} \|N_t - N^m_t\|^p<\eps. 
\]
Let $\tau_1',\ldots,\tau_m'$ be an increasing rearrangement of $\tau_1,\ldots,\tau_m$ (see \cite[Subsection 5.3]{DY17}). Then sequences $(d_n)_{n=1}^{2m}$ and $(e_n)_{n=1}^{2m}$
\[
d_n = 
\begin{cases}
\Delta M_{\tau_{n/2}'},\;\;\; & n\;\text{is even},\;\; 1\leq n \leq 2m,\\
0,\;\;\; & n\;\text{is odd},\;\; 1\leq n \leq 2m,
\end{cases}
\]
\[
e_n = 
\begin{cases}
\Delta N_{\tau_{n/2}'},\;\;\; & n\;\text{is even},\;\; 1\leq n \leq 2m,\\
0,\;\;\; & n\;\text{is odd},\;\; 1\leq n \leq 2m,
\end{cases}
\]
are tangent martingale difference sequences with respect to the filtration 
$$
(\mathcal F_{\tau_1'-},\mathcal F_{\tau_1'}, \mathcal F_{\tau_2'-},\ldots,\mathcal F_{\tau_{m}'-}, \mathcal F_{\tau_m'}).
$$
Indeed, first of all the latter is a filtration by \cite[Lemma 25.2]{Kal}. Next notice that for any even $n=2,\ldots, 2m$
\[
\mathbb E \bigl(d_n|\mathcal F_{\tau_{n/2}'-}\bigr) =\mathbb E \bigl(e_n|\mathcal F_{\tau_{n/2}'-}\bigr) = 0,
\]
by Lemma \ref{lem:DeltaMtaugiventau-=0}. Finally for any even $n= 2,\ldots,2m$ we have that
\begin{multline*}
\mathbb P\bigl(d_n|\mathcal F_{\tau_{n/2}'-}\bigr) = \mathbb P\bigl(\Delta M_{\tau_{n/2}'}|\mathcal F_{\tau_{n/2}'-}\bigr)
\stackrel{(*)} = \mathbb P\bigl(\Delta N_{\tau_{n/2}'}|\mathcal F_{\tau_{n/2}'-}\bigr) = \mathbb P\bigl(e_n|\mathcal F_{\tau_{n/2}'-}\bigr),
\end{multline*}
where $(*)$ follows from Lemma \ref{lem:predsttimetang} and the fact that $M$ and $N$ are tangent. Therefore by the definition of $M^m$ and $N^m$, by Theorem \ref{thm:intromccnnll}, and by the fact that $\sup_{t\geq 0} \|M^m_t\| = \sup_{n=1}^{2m}\bigl\| \sum_{k=1}^n d_k \bigr\|$ and $\sup_{t\geq 0} \|N^m_t\| = \sup_{n=1}^{2m}\bigl\| \sum_{k=1}^n e_k \bigr\|$ (as both martingales $M^m$ and $N^m$ are pure jump processes which jumps coincide with $(d_n)_{n=1}^{2m}$ and $(e_n)_{n=1}^{2m}$ respectively) we have that
\[
\mathbb E\sup_{0\leq t <\infty}\|M^m_t\|^p \eqsim_{p, X}\mathbb E\sup_{0\leq t <\infty}\|N^m_t\|^p,
\]
and the desired follows by approaching $\eps$ to zero and by Proposition \ref{prop:MmapproxMinLpforacccase}.
\end{proof}

Let us now show that for any purely discontinuous martingale with accessible jumps taking values in UMD Banach spaces there exists a decoupled tangent martingale.

\begin{theorem}\label{thm:detangmartforMXVpdwithaccjumps}
Let $X$ be a UMD Banach space, $M:\mathbb R_+ \times \Omega \to X$ be a purely discontinuous local martingale with accessible jumps. Then there exist an enlarged probability space $(\overline {\Omega}, \overline{\mathcal F}, \overline {\mathbb P})$ endowed with an enlarged filtration $\overline {\mathbb F} = (\overline {\mathcal F}_t)_{t\geq 0}$, and an $\overline {\mathbb F}$-adapted purely discontinuous local martingale $N:\mathbb R_+ \times \overline {\Omega} \to X$ with accessible jumps such that $M$ is a local $\overline {\mathbb F}$-martingale with the same local characteristics, $M$ and $N$ are tangent and $N(\omega)$ is a martingale with independent increments and with the local characteristics $(0,\nu^M(\omega))$ for a.e.\ $\omega \in \Omega$.
\end{theorem}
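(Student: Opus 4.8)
The plan is to reduce to the discrete case via the approximation scheme of Proposition \ref{prop:MmapproxMinLpforacccase}, exactly as announced in the introduction. First I would invoke the stopping time argument of Theorem \ref{thm:MNtangent==>MtauNtautangent+ifNCIINtauCII} together with Remark \ref{rem:locmartislocallyL1DR+X} to reduce to a martingale $M$ with $\mathbb E\sup_{t\geq 0}\|M_t\|<\infty$. By Lemma \ref{lem:PDmAJhasjumpsatprsttimes} the jumps of $M$ occur along a sequence $(\tau_n)_{n\geq 1}$ of finite predictable stopping times with disjoint graphs. For each $m$ I would let $\tau_1',\dots,\tau_m'$ be the increasing rearrangement of $\tau_1,\dots,\tau_m$, let $M^m$ be the purely discontinuous martingale defined by \eqref{eq:defofMmforaccjase} that collects the jumps $\Delta M_{\tau_1'},\dots,\Delta M_{\tau_m'}$, and regard it through the martingale difference sequence $(0,\Delta M_{\tau_1'},0,\Delta M_{\tau_2'},\dots,0,\Delta M_{\tau_m'})$ relative to the filtration $(\mathcal F_{\tau_1'-},\mathcal F_{\tau_1'},\mathcal F_{\tau_2'-},\dots,\mathcal F_{\tau_m'})$ --- which is a martingale difference sequence by Lemma \ref{lem:DeltaMtaugiventau-=0}, just as in the proof of Proposition \ref{prop:pdwajjusttang}.

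Next I would apply the discrete decoupling of Hitczenko and de la Pe\~na (see Remark \ref{rem:discrdefislikecontdafa} and the references given there): on a product enlargement there exists a decoupled tangent martingale difference sequence $(0,e_1^m,0,e_2^m,\dots,0,e_m^m)$ for which, for a.e.\ $\omega$, the variable $e_k^m(\omega)$ has mean zero and law $\mathbb P(\Delta M_{\tau_k'}\,|\,\mathcal F_{\tau_k'-})(\omega)$, and the $e_k^m$ are conditionally independent given $\mathcal G:=\mathcal F_\infty$. Using uniqueness in distribution of the discrete decoupled sequence I would build these coherently in $m$, obtaining a single enlarged filtered probability space $(\overline\Omega,\overline{\mathcal F},\overline{\mathbb P})$ with filtration $\overline{\mathbb F}$ carrying all of $(e_k)_{k\geq 1}$ --- with $e_k$ placed as the jump of the new process at time $\tau_k'$ --- and such that $M$ remains a local $\overline{\mathbb F}$-martingale with unchanged local characteristics $(0,\nu^M)$; for the latter I would argue, as in the discrete construction and in Remark \ref{rem:intFdmuisamaringeiththesameloccharenlfi}, that the enlargement only adjoins independent, decoupled randomness. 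Setting
\[
N^m_t:=\sum_{k=1}^m e_k\,\mathbf 1_{[\tau_k',\infty)}(t),\qquad t\geq 0,
\]
the process $N^m$ is a purely discontinuous $\overline{\mathbb F}$-martingale with accessible jumps; mimicking Remark \ref{rem:defofdectanglmartidiscreteandfac,q} with $\{\tau_k'\}$ in place of the integers I would conclude that $M^m$ and $N^m$ are tangent and, invoking Corollary \ref{cor:condindgivenRVsuffcond}, that $N^m(\omega)$ is for a.e.\ $\omega$ a martingale with independent increments and local characteristics $(0,\nu^{M^m}(\omega))$.

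Then I would pass to the limit $m\to\infty$. For $m<m'$ the martingales $M^{m'}-M^m$ and $N^{m'}-N^m$ collect the jumps $\Delta M_{\tau_k'}$, resp.\ $e_k$, for $m<k\leq m'$; these are purely discontinuous with accessible jumps and tangent, by Lemma \ref{lem:predsttimetang} and the construction, so Proposition \ref{prop:pdwajjusttang} (with $p=1$) and Proposition \ref{prop:MmapproxMinLpforacccase} give
\[
\mathbb E\sup_{t\geq 0}\|N^{m'}_t-N^{m}_t\|\ \eqsim_X\ \mathbb E\sup_{t\geq 0}\|M^{m'}_t-M^{m}_t\|\ \longrightarrow\ 0\quad\text{as }m,m'\to\infty.
\]
Hence $(N^m)$ is Cauchy in $L^1(\overline\Omega;\mathcal D(\mathbb R_+,X))$; its limit $N$ is a martingale by Theorem \ref{thm:strongLpmartforamaBanacoace}, and I would check that $N$ is purely discontinuous with accessible jumps (its jump measure being supported on $\bigcup_n\{\tau_n\}\times X$, since $\mu^{N^m}\nearrow\mu^N$), that $M$ and $N$ are tangent ($[\![N^c]\!]=0=[\![M^c]\!]$ and $\nu^N=\nu^M$, the latter by letting $m\to\infty$ in $\nu^{N^m}=\nu^{M^m}$), and that $N(\omega)$ is a martingale with independent increments and local characteristics $(0,\nu^M(\omega))$ for a.e.\ $\omega$ --- using that $N^m(\omega)\to N(\omega)$, that $\nu^{M^m}(\omega)\nearrow\nu^M(\omega)$, and that a limit of martingales with independent increments again has independent increments (cf.\ Remark \ref{rem:locmartwithIIisamartwithII}), which then verifies Definition \ref{def:dectanglocmartcontimecased}.

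I expect the main obstacle to be the passage to the limit combined with the bookkeeping of the enlargement: one must realise the discrete decoupled sequences consistently in $m$ (this is where uniqueness in distribution of the discrete decoupled martingale enters), keep $M$ a local $\overline{\mathbb F}$-martingale with \emph{unchanged} compensator $\nu^M$ --- which, as Remark \ref{rem:Mmightchangelocalcharacp} warns, cannot be taken for granted --- and ensure that the limiting $N$ genuinely inherits the decoupled structure, i.e.\ that for a.e.\ $\omega$ the path $N(\omega)$ has independent increments and the correct local characteristics. Everything else should be routine given Propositions \ref{prop:pdwajjusttang} and \ref{prop:MmapproxMinLpforacccase} and the discrete theory.
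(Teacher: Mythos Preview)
Your plan is exactly the paper's route: localize to $\mathbb E\sup_t\|M_t\|<\infty$, exhaust the accessible jumps by predictable $(\tau_n)$, decouple each jump conditionally on $\mathcal F_{\tau_n-}$, and pass to the limit via the tangency estimate of Proposition~\ref{prop:pdwajjusttang}. The paper dispatches the coherence obstacle you flag by constructing the enlargement \emph{once and for all} as $\overline\Omega=X^{\mathbb N}\times\Omega$ with $\overline{\mathbb P}(A\times R)=\int_R\bigotimes_{n\geq1}\mathbb P^n_\omega(A)\,d\mathbb P(\omega)$, where $\mathbb P^n_\omega$ is a regular version of $\mathbb P(\Delta M_{\tau_n}\in\cdot\mid\mathcal F_{\tau_n-})(\omega)$ (available since $X$ is Polish); the decoupled jump at $\tau_n$ is then simply the $n$th coordinate $x_n$, so all $N^m$ automatically live on the same space and no ``gluing via uniqueness in distribution'' is required, and the verification that $M$ retains its compensator $\nu^M$ under $\overline{\mathbb F}$ becomes a short direct computation. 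Two small cautions on your write-up: index the decoupled jumps by the \emph{original} $\tau_n$ rather than the increasing rearrangement $\tau_k'$, since the latter depends on $m$ and so ``a single sequence $(e_k)$ with $e_k$ at $\tau_k'$'' is not well-defined as stated (the rearrangement is only needed inside the tangency check, as in Proposition~\ref{prop:pdwajjusttang}); and Remark~\ref{rem:locmartwithIIisamartwithII} does not assert that limits preserve independent increments --- in the paper this is immediate for $N(\omega)$ from the product structure of $\overline{\mathbb P}$.
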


\begin{proof}
By Remark \ref{rem:locmartislocallyL1DR+X} and Theorem \ref{thm:MNtangent==>MtauNtautangent+ifNCIINtauCII} The proof will be based on the construction of a CI\footnote{{\em CI} is for {\em conditionally independent}} tangent martingale difference sequence presented in the proof of \cite[Proposition 6.1.5]{dlPG}. Let $(\tau_n)_{n\geq 1}$ be a sequence of predictable stopping times with disjoint graphs such that a.s.\
\[
\{t\geq 0: \Delta M_t \neq 0\} \subset \{\tau_1,\ldots,\tau_n,\ldots\}
\]
(see Lemma \ref{lem:PDmAJhasjumpsatprsttimes}).
Let us define
\[
\overline \Omega := X^{\mathbb N}\times \Omega,\;\;\;\;\overline{\mathcal F} := \bigl(\otimes_{n\geq 1} \mathcal B(X)\bigr)\otimes \mathcal F,
\] 
where $\mathcal B(X)$ is the Borel $\sigma$-algebra of $X$, and let for any $t\geq 0$  a $\sigma$-algebra $\overline{\mathcal F}_t$ on $\overline{\Omega}$ be generated by all the sets 
\begin{equation}\label{eq:defofmathcalFtoverlinebysets}
 (B_n \mathbf 1_{\tau_n \leq t} \cup X \mathbf 1_{\tau_n > t} )_{n\geq 1} \times R,\;\;\; B_1 ,\ldots,B_n,\ldots \in \mathcal B(X),\;\;\; R\in \mathcal F_t,
\end{equation}
so $ \overline{\mathcal F}_t := S_t\otimes \mathcal F_t$ for any $ t\geq 0$ where
\begin{equation}\label{eq:defofStforPDMAJdecoupl}
S_t := \otimes_{n\geq 1} \bigl(\mathcal B(X) \mathbf 1_{\tau_n \leq t} \cup \{\varnothing, \Omega\} \mathbf 1_{\tau_n > t}\bigr).
\end{equation}
($\overline{\mathcal F}_t$ sees $x_n$ if $\tau_n \leq t$, and does not see otherwise. Note that $\otimes$ in $S_t\otimes \mathcal F_t$ does not mean the direct product of $\sigma$-algebras since $S_t$ by its definition \eqref{eq:defofStforPDMAJdecoupl} depends on $\Omega$, but in this case $\otimes$ means that the corresponding $\sigma$-algebra is generated by products of sets of the form \eqref{eq:defofmathcalFtoverlinebysets}). Let $\overline{\mathbb F}: = (\overline{\mathcal F}_t)_{t\geq 0}$.
As $(X, \mathcal B(X))$ is a {\em Polish space} (see \cite[pp.\ 344, 386]{Dud89}), by \cite[Theorem 10.2.2]{Dud89} for any $n\geq 1$ and for almost any $\omega \in \Omega$ there exists a probability measure $\mathbb P^n_{\omega}$ on $X$ such that for any $B \in \mathcal B(X)$ (see \eqref{eq:defofftau-} for the definition of $\mathcal F_{\tau-}$)
\begin{equation}\label{eq:forPDWAJdefofPomeganchm}
\mathbb E \bigl(\mathbf 1_{B}(\Delta M_{\tau_n})|\mathcal F_{\tau_n-}\bigr)(\omega) =\mathbb P^n_{\omega}(B),\;\;\; \omega\in \Omega.
\end{equation}
Then set 
\begin{equation}\label{eq:deofPbarforDTMwithaJmomente}
 \overline {\mathbb P} (A \times R) := \int_R \otimes_{n\geq 1} \mathbb P^n_{\omega}(A) d\mathbb P(\omega),\;\;\; A \in X^{\mathbb N}, \;\; R \in \mathcal F.
\end{equation}

Now let us construct a c\`adl\`ag process $N:\mathbb R_+ \times \overline{\Omega} \to X$ satisfying for a.e.\ $\omega \in \overline{\Omega}$
\begin{equation}\label{eq:DeltaNtaunforDTMforPDAJmart}
 \Delta N_{\tau_n}\bigl((x_i)_{i\geq 1}, \omega \bigr) = x_n, \;\;\; (x_i)_{i\geq 1} \in X^{\mathbb N}.
\end{equation}
(Spoiler: this is going to be our decoupled tangent martingale).
We need to show that such a process exists $\overline {\mathbb P}$-a.s.\ and that this is an $\overline{\mathbb F}$-martingale. For each $m\geq 1$ define $N^m:\mathbb R_+ \times \overline{\Omega} \to X$ to be 
\begin{equation}\label{eq:defofNmapproxNDTMforPDWAJ}
 N^m_t\bigl((x_n)_{n\geq 1}, \omega \bigr)  := \sum_{n=1}^m x_n \mathbf 1_{[0, t]}(\tau_n),\;\;\; t\geq 0, \;\; (x_n)_{n\geq 1} \in X^{\mathbb N},\;\;\omega \in \Omega.
\end{equation}
First note that $N^m$ is an $\overline{\mathbb F}$-adapted process with values in $X$ as for any fixed $t\geq 0$
$$
\bigl(t,(x_n)_{n\geq 1}, \omega \bigr)\mapsto x_n \mathbf 1_{[0, t]}(\tau_n),\;\; (x_n)_{n\geq 1} \in X^{\mathbb N},\;\;\omega \in \Omega,
$$
is  $\overline{\mathcal F}_t$-measurable since $\overline{\mathcal F}_t = S_t \otimes \mathcal F_t$ with $S_t$ defined by \eqref{eq:defofStforPDMAJdecoupl}, so $N^m$ is $\overline{\mathbb F}$-adapted as a sum of $\overline{\mathbb F}$-adapted processes.

Let us show that $N^m$ is a purely discontinuous martingale with accessible jumps. $N^m$ has accessible jumps as by the definition \eqref{eq:defofNmapproxNDTMforPDWAJ} of $N^m$ it jumps only at predictable stopping times $\{\tau_1, \ldots, \tau_m\}$ (which remain predictable stopping times with respect to the enlarged filtration $\overline{\mathbb F}$ as they remain being announced by the same sequences of stopping times, see Subsection \ref{subsec:prelimsttimes}). Note that for any $1\leq n\leq m$ we have that (here $\overline {\mathcal F}_{\tau_n-}$ and $S_{\tau_n-}$ are defined analogously $\mathcal F_{\tau_n-}$ through an announcing sequence as $\tau_n$ is a predictable stopping time, see Subsection \ref{subsec:prelimsttimes} and \eqref{eq:defofStaunStaun-})
\[
\mathbb E (\Delta N^m_{\tau_n}|\overline {\mathcal F}_{\tau_n-}) = \mathbb E (\Delta N^m_{\tau_n}|S_{\tau_n-} \otimes {\mathcal F}_{\tau_n-}) = \mathbb E \bigl(\mathbb E (\Delta N^m_{\tau_n}|S_{\tau_n-} \otimes {\mathcal F})\big| S_{\tau_n-} \otimes {\mathcal F}_{\tau_n-}\bigr).
\]
(Here $\otimes$ again {\em is not} a product of $\sigma$-algebras, but a $\sigma$-algebra generated by products of sets of the form familiar to \eqref{eq:defofmathcalFtoverlinebysets}).
We need to show that $\mathbb E (\Delta N^m_{\tau_n}|S_{\tau_n-} \otimes {\mathcal F}) = 0$. It is sufficient to show that for $\overline {\mathbb P}$-almost any fixed $\omega\in \overline{\Omega}$, $\mathbb E (\Delta N^m_{\tau_n(\omega)}(\omega)|S_{\tau_n-}) = 0$ because we have that for any $R\in \mathcal F$ and $A\times R \in S_{\tau_n-} \otimes {\mathcal F}$ (where $A$ depends on $\Omega$ in a predictable way so that $A \times R$ has the form \eqref{eq:defofmathcalFtoverlinebysets}) we have that
\[
\int_{A \times R} \Delta N^m_{\tau_n} \ud \overline{\mathbb P} = \int_R \int_A \Delta N^m_{\tau_n} \ud \otimes_{n\geq 1} \mathbb P_n^{\omega} \ud \mathbb P(\omega), 
\]
so the first integral equals zero if $\int_A \Delta N^m_{\tau_n} \ud \otimes_{n\geq 1} \mathbb P_n^{\omega} = 0$ for a.e.\ $\omega\in \Omega$ for any $A \in S_{\tau_n-}$.
By the definition \eqref{eq:defofStforPDMAJdecoupl} of $S_t$ we have that for almost any fixed $\omega \in \Omega$
\begin{equation}\label{eq:defofStaunStaun-}
\begin{split}
 S_{\tau_n} &= \otimes_{i\geq 1, \tau_i \leq \tau_n} \mathcal B(X) \otimes_{i\geq 1, \tau_i > \tau_n} \{\varnothing, \Omega\},\\
 S_{\tau_n-} &= \otimes_{i\geq 1, \tau_i < \tau_n} \mathcal B(X) \otimes_{i\geq 1, \tau_i \geq \tau_n} \{\varnothing, \Omega\}, 
\end{split}
\end{equation}
so we have that 
$$
S_{\tau_n} = \sigma(S_{\tau_{n}-},\Delta S_{\tau_n}), \;\;\; \Delta S_{\tau_n} := \otimes_{i\geq 1} \bigl(\mathcal B(X) \mathbf 1_{i=n}\bigr) \cup\bigl( \{\varnothing, \Omega\}\mathbf 1_{i\neq n}\bigr),
$$
(here we used the fact that $(\tau_n)_{n\geq 1}$ have a.s.\ disjoint graphs), so $S_{\tau_n}$ is a.s.\ generated by two independent $\sigma$-algebras $S_{\tau_n-}$ and $\Delta S_{\tau_n}$ (which are independent a.s.\ by the definition \eqref{eq:deofPbarforDTMwithaJmomente} of $\overline{\mathbb P}$),
and hence as $\Delta N^m_{\tau_n}$ is  a.s.\ $\Delta S_{\tau_n}$-measurable, $\mathbb E (\Delta N^m_{\tau_n}(\omega)|S_{\tau_n-}) = \mathbb E_{X^{\mathbb N}} (\Delta N^m_{\tau_n})(\omega)$. Finally note that $\Delta N^m_{\tau_n}(\omega)$ has $\mathbb P^n_{\omega}$ as its distribution by the definition \eqref{eq:deofPbarforDTMwithaJmomente} of $\overline {\mathbb P}$ and the definition \eqref{eq:defofNmapproxNDTMforPDWAJ} of $N^m$, and the latter distribution has a.s.\ a mean zero by the definition \eqref{eq:forPDWAJdefofPomeganchm} as $\int_{X} x \ud \mathbb P^n_{\omega} = \mathbb E (\Delta M_{\tau_n}|\mathcal F_{\tau_n-})(\omega) = 0$ for a.e.\ $\omega\in \Omega$. Therefore $\mathbb E (\Delta N^m_{\tau_n}|\overline {\mathcal F}_{\tau_n-}) = 0$, and hence $N^m$ is a martingale by \cite[Subsection 5.3]{DY17}.

 Let us now show that $M$ is an $\overline{\mathbb F}$-martingale with the same local characteristics $(0, \nu^M)$. Fix $n\geq 1$. Then for any Borel $B \subset X$ and any $\overline{\mathcal F}_{\tau_n-}$-measurable bounded $F:\overline{\Omega} \to \mathbb R$ we have that
\begin{equation}\label{eq:whywehavemarqoidq1oimeiqmw}
 \mathbb E \mathbf 1_{B}(\Delta M_{\tau_n}) F = \mathbb E \mathbb E_{\omega}\mathbf 1_{B}(\Delta M_{\tau_n})(\omega) F = \mathbb E\mathbf 1_{B}(\Delta M_{\tau_n})  \mathbb E_{\omega}F,
\end{equation}
where $\mathbb E_{\omega}$ denotes expectation w.r.t.\ $\otimes_{n\geq 1} \mathbb P^n_{\omega}$ for each fixed $\omega\in \Omega$. As $F$ is $\overline{\mathcal F}_{\tau_n-}$-measurable and as $\overline{\mathcal F}_{\tau_n-}\subset S_{\infty} \otimes  {\mathcal F}_{\tau_n-}$ (this follows due to that fact that $\overline{\mathcal F}_{\tau_n-} = S_{\tau_n-} \otimes {\mathcal F}_{\tau_n-}$) we have that $ \mathbb E_{\omega}F$ is ${\mathcal F}_{\tau_n-}$-measurable and hence
\begin{equation}\label{eq:whywehavemarqoidq2oimeiqmw}
\mathbb E\mathbf 1_{B}(\Delta M_{\tau_n})  \mathbb E_{\omega}F = \mathbb E \mathbb E \bigl(\mathbf 1_{B}(\Delta M_{\tau_n})\big| {\mathcal F}_{\tau_n-} \bigr)  \mathbb E_{\omega}F = \mathbb E \mathbb E \bigl(\mathbf 1_{B}(\Delta M_{\tau_n})\big| {\mathcal F}_{\tau_n-} \bigr) F,
\end{equation}
so by the definition of conditional expectation and freedom in choice of $F$ we have that
\begin{equation}\label{eq:PDelmaMtaun-givenoverFdao=PDelmcaco}
\begin{split}
 \overline{\mathbb P}\bigl((\Delta M_{\tau_n})|\overline{\mathcal F}_{\tau_n-}\bigr)(B) &= \mathbb E \bigl(\mathbf 1_{B}(\Delta M_{\tau_n})|\overline{\mathcal F}_{\tau_n-}\bigr)\\
 &= \mathbb E \bigl(\mathbf 1_{B}(\Delta M_{\tau_n})|{\mathcal F}_{\tau_n-}\bigr) = \overline{\mathbb P}\bigl((\Delta M_{\tau_n})|\overline{\mathcal F}_{\tau_n-}\bigr)(B),
\end{split}
\end{equation}
and hence $M$ has the same $\overline{\mathbb F}$-local characteristics $(0, \nu^M)$. The fact that $M$ is an $\overline{\mathbb F}$-martingale follows from Lemma \ref{lem:DeltaMtaugiventau-=0}, the fact that $(\tau_n)_{n\geq 1}$ exhausts all the jumps of $M$, and the fact that by \eqref{eq:PDelmaMtaun-givenoverFdao=PDelmcaco}
\[
\mathbb E \bigl(\Delta M_{\tau_n}|\overline{\mathcal F}_{\tau_n-}\bigr) = \mathbb E \bigl(\Delta M_{\tau_n}|{\mathcal F}_{\tau_n-}\bigr) = 0.
\]

Now let $M^m$ be defined by \eqref{eq:defofMmforaccjase} and let us show that $N^m$ is a decoupled tangent martingale to $M^m$. Note that $M^m$ is an $\overline {\mathbb F}$-martingale as well as $M$. First $M^m$ and $N^m$ have jumps only at $\{\tau_1,\ldots, \tau_m\}$, so they are tangent because for any $1\leq n\leq m$ for any $B\in \mathcal B(X)$ for a.e.\ $\omega\in \overline{\Omega}$
\begin{multline*}
 \mathbb P(\Delta M^m_{\tau_n}| \overline{\mathcal F}_{\tau_n-})(B)(\omega)  \stackrel{(i)}= \mathbb P(\Delta M_{\tau_n}| \overline{\mathcal F}_{\tau_n-})(B)(\omega) \\= \mathbb E (\mathbf 1_{B}(\Delta M_{\tau_n})| \overline{\mathcal F}_{\tau_n-})(\omega)
  \stackrel{(ii)}=\mathbb P^n_{\omega}(B) \stackrel{(iii)}= \mathbb P(\Delta N^m_{\tau_n}|\overline{\mathcal F}_{\tau_n-})( B)(\omega) ,
\end{multline*}
where $(i)$ follows from the definition of $M^m$, $(ii)$ holds by the definition \eqref{eq:forPDWAJdefofPomeganchm} of $\mathbb P^n_{\omega}$, and $(iii)$ follows from the definition of $\overline {\mathbb P}$ and the definition of $N^m$.

Let us show that $N^m$ is a decoupled tangent martingale to $M^m$, i.e.\ that $N^m(\omega)$ has independent mean-zero increments and local characteristics $(0, \nu^{M^m}(\omega))$ for a.e.\ $\omega \in \Omega$. This easily follows from the fact that for a.e.\ fixed $\omega \in \Omega$ the process $N^m(\omega)$ has fixed jumps at $\{\tau_1(\omega),\ldots,\tau_m(\omega)\}$ and for every $1\leq n\leq m$ we have that $\Delta N^m_{\tau_n}(\omega)$ is $\Delta S_{\tau_n(\omega)}$-measurable; as
$ S_{\infty} = \sigma(\Delta S_{\tau_n(\omega)}, n\geq 1) = \otimes_{n\geq 1}\mathcal B(X)$,
so $(\Delta N^m_{\tau_n}(\omega))_{n= 1}^m$ are independent since $(\Delta S_{\tau_n(\omega)})_{n\geq 1}$ are independent. The fact that $N^m(\omega)$ has local characteristics $(0, \nu^{M^m}(\omega))$ follows from the construction of $N^m$.

Now let us show that $N^m$ converges as $m\to \infty$, and that the limit coincides with the desired $N$ which thus exists. For any $m_2 \geq m_1 \geq 1$ by \eqref{eq:PDMWAJtangentLpbounds} and by the fact that $N^{m_1} - N^{m_2}$ is a decoupled tangent martingale to $M^{m_1} - M^{m_2}$ (which can be shown analogously to the considerations above) we have that 
$$
\mathbb E \sup_{t\geq 0} \|N^{m_1}_t - N^{m_2}_t\| \eqsim_{X}\mathbb E \sup_{t\geq 0} \|M^{m_1}_t - M^{m_2}_t\|.$$
Thus martingales $(N^m)_{m\geq 1}$ converge in $L^1(\overline{\Omega}; \mathcal D(\mathbb R_+, X))$ by \eqref{eq:approxfactforMmforPDwAJcase}. Let $\widetilde N$ be the limit. 
Note that by Theorem \ref{thm:strongLpmartforamaBanacoace} $\widetilde N$ is an $\overline{\mathbb F}$-martingales. Let us show that $\widetilde N$ coincides with the desired $N$. For any $n\geq 1$ we have that for $\Delta N_{\tau_n}$ defined by \eqref{eq:DeltaNtaunforDTMforPDAJmart} (note that we still need to prove that $N$ exists and that $\widetilde N = N$) and by the fact that $\Delta N_{\tau_n} = \Delta N^m_{\tau_n}$ for $m\geq n$
\begin{multline*}
 \mathbb E \|\Delta \widetilde N_{\tau_n} - \Delta N_{\tau_n}\| = \lim_{m\to \infty} \mathbb E \|\Delta \widetilde N_{\tau_n} - \Delta N^m_{\tau_n}\|\\
 \stackrel{(*)}\leq \lim_{m\to \infty} \mathbb E \Bigl(\| \widetilde N_{\tau_n} - N^m_{\tau_n}\| +  \| \widetilde N_{\tau_n-} - N^m_{\tau_n-}\|\Bigr)\\
 \leq 2\lim_{m\to \infty} \mathbb E \sup_{t\geq 0} \| \widetilde N_{t} - N^m_{t}\| = 0,
\end{multline*}
where $(*)$ follows by a triangle inequality.
For the same reason we have that $\Delta \widetilde N_{\tau} = 0$ a.s.\ on $\tau \notin \{\tau_1,\ldots, \tau_n,\ldots\}$ for any stopping time $\tau$. Therefore $\widetilde N$ coincides with the desired $N$, so such $N$ exists. $N$ is a decoupled tangent martingale to $M$ for the same reason as $N^m$ is a decoupled tangent martingale to $M^m$ for any $m\geq 1$.
\end{proof}

\begin{remark}
 To sum up Theorem \ref{thm:detangmartforMXVpdwithaccjumps}. Any purely discontinuous martingale $M$ with accessible jumps and with values in a UMD Banach space has a tangent martingale $N$ on an enlarged probability space with an enlarged filtration such that for a.e.\ $\omega$ from the original probability space $N$ is a martingale with fixed jump times coinciding with the jumps times of $M$ and with independent increments. 
\end{remark}

\begin{remark}\label{rem:NPDwAJhasindincrgivenNnuMdsa}
Note that $N$ constructed in the proof of Theorem \ref{thm:detangmartforMXVpdwithaccjumps} has independent increments given $\nu^M$. Indeed, for a.e.\ fixed $\nu^M$ we have that the set $(\tau_n(\cdot))_{n\geq 1}$ is fixed, the distributions $(\mathbb P_{\omega}^n)_{n\geq 1}$ are fixed and mean zero, so $(\Delta N_{\tau_n(\cdot)})_{n\geq 1}$ are independent mean-zero random variable. Consequently the desired independence follows from Corollary \ref{cor:condindgivenRVsuffcond}.
\end{remark}

\subsection{Proof of Theorem \ref{thm:tangentgencaseUMDuhoditrotasoldat} and \ref{thm:mainforDTMgencasenananana}}\label{subsec:CIprocess}

Let us finally prove Theorem \ref{thm:tangentgencaseUMDuhoditrotasoldat} and \ref{thm:mainforDTMgencasenananana}. First, Theorem \ref{thm:tangentgencaseUMDuhoditrotasoldat} follows from Theorem \ref{thm:candecXvalued}, Remark \ref{rem:MYdecBanach}, Proposition \ref{prop:domcontcase},  \ref{prop:pdwajjusttang}, and Corollary \ref{cor:LstropboundforPDQLStangmartUMD}.  Now let us show Theorem \ref{thm:mainforDTMgencasenananana}. 
This theorem follows from the fact that for a.e.\ fixed $\omega\in \Omega$ we have that $N^c(\omega)$, $N^q(\omega)$, and $N^a(\omega)$ are independent. Since each of them a.s.\ have independent increments and local characteristics $([\![M^c]\!](\omega), 0)$, $(0, \nu^{M^q}(\omega))$, and $(0, \nu^{M^a}(\omega))$ respectively, $N(\omega)$ has independent increments and local characteristics $([\![M^c]\!](\omega), \nu^M(\omega))$ (the letter follows from Proposition \ref{prop:Grigcharforcandechowdoesitlooklike}). 

 It remains to show that $M$ and $N$ are local $\overline{\mathbb F}$-martingales with $\overline{\mathbb F}$-local characteristics $([\![M^c]\!], \nu^M)$. Let us start with $M$. To this end recall that the new filtration $\overline {\mathbb F}$ over the enlarged probability space $(\overline{\Omega},\overline{\mathcal F}, \overline{\mathbb P})$ is generated by ${\mathbb F}$, time-changed independent cylindrical Wiener process $W_H'$ (see the proof of Theorem \ref{thm:DTMforcontcase}), the Cox process $\mu_{\rm Cox}$ (see Remark \ref{rem:intFdmuisamaringeiththesameloccharenlfi}), and the filtration $(S_t(\omega))_{t\geq 0}$ defined by \eqref{eq:defofStforPDMAJdecoupl}. Let $F:\overline{\Omega} \to \mathbb R$ be any bounded $\mathcal F$-measurable random variable. Then for any fixed $t\geq 0$
\begin{equation}\label{eq:FFmeasiconsexpwrtwodiasFtthesacaom}
\mathbb E (F|\overline{\mathcal F}_t) = \mathbb E\bigl( \mathbb E(F|\sigma(W_H',\mathcal N,S_{\infty},{\mathcal F}_t))\big|\overline{\mathcal F}_t \bigr) \stackrel{(*)}= \mathbb E\bigl( \mathbb E(F|{\mathcal F}_t)\big|\overline{\mathcal F}_t \bigr) = \mathbb E (F|{\mathcal F}_t),
\end{equation}
where $\mathcal N$ is a $\sigma$-algebra generated by independent sequence of standard Poisson processes (this sequence can be assumed finite thanks to Remark \ref{rem:intFdmuisamaringeiththesameloccharenlfi}), and where $(*)$ follows from the fact that $F$ is independent of $W_H'$ and $\mathcal N$ and the trick similar to the computations \eqref{eq:whywehavemarqoidq1oimeiqmw} and \eqref{eq:whywehavemarqoidq2oimeiqmw}. Hence, as $F$ was general, $M$ is a local $\overline {\mathbb F}$-martingale. 

In order to show that $M$ preserves its local characteristics we notice by Remark \ref{rem:Mmightchangelocalcharacp} that $[\![M^c]\!]$ stays the same, the predictable jumps $(\tau_n)_{n\geq 1}$ remain predictable (hence $M^a$ has accessible jumps and the local characteristics $(0, \nu^{M^a})$ do not change by \eqref{eq:whywehavemarqoidq1oimeiqmw} and \eqref{eq:whywehavemarqoidq2oimeiqmw}), and $M^q$ does not change its local characteristics as analogously to Remark \ref{rem:intFdmuisamaringeiththesameloccharenlfi} with exploiting \eqref{eq:FFmeasiconsexpwrtwodiasFtthesacaom} instead of \eqref{eq:Mt-MrconexpdirmuXpcsa} $\mu^{M^q}$ has the same compensator $\nu^{M^q}$, so $M^q$ has the same local characteristics $(0, \nu^{M^q})$, and thus $M$ keeps the local characteristics $([\![M^c]\!], \nu^M)$ by Subsection \ref{subsec:charandthecandecbe}.

Proving that $N$ is a local $\overline{\mathbb F}$-martingales with local characteristics $([\![M^c]\!], \nu^M)$ follows analogously. We will only show this for $N^c$ (the cases of $N^q$ and $N^a$ can be shown similarly). Let $\widetilde {\mathcal F}_t := \mathcal F_{t}\otimes W_H'|_{[0, A_t]}$ and $\widetilde {\mathbb F} := (\widetilde {\mathcal F}_t)_{t\geq 0}$, where the time-change $(A_t)_{t\geq 0}$ depending only on $\mathcal F_{t}$ defined in the proof of Theorem \ref{thm:DTMforcontcase}. Then for any bounded $ \widetilde {\mathcal F}_{\infty}$-measurable $F$ similarly to \eqref{eq:FFmeasiconsexpwrtwodiasFtthesacaom}
\begin{equation*}
\mathbb E (F|\overline{\mathcal F}_t) = \mathbb E\bigl( \mathbb E(F|\sigma(\mathcal N,S_{\infty},\widetilde{\mathcal F}_t))\big|\overline{\mathcal F}_t \bigr) = \mathbb E\bigl( \mathbb E(F|\widetilde{\mathcal F}_t)\big|\overline{\mathcal F}_t \bigr) = \mathbb E (F|\widetilde{\mathcal F}_t),
\end{equation*}
so as $N^c$ is a local $\widetilde {\mathbb F}$-martingale, it is a local $\overline {\mathbb F}$-martingale. The fact that $N$ has the local characteristics $([\![M^c]\!], \nu^M)$ follows in the same way as for $M$.

\subsection{Uniqueness of a decoupled tangent martingale}\label{subsec:uniqofDTM}

This subsection is devoted to showing that a decoupled tangent local martingale, if exists, is {\em unique up to the distribution}.

\begin{proposition}\label{prop:dectanglocmaathasuniqerdistr}
Let $X$ be a Banach space, $M:\mathbb R_+  \times \Omega \to X$ be a local martingale. Assume that $M$ has two decoupled tangent local martingales $N^1$ and $N^2$ on possibly different enlarged probability spaces with enlarged filtrations. Then $N^1$ and $N^2$ are equidistributed as random elements with values in $\mathcal D(\mathbb R_+, X)$.
\end{proposition}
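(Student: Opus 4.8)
The plan is to reduce the uniqueness-in-distribution statement to a uniqueness statement about the "skeleton data" of $M$, namely its local characteristics. First I would observe that by the definition of a decoupled tangent local martingale (Definition \ref{def:dectanglocmartcontimecased}), for $i=1,2$ the process $N^i$ is, conditionally on $\mathcal F_\infty$ (equivalently, on a version of the local characteristics $([\![M^c]\!], \nu^M)$), a local martingale with independent increments and with local characteristics $([\![M^c]\!](\omega), \nu^M(\omega))$ for a.e.\ $\omega$. So it suffices to prove the following deterministic statement: if $([\![C]\!], \nu)$ is a fixed pair consisting of a bilinear-form-valued increasing path with finite Gaussian characteristic and a fixed $\sigma$-finite measure on $\mathbb R_+ \times X$ satisfying the integrability constraints of a local characteristic, then the law on $\mathcal D(\mathbb R_+, X)$ of a local martingale with independent increments and these local characteristics is unique. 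Then I would integrate this conditional uniqueness over $\omega$: since the regular conditional law of $N^i$ given $\mathcal F_\infty$ depends only on $([\![M^c]\!], \nu^M)$ (the same random element for both $i$), and the unconditional law is the $\mathbb P$-average of these conditional laws, the two unconditional laws coincide.

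For the deterministic core statement I would argue through characteristic functionals, using the L\'evy--Khinchin-type formula from Section \ref{sec:LevyHinchinFormula}. A local martingale $Y$ with independent increments and deterministic local characteristics $([\![C]\!], \nu)$ is in fact a genuine martingale (Remark \ref{rem:locmartwithIIisamartwithII}), and by the results of that section, for each $x^*\in X^*$, the process $t\mapsto e^{i\langle Y_t, x^*\rangle}/G_t(x^*)$ is a (local) martingale on $[0,\tau_{G(x^*)})$, where $G_t(x^*)$ is the explicit deterministic product formula built from $[\![C]\!]$ and $\nu$. Since $G$ is deterministic here, taking expectations pins down $\mathbb E e^{i\langle Y_t, x^*\rangle}$, and more generally, using the independent-increments structure one pins down $\mathbb E \exp(i\sum_{k}\langle Y_{t_k}-Y_{t_{k-1}}, x^*_k\rangle)$ for all finite collections $0\le t_0<\dots<t_n$ and $x^*_1,\dots,x^*_n\in X^*$. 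This determines all finite-dimensional distributions of $Y$ as an $X$-valued process (here one uses that the cylindrical $\sigma$-algebra on $X$ generated by $X^*$ coincides with the Borel $\sigma$-algebra, $X$ being separable by the Pettis measurability theorem as in earlier proofs). Finally, since $\mathcal D(\mathbb R_+, X)$ with the sup-norm is a Banach space whose Borel structure is generated by the coordinate maps $\omega\mapsto \omega(t)$ restricted to rationals together with right-continuity, the finite-dimensional distributions determine the law on $\mathcal D(\mathbb R_+, X)$.

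I would also need to address the measurability of $\omega\mapsto (\text{law of }N^i(\omega))$ and that this regular conditional distribution is a well-defined probability kernel depending only on $([\![M^c]\!], \nu^M)$; this is routine given that $X$ (hence $\mathcal D(\mathbb R_+,X)$) is a Polish space, so regular conditional distributions exist, and given Proposition \ref{prop:Omega'Omega';arecondindepifondforanyomega'as} and Corollary \ref{cor:condindgivenRVsuffcond} which already encode that conditioning on $\mathcal F_\infty$ is the same as conditioning on the local characteristics in the enlarged product structure. One more point to check is that the constraint data actually determine $G(x^*)$ and $\tau_{G(x^*)}$ unambiguously — but this is immediate from their explicit formulas in terms of $[\![M^c]\!]$ and $\nu^M$.

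The main obstacle I expect is the passage from "equality of characteristic functionals of increments" to "equality of laws on $\mathcal D(\mathbb R_+, X)$" in a way that is clean in the infinite-dimensional c\`adl\`ag setting: one must be careful that $e^{i\langle Y_t,x^*\rangle}/G_t(x^*)$ is a genuine (not merely local) martingale up to the relevant horizon so that expectations can legitimately be taken — possibly after a localization/stopping-time argument as used repeatedly in Section \ref{sec:tangmartconttime} — and that the collection of functionals $\{Y\mapsto \exp(i\sum_k \langle Y_{t_k}, x_k^*\rangle)\}$ is measure-determining on $\mathcal D(\mathbb R_+,X)$. Both are standard but require invoking the separability of $X$ and $X^*$ (valid since UMD, though here $X$ need not be UMD — one reduces to separable $X$ via Pettis and notes $X^*$ separability is not actually needed, only that $X^*$ separates points, which suffices for the cylindrical-equals-Borel argument on a separable $X$). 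Everything else is bookkeeping: combining the conditional uniqueness with Fubini to get the unconditional one.
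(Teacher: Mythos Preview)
Your overall strategy matches the paper's: fix $\omega$, use that $N^i(\omega)$ is a martingale with independent increments having the deterministic local characteristics $([\![M^c]\!](\omega),\nu^M(\omega))$, prove that this determines the law on $\mathcal D(\mathbb R_+,X)$, and then integrate over $\omega$ using the product structure of the enlargement. The paper executes the integration step exactly as you sketch.

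Where you diverge is the deterministic core. The paper does \emph{not} use the exponential/L\'evy--Khinchin formula of Section~\ref{sec:LevyHinchinFormula}; instead it invokes Corollary~\ref{cor:ifMIIthendirstofMisundetbylocchas}, whose proof rests on the structure theorem (Theorem~\ref{thm:genformofmartwithindince}): a martingale with independent increments admits a canonical decomposition $M^c+M^q+M^a$ into independent pieces whose laws are each explicitly and uniquely determined by $[\![M^c]\!]$, $\nu_{na}$, and $\nu_a$ respectively. This sidesteps entirely the two obstacles you flagged --- upgrading the local martingale $e^{i\langle Y_t,x^*\rangle}/G_t(x^*)$ to a true martingale, and passing from finite-dimensional distributions to the law on $\mathcal D(\mathbb R_+,X)$ --- because one never computes characteristic functions at all; the law is pinned down by an explicit construction. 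Your route via characteristic functionals is correct (and Remark~\ref{rem:LKformdoeinqaindwoe} handles the true-martingale issue in the independent-increments case), but it is more laborious and leaves the path-space measurability step to be checked, whereas the paper's structural argument is a one-liner once Theorem~\ref{thm:genformofmartwithindince} is in hand.
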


\begin{proof}
Suppose that $N^1$ and $N^2$ live on probability spaces $(\overline{\Omega}^1, \overline{\mathcal F}^1, \overline{\mathbb P}^1)$ and $(\overline{\Omega}^2, \overline{\mathcal F}^2, \overline{\mathbb P}^2)$ respectively, where both $(\overline{\Omega}^1, \overline{\mathcal F}^1, \overline{\mathbb P}^1)$ and $(\overline{\Omega}^2, \overline{\mathcal F}^2, \overline{\mathbb P}^2)$ are enlargements of $(\Omega, \mathcal F, \mathbb P)$ (see Definition \ref{def:enlagoffiltprobspace}). Then by Definition \ref{def:dectanglocmartcontimecased} for a.e.\ fixed $\omega\in \Omega$ processes $N^1(\omega)$ and $N^2(\omega)$ are local martingales with independent increments and local characteristics $([\![M^c]\!](\omega), \nu^M(\omega))$. Thus $N^1(\omega)$ and $N^2(\omega)$ are equidistributed by Corollary \ref{cor:ifMIIthendirstofMisundetbylocchas}, and thus $N^1$ and $N^2$ are equidistributed as we have that for any Borel set $B \in \mathcal D(\mathbb R_+, X)$
\[
\overline{\mathbb P}^1(N^1 \in B) = \int_{\Omega} \widehat{\mathbb P}_{\omega}^1(N^1(\omega) \in B) \ud \mathbb P(\omega) = \int_{\Omega} \widehat{\mathbb P}_{\omega}^2(N^2(\omega)\in B) \ud \mathbb P(\omega) = \overline{\mathbb P}^2(N^2 \in B),
\]
where $ \widehat{\mathbb P}_{\omega}^1$ and $ \widehat{\mathbb P}_{\omega}^2$ are as in Definition \ref{def:enlagoffiltprobspace}. This terminates the proof.
\end{proof}

\subsection{Independent increments given the local characteristics}\label{subsec:indincgivenLC}

In fact, we can make Definition \ref{def:dectanglocmartcontimecased} stronger by proving the following theorem.

\begin{theorem}
Let $X$ be a UMD Banach space, $M:\mathbb R_+ \times \Omega \to X$ be a local martingale, $N:\mathbb R_+ \times \overline{\Omega} \to X$ be a decoupled tangent local martingale to $M$. Then $N$ has independent increments given $([\![M^c]\!], \nu^M)$.
\end{theorem}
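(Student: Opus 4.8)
The plan is to assemble this from the three constructions already carried out in Subsections \ref{subsec:dectangcontmart}, \ref{subsec:dectanPDQLC}, and \ref{subsec:dectanPDwithAJ}, together with the uniqueness statement of Proposition \ref{prop:dectanglocmaathasuniqerdistr}. First I would invoke Theorem \ref{thm:candecXvalued} to write the canonical decomposition $M = M^c + M^q + M^a$, and recall from Subsection \ref{subsec:CIprocess} that the concretely built decoupled tangent martingale $N$ splits correspondingly as $N = N^c + N^q + N^a$, where $N^c$ is the time-changed Wiener integral of Theorem \ref{thm:DTMforcontcase}, $N^q = \int x \ud \bar\mu_{\mathrm{Cox}}$ as in Corollary \ref{thm:XUMDPDQLCdectangmart}, and $N^a$ is the process of Theorem \ref{thm:detangmartforMXVpdwithaccjumps}. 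Since by Proposition \ref{prop:dectanglocmaathasuniqerdistr} a decoupled tangent local martingale is unique in distribution, it suffices to establish the independent-increments-given-$([\![M^c]\!],\nu^M)$ property for this one explicit $N$, and distributional properties of $N$ given a $\sigma$-algebra are preserved.

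Next I would record the three ingredients already available: by Remark \ref{rem:condectangmarthasindinrccond[[M]]}, $N^c$ has independent increments given $[\![M^c]\!]$; by Remark \ref{rem:NPDQLChascondindeindcfsgivennuM}, $N^q$ has independent increments given $\nu^{M^q}$; and by Remark \ref{rem:NPDwAJhasindincrgivenNnuMdsa}, $N^a$ has independent increments given $\nu^{M^a}$. Since $\nu^M = \nu^{M^q} + \nu^{M^a}$ with the two summands supported on disjoint predictable sets (Proposition \ref{prop:Grigcharforcandechowdoesitlooklike}, Lemma \ref{lem:decompofmeasuresontwoparts}, and Remark \ref{rem:whichwhatpartofrmmeans}), each of $\nu^{M^q}$ and $\nu^{M^a}$ is $\sigma(\nu^M)$-measurable, so each of $N^c$, $N^q$, $N^a$ has independent increments given the full pair $([\![M^c]\!],\nu^M)$. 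I would then combine these via Corollary \ref{cor:condindgivenRVsuffcond}: conditionally on a.e.\ fixed value of $([\![M^c]\!],\nu^M)$, the time change $(\tau_t)$, the operator $\Phi$, the compensator directing the Cox process, and the predictable jump times $(\tau_n)$ together with the conditional jump laws $(\mathbb P^n_\omega)$ are all deterministic; moreover $W_H'$, $\mu_{\mathrm{Cox}}$, and the sequence $(\Delta S_{\tau_n})$ are mutually independent and independent of $\mathcal F$ by construction (this is exactly the independence built into Subsection \ref{subsec:CIprocess}). Hence for a.e.\ fixed $([\![M^c]\!],\nu^M)$ the three processes $N^c$, $N^q$, $N^a$ are independent processes each with independent increments, so their sum $N$ has independent increments; Corollary \ref{cor:condindgivenRVsuffcond} then upgrades this pathwise statement to genuine conditional independence of the increments of $N$ given $\sigma([\![M^c]\!],\nu^M)$.

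The one point requiring care — and the main obstacle — is the joint independence of $N^c$, $N^q$, $N^a$ conditionally on $([\![M^c]\!],\nu^M)$, since the three were built by successive enlargements of the probability space rather than as an a priori product. I would handle this by going back to the construction in Subsection \ref{subsec:CIprocess}: the enlarged filtration is generated by $\mathbb F$, the independent cylindrical Brownian motion $W_H'$, the Cox process $\mu_{\mathrm{Cox}}$, and the jump-law randomization $(S_t(\omega))$, and these last three are, by design, conditionally independent given $\mathcal F_\infty$ (for $\mu_{\mathrm{Cox}}$ this is the conditional-Poisson property from Subsection \ref{subsubsec:Coxprocess}; for $S_t$ it is the product structure \eqref{eq:deofPbarforDTMwithaJmomente}; $W_H'$ is genuinely independent). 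Since each $N^i$ is a measurable functional of $\mathcal F_\infty$-measurable data (the deterministic-given-$([\![M^c]\!],\nu^M)$ coefficients) together with exactly one of the three independent noises, and since $([\![M^c]\!],\nu^M)$ is $\mathcal F_\infty$-measurable, conditioning on $([\![M^c]\!],\nu^M)$ and applying Proposition \ref{prop:Omega'Omega';arecondindepifondforanyomega'as} with $\Omega' $ carrying the value of $([\![M^c]\!],\nu^M)$ and $\Omega''$ the independent noise space yields the desired conditional independence; the rest is the routine observation that a sum of independent processes with independent increments again has independent increments.
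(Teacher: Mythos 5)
Your proof takes essentially the same approach as the paper: the paper's argument also combines the explicit constructions from Theorems \ref{thm:DTMforcontcase}, \ref{thm:mumuCoxcomparable}, and \ref{thm:detangmartforMXVpdwithaccjumps} with Remarks \ref{rem:condectangmarthasindinrccond[[M]]}, \ref{rem:NPDQLChascondindeindcfsgivennuM}, \ref{rem:NPDwAJhasindincrgivenNnuMdsa}, the independence built into the enlargement of $(\Omega,\mathcal F,\mathbb P)$ by $W_H'$, $\mu_{\rm Cox}$, and $\overline{\mathbb P}$, and Corollary \ref{cor:condindgivenRVsuffcond}. You make the reduction to the constructed $N$ via Proposition \ref{prop:dectanglocmaathasuniqerdistr} and the conditional joint independence of $N^c$, $N^q$, $N^a$ explicit where the paper leaves them implicit, but the argument is the same.
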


This theorem extends e.g.\ \cite[Example 6.1.7]{dlPG} and \cite[Theorem 3.1]{Kal17}.

\begin{proof}
The theorem follows directly from the construction of a decoupled tangent local martingale presented in Theorem \ref{thm:DTMforcontcase}, \ref{thm:mumuCoxcomparable}, and \ref{thm:detangmartforMXVpdwithaccjumps}, from Remark \ref{rem:condectangmarthasindinrccond[[M]]}, \ref{rem:NPDQLChascondindeindcfsgivennuM}, and \ref{rem:NPDwAJhasindincrgivenNnuMdsa}, from that fact that we can consider an enlargement of $(\Omega, \mathcal F, \mathbb P)$ generated by $W_H'$, $\mu_{\rm Cox}$, and $\overline{\mathbb P}$ defined by \eqref{eq:deofPbarforDTMwithaJmomente}, and from Corollary \ref{cor:condindgivenRVsuffcond} on condtioinal independence with respect to a random variable.
\end{proof}

\section{Upper bounds and the decoupling property}\label{sec:Uppbdsanddecprop}

As it was shown in Theorem \ref{thm:tangentgencaseUMDuhoditrotasoldat}, if $X$ is UMD, then for any local martingale $M$ and for a decoupled tangent local martingale $N$ we have that for any $1\leq p<\infty$

\begin{equation}\label{eq:Esup|Mt|peqsimEN_T|P}
\mathbb E \sup_{0\leq t\leq T} \|M_t\|^p \eqsim_{p, X} \mathbb E \sup_{0\leq t\leq T} \|N_t\|^p \stackrel{(*)}\eqsim_{p}\mathbb E  \|N_T\|^p,\;\;\; T>0,
\end{equation}
where $(*)$ follows from Lemma \ref{lem:MhasIIthemathbbEsupphiMeqsimphiEphiM} and the fact that $N(\omega)$ is a martingale with independent increments for a.e.\ $\omega \in \Omega$. But what if we are interested only in the upper bound of \eqref{eq:Esup|Mt|peqsimEN_T|P} (this is often the case, see Remark \ref{rem:stochinforgenmartfordecpropet} on stochastic integration)? Can we have such estimates for non-UMD Banach spaces? Inequalities of such form have been discovered by Cox and Veraar in \cite{CV07,CV} (see also \cite{CG,HNVW1,MC}) and they turn out to characterize the so-called {\em decoupling property}.

\begin{definition}\label{def:decproperty}
Let $X$ be a Banach space. Then $X$ is said to have the {\em decoupling property} if for any $1\leq p <\infty$, for any $X$-valued martingale difference sequence $(d_n)_{n\geq 1}$ and for a decoupled tangent martingale difference sequence $(e_n)_{n\geq 1}$ one has that
\begin{equation}\label{eq:defofdecpropqdsao}
\mathbb E \sup_{N\geq 1} \Bigl\| \sum_{n=1}^N d_n \Bigr\|^p \lesssim_{p, X} \mathbb E \Bigl\| \sum_{n=1}^{\infty} c_n\Bigr\|^p.
\end{equation}
\end{definition}

Unlike the UMD property, Banach spaces with the decoupling property might not enjoy reflexivity. For example, $L^1$ spaces has the decoupling property. Moreover, {\em quasi-}Banach spaces can also satisfy \eqref{eq:defofdecpropqdsao} (e.g.\ $L^q$ for $q\in (0,1)$, see \cite{CV}).

The goal of the present section is to extend \eqref{eq:defofdecpropqdsao} to the continuous-times case. Of course for a general Banach space $X$ with a decoupling property and for a general $X$-valued martingale we will not have a decoupled tangent local martingale thanks to Theorem \ref{thm:mainforDTMgencasenananana}, but nonetheless, we are able to provide a continuous-time analogue of \eqref{eq:defofdecpropqdsao} in some spacial cases when such a decouple tangent local martingale exists. Let us start with the continuous case which is an elementary consequence of \cite[Theorem 5.4]{CV}. Recall that for any time change $(\tau_s)_{s\geq 0}$ we have the {\em inverse} time change $(A_t)_{t\geq 0}$ defined by $A_t := \inf\{s\geq 0: \tau_s \geq t\}$, and that a process is in $\gamma_{loc}(L^2(\mathbb R_+; H), X)$ if it is locally in $\gamma(L^2(\mathbb R_+; H), X)$ (see Subsection \ref{subsec:gammanorm}).

\begin{theorem}\label{thm:decpropfordonmartupperbasv}
Let $X$ be a Banach space with the decoupling property, $M^c:\mathbb R_+\times \Omega \to X$ be a continuous local martingale. Assume that there exists a time change $(\tau^c_s)_{s\geq 0}$, a Hilbert space $H$, an $H$-cylindrical Brownian motion $W_H$ adapted with respect to (possibly, enlarged) filtration $\mathbb G := (\mathcal F_{\tau_s})_{s\geq 0}$, and a strongly $\mathbb G$-predictable process $\Phi:\Omega \to \gamma_{loc}(L^2(\mathbb R_+; H), X)$ such that for any $x^*\in X^*$ we have that
$\langle M^c, x^* \rangle \circ \tau^c = \Phi^*x^* \cdot W_H$ a.s. Then $M^c$ has a decoupled tangent local martingale $N^c$ which has the following form: $N^c = (\Phi \cdot \overline W_H) \circ A^c$, where $\overline W_H$ is an independent copy of $W_H$ and $(A^c_t)_{t\geq 0}$ is the time change inverse to $\tau^c$. Moreover, if this is the case then for any $1\leq p<\infty$
\begin{equation}\label{eq:upperbounddecforcontmar}
 \mathbb E \sup_{0\leq t\leq T} \|M^c_t\|^p \lesssim_{p, X} \mathbb E  \|N^c_T\|^p,\;\;\; T\geq 0. 
\end{equation}
\end{theorem}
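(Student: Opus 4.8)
\textbf{Proof proposal for Theorem \ref{thm:decpropfordonmartupperbasv}.}

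The plan is to reduce the continuous-time inequality \eqref{eq:upperbounddecforcontmar} to the discrete decoupling inequality \eqref{eq:defofdecpropqdsao} by discretizing the cylindrical Brownian motions $W_H$ and $\overline W_H$, and then passing to the limit. First I would deal with the structural claim. Since by hypothesis $\langle M^c,x^*\rangle\circ\tau^c=\Phi^*x^*\cdot W_H$ for every $x^*\in X^*$, the process $\widetilde M^c:=M^c\circ\tau^c$ satisfies $\widetilde M^c=\Phi\cdot W_H$ in the sense of \cite{NVW} (using that $\Phi^*x^*$ is strongly $\mathbb G$-predictable for each $x^*$, so Theorem 3.5 of \cite{NVW} applies exactly as in the proof of Theorem \ref{thm:DTMforcontcase}). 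Setting $N^c:=(\Phi\cdot\overline W_H)\circ A^c$ with $\overline W_H$ an independent copy of $W_H$ over an enlarged probability space, the Kazamaki theorem \cite[Theorem 17.24]{Kal} shows $N^c$ is a continuous local martingale with respect to the filtration generated by $\mathbb F$ and the time-changed process $\overline W_H\circ A^c$, and, as in Theorem \ref{thm:DTMforcontcase}, that $M^c$ remains a local martingale in this enlarged filtration with unchanged $[\![(M^c)^c]\!]=[\![M^c]\!]$ (a quadratic variation is insensitive to enlargement, see Subsection \ref{subsec:quadrvar}) and unchanged $\nu^{M^c}=0$ (it stays continuous). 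Since $\Phi(\omega)$ is deterministic, $(\tau^c_s(\omega))_{s\geq0}$ is deterministic, and $\overline W_H$ is independent of $\omega$, for a.e.\ $\omega$ the process $N^c(\omega)$ is a Wiener integral of a deterministic $\gamma_{loc}$-function and hence a Gaussian martingale with independent increments and local characteristics $([\![M^c]\!](\omega),0)$; by Proposition \ref{prop:Grigcharcontcase}, $M^c$ and $N^c$ are tangent, so $N^c$ is a decoupled tangent local martingale of $M^c$ in the sense of Definition \ref{def:dectanglocmartcontimecased}.

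For the estimate \eqref{eq:upperbounddecforcontmar}, by a stopping-time argument (Theorem \ref{thm:MNtangent==>MtauNtautangent+ifNCIINtauCII} together with Remark \ref{rem:locmartislocallyL1DR+X}) and by localizing the time change, I may assume $\Phi\in\gamma(L^2(\mathbb R_+;H),X)$ a.s.\ with $\mathbb E\|\Phi\|_{\gamma}^p<\infty$ and that $T$ is fixed, say $T$ replaced by $\infty$ after the time change, so it suffices to bound $\mathbb E\sup_{s\geq0}\|\widetilde M^c_s\|^p=\mathbb E\sup_{s\geq0}\|(\Phi\cdot W_H)_s\|^p$ by $\mathbb E\|\Phi\cdot\overline W_H\|_{L^p(\overline\Omega;X)}^p$, using Doob's inequality \eqref{eq:DoobsineqXBanach} in reverse for the right-hand side since $\Phi\cdot\overline W_H$ has independent increments. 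Discretize: choose a refining sequence of partitions $0=t_0^n<t_1^n<\dots<t_{k_n}^n$ of $\mathbb R_+$ with mesh tending to zero, and approximate $\Phi$ in $\gamma(L^2(\mathbb R_+;H),X)$ by elementary predictable $\Phi_n$ constant on each $(t_{j-1}^n,t_j^n]$ and $\mathcal G_{t_{j-1}^n}$-measurable; the increments $d_j^n:=\Phi_n(t_{j-1}^n,\cdot)(W_H(t_j^n)-W_H(t_{j-1}^n))$ form an $X$-valued martingale difference sequence, and $e_j^n:=\Phi_n(t_{j-1}^n,\cdot)(\overline W_H(t_j^n)-\overline W_H(t_{j-1}^n))$ is a decoupled tangent martingale difference sequence to $(d_j^n)$ — this is exactly the Gaussian analogue of Example \ref{ex:tangstandarddecxinvn-->xi'nvn} combined with Remark \ref{rem:discrdefislikecontdafa}, since $\overline W_H$ is an independent copy of $W_H$ and, conditionally on $\mathcal F$, the increments of $\overline W_H$ are independent Gaussians with the correct covariances. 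Applying the decoupling property \eqref{eq:defofdecpropqdsao} to $(d_j^n)$ and $(e_j^n)$ gives
\begin{equation*}
\mathbb E\sup_{1\leq m\leq k_n}\Bigl\|\sum_{j=1}^m d_j^n\Bigr\|^p\lesssim_{p,X}\mathbb E\Bigl\|\sum_{j=1}^{k_n}e_j^n\Bigr\|^p,
\end{equation*}
and then I would let $n\to\infty$: the left-hand side converges to $\mathbb E\sup_{s\geq0}\|(\Phi\cdot W_H)_s\|^p$ and the right-hand side to $\mathbb E\|\Phi\cdot\overline W_H\|^p=\mathbb E\|N^c_\infty\|^p$, both by the $L^p$-convergence of stochastic integrals against cylindrical Brownian motion \eqref{eq:LpboundsforstochintwrtHcylbrm} and \cite{NVW} (combined with Theorem \ref{thm:strongLpmartforamaBanacoace} for the supremum on the left). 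Undoing the time change via $A^c$ recovers \eqref{eq:upperbounddecforcontmar}; note the time change commutes with taking suprema and with the $L^p(\overline\Omega)$-norm of the terminal value because it is deterministic for a.e.\ $\omega$.

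The main obstacle I anticipate is the approximation step: unlike the UMD case one cannot invoke an unconditional-martingale-difference estimate to control the errors, so I must be careful that the approximating $\Phi_n$ are genuinely predictable with respect to the time-changed filtration $\mathbb G$ and that both $\mathbb E\sup_s\|(\Phi_n\cdot W_H)_s-(\Phi\cdot W_H)_s\|^p\to0$ and $\mathbb E\|\Phi_n\cdot\overline W_H-\Phi\cdot\overline W_H\|^p\to0$; this is where $\gamma$-dominated convergence \cite[Theorem 9.4.2]{HNVW2} and the two-sided estimate of \cite{NVW} are needed, and where the case $p=1$ (only an upper Doob bound is available) must be handled by first establishing the inequality for $p>1$ and then using a good-$\lambda$ / truncation argument on the continuous martingale $M^c$ exactly in the spirit of the $p=1$ part of Proposition \ref{prop:XUMDiffdecouplforPois}. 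A secondary subtlety is checking the tangency of $(d_j^n)$ and $(e_j^n)$ rigorously via Corollary \ref{cor:condindgivenRVsuffcond}, since the increments of $\overline W_H$ are only conditionally (given $\mathcal F$) a Gaussian sequence with the prescribed law; but this is routine once the enlargement is set up as a product space as in Subsection \ref{subsec:ConexponPSCondProbCondIndep}.
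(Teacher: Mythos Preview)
Your proposal is correct in spirit, but it takes a substantially longer route than the paper. The paper's proof is a two-line citation: it invokes \cite[Theorem 5.4]{CV} to conclude that $\Phi$ is stochastically integrable with respect to both $W_H$ and $\overline W_H$ (so $M^c\circ\tau^c=\Phi\cdot W_H$ and $N^c$ is well defined), and then reads off \eqref{eq:upperbounddecforcontmar} directly from \cite[(5.3)]{CV} combined with the identity $\mathbb E_{\overline W_H}\|N^c_T\|^p\eqsim_p\|\Phi\|^p_{\gamma(L^2([0,A_T];H),X)}$, which holds pointwise in $\omega$ because $\Phi(\omega)$ is a deterministic integrand against an independent Brownian motion. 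In other words, the continuous-time decoupling inequality for stochastic integrals in spaces with the decoupling property is already a black-box result of Cox--Veraar, and the paper simply quotes it.

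What you are doing is essentially reproving \cite[Theorem 5.4]{CV} from scratch via discretization and the discrete decoupling inequality \eqref{eq:defofdecpropqdsao}. This is a legitimate and self-contained argument, and your outline is sound: for elementary $\Phi_n$ the decoupling inequality applied to the Gaussian increments yields $\mathbb E\sup_s\|(\Phi_n\cdot W_H)_s\|^p\lesssim_{p,X}\mathbb E\|\Phi_n\|_\gamma^p$, and applying this to $\Phi_n-\Phi_m$ shows $(\Phi_n\cdot W_H)$ is Cauchy in $L^p(\Omega;\mathcal D(\mathbb R_+,X))$, whence the limit exists and equals $M^c\circ\tau^c$ by the weak characterization. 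One caution: you write that ``the two-sided estimate of \cite{NVW}'' is needed for the approximation, but \cite{NVW} requires UMD, which you do not have. What you actually need (and what you have) is only the one-sided bound coming from decoupling itself, applied to differences, together with the unconditional Gaussian estimate on the $\overline W_H$ side; no appeal to \cite{NVW} is necessary or permissible here. The good-$\lambda$ detour you propose for $p=1$ is also unnecessary, since \cite[(5.3)]{CV} covers $p=1$ directly.
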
 

\begin{proof}
First note that by \cite[Theorem 5.4]{CV} $\Phi$ is integrable with respect to $W_H$ and $M^c \circ \tau^c = \Phi \cdot W_H$. Moreover, by  \cite[Theorem 5.4]{CV} we also have that $\Phi$ is integrable with respect to $\overline W_H$. Let $N^c := (\Phi \cdot \overline W_H) \circ A^c$. Then $N^c$ is a decoupled tangent local martingale to $M^c$ due to Definition \ref{def:dectanglocmartcontimecased} and \eqref{eq:qvvarofstintwrtHcylbrmot}. \eqref{eq:upperbounddecforcontmar} follows directly from \cite[(5.3)]{CV} and the fact that
\[
 \mathbb E_{\overline W_H}  \|N^c_T\|^p = \mathbb E_{\overline W_H} \Bigl\| \int_0^{A_t} \Phi \ud \overline W_H\Bigr\|^p  \eqsim_{p} \mathbb E \|\Phi\|^p_{\gamma(L^2([0, A_t]; H), X)}.
\]
\end{proof}

Now let us move to the quasi-left continuous case. Recall that a stochastic integral with respect to a random measure was defined in Definition \ref{def:Fisintagrablewrtbarmugendef}.

\begin{theorem}\label{thm:upperbdsforPSWLCmaerviacox}
 Let $X$ be a Banach space satisfying the decoupling property, $(J, \mathcal J)$ be a measurable space. Let $\mu$ be a $\widetilde {\mathcal P}$-$\sigma$-finite quasi-left continuous integer random measure on $\mathbb R_+ \times J$ with a compensator $\nu$, $\bar{\mu} := \mu-\nu$. Let $F:\mathbb R_+ \times \Omega \to X$ be strongly $\widetilde {\mathcal P}$-measurable. Assume that $F(\omega)$ is integrable with respect to $\bar{\mu}_{\rm Cox}(\omega)$ for a.e.\ $\omega \in \Omega$, where ${\mu}_{\rm Cox}$ is a Cox process directed by $\nu$, $\bar{\mu}_{\rm Cox} = {\mu}_{\rm Cox} - \nu$. Then $F$ is locally integrable with respect to $\bar{\mu}$ and for any $1\leq p<\infty$
 \begin{equation}\label{eq:qlcranmareanddecptreopgiceLpestforstofashint}
  \mathbb E \sup_{t\geq 0} \Bigl\|\int_{[0, t] \times J} F \ud \bar{\mu} \Bigr\|^p \lesssim_{p, X}  \mathbb E  \Bigl\|\int_{R_+ \times J} F \ud \bar{\mu}_{\rm Cox} \Bigr\|^p.
 \end{equation}
\end{theorem}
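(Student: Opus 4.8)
\textbf{Proof plan for Theorem \ref{thm:upperbdsforPSWLCmaerviacox}.}
The plan is to reduce the continuous-time inequality \eqref{eq:qlcranmareanddecptreopgiceLpestforstofashint} to its discrete counterpart, namely the defining inequality \eqref{eq:defofdecpropqdsao} of the decoupling property, exactly mirroring the structure of the proof of Theorem \ref{thm:mumuCoxcomparable}, but keeping only the "$\lesssim$'' direction so that the UMD assumption is not needed. First I would pass to the situation where $F$ is uniformly bounded and elementary predictable, $J$ is a finite set, and $X$ is separable: the general case follows from this one by an approximation argument identical to Step 4 in the proof of Theorem \ref{thm:mumuCoxcomparable} (truncating $\mu$ to an increasing family $(A_m)_{m\geq1}\subset\widetilde{\mathcal P}$ with $\mathbb E\mu(A_m)<\infty$, adding an independent auxiliary Poisson piece $\varepsilon\zeta$, and letting $m\to\infty$, $\varepsilon\to0$, using Burkholder--Davis--Gundy and $\gamma$-dominated convergence on both sides). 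This reduction is also where one sees that $F$ is locally integrable with respect to $\bar\mu$: on each $A_m$ the integral $\int_{A_m}F\ud\bar\mu$ is a well-defined martingale by Proposition \ref{prop:defofstochintwrtranmcaseodFoffinitemuintds}, and the decoupling inequality applied to its finitely many jumps gives the uniform $L^p$ control needed to conclude convergence, i.e.\ integrability in the sense of Definition \ref{def:Fisintagrablewrtbarmugendef}.

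Next, in the finite-$J$, elementary-predictable setting I would use the time-change representation of $\mu$: by \cite[Corollary 25.26]{Kal} (as in Step 1 of the proof of Theorem \ref{thm:mumuCoxcomparable}), after the predictable time change $\tau^j_s$ associated with $\nu(\cdot\times\{j\})$ the measure $\mu$ becomes a standard Poisson random measure $\chi$ with Lebesgue compensator. Writing out $\int_{[0,t]\times J}F\ud\bar\mu$ in the time-changed coordinates and discretizing the mesh $0=t_0<\dots<t_K=T$, the maximal function $\sup_t\|\int_{[0,t]\times J}F\ud\bar\mu\|$ is comparable (after adding knots) to $\max_{k}\|\int_{[0,t_k]\times J}F\ud\bar\mu\|$, which is the running maximum of a discrete $X$-valued martingale whose differences are $d_{k,j}=\bar\mu([t_{k-1},t_k)\times\{j\})\,\xi_{k,j}$. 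Then I would apply Lemma \ref{lem:Lipshfuncapprox} to approximate, in $L^p$, the predictable compensator increments $\nu^j[t_{k-1},t_k)$ by their conditional expectations $\mathbb E(\nu^j[t_{k-1},t_k)\mid\mathcal F_{t_{k-1}})$; this is precisely what lets one replace the true measure $\mu$ by a measure built on blocks of an \emph{independent} Poisson random measure $\eta$, splitting the discrete martingale into a main part $L^1$ (a martingale in the original filtration) and an error part $L^2$ of size $<\delta$, exactly as in \eqref{eq:L1approxintFwrtmubar}--\eqref{eq:L2issmall<delta}. The decoupled object on the right-hand side, $\int_{\mathbb R_+\times J}F\ud\bar\mu_{\rm Cox}$, has the analogous discrete description via Example \ref{ex:CoxprocessfiniteJ}, with the blocks of an independent standard Poisson measure $N$ indexed by $[t_{k-1},t_{k-1}+\nu^j[t_{k-1},t_k))$; and the discrete difference sequence coming from $N$ is, for a.e.\ fixed $\omega$, a sequence of independent mean-zero random variables with the same conditional laws as $(d_{k,j})$, i.e.\ a decoupled tangent sequence in the sense of Remark \ref{rem:discrdefislikecontdafa} and Proposition \ref{prop:Omega'Omega';arecondindepifondforanyomega'as}. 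Hence \eqref{eq:defofdecpropqdsao} applies and yields
\[
\mathbb E\max_{1\le k\le K}\Bigl\|\int_{[0,t_k]\times J}F\ud\bar\mu\Bigr\|^p \lesssim_{p,X}\mathbb E\,\mathbb E_N\Bigl\|\sum_{k,j}\widetilde N\bigl([t_{k-1},t_{k-1}+\nu^j[t_{k-1},t_k))\times\{j\}\bigr)\xi_{k,j}\Bigr\|^p,
\]
and letting the mesh refine ($\delta\to0$, $K\to\infty$), using the error estimates for $L^2$ and the corresponding Novikov-type bound \eqref{eq:forwildetildeNlikeL2also<delta} on the Poisson side, gives the claimed \eqref{eq:qlcranmareanddecptreopgiceLpestforstofashint}. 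As before, the case $1\le p\le 2$ uses the first Novikov inequality in \eqref{eq:NovRV} and general $p$ uses the second.

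The main obstacle I expect is bookkeeping rather than a genuine new idea: one must carry the approximation of $\nu$ by conditional expectations through the time change so that the block decomposition on the true-measure side and on the Cox side are literally the same up to an $L^p$-error controlled by $\delta$, and one must verify that the discretized difference sequence attached to the independent Poisson measure is genuinely a \emph{decoupled} tangent sequence to the one attached to $\mu$ (not merely tangent) — this is where Corollary \ref{cor:condindgivenRVsuffcond} and the "fix $\omega$, check independence of the fixed-law mean-zero increments'' argument from Remark \ref{rem:discrdefislikecontdafa} are used. A secondary subtlety is that, since $X$ need not be UMD, one cannot invoke the canonical decomposition or the covariation bilinear form; but this is not actually needed, because $\mu$ is assumed quasi-left continuous from the outset, so $\int F\ud\bar\mu$ is automatically purely discontinuous and quasi-left continuous, and the entire argument stays within the theory of stochastic integration against random measures (Proposition \ref{prop:defofstochintwrtranmcaseodFoffinitemuintds} and Definition \ref{def:Fisintagrablewrtbarmugendef}), which does not require any geometric assumption on $X$. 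Finally, the passage from "maximal function over $[0,T]$'' to "over $\mathbb R_+$'' on the left and to the terminal value on the right is routine: Doob's inequality \eqref{eq:DoobsineqXBanach} (when $p>1$) or a direct monotone/dominated convergence argument handles the supremum, and on the right one uses that $\int_{\mathbb R_+\times J}F\ud\bar\mu_{\rm Cox}$ is the $L^p$-limit of its restrictions since $F(\omega)$ is assumed integrable against $\bar\mu_{\rm Cox}(\omega)$.
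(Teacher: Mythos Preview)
Your plan has a concrete gap in the approximation step. You write that the passage from elementary predictable $F$ with finite $J$ to general $F$ is ``identical to Step~4 in the proof of Theorem~\ref{thm:mumuCoxcomparable} \ldots\ using Burkholder--Davis--Gundy and $\gamma$-dominated convergence on both sides.'' But the Burkholder--Davis--Gundy inequalities invoked there are the vector-valued ones from \cite[Subsection~7.2]{Y18BDG}, which require $X$ to be UMD, and so does the existence of the $\gamma$-norm $\|F\|_{\gamma(L^2(\mathbb R_+\times J;\mu),X)}$ used in the $\gamma$-dominated convergence argument. Under the mere decoupling property you have neither of these, so Step~4 cannot be copied verbatim and your reduction to elementary $F$ does not close as written.

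The paper avoids this trap by taking a much shorter and more direct route that never reduces to elementary $F$ or invokes the time-change/Poissonization machinery at all. It first localizes via the stopping times $\tau_k=\inf\{t:\mathbb E_{\rm Cox}\|\int_{[0,t]\times J}F\,\ud\bar\mu_{\rm Cox}\|=k\}$ so that the Cox integral is a.s.\ bounded. Then it takes the sets $(A_n)$ from Definition~\ref{def:Fisintagrablewrtbarmugendef}, observes that on each $A_n$ the integral $M^n=\int_{A_n}F\,\ud\bar\mu$ is a well-defined martingale by Proposition~\ref{prop:defofstochintwrtranmcaseodFoffinitemuintds}, and gets the key bound $\mathbb E\sup_t\|M^n_t\|^p\lesssim_{p,X}\mathbb E\,\mathbb E_{\rm Cox}\|\int_{A_n}F\,\ud\bar\mu_{\rm Cox}\|^p$ directly from the discrete decoupling inequality \eqref{eq:defofdecpropqdsao} after approximating $F\mathbf 1_{A_n}$ by step functions in $L^1(\mathbb P\otimes\nu;X)$---an approximation that works in any Banach space via the elementary estimate \eqref{eq:ineqforsupnormforgoosmart}, with no UMD needed. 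The same estimate on $A_m\setminus A_n$ shows $(M^n)$ is Cauchy in $L^1(\Omega;\mathcal D(\mathbb R_+,X))$, giving local integrability, and \eqref{eq:qlcranmareanddecptreopgiceLpestforstofashint} follows by letting $n\to\infty$. Your elaborate replication of Steps~1--3 of Theorem~\ref{thm:mumuCoxcomparable} is simply not needed here: once you know the Cox integral is the decoupled tangent object (which is immediate pointwise in $\omega$), the discrete decoupling property does all the work.
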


\begin{proof}
 For each $k\geq 1$ define a stopping time 
 $$
 \tau_k:= \inf\Bigl\{t\geq 0: \mathbb E_{\rm Cox} \Bigl \| \int_{[0, t]\times J} F  \ud \bar{\mu}_{\rm Cox}  \Bigr\|  = k\Bigr\}.
 $$
We can find such (possibly infinite) $t$ that $\mathbb E_{\rm Cox}  \| \int_{[0, t]\times J} F \ud \bar{\mu}_{\rm Cox}  \|  = k$ since the function $t\mapsto \mathbb E_{\rm Cox}  \| \int_{[0, t]\times J} F(\omega) \ud \bar{\mu}_{\rm Cox}(\omega)  \|$ is continuous in $t\geq 0$ for a.e.\ $\omega\in \Omega$ because $\int_{[t, t+\eps]\times J} F(\omega) \ud \bar{\mu}_{\rm Cox}(\omega) \to 0$ a.s.\ as $\eps\to 0$. Without loss of generality by a stopping time argument we can set $F := F \mathbf 1_{[0, \tau_k]}$ and so we may assume that $\mathbb E_{\rm Cox}  \| \int_{\mathbb R_+\times J} F \ud \bar{\mu}_{\rm Cox}  \|  <C$ a.s.\ for some $C>0$.

Let us show that $F$ is integrable with respect to $\bar{\mu}$. Let the sets $(A_n)_{n\geq 1}$ be as in Definition \ref{def:Fisintagrablewrtbarmugendef}. Then
\[
 t\mapsto M^n_t := \int_{[0, t] \times J} F \mathbf 1_{A_n} \ud \bar{\mu},\;\;\; t\geq 0,
\]
is a martingale for any $n\geq 1$. Due to \eqref{eq:defofdecpropqdsao} and by an approximating by step functions we have that 
\begin{equation}\label{eq:Mnisnaisdanabdfordecopropreaqwdjioqlcaas}
 \mathbb E \sup_{t\geq 0} \|M^n_t\|^p \lesssim_{p,X} \mathbb E \mathbb E_{\rm Cox} \Bigl\| \int_{\mathbb R_+ \times \Omega}  F \mathbf 1_{ A_n} \ud\bar{\mu}_{\rm Cox} \Bigr\|^p,\;\;\; n\geq 1,
\end{equation}
and for the same reason for any $m\geq n\geq 1$
\begin{equation}\label{eq:Mniscauchyfordecopropreaqwdjioqlcaas}
 \mathbb E \sup_{t\geq 0} \|M^m_t - M^n_t\| \lesssim_{ X} \mathbb E \mathbb E_{\rm Cox} \Bigl\| \int_{\mathbb R_+ \times \Omega}  F \mathbf 1_{A_m \setminus A_n} \ud \bar{\mu}_{\rm Cox}\Bigr\|.
\end{equation}
Thus we have that $(M^n)_{n\geq 1}$ is a Cauchy sequence in $L^1(\Omega;\mathcal D(\mathbb R_+, X))$ by \eqref{eq:Mniscauchyfordecopropreaqwdjioqlcaas} and Remark \ref{rem:sticintwrtPoisrmonercanchosreeanysetws}. Inequality \eqref{eq:qlcranmareanddecptreopgiceLpestforstofashint} follows from \eqref{eq:Mnisnaisdanabdfordecopropreaqwdjioqlcaas} by letting $n\to \infty$.
\end{proof}

The following corollary is a direct consequence of Theorem \ref{thm:upperbdsforPSWLCmaerviacox}.

\begin{corollary}\label{cor:decoaspropforQLCPDmart}
 Let $X$ be a Banach space with the decoupling property, $M^q : \mathbb R_+ \times \Omega \to X$ be a purely discontinuous quasi-left continuous local martingale. Let $\mu^{M^q}$ be defined by \eqref{eq:defofmuM}, $\nu^{M^q}$ be the corresponding compensator, $\mu^{M^q}_{\rm Cox}$ be a Cox process directed by $\nu^{M^q}$, $\bar{\mu}^{M^q}_{\rm Cox}:= \mu^{M^q}_{\rm Cox} - \nu^{M^q}$. Assume that $\int_{[0, t]\times X} x \ud \bar{\mu}^{M^q}_{\rm Cox}$ is well defined a.s.\ for any $t\geq 0$. Then an $X$-valued local martingale $N^q$ defined by
 \[
  N^q_t := \int_{[0, t]\times X}x \ud \bar{\mu}^{M^q}_{\rm Cox},\;\;\; t\geq 0,
 \]
is a decoupled tangent local martingale to $M^q$ and for any $1\leq p<\infty$
\begin{equation}\label{eq:upperbounddecforqlcontpdmar}
 \mathbb E \sup_{0\leq t\leq T} \|M^q_t\|^p \lesssim_{p, X} \mathbb E  \|N^q_T\|^p,\;\;\; T\geq 0. 
\end{equation}
\end{corollary}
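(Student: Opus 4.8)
The plan is to deduce this corollary directly from Theorem \ref{thm:upperbdsforPSWLCmaerviacox} and Corollary \ref{thm:XUMDPDQLCdectangmart} (or rather its proof), by specializing the random measure $\mu$ to the jump measure $\mu^{M^q}$ of $M^q$ and the integrand $F$ to the canonical map $(t,x)\mapsto x$ on $\mathbb R_+ \times X$, with $J = X$ and $\mathcal J = \mathcal B(X)$. First I would note that $\mathcal B(X)$ is countably generated (by the Pettis measurability theorem we may assume $X$ separable), and that $\mu^{M^q}$ is a $\widetilde{\mathcal P}$-$\sigma$-finite quasi-left continuous integer-valued random measure by Lemma \ref{lem:nuMisnonatomiffMqlc}, so its compensator $\nu^{M^q}$ is non-atomic in time and the Cox process $\mu^{M^q}_{\rm Cox}$ directed by $\nu^{M^q}$ is well defined as in Subsection \ref{subsubsec:Coxprocess}. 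The map $F(t,\omega,x) := x$ is trivially strongly $\widetilde{\mathcal P}$-measurable, and the standing assumption of the corollary is precisely that $F(\omega)$ is integrable with respect to $\bar\mu^{M^q}_{\rm Cox}(\omega)$ for a.e.\ $\omega$, so all hypotheses of Theorem \ref{thm:upperbdsforPSWLCmaerviacox} are met.

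Next I would apply Theorem \ref{thm:upperbdsforPSWLCmaerviacox} with this choice of $\mu$ and $F$: it yields that $x$ is locally integrable with respect to $\bar\mu^{M^q}$ and that for any $1\leq p<\infty$
\[
\mathbb E \sup_{t\geq 0} \Bigl\|\int_{[0,t]\times X} x \ud \bar\mu^{M^q}\Bigr\|^p \lesssim_{p, X} \mathbb E \Bigl\| \int_{\mathbb R_+\times X} x \ud \bar\mu^{M^q}_{\rm Cox}\Bigr\|^p = \mathbb E \|N^q_{\infty}\|^p \geq \mathbb E \|N^q_T\|^p,
\]
where the last inequality is Doob's inequality \eqref{eq:DoobsineqXBanach} (or simply the fact that, conditionally on $\omega$, $N^q(\omega)$ has independent increments, hence $\mathbb E_{\rm Cox}\|N^q_T\|^p \leq \mathbb E_{\rm Cox}\|N^q_\infty\|^p$ by Jensen, then integrate over $\omega$). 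Since $M^q$ is purely discontinuous quasi-left continuous, Theorem \ref{thm:XisUMDiffMisintxwrtbarmuMvain} gives (after a stopping-time reduction to $\mathbb E\sup_t\|M^q_t\|<\infty$ via Theorem \ref{thm:MNtangent==>MtauNtautangent+ifNCIINtauCII} and Remark \ref{rem:locmartislocallyL1DR+X}, noting that the decoupling property requires no UMD assumption; here one may actually wish to avoid invoking Theorem \ref{thm:XisUMDiffMisintxwrtbarmuMvain} directly since its ``only if'' part used UMD — instead argue locally on the predictable stopping times exhausting jumps, or simply observe that the local integrability output of Theorem \ref{thm:upperbdsforPSWLCmaerviacox} together with the identity of jumps forces $M^q_t = \int_{[0,t]\times X} x\ud\bar\mu^{M^q}$ as in the proof of Corollary \ref{thm:XUMDPDQLCdectangmart}) that $M^q_t = \int_{[0,t]\times X} x \ud \bar\mu^{M^q}$ a.s. Combining this identity with the displayed estimate yields \eqref{eq:upperbounddecforqlcontpdmar}.

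Finally, to see that $N^q$ is a decoupled tangent local martingale to $M^q$, I would repeat the argument of Corollary \ref{thm:XUMDPDQLCdectangmart} and Remark \ref{rem:intFdmuisamaringeiththesameloccharenlfi}: after enlarging the probability space by $\mu^{M^q}_{\rm Cox}$, the measure $\mu^{M^q}$ keeps its compensator $\nu^{M^q}$, so $M^q$ remains a local martingale with the same local characteristics $(0, \nu^{M^q})$; the process $N^q$ is a purely discontinuous martingale by Proposition \ref{prop:defofstochintwrtranmcaseodFoffinitemuintds} (applied along the exhausting sequence $(A_n)$), it is tangent to $M^q$ since its jump measure has compensator $\nu^{M^q}$ (by the computation in Lemma \ref{lem:Fintwrtmu1iffwrtmu2UMDcase} with $\mu^1 = \mu^{M^q}$, $\mu^2 = \mu^{M^q}_{\rm Cox}$), and for a.e.\ fixed $\omega$ the directed Cox process $\mu^{M^q}_{\rm Cox}(\omega)$ is a Poisson random measure with deterministic intensity $\nu^{M^q}(\omega)$, hence $N^q(\omega)$ has independent increments with local characteristics $(0, \nu^{M^q}(\omega))$ by Subsection \ref{subsec:PoissRMprelim} and Remark \ref{rem:sticintwrtPoisrmonercanchosreeanysetws}. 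The main obstacle I anticipate is purely bookkeeping: making sure the reduction to $\mathbb E \sup_t\|M^q_t\|<\infty$ is legitimate without secretly invoking UMD (since the corollary only assumes the decoupling property), and checking that the integrability-of-$x$ hypothesis propagates correctly through the localizing stopping times so that the identity $M^q = \int x \ud\bar\mu^{M^q}$ and the estimate both survive the limit; this is routine but needs care, and is handled exactly as in the proofs of Theorem \ref{thm:upperbdsforPSWLCmaerviacox} and Corollary \ref{thm:XUMDPDQLCdectangmart}.
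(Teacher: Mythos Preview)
Your approach is the intended one---the paper states this corollary as a direct consequence of Theorem \ref{thm:upperbdsforPSWLCmaerviacox} with no further proof---and your checks (countable generation of $\mathcal B(X)$, that $\mu^{M^q}$ is $\widetilde{\mathcal P}$-$\sigma$-finite quasi-left continuous, the tangency and decoupled-tangent verification via Remark \ref{rem:intFdmuisamaringeiththesameloccharenlfi} and the computation in Lemma \ref{lem:Fintwrtmu1iffwrtmu2UMDcase}) are all correct. You also correctly flag the UMD issue in Theorem \ref{thm:XisUMDiffMisintxwrtbarmuMvain}; the resolution is as you suspect: once Theorem \ref{thm:upperbdsforPSWLCmaerviacox} delivers local integrability of $x$ with respect to $\bar\mu^{M^q}$, the identification $M^q = \int x\,\ud\bar\mu^{M^q}$ follows without UMD from the fact that both sides are purely discontinuous local martingales starting at zero with identical jumps.

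There is, however, one genuine slip. Your displayed chain reads
\[
\mathbb E \sup_{t\geq 0} \Bigl\|\int_{[0,t]\times X} x \ud \bar\mu^{M^q}\Bigr\|^p \lesssim_{p, X} \mathbb E \|N^q_{\infty}\|^p \geq \mathbb E \|N^q_T\|^p,
\]
and you conclude \eqref{eq:upperbounddecforqlcontpdmar} from this. But the last inequality points the wrong way: knowing $\mathbb E\|N^q_\infty\|^p \geq \mathbb E\|N^q_T\|^p$ does not let you replace the upper bound $\mathbb E\|N^q_\infty\|^p$ by the smaller quantity $\mathbb E\|N^q_T\|^p$. (And this is not Doob's inequality in any case.) The fix is immediate: apply Theorem \ref{thm:upperbdsforPSWLCmaerviacox} with $F(t,\omega,x) = x\,\mathbf 1_{[0,T]}(t)$, or equivalently stop $M^q$ at the deterministic time $T$ (Theorem \ref{thm:MNtangent==>MtauNtautangent+ifNCIINtauCII}). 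Then the right-hand side of \eqref{eq:qlcranmareanddecptreopgiceLpestforstofashint} is $\mathbb E\bigl\|\int_{[0,T]\times X} x\,\ud\bar\mu^{M^q}_{\rm Cox}\bigr\|^p = \mathbb E\|N^q_T\|^p$ directly, and the left-hand side dominates $\mathbb E\sup_{0\leq t\leq T}\|M^q_t\|^p$.
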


Now let us move to the accessible jump case.

\begin{theorem}\label{thm:deppropforPDwiAJupperbds}
Let $X$ be a Banach space with the decoupling property, $M^a : \mathbb R_+ \times \Omega \to X$ be a purely discontinuous local martingale with accessible jumps. Assume that it has a decoupled tangent local martingale $N^a$. Then for any $1\leq p<\infty$
\begin{equation}\label{eq:upperbounddecfocontpdmaraccjudsa}
 \mathbb E \sup_{0\leq t\leq T} \|M^a_t\|^p \lesssim_{p, X} \mathbb E  \|N^a_T\|^p,\;\;\; T\geq 0. 
\end{equation}
\end{theorem}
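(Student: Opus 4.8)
The plan is to reduce the statement to the discrete decoupling property \eqref{eq:defofdecpropqdsao} via a discretization argument analogous to the one used in the proof of Proposition \ref{prop:pdwajjusttang}. First I would invoke Remark \ref{rem:locmartislocallyL1DR+X} together with a stopping time argument (Theorem \ref{thm:MNtangent==>MtauNtautangent+ifNCIINtauCII}) to assume $\mathbb E \sup_{t\geq 0} \|M^a_t\|^p < \infty$; combined with \eqref{eq:upperbounddecfocontpdmaraccjudsa}'s right-hand side it is harmless to also suppose $\mathbb E \|N^a_T\|^p<\infty$ for the relevant $T$. By Lemma \ref{lem:PDmAJhasjumpsatprsttimes} there is a sequence $(\tau_n)_{n\geq 1}$ of finite predictable stopping times with disjoint graphs exhausting the jumps of $M^a$; since a decoupled tangent local martingale preserves local characteristics and hence (by Proposition \ref{prop:GrigcharforPDmAJ} and Lemma \ref{lem:onejumpformu-->onejumpfornu}) $\{\tau_1,\ldots\}\times X$ carries full $\nu^{N^a}$-measure, the jumps of $N^a$ occur at the same $(\tau_n)_{n\geq 1}$.

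Next I would truncate: for $m\geq 1$ set $M^{a,m}$ and $N^{a,m}$ to be the martingales collecting only the jumps at $\tau_1,\ldots,\tau_m$ (as in \eqref{eq:defofMmforaccjase}), so that by Proposition \ref{prop:MmapproxMinLpforacccase} (or \eqref{eq:approxfactforMmforPDwAJcase}) both $M^{a,m}\to M^a$ and $N^{a,m}\to N^a$ in $L^p(\Omega;\mathcal D(\mathbb R_+,X))$. For fixed $m$, passing to an increasing rearrangement $\tau_1',\ldots,\tau_m'$ of the $\tau_j$'s and using the filtration $(\mathcal F_{\tau_1'-},\mathcal F_{\tau_1'},\ldots,\mathcal F_{\tau_m'-},\mathcal F_{\tau_m'})$, the jump sequences $(d_n)=(\Delta M^a_{\tau_{n/2}'}\mathbf 1_{n\text{ even}})$ and $(e_n)=(\Delta N^a_{\tau_{n/2}'}\mathbf 1_{n\text{ even}})$ are martingale difference sequences by Lemma \ref{lem:DeltaMtaugiventau-=0}. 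The key point is that $(e_n)$ is a \emph{decoupled} tangent sequence to $(d_n)$: tangency of the conditional laws $\mathbb P(\Delta M^a_{\tau_n'}|\mathcal F_{\tau_n'-})=\mathbb P(\Delta N^a_{\tau_n'}|\mathcal F_{\tau_n'-})$ comes from Lemma \ref{lem:predsttimetang} applied to the tangent pair $M^a, N^a$, and conditional independence of the $e_n$'s follows because $N^a(\omega)$ has independent increments for a.e.\ $\omega$ (Definition \ref{def:dectanglocmartcontimecased}) — this is exactly the situation of Remark \ref{rem:defofdectanglmartidiscreteandfac,q} read backwards, so one applies Corollary \ref{cor:condindgivenRVsuffcond}. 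Then $\sup_{t\geq 0}\|M^{a,m}_t\|=\sup_{1\leq k\leq 2m}\|\sum_{n=1}^k d_n\|$ and likewise for $N^{a,m}$ with $(e_n)$, so \eqref{eq:defofdecpropqdsao} gives
\[
\mathbb E \sup_{t\geq 0}\|M^{a,m}_t\|^p \lesssim_{p,X} \mathbb E \|N^{a,m}_\infty\|^p.
\]
Finally I would let $m\to\infty$ using the $L^p(\Omega;\mathcal D(\mathbb R_+,X))$-convergence above, note that $\mathbb E\|N^{a,m}_\infty\|^p\to\mathbb E\|N^a_\infty\|^p$ and that for $N^a$, having a.e.\ $\omega$-wise independent increments, $\mathbb E\|N^a_T\|^p\eqsim_p\mathbb E\sup_{t\leq T}\|N^a_t\|^p\geq \mathbb E\|N^a_\infty\|^p$ up to restricting $T$ — actually cleaner is to run the whole argument on $[0,T]$, i.e.\ replace $M^a$ by $M^a$ stopped so that only finitely many jumps before $T$ matter in the limit, and then the right-hand side is $\mathbb E\|N^a_T\|^p$ directly, giving \eqref{eq:upperbounddecfocontpdmaraccjudsa}.

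The main obstacle I expect is the bookkeeping around \emph{conditional independence} of the discrete decoupled sequence $(e_n)$ on the enlarged filtration: one must verify that the enlargement on which $N^a$ lives has the product structure of Definition \ref{def:enlagoffiltprobspace} compatibly with the stopping times $(\tau_n)$, so that $\omega$-wise independent increments of $N^a$ translate into conditional independence given $\mathcal F_\infty$ of the $\Delta N^a_{\tau_n}$'s — this is where Proposition \ref{prop:Omega'Omega';arecondindepifondforanyomega'as} and Corollary \ref{cor:condindgivenRVsuffcond} do the real work, and it requires knowing that the $\sigma$-algebra $\mathcal B(X)$ is countably generated (true after reducing to separable $X$ by Pettis measurability) so that the relevant conditional distributions exist. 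A secondary nuisance is ensuring the rearrangement into an honest discrete filtration is legitimate, which is handled by \cite[Lemma 25.2]{Kal} exactly as in the proof of Proposition \ref{prop:pdwajjusttang}; everything else is routine approximation.
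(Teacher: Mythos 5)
Your discretization-and-limit strategy is the right one, and the tangency/conditional-independence bookkeeping at the discrete level is handled correctly (Lemma~\ref{lem:predsttimetang} for tangency, $\omega$-wise independence of increments of $N^a$ plus Corollary~\ref{cor:condindgivenRVsuffcond} for decoupling). The gap is in how you pass to the limit.

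You invoke Proposition~\ref{prop:MmapproxMinLpforacccase} to conclude that both $M^{a,m}\to M^a$ and $N^{a,m}\to N^a$ in $L^p(\Omega;\mathcal D(\mathbb R_+,X))$. But Proposition~\ref{prop:MmapproxMinLpforacccase} is stated only for UMD Banach spaces — its proof goes through the covariation bilinear form $[\![\cdot]\!]$ and Burkholder--Davis--Gundy inequalities from \cite{Y18BDG}, none of which are available here. The whole point of Theorem~\ref{thm:deppropforPDwiAJupperbds} is that $X$ is assumed to have only the decoupling property (e.g.\ $X=L^1$), which is strictly weaker and does not imply UMD, so this application is not legitimate. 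The same caveat applies to your opening stopping-time reduction to $\mathbb E\sup_{t\geq 0}\|M^a_t\|^p<\infty$: that $L^p$-finiteness is precisely what you are trying to prove, so you cannot assume it up front.

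The paper avoids both issues by running the limit argument entirely on the $N^a$-side, where no geometric assumption on $X$ is needed. Starting from $\mathbb E\|N^a_T\|^p<\infty$ (if infinite, nothing to prove), one notes that for a.e.\ fixed $\omega$ the process $N^a(\omega)$ is a martingale with independent increments; by construction $N^{a,m}(\omega)=\mathbb E\bigl(N^a(\omega)\,\big|\,\sigma(N^{a,m}(\omega))\bigr)$, so $\mathbb E\|N^{a,m}_T-N^a_T\|^p\to 0$ is just $\omega$-wise $L^p$-martingale convergence (\cite[Theorem~3.3.2]{HNVW1}). One then applies the discrete decoupling inequality not only to each truncation but also to the \emph{differences} $M^{a,\ell}-M^{a,m}$ and $N^{a,\ell}-N^{a,m}$, obtaining that $(M^{a,m})_m$ is Cauchy in $L^p(\Omega;\mathcal D(\mathbb R_+,X))$ because $(N^{a,m}_T)_m$ is; this gives the convergence on the $M^a$-side for free, with no appeal to UMD. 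If you replace the Proposition~\ref{prop:MmapproxMinLpforacccase} step with this Cauchy-by-decoupling-plus-martingale-convergence argument, the rest of your proof goes through.
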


\begin{proof}
 Fix $T>0$. Without loss of generality assume that $\mathbb E \|N^a_T\|^p<\infty$. Let $(\tau_n)_{n\geq 1}$ be finite predictable stopping times with disjoint graphs which exhaust jumps of $M^a$, $M^{a, m}$ and $N^{a, m}$ be defined analogously to \eqref{eq:defofMmforaccjase}. First notice that thanks to the proof of Proposition \ref{prop:pdwajjusttang} and \eqref{eq:defofdecpropqdsao} we have that for any $m\geq 1$
 \begin{equation*}
   \mathbb E \sup_{0\leq t\leq T} \|M^{a, m}_t\|^p \lesssim_{p, X} \mathbb E  \|N^{a, m}_T\|^p. 
 \end{equation*}
For the same reason we have that for any $\ell >m\geq 1$
 \begin{equation*}
   \mathbb E \sup_{0\leq t\leq T} \|M^{a, \ell}_t - M^{a, \ell}_t\|^p \lesssim_{p, X} \mathbb E  \|N^{a, \ell}_T - N^{a, m}_T\|^p. 
 \end{equation*}
Therefore in order to show \eqref{eq:upperbounddecfocontpdmaraccjudsa} it is sufficient to show that $\mathbb E  \|N^{a, m}_T - N^{a}_T\|^p \to 0$ as $m\to \infty$. This follows directly from the fact that $N^a(\omega)$ has independent increments, hence $N^{a, m}(\omega) = \mathbb E (N^{a}(\omega) | \sigma(N^{a, m}(\omega)))$ due to the construction of $N^{a, m}$, and so the desired holds true by \cite[Theorem 3.3.2]{HNVW1}.
\end{proof}

The following theorem sums up Theorem \ref{thm:decpropfordonmartupperbasv}, Corollary \ref{cor:decoaspropforQLCPDmart}, and Theorem \ref{thm:deppropforPDwiAJupperbds}.

\begin{theorem}\label{thm:deopropertandLpboubdsfordecoupltangma}
Let $X$ be a Banach space with the decoupling property, $M: \mathbb R_+ \times \Omega \to X$ be a local martingale. Assume additionally that $M$ has the canonical decomposition $M = M^c + M^q + M^a$. Assume that $M^c$, $M^q$, and $M^a$ satisfy the conditions of  Theorem \ref{thm:decpropfordonmartupperbasv}, Corollary \ref{cor:decoaspropforQLCPDmart}, and Theorem \ref{thm:deppropforPDwiAJupperbds} respectively. Then $M$ has a decoupled tangent local martingale $N$, and for any $1\leq p<\infty$ one has that
\[
  \mathbb E \sup_{0\leq t\leq T} \|M_t\|^p \lesssim_{p, X} \mathbb E  \|N_T\|^p,\;\;\; T\geq 0. 
\]
\end{theorem}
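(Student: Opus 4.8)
The plan is to decompose $M$ via its canonical decomposition $M = M^c + M^q + M^a$ (which exists by hypothesis) and to handle each of the three pieces using the three results being summed up, then to reassemble. First I would invoke Theorem \ref{thm:decpropfordonmartupperbasv} on $M^c$ to obtain a decoupled tangent local martingale $N^c$ of the stated form $N^c = (\Phi \cdot \overline W_H) \circ A^c$ living on some enlargement $(\overline \Omega^c, \overline{\mathcal F}^c, \overline{\mathbb P}^c)$; then Corollary \ref{cor:decoaspropforQLCPDmart} on $M^q$ to obtain $N^q_t = \int_{[0,t]\times X} x \ud \bar\mu^{M^q}_{\rm Cox}$ on an enlargement built from the Cox process directed by $\nu^{M^q}$; and then Theorem \ref{thm:deppropforPDwiAJupperbds} applied to $M^a$, using its assumed decoupled tangent local martingale $N^a$. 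Each of these comes with its own upper bound: $\mathbb E \sup_{0\leq t\leq T}\|M^c_t\|^p \lesssim_{p,X} \mathbb E\|N^c_T\|^p$, and similarly for $q$ and $a$.

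Next I would glue the three enlargements together into a single enlarged probability space $(\overline\Omega, \overline{\mathcal F}, \overline{\mathbb P})$ over which all three constructions are defined \emph{independently given $\Omega$}; concretely, take $\overline\Omega = \Omega \times \widehat\Omega^c \times \widehat\Omega^q \times \widehat\Omega^a$ with the product of the corresponding conditional measures $(\widehat{\mathbb P}^c_\omega \otimes \widehat{\mathbb P}^q_\omega \otimes \widehat{\mathbb P}^a_\omega)$, exactly as in the proof of Theorem \ref{thm:mainforDTMgencasenananana} in Subsection \ref{subsec:CIprocess}. Set $N := N^c + N^q + N^a$. Then for a.e.\ fixed $\omega \in \Omega$ the processes $N^c(\omega)$, $N^q(\omega)$, $N^a(\omega)$ are independent, each has independent increments with local characteristics $([\![M^c]\!](\omega),0)$, $(0,\nu^{M^q}(\omega))$, $(0,\nu^{M^a}(\omega))$ respectively, so $N(\omega)$ has independent increments with local characteristics $([\![M^c]\!](\omega), \nu^M(\omega))$ by Proposition \ref{prop:Grigcharforcandechowdoesitlooklike}. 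That $M$ remains a local $\overline{\mathbb F}$-martingale with unchanged local characteristics, and that $N$ is a local $\overline{\mathbb F}$-martingale, follows verbatim from the filtration-enlargement computations \eqref{eq:FFmeasiconsexpwrtwodiasFtthesacaom} and \eqref{eq:whywehavemarqoidq1oimeiqmw}--\eqref{eq:whywehavemarqoidq2oimeiqmw} in Subsection \ref{subsec:CIprocess}, combined with Remark \ref{rem:intFdmuisamaringeiththesameloccharenlfi} and Remark \ref{rem:Mmightchangelocalcharacp}; thus $N$ is genuinely a decoupled tangent local martingale of $M$ in the sense of Definition \ref{def:dectanglocmartcontimecased}.

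For the inequality, fix $T \geq 0$ and $1 \leq p < \infty$. Since $X$ has the decoupling property it is in particular (by Burkholder's theorem for martingale transforms, or directly) such that the canonical decomposition is bounded in the relevant sense — but here I would argue more elementarily: by the triangle inequality,
\[
\mathbb E \sup_{0\leq t\leq T}\|M_t\|^p \lesssim_p \mathbb E \sup_{0\leq t\leq T}\|M^c_t\|^p + \mathbb E \sup_{0\leq t\leq T}\|M^q_t\|^p + \mathbb E \sup_{0\leq t\leq T}\|M^a_t\|^p,
\]
then apply \eqref{eq:upperbounddecforcontmar}, \eqref{eq:upperbounddecforqlcontpdmar}, and \eqref{eq:upperbounddecfocontpdmaraccjudsa} to bound the right-hand side by $\mathbb E\|N^c_T\|^p + \mathbb E\|N^q_T\|^p + \mathbb E\|N^a_T\|^p$ up to $p,X$-constants. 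It then remains to bound this sum by $\mathbb E\|N_T\|^p$. This is the only genuinely delicate point: the three summands $N^c_T, N^q_T, N^a_T$ are, conditionally on $\mathcal F$, \emph{independent} mean-zero random variables, so for a.e.\ $\omega$ we have $N^c_T(\omega) = \mathbb E(N_T(\omega) \mid \sigma(N^c_T(\omega)))$ and likewise for the other two (each is the conditional expectation of the sum given the sub-$\sigma$-algebra it generates), whence $\mathbb E_{\widehat\Omega}\|N^i_T(\omega)\|^p \leq \mathbb E_{\widehat\Omega}\|N_T(\omega)\|^p$ for $i \in \{c,q,a\}$ by contractivity of conditional expectation on $L^p$ (\cite[Section 2.6]{HNVW1}); integrating over $\Omega$ gives $\mathbb E\|N^i_T\|^p \leq \mathbb E\|N_T\|^p$, and summing the three proves $\mathbb E\sup_{0\leq t\leq T}\|M_t\|^p \lesssim_{p,X} \mathbb E\|N_T\|^p$. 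The main obstacle is precisely organizing the joint enlargement so that the conditional-independence-given-$\mathcal F$ structure of $N^c, N^q, N^a$ holds simultaneously and so that $M$ keeps its local characteristics in the combined filtration; once that bookkeeping is in place (and it is identical to Subsection \ref{subsec:CIprocess}), the estimates are immediate.
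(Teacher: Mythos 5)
Your proposal is correct and follows essentially the same route as the paper: existence of $N$ comes from Theorem~\ref{thm:decpropfordonmartupperbasv}, Corollary~\ref{cor:decoaspropforQLCPDmart}, and Theorem~\ref{thm:deppropforPDwiAJupperbds} (the paper refers the gluing back to Subsection~\ref{subsec:CIprocess} rather than spelling it out), the triangle inequality bounds $\mathbb{E}\sup_t\|M_t\|^p$ by the sum of the three pieces, and the conditional-independence-plus-Jensen argument shows $\mathbb{E}\|N^i_T\|^p \le \mathbb{E}\|N_T\|^p$ for each $i$. You prove only the one direction of the moment comparison that is actually needed, whereas the paper states the two-sided equivalence $\mathbb{E}\|N_T\|^p \eqsim_p \sum_i \mathbb{E}\|N^i_T\|^p$; this is a harmless difference.
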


\begin{proof}
 Existence of a decoupled tangent local martingale $N$ follows directly from Theorem \ref{thm:decpropfordonmartupperbasv}, Corollary \ref{cor:decoaspropforQLCPDmart}, and Theorem \ref{thm:deppropforPDwiAJupperbds}. Let $N = N^c + N^q + N^a$ be the canonical decomposition of $N$. Then by \eqref{eq:upperbounddecforcontmar}, \eqref{eq:upperbounddecforqlcontpdmar}, and \eqref{eq:upperbounddecfocontpdmaraccjudsa}, and by a triangle inequality we have that
 \[
    \mathbb E \sup_{0\leq t\leq T} \|M_t\|^p \lesssim_{p, X}  \mathbb E  \|N^c_T\|^p + \|N^q_T\|^p + \|N^a_T\|^p,\;\;\; T\geq 0.
 \]
 It remains to show that
 \[
 \mathbb E  \|N_T\|^p \eqsim_p \mathbb E  \|N^c_T\|^p + \|N^q_T\|^p + \|N^a_T\|^p,\;\;\; T\geq 0,
 \]
which follows from the fact that $N^c_T(\omega)$, $N^q_T(\omega)$, and $N^a_T(\omega)$ are independent mean-zero for a.e.\ $\omega \in \Omega$ due to Definition \ref{def:dectanglocmartcontimecased} and Theorem \ref{thm:genformofmartwithindince}.
\end{proof}

\section{Convex functions with moderate growth}\label{sec:Cfwithmodg}

A function $\phi:\mathbb R_+ \to \mathbb R_+$ is called to have a {\em moderate growth} if there exists $\alpha>0$ such that $\phi(2x) \leq \alpha \phi(x)$ for any $x\geq 0$. The goal of the present section is to show the following result about tangent martingales and convex functions with moderate growth which extends Theorem \ref{thm:intromccnnll} to more general functions and to continuous-time martingales and also extends \cite[Theorem 4.2]{Kal17} to infinite dimensions.

\begin{theorem}\label{thm:XUMDiffphimaxtangent}
Let $X$ be a Banach space, $\phi:\mathbb R_+ \to \mathbb R_+$ be a convex function of moderate growth such that $\phi(0) =0$. Then $X$ is UMD if and only if we have that for any tangent local martingales $M, N:\mathbb R_+ \times \Omega \to X$
\[
\mathbb E \phi(M^*) \eqsim_{\phi, X}\mathbb E \phi(N^*),
\]
where $M^* := \sup_{t\geq 0} \|M_t\|$ and $N^* := \sup_{t\geq 0} \|N_t\|$.
\end{theorem}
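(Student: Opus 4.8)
The plan is to follow the same strategy that was used for Theorem~\ref{thm:tangentgencaseUMDuhoditrotasoldat}, namely to reduce the $\phi$-estimate for tangent local martingales to the three pieces of the canonical decomposition and then handle each piece separately. First I would argue that the ``only if'' direction is essentially already contained in Theorem~\ref{thm:intromccnnll} and its $\phi$-version referred to in the text (Theorem~\ref{thm:Ephifordiscretetantdandee}): taking two tangent discrete martingale difference sequences, embedding them as continuous-time purely discontinuous martingales with jumps at natural times (as in Remark~\ref{rem:disctangagreesconttime}), and noting that the $\phi$-comparison for these continuous-time tangent martingales specializes to the discrete $\phi$-UMD inequality, which characterizes UMD. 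So the substance is the ``if'' direction.

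For the ``if'' direction, assume $X$ is UMD. By a stopping time argument (Theorem~\ref{thm:MNtangent==>MtauNtautangent+ifNCIINtauCII}) together with Remark~\ref{rem:locmartislocallyL1DR+X} I may assume $M$ and $N$ have finite strong $L^1$-moment. Since $\phi$ has moderate growth and is convex with $\phi(0)=0$, one has $\phi(a+b+c)\lesssim_\phi \phi(a)+\phi(b)+\phi(c)$ and $\phi(a)+\phi(b)+\phi(c)\lesssim_\phi\phi(a+b+c)$ for $a,b,c\ge 0$ (the latter because $\phi$ is nondecreasing). Combining this with the $\phi$-version of the strong estimates for the canonical decomposition, i.e.\ the fact from \cite{Y18BDG} that $\mathbb E\phi(M^*)\eqsim_{\phi,X}\mathbb E\phi((M^c)^*)+\mathbb E\phi((M^q)^*)+\mathbb E\phi((M^a)^*)$ (the $\phi$-analogue of \eqref{eq:candecstrongLpestmiaed}), and the same for $N$, and using Theorem~\ref{thm:candecfortangaretang} which says that $M^c$ and $N^c$, $M^q$ and $N^q$, $M^a$ and $N^a$ are tangent, the problem reduces to proving $\mathbb E\phi((M^i)^*)\eqsim_{\phi,X}\mathbb E\phi((N^i)^*)$ separately for $i\in\{c,q,a\}$.

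Then I would treat the three cases. The continuous case follows from the $\phi$-version of weak differential subordination / martingale domination (Proposition~\ref{prop:domcontcase} has a $\phi$-analogue in \cite{Y17MartDec,OY18,Y18BDG}): tangency of continuous martingales means $[\![M^c]\!]=[\![N^c]\!]$, so each dominates the other and the $\phi$-estimate is symmetric. The purely discontinuous accessible-jumps case is handled by discretization exactly as in the proof of Proposition~\ref{prop:pdwajjusttang}: pass to predictable stopping times exhausting the jumps, use Proposition~\ref{prop:MmapproxMinLpforacccase}-type approximation (which works in $\phi$-norm by moderate growth and a monotone/dominated convergence argument), form the associated tangent discrete martingale difference sequences via Lemma~\ref{lem:predsttimetang}, and invoke the discrete $\phi$-UMD inequality. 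The purely discontinuous quasi-left continuous case is the one I expect to be the main obstacle: following Subsection~\ref{subsec:dectanPDQLC} one writes $M^q=\int x\,\mathrm d\bar\mu^{M^q}$ and $N^q=\int x\,\mathrm d\bar\mu^{N^q}$ with $\nu^{M^q}=\nu^{N^q}$, and one needs the $\phi$-analogue of Theorem~\ref{thm:mumuCoxcomparable} comparing $\mathbb E\phi\bigl(\sup_t\|\int_{[0,t]\times J}F\,\mathrm d\bar\mu\|\bigr)$ with $\mathbb E\phi\bigl(\|\int F\,\mathrm d\bar\mu_{\mathrm{Cox}}\|\bigr)$. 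This in turn requires a $\phi$-version of Proposition~\ref{prop:XUMDiffdecouplforPois} for stochastic integrals against Poisson random measures; the argument there already uses good-$\lambda$ inequalities (Lemma~\ref{lem:goodlgivesphi}) to pass from $p=1$ to general $p>1$, and the same good-$\lambda$ / distributional argument upgrades directly to any convex $\phi$ of moderate growth, while the approximation steps (Lemmas~\ref{lem:Lipshfuncapprox} and Novikov-type bounds, combined via moderate growth) carry over with only cosmetic changes. Assembling these three cases and running the reduction backwards gives $\mathbb E\phi(M^*)\eqsim_{\phi,X}\mathbb E\phi(N^*)$, completing the ``if'' direction. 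The bookkeeping to make sure every approximation is valid in $\phi$-norm, and that the constants depend only on $\phi$ and $X$, is routine but is where most of the writing will go.
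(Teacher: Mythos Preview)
Your overall architecture is correct and matches the paper: reduce via the canonical decomposition (the paper states this as Proposition~\ref{prop:candecandphiarecorrespondest}), then handle the three cases. The continuous case and the accessible-jumps case are done exactly as you describe, and exactly as the paper does (Theorems~\ref{thm:MNconttantgethenphiinrq} and~\ref{thm:Ephifordiscretetantdandee} together with Proposition~\ref{prop:MmapproxMinLpforacccase}).

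The purely discontinuous quasi-left continuous case is where you and the paper genuinely diverge. The paper does \emph{not} push $\phi$ through the proof of Theorem~\ref{thm:mumuCoxcomparable}. Instead it proves a good-$\lambda$ inequality directly for tangent PDQLC martingales under a \emph{conditional symmetry} assumption (Proposition~\ref{prop:goodlambdapdQlc}), using the already-established $L^p$ tangent estimate as a black box; this needs Lemma~\ref{lem:MNtangentthenthejumpsaresmall} to control $\Delta N^*$ by $\Delta M^*$, whose proof in turn relies on the Jacod--Kwapie\'n--Woyczy\'nski approximation of Section~\ref{sec:appJKW}. The general (non-symmetric) case is then obtained by a symmetrization trick with an independent decoupled copy for one inequality, and by the discrete approximation Theorem~\ref{thm:condtrstepsnsidtrtothepledone} plus Fatou for the other. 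So the paper's route is: $L^p$ result $\Rightarrow$ good-$\lambda$ (symmetric case) $\Rightarrow$ $\phi$-estimate (symmetric) $\Rightarrow$ general via symmetrization and JKW.

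Your proposed route, by contrast, is to upgrade the Cox/Poisson comparison itself to $\phi$. This can be made to work, essentially because for \emph{elementary} $F$ the jump size is predictably bounded by $\|F\|_\infty$, so the stopping time $\rho$ in the good-$\lambda$ argument can be taken predictable (as in the $p=1$ paragraph of Proposition~\ref{prop:XUMDiffdecouplforPois} and Remark~\ref{rem:predboundjumpsforgoodL}); this sidesteps conditional symmetry and hence the whole JKW machinery. Two caveats: first, the good-$\lambda$ you cite in Proposition~\ref{prop:XUMDiffdecouplforPois} sits in the ``if'' direction (estimate $\Rightarrow$ UMD), not in the direction you need, so you must redo it with $\widetilde N$ versus $\widetilde N_{\rm ind}$ and feed in the $L^p$ estimate you already have; second, the approximation errors in the proof of Theorem~\ref{thm:mumuCoxcomparable} are controlled via Novikov in $L^p$, and to get them small in $\phi$-moment you need to invoke Novikov for the specific exponent $q=\log_2\alpha$ coming from the moderate-growth constant (so that $\phi(x)\lesssim_\phi x+x^q$), rather than ``only cosmetic changes''. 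With those adjustments your route is valid and arguably lighter than the paper's, since it avoids Section~\ref{sec:appJKW} entirely.
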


In order to prove the theorem we will need two components: the canonical decomposition and {\em good-$\lambda$ inequalities} for each part of the canonical decomposition. Namely we will use the following lemma proven by Burkholder in \cite[Lemma 7.1]{Bur73} (see also \cite[pp.\ 88--90]{Burk86}, \cite[pp.\ 1000--1001]{Burk81}, and  \cite[Section 4]{OY18} for various forms of general good-$\lambda$ inequalities).

\begin{lemma}\label{lem:goodlgivesphi}
Let $f, g:\Omega\to \mathbb R_+$ be measurable such that for some $\beta>1$, $\delta >0$, and $\eps>0$
\[
\mathbb P(g>\beta\lambda, f\leq \delta \lambda) \leq \eps \mathbb P(g>\lambda),\;\;\; \lambda >0.
\]
Let $\phi:\mathbb R_+ \to \mathbb R_+$ be a convex function of moderate growth with $\phi(0) = 0$. Let $\gamma<\eps^{-1}$ and $\eta$ be such that 
\[
\phi(\beta \lambda) \leq \gamma \phi(\lambda),\;\;\;\;\phi(\delta^{-1} \lambda) \leq \eta \phi(\lambda),\;\;\;\lambda>0.
\]
Then
\[
\mathbb E \phi(g) \leq \frac{\gamma \eta}{1-\gamma\eps} \mathbb E \phi(f).
\]
\end{lemma}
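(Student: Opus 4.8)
\textbf{Plan for the proof of Lemma \ref{lem:goodlgivesphi}.} The plan is to prove this purely real-variable good-$\lambda$ inequality by a direct computation with distribution functions, following Burkholder's original argument in \cite[Lemma 7.1]{Bur73}. The starting point is the standard layer-cake identity $\mathbb E \phi(g) = \int_0^\infty \phi'(\lambda)\,\mathbb P(g>\lambda)\ud \lambda$ for a convex $\phi$ with $\phi(0)=0$ (with $\phi'$ interpreted as a right derivative, which exists and is nondecreasing by convexity); equivalently, one may work directly with $\mathbb E\phi(g) = \int_0^\infty \mathbb P(g>\lambda)\ud\phi(\lambda)$ as a Lebesgue--Stieltjes integral, which avoids any differentiability fuss.

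First I would reduce to the case $\mathbb E\phi(f)<\infty$, since otherwise there is nothing to prove. The key step is then the following chain: substituting $\lambda \mapsto \beta\lambda$ and splitting the event $\{g>\beta\lambda\}$ according to whether $f\leq \delta\lambda$ or $f>\delta\lambda$,
\begin{align*}
\mathbb E\phi(g) &= \int_0^\infty \mathbb P(g>\beta\lambda)\ud\phi(\beta\lambda)\\
&\leq \int_0^\infty \bigl(\mathbb P(g>\beta\lambda, f\leq\delta\lambda) + \mathbb P(f>\delta\lambda)\bigr)\ud\phi(\beta\lambda)\\
&\leq \eps\int_0^\infty \mathbb P(g>\lambda)\ud\phi(\beta\lambda) + \int_0^\infty \mathbb P(f>\delta\lambda)\ud\phi(\beta\lambda),
\end{align*}
where in the last line the good-$\lambda$ hypothesis was used on the first term. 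Now I would invoke the moderate-growth estimates: $\phi(\beta\lambda)\leq \gamma\phi(\lambda)$ gives $\int_0^\infty \mathbb P(g>\lambda)\ud\phi(\beta\lambda) \leq \gamma\,\mathbb E\phi(g)$ (after checking that the Stieltjes measure $\ud\phi(\beta\lambda)$ is dominated by $\gamma\,\ud\phi(\lambda)$, which follows from the pointwise inequality on the primitives together with $\phi(0)=0$), and after the substitution $\lambda\mapsto\delta^{-1}\lambda$ combined with $\phi(\delta^{-1}\lambda)\leq\eta\phi(\lambda)$ and again $\phi(\beta\lambda)\leq\gamma\phi(\lambda)$ one gets $\int_0^\infty \mathbb P(f>\delta\lambda)\ud\phi(\beta\lambda)\leq \gamma\eta\,\mathbb E\phi(f)$. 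Combining these yields $\mathbb E\phi(g)\leq \gamma\eps\,\mathbb E\phi(g) + \gamma\eta\,\mathbb E\phi(f)$.

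The final step is to absorb the term $\gamma\eps\,\mathbb E\phi(g)$ into the left-hand side. Since $\gamma<\eps^{-1}$ we have $\gamma\eps<1$, so rearranging gives $\mathbb E\phi(g)\leq \frac{\gamma\eta}{1-\gamma\eps}\,\mathbb E\phi(f)$, which is exactly the claim. The one genuine subtlety — and the main obstacle — is the justification that $\mathbb E\phi(g)<\infty$, so that the absorption step is legitimate rather than a vacuous manipulation with $\infty$. The standard fix is a truncation argument: replace $g$ by $g\wedge K$ for $K>0$, observe that the good-$\lambda$ hypothesis with $f$ and the convexity of $\phi$ pass to the truncation (or rather, run the whole chain with $g\wedge K$ in place of $g$, noting $\mathbb E\phi(g\wedge K)\leq\phi(K)<\infty$ a priori), derive $\mathbb E\phi(g\wedge K)\leq\frac{\gamma\eta}{1-\gamma\eps}\mathbb E\phi(f)$ uniformly in $K$, and let $K\to\infty$ using monotone convergence. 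I would also remark at the outset that moderate growth guarantees $\phi$ is finite everywhere and that $\phi(c\lambda)\lesssim_{c,\phi}\phi(\lambda)$ for every fixed $c>0$, which is what makes all the rescaling legitimate; the explicit constants $\gamma$ and $\eta$ in the statement are simply names for the best such multipliers at the scalings $\beta$ and $\delta^{-1}$.
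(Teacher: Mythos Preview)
Your proposal is correct and follows precisely the classical Burkholder argument from \cite[Lemma~7.1]{Bur73} that the paper cites in lieu of a proof. One small terminological point: the parenthetical claim that the Stieltjes measure $\ud\phi(\beta\lambda)$ is dominated by $\gamma\,\ud\phi(\lambda)$ is stronger than what actually holds or is needed---what you get from $\phi(\beta\lambda)\leq\gamma\phi(\lambda)$ and $\phi(0)=0$ is only $\int_0^t\ud\phi(\beta\lambda)\leq\gamma\int_0^t\ud\phi(\lambda)$ for each $t>0$, but this suffices for integrating the nonincreasing function $\lambda\mapsto\mathbb P(g>\lambda)$ (e.g.\ via integration by parts), so the conclusion stands.
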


\subsection{Good-$\lambda$ inequalities}

Let us start with good-$\lambda$ inequalities for tangent continuous and purely discontinuous quasi-left continuous martingales. 
The following good-$\lambda$ inequalities for continuous tangent martingales follow from $L^p$ estimates \eqref{eq:domcontcase} analogously good-$\lambda$ inequalities presented in \cite[Section 8 and~9]{Bur73}.

\begin{proposition}\label{prop:goodlforconttanglocalmart}
Let $X$ be a UMD Banach space, $M, N:\mathbb R_+ \times \Omega \to X$ be tangent continuous local martingales.
Then we have that for any $1<p<\infty$, $\delta>0$, and $\beta>1$ 
\begin{equation*}\label{eq:goodlforconttangmaty}
\mathbb P(N^*>\beta\lambda, M^*\leq \delta \lambda) \lesssim_{p,X}\frac{ \delta^p}{(\beta-1)^p}\mathbb P(N^*>\lambda),\;\;\; \lambda >0,
\end{equation*}
where $M^* := \sup_{t\geq 0} \|M_t\|$ and $N^* := \sup_{t\geq 0} \|N_t\|$.
\end{proposition}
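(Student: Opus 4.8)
The plan is to derive the good-$\lambda$ inequality from the strong $L^p$-estimates for tangent continuous martingales via the standard stopping-time/localization device of Burkholder \cite{Bur73}. The key point is that \eqref{eq:domcontcase} in Proposition \ref{prop:domcontcase} gives, for any pair of continuous martingales with $[\![N']\!]_\infty \le [\![M']\!]_\infty$, the estimate $\mathbb E \sup_t \|N'_t\|^p \lesssim_{p,X} \mathbb E \sup_t \|M'_t\|^p$; combined with the fact that tangent continuous martingales automatically have $[\![M^c]\!] = [\![N^c]\!]$ (they have the same local characteristics, and the continuous-part covariation bilinear form is exactly one of them), this will let me compare suitably stopped pieces of $M$ and $N$.

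First I would fix $\lambda>0$, $\beta>1$, $\delta>0$, and introduce the stopping times
\[
\sigma := \inf\{t\ge 0: \|N_t\|>\lambda\},\quad \tau := \inf\{t\ge 0: \|M_t\|>\delta\lambda\},
\]
together with, if needed, an auxiliary stopping time controlling the covariation $[\![M]\!]$ (since $M$ is continuous there are no jumps to worry about, so $\Delta$-terms vanish and the stopped processes stay bounded by the natural constants). Then I would consider the stopped martingales $\widehat M := M^\tau - M^{\tau\wedge\sigma}$ and $\widehat N := N^\tau - N^{\tau\wedge\sigma}$ (i.e.\ the increments of $M$ and $N$ on the stochastic interval $(\tau\wedge\sigma,\tau]$). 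On the event $\{N^*>\beta\lambda,\ M^*\le\delta\lambda\}$ one has $\sigma<\infty$, $\tau=\infty$, and $\widehat N^* \ge (\beta-1)\lambda$, while $\widehat M^* \le 2\delta\lambda$ by the definition of $\tau$ and continuity. Since $M$ and $N$ are tangent, the same holds for the stopped/shifted pieces (this is where I invoke the stopping-time stability of tangency, Theorem \ref{thm:MNtangent==>MtauNtautangent+ifNCIINtauCII} referenced in the paper), so $\widehat M$ and $\widehat N$ are tangent continuous martingales and in particular $[\![\widehat N]\!]_\infty = [\![\widehat M]\!]_\infty$ a.s. Applying \eqref{eq:domcontcase} (with $p>1$) to the pair $(\widehat M,\widehat N)$, or more precisely the version giving $\mathbb E (\widehat N^*)^p \lesssim_{p,X} \mathbb E (\widehat M^*)^p$, yields
\[
((\beta-1)\lambda)^p\,\mathbb P(N^*>\beta\lambda,\ M^*\le\delta\lambda) \le \mathbb E (\widehat N^*)^p \lesssim_{p,X} \mathbb E (\widehat M^*)^p \le (2\delta\lambda)^p\,\mathbb P(\sigma<\infty),
\]
where in the last inequality I used $\widehat M^* \le 2\delta\lambda$ together with $\widehat M^* \ne 0$ only on $\{\tau\wedge\sigma<\infty\}\subset\{\sigma<\infty\}$ (as $\{\tau=\infty\}$ contains nothing contributing otherwise — one checks $\widehat M^* \le 2\delta\lambda\mathbf 1_{\sigma<\infty}$ pathwise since $M$ is continuous). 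Finally $\mathbb P(\sigma<\infty) = \mathbb P(N^*>\lambda)$, which gives
\[
\mathbb P(N^*>\beta\lambda,\ M^*\le\delta\lambda) \lesssim_{p,X} \frac{\delta^p}{(\beta-1)^p}\,\mathbb P(N^*>\lambda).
\]

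I expect the main obstacle to be bookkeeping rather than anything conceptual: making sure that (i) the shifted martingale $\widehat N$ genuinely satisfies $\widehat N^* \ge (\beta-1)\lambda$ on the relevant event, which requires the triangle inequality $\|N_{\tau\wedge\sigma}\|\le\lambda$ coming from continuity of $N$ (no overshoot at $\sigma$), and (ii) that the pair $(\widehat M,\widehat N)$ really is a pair of tangent continuous local martingales to which Proposition \ref{prop:domcontcase} applies — this is where I must carefully cite the stopping-time argument of Section \ref{sec:tangmartconttime} (Theorem \ref{thm:MNtangent==>MtauNtautangent+ifNCIINtauCII}) and the fact that $[\![M^c]\!]$, being the continuous part of $[\![M]\!]$, is unaffected by the stopping and is one of the matching local characteristics. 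A minor technical point is reducing to $L^p$-bounded martingales by a preliminary localization (Remark \ref{rem:locmartislocallyL1DR+X}), which is harmless. With these in hand the computation above is routine, exactly paralleling \cite[Sections 8--9]{Bur73}.
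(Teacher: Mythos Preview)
Your proposal is correct and is precisely the standard Burkholder stopping-time argument that the paper has in mind: the paper's proof consists of a single sentence pointing to the $L^p$-estimate \eqref{eq:domcontcase} and to \cite[Sections 8--9]{Bur73}, and you have written out exactly that argument (indeed, your write-up parallels the paper's detailed proof of the analogous Proposition \ref{prop:goodlambdapdQlc} for the quasi-left continuous case, with the jump stopping time $\rho$ omitted since $M,N$ are continuous). The bookkeeping points you flag---no overshoot at $\sigma$ by continuity, $[\![\widehat M]\!]=[\![\widehat N]\!]$ via Theorem \ref{thm:MNtangent==>MtauNtautangent+ifNCIINtauCII}, and $\widehat M^*\le 2\delta\lambda\mathbf 1_{\sigma<\infty}$---are exactly the ones that need checking and are all correct.
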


Let us now show good-$\lambda$ inequalities for stochastic integrals with respect to a random measure. First we will need a definition of a conditionally symmetric martingale.

\begin{definition}
 Let $X$ be a Banach space. $M:\mathbb R_+ \times \Omega \to X$ is called {\em conditionally symmetric} if $M$ has local characteristics and if $M$ and $-M$ are tangent.
\end{definition}

\begin{remark}
 Note that in the discrete case, i.e.\ when we have an $X$-valued discrete martingale difference sequence $(d_n)_{n\geq 1}$, the latter definition is equivalent to $\mathbb P(d_n|\mathcal F_{n-1})$ being symmetric a.s.\ for any $n\geq 1$. 
\end{remark}

Now let us state and prove the desired good-$\lambda$ inequalities.

\begin{proposition}\label{prop:goodlambdapdQlc}
Let $X$ be a UMD Banach space, $M$ and $N$ be $X$-valued tangent purely discontinuous quasi-left continuous conditionally symmetric local martingales. Then for any $\delta> 0$ and any $\beta>\delta + 1$ we have that
\begin{equation}\label{eq:goodlforstochintwwrtrm}
\mathbb P(N^*>\beta\lambda, \Delta M^* \vee\Delta N^* \vee M^* \leq \delta \lambda) \lesssim_{p,X}\frac{ \delta^p}{(\beta-\delta - 1)^p}\mathbb P(N^*>\lambda),\;\;\; \lambda >0,
\end{equation}
where $\Delta M^* := \sup_{t\geq 0} \|\Delta M_t\|$, $\Delta N^* := \sup_{t\geq 0} \|\Delta N_t\|$, $M^* := \sup_{t\geq 0} \|M_t\|$, and $N^* := \sup_{t\geq 0} \|N_t\|$.
\end{proposition}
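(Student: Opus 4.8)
The plan is to mimic the structure of the good-$\lambda$ argument already carried out for Poisson integrals in the proof of Proposition \ref{prop:XUMDiffdecouplforPois} (the case $p=1$ there), now using the full strength of Corollary \ref{cor:LstropboundforPDQLStangmartUMD} in place of the scalar Poisson estimate. Fix $\delta>0$, $\beta>\delta+1$, and $\lambda>0$. The key point is that $N$ is a decoupled-type tangent partner of $M$ only through local characteristics, so I cannot localize $N$ by stopping times of $M$ directly; instead I localize both processes simultaneously by three stopping times built from $N$, $M$, and the jump size. Concretely, set
\[
\sigma := \inf\{t\geq 0:\|N_t\|>\lambda\},\qquad
\tau := \inf\{t\geq 0:\|M_t\|>\delta\lambda\},\qquad
\rho := \inf\{t\geq 0:\|\Delta M_t\|\vee\|\Delta N_t\|>\delta\lambda\}.
\]
On the event $\{N^*>\beta\lambda,\ \Delta M^*\vee\Delta N^*\vee M^*\leq\delta\lambda\}$ one has $\tau=\rho=\infty$ and $\sigma<\infty$, and $N$ increases past $\beta\lambda$ after time $\sigma$ while $M$ stays below $\delta\lambda$; since $N$ is quasi-left continuous and the comparison stopping times involved are handled as in the Poisson case (the inaccessible/predictable-graph disjointness argument), the jump at $\sigma$ contributes at most $\delta\lambda$, so $\{N^*>\beta\lambda,\dots\}\subset\{\widehat N^*>(\beta-\delta-1)\lambda\}$ where $\widehat N$ is the ``excursion after $\sigma$'' piece of $N$ killed at $\tau\wedge\rho$.

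The second step is to estimate $\mathbb E\sup_t\|\widehat N_t\|^p$ by $\mathbb E\sup_t\|\widehat M_t\|^p$, where $\widehat M$ is the corresponding piece of $M$: here I use that restricting tangent martingales to the stochastic interval $(\tau\wedge\sigma\wedge\rho,\tau\wedge\rho]$ and inserting indicator processes preserves tangency (this is exactly the stopping-time stability of tangency guaranteed by Theorem \ref{thm:MNtangent==>MtauNtautangent+ifNCIINtauCII}, together with Remark \ref{rem:disctangagreesconttime}/Lemma \ref{lem:predsttimetang} type manipulations for predictable times), and that both pieces remain purely discontinuous quasi-left continuous. Then Corollary \ref{cor:LstropboundforPDQLStangmartUMD} gives $\mathbb E\sup_t\|\widehat N_t\|^p\eqsim_{p,X}\mathbb E\sup_t\|\widehat M_t\|^p$. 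By construction $\widehat M$ coincides with $M-M^{\tau\wedge\sigma\wedge\rho}$ on $[0,\tau\wedge\rho]$, hence $\|\widehat M\|\leq 2\delta\lambda$ a.s.\ (using $\|\Delta M_\rho\|\leq\delta\lambda$, and that $\rho$ is predictable so $\Delta M_\rho=\Delta N_\rho=0$ by quasi-left continuity when $\rho$ is the relevant time — this is the same subtlety as in Proposition \ref{prop:XUMDiffdecouplforPois}). Therefore
\[
\mathbb E\sup_{t\geq 0}\|\widehat N_t\|^p\lesssim_{p,X}\mathbb E\sup_{t\geq 0}\|\widehat M_t\|^p
=\mathbb E\bigl(\sup_t\|\widehat M_t\|^p\,\mathbf 1_{\tau\wedge\sigma\wedge\rho<\infty}\bigr)
\leq (2\delta\lambda)^p\,\mathbb P(\sigma<\infty)=(2\delta\lambda)^p\,\mathbb P(N^*>\lambda).
\]

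Combining the two steps: by Chebyshev,
\[
\mathbb P(N^*>\beta\lambda,\ \Delta M^*\vee\Delta N^*\vee M^*\leq\delta\lambda)
\leq\mathbb P\bigl(\widehat N^*>(\beta-\delta-1)\lambda\bigr)
\leq\frac{\mathbb E\sup_t\|\widehat N_t\|^p}{(\beta-\delta-1)^p\lambda^p}
\lesssim_{p,X}\frac{(2\delta)^p}{(\beta-\delta-1)^p}\,\mathbb P(N^*>\lambda),
\]
which is the claimed inequality (absorbing the harmless $2^p$ into the implicit constant). I expect the main obstacle to be the careful bookkeeping that makes the ``cut-out'' pieces $\widehat M,\widehat N$ genuinely tangent quasi-left continuous martingales: one must check that inserting the predictable indicator $\mathbf 1_{(\tau\wedge\sigma\wedge\rho,\tau\wedge\rho]}$ is legitimate on both sides and preserves equality of compensators (so that Corollary \ref{cor:LstropboundforPDQLStangmartUMD} applies), and that the predictability of $\rho$ together with quasi-left continuity really does kill the boundary jump — this is precisely where the conditional symmetry hypothesis and the quasi-left continuity are used, exactly as in the $p=1$ portion of the proof of Proposition \ref{prop:XUMDiffdecouplforPois}. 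Everything else is a routine repetition of that template.
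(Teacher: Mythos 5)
Your localization setup (the stopping times $\sigma,\tau,\rho$ and the reduction to a Chebyshev bound on $\widehat N^*$) matches the paper, but the proposal has a genuine gap at the crucial step of controlling $\widehat M$, and the gap comes from a misdiagnosis of the subtlety you acknowledge at the end.

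You assert that $\rho=\inf\{t:\|\Delta M_t\|\vee\|\Delta N_t\|>\delta\lambda\}$ is predictable and hence $\Delta M_\rho=\Delta N_\rho=0$ by quasi-left continuity, ``as in Proposition \ref{prop:XUMDiffdecouplforPois}.'' This transplants the wrong stopping time: in the Poisson case $\rho$ is the debut of a predictable set $\{\|F_t\|>\delta\lambda\}$ because the integrand $F$ is elementary predictable, so $\rho$ there really is predictable and avoids the (totally inaccessible) jump times of the Poisson measure. Here, by contrast, $\rho$ is the first time $M$ or $N$ itself makes a jump of size $>\delta\lambda$; for a quasi-left continuous martingale all jumps occur at totally inaccessible times, so $\rho$ is totally inaccessible, not predictable, and by its very definition $\|\Delta M_\rho\|\vee\|\Delta N_\rho\|>\delta\lambda$ on $\{\rho<\infty\}$. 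Consequently, if $\widehat M$ is just ``$M$ cut out on $(\tau\wedge\sigma\wedge\rho,\tau\wedge\rho]$,'' the terminal jump at $\rho$ is included and the bound $\widehat M^*\le 2\delta\lambda$ simply fails. Relatedly, you never explain why your cut-out pieces remain local martingales or why $\widehat M$ and $\widehat N$ are still tangent (i.e.\ share a compensator), which is needed to invoke Corollary \ref{cor:LstropboundforPDQLStangmartUMD}.

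The paper's fix, which is the missing idea here, is to define $\widehat M$ and $\widehat N$ with an explicit jump-size truncation:
\[
\widehat M_t:=\int_{[0,t]\times X}\mathbf 1_{\|x\|\leq\delta\lambda}\,x\,\mathbf 1_{(\tau\wedge\sigma\wedge\rho,\ \tau\wedge\rho]}(s)\ \ud\bar{\mu}^M(s,x),
\]
and similarly for $\widehat N$ with $\bar{\mu}^N$. To relate this truncated integral back to $M-M^{\tau\wedge\sigma\wedge\rho}$ on $[0,\rho)$ (and to see the truncation is harmless) one needs Lemma \ref{lem:smallerjumpsonlyforgoodl}, which shows that the ``big-jump'' integral $\int\mathbf 1_{\|x\|>\delta\lambda}\,x\,\mathbf 1_{[0,\rho]}\ud\bar{\mu}^M$ collapses to $\Delta M_\rho\mathbf 1_{t\geq\rho}$ because the compensator contribution vanishes. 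This is exactly where conditional symmetry enters: $\nu^M=\nu^N$ is symmetric, while $\mathbf 1_{\|x\|>\delta\lambda}x$ is odd in $x$, so the $\nu$-integral is zero. Without that lemma (or an equivalent use of conditional symmetry) your bound $\widehat M^*\le 2\delta\lambda$ is unjustified, and the appeal to $\rho$ being predictable is false. The remaining bookkeeping — tangency of $\widehat M$ and $\widehat N$ via Lemma \ref{lem:Fintwrtmu1iffwrtmu2UMDcase}, and the containment $\{N^*>\beta\lambda,\tau=\rho=\infty\}\subset\{\widehat N^*>(\beta-\delta-1)\lambda\}$ — is as you describe.
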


For the proof we will need the following elementary lemma.

\begin{lemma}\label{lem:smallerjumpsonlyforgoodl}
Let  $X$, $M$, and $N$ be as above, $a>0$. Let
\[
\rho := \inf\{t\geq 0: \|\Delta M_t\| \vee \|\Delta N_t\|> a\}
\]
be a stopping time. Then $t\mapsto \Delta M_{\rho}\mathbf 1_{t\geq \rho}$ and $t\mapsto \Delta N_{\rho}\mathbf 1_{t\geq \rho}$ are local martingales. Moreover, we have that a.s.\
\[
M_t = \int_{[0, t] \times X} \mathbf 1_{\|x\|\leq a} x \ud \bar{\mu}^M,\;\;\; N_t = \int_{[0, t] \times X} \mathbf 1_{\|x\|\leq a} x \ud \bar{\mu}^N,\;\;\; t\in [0, \rho),
\]
where $\mu^M$ and $\mu^N$ are as defined by \eqref{eq:defofmuM}.
\end{lemma}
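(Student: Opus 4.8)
The plan is to isolate the ``large jumps'' of $M$ and $N$ and discard them, leaving processes whose jumps are bounded by $a$, and then to identify these truncated processes as stochastic integrals against the compensated jump measures restricted to the ball $\{\|x\|\leq a\}$. First I would recall that, since $M$ and $N$ are quasi-left continuous and $\rho$ is of the form $\inf\{t\geq 0:\|\Delta M_t\|\vee\|\Delta N_t\|>a\}$, the stopping time $\rho$ is totally inaccessible, so on $\{\rho<\infty\}$ the jumps $\Delta M_\rho$ and $\Delta N_\rho$ occur at a totally inaccessible time. By Lemma \ref{lem:DeltaMtaugiventau-=0} applied to the (locally $L^1$-bounded) martingales $M$ and $N$ — after the usual localization (Remark \ref{rem:locmartislocallyL1DR+X}) — together with the quasi-left continuity, one gets that $t\mapsto \Delta M_\rho\mathbf 1_{[\rho,\infty)}(t)$ and $t\mapsto \Delta N_\rho\mathbf 1_{[\rho,\infty)}(t)$ are local martingales: indeed each is the single-jump purely discontinuous local martingale associated to the single atom of $\mu^M$ (resp.\ $\mu^N$) supported on $\{\rho\}\times X$, which is compensated precisely because the ``predictable'' compensator contribution at the totally inaccessible time $\rho$ vanishes, exactly as in \cite[Subsection 5.3]{DY17} and \cite[Chapter II.1]{JS}.

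Next I would turn to the integral representations on $[0,\rho)$. By Theorem \ref{thm:XisUMDiffMisintxwrtbarmuMvain} (which applies since $X$ is UMD, $M$ is purely discontinuous quasi-left continuous, and after localization $\mathbb E\sup_{t\geq 0}\|M_t\|<\infty$), we have $M_t=\int_{[0,t]\times X}x\ud\bar\mu^M$ a.s., and similarly $N_t=\int_{[0,t]\times X}x\ud\bar\mu^N$. Now split the integrand as $x=x\mathbf 1_{\|x\|\leq a}+x\mathbf 1_{\|x\|> a}$. On the stochastic interval $[0,\rho)$ the measure $\mu^M$ charges no atom with $\|x\|>a$ by the definition of $\rho$, and its compensator $\nu^M$ restricted to $\{\|x\|>a\}$ on $[0,\rho)$ is also null — this is again \cite[Subsection 5.3]{DY17} / Lemma \ref{lem:onejumpformu-->onejumpfornu} in the guise that a predictable stopping-time-free portion of a quasi-left continuous compensator cannot ``see'' jumps that only happen at $\rho$; more directly, $\mu^M\mathbf 1_{\{\|x\|>a\}}\mathbf 1_{[0,\rho)}=0$ forces $\nu^M\mathbf 1_{\{\|x\|>a\}}\mathbf 1_{[0,\rho)}=0$ by uniqueness and positivity of the compensator, since $\nu^M(\mathbf 1_{\{\|x\|>a\}}\mathbf 1_{[0,\rho)})$ is a predictable process that agrees in expectation against every $\widetilde{\mathcal P}$-test set with $\mu^M(\mathbf 1_{\{\|x\|>a\}}\mathbf 1_{[0,\rho)})\equiv 0$. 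Hence on $[0,\rho)$ the integral against $x\mathbf 1_{\|x\|>a}$ vanishes both in the $\mu$- and the $\nu$-part, and we conclude $M_t=\int_{[0,t]\times X}\mathbf 1_{\|x\|\leq a}\,x\ud\bar\mu^M$ for $t\in[0,\rho)$, and symmetrically for $N$. (One should check the integrability condition of Definition \ref{def:Fisintagrablewrtbarmugendef} / Proposition \ref{prop:defofstochintwrtranmcaseodFoffinitemuintds} for the truncated integrand on $[0,\rho)$, which holds because the untruncated one is integrable by Theorem \ref{thm:XisUMDiffMisintxwrtbarmuMvain} and truncation only decreases $\int\|F\|\ud\mu$.)

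I expect the main obstacle to be the careful bookkeeping on the half-open stochastic interval $[0,\rho)$: one must ensure that ``restricting to $[0,\rho)$'' commutes cleanly with the compensation operation, i.e.\ that $(\bar\mu^M)$ restricted to $[0,\rho)\times\{\|x\|\leq a\}$ is still the compensated measure of $\mu^M$ restricted there, with compensator $\nu^M$ restricted there, rather than acquiring a boundary term at $\rho$. This is exactly where total inaccessibility of $\rho$ and quasi-left continuity of $M$, $N$ are used: a quasi-left continuous compensator $\nu^M$ is non-atomic in time (Lemma \ref{lem:nuMisnonatomiffMqlc}), so $\nu^M(\{\rho\}\times X)=0$ a.s., and there is no contribution of $\rho$ to the $\nu$-part; combined with $\Delta M_\rho\mathbf 1_{[\rho,\infty)}$ being itself a local martingale, the stopped process $M^{\rho-}$ and the integral $\int_{[0,\cdot]\times X}\mathbf 1_{\|x\|\leq a}x\ud\bar\mu^M$ agree on $[0,\rho)$. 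The rest is routine: assemble the two displayed identities and the two single-jump martingale statements, which is the content of the lemma.
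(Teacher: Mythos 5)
There is a genuine gap: your proof never uses the hypothesis that $M$ and $N$ are \emph{conditionally symmetric}, and this hypothesis is precisely what makes both assertions of the lemma true. The two places where your argument breaks are intertwined.

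First, you assert that $\nu^M\mathbf 1_{\{\|x\|>a\}}\mathbf 1_{[0,\rho)}=0$ because $\mu^M\mathbf 1_{\{\|x\|>a\}}\mathbf 1_{[0,\rho)}=0$, invoking ``uniqueness and positivity of the compensator.'' This is false: the whole point of a compensator is that it is a predictable intensity, not the measure itself, so it will in general be strictly positive on a set where $\mu^M$ happens to put no mass. Take $M=N^1-N^2$ with $N^1,N^2$ independent standard Poisson processes (this $M$ \emph{is} conditionally symmetric, so it is in the scope of the lemma), and $a<1$. Then $\rho$ is the first jump time of $M$ and $\mu^M\big|_{[0,\rho)\times\{\|x\|>a\}}\equiv 0$, yet $\nu^M\big|_{[0,\rho)\times\{\|x\|>a\}}=\lambda\big|_{[0,\rho)}\otimes(\delta_1+\delta_{-1})\neq 0$. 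Your ``$\widetilde{\mathcal P}$-test set'' argument also does not go through as stated, since $\mathbf 1_{[0,\rho)}$ is \emph{not} predictable for the totally inaccessible $\rho$. Second, and relatedly, you cite Lemma \ref{lem:DeltaMtaugiventau-=0} to conclude that $t\mapsto\Delta M_\rho\mathbf 1_{[\rho,\infty)}(t)$ is a local martingale; but that lemma requires $\tau$ to be a finite \emph{predictable} stopping time, whereas your $\rho$ is totally inaccessible, so the lemma does not apply. Quasi-left continuity gives $\nu^M(\{\rho\}\times X)=0$, but that by itself does not make a single uncompensated jump into a martingale (again, the Poisson process is the prototypical counterexample).

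What actually makes the argument work, and what the paper exploits, is the symmetry. Since $M$ and $N$ are conditionally symmetric and tangent one may take a common compensator $\nu$ satisfying $\nu(\cdot\times B)=\nu(\cdot\times(-B))$ a.s.\ for all Borel $B$. The predictable integrand $F(s,\omega,x)=x\,\mathbf 1_{\|x\|>a}\,\mathbf 1_{[0,\rho]}(s)$ (note $[0,\rho]$, which \emph{is} predictable) is odd in $x$, so $\int F\,\ud\nu$ vanishes identically by the substitution $x\mapsto -x$ — not because $\nu$ is zero on the relevant region, which it isn't, but because the positive and negative contributions cancel. Hence
\[
M'_t:=\int_{[0,t]\times X}x\,\mathbf 1_{\|x\|>a}\,\mathbf 1_{[0,\rho]}(s)\,\ud\bar\mu^M
=\int_{[0,t]\times X}x\,\mathbf 1_{\|x\|>a}\,\mathbf 1_{[0,\rho]}(s)\,\ud\mu^M
=\Delta M_\rho\mathbf 1_{t\geq\rho},
\]
and $M'$ is a local martingale because it is a stochastic integral of a predictable integrand against $\bar\mu^M$ (local integrability being supplied by Theorem \ref{thm:XisUMDiffMisintxwrtbarmuMvain} and $\gamma$-domination). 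The representation on $[0,\rho)$ then follows by subtracting $M'$ from $M=\int x\,\ud\bar\mu^M$. Without the cancellation due to symmetry, the $\nu$-part of $M'$ would be a nonzero predictable finite-variation process, $M'$ would not equal the pure jump $\Delta M_\rho\mathbf 1_{t\geq\rho}$, and neither displayed identity of the lemma would hold.
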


\begin{proof}
As $M$ and $N$ are conditionally symmetric and tangent, we may set that $\nu = \nu^M = \nu^N$ is the compensator for both $\mu^M$ and $\mu^N$, and that $\nu(\cdot \times B) = \nu(\cdot \times -B)$ a.s.\ for any Borel set $B\in \mathcal B(X)$. Now let 
$$
M'_t = \int_{[0,t]\times X} \mathbf 1_{\|\cdot\|> a}(x) x \mathbf 1_{[0, \rho]}(s) \ud\bar{\mu}^M(s,x),\;\;\; t\geq 0,
$$
$$
N'_t = \int_{[0,t]\times X} \mathbf 1_{\|\cdot\|> a}(x) x \mathbf 1_{[0, \rho]}(s) \ud\bar{\mu}^N(s,x),\;\;\; t\geq 0.
$$
These processes are local martingales by the fact that $x$ is locally stochastically integrable with respect to $\bar{\mu}^M$ and $\bar{\mu}^N$ thanks to Theorem \ref{thm:XisUMDiffMisintxwrtbarmuMvain}, therefore $x \mathbf 1_{A}$ is also locally integrable with respect to $\bar{\mu}^M$ for any $A\subset \widetilde {\mathcal P}$ by \cite[Subsection 7.2]{Y18BDG} and $\gamma$-domination \cite[Theorem 9.4.1]{HNVW2}. On the other hand, as $\nu$ is symmetric in $x\in X$ and as the function $1_{\|\cdot\|> a}(x) x \mathbf 1_{[0, \rho]}(s)$ is antisymmetric in $x\in X$, by the definition~of~$\rho$ we have that
\begin{align*}
M'_t &= \int_{[0,t]\times X} \mathbf 1_{\|\cdot\|> a}(x) x \mathbf 1_{[0, \rho]}(s) \ud\bar{\mu}^M(s,x)\\
& = \int_{[0,t]\times X} \mathbf 1_{\|\cdot\|> a}(x) x \mathbf 1_{[0, \rho]}(s) \ud{\mu}^M(s,x)\\
 &\quad\quad- \int_{[0,t]\times X} \mathbf 1_{\|\cdot\|> a}(x) x \mathbf 1_{[0, \rho]}(s) \ud\nu(s,x)\\
& = \int_{[0,t]\times X} \mathbf 1_{\|\cdot\|> a}(x) x \mathbf 1_{[0, \rho]}(s) \ud{\mu}^M(s,x)\\
&= \sum_{0\leq s \leq t\wedge \rho} \Delta M_s \mathbf 1_{\| \Delta M_s\| > a} =   \Delta M_{\rho}\mathbf 1_{t\geq \rho},
\end{align*}
so the desired follows for $M$. The same can be done for $N$.

The second part of the lemma follows from the fact that 
$$
M = M' +\int_{[0, \cdot] \times X} \mathbf 1_{\|x\|\leq a} x \ud \bar{\mu}^M,\;\;\;N= N' +\int_{[0, \cdot] \times X} \mathbf 1_{\|x\|\leq a} x \ud \bar{\mu}^N 
$$
a.s.\ on $[0, \rho]$ and the fact that by the considerations above a.s.\ 
$$
M'_t = \Delta M_{\rho}\mathbf 1_{t\geq \rho},\;\;\; N'_t = \Delta N_{\rho}\mathbf 1_{t\geq \rho},\;\;\; t\geq 0.
$$
\end{proof}

\begin{proof}[Proof of Proposition \ref{prop:goodlambdapdQlc}]
The proof is based on approach of Kallenberg \cite[pp.\ 36--39]{Kal17}.
 Let us define stopping times
\begin{align*}
\sigma&:= \inf\{t\geq 0: \|N_t\|>\lambda\},\\
\tau&:= \inf\{t\geq 0: \|M_t\|>\delta\lambda\},\\
\rho&:= \inf\{t\geq 0: \|\Delta M_t\|\vee \|\Delta N_t\|>\delta\lambda\}.
\end{align*}
Let $\mu^M$ and $\mu^N$ be defined by \eqref{eq:defofmuM}, $\bar{\mu}^M= \mu^M - \nu$ and $\bar{\mu}^N= \mu^N - \nu$ be the corresponding compensated random measures (as $M$ and $N$ are tangent, $\mu^M$ and $\mu^N$ have the same compensator). Define
$$
\widehat M_t:= \int_{[0, t] \times X} \mathbf 1_{\|x\|\leq \delta\lambda} x \mathbf 1_{(\tau \wedge \sigma \wedge \rho, \tau \wedge \rho]}(s) \ud \bar{\mu}^M(s, x),\;\;\; t\geq 0,
$$
$$
\widehat N_t:= \int_{[0, t] \times X} \mathbf 1_{\|x\|\leq \delta\lambda} x \mathbf 1_{(\tau \wedge \sigma \wedge \rho, \tau \wedge \rho]} (s)\ud \bar{\mu}^N(s, x),\;\;\; t\geq 0.
$$ 
Note that by Lemma \ref{lem:smallerjumpsonlyforgoodl}, $\widehat M$ coincides with $M - M^{\tau \wedge \sigma \wedge \rho}$ on $[0, \tau\wedge \rho)$, so by the definition of $\tau$ and $\rho$ we have that $\widehat M \leq 2\delta\lambda$, and thus by the fact that $\widehat M$ and $\widehat N$ are tangent by Lemma \ref{lem:Fintwrtmu1iffwrtmu2UMDcase}, so by Corollary \ref{cor:LstropboundforPDQLStangmartUMD} for any $1<p<\infty$ we have that
\begin{equation}\label{eq:MhatdominatesNhat}
\mathbb E \sup_{t\geq 0} \|\widehat N_t\|^p\lesssim_{p, X}\mathbb E \sup_{t\geq 0} \|\widehat M_t\|^p \leq 2^p \delta^p\lambda^p.
\end{equation}
Therefore,
\begin{equation*}
\begin{split}
\mathbb P(N^*>\beta\lambda, \Delta M^* \vee \Delta N^* \vee M^*\leq \delta \lambda)  &\leq \mathbb P(N^*>\beta\lambda, \tau = \rho = \infty)\\
& \stackrel{(*)}\leq \mathbb P(\widehat N^*>(\beta-\delta-1)\lambda)\\
& = \mathbb P\bigl((\widehat N^*)^p>(\beta-\delta-1)^p\lambda^p\bigr)\\
& \leq \frac{1}{(\beta-\delta-1)^p\lambda^p} \mathbb E (\widehat N^*)^p\\
& \stackrel{(**)}\lesssim_{p,X}\frac{1}{(\beta-\delta-1)^p\lambda^p} \mathbb E (\widehat M^*)^p,
\end{split}
\end{equation*}
where $(*)$ follows from the fact that if $\tau = \rho  = \infty$, so by Lemma \ref{lem:smallerjumpsonlyforgoodl} $\widehat N$ coincides with $N-N^{ \sigma}$ on $\mathbb R_+$ and $\|N_{\sigma}\| \leq \|N_{\sigma-}\| + \|\Delta N_{\sigma}\| \leq (1+\delta )\lambda$ on $\{\tau=\rho = \infty\}$, while $(**)$ holds by \eqref{eq:MhatdominatesNhat}.
The desired  then follows by
\begin{equation*}
\begin{split}
\mathbb E (\widehat{M}^*)^p = \mathbb E (\widehat{M}^*)^p \mathbf 1_{\sigma \leq \tau\wedge \rho} &\leq \mathbb E 2^p \delta^p \lambda^p \mathbf 1_{\sigma <\infty} \\
&= 2^p \delta^p \lambda^p\mathbb P(\sigma <\infty) =2^p \delta^p \lambda^p\mathbb P(N^*>\lambda).
\end{split}
\end{equation*}
\end{proof}

\subsection{Proof of Theorem \ref{thm:XUMDiffphimaxtangent}}
First we will prove each case of the canonical decomposition separately, and then compile them using the following proposition.

\begin{proposition}\label{prop:candecandphiarecorrespondest}
 Let $X$ be a UMD Banach space, $\phi:\mathbb R_+ \to \mathbb R_+$ be convex of moderate growth such that $\phi(0) = 0$. Then for any local martingale $M:\mathbb R_+ \times \Omega \to X$ with the canonical decomposition $M = M^c + M^q + M^a$ we have that
 \begin{equation}\label{eq:candecphiineqforsled}
    \mathbb E \phi(M^*) \eqsim_{\phi, X} \mathbb E \phi\bigl((M^c)^*\bigr) + \mathbb E \phi\bigl((M^q)^*\bigr) +\mathbb E \phi\bigl((M^a)^*\bigr). 
 \end{equation}
\end{proposition}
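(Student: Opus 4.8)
The plan is to prove the two-sided estimate \eqref{eq:candecphiineqforsled} by combining the strong $L^p$-version of the canonical decomposition (Theorem \ref{thm:candecXvalued}), the general good-$\lambda$ machinery (Lemma \ref{lem:goodlgivesphi}), and a standard passage from $L^p$-bounds valid for all $1<p<\infty$ to $\phi$-bounds for convex $\phi$ of moderate growth. The overall strategy mirrors the one used for \eqref{eq:candecstrongLpestmiaed}, but at the level of convex functions rather than powers.

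\textbf{The lower bound.} For the estimate $\mathbb E \phi\bigl((M^c)^*\bigr) + \mathbb E \phi\bigl((M^q)^*\bigr) +\mathbb E \phi\bigl((M^a)^*\bigr) \lesssim_{\phi, X} \mathbb E \phi(M^*)$, I would first establish a good-$\lambda$ inequality of the form
\[
\mathbb P\bigl((M^i)^* > \beta\lambda,\ M^* \leq \delta\lambda\bigr) \lesssim_{X} \frac{\delta}{\beta-1}\,\mathbb P\bigl((M^i)^* > \lambda\bigr),\qquad i\in\{c,q,a\},
\]
valid for all $\beta>1$ and $\delta>0$, by a stopping-time argument exactly as in the proof of \eqref{eq:candecstrongLpestmiaed} in \cite{Y18BDG}: on the event where $M^*$ is small, each projected piece $M^i$ is forced (after the appropriate stopping) to be a martingale whose terminal value is controlled by $M^*$, and then the $L^p$-boundedness of the projections $M\mapsto M^i$ (which holds for some, hence all $1<p<\infty$ by \eqref{eq:candecstrongLpestmiaed}) gives the weak-type estimate via Chebyshev. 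Feeding this good-$\lambda$ inequality into Lemma \ref{lem:goodlgivesphi} (with $f = M^*$, $g = (M^i)^*$, choosing $\delta$ small depending on $\phi$'s moderate-growth constant and on $\beta$) yields $\mathbb E\phi\bigl((M^i)^*\bigr) \lesssim_{\phi,X} \mathbb E\phi(M^*)$ for each $i$, and summing gives the lower bound.

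\textbf{The upper bound.} For $\mathbb E \phi(M^*) \lesssim_{\phi, X} \mathbb E \phi\bigl((M^c)^*\bigr) + \mathbb E \phi\bigl((M^q)^*\bigr) +\mathbb E \phi\bigl((M^a)^*\bigr)$, I would argue more directly: since $M = M^c + M^q + M^a$, we have $M^* \leq (M^c)^* + (M^q)^* + (M^a)^*$ pointwise, and by convexity and moderate growth of $\phi$ one has $\phi(a+b+c) \lesssim_\phi \phi(a) + \phi(b) + \phi(c)$ for $a,b,c\geq 0$ (iterate $\phi(2x)\leq\alpha\phi(x)$ and use $\phi(\tfrac{a+b}{2}) \leq \tfrac12\phi(a)+\tfrac12\phi(b)$). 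Taking expectations gives the upper bound immediately. This direction needs no UMD hypothesis and no good-$\lambda$ inequality at all.

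\textbf{Main obstacle.} The routine part is the upper bound; the substantive part is the good-$\lambda$ inequality for the projections $M\mapsto M^i$, and in particular making sure the stopping-time argument is carried out so that $\widehat M^i := (M^i - (M^i)^{\tau\wedge\sigma})$ restricted to the relevant interval really is a martingale dominated by $M^*$ — this is where one must invoke that $M$ remains a martingale under stopping, that the canonical decomposition commutes with stopping, and that the truncated jumps are integrable (using Lemma \ref{lem:DeltaMtaugiventau-=0} and Theorem \ref{thm:XisUMDiffMisintxwrtbarmuMvain} in the $q$- and $a$-cases). Once the weak-type bound is in hand, the reduction to $\phi$ via Lemma \ref{lem:goodlgivesphi} is mechanical: one only needs $\gamma\eps<1$, which is arranged by taking $\delta$ small enough relative to $\beta$ and the growth constant $\alpha$ of $\phi$. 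I expect the write-up to be short, deferring the stopping-time details to the analogous argument already recorded in \cite{Y18BDG}.
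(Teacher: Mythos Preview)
Your upper bound is identical to the paper's. For the lower bound, your good-$\lambda$ extrapolation from the $L^p$-boundedness of the projections $M\mapsto M^i$ is a legitimate route and is the classical Burkholder approach; with the observation that $\|\Delta M^i_t\|\le \|\Delta M_t\|\le 2M^*$ (from \eqref{eq:candecsplitsjumps}) and that the canonical decomposition commutes with stopping, the stopping-time argument can indeed be carried through. The paper, however, takes a shorter path: it invokes the $\phi$-version of the Burkholder--Davis--Gundy inequalities already established in \cite{Y18BDG}, namely $\mathbb E\,\phi(M^*)\eqsim_{\phi,X}\mathbb E\,\phi\bigl(\gamma([\![M]\!]_\infty)\bigr)$, together with the additivity $[\![M]\!]=[\![M^c]\!]+[\![M^q]\!]+[\![M^a]\!]$ and the monotonicity of the Gaussian characteristic $\gamma(\cdot)$, to obtain $\mathbb E\,\phi\bigl(\gamma([\![M]\!]_\infty)\bigr)\ge \mathbb E\,\phi\bigl(\gamma([\![M^i]\!]_\infty)\bigr)\eqsim_{\phi,X}\mathbb E\,\phi\bigl((M^i)^*\bigr)$ in one line. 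The trade-off: the paper's argument is shorter because all stopping-time and jump-control work has already been absorbed into the $\phi$-BDG theorem of \cite{Y18BDG}; your argument is more self-contained in that it only uses the $L^p$-bound \eqref{eq:candecstrongLpestmiaed} and Lemma \ref{lem:goodlgivesphi}, but you then have to redo the good-$\lambda$ analysis for these particular projections. One caveat: your deferral of ``the stopping-time details to the analogous argument already recorded in \cite{Y18BDG}'' may be optimistic, since \cite{Y18BDG} proves \eqref{eq:candecstrongLpestmiaed} through the covariation bilinear form rather than via a direct good-$\lambda$ argument for the projections, so you would have to write that argument out yourself.
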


\begin{proof}
Inequality $\lesssim_{\phi, X}$ of \eqref{eq:candecphiineqforsled} follows from the fact that $M = M^c + M^q + M^a$ a.s., so $M^* \leq (M^c)^* + (M^q)^* + (M^a)^*$ a.s., and the fact that $\phi$ has moderate growth, so a.s.\
$$
\phi(M^*) \leq \phi\bigl( (M^c)^* + (M^q)^* + (M^a)^*\bigr) \eqsim_{\phi}  \phi\bigl( (M^c)^*\bigr) +  \phi\bigl( (M^q)^*\bigr)  +  \phi\bigl(  (M^a)^*\bigr).
$$
Let us  show $\gtrsim_{\phi, X}$ of \eqref{eq:candecphiineqforsled}.
 As $X$ is UMD, each of $M$, $M^c$, $M^q$, and $M^a$ has a covariation bilinear form $[\![M]\!]$, $[\![M^c]\!]$, $[\![M^q]\!]$, and $[\![M^a]\!]$ respectively (see Remark \ref{rem:ifUMDthencovbilform}). Moreover, by \cite[Subsection 7.6]{Y18BDG} we have that $[\![M]\!]=[\![M^c]\!]+[\![M^q]\!]+[\![M^a]\!]$ a.s., and thus by \cite[Subsection 3.2 and Section 5]{Y18BDG} (see also Remark \ref{rem:ifUMDthencovbilform}) for any $i\in \{c, q, a\}$
 \[
    \mathbb E \phi(M^*) \eqsim_{\phi, X} \mathbb E \phi\bigl(\gamma([\![M]\!]_{\infty})\bigr) \geq \mathbb E \phi\bigl(\gamma([\![M^i]\!]_{\infty})\bigr) \eqsim_{\phi, X} \mathbb E \phi\bigl((M^i)^*\bigr).
 \]
This terminates the proof.
\end{proof}

Fix $\phi:\mathbb R_+ \to \mathbb R_+$ convex of moderate growth such that $\phi(0) = 0$. 

\begin{theorem}\label{thm:Ephifordiscretetantdandee}
Let $X$ be a UMD Banach space, $(d_n)_{n\geq 1}$ and $(e_n)_{n\geq 1}$ be tangent martingale difference sequences. Then we have that
\[
\mathbb E \phi\left( \sup_{n\geq 1} \Bigl\| \sum_{k=1}^n d_n \Bigr\|\right) \eqsim_{\phi, X}\mathbb E \phi\left( \sup_{n\geq 1} \Bigl\| \sum_{k=1}^n e_n\Bigr\|\right). 
\]
\end{theorem}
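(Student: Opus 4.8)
The plan is to deduce the estimate from Burkholder's good-$\lambda$ method (Lemma~\ref{lem:goodlgivesphi}), using the $L^p$-comparison of tangent discrete martingales from Theorem~\ref{thm:intromccnnll} as the underlying distributional input. This is the discrete counterpart of the way Proposition~\ref{prop:goodlambdapdQlc} enters the purely discontinuous quasi-left continuous part of Theorem~\ref{thm:XUMDiffphimaxtangent}, and the real-valued skeleton is that of Kallenberg~\cite[Section~4]{Kal17}. Write $S_N:=\sum_{n=1}^N d_n$, $T_N:=\sum_{n=1}^N e_n$, $S^*:=\sup_N\|S_N\|$, $T^*:=\sup_N\|T_N\|$, $d^*:=\sup_n\|d_n\|$, $e^*:=\sup_n\|e_n\|$; since tangency is a symmetric relation it suffices to prove $\mathbb E\phi(T^*)\lesssim_{\phi,X}\mathbb E\phi(S^*)$ and then interchange the two sequences.

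I would first make two standard reductions. By the stopping-time argument of Theorem~\ref{thm:MNtangent==>MtauNtautangent+ifNCIINtauCII} together with monotone convergence one reduces to the case in which all occurring $\phi$-moments are finite (and the quantities involved are bounded). Next, replacing $(d_n)$ and $(e_n)$ by their conditionally symmetrized versions $(d_n-\bar d_n)$ and $(e_n-\bar e_n)$, where $\bar d_n$, $\bar e_n$ are conditionally independent copies of $d_n$, $e_n$ given the past --- an operation which preserves tangency and, because $\phi$ is convex of moderate growth, changes $\mathbb E\phi((\cdot)^*)$ only up to $\phi$-dependent constants (see e.g.\ \cite[Section~6.1]{dlPG}) --- we may assume $(d_n)$ and $(e_n)$ conditionally symmetric. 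Viewed, as in Remark~\ref{rem:disctangagreesconttime}, as purely discontinuous martingales with accessible jumps at the integers, these two processes are then tangent in the sense of Definition~\ref{def:tangconttimecase}.

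The core of the argument is a good-$\lambda$ inequality: fix $1<p<\infty$; then for every $\delta>0$ and $\beta>1+\delta$,
\[
\mathbb P\bigl(T^*>\beta\lambda,\; d^*\vee e^*\vee S^*\le\delta\lambda\bigr)\lesssim_{p,X}\frac{\delta^p}{(\beta-1-\delta)^p}\,\mathbb P(T^*>\lambda),\qquad\lambda>0.
\]
I would prove this exactly along the lines of Proposition~\ref{prop:goodlambdapdQlc}. Introduce $\sigma:=\inf\{N:\|T_N\|>\lambda\}$, $\tau:=\inf\{N:\|S_N\|>\delta\lambda\}$, $\rho:=\inf\{N:\|d_N\|\vee\|e_N\|>\delta\lambda\}$, and let $\widehat S$, $\widehat T$ be the martingales with increments $d_n\mathbf 1_{\{\|d_n\|\le\delta\lambda\}}\mathbf 1_{\{\sigma\wedge\tau\wedge\rho<n\le\tau\wedge\rho\}}$ and $e_n\mathbf 1_{\{\|e_n\|\le\delta\lambda\}}\mathbf 1_{\{\sigma\wedge\tau\wedge\rho<n\le\tau\wedge\rho\}}$. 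Conditional symmetry makes these genuine martingale differences (truncation at $\delta\lambda$ needs no re-centering since $\int_{\|x\|\le\delta\lambda}x\,\mathbb P(d_n\mid\mathcal F_{n-1})(\mathrm dx)=0$), and since $\mathbb P(d_n\mid\mathcal F_{n-1})=\mathbb P(e_n\mid\mathcal F_{n-1})$ the sequences $\widehat S$, $\widehat T$ are again tangent; moreover $\sup_N\|\widehat S_N\|\le 3\delta\lambda$, and $\widehat S$ vanishes unless $\sigma\le\tau\wedge\rho$, so $\mathbb E\sup_N\|\widehat S_N\|^p\le(3\delta\lambda)^p\mathbb P(\sigma<\infty)=(3\delta\lambda)^p\mathbb P(T^*>\lambda)$. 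By Theorem~\ref{thm:intromccnnll}, $\mathbb E\sup_N\|\widehat T_N\|^p\lesssim_{p,X}\mathbb E\sup_N\|\widehat S_N\|^p$; and on $\{\tau=\rho=\infty\}\cap\{T^*>\beta\lambda\}$ one has $\widehat T=T-T^{\sigma}$ and $\|T_\sigma\|\le\|T_{\sigma-1}\|+\|e_\sigma\|\le(1+\delta)\lambda$, so $\sup_N\|\widehat T_N\|\ge(\beta-1-\delta)\lambda$ there, and Chebyshev closes the estimate. Applying Lemma~\ref{lem:goodlgivesphi} with $g=T^*$, $f=d^*\vee e^*\vee S^*$ (fix $\beta$ and take $\delta$ small so that $\gamma\varepsilon<1$), and interchanging the two sequences, gives
\[
\mathbb E\phi(T^*)\lesssim_{\phi,X}\mathbb E\phi(d^*)+\mathbb E\phi(e^*)+\mathbb E\phi(S^*),\qquad \mathbb E\phi(S^*)\lesssim_{\phi,X}\mathbb E\phi(d^*)+\mathbb E\phi(e^*)+\mathbb E\phi(T^*).
\]

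It remains to absorb the maximal-jump terms, and this is the step I expect to be the main obstacle. One of them is free: $d^*\le2S^*$ (since $d_n=S_n-S_{n-1}$), whence $\mathbb E\phi(d^*)\lesssim_\phi\mathbb E\phi(S^*)$ by moderate growth, and likewise $\mathbb E\phi(e^*)\lesssim_\phi\mathbb E\phi(T^*)$. The delicate point is to bound $\mathbb E\phi(e^*)$ --- the maximal jump of the ``far'' sequence --- by $\mathbb E\phi(S^*)$. I would obtain this by a companion argument running the good-$\lambda$ machinery on the $\delta\lambda$-truncated sequences built from $(d_n)$ and $(e_n)$, which are tangent and to which Theorem~\ref{thm:intromccnnll} (and, if convenient, Proposition~\ref{prop:pdwajjusttang}) applies, together with the fact that the large-jump parts of $(d_n)$ and $(e_n)$ share the conditional laws $\mathbb P(d_n\mid\mathcal F_{n-1})$; the required absorption over the level $\lambda$ follows the scheme of \cite[pp.\ 36--39]{Kal17} (in the quasi-left continuous case of Proposition~\ref{prop:goodlambdapdQlc} the analogous contribution is instead disposed of through the $\gamma$-norm/BDG identities of \cite[Subsection~6.1]{Y18BDG}). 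Once $\mathbb E\phi(e^*)\lesssim_{\phi,X}\mathbb E\phi(S^*)$ is in hand we get $\mathbb E\phi(T^*)\lesssim_{\phi,X}\mathbb E\phi(S^*)$; interchanging $(d_n)$ and $(e_n)$ yields the reverse inequality, and undoing the reductions completes the proof.
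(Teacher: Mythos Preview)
Your route via good-$\lambda$ is different from the paper's and considerably longer. The paper's proof is two lines: multiply both sequences by an independent Rademacher sequence $(r_n)$, which by the UMD property and \cite[(8.22)]{Burk86} changes $\mathbb E\phi(\sup_N\|\sum_{n\le N}\cdot\|)$ only up to constants depending on $\phi,X$; then observe that $(r_nd_n)$ and $(r_ne_n)$ are tangent \emph{and conditionally symmetric}, so the $\phi$-comparison follows directly from Hitczenko's unpublished result \cite{HitUP} (see \cite[pp.~424--425]{CV07}). Your proposal essentially tries to re-derive Hitczenko's conditionally symmetric $\phi$-estimate from the $L^p$-estimate of Theorem~\ref{thm:intromccnnll} via good-$\lambda$; this is a legitimate aim, but the paper simply cites the result.

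There is a genuine gap at the absorption of $\mathbb E\phi(e^*)$. Your sketch (``run good-$\lambda$ on the $\delta\lambda$-truncated sequences'') does not produce a bound of $\mathbb E\phi(e^*)$ by $\mathbb E\phi(S^*)$: the truncated sequences compare $L^p$-norms of truncated partial sums, not of the maximal jump itself. The correct tool is the tail comparison for the maximal increment of tangent sequences, \cite[Lemma~2.3.3]{dlPG}, which gives $\mathbb P(e^*>\lambda)\le 2\,\mathbb P(d^*>\lambda)$ (pass through a common decoupled tangent sequence). Integrating against $\mathrm d\phi$ then yields $\mathbb E\phi(e^*)\le 2\,\mathbb E\phi(d^*)\lesssim_\phi\mathbb E\phi(S^*)$. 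This is exactly how the paper handles the analogous term in the continuous-time case: Lemma~\ref{lem:MNtangentthenthejumpsaresmall} is built on \cite[Lemma~2.3.3]{dlPG}, and it --- not the $\gamma$-norm/BDG identities you mention --- is what absorbs $\mathbb E\phi(\Delta N^*)$ in the proof of Theorem~\ref{thm:tangentphiPDQLCtratata}. A smaller point: your symmetrization by subtracting conditionally independent copies $d_n-\bar d_n$ is less clean than the paper's Rademacher randomization, because showing $\mathbb E\phi(\sup_N\|\sum\bar d_n\|)\lesssim\mathbb E\phi(\sup_N\|\sum d_n\|)$ already requires a $\phi$-comparison of tangent sequences, so you risk circularity unless you restrict to the decoupled case first.
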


\begin{proof}
Let $(r_n)_{n\geq 1}$ be a sequence of independent Rademachers (see Definition \ref{def:ofRadRV}). Then by \cite[(8.22)]{Burk86} and by \cite[Section 2]{Y18BDG} we have that
\begin{equation}\label{eq:makingtangMDSsymnme}
 \begin{split}
  \mathbb E \phi\left( \sup_{n\geq 1} \Bigl\| \sum_{k=1}^n r_nd_n\Bigr\|\right) &\eqsim_{\phi, X}\mathbb E \phi\left( \sup_{n\geq 1} \Bigl\| \sum_{k=1}^n d_n\Bigr\|\right),\\
  \mathbb E \phi\left( \sup_{n\geq 1} \Bigl\| \sum_{k=1}^n r_ne_n\Bigr\|\right) &\eqsim_{\phi, X}\mathbb E \phi\left( \sup_{n\geq 1} \Bigl\| \sum_{k=1}^n e_n\Bigr\|\right). 
 \end{split}
\end{equation}
Finally, $(r_nd_n)_{n\geq 1}$ and $(r_n e_n)_{n\geq 1}$ are tangent martingale difference sequences with respect to an enlarged filtration $\overline {\mathbb F} = (\overline{\mathcal F}_n)_{n\geq 1}$ which is generated by the original filtration $({\mathcal F}_n)_{n\geq 1}$ and by Rademachers $(d_n)_{n\geq 1}$ as for any $n\geq 1$ and for any Borel set $A \in \mathcal B(X)$
\begin{equation}\label{eq:rndnandrnenaretagicdnenadfosim}
 \begin{split}
   \mathbb P(r_n d_n|\overline{\mathcal F}_{n-1})(A) &= \mathbb E (\mathbf 1_A (r_n d_n) | \overline{\mathcal F}_{n-1})\\
   &\stackrel{(i)}= \tfrac{1}{2}\mathbb E (\mathbf 1_A ( d_n) | \overline{\mathcal F}_{n-1}) + \tfrac{1}{2}\mathbb E (\mathbf 1_{-A} ( d_n) | \overline{\mathcal F}_{n-1})\\
 &\stackrel{(ii)}=\tfrac{1}{2}\mathbb E (\mathbf 1_A ( d_n) | {\mathcal F}_{n-1}) + \tfrac{1}{2}\mathbb E (\mathbf 1_{-A} ( d_n) | {\mathcal F}_{n-1})\\
  &\stackrel{(iii)}=\tfrac{1}{2}\mathbb E (\mathbf 1_A ( e_n) | {\mathcal F}_{n-1}) + \tfrac{1}{2}\mathbb E (\mathbf 1_{-A} ( e_n) | {\mathcal F}_{n-1})\\
  &\stackrel{(iv)}=  \mathbb P(r_n e_n|\overline{\mathcal F}_{n-1})(A),
 \end{split}
\end{equation}
where $(i)$ follows from the fact that $r_n$ is independent of $d_n$ and $\overline{\mathcal F}_{n-1}$, $(ii)$ follows from the fact that $d_n$ is independent of $\sigma(r_1, \ldots, r_{n-1})$, $(iii)$ holds as $(d_n)_{n\geq 1}$ and $(e_n)_{n\geq 1}$ are tangent, and finally $(iv)$ holds as $(i)$, $(ii)$, and $(iii)$ can analogously be shown for $e_n$.
Moreover, $r_nd_n$ and $r_ne_n$ are conditionally symmetric given $\mathcal F_{n-1}$ for any $n\geq 1$, so we have that
\begin{equation}\label{eq:phifosymmetrizedMDSprosh}
 \mathbb E \phi\left( \sup_{n\geq 1} \Bigl\| \sum_{k=1}^n r_nd_n\Bigr\|\right) \eqsim_{\phi, X}\mathbb E \phi\left( \sup_{n\geq 1} \Bigl\| \sum_{k=1}^n r_ne_n\Bigr\|\right)
\end{equation}
 by \cite{HitUP} (see \cite[pp.\ 424--425]{CV07}). The desired follows from \eqref{eq:makingtangMDSsymnme} and~\eqref{eq:phifosymmetrizedMDSprosh}.
\end{proof}

\begin{theorem}\label{thm:MNconttantgethenphiinrq}
Let $X$ be a UMD Banach space, $M, N:\mathbb R_+ \times \Omega \to X$ be tangent continuous local martingales. Then $\mathbb E \phi(M^*) \eqsim_{\phi, X} \mathbb E \phi(N^*)$.
\end{theorem}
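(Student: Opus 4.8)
The plan is to deduce the two-sided $\phi$-estimate from the good-$\lambda$ inequality of Proposition \ref{prop:goodlforconttanglocalmart} together with the general good-$\lambda$ lemma (Lemma \ref{lem:goodlgivesphi}), using that tangency is a symmetric relation so that the roles of $M$ and $N$ may be interchanged. First I would reduce to the case in which both $\mathbb E\phi(M^*)$ and $\mathbb E\phi(N^*)$ are finite. By the stopping-time argument (Theorem \ref{thm:MNtangent==>MtauNtautangent+ifNCIINtauCII}) applied with $\tau_k := \inf\{t\geq 0 : \|M_t\|\vee\|N_t\| \geq k\}$, which increase to $\infty$ a.s.\ since $M$ and $N$ are continuous, the stopped processes $M^{\tau_k}$ and $N^{\tau_k}$ remain tangent continuous local martingales and are uniformly bounded (by $k$), so $\mathbb E\phi((M^{\tau_k})^*)$ and $\mathbb E\phi((N^{\tau_k})^*)$ are finite; moreover $(M^{\tau_k})^*\nearrow M^*$ and $(N^{\tau_k})^*\nearrow N^*$ a.s., and $\phi$ is nondecreasing and continuous (being convex, nonnegative, with $\phi(0)=0$), so the general statement follows from the bounded case by monotone convergence (with the understanding that if one side is infinite so is the other).

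Next, fix $\beta := 2$ and some $p\in(1,\infty)$, and let $c = c_{p,X}$ be the implicit constant in Proposition \ref{prop:goodlforconttanglocalmart}. Since $\phi$ has moderate growth there is $\alpha>0$ with $\phi(2x)\leq\alpha\phi(x)$ for all $x\geq 0$, so $\phi(\beta\lambda)\leq\alpha\phi(\lambda)$ and we may take $\gamma:=\alpha$ in Lemma \ref{lem:goodlgivesphi}. Choosing $\delta>0$ small enough that $c\,\delta^p/(\beta-1)^p = c\delta^p < (2\alpha)^{-1}$, Proposition \ref{prop:goodlforconttanglocalmart} gives
\[
\mathbb P(N^*>\beta\lambda,\ M^*\leq\delta\lambda)\ \leq\ \eps\,\mathbb P(N^*>\lambda),\qquad \lambda>0,
\]
with $\eps:=c\delta^p<(2\gamma)^{-1}$, so in particular $\gamma\eps<1$. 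Taking $\eta$ with $\phi(\delta^{-1}\lambda)\leq\eta\phi(\lambda)$ — which exists and is finite, since iterating moderate growth yields $\phi(2^m x)\leq\alpha^m\phi(x)$, whence $\eta\leq\alpha^{\lceil\log_2(1/\delta)\rceil}$ — Lemma \ref{lem:goodlgivesphi} applied with $f:=M^*$ and $g:=N^*$ gives $\mathbb E\phi(N^*)\leq\frac{\gamma\eta}{1-\gamma\eps}\,\mathbb E\phi(M^*)$, i.e.\ $\mathbb E\phi(N^*)\lesssim_{\phi,X}\mathbb E\phi(M^*)$.

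Finally, since $M$ and $N$ are tangent if and only if $N$ and $M$ are tangent, the same argument with the roles of $M$ and $N$ exchanged gives $\mathbb E\phi(M^*)\lesssim_{\phi,X}\mathbb E\phi(N^*)$, and the two bounds together yield $\mathbb E\phi(M^*)\eqsim_{\phi,X}\mathbb E\phi(N^*)$. I do not expect a serious obstacle: the only point requiring any care is the bookkeeping of the parameters $\beta,\delta,\eps,\gamma,\eta$ that makes $\gamma\eps<1$ in Lemma \ref{lem:goodlgivesphi}, which is purely arithmetic once the moderate-growth bound on $\phi$ is in hand. (Alternatively, one could note that tangency forces $[\![M]\!]_\infty=[\![N]\!]_\infty$ a.s.\ and invoke the $\phi$-characterisation $\mathbb E\phi(M^*)\eqsim_{\phi,X}\mathbb E\phi(\gamma([\![M]\!]_\infty))$ from \cite{Y18BDG} directly for the continuous martingales $M$ and $N$; but the good-$\lambda$ route above is the one consistent with the present section.)
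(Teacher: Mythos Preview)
Your proposal is correct and follows exactly the paper's approach: the paper's proof is the one-liner ``follows directly from Lemma~\ref{lem:goodlgivesphi} and Proposition~\ref{prop:goodlforconttanglocalmart}'', and you have simply (and carefully) unpacked this by choosing admissible parameters $\beta,\delta,\eps,\gamma,\eta$ and invoking the symmetry of tangency for the reverse bound. The stopping-time reduction you add is a sensible precaution to ensure finiteness before applying Lemma~\ref{lem:goodlgivesphi}, and your parenthetical remark about the alternative route via $[\![M]\!]_\infty=[\![N]\!]_\infty$ and the $\phi$-BDG inequalities of \cite{Y18BDG} is also valid.
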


\begin{proof}
The proof follows directly from Lemma \ref{lem:goodlgivesphi} and Proposition \ref{prop:goodlforconttanglocalmart}.
\end{proof}

\begin{theorem}\label{thm:tangentphiPDQLCtratata}
Let $X$ be a UMD Banach space, $M, N:\mathbb R_+ \times \Omega \to X$ be tangent purely discontinuous quasi-left continuous local martingales. Then $\mathbb E \phi(M^*) \eqsim_{\phi, X} \mathbb E \phi(N^*)$.
\end{theorem}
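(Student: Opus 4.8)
The plan is to mimic the continuous case (Theorem \ref{thm:MNconttantgethenphiinrq}), i.e.\ to run Burkholder's good-$\lambda$ principle (Lemma \ref{lem:goodlgivesphi}), the only difference being that the available good-$\lambda$ estimate, Proposition \ref{prop:goodlambdapdQlc}, demands \emph{conditional symmetry} and carries two extra maximal-jump terms in its ``small'' set. So the first step is symmetrization. By a stopping-time argument (Remark \ref{rem:locmartislocallyL1DR+X}, Theorem \ref{thm:MNtangent==>MtauNtautangent+ifNCIINtauCII}) we may assume $\mathbb E M^* + \mathbb E N^* < \infty$, and then, writing $M = \int x\,\dd\bar\mu^M$ and $N = \int x\,\dd\bar\mu^N$ by Theorem \ref{thm:XisUMDiffMisintxwrtbarmuMvain}, we enlarge the probability space so as to attach an independent Rademacher sign to every jump; this replaces $M,N$ by purely discontinuous quasi-left continuous martingales $M^s,N^s$ which are \emph{conditionally symmetric}, still tangent (the common compensator $\nu=\nu^M=\nu^N$ only gets symmetrized in the space variable, so $\nu^{M^s}=\nu^{N^s}$), and which satisfy $\mathbb E\phi((M^s)^*)\eqsim_{\phi,X}\mathbb E\phi(M^*)$ and $\mathbb E\phi((N^s)^*)\eqsim_{\phi,X}\mathbb E\phi(N^*)$ — the latter because sign changes of the jumps leave $\|(\Delta M_t)_t\|_{\gamma(\ell^2(\mathbb R_+),X)}$ unchanged, while $\mathbb E\phi(M^*)\eqsim_{\phi,X}\mathbb E\phi(\|(\Delta M_t)_t\|_{\gamma(\ell^2(\mathbb R_+),X)})$ by \cite{Y18BDG}. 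This is the continuous-time analogue of the Rademacher reduction \eqref{eq:makingtangMDSsymnme} used for Theorem \ref{thm:Ephifordiscretetantdandee}.

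Next I would feed Proposition \ref{prop:goodlambdapdQlc} (applied to $M^s,N^s$, with some fixed $p\in(1,\infty)$) into Lemma \ref{lem:goodlgivesphi} with $g=(N^s)^*$ and $f=(M^s)^*\vee(\Delta M^s)^*\vee(\Delta N^s)^*$, where $(\Delta M^s)^*:=\sup_t\|\Delta M^s_t\|$, $(\Delta N^s)^*:=\sup_t\|\Delta N^s_t\|$. Fixing $\beta$ and then $\delta$ small enough, depending only on the moderate-growth constant of $\phi$ and on $\beta_{p,X}$, so that the resulting $\gamma\varepsilon<1$ in Lemma \ref{lem:goodlgivesphi}, one obtains
\[
\mathbb E\phi\bigl((N^s)^*\bigr)\lesssim_{\phi,X}\mathbb E\phi\bigl((M^s)^*\bigr)+\mathbb E\phi\bigl((\Delta M^s)^*\bigr)+\mathbb E\phi\bigl((\Delta N^s)^*\bigr).
\]
The term $\mathbb E\phi((\Delta M^s)^*)$ is harmless: $(\Delta M^s)^*\leq 2(M^s)^*$, so moderate growth of $\phi$ absorbs it into $\mathbb E\phi((M^s)^*)$. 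Exchanging the roles of $M^s$ and $N^s$ would likewise control $\mathbb E\phi((N^s)^*)$ from the other side, so once the last term is dealt with one gets $\mathbb E\phi((M^s)^*)\eqsim_{\phi,X}\mathbb E\phi((N^s)^*)$, and undoing the symmetrization gives the claim.

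The main obstacle is therefore the term $\mathbb E\phi((\Delta N^s)^*)$, which a priori lives on the $N$-side. The naive route $\phi((\Delta N^s)^*)\leq\sum_t\phi(\|\Delta N^s_t\|)$ is too lossy (its expectation equals $\mathbb E\int_{\mathbb R_+\times X}\phi(\|x\|)\,\dd\nu$, not comparable to $\mathbb E\phi((M^s)^*)$ in general), and $(\Delta N^s)^*\leq\|(\Delta N^s_t)_t\|_{\gamma(\ell^2(\mathbb R_+),X)}$ only reproduces $\mathbb E\phi((N^s)^*)$ via \cite{Y18BDG}. The way out I would take is the same stopping-time truncation device used inside the proof of Proposition \ref{prop:goodlambdapdQlc}: stop $M^s$ and $N^s$ at the first jump exceeding the relevant level (Lemma \ref{lem:smallerjumpsonlyforgoodl}), so that the large-jump part becomes a single increment handled by Lemma \ref{lem:DeltaMtaugiventau-=0}, and the small-jump parts $\int \mathbf 1_{\|x\|\leq a}x\,\dd\bar\mu^{M^s}$, $\int\mathbf 1_{\|x\|\leq a}x\,\dd\bar\mu^{N^s}$ remain tangent (Lemma \ref{lem:Fintwrtmu1iffwrtmu2UMDcase}) and are comparable in all strong $L^p$-norms by Corollary \ref{cor:LstropboundforPDQLStangmartUMD}; feeding these two ingredients back into the good-$\lambda$/Lemma \ref{lem:goodlgivesphi} loop lets one trade $\mathbb E\phi((\Delta N^s)^*)$ for the corresponding $M^s$-quantity and then bound it by $\mathbb E\phi((M^s)^*)$. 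As a consistency check, for $\phi(x)=x^p$ with $p>1$ the whole statement is already contained in Corollary \ref{cor:LstropboundforPDQLStangmartUMD}, and indeed the argument above reduces to that corollary in the power case.
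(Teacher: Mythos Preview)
Your overall architecture --- symmetrize, then run Burkholder's good-$\lambda$ principle via Proposition~\ref{prop:goodlambdapdQlc}, then absorb the jump terms --- matches the paper's Step~1 (the conditionally symmetric case). The genuine gap is in how you dispose of $\mathbb E\phi\bigl((\Delta N^s)^*\bigr)$. Your proposed fix (stop at the first jump above the level, split into a single large increment plus tangent small-jump parts, and ``feed back into the good-$\lambda$ loop'') does not actually bound $(\Delta N^s)^*$: the quantity $(\Delta N^s)^*$ is the supremum over \emph{all} jumps, and knowing that the small-jump martingales are $L^p$-comparable via Corollary~\ref{cor:LstropboundforPDQLStangmartUMD} says nothing about their maximal jumps, while the single large increment on the $N$-side is not controlled by anything on the $M$-side. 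So the argument is circular --- you are invoking the very machinery whose output you are trying to absorb.

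The paper closes this gap with a separate lemma (Lemma~\ref{lem:MNtangentthenthejumpsaresmall}): for tangent local martingales one has the tail bound $\mathbb P\bigl((\Delta N)^*\geq\lambda\bigr)\leq 6\,\mathbb P\bigl((\Delta M)^*\geq\lambda\bigr)$ for every $\lambda>0$, which via the layer-cake formula immediately gives $\mathbb E\phi\bigl((\Delta N)^*\bigr)\lesssim\mathbb E\phi\bigl((\Delta M)^*\bigr)\lesssim_\phi\mathbb E\phi(M^*)$. The proof of that lemma reduces to finite dimensions, then discretizes on a mesh $Tk/n$, uses the de~la~Pe\~na--Gin\'e maximal-increment comparison for tangent sequences, and passes to the limit via Jacod's classical convergence theorem (Theorem~\ref{thm:condtrstepsnsidtrtothepledone}). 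This is the missing idea.

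A secondary difference: your symmetrization attaches an independent Rademacher to each jump and compares $M$ with $M^s$ via the $\gamma$-BDG equivalence from \cite{Y18BDG}. The paper instead symmetrizes by subtracting an independent decoupled tangent copy $N'$ (so $M-N'$ and $N-N'$ are tangent and conditionally symmetric), and for the reverse inequality in the general case it again discretizes and invokes Theorem~\ref{thm:Ephifordiscretetantdandee} together with Theorem~\ref{thm:condtrstepsnsidtrtothepledone} and Fatou. Your Rademacher-on-jumps construction can be made rigorous, but it is not the route taken; either way, the substantive omission is the jump-comparison Lemma~\ref{lem:MNtangentthenthejumpsaresmall}.
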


For the proof of the theorem we will need the following lemma. 

\begin{lemma}\label{lem:MNtangentthenthejumpsaresmall}
Let $X$ be a Banach space, $M, N:\mathbb R_+ \times \Omega \to X$ be tangent local martingales. Then
\begin{equation}\label{eq:probofsupofjumpsoftang}
\mathbb P(\Delta N^* \geq \lambda) \leq 6 \mathbb P(\Delta M^* \geq \lambda),\;\;\; \lambda>0,
\end{equation}
where we set $\Delta M^* := \sup_{t\geq 0} \|\Delta M_t\|$  and $\Delta N^* := \sup_{t\geq 0} \|\Delta N_t\|$.
\end{lemma}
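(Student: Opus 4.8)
The plan is to reduce the assertion to a statement about two counting processes sharing the same predictable compensator, and to estimate the probability that either is positive from above and from below by one and the same functional of that compensator. Fix $\mu>0$ and put $A_\mu:=\{x\in X:\|x\|>\mu\}$. Since $M$ and $N$ are tangent, the jump measures $\mu^M,\mu^N$ defined by \eqref{eq:defofmuM} have a common compensator $\nu:=\nu^M=\nu^N$. Hence $X^M_t:=\mu^M([0,t]\times A_\mu)$ and $X^N_t:=\mu^N([0,t]\times A_\mu)$, $t\geq0$, are counting processes (at most one jump at a time, by \cite[Proposition II.1.14]{JS}) having the same predictable nondecreasing compensator $V_t:=\nu([0,t]\times A_\mu)$; note $\Delta V\leq1$, since $V$ jumps only at predictable times $\tau$ and there $\Delta V_\tau=\mathbb E(\Delta X^M_\tau|\mathcal F_{\tau-})\in[0,1]$. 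One has the exact identities $\{\Delta M^*>\mu\}=\{X^M_\infty\geq1\}$ and $\{\Delta N^*>\mu\}=\{X^N_\infty\geq1\}$, and since $\{\Delta N^*\geq\lambda\}=\bigcap_{\mu<\lambda}\{\Delta N^*>\mu\}$ (a decreasing intersection), it suffices to prove $\mathbb P(X^N_\infty\geq1)\leq6\,\mathbb P(X^M_\infty\geq1)$ for every $\mu>0$ and then let $\mu\uparrow\lambda$.

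For the upper bound I would invoke Lenglart's domination inequality (see e.g.\ \cite[I.3.30]{JS} or \cite{Kal}): $X^N$ is nonnegative, adapted and dominated by its compensator $V$, so with both truncation levels equal to $1$ one gets $\mathbb P(X^N_\infty\geq1)\leq\mathbb E(V_\infty\wedge1)+\mathbb P(V_\infty\geq1)\leq2\,\mathbb E(V_\infty\wedge1)$, using $\mathbf 1_{\{V_\infty\geq1\}}\leq V_\infty\wedge1$. (If one prefers not to quote Lenglart, the same bound with a slightly larger constant follows by stopping $X^N$ at the predictable time $\sigma:=\inf\{t:V_t\geq1\}$ and estimating $\mathbb P(X^N_{\sigma-}\geq1)\leq\mathbb E\,X^N_{\sigma-}=\mathbb E\,V_{\sigma-}$ together with $\mathbb P(\sigma<\infty)=\mathbb P(V_\infty\geq1)$; this route produces the factor $6$ rather than $4$.) For the lower bound, let $\rho:=\inf\{t:X^M_t\geq1\}$ be the first jump time of $X^M$. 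Then $X^{M,\rho}_t=\mathbf 1_{\{t\geq\rho\}}$ is a counting process with compensator $V^\rho$, and evaluating $\mathbb E X^{M,\rho}_{\tau_n}=\mathbb E V^\rho_{\tau_n}$ along a localizing sequence $\tau_n\uparrow\infty$ and passing to the limit (bounded convergence on the left, monotone convergence on the right) gives $\mathbb P(X^M_\infty\geq1)=\mathbb P(\rho<\infty)=\mathbb E\,V_\rho$, with the convention $V_\rho:=V_\infty$ on $\{\rho=\infty\}$. Splitting over $\{\rho<\infty\}$ and $\{\rho=\infty\}$ and using $V_\infty\wedge1\leq1$ on the first event and $V_\infty=V_\rho$ on the second yields $\mathbb E(V_\infty\wedge1)\leq\mathbb P(\rho<\infty)+\mathbb E\,V_\rho=2\,\mathbb E\,V_\rho=2\,\mathbb P(X^M_\infty\geq1)$.

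Combining the two estimates gives $\mathbb P(X^N_\infty\geq1)\leq2\,\mathbb E(V_\infty\wedge1)\leq4\,\mathbb P(X^M_\infty\geq1)\leq6\,\mathbb P(X^M_\infty\geq1)$, and then $\mathbb P(\Delta N^*\geq\lambda)=\lim_{\mu\uparrow\lambda}\mathbb P(\Delta N^*>\mu)\leq6\lim_{\mu\uparrow\lambda}\mathbb P(\Delta M^*>\mu)=6\,\mathbb P(\Delta M^*\geq\lambda)$, which is \eqref{eq:probofsupofjumpsoftang}. The substantive point of the argument is precisely that both tails of the jump-count can be squeezed, with universal constants, between $\mathbb E(V_\infty\wedge1)$ from above and essentially $\tfrac12\mathbb E(V_\infty\wedge1)$ from below; the remaining ingredients (the measure-theoretic passage from $\{\Delta\,\cdot^{\,*}\geq\lambda\}$ to $\{\Delta\,\cdot^{\,*}>\mu\}$, the fact that stopping preserves the compensator, and the interchange of limits and expectations) are routine and present no real obstacle.
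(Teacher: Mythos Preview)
Your proof is correct and takes a genuinely different, more elementary route than the paper. The paper first reduces to finite dimensions, then splits via the canonical decomposition $M=M^q+M^a$, $N=N^q+N^a$: for the accessible parts it discretizes and invokes \cite[Lemma~2.3.3]{dlPG} to get the factor $2$; for the quasi-left continuous parts it discretizes, passes through decoupled tangent sequences, applies \cite[Lemma~2.3.3]{dlPG} twice, and uses the Jacod--Kwapie\'n--Woyczy\'nski convergence (Theorem~\ref{thm:condtrstepsnsidtrtothepledone}) to identify the limit, obtaining the factor $4$; the two pieces combine to $6$. By contrast, you work directly with the real-valued counting processes $X^M,X^N$ of jumps exceeding $\mu$, which share the predictable compensator $V$, and squeeze both tail probabilities against $\mathbb E(V_\infty\wedge1)$ via Lenglart above and the stopped-at-first-jump identity $\mathbb P(\rho<\infty)=\mathbb E\,V_\rho$ below. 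This bypasses the finite-dimensional reduction, the canonical decomposition, the discrete tangent machinery of \cite{dlPG}, and the forward reference to Section~\ref{sec:appJKW}; it also yields the sharper constant $4$. What the paper's route buys is a transparent link to the established discrete-time comparison \cite[Lemma~2.3.3]{dlPG}, whereas your argument is self-contained and uses only the shared compensator $\nu^M=\nu^N$ together with standard predictable-domination tools.
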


\begin{proof}
By a standard restriction to finite dimensions argument (see e.g.\ the proof of \cite[Theorem 3.3]{Y17FourUMD}) and by the fact that $AM$ and $AN$ ate tangent for any linear operator $A \in \mathcal L(X, Y)$ (see Theorem \ref{thm:MNtangent==>TMTNaretangentforanylinop}) we may assume that $X$ is finite dimensional.
Due to Theorem \ref{thm:candecfortangaretang} we may assume that both $M$ and $N$ are purely discontinuous. Let $M = M^q + M^a$ and $N=N^q + N^a$ be the canonical decompositions of $M$ and $N$. Then by \eqref{eq:candecsplitsjumps} we have that a.s.\
\[
\{ t\geq 0: \Delta M_t \neq 0\} = \{ t\geq 0: \Delta M^q_t \neq 0\} \cup \{ t\geq 0: \Delta M^a_t \neq 0\},
\]
\[
\{ t\geq 0: \Delta N_t \neq 0\} = \{ t\geq 0: \Delta N^q_t \neq 0\} \cup \{ t\geq 0: \Delta N^a_t \neq 0\},
\]
and
\[
 \{ t\geq 0: \Delta M^q_t \neq 0\} \cap \{ t\geq 0: \Delta M^a_t \neq 0\} = \varnothing,
\]
\[
\{ t\geq 0: \Delta N^q_t \neq 0\} \cap \{ t\geq 0: \Delta N^a_t \neq 0\} = \varnothing.
\]
Thus in order to show \eqref{eq:probofsupofjumpsoftang} it is sufficient to prove that
\begin{equation}\label{eq:probofsupofjumpsoftangQLC}
\mathbb P\bigl((\Delta N^q)^* \geq \lambda \bigr) \leq 4 \mathbb P\bigl((\Delta M^q)^* \geq \lambda \bigr),\;\;\; \lambda>0,
\end{equation}
\begin{equation}\label{eq:probofsupofjumpsoftangAJ}
\mathbb P\bigl((\Delta N^a)^* \geq \lambda \bigr) \leq 2 \mathbb P\bigl((\Delta M^a)^* \geq \lambda \bigr),\;\;\; \lambda>0.
\end{equation}
First notice that \eqref{eq:probofsupofjumpsoftangAJ} follows from a standard discrete approximation of purely discontinuous martingales with accessible jumps (see e.g.\ the proof of Proposition \ref{prop:pdwajjusttang} and Subsection \ref{subsec:appforMarAppPDMAJ}) and \cite[Lemma 2.3.3]{dlPG}. Let us show \eqref{eq:probofsupofjumpsoftangQLC}. Without loss of generality we may assume that $M+M^T$ and $N = N^T$ for some foxed $T>0$. Let us define for any $n\geq 1$
\[
d_n^k :=M^q_{Tk/n} - M^q_{T(k-1)/n},\;\;\;e_n^k :=N^q_{Tk/n} - N^q_{T(k-1)/n}, \;\;\; k=1,\ldots, n.
\]
For each $n\geq 1$, let $(\tilde d_n^k)_{k=1}^n$ be a decoupled tangent sequence of $(d_n^k)_{k=1}^n$ and $(\tilde e_n^k)_{k=1}^n$ be a decoupled tangent sequence of $(e_n^k)_{k=1}^n$.
Then by \cite[Lemma 2.3.3]{dlPG} we have that
\begin{equation}\label{eq:stepapproxandfortangent}
\begin{split}
\mathbb P\Bigl(\sup_{k=1}^n\|e_{k}^n\|\geq \lambda \Bigr) &\leq 2 \mathbb P\Bigl(\sup_{k=1}^n\| \tilde e_{k}^n\|\geq \lambda \Bigr),\;\;\;\;\; \lambda>0, \\
\mathbb P\Bigl(\sup_{k=1}^n\|\tilde d_{k}^n\|\geq \lambda \Bigr) &\leq 2 \mathbb P\Bigl(\sup_{k=1}^n\|  d_{k}^n\|\geq \lambda \Bigr),\;\;\;\;\; \lambda>0.
\end{split}
\end{equation}
Let $\widetilde M^q$ be a local martingale decoupled tangent to both $M^q$ and $N^q$. As $M^q$, $N^q$, and $\widetilde M^q$ have c\`adl\`ag trajectories (see Subsection \ref{subsec:BSvmart}), we have the following convergences
\[
\mathbb P-\lim_{n\to \infty}\sup_{k=1}^n\|  d_{k}^n\| = \sup_{0\leq t\leq T}\|\Delta M^q_t\|, \;\;\; \mathbb P-\lim_{n\to \infty}\sup_{k=1}^n\|  e_{k}^n\|=  \sup_{0\leq t\leq T}\|\Delta N^q_t\|,
\]
\[
\mathbb P\Bigl(\sup_{k=1}^n\| \tilde  d_{k}^n\|>\lambda\Bigr) , \mathbb P\Bigl(\sup_{k=1}^n\| \tilde e_{k}^n\|>\lambda\Bigr) \to \mathbb P\Bigl( \sup_{0\leq t\leq T}\|\Delta \widetilde M_t\|>\lambda\Bigr),\;\;\; n\to \infty,\;\; \lambda >0,
\]
where the latter follows from Theorem \ref{thm:condtrstepsnsidtrtothepledone}; thus by \eqref{eq:stepapproxandfortangent} we have that
\[
\mathbb P\bigl((\Delta N^q)^* \geq \lambda \bigr)\leq 2 \mathbb P\bigl((\Delta \widetilde M^q)^* \geq \lambda \bigr) \leq 4 \mathbb P\bigl((\Delta M^q)^* \geq \lambda \bigr),\;\;\; \lambda>0,
\]
so \eqref{eq:probofsupofjumpsoftangQLC} (and consequently \eqref{eq:probofsupofjumpsoftang}) follows.
\end{proof}

\begin{proof}[Proof of Theorem \ref{thm:tangentphiPDQLCtratata}]
First we prove the conditional symmetric case, and then the general case.

{\em Step 1: conditionally symmetric case.} Let $M$ and $N$ be conditionally symmetric. Then
by Lemma \ref{lem:goodlgivesphi} and Proposition \ref{prop:goodlambdapdQlc}  we have that 
\[
\mathbb E \phi(N^*) \lesssim_{\phi, X} \mathbb E \phi( \Delta M^* \vee\Delta N^* \vee M^*).
\]
As $\phi$ has a moderate growth, we have that
\[
 \mathbb E \phi( \Delta M^* \vee\Delta N^* \vee M^*) \eqsim_{\phi}  \mathbb E \phi( \Delta M^*) +  \mathbb E \phi(\Delta N^*) +  \mathbb E \phi(M^*),
\]
where
\begin{equation}\label{eq:phiDeltaM*leqphiM*}
  \mathbb E \phi( \Delta M^*) \lesssim_{\phi}  \mathbb E \phi( M^*),
\end{equation}
as $ \Delta M^* \leq 2 M^*$, and 
$$
\mathbb E \phi( \Delta N^*) \leq 6\mathbb E \phi( \Delta M^*) \lesssim_{\phi}  \mathbb E \phi( M^*),
$$ 
by Lemma \ref{lem:MNtangentthenthejumpsaresmall}, since $\mathbb E \phi(\xi) = \int_{\mathbb R_+} \mathbb P(\xi>\lambda) \ud \phi(\lambda)$ for any random variable $\xi:\Omega \to \mathbb R_+$ and since $\phi(0)=0$, and by \eqref{eq:phiDeltaM*leqphiM*}. Thus we have that $ \mathbb E \phi(N^*) \lesssim_{\phi}  \mathbb E \phi(M^*)$; the converse follows similarly.

{\em Step 2: general case.} First of all, it is sufficient to assume that $N$ is a decoupled tangent martingale to $M$. Let $N'$ be another decoupled tangent martingale to $M$ conditionally independent of $N$ given $\mathcal F$. Then $M-N'$ and $N-N'$ are tangent martingales which are conditionally symmetric, and thus 
$$
\mathbb E \phi(M^*)\stackrel{(i)} \leq\mathbb E \phi\bigl((M-N')^*\bigr) \stackrel{(ii)}\lesssim_{\phi, X} \mathbb E \phi\bigl((N-N')^*\bigr) \stackrel{(iii)}\lesssim_{\phi} \mathbb E \phi(N^*),
$$
where $(i)$ holds by the fact that a conditional expectation is a contraction and by the fact that $\phi$ is convex, $(ii)$ follows from Step 1, and $(iii)$ follows by the fact that $\phi$ is convex of moderate growth and that  $N$ and $N'$ are conditionally independent given $\mathcal F$ and equidistributed. 

\smallskip

Let us show that
\begin{equation}\label{eq:EphiN*leqEphiM*forQLCPDMART}
\mathbb E \phi(N^*)\lesssim_{\phi, X}  \mathbb E \phi(M^*).
\end{equation} 
Without loss of generality by the dominated convergence theorem we may assume that $M_t = M_T$ and $N_t= N_T$ for some fixed $T>0$ and any $t\geq T$. By Theorem \ref{thm:condtrstepsnsidtrtothepledone} there exist pure jump processes $(M^n)_{n\geq 1}$ and $(N^n)_{n\geq 1}$ such that
\begin{enumerate}[\rm (A)]
\item for each $n\geq 1$, $M^n$ and $N^n$ have jumps at $\{\tfrac{T}{n},\ldots,\tfrac{T(n-1)}{n}, T\}$,
\item for each $n\geq 1$, $(M^n_{Tk/n} - M^n_{T(k-1)/n})_{k=1}^n$ and $(N^n_{Tk/n} - N^n_{T(k-1)/n})_{k=1}^n$ are martingale difference sequences with respect to the enlarged filtration $(\overline{\mathcal F}_{Tk/n})_{k=1}^n$ (which enlarges $({\mathcal F}_{Tk/n})_{k=1}^n$) such that $(N^n_{Tk/n} - N^n_{T(k-1)/n})_{k=1}^n$ is a decoupled tangent martingale difference sequence to $(M^n_{Tk/n} - M^n_{T(k-1)/n})_{k=1}^n$,
\item $N^n$ converges to $N$ in distribution as random variables with values in the Skorokhod space $\mathcal D([0,T], X)$ as $n\to \infty$,
\item $M^n$ converges to $M$ a.s.\ as $n\to \infty$, and, moreover, $(M^n)^* \nearrow M^*$ a.s.
\end{enumerate}
By $\rm (B)$ and Theorem \ref{thm:Ephifordiscretetantdandee} we have that
\begin{equation}\label{eq:EphiNn*eqEphiMn*forQLCPDMART}
\mathbb E \phi\bigl((M^n)^*\bigr) \eqsim_{\phi, X}\mathbb E \phi\bigl((N^n)^*\bigr),
\end{equation}
for any $n\geq 1$. On the other hand we have that $\mathbb E \phi\bigl((M^n)^*\bigr) \nearrow \mathbb E \phi(M^*) $ by the dominated convergence theorem and $\rm (D)$. Therefore \eqref{eq:EphiN*leqEphiM*forQLCPDMART} follows from \eqref{eq:EphiNn*eqEphiMn*forQLCPDMART}, $\rm (C)$, and Fatou's lemma.
\end{proof}

\begin{remark}\label{rem:predboundjumpsforgoodL}
Note that if $M$ and $N$ have predictably bounded jumps, i.e.\ there exists a predictable increasing process $A:\mathbb R_+ \times \Omega \to \mathbb R_+$ such that $\|\Delta M_t\|, \|\Delta N_t\| \leq A_t$ a.s.\ for any $t\geq 0$, then there is no need in conditional symmetry in the proof of  Proposition \ref{prop:goodlambdapdQlc}, and hence there is no need in  using Section \ref{sec:appJKW}  in order to prove Theorem \ref{thm:tangentphiPDQLCtratata} (see e.g.\ the proof of Proposition \ref{prop:XUMDiffdecouplforPois}).
\end{remark}

Let us eventually show Theorem \ref{thm:XUMDiffphimaxtangent}. By Proposition \ref{prop:candecandphiarecorrespondest} it is sufficient to show that
\begin{equation}\label{eq:phiMcNcasffda}
\mathbb E \phi\bigl((M^c)^*\bigr) \eqsim_{\phi, X}\mathbb E \phi\bigl((N^c)^*\bigr),
\end{equation}
\begin{equation}\label{eq:phiMqNqasffda}
\mathbb E \phi\bigl((M^q)^*\bigr) \eqsim_{\phi, X}\mathbb E \phi\bigl((N^q)^*\bigr),
\end{equation}
\begin{equation}\label{eq:phiMaNaasffda}
\mathbb E \phi\bigl((M^a)^*\bigr) \eqsim_{\phi, X}\mathbb E \phi\bigl((N^a)^*\bigr).
\end{equation}
The inequality \eqref{eq:phiMcNcasffda} follows from Theorem \ref{thm:candecfortangaretang} and \ref{thm:MNconttantgethenphiinrq}, \eqref{eq:phiMqNqasffda} follows from Theorem \ref{thm:candecfortangaretang} and \ref{thm:tangentphiPDQLCtratata}, and finally \eqref{eq:phiMaNaasffda} holds by Theorem \ref{thm:candecfortangaretang} and \ref{thm:Ephifordiscretetantdandee}, and the approximation argument from the proof of Proposition \ref{prop:pdwajjusttang} and \ref{prop:MmapproxMinLpforacccase}.

\subsection{Not convex functions}

What is of a big interest is whether it is possible to have an analogue of Theorem \ref{thm:XUMDiffphimaxtangent} for a general $\phi$ of moderate growth (e.g.\ $\phi(t) = \sqrt t$), as it was done in the conditionally symmetric case in \cite[Theorem 4.1]{Kal17}. In our case this is possible due to the following theorem.

\begin{theorem}\label{thm:gencondeemartgenphimosdopfew}
 Let $X$ be a UMD Banach space, $\phi:\mathbb R_+ \to \mathbb R_+$ be an increasing function of moderate growth such that $\phi(0)=0$. Then for any tangent conditionally symmetric martingales $M, N:\mathbb R_+ \times \Omega \to X$ we have that
 \[
  \mathbb E \phi(M^*) \eqsim_{\phi, X} \phi(N^*).
 \]
\end{theorem}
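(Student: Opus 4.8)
\textbf{Proof proposal for Theorem \ref{thm:gencondeemartgenphimosdopfew}.}

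The plan is to reduce the general-$\phi$ statement for conditionally symmetric tangent martingales to the already-proven convex case via good-$\lambda$ inequalities, which do not require convexity of the test function. First I would split $M$ and $N$ using the canonical decomposition $M = M^c + M^q + M^a$ and $N = N^c + N^q + N^a$; by Theorem \ref{thm:candecfortangaretang} the corresponding parts are tangent, and moreover each part inherits conditional symmetry (if $M$ and $-M$ are tangent, then by Theorem \ref{thm:candecfortangaretang} applied to the pair $(M, -M)$ the parts $M^i$ and $-M^i = (-M)^i$ are tangent). Then I would prove an analogue of Proposition \ref{prop:candecandphiarecorrespondest} for a general increasing $\phi$ of moderate growth: the upper bound $\mathbb E \phi(M^*) \lesssim_{\phi} \mathbb E \phi((M^c)^*) + \mathbb E \phi((M^q)^*) + \mathbb E \phi((M^a)^*)$ follows from $M^* \leq (M^c)^* + (M^q)^* + (M^a)^*$ and moderate growth exactly as before (no convexity needed there), while the reverse inequality follows from the fact that $X$ is UMD so each $M^i$ has a covariation bilinear form, $[\![M]\!] = [\![M^c]\!] + [\![M^q]\!] + [\![M^a]\!]$ a.s.\ by \cite[Subsection 7.6]{Y18BDG}, and $\mathbb E \phi(M^*) \eqsim_{\phi, X} \mathbb E \phi(\gamma([\![M]\!]_\infty)) \geq \mathbb E \phi(\gamma([\![M^i]\!]_\infty)) \eqsim_{\phi, X} \mathbb E \phi((M^i)^*)$, where the outer equivalences come from \cite[Section 5]{Y18BDG} — those $\gamma$-characteristic estimates hold for moderate growth without convexity. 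So it suffices to prove the three separate statements: $\mathbb E \phi((M^c)^*) \eqsim_{\phi, X} \mathbb E \phi((N^c)^*)$, $\mathbb E \phi((M^q)^*) \eqsim_{\phi, X} \mathbb E \phi((N^q)^*)$, and $\mathbb E \phi((M^a)^*) \eqsim_{\phi, X} \mathbb E \phi((N^a)^*)$.

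For the continuous case I would invoke Proposition \ref{prop:goodlforconttanglocalmart}, which gives $\mathbb P(N^* > \beta\lambda, M^* \leq \delta\lambda) \lesssim_{p,X} \frac{\delta^p}{(\beta-1)^p}\mathbb P(N^* > \lambda)$; feeding this into a version of Lemma \ref{lem:goodlgivesphi} valid for increasing (not necessarily convex) $\phi$ of moderate growth — which is exactly the content of \cite[Theorem 4.1]{Kal17} or the good-$\lambda$ machinery for moderate-growth functions in \cite[Section 4]{OY18} — yields $\mathbb E \phi((N^c)^*) \lesssim_{\phi, X} \mathbb E \phi((M^c)^*)$, and the converse by symmetry of roles. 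For the purely discontinuous quasi-left continuous case I would use Proposition \ref{prop:goodlambdapdQlc}, which requires precisely the conditional symmetry hypothesis we have assumed, giving $\mathbb P(N^* > \beta\lambda, \Delta M^* \vee \Delta N^* \vee M^* \leq \delta\lambda) \lesssim_{p,X} \frac{\delta^p}{(\beta-\delta-1)^p}\mathbb P(N^* > \lambda)$; then the moderate-growth good-$\lambda$ lemma gives $\mathbb E \phi((N^q)^*) \lesssim_{\phi, X} \mathbb E \phi(\Delta M^* \vee \Delta N^* \vee M^*)$, and by moderate growth this is $\lesssim_{\phi} \mathbb E \phi(\Delta M^*) + \mathbb E \phi(\Delta N^*) + \mathbb E \phi(M^*)$, where $\mathbb E \phi(\Delta M^*) \lesssim_\phi \mathbb E \phi(M^*)$ since $\Delta M^* \leq 2M^*$ and $\mathbb E \phi(\Delta N^*) \lesssim_\phi \mathbb E \phi(\Delta M^*)$ by Lemma \ref{lem:MNtangentthenthejumpsaresmall} together with the layer-cake formula $\mathbb E \phi(\xi) = \int_{\mathbb R_+} \mathbb P(\xi > \lambda) \ud \phi(\lambda)$ and $\phi(0) = 0$. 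For the accessible-jump case I would run a discretization: approximate $M^a, N^a$ by their jumps along a refining sequence of predictable stopping times as in the proof of Proposition \ref{prop:pdwajjusttang} and \ref{prop:MmapproxMinLpforacccase}, reducing to tangent conditionally symmetric discrete martingale difference sequences, and then apply the moderate-growth $\phi$-estimate for conditionally symmetric discrete tangent sequences — this is Zinn's original result \cite{Zinn85}, extended to UMD spaces via \cite{HitUP} (cf.\ \cite[pp.\ 424--425]{CV07}), which does not need convexity in the conditionally symmetric setting — passing to the limit with Fatou's lemma and the approximation bounds.

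The main obstacle I anticipate is the accessible-jump step: one must verify that the discretization used in Proposition \ref{prop:pdwajjusttang} actually preserves conditional symmetry of the martingale difference sequences and that the limiting passage is compatible with a merely increasing $\phi$, where the lack of convexity rules out the Doob-type $L^p$ comparisons and forces careful use of the moderate-growth good-$\lambda$ inequality and of $\sup_{t} \|M^{a,m}_t\| = \sup_n \|\sum_{k=1}^n d_k\|$ identities; in particular one needs a moderate-growth analogue of Theorem \ref{thm:Ephifordiscretetantdandee} restricted to the conditionally symmetric case, which is available from \cite{HitUP} but whose invocation must be stated carefully. A secondary technical point is confirming that the good-$\lambda$-to-$\phi$ implication (Lemma \ref{lem:goodlgivesphi}) genuinely extends to increasing functions of moderate growth — this is standard (the convexity in Lemma \ref{lem:goodlgivesphi} is used only to control $\phi(\beta\lambda)$ and $\phi(\delta^{-1}\lambda)$, and moderate growth alone suffices for the first while a suitable doubling argument handles the second), but it should be cited to \cite{Kal17} or \cite{Bur73,Burk86} rather than reproved.
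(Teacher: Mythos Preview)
Your approach is different from the paper's and considerably more roundabout. The paper does \emph{not} split by the canonical decomposition at all: it observes that Proposition \ref{prop:goodlambdapdQlc} (and Lemma \ref{lem:smallerjumpsonlyforgoodl}) can be proved for \emph{arbitrary} conditionally symmetric tangent martingales --- not just purely discontinuous quasi-left continuous ones --- simply by adding the continuous increments $M^c_{\tau\wedge\rho\wedge t}-M^c_{\tau\wedge\sigma\wedge\rho}$ and $N^c_{\tau\wedge\rho\wedge t}-N^c_{\tau\wedge\sigma\wedge\rho}$ to $\widehat M$ and $\widehat N$ in that proof. With this single good-$\lambda$ inequality in hand, the paper then does a direct layer-cake computation (fixing $\delta=2$, choosing $p$ large compared to the growth exponent of $\phi$, and sending $\beta\to\infty$) to get $\mathbb E\phi(N^*)\lesssim_{\phi,X}\mathbb E\phi(M^*)$ in one stroke. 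This completely bypasses the need for a non-convex analogue of Proposition \ref{prop:candecandphiarecorrespondest}, for a non-convex discrete tangent $\phi$-estimate, and for a non-convex approximation result for the accessible-jump part.

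Your route is plausible but leans on several extensions you do not fully justify. First, the claim that the $\gamma$-characteristic BDG equivalence $\mathbb E\phi(M^*)\eqsim_{\phi,X}\mathbb E\phi(\gamma([\![M]\!]_\infty))$ from \cite{Y18BDG} holds without convexity is believable (real-valued BDG does) but is not what is stated there, and your recombination step depends on it. Second, and more seriously, your accessible-jump step requires both a non-convex version of Proposition \ref{prop:MmapproxMinLpforacccase} (whose proof uses \cite[Section 5]{Y18BDG}, stated for convex $\phi$) and a moderate-growth $\phi$-estimate for conditionally symmetric discrete tangent sequences in UMD spaces; your citation to Zinn and \cite{HitUP} does not obviously deliver the non-convex case, and the limiting passage via Fatou gives only one inequality. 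None of this is fatal, but each gap would need its own argument, whereas the paper's unified good-$\lambda$ approach avoids them all.
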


Note that $M$ is conditionally symmetric if and only if $M^d$ is conditionally symmetric, where $M=M^c+M^d$ is the Meyer-Yoeurp decomposition of $M$ (see Remark \ref{rem:MYdecBanach}).

\begin{proof}[Proof of Theorem \ref{thm:gencondeemartgenphimosdopfew}]
 First note that one can show Proposition \ref{prop:goodlambdapdQlc} and Lemma \ref{lem:smallerjumpsonlyforgoodl} for general conditionally symmetric $M$ and $N$ (with modifying $\widehat M$ and $\widehat N$ by adding to them $M^c_{\tau\wedge \rho\wedge t} - M^c_{\tau\wedge\sigma\wedge  \rho}$ and $N^c_{\tau\wedge \rho\wedge t} - N^c_{\tau\wedge\sigma\wedge  \rho}$ respectively). Then thanks to \eqref{eq:goodlforstochintwwrtrm} we get that for any $\lambda>0$, $\delta>0$, and $\beta>1+\delta$
 \begin{align*}
    \mathbb P(N^*>\beta\lambda)& - \mathbb P( \Delta M^* \vee\Delta N^* \vee M^* > \delta \lambda)\\
    &\leq \mathbb P(N^*>\beta\lambda, \Delta M^* \vee\Delta N^* \vee M^* \leq \delta \lambda) \lesssim_{p,X}\frac{ \delta^p}{(\beta-\delta - 1)^p}\mathbb P(N^*>\lambda),
 \end{align*}
so by fixing $p\geq 1$, $\delta=2$, and $\beta>4$ we derive for some fixed $C_{p,X}>0$ by \eqref{eq:probofsupofjumpsoftang}
\begin{align*}
 \mathbb P(N^*>\beta\lambda) &\leq C_{p,X} \frac{2}{(\beta-3)^p} \mathbb P(N^*>\lambda) + \mathbb P( \Delta M^* >2\lambda) + \mathbb P(\Delta N^* >2\lambda) + \mathbb P( M^* > 2\lambda)\\
 &\leq C_{p,X} \frac{2}{(\beta-3)^{p}} \mathbb P(N^*>\lambda) + 8\mathbb P( M^* > \lambda).
\end{align*}
Then
\begin{equation}\label{eq:phiN*formviaPN*fncik}
 \mathbb E \phi(N^*) = \int_{\mathbb R_+} \mathbb P(\phi(N^*)>\lambda) \ud \lambda = \int_{\mathbb R_+} \mathbb P(N^*>\lambda) (\phi^{-1})'(\lambda) \ud \lambda,
\end{equation}
consequently in particular if $\phi(ex) \leq \alpha \phi(x)$ for some $\alpha>1$ (hence $\phi(\beta x) \leq \beta^{\ln \alpha +1} \phi(x)$ for $\beta$ big enough), which holds as $\phi$ is of moderate growth, then analogously to \cite[p.\ 38]{Kal17} by using the fact that $\phi$ is increasing so $(\phi^{-1})'$ is nonnegative a.s.\ on $\mathbb R_+$
\begin{align*}
 \Bigl(\beta^{-\ln \alpha-1} &-   C_{p,X} \frac{2}{(\beta-3)^p}  \Bigr) \mathbb E \phi(N^*) \leq E \phi(N^*/\beta) -  C_{p,X} \frac{2}{(\beta-3)^p}   \mathbb E \phi(N^*)\\
 &=  \int_{\mathbb R_+} \Bigl(\mathbb P(N^*/\beta>\lambda) -C_{p,X} \frac{2}{(\beta-3)^p}   \mathbb  P(N^*>\lambda)\Bigr) (\phi^{-1})'(\lambda) \ud \lambda\\
 &\leq  \int_{\mathbb R_+} \mathbb P(M^*>\lambda) (\phi^{-1})'(\lambda) \ud \lambda = \mathbb E \phi(M^*),
\end{align*}
so the desired follows by choosing $p>\ln \alpha +2$ and $\beta$ big enough.
\end{proof}

Unfortunately the author does not know whether Theorem \ref{thm:gencondeemartgenphimosdopfew} holds for general martingales. Nonetheless, the following upper estimate can be shown.

\begin{theorem}
 Let $X$ be a UMD Banach space, $\phi:\mathbb R_+ \to \mathbb R_+$ be an increasing function of moderate growth such that $\phi(0)=0$. Then for any local martingale $M:\mathbb R_+ \times \Omega \to X$ with a decoupled tangent local martingale $N$ we have that
 \[
  \mathbb E \phi(M^*) \lesssim_{\phi, X} \mathbb E  \phi(N^*).
 \]
\end{theorem}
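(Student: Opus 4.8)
The plan is to decompose $M$ via the canonical decomposition $M = M^c + M^q + M^a$ (which exists since $X$ is UMD, by Theorem \ref{thm:candecXvalued}), to obtain the corresponding decoupled tangent pieces $N^c$, $N^q$, $N^a$ of $N$ (these are themselves the canonical decomposition of $N$ by Subsection \ref{subsec:charandthecandecbe}, since $N(\omega)$ has independent increments and the correct local characteristics for a.e.\ $\omega$), and to prove the upper bound separately for each of the three cases, then reassemble. The reassembly uses the fact that $N^c_T(\omega)$, $N^q_T(\omega)$, $N^a_T(\omega)$ are independent mean-zero for a.e.\ $\omega$ (Definition \ref{def:dectanglocmartcontimecased} and the structure of martingales with independent increments, cf.\ the proof of Theorem \ref{thm:deopropertandLpboubdsfordecoupltangma}), so by a contraction/triangle argument and moderate growth of $\phi$ one gets $\mathbb E\phi(M^*) \lesssim_{\phi, X} \mathbb E\phi((M^c)^*) + \mathbb E\phi((M^q)^*) + \mathbb E\phi((M^a)^*)$ via Proposition \ref{prop:candecandphiarecorrespondest}, and $\mathbb E\phi((N^i)^*) \lesssim_{\phi} \mathbb E\phi(N^*)$ for each $i$ using that a conditional expectation onto the $\sigma$-algebra generated by the $i$-th piece fixes $N^i$.

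\textbf{The continuous part.} For $M^c$ and its decoupled tangent $N^c$, recall from Theorem \ref{thm:DTMforcontcase} (and Remark \ref{rem:condectangmarthasindinrccond[[M]]}) that $M^c$ and $N^c$ are built as time-changed stochastic integrals $\Phi \cdot W_H$ and $\Phi \cdot W'_H$ against the \emph{same} deterministic-given-$\omega$ integrand $\Phi$, with $\Phi$ and the time change measurable with respect to $\sigma([\![M^c]\!])$. Hence $M^c$ and $N^c$ are tangent continuous local martingales, and conditionally (given $[\![M^c]\!]$) they are even conditionally symmetric in the sense needed. I would invoke Theorem \ref{thm:gencondeemartgenphimosdopfew} (or rather its proof, which only needs conditional symmetry and goes through for continuous martingales via Proposition \ref{prop:goodlforconttanglocalmart}): for a.e.\ fixed value of $[\![M^c]\!]$, the pair $(M^c, -N^c$-type symmetrization$)$ lives in the conditionally symmetric world, giving $\mathbb E\phi((M^c)^*) \lesssim_{\phi,X} \mathbb E\phi((N^c)^*)$. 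Alternatively, and more cleanly, one can run the symmetrization trick of Step 2 in the proof of Theorem \ref{thm:tangentphiPDQLCtratata}: take $N'$ a decoupled tangent martingale to $M^c$ conditionally independent of $N^c$ given $\mathcal F$; then $M^c - N'$ and $N^c - N'$ are tangent and conditionally symmetric, so $\mathbb E\phi((M^c)^*) \le \mathbb E\phi((M^c-N')^*) \lesssim_{\phi,X} \mathbb E\phi((N^c-N')^*) \lesssim_{\phi} \mathbb E\phi((N^c)^*)$, where the first inequality uses Jensen (conditional expectation is a contraction, $\phi$ increasing — wait, here I need $\phi$ convex; $\phi$ is only increasing of moderate growth), so this contraction step must instead be handled by the weak-type/good-$\lambda$ bound directly rather than by Jensen.

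\textbf{Remark on the convexity gap.} Since $\phi$ is only assumed increasing of moderate growth (not convex), the Jensen-style contraction step $\mathbb E\phi((M-N')^*)\le \mathbb E\phi(M^*)$ used in the convex case is unavailable. The cleanest route, which I expect to be the main obstacle to write carefully, is to prove the one-sided (upper) estimate directly from good-$\lambda$ inequalities in the spirit of the proof of Theorem \ref{thm:gencondeemartgenphimosdopfew}, applied piecewise: for $N^q$ one uses Proposition \ref{prop:goodlambdapdQlc} together with Lemma \ref{lem:MNtangentthenthejumpsaresmall} exactly as in that proof, yielding $\mathbb E\phi((M^q)^*)\lesssim_{\phi,X}\mathbb E\phi((N^q)^*)$ (and the \emph{reverse} as well, since the roles of $M^q$ and $N^q$ can be swapped there); for $N^a$ one discretizes as in Proposition \ref{prop:pdwajjusttang}/\ref{prop:MmapproxMinLpforacccase}, reduces to the discrete conditionally-symmetric-after-symmetrization estimate of Hitczenko \cite{HitUP} (cf.\ Theorem \ref{thm:Ephifordiscretetantdandee}) which covers non-convex $\phi$ of moderate growth in the conditionally symmetric case, then removes the symmetrization at the cost of only the \emph{upper} bound via the $N'$-trick using that a discrete conditional expectation $g_n = \mathbb E(h_n\mid\mathcal G_n)$ satisfies the weak-type good-$\lambda$ passage of Lemma \ref{lem:goodlgivesphi}. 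Assembling: the three upper bounds combine via Proposition \ref{prop:candecandphiarecorrespondest} and the independence of the pieces of $N$ to give $\mathbb E\phi(M^*) \lesssim_{\phi,X} \mathbb E\phi((N^c)^*) + \mathbb E\phi((N^q)^*) + \mathbb E\phi((N^a)^*) \lesssim_{\phi} \mathbb E\phi(N^*)$, where the last step uses moderate growth of $\phi$ and that each $(N^i)^* \le \mathbb E_{(\text{other pieces})}[\text{something}]$ — more precisely that $N^i$ is a conditional expectation of $N$ onto a sub-$\sigma$-algebra, so $\mathbb P((N^i)^* > \lambda) \lesssim \mathbb P(N^* > c\lambda)$ by a weak-type argument, hence $\mathbb E\phi((N^i)^*) \lesssim_{\phi} \mathbb E\phi(N^*)$. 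The main obstacle is precisely this bookkeeping around the lack of convexity: one must everywhere replace "Jensen + $L^p$" by "weak-type + good-$\lambda$" and make sure the resulting constants depend only on $\phi$ (through its moderate-growth exponent) and $X$.
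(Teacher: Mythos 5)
You have correctly identified the key obstacle: since $\phi$ is only increasing of moderate growth rather than convex, the Jensen-style contraction step $\mathbb E\phi\bigl((M-N')^*\bigr)\leq\mathbb E\phi(M^*)$ is unavailable. You also correctly reach for the symmetrization trick (adding a second decoupled tangent $\widetilde N$, conditionally independent of $N$ given the local characteristics, so that $M-\widetilde N$ and $N-\widetilde N$ become tangent and conditionally symmetric). But you then leave the Jensen issue unresolved, declaring that it ``must instead be handled by the weak-type/good-$\lambda$ bound directly'' and pivoting to a piecewise canonical-decomposition argument. That route has its own soft spot: you invoke Proposition \ref{prop:candecandphiarecorrespondest}, which is stated and proved for \emph{convex} $\phi$, and you do not check whether the one-sided $\lesssim_{\phi,X}$ direction you need survives without convexity.

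The actual fix is considerably simpler than a good-$\lambda$ argument and makes the canonical decomposition unnecessary. From the pointwise inequality $M^*\leq(M-\widetilde N)^*+\widetilde N^*$ one obtains $\mathbb P(M^*>\lambda)\leq\mathbb P\bigl((M-\widetilde N)^*>\lambda/2\bigr)+\mathbb P\bigl(\widetilde N^*>\lambda/2\bigr)$; integrating against $\ud \phi(\lambda)$ via the representation $\mathbb E\phi(\xi)=\int_{\mathbb R_+}\mathbb P(\xi>\lambda)\ud\phi(\lambda)$, absorbing the factor $1/2$ via moderate growth, and using that $N$ and $\widetilde N$ are equidistributed gives $\mathbb E\phi(M^*)\lesssim_\phi\mathbb E\phi\bigl((M-\widetilde N)^*\bigr)+\mathbb E\phi(N^*)$. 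One then applies Theorem \ref{thm:gencondeemartgenphimosdopfew} directly to the full pair $(M-\widetilde N, N-\widetilde N)$ --- no decomposition into $c,q,a$ parts --- and bounds $\mathbb E\phi\bigl((N-\widetilde N)^*\bigr)\lesssim_\phi\mathbb E\phi(N^*)$ by the L\'evy maximal inequality \cite[Proposition 6.1.12]{HNVW2} (since $N-\widetilde N$ has conditionally independent symmetric increments) followed by the same elementary union-bound-plus-moderate-growth step. So the gap is not the idea --- you found it --- but the closing: the replacement for Jensen is a one-line union bound, not a good-$\lambda$ inequality, and once that is in place the argument closes for the whole martingale in one stroke.
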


\begin{proof}
 Let $\widetilde N$ be another decoupled tangent local martingale to $M$ which is conditionally independent of $N$ given the local characteristics of $M$. Then by Theorem \ref{thm:gencondeemartgenphimosdopfew} we have that 
 $$
 \mathbb E \phi\bigl((M-\widetilde N)^*\bigr) \eqsim_{\phi, X} \mathbb E\phi\bigl((N-\widetilde N)^*\bigr).
 $$
 It remains to notice that $\mathbb P(M^* \geq \lambda) \leq \mathbb P\bigl((M-\widetilde N)^* \geq \lambda/2\bigr) + \mathbb P\bigl((\widetilde N)^* \geq \lambda/2\bigr)$, so $\mathbb E \phi(M^*) \lesssim_{\phi} \mathbb E \phi((M-\widetilde N)^*) + \mathbb E \phi(N^*)$ by \eqref{eq:phiN*formviaPN*fncik}, the fact that $\phi$ has a moderate growth, and due to the equidistribution of $N$ and $\widetilde N$, and to note that by the fact that $N-\widetilde N$ has increments which are independent symmetric  given the local characteristics of $M$ we have that thanks to \eqref{eq:phiN*formviaPN*fncik} and \cite[Proposition 6.1.12]{HNVW2}
 \[
  \mathbb E\phi\bigl((N-\widetilde N)^*\bigr) \eqsim \mathbb E\phi(\|N_{\infty}-\widetilde N_{\infty}\|) \stackrel{(*)}\lesssim_{\phi}  \mathbb E\phi(\|N_{\infty}\|) \lesssim \mathbb E\phi(N_{\infty}^*)
 \]
where $(*)$ follows from \eqref{eq:phiN*formviaPN*fncik}, the fact that $\mathbb P(\|N_{\infty}-\widetilde N_{\infty}\| >\lambda) \leq \mathbb P(\|N_{\infty}\| >\lambda/2) + \mathbb P(\|\widetilde N_{\infty}\| >\lambda/2)$, and the fact that $\phi$ has a moderate growth.
\end{proof}

\section{Integration with respect to a general martingale}\label{sec:intwrtgenmart}

The present section is devoted to new estimates for stochastic integrals with values in UMD Banach spaces. These are so-called {\em predictable estimates} as we will have a {\em predictable} process on the right-hand side since this process depends only on the corresponding local characteristics and thus it is  predictable. In particular, these estimates extend sharp bounds for a stochastic integral with respect to a cylindrical Brownian motion obtained by van Neerven, Veraar, and Weis in \cite{NVW,NVW15} (see also \cite{Ver,VY16} for continuous martingale case). On the other hand, this section in some sense extends a recent work \cite{DY17} by Dirksen and the author on stochastic integration in $L^q$-spaces, though the latter publication provides precise formulas for the right-hand side of \eqref{eq:stochintestwithpredRHS}, i.e.\ formulas that do not depend on the decoupled tangent martingale or the corresponding Cox process, but only on $\nu^M$. We also wish to note that the obtained below estimates are very different from those proven in \cite[Subsection 7.1]{Y18BDG}: estimates \eqref{eq:stochintestwithpredRHS} are more in the spirit of  works of Novikov \cite{Nov75}, Burkholder \cite{Bur73}, and Rosenthal \cite{Ros70}, while \cite[Subsection 7.1]{Y18BDG} is based on Burkholder-Davis-Gundy inequalities, which are similar to square function estimates (see e.g.\ also \cite{VY18}).

\begin{theorem}
Let $H$ be a Hilbert space, $X$ be a UMD Banach space, $\widetilde  M:\mathbb R_+\times \Omega \to H$ be a local martingale. Then for any elementary predictable $\Phi:\mathbb R_+ \times \Omega \to \mathcal L(H, X)$ and for any $1\leq p<\infty$ one has that
\begin{align}\label{eq:stochintestwithpredRHS}
 \mathbb E \sup_{t\geq 0} \Bigl\| \int_0^t \Phi \ud \widetilde M \Bigr\|^p &\eqsim_{p, X} \mathbb E \|\Phi q_{\widetilde  M^c}^{1/2}\|_{\gamma(L^2(\mathbb R_+, [ M^c]; H), X)}^p\nonumber\\ 
 &\quad\quad \quad+ \mathbb E \mathbb E_{\rm Cox} \Bigl\|\int_{\mathbb R_+ \times H}\Phi(s) h \ud \bar \mu^{\widetilde M^q}_{\rm Cox}(s, h) \Bigr\|^p\\
 &\quad \quad \quad \quad\quad \quad+ \mathbb E \mathbb E_{\rm CI} \Bigl\|\sum_{0\leq t < \infty} \Phi \Delta\widetilde  N_t^{a}\Bigr\|^p,\nonumber
\end{align}
where $\widetilde  M=\widetilde   M^c + \widetilde  M^q + \widetilde  M^a$ is the canonical decomposition, $q_{\widetilde M^c}$ is as defined in Subsection \ref{subsec:quadrvar}, $\mu^{\widetilde M^q}_{\rm Cox}$ is a Cox process directed by $\nu^{M^q}$, and $\widetilde N^{a}$ is a decoupled tangent martingale to $\widetilde  M^a$ constructed in Theorem \ref{thm:detangmartforMXVpdwithaccjumps}, while $\mathbb E_{\rm Cox}$ denotes an expectation for a fixed $\omega\in \Omega$ in a probability space generated by $\mu_{\rm Cox}$, and $\mathbb E_{\rm CI}$ denotes an expectation for a fixed $\omega \in \Omega$ in a probability space generated by $\widetilde N^a$ (see Subsection~\ref{subsec:ConexponPSCondProbCondIndep}).
\end{theorem}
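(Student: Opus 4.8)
The plan is to reduce the problem to the three pieces of the canonical decomposition of $\widetilde M$ and then invoke the decoupling/tangency machinery developed earlier in the paper. First I would decompose $\widetilde M = \widetilde M^c + \widetilde M^q + \widetilde M^a$ (which exists since $H$ is a Hilbert space, hence UMD), and correspondingly write $\int \Phi \ud \widetilde M = \int \Phi \ud \widetilde M^c + \int \Phi \ud \widetilde M^q + \int \Phi \ud \widetilde M^a$. Since $X$ is UMD, the martingale $\int \Phi \ud \widetilde M$ itself has a canonical decomposition, and the three integrals on the right are its continuous, purely discontinuous quasi-left continuous, and purely discontinuous accessible-jump parts respectively (the latter two jump at the jump times of $\widetilde M^q$ and $\widetilde M^a$). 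Hence by Theorem~\ref{thm:candecXvalued} (the strong $L^p$ estimate \eqref{eq:candecstrongLpestmiaed} for the canonical decomposition) it suffices to prove the analogue of \eqref{eq:stochintestwithpredRHS} for each of the three summands separately. In each of these three sub-problems $\Phi$ is elementary predictable, so by the Pettis measurability theorem we may assume $X$ separable; we also reduce to the case of finite strong $L^1$-moment by the usual stopping-time argument (Theorem~\ref{thm:MNtangent==>MtauNtautangent+ifNCIINtauCII} and Remark~\ref{rem:locmartislocallyL1DR+X}).

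For the continuous part, I would observe that $\int \Phi \ud \widetilde M^c$ is a continuous local martingale whose covariation bilinear form is determined by $q_{\widetilde M^c}$ and $[M^c]$; concretely, $[\![\int \Phi \ud \widetilde M^c]\!]_\infty(x^*,x^*) = \int_0^\infty \|q_{\widetilde M^c}^{1/2}(s)\Phi^*(s)x^*\|^2 \ud[M^c]_s$. By Remark~\ref{rem:ifUMDthencovbilform} and the identification of the Gaussian characteristic with a $\gamma$-norm (as used in the proof of Theorem~\ref{thm:DTMforcontcase} and in \cite{Y18BDG}), the strong $L^p$-moment of this martingale is comparable to $\mathbb E\|\Phi q_{\widetilde M^c}^{1/2}\|^p_{\gamma(L^2(\mathbb R_+,[M^c];H),X)}$; this is essentially Proposition~\ref{prop:domcontcase} combined with the $\gamma$-norm computation, or one can invoke the time-change representation from Theorem~\ref{thm:DTMforcontcase} directly. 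For the quasi-left continuous part, $\int \Phi \ud \widetilde M^q$ can be written (after passing through the jump measure of $\widetilde M^q$ on $\mathbb R_+\times H$ via $(s,h)\mapsto \Phi(s)h$) as an integral $\int_{\mathbb R_+\times H}\Phi(s)h \ud \bar\mu^{\widetilde M^q}(s,h)$, which makes sense by Theorem~\ref{thm:XisUMDiffMisintxwrtbarmuMvain}; then Theorem~\ref{thm:mumuCoxcomparable} (the Cox-process comparison \eqref{eq:CoxmeasiffUMD}) gives $\mathbb E\sup_t\|\int\Phi\ud\widetilde M^q\|^p \eqsim_{p,X} \mathbb E\mathbb E_{\rm Cox}\|\int_{\mathbb R_+\times H}\Phi(s)h\ud\bar\mu^{\widetilde M^q}_{\rm Cox}(s,h)\|^p$. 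For the accessible part, I would discretize: $\int \Phi \ud \widetilde M^a$ is a purely discontinuous martingale with accessible jumps, so by Lemma~\ref{lem:PDmAJhasjumpsatprsttimes} its jumps sit at a sequence of predictable stopping times, and its jump at $\tau_n$ is $\Phi(\tau_n)\Delta\widetilde M^a_{\tau_n}$; applying Theorem~\ref{thm:detangmartforMXVpdwithaccjumps} and Proposition~\ref{prop:pdwajjusttang} (equivalently the discrete Theorem~\ref{thm:intromccnnll}) gives the comparison with $\mathbb E\mathbb E_{\rm CI}\|\sum_t \Phi\Delta\widetilde N^a_t\|^p$, where $\widetilde N^a$ is the decoupled tangent martingale of $\widetilde M^a$. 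Finally I would combine the three comparisons, using \eqref{eq:candecstrongLpestmiaed} to glue them into \eqref{eq:stochintestwithpredRHS}.

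The main obstacle I expect is the bookkeeping needed to justify that $(s,h)\mapsto \Phi(s)h$ is integrable with respect to the relevant (compensated or Cox) jump measures on $\mathbb R_+\times H$ and that pushing $\Phi$ through the jump measure of $\widetilde M^q$ (resp. $\widetilde M^a$) is legitimate — i.e.\ that $\int \Phi \ud \widetilde M^q = \int_{\mathbb R_+\times H}\Phi(s)h\ud\bar\mu^{\widetilde M^q}(s,h)$ as martingales, with the correct compensator, and similarly that the accessible-part jumps are correctly captured by the discrete decoupled sequence. This requires carefully matching the jump measure $\mu^{\int\Phi\ud\widetilde M^q}$ on $\mathbb R_+\times X$ with the image of $\mu^{\widetilde M^q}$ under $(s,h)\mapsto(s,\Phi(s)h)$, checking that compensators transform accordingly, and handling the $\gamma$-radonifying integrability of $\Phi q_{\widetilde M^c}^{1/2}$ for the continuous part. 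A secondary technical point is that the Cox process and the CI-enlargement are built on different enlarged probability spaces for the three parts, so when recombining one must take a common enlargement on which all three decoupled objects (time-changed independent Brownian motion, Cox process, and the $\overline{\mathbb P}$ of Theorem~\ref{thm:detangmartforMXVpdwithaccjumps}) are defined independently, exactly as in Subsection~\ref{subsec:CIprocess}; but this is routine given the earlier construction. Everything else is a direct application of the already-established tangency $L^p$-estimates for the three cases.
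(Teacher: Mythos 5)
Your proposal is correct and follows essentially the same route as the paper, which simply invokes the canonical-decomposition $L^p$ estimate (Theorem~\ref{thm:candecXvalued}), the continuous case (the paper cites \cite[Example 3.19 and Theorem 4.1]{VY16}, which is equivalent to your $\gamma$-norm/time-change argument via Theorem~\ref{thm:DTMforcontcase}), the Cox comparison Theorem~\ref{thm:mumuCoxcomparable} for the quasi-left continuous part, and Proposition~\ref{prop:pdwajjusttang} together with Theorem~\ref{thm:detangmartforMXVpdwithaccjumps} for the accessible part. Your explicit discussion of the jump-measure bookkeeping and the common enlargement is exactly the unspoken content behind the paper's one-line proof.
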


\begin{proof}
The theorem follows from Theorem \ref{thm:candecXvalued}, the continuous case \cite[Example 3.19 and Theorem 4.1]{VY16}, Theorem \ref{thm:mumuCoxcomparable}, and Proposition \ref{prop:pdwajjusttang}.
\end{proof}

\begin{remark}
 As on both the right- and the left-hand sides of \eqref{eq:stochintestwithpredRHS} we have norms (strictly speaking, seminorms, but we can consider a quotient space and make these expressions norms), analogously to \cite[Subsection 7.1]{Y18BDG} we can extend the definition of a stochastic integral to {\em any} strongly predictable $\Phi:\mathbb R_+ \times \Omega \to X$ with
 \begin{multline*}
  \mathbb E \|\Phi q_{\widetilde M^c}^{1/2}\|_{\gamma(L^2(\mathbb R_+, [\widetilde  M^c]; H), X)}+ \mathbb E \mathbb E_{\rm Cox} \Bigl\|\int_{\mathbb R_+ \times H}\Phi(s) h \ud \bar \mu^{\widetilde M^q}_{\rm Cox}(s, h) \Bigr\|\\
  + \mathbb E \mathbb E_{\rm CI} \Bigl\|\sum_{0\leq t < \infty} \Phi \Delta\widetilde  N_t^{a}\Bigr\|<\infty.
 \end{multline*}
\end{remark}

\begin{remark}
 Due to standard Lenglart's trick \cite[Corollaire II]{Len77} we can extend the upper bounds of \eqref{eq:stochintestwithpredRHS} to $p\in (0,1)$ in the following way
\begin{align}\label{eq:stochintestwithpredRHSpin01}
 \mathbb E \sup_{t\geq 0} \Bigl\| \int_0^t \Phi \ud \widetilde M \Bigr\|^p &\lesssim_{p, X} \mathbb E \|\Phi q_{\widetilde  M^c}^{1/2}\|_{\gamma(L^2(\mathbb R_+, [\widetilde  M^c]; H), X)}^p\nonumber\\ 
 &\quad\quad \quad+ \mathbb E \Bigl (\mathbb E_{\rm Cox} \Bigl\|\int_{\mathbb R_+ \times H}\Phi(s) h \ud \bar \mu^{\widetilde M^q}_{\rm Cox}(s, h) \Bigr\|\Bigr)^p\\
 &\quad \quad \quad \quad\quad \quad+ \mathbb E \Bigl(\mathbb E_{\rm CI} \Bigl\|\sum_{0\leq t < \infty} \Phi \Delta\widetilde  N_t^{a}\Bigr\|\Bigr)^p.\nonumber
\end{align}
\end{remark}

\begin{remark}
 Why expressions on the right-hand sides of \eqref{eq:stochintestwithpredRHS} and \eqref{eq:stochintestwithpredRHSpin01} can be useful? First, if one fixes $\omega\in \Omega$, then these expressions become stochastic integrals with respect to martingales with independent increments, which it is easier to work with. Second, if we are in the quasi-left continuous setting (i.e.\ $M^a=0$ and we have only Poisson-like jumps), then we end up with $\gamma$-norms and the norms generated by Cox processes, which might be of $\gamma$-radonifying nature but with the Poisson distribution (see Remark \ref{rem:CoxidlikePoisson}).
\end{remark}

\begin{remark}\label{rem:stochinforgenmartfordecpropet}
 Thanks to Theorem \ref{thm:deopropertandLpboubdsfordecoupltangma} both \eqref{eq:stochintestwithpredRHSpin01} and the upper bound of \eqref{eq:stochintestwithpredRHS} hold true if $X$ has the decoupling property.
\end{remark}

\section{Weak tangency versus tangency}\label{sec:WTversusT}

The natural question is raising up while working with local characteristics in infinite dimensions: given a Banach space $X$ (perhaps, not UMD) and an \mbox{$X$-va}\-lued martingale $M$. {\em Can we have results of the form \eqref{eq:Lpboundsforgentangmarraz} for more general Banach spaces by using a family of local characteristics $([\langle M, x^* \rangle^c], \nu^{\langle M, x^* \rangle})_{x^*\in X^*}$ instead of local characteristics discovered in Section \ref{sec:tangmartconttime} (note that the latter even might not exist by Theorem \ref{thm:tangentgencaseUMDuhoditrotasoldat})? And how do these weak local characteristics correspond to the those defined in Section \ref{sec:tangmartconttime}?} Let us answer these questions. First we will need the following definitions.

\begin{definition}
Let $X$ be a Banach space, $M:\mathbb R_+ \times \Omega \to X$ be local martingale. Then the family $([\langle M, x^* \rangle^c], \nu^{\langle M, x^* \rangle})_{x^*\in X^*}$ is called {\em weak local characteristics}.
\end{definition}

\begin{definition}
Let $X$ be a Banach space, $M, N:\mathbb R_+ \times \Omega \to X$. Then $M$ and $N$ are {\em weakly tangent} if $\langle M, x^*\rangle$ and $\langle N, x^*\rangle$ are tangent for any $x^*\in X^*$, i.e.\ if $M$ and $N$ have the same weak local characteristics.
\end{definition}

Here we show that weak tangency coincides with tangency in the UMD case, so this approach cannot extend Theorem \ref{thm:tangentgencaseUMDuhoditrotasoldat} in the UMD setting.

\begin{theorem}\label{thm:tangiffwtangUMDcase}
Let $X$ be a Banach space, $M, N:\mathbb R_+ \times \Omega \to X$ be local martingales which have local characteristics. Then $M$ and $N$ are tangent if and only if they are weakly tangent. 
\end{theorem}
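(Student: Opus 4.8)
The direction ``tangent $\Rightarrow$ weakly tangent'' is immediate and should be dispatched first: if $M$ and $N$ have the same local characteristics $([\![M^c]\!],\nu^M)=([\![N^c]\!],\nu^N)$, then for any fixed $x^*\in X^*$ the real-valued local martingales $\langle M,x^*\rangle$ and $\langle N,x^*\rangle$ have local characteristics $([\![M^c]\!](x^*,x^*),\nu^{\langle M,x^*\rangle})$ and $([\![N^c]\!](x^*,x^*),\nu^{\langle N,x^*\rangle})$ respectively; the first coordinates coincide by definition of the covariation bilinear form, and the second coincide because $\nu^{\langle M,x^*\rangle}$ is the compensator of $\mu^{\langle M,x^*\rangle}$, which is the pushforward of $\mu^M$ under $x\mapsto\langle x,x^*\rangle$ (up to the removal of the atom at $0$), hence determined by $\nu^M$. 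So the real-valued pieces are tangent, i.e.\ $M$ and $N$ are weakly tangent. For this direction I would also record explicitly, via Remark~\ref{rem:candec==>candecforanyx*}, that $\langle M^c,x^*\rangle$ is indeed the continuous part of $\langle M,x^*\rangle$, so that $[\langle M,x^*\rangle^c]=[\![M^c]\!](x^*,x^*)$.

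For the converse ``weakly tangent $\Rightarrow$ tangent'' I plan to recover the two components of the local characteristics separately from the one-dimensional data. For the continuous part: weak tangency gives $[\![M^c]\!](x^*,x^*)=[\langle M,x^*\rangle^c]=[\langle N,x^*\rangle^c]=[\![N^c]\!](x^*,x^*)$ as processes, a.s., for each $x^*\in X^*$; since a symmetric bilinear form is determined by its associated quadratic form (polarization), and since both $[\![M^c]\!]$ and $[\![N^c]\!]$ are genuine bilinear-form-valued processes existing by Remark~\ref{rem:ifUMDthencovbilform}, we can polarize on a countable dense subset of $X^*$ (using separability of $X^*$, which holds as $X$ is UMD hence reflexive, after the usual Pettis reduction to $X$ separable) to conclude $[\![M^c]\!]=[\![N^c]\!]$ a.s. For the jump part: I want to show the random measures $\nu^M$ and $\nu^N$ on $\mathbb R_+\times X$ agree. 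The natural route is to test them against a generating family of functions. Because $\mathcal B(X)$ is countably generated and $X$ is separable, it suffices to show that for a countable family of $x^*$'s and a countable family of Borel sets of the form $\{x:\langle x,x^*\rangle\in B\}$, $B\in\mathcal B(\mathbb R)$, the $\nu^M$- and $\nu^N$-measures coincide; these cylinder-type sets generate $\mathcal B(\mathbb R_+)\otimes\mathcal B(X)$. But for a single $x^*$, the compensator $\nu^{\langle M,x^*\rangle}$ of $\mu^{\langle M,x^*\rangle}$ is, away from $0$, exactly the pushforward of $\nu^M$ under $(t,x)\mapsto(t,\langle x,x^*\rangle)$, so weak tangency $\nu^{\langle M,x^*\rangle}=\nu^{\langle N,x^*\rangle}$ gives agreement of $\nu^M$ and $\nu^N$ on all sets $I\times\{x:\langle x,x^*\rangle\in B\}$ with $0\notin B$. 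A monotone-class / $\pi$-$\lambda$ argument over the countable generating family then upgrades this to equality of $\nu^M$ and $\nu^N$ as predictable random measures on $\mathbb R_+\times(X\setminus\{0\})$, and since neither charges $\mathbb R_+\times\{0\}$ we are done.

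The step I expect to be the main obstacle is the passage from ``the pushforwards of $\nu^M$ under every $x^*$ agree'' to ``$\nu^M$ itself agrees'': one must be careful that the equalities hold \emph{simultaneously a.s.}\ for a countable generating family and then that the almost-sure equality on a $\pi$-system of generators of $\mathcal B(\mathbb R_+)\otimes\mathcal B(X\setminus\{0\})$ (closed under finite intersections) propagates to the whole $\sigma$-algebra via a Dynkin argument applied $\omega$-by-$\omega$; and one must handle the atom at $0$, i.e.\ the fact that $\mu^{\langle M,x^*\rangle}$ records only $\langle\Delta M_t,x^*\rangle\neq 0$, so a jump $\Delta M_t$ with $\langle\Delta M_t,x^*\rangle=0$ is invisible to that particular coordinate — this is why one needs the full countable family of functionals together with separating points of $X$ (Hahn--Banach) to ensure no jump escapes detection. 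A secondary technical point is making sure the ``countable dense set of $x^*$'' arguments for $[\![M^c]\!]$ and the measurability bookkeeping for $\nu^M$ are compatible with the reduction to separable $X$; I would handle both by the standard Pettis measurability reduction cited elsewhere in the paper (e.g.\ in the proof of Theorem~\ref{thm:DTMforcontcase}). Modulo these measure-theoretic care points, the proof is essentially a polarization-plus-pushforward argument and should be short.
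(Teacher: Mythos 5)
Your overall strategy (pushforwards plus polarization) matches the paper's, but there is a genuine gap in how you propose to recover $\nu^M=\nu^N$ from the one-dimensional data, and a secondary issue with your justification for the continuous part.

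The gap: you assert that the collection of single-functional cylinders $\{x:\langle x,x^*\rangle\in B\}$ (with $x^*$ ranging over a countable family and $B\in\mathcal B(\mathbb R)$) forms a $\pi$-system closed under finite intersections, so that a Dynkin argument applies once $\nu^M$ and $\nu^N$ are seen to agree on these sets. But this family is \emph{not} closed under finite intersections: intersecting $\{\langle x,x_1^*\rangle\in B_1\}$ with $\{\langle x,x_2^*\rangle\in B_2\}$ for distinct $x_1^*,x_2^*$ produces a cylinder involving the joint law of $(\langle x,x_1^*\rangle,\langle x,x_2^*\rangle)$, which is genuinely two-dimensional information not determined by the two one-dimensional marginals. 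So equality of the single-functional pushforwards does not transfer to the $\pi$-system you would actually need. The missing ingredient is precisely the Cram\'{e}r--Wold theorem: knowing the one-dimensional pushforwards of a (finite, or localized $\sigma$-finite) measure on $\mathbb R^m$ along all linear functionals determines the measure. The paper's proof uses exactly this: it fixes a cylinder $B$ depending on $x_1^*,\dots,x_m^*$, considers the finite-dimensional projection $P:X\to\mathbb R^m$, verifies $\nu^{\langle M,y\rangle}=\nu^{\langle N,y\rangle}$ for $y$ ranging over a dense subset of ${\rm span}(x_1^*,\dots,x_m^*)$ (which requires a continuity argument in $y$), and then invokes Cram\'{e}r--Wold via Lemma~\ref{lem:lintransfofmart-->transfofnuM} to conclude $\nu^{PM}(A\times\widetilde B)=\nu^{PN}(A\times\widetilde B)$. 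Without this step your $\pi$-$\lambda$ argument does not close.

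Secondary point: in the continuous part you invoke ``$X$ is UMD hence reflexive'' to get separability of $X^*$, but the theorem's hypothesis is only that $M$ and $N$ have local characteristics, not that $X$ is UMD. You do not need reflexivity: after the usual Pettis reduction to $X$ separable, the unit ball of $X^*$ is weak$^*$-metrizable and compact, hence contains a countable weak$^*$-dense subset, and the map $x^*\mapsto[\![M]\!]_t(x^*,x^*)$ is weak$^*$-continuous into $L^0(\Omega)$ (this is Lemma~\ref{lem:[[M]]isweka*consda} in the paper, proved via a Cauchy--Schwarz estimate and the Burkholder--Davis--Gundy inequalities). Agreement on the weak$^*$-dense countable subset then propagates by weak$^*$ continuity. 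Your norm-dense subset of $X^*$ would indeed work under reflexivity, but that is a strictly stronger hypothesis than what the theorem assumes.
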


For the proof we will need the following lemma.

\begin{lemma}\label{lem:[[M]]isweka*consda}
 Let $X$ be a Banach space, $M:\mathbb R_+ \times \Omega \to X$ be a martingale. Assume that $M$ has a covariation bilinear form $[\![M]\!]$. Then for any $t\geq 0$ we have that
 \[                                                                                                                                                                   
X^* \to L^0(\Omega),\;\;\; x^* \mapsto  [\![M]\!]_t(x^*, x^*),                                                                                                                                                                               \]
is continuous for $X^*$ endowed with the weak$^*$ topology.
\end{lemma}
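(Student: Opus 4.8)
\textbf{Proof plan for Lemma \ref{lem:[[M]]isweka*consda}.}

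The plan is to exploit the fact that weak$^*$ convergence of a sequence in $X^*$ is, after passing to norms, quantitatively controlled, and that the covariation bilinear form is by construction tied to quadratic variations of scalar martingales. First I would reduce to the sequential statement: since $X$ is UMD (this is implicit in ``$M$ has a covariation bilinear form'', by Remark \ref{rem:ifUMDthencovbilform}), hence reflexive, so $X^*$ is separable whenever $X$ is — and by the Pettis measurability theorem we may assume $X$ separable — the weak$^*$ topology on bounded subsets of $X^*$ is metrizable, so continuity of $x^*\mapsto [\![M]\!]_t(x^*,x^*)$ for the weak$^*$ topology is equivalent to sequential continuity along weak$^*$-convergent sequences. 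Thus let $x_n^*\xrightarrow{w^*}x^*$ in $X^*$; by the uniform boundedness principle $\sup_n\|x_n^*\|=:C<\infty$. I want to show $[\![M]\!]_t(x_n^*,x_n^*)\to[\![M]\!]_t(x^*,x^*)$ in $L^0(\Omega)$.

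The key step is to use the polarization-type bound
\[
\bigl|[\![M]\!]_t(x_n^*,x_n^*) - [\![M]\!]_t(x^*,x^*)\bigr|
= \bigl|[\langle M, x_n^*-x^*\rangle,\,\langle M, x_n^*+x^*\rangle]_t\bigr|
\leq [\langle M, x_n^*-x^*\rangle]_t^{1/2}\,[\langle M, x_n^*+x^*\rangle]_t^{1/2},
\]
which follows from the Kunita–Watanabe / Cauchy–Schwarz inequality for covariations applied pathwise (recall $[\![M]\!]_t(y^*,z^*)=[\langle M,y^*\rangle,\langle M,z^*\rangle]_t$ a.s.\ for fixed $y^*,z^*$, and the bilinear form is continuous so this identity extends simultaneously to all $y^*,z^*$ on the separable space). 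Since $\|x_n^*+x^*\|\leq C+\|x^*\|$, the second factor is bounded in $L^0$ by $(\gamma([\![M]\!]_t))\cdot(C+\|x^*\|)$ using the estimate $[\langle M,y^*\rangle]_t\leq \|[\![M]\!]_t\|\|y^*\|^2\leq \gamma([\![M]\!]_t)^2\|y^*\|^2$ a.s.\ (as in the proof of Theorem \ref{thm:DTMforcontcase}), which is a.s.\ finite by Remark \ref{rem:ifUMDthencovbilform}. So it suffices to prove that the first factor $[\langle M, x_n^*-x^*\rangle]_t\to 0$ in probability.

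The main obstacle — and where the real work lies — is exactly this last claim: weak$^*$-null-ness of $(x_n^*-x^*)$ does \emph{not} imply norm convergence, so one cannot simply bound $[\langle M, x_n^*-x^*\rangle]_t\leq\gamma([\![M]\!]_t)^2\|x_n^*-x^*\|^2$. Instead I would argue by a density/uniform-integrability argument: approximate $M_t$ in $L^1(\Omega;X)$ (or in the strong $L^1$-martingale sense, using a stopping-time reduction to $\mathbb E\sup_{t}\|M_t\|<\infty$ via Remark \ref{rem:locmartislocallyL1DR+X}) by martingales $M^{(k)}$ taking values in finite-dimensional subspaces; on each finite-dimensional subspace weak$^*$ convergence \emph{is} norm convergence, so $[\langle M^{(k)}, x_n^*-x^*\rangle]_t\to 0$ as $n\to\infty$ for each fixed $k$, while the tail $[\langle M-M^{(k)}, \cdot\rangle]_t$ is controlled uniformly in $n$ by $\gamma([\![M-M^{(k)}]\!]_t)^2\cdot(C+\|x^*\|)^2$, which tends to $0$ in probability as $k\to\infty$ by the $L^p$-continuity of $M\mapsto[\![M]\!]$ from \cite{Y18BDG} together with $\gamma$-dominated convergence \cite[Theorem 9.4.2]{HNVW2}. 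A standard $3\varepsilon$-argument (choose $k$ large to kill the tail uniformly in $n$, then $n$ large) then gives $[\langle M, x_n^*-x^*\rangle]_t\to 0$ in probability, completing the proof. One technical point to handle carefully is that all the pathwise identities and inequalities between the bilinear form and scalar covariations must be arranged to hold on a \emph{single} event of full measure simultaneously for all $x^*$ in a countable dense set, and then extended by the assumed continuity of $[\![M]\!]_t(\cdot,\cdot)$ — this is routine given separability of $X^*$.
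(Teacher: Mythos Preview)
Your overall strategy is sound and would work, but you take a much longer route than the paper for the key step, and you invoke an assumption the lemma does not make.

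Both you and the paper reduce, via a Cauchy--Schwarz/polarization bound, to showing that $[\langle M, x_n^*-x^*\rangle]_t\to 0$ in probability. The paper handles this in one line using the \emph{real-valued} Burkholder--Davis--Gundy inequality: after a stopping-time reduction to $\mathbb E\sup_s\|M_s\|<\infty$, one has
\[
\bigl\|[\langle M, x_n^*-x^*\rangle]_t\bigr\|_{L^{1/2}(\Omega)}
\eqsim \Bigl(\mathbb E\sup_{0\le s\le t}\bigl|\langle M_s, x_n^*-x^*\rangle\bigr|\Bigr)^{2},
\]
and the right-hand side tends to zero by dominated convergence, since for every fixed $\omega$ and $s$ one has $\langle M_s(\omega), x_n^*-x^*\rangle\to 0$ (this is precisely the definition of weak$^*$ convergence applied to the vector $M_s(\omega)\in X$), while $|\langle M_s, x_n^*-x^*\rangle|\le \sup_s\|M_s\|\cdot(C+\|x^*\|)$ with the dominating function integrable by Banach--Steinhaus. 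No finite-dimensional approximation, no $\gamma$-dominated convergence, no continuity of $M\mapsto[\![M]\!]$ is needed.

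One further point: your assertion that ``$X$ is UMD is implicit in `$M$ has a covariation bilinear form'\,'' is not correct. Remark \ref{rem:ifUMDthencovbilform} gives only the implication UMD $\Rightarrow$ existence of $[\![M]\!]$, not the converse; the lemma is stated for a general Banach space and a martingale which \emph{happens} to have a covariation bilinear form, and the paper's proof genuinely uses no UMD-specific tools. Your argument, by contrast, relies on the UMD machinery from \cite{Y18BDG} for the tail control of $[\![M-M^{(k)}]\!]$, so it proves a weaker statement than the lemma as written.
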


\begin{proof}
 By a stopping time argument and by Remark \ref{rem:locmartislocallyL1DR+X} we may assume that $\mathbb E \sup_{t\geq 0} \|M_t\|<\infty$. Let $(x_n^*)_{n\geq 1}$ be a weak$^*$ Cauchy sequence with the limit $x^*$. By the definition of weak$^*$ convergence we have that $\langle x, x_n^*\rangle \to \langle x, x^*\rangle$ for any $x\in X$. Thus by \cite[Theorem 26.6]{Kal} a.s.\
 \begin{multline*}
  \bigl|[\![M]\!]_t(x^*, x^*) - [\![M]\!]_t(x^*_n, x^*_n) \bigr|= \bigl|[\langle M ,x^*\rangle]_t - [\langle M, x_n^*\rangle]_t\bigr|\\
  = \bigl|2[\langle M ,x^*\rangle, \langle M ,x_n^*-x^*\rangle]_t + [ \langle M ,x^*-x_n^*\rangle]_t \bigr|\\
  \leq 2 \sqrt{[ \langle M ,x^*\rangle]_t}\sqrt{[ \langle M ,x^*-x_n^*\rangle]_t} + [ \langle M ,x^*-x_n^*\rangle]_t,
 \end{multline*}
so we have that
\begin{align*}
 \|[\![M]\!]_t(x^*, x^*) &- [\![M]\!]_t(x^*_n, x^*_n)\|_{L^{1/2}(\Omega)}\\
 &\leq \|2 \sqrt{[ \langle M ,x^*\rangle]_t}\sqrt{[ \langle M ,x^*-x_n^*\rangle]_t} + [ \langle M ,x^*-x_n^*\rangle]_t\|_{L^{1/2}(\Omega)}\\
 &\lesssim \|\sqrt{[ \langle M ,x^*\rangle]_t}\sqrt{[ \langle M ,x^*-x_n^*\rangle]_t}\|_{L^{1/2}(\Omega)} + \|[ \langle M ,x^*-x_n^*\rangle]_t\|_{L^{1/2}(\Omega)}\\
 &\leq \|\sqrt{[ \langle M ,x^*\rangle]_t}\|_{L^{1/2}(\Omega)}^{1/2}\|\sqrt{[ \langle M ,x^*-x_n^*\rangle]_t}\|_{L^{1/2}(\Omega)}^{1/2} +  \|[ \langle M ,x^*-x_n^*\rangle]_t\|_{L^{1/2}(\Omega)},
\end{align*}
and thus it is enough to show that $\|[ \langle M ,x^*-x_n^*\rangle]_t\|_{L^{1/2}(\Omega)} \to 0$ as $n\to \infty$, which follows from the fact that  by Burkholder-Davis-Gundy inequalities \cite[Theorem 26.12]{Kal}
\[
 \|[ \langle M ,x^*-x_n^*\rangle]_t\|_{L^{1/2}(\Omega)}  \eqsim \bigl(\mathbb E \sup_{0\leq s\leq t}\|\langle M ,x^*-x_n^*\rangle\|\bigr)^2 \to 0,\;\; n\to \infty,
\]
as $\langle M ,x^*-x_n^*\rangle \to 0$ a.s.\ and $\langle M ,x^*-x_n^*\rangle$ are uniformly bounded by Banach--Steinhaus theorem.
\end{proof}

\begin{proof}[Proof of Theorem \ref{thm:tangiffwtangUMDcase}]
It is clear that tangency implies weak tangency. Let us show the converse. Assume that $M$ and $N$ are weakly tangent.
Let $M = M^c + M^d$ and $N = N^c + N^d$ be the Meyer-Yoeurp decompositions (see Remark \ref{rem:MYdecBanach}; this decomposition exists as $M$ and $N$ have local characteristics). First notice that for any $t\geq 0$ and for any $x^*\in X^*$ a.s.\
\begin{multline*}
[\![M^c ]\!]_t(x^*, x^*) = [\langle M^c, x^* \rangle]_t \stackrel{(*)}= [\langle M, x^* \rangle^c]_t\\
 = [\langle N, x^* \rangle^c]_t \stackrel{(*)}=  [\langle N^c, x^* \rangle]_t =[\![N^c ]\!]_t(x^*, x^*),
\end{multline*}
where $(*)$ follows from the fact that $\langle M^c, x^* \rangle = \langle M, x^* \rangle^c$ and $\langle N^c, x^* \rangle = \langle N, x^* \rangle^c$ a.s.\ (see \cite{Y17GMY,Y17MartDec,DY17}). Therefore $[\![M^c]\!]_t(x^*) = [\![N^c]\!]_t(x^*)$ for any $x^*\in X^*$ a.s., so we can set $[\![M^c]\!]_t = [\![N^c]\!]_t$ a.s.\ on $\Omega$ as $X$ can be assumed separable by the Pettis measurability theorem \cite[Theorem 1.1.20]{HNVW1}, so $[\![M^c]\!]_t$ and $[\![N^c]\!]_t$ coincide a.s.\ on weak$^*$ dense subset of $X^*$ which can be assumed countable by  the sequential Banach--Alaoglu theorem, and thus they coincide on the whole $X^*$ by Lemma \ref{lem:[[M]]isweka*consda}.

Now let us show that $\nu^M = \nu^N$ a.s.\ Fix Borel sets $B\subset X$ and $A \subset \mathbb R_+$. It is sufficient to show that a.s.\
\begin{equation}\label{eq:nuMandnuNcoincideonAtimesB}
 \nu^M(A\times B) = \nu^N(A\times B)
\end{equation}
As $X$ is separable, the Borel $\sigma$-algebra of $X$ is generated by cylinders (see e.g.\ \cite[Section 2.1]{BGaus}), we may assume that $B$ is a cylinder as well, i.e.\ there exist linear functions $x^*_1,\ldots, x^*_m \in X^*$ and a Borel set $\widetilde B\in \mathbb R^m$ such that $\mathbf 1_{B}(x) = \mathbf 1_{\widetilde B}(\langle x, x^*_1\rangle, \ldots, \langle x, x^*_m\rangle)$. Let $Y = \text{span}(x^*_1,\ldots, x^*_m)$, $(y_n)_{n\geq 1}$ be a dense sequence of $Y$. Then by the assumption of the theorem there exists $\Omega_0 \subset \Omega$ of full measure such that on $\Omega_0$
\[
 \nu^{\langle M, y_n\rangle} = \nu^{\langle N, y_n\rangle},\;\;\; n\geq 1,
\]
and as $(y_n)_{n\geq 1}$ is dense in $Y$ by a continuity argument we have that on $\Omega_0$
\begin{equation}\label{eq:nuMandnuNanregoodonfunctionals}
  \nu^{\langle M, y\rangle} = \nu^{\langle N, y\rangle},\;\;\; y\in Y.
\end{equation}
Let $P:X \to \mathbb R^m$ be such that $Px = (\langle x, x^*_1\rangle, \ldots, \langle x, x^*_m\rangle) \in \mathbb R^m$ for any $x\in X$. Then by \eqref{eq:nuMandnuNanregoodonfunctionals}, by Lemma \ref{lem:lintransfofmart-->transfofnuM}, and by the Cram\'er-Wold theorem (see e.g.\ \cite[Theorem 29.4]{Bill95} and \cite{BMR97}) we have that on $\Omega_0$
\[
 \nu^M(A \times B) = \nu^{ P M}(A \times \widetilde B) = \nu^{P N} (A \times \widetilde B) = \nu^N(A \times B),
\]
and thus \eqref{eq:nuMandnuNcoincideonAtimesB} follows. Consequently, $M$ and $N$ have the same local characteristics, and thus they are tangent.
\end{proof}

Assume now that inequalities of the form \eqref{eq:Lpboundsforgentangmarraz} hold for some Banach space $X$ for some $1\leq p <\infty$ for all weakly tangent martingales. Then in particular  for any independent Brownian motions $W$ and $\widetilde W$ and for any elementary predictable $\Phi: \mathbb R_+ \times \Omega \to X$ for martingales
$M:= \int \Phi \ud W$ and $N:= \int \Phi \ud \widetilde W$
we have that by \eqref{eq:qvvarofstintwrtHcylbrmot} for any $t\geq 0$ a.s.\
\[
[\![M]\!]_t(x^*, x^*) = [\langle M, x^*\rangle]_t = \int_0^t| \Phi^*x^*|^2 \ud s = [\langle N, x^*\rangle]_t =[\![N]\!]_t(x^*, x^*),\;\;\; t\geq 0,
\]
so $M$ and $N$ are weakly tangent and thus by \eqref{eq:Lpboundsforgentangmarraz}
\[
\mathbb E \sup_{t\geq 0} \Bigl\|\int_0^t \Phi \ud W \Bigr\|^p \eqsim_{p, X}\mathbb E \sup_{t\geq 0} \Bigl\|\int_0^t \Phi \ud \widetilde W \Bigr\|^p,
\]
which implies UMD e.g.\ by \cite{Gar85,HNVW1,CG} and by good-$\lambda$ inequalities \eqref{eq:goodlforconttangmaty}.

\section{Decoupled tangent martingales and the recoupling property}\label{sec:LCharandRecPro}

An interesting question is the following. Thanks to Theorem \ref{thm:mainforDTMgencasenananana} we know that for any given UMD space $X$ any $X$-valued local martingale has a decoupled tangent local martingale. Using Section \ref{sec:WTversusT} one can try to extend the notion of a decoupled tangent local martingale via exploiting weak local characteristics in Definition \ref{def:dectanglocmartcontimecased}, i.e.\ for a general Banach space $X$ a local martingale $N$ defined on enlarged probability space and filtration is called {\em decoupled tangent} to a local martingale $M$ if $M$ is a local martingale with respect to the enlarged filtration having the same weak local characteristics $([\langle M, x^* \rangle^c], \nu^{\langle M, x^* \rangle})_{x^*\in X^*}$ so that $N(\omega)$ is a local martingale with independent increments with weak local characteristics $([\langle M, x^* \rangle^c](\omega), \nu^{\langle M, x^* \rangle}(\omega))_{x^*\in X^*}$ for a.e.\ $\omega$ from the original probability space. {\em For which Banach space $X$ we can guarantee existence of such an object?}

In order to answer this question we need the recoupling property, which is dual to the decoupling property (see Definition \ref{def:decproperty}).

\begin{definition}\label{def:recproperty}
Let $X$ be a Banach space. Then $X$ is said to have the {\em recoupling property} if for some  $1\leq p <\infty$, for any $X$-valued martingale difference sequence $(d_n)_{n\geq 1}$ and for a decoupled tangent martingale difference sequence $(e_n)_{n\geq 1}$ one has that
\begin{equation}\label{eq:defofrqwecpropqdsao}
\mathbb E  \Bigl\| \sum_{n=1}^{\infty} e_n \Bigr\|^p \lesssim_{p, X} \mathbb E \sup_{N\geq 1}\Bigl\| \sum_{n=1}^{N} d_n\Bigr\|^p.
\end{equation}
\end{definition}

Let us first show the following elementary proposition demonstrating that we can assume any $1\leq p <\infty$ in Definition \ref{def:recproperty}.
\begin{proposition}\label{prop:5defofrecouplproepnmfoac}
 Let $X$ be a Banach space, $(d_n)$ be an arbitrary $X$-valued martingale difference sequence, $(e_n)$ be its decoupled tangent martingale sequence. Then the following are equivalent.
 \begin{enumerate}[(i)]
  \item $\mathbb E  \bigl\| \sum_{n=1}^{\infty} e_n \bigr\|^p \lesssim_{p, X} \mathbb E \sup_{N\geq 1} \bigl\| \sum_{n=1}^{N} d_n\bigr\|^p$ for any $1\leq p<\infty$, any $(d_n)$ and $(e_n)$.
    \item $\mathbb E   \bigl\| \sum_{n=1}^{\infty} e_n\bigr\|^p \lesssim_{p, X} \mathbb E \sup_{N\geq 1} \bigl\| \sum_{n=1}^{N} d_n\bigr\|^p$ for some $1\leq p<\infty$, any $(d_n)$ and $(e_n)$.
      \item $\mathbb E  \mathbb E \Bigl(\bigl\| \sum_{n=1}^{\infty} e_n \bigr\|\Big|\mathcal F\Bigr)^p \lesssim_{p, X} \mathbb E \sup_{N\geq 1} \bigl\| \sum_{n=1}^{N} d_n\bigr\|^p$ for some $1\leq p<\infty$, any $(d_n)$ and $(e_n)$.
            \item There exists a constant $C_X$ such that if $\mathbb E \Bigl( \bigl\| \sum_{n=1}^{\infty} e_n \bigr\|\Big|\mathcal F\Bigr)>1$ a.s.\ then we have that $ \mathbb E\sup_{N\geq 1} \bigl\| \sum_{n=1}^{N} d_n\bigr\|>C_X$.
\item There exists a constant $C_X$ such that if $ \bigl\| \sum_{n=1}^{\infty} e_n \bigr\|>1$ a.s.\ then we have that $ \mathbb E\sup_{N\geq 1} \bigl\| \sum_{n=1}^{N} d_n\bigr\|>C_X$.
 \end{enumerate}
\end{proposition}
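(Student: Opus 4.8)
The plan is to prove the equivalences in the cyclic order $(i)\Rightarrow(ii)\Rightarrow(iii)\Rightarrow(iv)\Rightarrow(v)\Rightarrow(i)$, or rather to organize it around the single nontrivial implication $(v)\Rightarrow(i)$ and then dispatch the rest by elementary observations. The implications $(i)\Rightarrow(ii)$ and $(iv)\Rightarrow(v)$ are trivial (specialization and, for the latter, using that a.s.\ domination implies the conditional-expectation domination via Jensen). The implication $(ii)\Rightarrow(iii)$ follows from the contraction property of conditional expectation on $L^p(\Omega;X)$ (see \cite[Section 2.6]{HNVW1}), since $\mathbb E(\|\sum e_n\|\,|\,\mathcal F) \leq \mathbb E(\|\sum e_n\|^p\,|\,\mathcal F)^{1/p}$ and one may then take $\mathbb E$ of the $p$-th power. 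For $(iii)\Rightarrow(iv)$ one argues by contraposition and a scaling argument: if there is a martingale difference sequence with $\mathbb E(\|\sum e_n\|\,|\,\mathcal F)>1$ a.s.\ but $\mathbb E\sup_N\|\sum d_n\|$ arbitrarily small, then rescaling $(d_n)\mapsto(\lambda d_n)$, $(e_n)\mapsto(\lambda e_n)$ and using positive homogeneity shows the constant in $(iii)$ cannot be finite.

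The heart of the matter is $(v)\Rightarrow(i)$: upgrading a ``distributional threshold'' statement to the full scale of $L^p$-inequalities for all $1\le p<\infty$. The plan here mirrors the standard device used to pass from a weak-type statement to strong-type bounds. First I would normalize: given $(d_n)$ with $0<\mathbb E\sup_N\|\sum d_n\|^p<\infty$ (the degenerate cases being trivial), I want to bound $\mathbb E\|\sum e_n\|^p$. Using the layer-cake formula $\mathbb E\|\sum e_n\|^p = p\int_0^\infty \lambda^{p-1}\mathbb P(\|\sum_n e_n\|>\lambda)\,\mathrm d\lambda$, it suffices to control $\mathbb P(\|\sum_n e_n\|>\lambda)$ by $\lambda^{-1}\mathbb E\sup_N\|\sum d_n\|$ up to a constant depending only on $X$ — a genuine weak-type $(1,1)$-style bound — and then interpolate. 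To get the weak-type bound from $(v)$, I would fix $\lambda>0$ and apply a stopping-time truncation to $(d_n)$: let $\tau := \inf\{N: \|\sum_{n\le N} d_n\| > \lambda\}$, set $d_n' := d_n\mathbf 1_{n\le\tau}$, which is again a martingale difference sequence (with respect to the same filtration), and let $(e_n')$ be its decoupled tangent sequence. By the construction of decoupled sequences (Remark \ref{rem:discrdefislikecontdafa} and \cite{dlP94,dlPG}), $(e_n')$ is obtained from $(e_n)$ by the same truncation at $\tau$, so $\sum_n e_n' = \sum_{n\le\tau} e_n$. On the event $\{\|\sum_n e_n\|>\lambda\}$ one either has $\tau<\infty$ (in which case $\sup_N\|\sum_{n\le N}d_n\|>\lambda$) or the truncated sum still exceeds a fixed fraction of $\lambda$; applying the normalized version of $(v)$ (rescale by $1/\lambda$, pass to conditional expectations to handle the a.s.\ hypothesis via a further averaging/Fubini argument on the enlarged product space as in Proposition \ref{prop:Omega'Omega';arecondindepifondforanyomega'as}) then yields $\mathbb P(\|\sum e_n\|>\lambda)\lesssim_X \lambda^{-1}\mathbb E\sup_N\|\sum d_n\|$.

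The main obstacle I anticipate is exactly the interplay between the \emph{a.s.} hypothesis in $(iv)$ and $(v)$ and the \emph{in-expectation} conclusions: one cannot directly plug a truncated sequence into $(v)$ because $\|\sum e_n'\|>\lambda$ need not hold a.s. The fix is to run the argument on the sub-probability-space (or rather, to condition): restrict attention to the event where the threshold is met, renormalize the measure, and use that the decoupled-tangent property is preserved under such conditioning when the conditioning event is $\mathcal F_0$-measurable — or, more robustly, reformulate $(v)$ in the conditional form $(iv)$ first and carry the whole truncation argument through with $\mathbb E(\,\cdot\,|\mathcal F)$ in place of $\mathbb P$, exploiting that conditional expectations of the truncated tangent sums behave correctly because $\tau$ is a stopping time. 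A secondary technical point is verifying that the decoupled tangent sequence of the truncated martingale difference sequence is the truncation of the original decoupled sequence; this is where I would invoke the explicit description in Remark \ref{rem:discrdefislikecontdafa} together with uniqueness in distribution of decoupled tangent sequences \cite{KwW91,dlPG}, so that without loss of generality $(e_n')_{n} = (e_n\mathbf 1_{n\le\tau})_n$ on a common enlarged space. Once the weak-type bound is in hand, the passage to all $p\in[1,\infty)$ is routine via the layer-cake identity and a further stopping-time/Davis-type decomposition of $(d_n)$ at level $\lambda$ (splitting $d_n$ into its part before the first time $\sup_{m\le n}\|\sum d_m\|$ exceeds $\lambda$ and the remainder), closing the loop $(v)\Rightarrow(i)$.
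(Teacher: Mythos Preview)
The easy implications are handled correctly. The gap is in $(v)\Rightarrow(i)$, at exactly the point you flag as the main obstacle: neither of your proposed fixes actually works. Conditioning on the event $\{\|\sum e_n\|>\lambda\}$ is inadmissible because that event lies in the enlarged $\sigma$-algebra, not in $\mathcal F$; restricting to it destroys the conditional independence of $(e_n)$ given $\mathcal F$, so you are no longer in the decoupled-tangent setting and cannot invoke~(v). Your alternative, ``reformulate $(v)$ in the conditional form $(iv)$ first'', is circular: $(iv)\Rightarrow(v)$ is the trivial direction, and $(v)\Rightarrow(iv)$ is not available at this point. Moreover, even if the weak-$(1,1)$ bound were in hand, the promised ``Davis-type'' upgrade to strong $L^p$ would require stopping the partial sums $\sum_{k\le n}e_k$, and the first-exit times for that process are $\overline{\mathbb F}$-stopping times, not $\mathbb F$-stopping times, so truncating $(d_n)$ there breaks predictability.

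The paper resolves both difficulties by a single device you are missing: pass to $G_p(d_n):=\mathbb E\bigl(\|\sum_n e_n\|^p\,\big|\,\mathcal F\bigr)$, which is $\mathcal F$-measurable and therefore lives on the original space. One first derives from~(v) a weak-type bound $\sup_\lambda\lambda\,\mathbb P(\|\sum e_n\|^p>\lambda)\lesssim_{p,X}\mathbb E\sup_N\|\sum d_n\|^p$ following the extrapolation scheme of \cite[p.~1000]{Burk81}; this transfers to a weak-type bound on $G_p$ because conditional expectation is a contraction on $L^{1,\infty}$. Then the Burkholder machinery \cite[pp.~999--1000]{Burk81} (adapted as in \cite[Theorem~4.1]{Gar90}) upgrades the weak bound on $G_p$ to $\mathbb E G_p\lesssim_{p,X}\mathbb E\sup_N\|\sum d_n\|^p$, using that $G_p^{1/p}$ is homogeneous and respects $\mathbb F$-stopping times---your observation that the decoupled tangent of an $\mathbb F$-stopped sequence is the stopped tangent is correct and is precisely what makes this work. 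The passage to $G_p$ is not cosmetic: it is what brings the target quantity back onto the probability space where the stopping-time argument can be run.
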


\begin{proof}
 The proof in analogous to the one of \cite[Theorem 4.1]{Gar90}. $(i)\Rightarrow(ii)\Rightarrow(iii)$ follow for an obvious reason and by Jensen's inequality. $(iii)\Rightarrow(iv)$ follows similarly to \cite[p.\ 999]{Burk81}. $(iv)\Rightarrow(v)$ follows from the fact that if $ \bigl\| \sum_{n=1}^{\infty} e_n \bigr\|>1$ a.s., then $\mathbb E \Bigl( \bigl\| \sum_{n=1}^{\infty} e_n \bigr\|\Big|\mathcal F\Bigr)>1$ a.s., and thus the desired holds. Now let us show $(v)\Rightarrow(i)$. Fix $1\leq p<\infty$. Let $G_p(d_n):= \mathbb E \Bigl( \bigl\| \sum_{n=1}^{\infty} e_n \bigr\|^p \Big| \mathcal F \Bigr)$. Then
 \begin{equation}\label{eq:Gpd_nbddbufweLpnoreiofdn}
   \sup_{\lambda>0}\lambda \mathbb P (G_p(d_n)>\lambda) \stackrel{(*)}\leq \sup_{\lambda>0} \lambda\mathbb P \Bigl(\bigl\| \sum_{n=1}^{\infty} e_n \bigr\|^p \geq \lambda \Bigr) \stackrel{(**)}\lesssim_{p, X} \mathbb E \sup_{N\geq 1} \bigl\| \sum_{n=1}^{N} d_n\bigr\|^p,
 \end{equation}
where $(*)$ follows from the fact that averaging operators are contractions on Lorentz spaces (see e.g.\ \cite{JLhbBs}) and $(**)$ can be derived from $(v)$ analogously \cite[p.\ 1000]{Burk81}. Now applying \cite[pp.\ 999--1000]{Burk81} one can conclude from \eqref{eq:Gpd_nbddbufweLpnoreiofdn} that
\[
 \mathbb E\Bigl\| \sum_{n=1}^{\infty} e_n \Bigr\|^p = \mathbb E G_p(d_n) \lesssim_{p, X}E \sup_{N\geq 1} \Bigl\| \sum_{n=1}^{N} d_n\Bigr\|^p,
\]
so the proposition follows.
\end{proof}

Recall that $X$ is called a {\em UMD$^+$ Banach space} if for some (equivalently, for all)
$p \in [1,\infty)$,
every martingale
difference sequence $(d_n)^{\infty}_{n=1}$ in $L^p(\Omega; X)$, and every independent Rademacher sequence
$(r_n)^{\infty}_{n=1}$ one has that
\begin{equation}\label{eq:UMD+defeqwo23}
 \mathbb E \Bigl\| \sum^{\infty}_{n=1} r_n d_n\Bigr\|^p
\lesssim_{p, X} \mathbb E\sup_{N=1}^{\infty} \Bigl \| \sum^N_{n=1}d_n\Bigr\|^p
\end{equation}
(see \cite{Gar90,HNVW1,CG,CV,Ver07}).

\begin{remark}\label{rem:recouplbasicproperta}
The recoupling property immediately yields the UMD$^+$ property for  Paley-Walsh and Gaussian martingales (see \cite[pp.\ 498--500]{HNVW1}, \cite[Section 4.2]{OY18}, and \cite{Gar90}) and hence any Banach space $X$ with the recoupling property is supereflexive, has finite cotype (see \cite[Theorem 3.2]{Gar90}) and nontrivial type (due to \cite[Theorem 7.3.8]{HNVW2} and the supereflexivity of $X$). It remains open whether the recoupling property implies UMD (note that the recoupling property is equivalent to UMD if $X$ is a Banach lattice thanks to \cite[Theorem 8.4]{KLW19}; see also the discussion in \cite{Geiss99} and \cite[Section O]{HNVW1}). Nonetheless, one can show that the recoupling property is in fact equivalent to UMD$^+$ for general martingales, which is in some sense a dual result to \cite[Theorem 6.6(iii)]{CG}.
\end{remark}

\begin{proposition}\label{prop:recouplequivUMD+}
Let $X$ be a Banach space. Then 
$X$ has the recoupling property if and only if it is UMD$^+$.
\end{proposition}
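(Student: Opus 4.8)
The plan is to prove the equivalence of the recoupling property and UMD$^+$ by establishing the two implications separately, using the characterizations of the recoupling property collected in Proposition \ref{prop:5defofrecouplproepnmfoac}.

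\emph{The recoupling property implies UMD$^+$.} This direction is essentially immediate from the definitions. Given a martingale difference sequence $(d_n)_{n\geq 1}$ and an independent Rademacher sequence $(r_n)_{n\geq 1}$, I would first observe that $(r_n d_n)_{n\geq 1}$ is a martingale difference sequence with respect to the filtration enlarged by the Rademachers, and that it is a decoupled tangent martingale difference sequence to $(d_n)_{n\geq 1}$: indeed, after symmetrizing, $\mathbb P(r_n d_n | \overline{\mathcal F}_{n-1})$ is the symmetrization of $\mathbb P(d_n|\mathcal F_{n-1})$, and since $(r_n)_{n\geq 1}$ is independent, the $r_n d_n$ are conditionally independent given $\sigma((d_n)_{n\geq 1})$. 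Wait --- one has to be slightly careful, since $(r_n d_n)$ is tangent to the \emph{symmetrization} of $(d_n)$, not to $(d_n)$ itself. The cleanest route is: the symmetrized sequence $(\varepsilon_n d_n)$ (with $(\varepsilon_n)$ another independent Rademacher sequence) has $(r_n d_n)$ as a decoupled tangent sequence, and by the fact that conditional expectation is an $L^p$-contraction together with a triangle inequality (as in the proof of Proposition \ref{prop:XUMDiffdecouplforPois}), $\mathbb E\sup_N \|\sum_{n=1}^N \varepsilon_n d_n\|^p \eqsim_{p} \mathbb E\sup_N\|\sum_{n=1}^N d_n\|^p$. Applying \eqref{eq:defofrqwecpropqdsao} to $(\varepsilon_n d_n)$ and $(r_n d_n)$ then yields \eqref{eq:UMD+defeqwo23}.

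\emph{UMD$^+$ implies the recoupling property.} Here I would reduce to the Paley--Walsh case. Since UMD$^+$ spaces have nontrivial type (by Remark \ref{rem:recouplbasicproperta}, which only uses the Paley--Walsh/Gaussian consequences of UMD$^+$, so there is no circularity), I can invoke the standard fact that it suffices to verify \eqref{eq:defofrqwecpropqdsao} for Paley--Walsh martingale difference sequences, i.e.\ those of the form $d_n = r_n\phi_n(r_1,\dots,r_{n-1})$ for a Rademacher sequence $(r_n)$; this is the analogue of the reduction used in the proof of Theorem \ref{thm:XisUMDiffMisintxwrtbarmuMvain}. For such a sequence, a concrete model of a decoupled tangent sequence is $e_n = r_n' \phi_n(r_1,\dots,r_{n-1})$ where $(r_n')$ is an independent copy of $(r_n)$, defined on the product space, and this $(e_n)$ is indeed decoupled tangent. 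Now one estimates
\[
\mathbb E\Bigl\|\sum_{n=1}^\infty e_n\Bigr\|^p = \mathbb E\Bigl\|\sum_{n=1}^\infty r_n'\phi_n(r_1,\dots,r_{n-1})\Bigr\|^p
= \mathbb E\,\mathbb E_{r'}\Bigl\|\sum_{n=1}^\infty r_n'\phi_n(r_1,\dots,r_{n-1})\Bigr\|^p;
\]
for fixed values of $(r_n)$, the inner expectation is the $p$-th moment of a Rademacher sum with \emph{fixed} coefficients $\phi_n(r_1,\dots,r_{n-1})$, and $(r_n'\phi_n(r_1,\dots,r_{n-1}))_{n}$ is itself a Rademacher-randomized martingale difference sequence (with respect to the filtration generated by $(r_n)$ and $(r_n')$, with $\phi_n$ playing the role of a predictable multiplier). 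Applying the UMD$^+$ inequality \eqref{eq:UMD+defeqwo23} to this sequence --- with the original $d_n = r_n\phi_n(r_1,\dots,r_{n-1})$ appearing on the right after renaming $r_n' \mapsto r_n$ --- gives $\mathbb E\|\sum e_n\|^p \lesssim_{p,X} \mathbb E\sup_N\|\sum_{n=1}^N d_n\|^p$, which is \eqref{eq:defofrqwecpropqdsao} in the Paley--Walsh case. By Proposition \ref{prop:5defofrecouplproepnmfoac} (the equivalence of the various formulations, in particular that the validity for one $p$ and a suitable class of martingales propagates), this suffices to conclude the recoupling property for general martingale difference sequences.

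The main obstacle I anticipate is the reduction from general martingale difference sequences to the Paley--Walsh case in the second implication: one must check that the decoupled tangent sequence of a Paley--Walsh martingale can be taken to be Paley--Walsh (built from an independent copy of the Rademachers), and, more delicately, that the approximation of an arbitrary $X$-valued martingale difference sequence by Paley--Walsh ones is compatible with the decoupling operation, so that a bound proven in the Paley--Walsh case transfers. This is where the precise statement of Proposition \ref{prop:5defofrecouplproepnmfoac}, in particular the weak-type reformulation $(v)$ and the good-$\lambda$-style passage from it to $(i)$ à la \cite{Burk81}, does the real work; the Rademacher-sum manipulations themselves are routine once the model of the decoupled sequence is in place.
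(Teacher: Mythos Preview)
Both directions of your argument contain genuine gaps.

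\textbf{Recoupling $\Rightarrow$ UMD$^+$.} Your key claim---that $(r_n d_n)$ is a decoupled tangent sequence to $(\varepsilon_n d_n)$---is false. By Remark \ref{rem:discrdefislikecontdafa}, a decoupled tangent sequence $(\tilde e_n)$ to $(\varepsilon_n d_n)$ must satisfy: for a.e.\ $\omega$ in the original space, $\tilde e_n(\omega)$ has law equal to the symmetrization of $\mathbb P(d_n\mid\mathcal F_{n-1})(\omega)$. But $r_n d_n(\omega)$, for fixed $\omega$, takes only the two values $\pm d_n(\omega)$; this is the symmetrization of the \emph{Dirac mass} at $d_n(\omega)$, not of the full conditional law. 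The two agree only when the conditional law is already (at most) two-point symmetric. Moreover, the equivalence $\mathbb E\sup_N\|\sum_{n=1}^N \varepsilon_n d_n\|^p \eqsim_p \mathbb E\sup_N\|\sum_{n=1}^N d_n\|^p$ that you invoke is \emph{not} a consequence of Jensen and the triangle inequality: the upper bound $\lesssim$ is precisely UMD$^+$, so the argument is circular. The paper avoids this by working with $(d_n-e_n)$ (where $(e_n)$ is the genuine decoupled tangent of $(d_n)$), which \emph{is} conditionally symmetric; from this symmetry it extracts an honest Rademacher structure via a sign functional and then applies recoupling a second time in an enlarged filtration.

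\textbf{UMD$^+$ $\Rightarrow$ recoupling.} Your treatment of the Paley--Walsh case is correct (including the identification $\mathbb E\|\sum r_n' d_n\|^p=\mathbb E\|\sum e_n\|^p$ via $r_n'r_n\stackrel{d}{=}r_n'$ pointwise in $\omega$). The gap is the reduction step: the Maurey--Burkholder reduction to Paley--Walsh martingales is proved for the $\pm 1$-transform inequality defining UMD, and there is no ``standard fact'' extending it to the recoupling inequality \eqref{eq:defofrqwecpropqdsao}, which involves the decoupled tangent construction. (The reference to Theorem \ref{thm:XisUMDiffMisintxwrtbarmuMvain} is misplaced: that proof builds a single counterexample from a Paley--Walsh martingale, which is not a reduction principle.) The paper instead first establishes recoupling for \emph{conditionally symmetric} $(d_n)$ by a finite-range approximation that manufactures explicit Rademachers, and then passes to general $(d_n)$ using UMD$^+$ to symmetrize together with the randomization lemma \cite[Lemma 6.3]{LT11} for the conditionally independent sequence $(e_n)$.
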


\begin{proof}
{\em The ``only if'' part.}
Assume that $X$ has the recoupling property.
Let $(d_n)^{\infty}_{n=1}$ be an $X$-valued martingale difference sequence, $(e_n)_{n=1}^{\infty}$ be a corresponding decoupled tangent sequence on an enlarged filtration. Then by Definition \ref{def:recproperty}
\[
 \mathbb E \sup_{N\geq 1}\Bigl\| \sum_{n=1}^{N} d_n-e_n\Bigr\|^p\lesssim_{p, X} \mathbb E \sup_{N\geq 1}\Bigl\| \sum_{n=1}^{N} d_n\Bigr\|^p.
\]
Note that $(d_n-e_n)_{n\geq 1}$ is conditionally symmetric. By an approximation argument (by adding a sequence $(\eps x r_n'')$ for some $x\in X$, independent Rademacher sequence $(r_n'')$, and some small enough $\eps > 0$) we may assume that $\mathbb P(d_n-e_n = 0) = 0$. Moreover, for the same approximation argument we may assume that there exists $x^*\in X^*$ such that $\langle d_n-e_n, x^* \rangle \neq 0$ a.s. Let $r_n' := \langle d_n-e_n, x^* \rangle/|\langle d_n-e_n, x^* \rangle|$, $\xi_n := (d_n-e_n)/r_n'$. Let us show that $r_n'$ is independent of $\sigma(\xi_n, \mathcal F_{n-1})$. Indeed, for any $A\subset X$ with $\langle x, x^*\rangle>0$ for any $x\in A$ and any $B\in \mathcal F_{n-1}$ by the conditional symmetry we have that
\begin{multline*}
\mathbb P(\{r_n'=1\}\cap \{\xi_n\in A\}\cap B) = \mathbb P(\{d_n-e_n\in A\}\cap B) \\
= \mathbb P(\{d_n-e_n\in -A\}\cap B) = \mathbb P(\{r_n'=-1\}\cap \{\xi_n\in A\}\cap B),
\end{multline*}
so
\[
\mathbb P(\{r_n'=\pm1\}\cap \{\xi_n\in A\}\cap B) = \frac{1}{2}\mathbb P( \{\xi_n\in A\}\cap B) = \mathbb P(r_n'=\pm1) \mathbb P( \{\xi_n\in A\}\cap B).
\]
 Therefore by setting a new filtration $(\mathcal  G_n)_{n\geq 1} := (\sigma(\mathcal  F_{n}, \xi_{n+1}))_{n\geq 1}$ and noticing that $(r_n' \xi_n)$ is a martingale difference sequence w.r.t.\ this filtration we can deduce from \eqref{eq:defofrqwecpropqdsao}, Example \ref{ex:tangstandarddecxinvn-->xi'nvn}, and the fact that a product of two Rademachers is a Rademacher that for any independent sequence $(r_n)_{n\geq 1}$ of Rademachers
 \begin{multline*}
\mathbb E \Bigl\| \sum_{n=1}^{\infty} r_n(d_n-e_n)\Bigr\|^p = \mathbb E \Bigl\| \sum_{n=1}^{\infty}r_n r_n' \xi_n\Bigr\|^p\\
\lesssim_{p, X} \mathbb E \sup_{N\geq 1} \Bigl\| \sum_{n=1}^{N} r_n' \xi_n\Bigr\|^p = \mathbb E \sup_{N\geq 1} \Bigl\| \sum_{n=1}^{N} d_n-e_n\Bigr\|^p.
\end{multline*}
Finally, by applying a conditional expectation w.r.t.\ $\sigma(\mathcal F, (r_n))$ and Jensen's inequality we know that $\mathbb E \| \sum_{n=1}^{\infty} r_nd_n\|^p \leq \mathbb E \| \sum_{n=1}^{\infty} r_n(d_n-e_n)\|^p$. By combining all the inequalities above \eqref{eq:UMD+defeqwo23} follows.

 {\em The ``if'' part.} 
 Let $X$ be UMD$^+$, $(d_n)_{n\geq 1}$ be an $X$-valued martingale difference sequence, $(e_n)_{n\geq 1}$ be a decoupled tangent sequence. Given the UMD$^+$ property we need to show \eqref{eq:defofrqwecpropqdsao} for some $p\geq 1$. Without loss of generality we may assume that the filtration is generated by $(d_n)$ and the enlarged filtration is generated by $(d_n)$ and $(e_n)$. First assume that $(d_n)_{n\geq 1}$ is conditionally symmetric. In this case for any Borel $A\subset X$ we have that
 \begin{equation}\label{eq:dncpaspsymmUMD+recpi}
    \mathbb P(\mathbf 1_{A}(d_n)|\mathcal F_{n-1}) =   \mathbb P(\mathbf 1_{-A}(d_n)|\mathcal F_{n-1}),\;\;\; n\geq 1.
 \end{equation}
Let $(x_m)_{m=1}^M\in X\setminus \{0\}$ be disjoint and let balls $(B_m)_{m=1}^M$ be disjoint with $x_m \in B_m$ and $\{0\}\notin B_m$ so $B_m \cap -B_m = \varnothing$. Then for any $n\geq 1$ we can approximate $d_n$ in $L^p$ for any $p<\infty$ by $\sum_{m=1}^M x_m (\mathbf 1_{B_m}(d_n) - \mathbf 1_{-B_m}(d_n))$ by taking big enough set $(x_m)_{m=1}^M\in X$ and by considering smaller balls $(B_m)_{m=1}^M$. Let $A_m^n:= \{d_n \in B_m\} \cup \{d_n \in -B_m\}$ and let $r_m^n := \mathbf 1_{B_m}(d_n) - \mathbf 1_{-B_m}(d_n) + r_m^{n'} \mathbf 1_{\Omega\setminus A_m^n}$ for some independent $r_m^{n'}$ (one may need to enlarge the probability space and filtration). By \eqref{eq:dncpaspsymmUMD+recpi} and independence of $(r_m^{n'})$ we conclude that
\begin{multline*}
 \mathbb P (r_m^n = \pm 1|\mathcal F_{n-1}) = \mathbb P(d_n \in \pm B_m|\mathcal F_{n-1}) + \mathbb P(\Omega \setminus A_m^n \cap \{r_m^{n'} = \pm 1\}|\mathcal F_{n-1})\\
 =\frac{1}{2}\mathbb P(A_m^n|\mathcal F_{n-1}) + \frac{1}{2}\mathbb P(\Omega \setminus A_m^n|\mathcal F_{n-1}) = \frac{1}{2},
\end{multline*}
so $(r_m^n)$ is conditionally independent of $\mathcal F_{n-1}$. For the same reason $(r_m^n)_{m=1}^M$ are independent Rademachers which are mutually independent of $\sigma(\mathcal F_{n-1}, A_1^n, \ldots, A_M^n)$. Thus we can approximate $d_n$ by $\sum_{m=1}^M r_m^n \mathbf 1_{A_m^n}$, so by assuming $d_n=\sum_{m=1}^M r_m^n \mathbf 1_{A_m^n}$, by \eqref{eq:UMD+defeqwo23}, and by the fact that $A_m^n$ are disjoint for every $n\geq 1$ (which guarantees that the corresponding suprema coincide) we get
\begin{equation}\label{eq:recoupropforrnmrmgfsw''da}
   \mathbb E \Bigl\| \sum^{\infty}_{n=1} \sum_{m=1}^M  r_m^{n''} \mathbf 1_{A_m^n}\Bigr\|^p
\lesssim_{p, X} \mathbb E\sup_{N=1}^{\infty} \Bigl \| \sum^N_{n=1}\sum_{m=1}^M  r_m^{n} \mathbf 1_{A_m^n}\Bigr\|^p,
\end{equation}
where $(r_m^{n''})_{n=1, m=1}^{\infty, M}$ is an independent sequence of Rademachers. For each $n\geq 1$ let $e_n:= \sum_{m=1}^M  r_m^{n''} \mathbf 1_{A_m^n}$ and let us show that $(e_n)_{n\geq 1}$ is a decoupled tangent sequence to $(d_n)_{n\geq 1}$. First both $(d_n)$ and $(e_n)$ take values only in $(\pm x_m)_{m=1}^M$, and for any $m=1, \ldots, M$ by the independence of $r_m^{n''}$ and by \eqref{eq:dncpaspsymmUMD+recpi}
\begin{multline*}
 \mathbb P(e_n = \pm x_m|\mathcal F_{n-1}) =  \mathbb P(\{r_m^{n''} = \pm 1\}\cap A_m^n|\mathcal F_{n-1}) = \frac{1}{2}\mathbb P( A_m^n|\mathcal F_{n-1})\\
 = \mathbb P(\{r_m^{n} = \pm 1\}\cap A_m^n|\mathcal F_{n-1}) =  \mathbb P(d_n = \pm x_m|\mathcal F_{n-1}),
\end{multline*}
so $(d_n)$ and $(e_n)$ are tangent. Next, for any fixed $\omega\in \Omega$, $(e_n(\omega))$ are independent, hence $(e_n)$ is decoupled. Thus \eqref{eq:defofrqwecpropqdsao} follows from \eqref{eq:recoupropforrnmrmgfsw''da}.

Now let $(d_n)$ be general. Then for any sequence of independent Rademachers $(r_n)$ by \eqref{eq:UMD+defeqwo23} and \cite[Proposition 6.1.12]{HNVW2}
\[
 \mathbb E \sup_{N=1}^{\infty}\Bigl \| \sum^N_{n=1}r_nd_n\Bigr\|^p\eqsim \mathbb E \Bigl\| \sum^{\infty}_{n=1} r_n d_n\Bigr\|^p
\lesssim_{p, X}\mathbb E  \sup_{N=1}^{\infty}\Bigl \| \sum^N_{n=1}d_n\Bigr\|^p.
\]
Let $(e_n)$ be a decoupled tangent sequence to $(d_n)$. Then $(r_n e_n)$ is a decoupled tangent sequence to $(r_n d_n)$ (see \eqref{eq:rndnandrnenaretagicdnenadfosim}), which is conditionally symmetric, and hence by the conditional symmetric case we get
\[
 \mathbb E \Bigl\| \sum^{\infty}_{n=1} r_n e_n\Bigr\|^p \lesssim_{p, X}\mathbb E \sup_{N=1}^{\infty}\Bigl \| \sum^N_{n=1}r_nd_n\Bigr\|^p.
\]
It remains to show that $\mathbb E \| \sum^{\infty}_{n=1} e_n\|^p \eqsim_{p}\mathbb E \| \sum^{\infty}_{n=1} r_n e_n\|^p$, which follows directly from the conditional independence and  a randomization argument (see e.g.\ \cite[Lemma 6.3]{LT11}. 
\end{proof}

\begin{remark}
The same proof yields that the {\em UMD$^-$ property} (the property which is inverse to UMD$^+$, see e.g.\ \cite[Chapter 4]{HNVW1}) implies the decoupling property. The converse statement can be shown for conditionally symmetric martingale difference sequences (which is a weaker form of \cite[Theorem 6.6(iii)]{CG}), but unfortunately a similar technique seems to be not able to provide an extension to general martingales as UMD$^-$ constants heavily dominate the decoupling constants in the real-valued case (see the discussion in \cite[pp.\ 346--348]{CV}). The equivalence of UMD$^-$ and decoupling remains unknown for the author.
\end{remark}

The following theorem is the main result of the section.

\begin{theorem}\label{thm:existofdecoupltagivenerqw}
 Let $X$ be a Banach space. Then the following are equivalent:
 \begin{enumerate}[(i)]
  \item $X$ has the recoupling property.
  \item Any $X$-valued martingale $M$ has 
  a decoupled tangent local martingale $N$.
 \end{enumerate}
Moreover, if this is the case, then for any $1\leq p <\infty$
\begin{equation}\label{eq:UMD+NtbddbyMtforpgeq1}
\mathbb E \sup_{t\geq 0}\|N_t\|^p \lesssim_{p, X} \mathbb E \sup_{t\geq 0}\|M_t\|^p.
\end{equation}
\end{theorem}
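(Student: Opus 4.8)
The plan is to prove Theorem \ref{thm:existofdecoupltagivenerqw} in three stages: first the implication $(i) \Rightarrow (ii)$ together with the quantitative bound \eqref{eq:UMD+NtbddbyMtforpgeq1}, then the converse $(ii) \Rightarrow (i)$, and finally a remark that \eqref{eq:UMD+NtbddbyMtforpgeq1} for a single $p$ self-improves to all $p$ via Proposition \ref{prop:5defofrecouplproepnmfoac}.

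For $(i) \Rightarrow (ii)$, first I would reduce to the case $\mathbb E \sup_{t\geq 0}\|M_t\| < \infty$ by Remark \ref{rem:locmartislocallyL1DR+X} and the stopping-time machinery referenced in Section \ref{sec:tangmartconttime}. Since $X$ has the recoupling property, by Proposition \ref{prop:recouplequivUMD+} it is UMD$^+$, hence superreflexive with nontrivial type and finite cotype (Remark \ref{rem:recouplbasicproperta}); in particular it is reflexive, so by the Pettis measurability theorem we may assume $X$ separable. The construction of $N$ then follows a discretization scheme. Fix $T>0$ and discretize $M$ on $[0,T]$ as $f_k^n := M_{Tk/n}$; let $\tilde f^n$ be a discrete decoupled tangent martingale difference sequence, which exists on an enlarged filtration by the discrete theory (Subsection \ref{subsec:tanmartdiscase}, \cite{HitUP,dlPG,KwW91}). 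The recoupling inequality \eqref{eq:defofrqwecpropqdsao} plus Doob's inequality \eqref{eq:DoobsineqXBanach} give
\[
\mathbb E \sup_{k} \Bigl\| \sum_{\ell=1}^k \tilde f_\ell^n \Bigr\|^p \lesssim_{p,X} \mathbb E \sup_{k} \Bigl\| \sum_{\ell=1}^k f_\ell^n \Bigr\|^p \lesssim_{p,X} \mathbb E \sup_{0\leq t\leq T}\|M_t\|^p,
\]
uniformly in $n$; this uniform bound is what drives tightness. I would then invoke the limit theorem of Section \ref{sec:appJKW} (which is where the paper identifies the Skorokhod-space limit of the discretized decoupled tangent sequences and shows it is a genuine decoupled tangent local martingale) to pass to the limit in $n$, obtaining on $[0,T]$ a process $N^{(T)}$ which is a decoupled tangent local martingale to $M$ on $[0,T]$ and satisfies \eqref{eq:UMD+NtbddbyMtforpgeq1} with the supremum taken over $[0,T]$. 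Consistency in $T$ (by uniqueness in distribution, Proposition \ref{prop:dectanglocmaathasuniqerdistr}) and monotone convergence as $T\to\infty$ then yield the global $N$ and the global bound \eqref{eq:UMD+NtbddbyMtforpgeq1}. The main obstacle here is the limiting argument: one must be careful that the enlarged filtrations for different $n$ can be chosen coherently, that $M$ remains a martingale with unchanged local characteristics in the limit filtration (cf.\ Remark \ref{rem:Mmightchangelocalcharacp} for why this is delicate), and that the limiting process genuinely has conditionally independent increments given the local characteristics — this is precisely the content invoked from Section \ref{sec:appJKW}.

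For $(ii) \Rightarrow (i)$, I would argue by contraposition using discrete martingales embedded in continuous time. Suppose $X$ fails the recoupling property; by Proposition \ref{prop:recouplequivUMD+} it is not UMD$^+$, so there exist an $X$-valued martingale difference sequence $(d_n)$ and its discrete decoupled tangent sequence $(e_n)$ violating \eqref{eq:defofrqwecpropqdsao} for every constant. Realize $(d_n)$ as a continuous-time martingale $M$ with $M_t = \sum_{n\leq t} d_n$ as in Remark \ref{rem:defofdectanglmartidiscreteandfac,q}. If $M$ had a decoupled tangent local martingale $N$, then by Remark \ref{rem:defofdectanglmartidiscreteandfac,q} the increments of $N$ over the integer mesh would form a discrete decoupled tangent martingale difference sequence to $(d_n)$, hence equidistributed with $(e_n)$ by discrete uniqueness \cite{dlPG,KwW91}; but $N$ being a bona fide (local) martingale with finite strong moments on every compact time interval — combined with Remark \ref{rem:locmartwithIIisamartwithII} which upgrades the independent-increments local martingale $N(\omega)$ to a martingale — would force $\mathbb E\sup_{k}\|\sum_{n\leq k} e_n\|$ to be controlled, contradicting the chosen violation. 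I would phrase this cleanly via characterization $(v)$ of Proposition \ref{prop:5defofrecouplproepnmfoac}: pick $(d_n)$ with $\|\sum e_n\| > 1$ a.s.\ but $\mathbb E\sup_k\|\sum_{n\leq k} d_n\|$ arbitrarily small; existence of $N$ forces $\mathbb E\sup_t\|N_t\| \geq \mathbb E\|\sum_n e_n\| \geq 1$, while the definition of a decoupled tangent local martingale (its existence as an $\overline{\mathbb F}$-local martingale with the same characteristics) cannot be reconciled with an arbitrarily small $M^*$-norm once one combines it with the good behaviour under conditioning; the remaining work is to make the "cannot be reconciled" step precise, which amounts to running the $(i)\Rightarrow(ii)$ estimate in reverse on the discrete level.

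Finally, the self-improvement of \eqref{eq:UMD+NtbddbyMtforpgeq1} across all $p$ is immediate: the construction yields $N$ independent of $p$, and Proposition \ref{prop:5defofrecouplproepnmfoac} shows that the recoupling inequality holds for all $1\leq p<\infty$ once it holds for one, so feeding this into the discretization estimate above gives \eqref{eq:UMD+NtbddbyMtforpgeq1} for every $p$. I expect the genuinely hard part of the whole argument to be the tightness-and-limit step in $(i)\Rightarrow(ii)$, i.e.\ justifying that the discrete decoupled tangent martingales converge in the Skorokhod space to a process which is still decoupled tangent to $M$ in the enlarged filtration; everything else is either a direct citation (Proposition \ref{prop:recouplequivUMD+}, Proposition \ref{prop:5defofrecouplproepnmfoac}, Remark \ref{rem:locmartwithIIisamartwithII}) or a soft contraposition.
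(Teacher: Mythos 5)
Your proposal has two genuine gaps, one in each direction.

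For $(i)\Rightarrow(ii)$, you propose to discretize $M$, take discrete decoupled tangent sequences, and pass to a Skorokhod-space limit by invoking Section~\ref{sec:appJKW}. This step is not available under the recoupling hypothesis. Theorem~\ref{thm:condtrstepsnsidtrtothepledoneUMDcase} and its entire toolkit — the canonical decomposition (Theorem~\ref{thm:candecXvalued}), the two-sided tangent estimates \eqref{eq:Lpboundsforgentangmarraz}, Lemma~\ref{lem:tangdecofsumissumoftangdec}, Lemma~\ref{lem:approxMbyMmthesameforDTM} — all require the full UMD property. The recoupling property is equivalent to UMD$^+$ (Proposition~\ref{prop:recouplequivUMD+}), and whether UMD$^+$ implies UMD is open (Remark~\ref{rem:recouplbasicproperta}); moreover recoupling only gives the one-sided bound $\|N\|\lesssim\|M\|$, whereas the limit argument in Section~\ref{sec:appJKW} uses two-sided bounds (e.g.\ in the proof of Lemma~\ref{lem:approxMbyMmthesameforDTM}, the Cauchy property of $\widetilde M^n$ is extracted from that of $M^n$ via the lower estimate). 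The paper's actual route avoids discretization-and-limit entirely: it shows directly from the recoupling/UMD$^+$ hypothesis that $[\![M]\!]$ exists with $\mathbb E\gamma([\![M]\!]_\infty)^p\lesssim_{p,X}\mathbb E\sup_t\|M_t\|^p$ (the one-sided half of the estimates in \cite{Y18BDG}), decomposes $[\![M]\!]$ into continuous, quasi-left continuous, and accessible-jump pieces, and constructs $N^c$, $N^q$, $N^a$ separately from a Brownian representation, a Cox process, and a predictable-jump construction, each time verifying that the construction converges using only one-sided recoupling estimates. This constructive argument is the genuine content the proposal is missing; it cannot be replaced by a citation to Section~\ref{sec:appJKW}.

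For $(ii)\Rightarrow(i)$, the argument as written does not close. You observe that failure of recoupling produces $(d_n)$ with $\|\sum e_n\|>1$ a.s.\ while $\mathbb E\sup_k\|\sum_{n\le k}d_n\|$ is small, and claim the existence of a decoupled tangent $N$ for the embedded martingale "cannot be reconciled with an arbitrarily small $M^*$-norm." But it can: the definition of a decoupled tangent local martingale imposes no quantitative bound between $N^*$ and $M^*$ in the absence of recoupling, so a single counterexample sequence gives no contradiction — $N$ simply has a large norm, which is permitted. What the paper actually does is telescope: using Proposition~\ref{prop:5defofrecouplproepnmfoac}(v) it builds a sequence of martingale difference blocks $(d^m_n)_n$ with $\mathbb E\sup_k\|\sum_n d^m_n\|\le 2^{-m}$ while the decoupled sums $\|\sum_n e^m_n\|$ exceed a strictly increasing threshold $C_{m-1}$ with high probability, concatenates them into a martingale $M$ on $[0,1)$ whose partial sums converge, and then shows that any candidate decoupled tangent $N$ must be equidistributed with the concatenation of the $e^m_n$'s, which diverges a.s.\ as $t\to 1^-$, hence $N$ cannot be a local martingale. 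The obstruction you need is \emph{non-existence via divergence}, not a norm estimate; the phrase "run the estimate in reverse" does not supply this construction.
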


\begin{proof}
 {\em Part 1: $(i) \Rightarrow(ii)$.} Assume that $X$ has the recoupling property. Let $M$ be an $X$-valued martingale. First let us show that $[\![M]\!]$ exists, c\`adl\`ag, and $\gamma([\![M]\!]_t)<\infty$ a.s.\ for any $t\geq 0$ (see Remark \ref{rem:ifUMDthencovbilform} and \cite{Y18BDG}). To this end note that by Proposition \ref{prop:recouplequivUMD+} one can repeat the proof of \cite[Theorem 2.1]{Y18BDG} showing only the upper estimate $\gtrsim_{p, X}$ in \cite[(2.2) and (2.3)]{Y18BDG}. This upper estimate together with the superreflexivity of $X$ (see Remark \ref{rem:recouplbasicproperta}) is enough to repeat the proof of \cite[Theorem 5.1]{Y18BDG} again with providing only 
 \begin{equation}\label{eq:gammaMforUMD+isbddbyM}
   \mathbb E \gamma([\![M]\!]_{\infty})^p\lesssim_{p, X} \mathbb E \sup_t \|M_t\|^p, \;\; 1\leq p<\infty,
 \end{equation} 
 in \cite[(5.1)]{Y18BDG}. For the same reason it follows that $\gamma([\![M]\!]_t)<\infty$ a.s.\ for any $t\geq 0$ and that it is locally integrable. Existence of a c\`adl\`ag version of $[\![M]\!]$ holds similarly \cite[Proposition 5.5]{Y18BDG}.
 
 Now, as $[\![M]\!]$ is c\`adl\`ag and nondecreasing (see Subsection \ref{subsec:quadrvar} and \cite{Y18BDG}), analogously to \cite[Section 25, 26]{Kal} it has a continuous part $[\![M]\!]^c$ and a pure jump part $[\![M]\!]^d$ (shortly, one can simply define $[\![M]\!]^d_t:= \sum_{0\leq s\leq t} \Delta [\![M]\!]_s$ and $[\![M]\!]^c := [\![M]\!] - [\![M]\!]^d$). Moreover, by \cite[Lemma 3.9, 3.10]{Y18BDG}, by the definition of $[\![M]\!]$, and by \cite[Theorem 26.14]{Kal} we have that $\gamma([\![M]\!]) \eqsim \gamma([\![M]\!]^c) + \gamma([\![M]\!]^d)$ a.s., so in particular by \cite[(3.2)]{Y18BDG} $[\![M]\!]^c$ is bounded and locally integrable.

 Let us show that there exists a decoupled tangent martingale $N$. We will construct separately $N^c$, $N^q$, and $N^a$, and then sum them up. For $N^c$ let us consider  $[\![M]\!]^c$. We know that $\gamma([\![M]\!]^c)$ is finite and locally integrable. Therefore analogously to the proof of Theorem \ref{thm:DTMforcontcase} there a.s.\ exist an invertible time-change $(\tau_s)_{s\geq 0}$ with an inverse time-change $(A_t)_{t\geq 0}$ (see the proof of Theorem \ref{thm:DTMforcontcase}), a Hilbert space $H$, $\Phi\in \gamma(L^2(\mathbb R_+; H), X)$ predictable with respect to $(\mathcal F_{\tau_s})_{s\geq 0}$, and a cylindrical Wiener process $W_H$ such that for any $x^* \in X^*$ a.s.\
 \[
 ( \langle M, x^*\rangle^c \circ \tau)_t= \int_0^t \Phi^* x^*\ud W_H,\;\;\;\;\; t\geq 0.
 \]
As $\Phi\in \gamma(L^2(\mathbb R_+; H), X)$, by \cite[Subsection 3.2]{Y18BDG} (see also \cite{NVW}) $\Phi$ is a.s.\ integrable with respect to an independent cylindrical Brownian motion $W_H'$ (with $(\mathbb E_{W_H'} \|\Phi\cdot W_H'\|^2)^{\frac 12} = \|\Phi\|_{\gamma(L^2(\mathbb R_+; H), X)} $ a.s.), so we can define $N^c_t:= \int_0^{A_t} \Phi \ud W_H'$. Moreover, by \eqref{eq:gammaMforUMD+isbddbyM}, Remark \ref{rem:recouplbasicproperta}, and \cite[Lemma 3.10 and Proposition 6.8]{Y18BDG} 
\begin{equation}\label{eq:NcUMD+bddbyNfotop}
\begin{split}
 \mathbb E \sup_{t\geq 0} \|N^c_{t}\|^p &\eqsim_{p, X} \mathbb E \gamma([\![N^c]\!]_{\infty})^p = \mathbb E \gamma([\![M^c]\!]_{\infty})^p\\
 & \leq \mathbb E \gamma([\![M]\!]_{\infty})^p \lesssim_{p, X}  \mathbb E \sup_{t\geq 0} \|M_{t}\|^p.
 \end{split}
\end{equation}

Let us now construct $N^q$. Analogously to \cite[Proposition 25.17]{Kal}, $[\![M]\!]^d$ can be decomposed into $[\![M]\!]^q$, which is quasi-left continuous, and $[\![M]\!]^a$, which is purely discontinuous with accessible jumps. Let $\mu^M$ be defined by \eqref{eq:defofmuM}, let $\mu$ be the quasi-left continuous component of $\mu^M$ (see Lemma \ref{lem:decompofmeasuresontwoparts}), and let $\nu$ be the compensator of $\mu$, $\bar{\mu} = \mu - \nu$. Then we have that for a.e.\ $\omega\in \Omega$ $[\![M]\!]^q_{\infty}$ collects quasi-left continuous jumps of $M$ and in particular by \cite{Y18BDG}, $\gamma([\![M]\!]^q_{\infty}) = (\mathbb E_{\gamma} \| \int_{\mathbb R_+ \times X} \gamma_t x\ud {\mu}(t, x)\|^2)^{\frac 12}$, where $(\gamma_t)_{t\geq 0}$ is a family of independent Gaussians which can be considered countable a.s.\ as $M$ has countably many jumps a.s. Let $\mu_{\rm Cox}$ be a Cox process directed by $\nu$, $\bar{\mu}_{\rm Cox} = \mu_{\rm Cox}-\nu$. Let us show that $x$ is integrable with respect to $\bar{\mu}_{\rm Cox}(\omega)$ for a.e.\ $\omega\in \Omega$. To this end notice that by \cite[Proposition 6.8 and Subsection 7.2]{Y18BDG} and by the fact that $\bar{\mu}_{\rm Cox}(\omega)$ is a Poisson random measure (so it is a random measure with independent increments) $x$ is integrable with respect to $\bar{\mu}_{\rm Cox}(\omega)$ if $\mathbb E_{\rm Cox}\|x\|_{\gamma(L^2(\mathbb R_+ \times X; {\mu}_{\rm Cox}(\omega)), X)} <\infty$, which is a.s.\ satisfied as for any $p\geq 1$
\begin{align}\label{eq:xja,,danomdasbddbyEsum|Mdaao|}
\mathbb E\|x\|_{\gamma(L^2(\mathbb R_+ \times X; {\mu}_{\rm Cox}), X)}^p & \stackrel{(i)}= \mathbb E \Bigl(\mathbb E_{\gamma} \Bigl\|\int_{\mathbb R_+ \times X} \gamma_t x \ud  {\mu}_{\rm Cox}(\omega)(t,x) \Bigr\|^2\Bigr)^{\frac p2}\nonumber\\
& \stackrel{(ii)}= \lim_{n\to \infty}\mathbb E \Bigl(\mathbb E_{\gamma} \Bigl\|\int_{\mathbb R_+ \times X} \mathbf 1_{A_n}(t, x)\gamma_{n(t)} x \ud  {\mu}_{\rm Cox}(t,x) \Bigr\|^2\Bigr)^{\frac p2}\nonumber\\
& \stackrel{(iii)} \lesssim_{p, X}\lim_{n\to \infty}\mathbb E \Bigl(\mathbb E_{\gamma} \Bigl\|\int_{\mathbb R_+ \times X} \mathbf 1_{A_n}(t, x)\gamma_{n(t)} x \ud  {\mu}(t,x) \Bigr\|^2\Bigr)^{\frac p2} \\
& \stackrel{(iv)}=\mathbb E \Bigl(\mathbb E_{\gamma} \Bigl\|\int_{\mathbb R_+ \times X} \gamma_t x \ud  {\mu}(t,x) \Bigr\|^2\Bigr)^{\frac p2}\nonumber\\
&\stackrel{(v)}=\mathbb E\gamma([\![M]\!]^q_{\infty})^p \lesssim \mathbb E \sup_{t\geq 0} \|M_t\|^p,\nonumber
\end{align}
where $(A_n)_{n\geq 1}$ are defined analogously to the proof of Theorem \ref{thm:genformofmartwithindince} for quasi-left continuous jumps of $M$, $(\gamma_t)_{t\geq 0}$ are independent Gaussians, a step nondecreasing function $t\mapsto n(t)$ is a discretization so that $(\gamma_{n(t)})_{t\geq 0}$ includes finitely many Gaussians and $n(t)\to t$ as $n\to \infty$, $(i)$ and $(v)$ follow from \cite[Section 9]{HNVW2} and the fact that ${\mu}_{\rm Cox}$ and ${\mu}$ are a.s.\ atomic measures, approximations $(ii)$ and $(iv)$ follow analogously \cite[Lemma 3.11]{Y18BDG}, and finally $(iii)$ holds due to \eqref{eq:defofrqwecpropqdsao}, \cite[Lemma 6.3]{LT11}, \cite[Proposition 6.3.1]{HNVW2}, and the fact that $t\mapsto \int_{[0,t] \times X} \mathbf 1_{A_n}(t, x)\gamma_{n(t)} x \ud  {\mu}_{\rm Cox}(t,x)$ is {\em approximately} a decoupled tangent martingale to $t\mapsto \int_{[0,t] \times X} \mathbf 1_{A_n}(t, x)\gamma_{n(t)} x \ud  {\mu}(t,x)$ (in fact $t\mapsto \int_{[0,t] \times X} \gamma_t x \ud  {\mu}_{\rm Cox}(t,x)$ is a decoupled tangent martingale to $t\mapsto \int_{[0,t] \times X} \gamma_t x \ud  {\mu}(t,x)$, but the filtration generated by $(\gamma_t)_{t\geq 0}$ is not countably generated, so an approximation needed; such an approximation can be done analogously Section \ref{sec:appJKW}). Therefore $t\mapsto N^q_t:= \int_{[0, t]\times X} x \ud \bar{\mu}_{\rm Cox}$ is a well-defined purely discontinuous quasi-left continuous martingale which has independent increment for a.e.\ $\omega\in \Omega$ and thanks to \eqref{eq:xja,,danomdasbddbyEsum|Mdaao|} and \cite[Proposition 6.8 and Subsection 7.2]{Y18BDG} we have that
\begin{equation}\label{eq:NqUMD+bddbyMpgeq1omc}
\mathbb E \sup_{t\geq 0} \|N^q_t\|^p \eqsim_{p,X}\mathbb E\|x\|_{\gamma(L^2(\mathbb R_+ \times X; {\mu}_{\rm Cox}), X)}^p \lesssim_{p, X}\mathbb E \sup_{t\geq 0} \|M_t\|^p.
\end{equation}

Finally, let us turn to $N^a$. First let $(\tau_n)_{n\geq 1}$ be a sequence of predictable stopping time exhausting predictable jumps of $M$ (equivalently, all the jumps of $[\![M]\!]^a$, see also Lemma \ref{lem:PDmAJhasjumpsatprsttimes} and \cite[Proposition 25.4]{Kal}). Let $N^{a,m}$ be constructed for any $m\geq 1$ similarly to \eqref{eq:defofNmapproxNDTMforPDWAJ}. Let us show that $N^{a,m}$ converges in strong $L^1(\Omega; X)$ as $m\to 0$, i.e.\ for any $n\geq m\geq 1$
\[
\mathbb E \sup_{t\geq 0} \|N^{a,n}_t- N^{a,m}_t\|\to 0,\;\;\; n\geq m\to \infty.
\]
Note that by its construction $N^{a,n}(\omega)- N^{a,m}(\omega)$ has independent increments, hence by \cite[Subsection 6.2]{Y18BDG} and Remark \ref{rem:recouplbasicproperta}
\begin{equation}\label{eq:NamNqnto0forUMD+eoqm}
\begin{split}
\mathbb E \sup_{t\geq 0} \|N^{a,n}_t- N^{a,m}_t\|&\eqsim_{X} \mathbb E \gamma([\![N^{a,n}- N^{a,m}]\!]_{\infty}) \eqsim \mathbb E \Bigl\| \sum_{k=m+1}^{n}\gamma_k \Delta N_{\tau_k} \Bigr\|\\
& \stackrel{(*)}\lesssim_{p, X}\mathbb E \Bigl\| \sum_{k=m+1}^{n}\gamma_k \Delta M_{\tau_k} \Bigr\| \stackrel{(**)}\to 0 \;\;\; \text{as}\;\;\; n\geq m\to \infty,
\end{split}
\end{equation}
 where $(*)$ follows from \eqref{eq:defofrqwecpropqdsao}, \cite[Lemma 6.3]{LT11}, and the fact that $(\gamma_k\Delta N_{\tau_k})_{k=m+1}^n$ is a decoupled tangent sequence to $(\gamma_k\Delta M_{\tau_k})_{k=m+1}^n$ (this follows analogously Theorem \ref{thm:detangmartforMXVpdwithaccjumps}, where one needs to reorder $(\tau_k)_{k=m+1}^n$ making it increasing as it was done in the proof of Proposition \ref{prop:pdwajjusttang}), and $(**)$ holds true similarly to \cite[Theorem 7.14]{Y18BDG}. Thus $N^a := \lim_{n\to \infty} N^{a, n}$ is a well-defined purely discontinuous martingale with accessible jumps and has independent increments for a.e.\ fixed $\omega\in \Omega$ due to its construction (see the proof of Theorem \ref{thm:detangmartforMXVpdwithaccjumps}), and analogously $(*)$ in \eqref{eq:NamNqnto0forUMD+eoqm} (which holds for any power $p\geq 1$) and due to \cite[Proposition 6.8 and Subsection 7.2]{Y18BDG} and \eqref{eq:gammaMforUMD+isbddbyM}
 \begin{equation}\label{eq:UMD+NabddbyMdpdq}
 \mathbb E \sup_{t\geq 0} \|N^a_t\|^p \eqsim_{p, X} \mathbb E \gamma([\![N^a]\!]_{\infty})^p \lesssim_{p, X} \mathbb E \gamma([\![M]\!]_{\infty})^p \lesssim_{p, X} \mathbb E \sup_{t\geq 0}\|M_t\|^p.
 \end{equation}
 It remains to let $N:= N^c + N^q + N^a$ with \eqref{eq:UMD+NtbddbyMtforpgeq1} holding by \eqref{eq:NcUMD+bddbyNfotop}, \eqref{eq:NqUMD+bddbyMpgeq1omc}, and \eqref{eq:UMD+NabddbyMdpdq}. The fact that $N$ is a decoupled tangent martingale to $M$ follows similarly Subsection \ref{subsec:CIprocess}.
  
  {\em Part 2: $(ii) \Rightarrow(i)$.}  Let $X$ fail the recoupling property. It is sufficient to show that there exists a martingale without a decoupled tangent local one. First let us construct an $X$-valued martingale difference sequence $(d_n)_{n\geq 1}$ such that $\sum_{n\geq 1} d_n$ converges and bounded a.s.\ and $\sum_{\geq 1}e_n$ diverges a.s., where $(e_n)_{n\geq 1}$ is a decoupled tangent sequence to $(d_n)_{n\geq 1}$. To this end we will apply Proposition \ref{prop:5defofrecouplproepnmfoac}. Due to Proposition \ref{prop:5defofrecouplproepnmfoac}(v) for any $m\geq 1$ we can inductively construct an $X$-valued martingale difference sequence $(d_n^m)_{m=1}^{M_m}$ with a decoupled tangent martingale difference sequence $(e_n^m)_{m=1}^{M_m}$ such that
  \begin{enumerate}[(i)]
  \item $\mathbb E \sup_{N=1}^{M_m} \|\sum_{n=1}^N d_n^m\| \leq \frac{1}{2^m}$, and
   \item $\mathbb P( \|\sum_{n=1}^{M_m} e_n^m\|>C_{m-1})>1-\frac{1}{2^m}$, where $C_0=1$ and for any $m\geq 1$ $C_m$ is such a constant bigger then $2C_{m-1}$ that $\mathbb P( \|\sum_{n=1}^{M_m} e_n^m\|>C_{m})<\frac{1}{2^m}$
  \end{enumerate}
(see e.g.\ \cite[Subsection 4.4]{Y17GMY}). Then we can set 
\[
 d_{M_1 + \ldots + M_{m-1} + n}:= d_n^m,\;\;\;m\geq 1,\;\; 1\leq n\leq M_m,
\]
\[
 e_{M_1 + \ldots + M_{m-1} + n}:= e_n^m,\;\;\;m\geq 1,\;\; 1\leq n\leq M_m,
\]
and the pair $(d_n)_{n\geq 1}$ and $(e_n)_{n\geq 1}$ would satisfy the desired properties. Now let us construct a martingale $M:\mathbb R_+ \times \Omega \to X$ without a decoupled tangent one. To this end let the filtration $(\mathcal F_{t})_{t\geq 0}$ be generated by  $(d_n)_{n\geq 1}$ and $(e_n)_{n\geq 1}$ in the following way: let $\mathcal F_{1-1/2^n} := \sigma(d_1, \ldots, d_n, e_1, \ldots, e_n)$ and let $\mathcal F_t := \mathcal F_{1-1/2^n}$ for any $n\geq 1$ and $t\in[1-1/2^n, 1-1/2^{n+1})$. Set 
$$
M_t:= \sum_{n:1-1/2^n\leq t} d_n,\;\;\;N_t:= \sum_{n:1-1/2^n\leq t} e_n,\;\;\;0\leq t<1.
$$ 
First of all notice that $N$ by its definition is a decoupled tangent martingale to $M$ on $[0, 1-1/2^n]$ for any $n\geq 1$. But $\lim_{t\nearrow 1}N_t = \sum_{n\geq 1}e_n$ does not exists due to the construction, so it is not a local martingale and thus not a decoupled tangent local martingale to $M$. Assume that $M$ has some decoupled tangent martingale $\widetilde N$. Then by Remark \ref{rem:defofdectanglmartidiscreteandfac,q} for each $\omega\in \Omega$ we have that $(\Delta N_t(\omega))_{t\geq 0}$ and $(\Delta\widetilde N_t(\omega))_{t\geq 0}$ are equidistributed and independent, so $N$ and $\widetilde N$ are equidistributed, hence $\widetilde N$ is not a local martingale, so the desired holds true.
\end{proof}

\begin{remark}\label{rem:decweaktangmartopenq}
Note that thanks to the proof of Theorem \ref{thm:existofdecoupltagivenerqw} any UMD$^+$-valued martingale $M$ has a covariation bilinear form $[\![M]\!]$, and more importantly, this covariation bilinear form has a continuous part $[\![M]\!]^c$ so that the weak local characteristics of $M$ are generated by $([\![M]\!]^c, \nu^M)$. It remains unknown for the author whether one can characterize the UMD$^+$ property of $X$ via existence of the pair $([\![M]\!]^c, \nu^M)$ (or, equivalently, only via existence of $[\![M]\!]^c$ as $\nu^M$ is always well-defined, see Subsection \ref{subsec:ranmeasures}) for any $X$-valued martingale $M$.
\end{remark}

\section{Independent increments}\label{sec:indincrements}

The present section is devoted to martingales with independent increments. As we will see below, in this case one could avoid the UMD assumption in order to show existence of local characteristics. Moreover, in Subsection \ref{subsec:martwithindincrgenform} we will show that such martingales have a precise form in terms of stochastic integrals with respect to cylindrical Brownian motions and Poisson random measures. Recall that we will be talking about martingales with independent increments without the localization assumption which can be omitted due to Remark \ref{rem:locmartwithIIisamartwithII}.

\subsection{Weak local characteristics and independent increments}\label{subsec:weakloccharandMII}

As it was originally shown by Grigelionis in \cite{Grig77} (see also a multidimensional version \cite[p.\ 106]{JS}), a local martingale has independent increments if and only if its local characteristics are deterministic. Let us extend this result to infinite dimensions via using weak local characteristics.

\begin{theorem}\label{thm:Mhasindinciffweallocchararedeterms}
 Let $X$ be a Banach space, $M:\mathbb R_+ \times \Omega \to X$ be a local martingale. Then $M$ has independent increments if and only if its weak local characteristics are deterministic.
\end{theorem}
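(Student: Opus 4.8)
The plan is to reduce the infinite-dimensional statement to the known real-valued (and finite-dimensional) result of Grigelionis recalled in the introduction via the Cram\'er--Wold device, exactly in the spirit of the proof of Theorem \ref{thm:tangiffwtangUMDcase}. First I would dispose of the easy implication: if $M$ has independent increments, then for every $x^*\in X^*$ the real-valued local martingale $\langle M, x^*\rangle$ has independent increments, so by Grigelionis' theorem \cite{Grig77} its local characteristics $([\langle M, x^*\rangle^c], \nu^{\langle M, x^*\rangle})$ are deterministic; as this holds for all $x^*$, the weak local characteristics of $M$ are deterministic by definition.

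For the converse, assume the family $([\langle M, x^*\rangle^c], \nu^{\langle M, x^*\rangle})_{x^*\in X^*}$ is deterministic. By the Pettis measurability theorem \cite[Theorem 1.1.20]{HNVW1} we may assume $X$ is separable, hence the Borel $\sigma$-algebra of $X$ is generated by cylinder sets. To show $M$ has independent increments it suffices to show that for any $0\leq t_0 < t_1 < \cdots < t_k$ the increments $M_{t_1}-M_{t_0}, \ldots, M_{t_k}-M_{t_{k-1}}$ are independent, and by the Cram\'er--Wold theorem (as used in the proof of Theorem \ref{thm:tangiffwtangUMDcase}; see \cite[Theorem 29.4]{Bill95}) this reduces to showing that for any finite collection $x_1^*,\ldots,x_m^*\in X^*$ the $\mathbb R^m$-valued process $P M$, where $Px := (\langle x, x_1^*\rangle,\ldots,\langle x, x_m^*\rangle)$, has independent increments. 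Now $PM$ is a finite-dimensional local martingale whose local characteristics are determined (via Lemma \ref{lem:lintransfofmart-->transfofnuM} and the bilinearity of the continuous-part quadratic variation, together with the polarization identity $[\langle M,x^*\rangle^c,\langle M,y^*\rangle^c] = \tfrac14([\langle M,x^*+y^*\rangle^c]-[\langle M,x^*-y^*\rangle^c])$) by the weak local characteristics of $M$ evaluated at the functionals $x_i^*\pm x_j^*$; since the latter are deterministic, the local characteristics of $PM$ are deterministic. By the multidimensional Grigelionis theorem \cite[p.\ 106]{JS}, $PM$ then has independent increments, which completes the Cram\'er--Wold argument and hence shows $M$ has independent increments.

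The one point requiring a little care — and which I expect to be the main obstacle — is the passage from "$PM$ has deterministic local characteristics" to invoking \cite[p.\ 106]{JS}: one must make sure the covariation of the continuous part of $PM$ (an $\mathbb R^{m\times m}$-valued process) and the jump compensator of $PM$ are genuinely deterministic, not merely that their one-dimensional projections are. For the continuous part this follows from polarization as indicated; for the jump measure $\nu^{PM}$ one uses that, $X$ being separable, cylinder sets generate $\mathcal B(X)$, so $\nu^{PM}(A\times \widetilde B) = \nu^{M}(A\times P^{-1}\widetilde B)$ can be recovered from the family $(\nu^{\langle M, y\rangle})_{y\in \mathrm{span}(x_1^*,\ldots,x_m^*)}$ by the finite-dimensional Cram\'er--Wold identity, exactly as in the last display of the proof of Theorem \ref{thm:tangiffwtangUMDcase}. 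Everything else is routine, and the writeup should closely parallel that proof. Note that, as emphasized in the statement, no UMD assumption enters anywhere: we never form the Meyer--Yoeurp decomposition of $M$ itself, only of its real-valued projections.
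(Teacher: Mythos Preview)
Your proposal is correct and follows essentially the same route as the paper: reduce to finite dimensions via separability and linear functionals, then invoke the finite-dimensional Grigelionis--Jacod--Shiryaev result. The paper's write-up is terser---it cites \cite[Theorem II.4.15]{JS} as a black box for the finite-dimensional step rather than spelling out the polarization and Cram\'er--Wold verification that weak local characteristics determine the full characteristics---but your more explicit treatment of that point (which you rightly flag as the place needing care) is exactly what the citation encapsulates.
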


\begin{proof}
The ``only if'' part is simple and follows directly from the real-valued case \cite{Grig77} and the fact that if $M$ has independent increments then $\langle M, x^*\rangle$ has independent increments for any $x^*\in X^*$ as well.

Let us show the ``if'' part.
First we reduct to the finite dimensional case.
 By the Pettis measurability theorem \cite[Theorem 1.1.20]{HNVW1} we may assume that $X$ is separable. Let $(x_n)_{n\geq 1}$ be a dense sequence in $X\setminus \{0\}$, $(x_n^*)_{n\geq 1}$ be a norming sequence, i.e.\ $\langle x_n, x_n^*\rangle = \|x_n\|$ and $\|x_n^*\|= 1$ for any $n\geq 1$ (such linear functionals exist by the Hahn-Banach theorem). For each $m\geq 1$ define $Y_m := \text{span}(x_1^*, \ldots x_m^*)$ and let $P_m: Y_m\to X^*$ be the corresponding inclusion operator. Then by the definition of $(x_n)_{n\geq 1}$ and $(x_n^*)_{n\geq 1}$ we have that the Borel $\sigma$-algebra of $X$ is generated by $(x_n^*)_{n\geq 1}$ (e.g.\ $x$ in the unit ball of $X$ if and only if $|\langle x, x_n^* \rangle|\leq 1$ for all $n\geq 1$), and so by the definition of $P_m$ we have that $M$ has independent increments if and only if $P_m^*M$ has independent increments for any $m\geq 1$. So we need to prove the theorem for any $m\geq 1$, which is equivalent to proving it for finite dimensional case as $P_m^*M$ takes values in a finite dimensional space $\text{ran}(P_m^*)$.
 
 Now let $X$ be finite dimensional. Then the theorem follows from \cite[Theorem II.4.15]{JS}.
\end{proof}

\subsection{General form of a martingale with independent increments}\label{subsec:martwithindincrgenform}

Now we are going to show that any martingale with independent increments (with values in {\em any} Banach space) has local characteristics, so there is no need in weak local characteristics. Moreover, any such a martingale has a very specific form outlined in Theorem \ref{thm:genformofmartwithindince}. Recall that a vector-valued stochastic integral of a deterministic function with respect to a compensated Poisson random measure was defined in Definition~\ref{def:defofstochintwrtPoisspoirnprod}.

\begin{theorem}\label{thm:genformofmartwithindince}
 Let $X$ and $M$ be as in Theorem \ref{thm:Mhasindinciffweallocchararedeterms}. Assume additionally that $M_0=0$. Then $M$ has the canonical decomposition $M = M^c + M^q + M^a$, where martingales $M^c$, $M^q$, and $M^a$ are independent and have independent increments, and for any $\phi:\mathbb R_+ \to \mathbb R_+$ with moderate growth and with $\phi(0)=0$ we have that
 \begin{equation}\label{eq:phiineqforindincrcandec}
    \mathbb E \sup_{t\geq 0} \phi\bigl (\|M_t\|\bigr) \eqsim_{\phi}   \mathbb E\sup_{t\geq 0} \phi\bigl ( \|M^c_t\|\bigr) +  \mathbb E\sup_{t\geq 0} \phi \bigl(\|M^q_t\|\bigr) +  \mathbb E\sup_{t\geq 0} \phi \bigl( \|M^a_t\|\bigr).
 \end{equation}
 Moreover, there exist a cylindrical Brownian motion $W_H$, $\Phi\in \gamma(L^2(\mathbb R_+;H), X)$ locally, and a deterministic time-change $\tau^c$ such that $M^c \circ \tau^c = \Phi \cdot W_H$, there exists a Poisson random measure $N_{\nu_{na}}$ on $\mathbb R_+ \times X$ with a compensator $\nu_{na}$ (which is a non-atomic in time part of $\nu^M$) such that $M^q  = \int_{[0, \cdot] \times J} x \ud \widetilde N_{\nu_{na}}$ (where $\widetilde N_{\nu_{na}} := N_{\nu_{na}} - \nu_{na}$), and finally $M^a$  is a martingale which has fixed jump times.
\end{theorem}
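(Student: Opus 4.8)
The plan is to build the canonical decomposition $M = M^c + M^q + M^a$ using Theorem \ref{thm:Mhasindinciffweallocchararedeterms} to get deterministic weak local characteristics, and then to exploit determinism aggressively to upgrade weak structure to genuine structure. First I would observe that since $M$ has independent increments, so does $\langle M, x^*\rangle$ for every $x^*$; by Grigelionis (the real-valued version cited via \cite{Grig77} and \cite{JS}) each $\langle M, x^*\rangle$ has deterministic local characteristics $([\langle M, x^*\rangle^c], \nu^{\langle M, x^*\rangle})$. For a real-valued martingale the Meyer--Yoeurp and canonical decompositions always exist, so $\langle M, x^*\rangle = \langle M, x^*\rangle^c + \langle M, x^*\rangle^q + \langle M, x^*\rangle^a$. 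The key point is that, because everything is deterministic, the jump measure $\mu^M$ on $\mathbb R_+\times X$ has a deterministic compensator $\nu^M$: indeed $\nu^M$ is a predictable random measure, and since $M$ has independent increments one checks directly from the definition \eqref{eq:defofcompofrandsamdsaer} that $\mathbb E\int \mathbf 1_A\,\mathrm d\mu^M$ already splits as a deterministic measure, so $\nu^M$ is deterministic (this is the reduction used in Theorem \ref{thm:Mhasindinciffweallocchararedeterms}, combined with the fact that a deterministic predictable random measure coincides with its own compensator). Then I split $\nu^M = \nu_{na} + \nu_{a}$ into its non-atomic-in-time part and its atomic part (supported on a deterministic countable set $(t_n)_{n\geq 1}$ of times), using Lemma \ref{lem:decompofmeasuresontwoparts} and Remark \ref{rem:whichwhatpartofrmmeans}, which gives the split of the jumps of $M$ into a quasi-left continuous family and an accessible family occurring at the fixed times $(t_n)$.

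Next I would construct the three pieces. Define $M^a := \sum_n \Delta M_{t_n}\mathbf 1_{[t_n,\infty)}$, the compensated sum of the jumps of $M$ at the deterministic times $(t_n)$; each $\Delta M_{t_n}$ is independent of $\mathcal F_{t_n-}$ and mean zero (Lemma \ref{lem:DeltaMtaugiventau-=0}), and since $M$ has independent increments the $(\Delta M_{t_n})_n$ are mutually independent, so $M^a$ is a purely discontinuous martingale with accessible (in fact fixed) jump times and independent increments. For $M^q$ I would show that the remaining quasi-left continuous jumps, assembled via the quasi-left continuous component $\mu$ of $\mu^M$ (which has the deterministic compensator $\nu_{na}$), give a Poisson random measure: by the classical Meyer--Papangelou time-change result (cited in the excerpt), a quasi-left continuous integer random measure with deterministic compensator $\nu_{na}$ is literally a Poisson random measure $N_{\nu_{na}}$, so $M^q := \int_{[0,\cdot]\times X} x\,\mathrm d\widetilde N_{\nu_{na}}$ — this integral makes sense by Definition \ref{def:defofstochintwrtPoisspoirnprod}/\ref{def:Fisintagrablewrtbarmugendef} once one verifies the integrability of $x$ against $\bar\mu^{M^q}$, which for a martingale with independent increments reduces to a deterministic Gaussian/$\gamma$ computation as in \eqref{eq:xja,,danomdasbddbyEsum|Mdaao|}. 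Finally $M^c := M - M^q - M^a$ is continuous; to put it in the claimed form I would repeat the construction in the proof of Theorem \ref{thm:DTMforcontcase} (Brownian representation, Lemma \ref{lem:brrepres}), but now the time change $\tau^c$ defined by \eqref{eq:defoftautforcontcase} is deterministic (since all the $[\langle M, x^*\rangle^c]$ are deterministic), hence $\Phi$ is a deterministic element of $\gamma(L^2(\mathbb R_+;H),X)$ locally, and $M^c\circ\tau^c = \Phi\cdot W_H$.

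The independence of the three summands $M^c, M^q, M^a$ I would get from the fact that, conditionally on nothing (everything being deterministic), the continuous part, the Poisson part, and the sum of fixed jumps are driven by independent sources: $W_H$, the Poisson random measure $N_{\nu_{na}}$, and the independent family $(\Delta M_{t_n})_n$; more carefully, since the three jump-structures occur on disjoint (deterministic) time/space supports, a standard argument shows their $\sigma$-algebras are independent once one knows $M$ has independent increments (this parallels the reasoning in Subsection \ref{subsec:CIprocess}). The moderate-growth estimate \eqref{eq:phiineqforindincrcandec} then follows exactly as in Proposition \ref{prop:candecandphiarecorrespondest} and the $\phi$-estimates of Section \ref{sec:Cfwithmodg}: the upper bound from $M = M^c+M^q+M^a$ and moderate growth of $\phi$, the lower bound from the independence and mean-zero property of the three pieces via \eqref{eq:phiN*formviaPN*fncik}-type arguments and \cite[Proposition 6.1.12]{HNVW2}.

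The main obstacle I expect is proving that $M^q$ is genuinely a (compensated) Poisson integral rather than merely having a deterministic compensator at the level of each $\langle M^q, x^*\rangle$: one must pass from the scalar statements "$\langle M, x^*\rangle^q$ is a Poisson-like martingale with deterministic characteristics" to the vector statement "$\mu^{M^q}$ is a Poisson random measure on $\mathbb R_+\times X$ and $x$ is $\bar\mu^{M^q}$-integrable with $M^q = \int x\,\mathrm d\bar\mu^{M^q}$." The first half is handled by Meyer--Papangelou plus the determinism of $\nu_{na}$; the integrability of $x$ is the delicate part, and I would handle it by a $\gamma$-radonifying computation: for fixed (deterministic) compensator the quantity $\|x\|_{\gamma(L^2(\mathbb R_+\times X;\mu^{M^q}),X)}$ controls $\mathbb E\sup_t\|M^q_t\|$ via the Burkholder--Davis--Gundy inequalities of \cite{Y18BDG}, exactly as in \eqref{eq:xja,,danomdasbddbyEsum|Mdaao|}, and this is finite because $\mathbb E\sup_t\|M_t\|<\infty$ locally (Remark \ref{rem:locmartislocallyL1DR+X}). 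A secondary technical point is the reduction to separable $X$ and the measurability of the deterministic objects $\tau^c$, $\Phi$, $\nu_{na}$, $(t_n)$, which I would dispatch with the Pettis measurability theorem and the fact that deterministic predictable objects are just what they are — no extension of the probability space is needed here, unlike in Section \ref{sec:tangmartconttime}.
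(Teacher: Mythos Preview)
Your overall architecture is reasonable, but there is a genuine gap: you repeatedly invoke tools that require the UMD property (or at least UMD$^+$), while the entire point of Theorem~\ref{thm:genformofmartwithindince} is that $X$ is an \emph{arbitrary} Banach space. Concretely: (i) you cite Proposition~\ref{prop:candecandphiarecorrespondest} for the $\phi$-estimate \eqref{eq:phiineqforindincrcandec}, but that proposition assumes $X$ is UMD and its proof goes through $\gamma([\![M]\!]_\infty)$, which need not even exist here; (ii) for the integrability of $x$ with respect to $\bar\mu^{M^q}$ you appeal to the Burkholder--Davis--Gundy inequalities of \cite{Y18BDG} and the estimate \eqref{eq:xja,,danomdasbddbyEsum|Mdaao|}, both of which live in the UMD (resp.\ UMD$^+$) world; (iii) your direct definition $M^a := \sum_n \Delta M_{t_n}\mathbf 1_{[t_n,\infty)}$ presupposes that this sum converges in $L^1(\Omega;X)$, which in a general Banach space you cannot get from the usual square-function or $\gamma$-norm machinery.

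The paper sidesteps all of this by never relying on geometry of $X$. It constructs $M^a$ and $M^c$ as \emph{conditional expectations} of $M$ onto suitable sub-$\sigma$-algebras ($\mathcal F^a$ generated by the fixed-time jumps, and $\sigma(W_H)$ respectively), which exist automatically in any Banach space; it then sets $M^q := M - M^c - M^a$ and verifies its properties via the scalar projections $\langle M, x_n^*\rangle$ for a norming sequence. The integrability of $x$ against $\widetilde N_{\nu_{na}}$ is obtained not through $\gamma$-norms but by showing $\int_{A_n} x\,\mathrm d\widetilde N_{\nu_{na}} = \mathbb E\bigl(M^q_\infty \,\big|\, \sigma(N_{\nu_{na}}|_{A_n})\bigr)$ and invoking the martingale convergence theorem together with It\^o--Nisio. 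Independence of $M^c, M^q, M^a$ is proved first in finite dimensions via characteristic functions and the L\'evy--Khinchin formula \cite[II.4.16]{JS}, then lifted by Cram\'er--Wold. Finally, \eqref{eq:phiineqforindincrcandec} is obtained directly from independence and Jensen's inequality (each $M^i_t$ is a conditional expectation of $M_t$), with no covariation bilinear form in sight. You should replace the UMD-dependent steps with these general-Banach-space arguments.
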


Recall that $\nu_{na}$ was defined in Lemma \ref{lem:decompofmeasuresontwoparts} since a measure is quasi-left continuous if and only if the corresponding compensator is non-atomic in time by Remark \ref{rem:whichwhatpartofrmmeans} (see also \cite[Theorem 9.22]{KalRM}). In order to prove Theorem \ref{thm:genformofmartwithindince} we will use these lemmas.

\begin{lemma}\label{lem:MhasIIthemathbbEsupphiMeqsimphiEphiM}
 Let $X$ be a Banach space, $M:\mathbb R_+ \times \Omega \to X$ be a martingale with independent increments. Then we have that for any $\phi:\mathbb R_+ \to \mathbb R_+$ with moderate growth and with $\phi(0) = 0$
 \begin{equation}\label{eq:EsupphiMIIMeqsimEphiM}
    \mathbb E \sup_{0\leq t\leq T} \phi\bigl(\|M_t\|\bigr) \eqsim_{\phi} \mathbb E  \phi\bigl(\|M_T\|\bigr),\;\;\; T \geq 0.
 \end{equation}
\end{lemma}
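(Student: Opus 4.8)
\textbf{Proof strategy for Lemma \ref{lem:MhasIIthemathbbEsupphiMeqsimphiEphiM}.} The plan is to derive the maximal inequality \eqref{eq:EsupphiMIIMeqsimEphiM} from a L\'evy-type maximal inequality for sums of independent random variables together with a good-$\lambda$ argument of the form already used in the paper. The inequality $\mathbb E \phi(\|M_T\|) \leq \mathbb E \sup_{0\leq t\leq T}\phi(\|M_t\|)$ is trivial since $\phi$ is nondecreasing, so the content is the reverse bound. Fix $T\geq 0$ and set $M^* := \sup_{0\leq t\leq T}\|M_t\|$. The key observation is that, because $M$ has independent increments and c\`adl\`ag paths, for any $0\leq s\leq T$ the increment $M_T - M_s$ is independent of $\mathcal F_s$ and (being a martingale increment) has conditional mean zero; in particular one has the L\'evy--Ottaviani-type estimate $\mathbb P(M^* > \lambda) \leq c\, \mathbb P(\|M_T\| > \lambda/2)$ for a universal constant $c$, valid for real-valued processes via the reflection/Ottaviani argument and extendable to the Banach-valued c\`adl\`ag case by a standard discretization of $[0,T]$ and passage to the limit (cf.\ \cite[Proposition 6.1.12]{HNVW2}, used repeatedly earlier in the excerpt). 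This reduces a weak-type comparison of $M^*$ and $\|M_T\|$ to the one-sided statement that $M^*$ and $\|M_T\|$ have comparable tails up to dilation.

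From there I would convert the tail comparison into the $\phi$-comparison exactly as in the proof of Theorem \ref{thm:gencondeemartgenphimosdopfew}: write $\mathbb E \phi(M^*) = \int_{\mathbb R_+} \mathbb P(M^* > \lambda)\, \ud \phi(\lambda)$ (legitimate since $\phi(0)=0$ and $\phi$ is nondecreasing of moderate growth), substitute the tail bound $\mathbb P(M^*>\lambda) \leq c\,\mathbb P(\|M_T\| > \lambda/2)$, change variables $\lambda \mapsto 2\lambda$, and use moderate growth of $\phi$ to absorb the factor $2$: $\phi(2\lambda) \leq \alpha\phi(\lambda)$. This yields
\begin{equation*}
\mathbb E \sup_{0\leq t\leq T}\phi(\|M_t\|) \;=\; \int_{\mathbb R_+} \mathbb P(M^* > \lambda)\,\ud\phi(\lambda) \;\leq\; c\,\alpha \int_{\mathbb R_+} \mathbb P(\|M_T\| > \lambda)\,\ud\phi(\lambda) \;=\; c\,\alpha\, \mathbb E\,\phi(\|M_T\|),
\end{equation*}
which is \eqref{eq:EsupphiMIIMeqsimEphiM}. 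One small technical point to handle cleanly: if $\phi$ is not absolutely continuous one should interpret the integral as a Lebesgue--Stieltjes integral against $\ud\phi$, and the change of variables then reads $\int \mathbb P(M^*>2\mu)\,\ud(\phi\circ 2)(\mu)$; moderate growth still gives the comparison of the measures $\ud(\phi\circ 2)$ and $\ud\phi$ after integration, or one may first approximate $\phi$ by a $C^1$ convex minorant/majorant of moderate growth. I would also remark that the localization is harmless by Remark \ref{rem:locmartwithIIisamartwithII}, so there is no loss in working with a genuine martingale.

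\textbf{Main obstacle.} The only genuinely delicate step is the Banach-space-valued, continuous-time version of the Ottaviani/L\'evy maximal inequality $\mathbb P(\sup_{t\leq T}\|M_t\|>\lambda) \lesssim \mathbb P(\|M_T\|>\lambda/2)$ for a c\`adl\`ag martingale with independent increments. For finite sums of independent mean-zero $X$-valued random variables this is classical (see e.g.\ \cite{HNVW2,LT11}); to pass to the continuous-time c\`adl\`ag setting I would discretize along a mesh $\{Tk/n\}_{k=0}^n$, apply the discrete inequality to the increments $(M_{Tk/n}-M_{T(k-1)/n})_{k=1}^n$, note that $\sup_{k}\|M_{Tk/n}\| \nearrow M^*$ a.s.\ as $n\to\infty$ by right-continuity, and invoke monotone convergence on the left and the fact that $\|M_T\|$ is fixed on the right. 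Everything else — the tail integral representation of $\mathbb E\phi$, the change of variables, the absorption of constants using moderate growth — is routine and mirrors computations already carried out in Section \ref{sec:Cfwithmodg} of the paper.
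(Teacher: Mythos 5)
Your approach is correct and close in spirit to the paper's, but there is one genuine difference worth flagging. The paper first \emph{symmetrizes}: it introduces an independent copy $\widetilde M$ of $M$ and applies the L\'evy maximal inequality for \emph{symmetric} processes with independent increments (\cite[Proposition 1.1.2]{dlPG}) to $M - \widetilde M$, obtaining $\mathbb P\bigl(\sup_{t\leq T}\|M_t - \widetilde M_t\|>\lambda\bigr) \leq 2\,\mathbb P\bigl(\|M_T - \widetilde M_T\|>\lambda\bigr)$ with \emph{no dilation}; it then transfers back to $M$ using the triangle inequality, Jensen's inequality, moderate growth, and equidistribution. You instead work directly with $M$ and invoke an Ottaviani--L\'evy bound $\mathbb P(M^*>\lambda)\leq c\,\mathbb P(\|M_T\|>\lambda/2)$ for mean-zero but not necessarily symmetric sums. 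That bound is indeed correct (it is a form of the Montgomery-Smith/L\'evy--Ottaviani inequality), and your tail-integral conversion and absorption of the dilation via moderate growth is exactly what the paper does as well, so the argument closes. The thing your write-up glosses over is that the centered, non-symmetric version of the L\'evy inequality — with a universal constant not depending on the distribution — is a bit less elementary than the classical reflection argument for symmetric sums; the paper's symmetrization is precisely designed to sidestep this and use only the cleanest form of the L\'evy inequality. Two smaller remarks: (a) the ``trivial'' lower bound $\mathbb E\phi(\|M_T\|)\leq\mathbb E\sup_t\phi(\|M_t\|)$ uses that $\phi$ is nondecreasing, which is not stated in the lemma's hypotheses but is evidently implicit (the paper's tail-integral representation $\mathbb E\phi(\xi)=\int\mathbb P(\xi>\lambda)\,\ud\phi(\lambda)$ also needs it); (b) the phrase ``good-$\lambda$ argument'' in your strategy is not really accurate — what you do, and what the paper does, is a straight integration of a tail comparison against $\ud\phi$, which is a simpler device than a good-$\lambda$ iteration.
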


\begin{proof}
 Let $\widetilde M$ be an independent copy of $M$. Then $M-\widetilde M$ is a symmetric martingale with independent increments, so by \cite[Proposition 1.1.2]{dlPG}
 \begin{equation}\label{eq:EsupphiMIIMeqsimEphiMforMsymm}
  \begin{split}
   \mathbb E \sup_{0\leq t\leq T} \phi \bigl(\|M_t - &\widetilde M_t\|\bigr) = \int_{\lambda>0} \mathbb P\Bigl(\sup_{0\leq t\leq T} \|M_t- \widetilde M_t\|>\lambda\Bigr) \ud \phi(\lambda) \\
 &\leq 2\int_{\lambda>0} \mathbb P( \|M_{T}- \widetilde M_T\|>\lambda) \ud \phi(\lambda)  = 2 \mathbb E \phi\bigl(\|M_{T}- \widetilde M_T\|\bigr).
  \end{split}
 \end{equation}
Moreover, by conditional Jensen's inequality \cite[Proposition 2.6.29]{HNVW1}, by a triangle inequality, by the fact that $\phi$ has moderate growth, and by the fact that $M$ and $\widetilde M$ are equidistributed we have that
\begin{equation}\label{eq:symmMMIIisalsomdthesameasMwithfsojaJen}
 \begin{split}
  \mathbb E \sup_{0\leq t\leq T} \phi \bigl(\|M_t - \widetilde M_t\|\bigr) &\eqsim_{\phi} \mathbb E \sup_{0\leq t\leq T} \phi \bigl(\|M_t\|\bigr),\\
  \mathbb E \phi\bigl(\|M_{T}- \widetilde M_T\|\bigr)& \eqsim_{\phi}\mathbb E \phi\bigl(\|M_{T}\|\bigr),
 \end{split}
\end{equation}
so \eqref{eq:EsupphiMIIMeqsimEphiM} follows from \eqref{eq:EsupphiMIIMeqsimEphiMforMsymm} and \eqref{eq:symmMMIIisalsomdthesameasMwithfsojaJen}.
\end{proof}

\begin{lemma}\label{lem:McontpdqlcaccjiffMxn*forxxn*norming}
Let $X$ be a separable Banach space, $(x_n)_{n\geq 1}$ be a dense subset of $X$, $(x_n^*)_{n\geq 1}$ be a norming sequence, i.e.\ $\langle x_n, x_n^*\rangle = \|x_n\|$ and $\|x_n^*\|=1$ for any $n\geq 1$. Let $M:\mathbb R_+ \times \Omega \to X$ be a local martingale. Then
\begin{enumerate}[\rm (I)]
\item $M$ is continuous if and only if $\langle M, x_n^*\rangle$ is continuous for any $n\geq 1$,
\item $M$ is purely discontinuous if and only if $\langle M, x_n^*\rangle$ is purely discontinuous for any $n\geq 1$,
\item $M$ is quasi-left continuous if and only if $\langle M, x_n^*\rangle$ is quasi-left continuous for any $n\geq 1$,
\item $M$ is with accessible jumps if and only if $\langle M, x_n^*\rangle$ is with accessible jumps for any $n\geq 1$.
\end{enumerate}
\end{lemma}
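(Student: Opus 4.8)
\textbf{Plan of proof for Lemma \ref{lem:McontpdqlcaccjiffMxn*forxxn*norming}.} In each of the four statements the implication ``$\Rightarrow$'' is immediate: if $M$ is continuous (resp.\ purely discontinuous, quasi-left continuous, with accessible jumps), then so is $\langle M, x^*\rangle$ for every $x^*\in X^*$, since all four properties are defined through the jumps of the process (resp.\ through the quadratic variation being pure jump, as in Definition \ref{def:purelydiscmart}), and $\Delta \langle M, x^*\rangle_t = \langle \Delta M_t, x^*\rangle$ a.s.\ for every $t\ge 0$. So the only work is in the converse direction, and the common mechanism is the elementary observation that, since $(x_n^*)_{n\ge1}$ is a norming sequence for the dense set $(x_n)_{n\ge1}$, one has
\[
\|\Delta M_t\| = \sup_{n\ge 1} |\langle \Delta M_t, x_n^*\rangle| = \sup_{n\ge 1}|\Delta\langle M, x_n^*\rangle_t|,\qquad t\ge 0,\ \text{a.s.}
\]
(The first equality holds because $\|x\|=\sup_n|\langle x,x_n^*\rangle|$ for all $x\in X$: indeed $|\langle x,x_n^*\rangle|\le\|x\|$ always, and taking $x_{n_k}\to x$ gives $\langle x_{n_k},x_{n_k}^*\rangle=\|x_{n_k}\|\to\|x\|$ while $|\langle x-x_{n_k},x_{n_k}^*\rangle|\le\|x-x_{n_k}\|\to 0$.) In particular $\Delta M_t=0$ a.s.\ if and only if $\Delta\langle M,x_n^*\rangle_t=0$ a.s.\ for all $n$, and more generally, on any measurable set where all the $\langle M,x_n^*\rangle$ have no jumps, $M$ itself has no jumps.

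\textbf{Step (I).} If each $\langle M,x_n^*\rangle$ is continuous, then a.s.\ $\Delta\langle M,x_n^*\rangle_t=0$ for all $t$ and all $n$ simultaneously (countable intersection of full-measure events), so by the displayed identity $\Delta M_t=0$ for all $t$ a.s., i.e.\ $M$ is continuous.

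\textbf{Step (III) and (IV).} For quasi-left continuity: fix a predictable stopping time $\tau$. For each $n$, $\langle M,x_n^*\rangle$ quasi-left continuous gives $\Delta\langle M,x_n^*\rangle_\tau=0$ a.s.\ on $\{\tau<\infty\}$; intersecting over $n$ and using the identity above yields $\Delta M_\tau=0$ a.s.\ on $\{\tau<\infty\}$, so $M$ is quasi-left continuous. The accessible-jumps case (IV) is identical with ``predictable stopping time'' replaced by ``totally inaccessible stopping time'' throughout. Note that here the norming property is used only to pass from $\sup_n$ of scalar jump sizes to the norm of the vector jump, so no additional structure (such as the UMD property) is needed.

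\textbf{Step (II), the main obstacle.} This is the only delicate point, because ``purely discontinuous'' (Definition \ref{def:purelydiscmart}) is not a pointwise statement about jumps but the requirement that $[\langle M,x^*\rangle]$ be pure jump for \emph{every} $x^*\in X^*$, whereas our hypothesis only controls $x^*\in\{x_n^*\}$. The plan is to appeal to the Meyer-Yoeurp decomposition. Since $X$ is a general Banach space here we cannot invoke it for $M$ directly, so instead I would argue via a reduction to finite dimensions exactly as in the proof of Theorem \ref{thm:Mhasindinciffweallocchararedeterms}: for each $m$ let $Y_m=\mathrm{span}(x_1^*,\dots,x_m^*)$ and $P_m:Y_m\to X^*$ the inclusion, so that $P_m^*M$ takes values in the finite-dimensional space $\mathrm{ran}(P_m^*)$ and $M$ is purely discontinuous iff each $P_m^*M$ is (because the Borel $\sigma$-algebra of $X$ is generated by $(x_n^*)_{n\ge1}$, and $[\langle M,x^*\rangle]$ for a general $x^*$ is obtained as an $L^{1/2}$-limit of $[\langle M,y_k^*\rangle]$ along $y_k^*\to x^*$ weak${}^*$ with $y_k^*\in\bigcup_m Y_m$, using bilinearity of covariation and the Kunita-Watanabe-type estimate as in Lemma \ref{lem:[[M]]isweka*consda}). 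On a finite-dimensional space $\mathrm{ran}(P_m^*)$, pick a basis; then $P_m^*M$ is purely discontinuous iff each coordinate is, and each coordinate is a finite linear combination of $\langle M,x_1^*\rangle,\dots,\langle M,x_m^*\rangle$, hence purely discontinuous as a sum of purely discontinuous martingales (the class of purely discontinuous real martingales is closed under linear combinations, see e.g.\ \cite{Y17MartDec,Kal,JS}). This gives that $P_m^*M$ is purely discontinuous for every $m$, hence $M$ is. Conversely the forward direction of (II) is the trivial one already noted. I expect the weak${}^*$-approximation argument for a general $x^*$ to be the one place requiring care, but it is exactly the argument already used in Lemma \ref{lem:[[M]]isweka*consda} and in the proof of Theorem \ref{thm:tangiffwtangUMDcase}, so it can be quoted rather than redone.
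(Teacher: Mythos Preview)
Your arguments for (I), (III), (IV) are correct and in fact cleaner than the paper's: you use the norming identity $\|x\|=\sup_n|\langle x,x_n^*\rangle|$ directly, whereas the paper proves (I) by a covering argument (if $M$ jumps, cover the range of $\Delta M_\tau$ by small balls centred at the $x_n$'s and find one ball hit with positive probability), and then repeats this for (III), (IV). Both are fine.

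For (II) your plan works, but there is a soft spot you should make explicit: $L^{1/2}$-convergence of $[\langle M,y_k^*\rangle]_t$ to $[\langle M,x^*\rangle]_t$ does \emph{not} by itself preserve ``pure jump'' (a continuous increasing function is a uniform limit of step functions). What saves you is that the Meyer--Yoeurp decomposition of the scalar martingales is linear in the functional: since $y_k^*\in\mathrm{span}(x_n^*)$ gives $\langle M,y_k^*\rangle^c=0$, one has $\langle M,x^*\rangle^c=\langle M,x^*-y_k^*\rangle^c$, hence $[\langle M,x^*\rangle^c]_t\le[\langle M,x^*-y_k^*\rangle]_t\to 0$ in $L^{1/2}$. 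This is the missing line; once you add it, your route is complete.

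The paper's proof of (II) is different and shorter. It uses the orthogonality characterisation from \cite{Y17MartDec}: an $X$-valued local martingale $M$ is purely discontinuous iff $MN$ is a local martingale for every real continuous bounded martingale $N$ with $N_0=0$; and if this fails one can arrange $\mathbb E M_tN_t\neq 0$ for some $t$. But then $\langle \mathbb E M_tN_t,x_n^*\rangle=\mathbb E\langle M_t,x_n^*\rangle N_t=0$ because $\langle M,x_n^*\rangle N$ is a martingale (the hypothesis on $\langle M,x_n^*\rangle$), and the norming property forces $\mathbb E M_tN_t=0$, a contradiction. This avoids the weak${}^*$ approximation and the limit argument entirely; your approach, by contrast, stays closer to Definition~\ref{def:purelydiscmart} and does not need to import the orthogonality criterion.
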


\begin{proof}
The ``only if'' part of each of the statements is obvious. Let us show the ``if'' part. First let us start with $\rm (I)$. Assume that $M$ is not continuous. Then there exists a stopping time $\tau$ such that $\mathbb P(\Delta M_{\tau} \neq 0)>0$. Without loss of generality by multiplying $M$ by a constant we may assume that $\mathbb P(\|\Delta M_{\tau}\| >1)>0$. Fix $\eps<1/2$. Then, as $(x_n)_{n\geq 1}$ is dense in $X$, there exists $n\geq 1$ such that $\|x_n\|>1-\eps$ and for a ball $B$ with centre in $x_n$ and radius $\eps$ we have that $\mathbb P(\Delta M_{\tau} \in B)>0$ (such a ball exists as $X$ can be covered by countably many such balls). Then
\begin{multline*}
\mathbb P(\langle\Delta M_{\tau}, x_n^* \rangle \neq 0) \geq \mathbb P\bigl(\langle\Delta M_{\tau}, x_n^* \rangle \in [\|x_n\|-\eps, \|x_n\|+\eps]\bigr) \geq \mathbb P(\Delta M_{\tau} \in B)>0,
\end{multline*}
so $\langle M, x_n^*\rangle$ is not continuous and the desired follows.

\smallskip

Now let us turn to $\rm (II)$. Assume that $M$ is not purely discontinuous. By \cite[Subsection 2.5]{Y17MartDec} (see also \cite[Subsection 5.2]{DY17}) it is analogous to the fact that there exists a continuous uniformly bounded martingale $N:\mathbb R_+ \times\Omega \to \mathbb R$ such that $N_0=0$ and $MN$ is not a martingale. Moreover, by exploiting the proof of \cite[Proposition 2.10]{Y17MartDec} we even can find such $N$ that $\mathbb E M_tN_t \neq 0$ for some $t\geq 0$. On the other hand  if $\langle M, x_n^*\rangle$ is purely discontinuous for any $n\geq 1$, then by \cite[Proposition~2.10]{Y17MartDec} $\langle M, x_n^* \rangle N$ is a martingale starting in zero, so
\[
 \langle \mathbb E M_tN_t, x_n^* \rangle =  \mathbb E\langle M_tN_t, x_n^* \rangle  = \mathbb E \langle M_t, x_n^* \rangle N_t = 0,
\]
consequently $\mathbb E M_tN_t = 0$ as $(x_n^*)_{n\geq 1}$ is a norming sequence, and thus $M$ is purely discontinuous.

\smallskip

Let us show $\rm(III)$. Let $\tau$ be a predictable stopping time. Then it can be shown that $\Delta M_{\tau} = 0$ a.s.\  analogously $\rm (I)$, so $M$ is quasi-left continuous. $\rm (IV)$ follows similarly.
\end{proof}

\begin{corollary}\label{cor:candeciffcandecforanormingseq}
Let $X$, $(x_n)_{n\geq 1} \in X$ and $(x_n^*)_{n\geq 1} \in X^*$ be as in Lemma \ref{lem:McontpdqlcaccjiffMxn*forxxn*norming}. Let $M, M^c, M^q, M^a:\mathbb R_+ \times \Omega \to X$ be local martingales. Then $M = M^c + M^q + M^a$ is the canonical decomposition of $M$ if and only if $\langle M, x_n^* \rangle= \langle M^c, x_n^* \rangle + \langle M^q, x_n^* \rangle + \langle M^a, x_n^* \rangle$ is the canonical decomposition of $\langle M, x_n^* \rangle$ for any $n\geq 1$.
\end{corollary}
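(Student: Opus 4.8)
The plan is to reduce the statement to the already-proven characterizations of the various parts of the canonical decomposition in terms of functionals, namely Remark \ref{rem:candec==>candecforanyx*}, Lemma \ref{lem:McontpdqlcaccjiffMxn*forxxn*norming}, and the uniqueness part of Theorem \ref{thm:candecXvalued}. The ``only if'' direction is immediate: if $M = M^c + M^q + M^a$ is the canonical decomposition of $M$, then by Remark \ref{rem:candec==>candecforanyx*} we have $\langle M, x^* \rangle = \langle M^c, x^* \rangle + \langle M^q, x^* \rangle + \langle M^a, x^* \rangle$ for every $x^*\in X^*$, and in particular for each $x_n^*$; since $\langle M^c, x_n^* \rangle$ is continuous, $\langle M^q, x_n^* \rangle$ is purely discontinuous quasi-left continuous, $\langle M^a, x_n^* \rangle$ is purely discontinuous with accessible jumps, and the first two start at zero, this is indeed the canonical decomposition of $\langle M, x_n^* \rangle$.

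For the ``if'' direction, suppose that for every $n\geq 1$ the decomposition $\langle M, x_n^* \rangle= \langle M^c, x_n^* \rangle + \langle M^q, x_n^* \rangle + \langle M^a, x_n^* \rangle$ is the canonical decomposition of $\langle M, x_n^* \rangle$. First I would read off the pathwise properties of $M^c$, $M^q$, $M^a$ from those of their functionals. Since $\langle M^c, x_n^* \rangle$ is continuous for all $n$, part $\rm(I)$ of Lemma \ref{lem:McontpdqlcaccjiffMxn*forxxn*norming} gives that $M^c$ is continuous; since $\langle M^q, x_n^* \rangle$ is purely discontinuous quasi-left continuous for all $n$, parts $\rm(II)$ and $\rm(III)$ give that $M^q$ is purely discontinuous quasi-left continuous; and parts $\rm(II)$, $\rm(IV)$ give that $M^a$ is purely discontinuous with accessible jumps. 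Moreover $\langle M^c_0, x_n^* \rangle = \langle M^q_0, x_n^* \rangle = 0$ for all $n$, and since $(x_n^*)_{n\geq 1}$ is norming this forces $M^c_0 = M^q_0 = 0$ a.s. Finally, for each fixed $n$, $\langle M - M^c - M^q - M^a, x_n^* \rangle = \langle M, x_n^*\rangle - \langle M^c, x_n^*\rangle - \langle M^q, x_n^*\rangle - \langle M^a, x_n^*\rangle = 0$ a.s., so by a countable intersection of null sets and the fact that $(x_n^*)_{n\geq 1}$ is norming (hence separates points of $X$) we get $M = M^c + M^q + M^a$ a.s. This exhibits $M^c + M^q + M^a$ as a decomposition of $M$ into a continuous local martingale, a purely discontinuous quasi-left continuous local martingale, and a purely discontinuous local martingale with accessible jumps, with $M^c_0 = M^q_0 = 0$; that is, a canonical decomposition in the sense of Definition \ref{def:candec}. (Note one must take care that $X$ is separable, which is exactly the standing hypothesis on $(x_n)_{n\geq 1}$ being dense; this is harmless by the Pettis measurability theorem as elsewhere in the paper.)

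I expect the only genuinely delicate point to be the bookkeeping with null sets: each of the properties extracted from Lemma \ref{lem:McontpdqlcaccjiffMxn*forxxn*norming} and the identity $M = M^c + M^q + M^a$ holds outside a null set depending on finitely many (or countably many) of the $x_n^*$, so one must intersect countably many such null sets to get a single exceptional set; this is routine but should be stated. One might also remark that uniqueness of the canonical decomposition (Theorem \ref{thm:candecXvalued}, in the UMD case, or directly the uniqueness built into Definition \ref{def:candec} together with Lemma \ref{lem:McontpdqlcaccjiffMxn*forxxn*norming}) makes the correspondence in the corollary a genuine equivalence rather than merely an implication, so that the decomposition of $M$ recovered from the functionals is necessarily \emph{the} canonical decomposition.
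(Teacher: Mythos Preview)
Your proposal is correct and matches the paper's intended approach: the paper states this as an immediate corollary of Lemma \ref{lem:McontpdqlcaccjiffMxn*forxxn*norming} without giving a separate proof, and your argument---using Remark \ref{rem:candec==>candecforanyx*} for the forward direction and parts (I)--(IV) of the lemma together with the norming property of $(x_n^*)$ for the converse---is exactly the unpacking the paper leaves implicit.
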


Eventually we are going to show Theorem \ref{thm:genformofmartwithindince}.

\begin{proof}[Proof of Theorem \ref{thm:genformofmartwithindince}]
Without loss of generality assume that $M_0=0$ a.s. We can also set that there exists $T>0$ such that $M_t= M_T$ for $t\geq T$, so $\mathbb E\sup_{t\geq 0} \|M_t\| \eqsim \mathbb E \|M_T\|<\infty$ (see Remark \ref{rem:locmartwithIIisamartwithII} and Lemma \ref{lem:MhasIIthemathbbEsupphiMeqsimphiEphiM}). First of all let us prove the first part of the proposition in the finite dimensional case, and then treat the whole proposition in infinite dimensions.

 {\em Step 1. $X$ is finite dimensional.}
 First assume that $X$ is finite dimensional. Then the existence of the canonical decomposition is guaranteed by Theorem \ref{thm:candecXvalued}. Let us show that $M^c$, $M^q$, and $M^a$ are independent and have independent increments. By Proposition \ref{prop:Grigcharcontcase} $M^c$ has local characteristics $([\![M^c]\!], 0)$. Further, by Lemma \ref{lem:nuMisnonatomiffMqlc}, Proposition \ref{prop:GigcharforPDQLC}, \ref{prop:GrigcharforPDmAJ} and \ref{prop:Grigcharforcandechowdoesitlooklike} $M^q$ has local characteristics $(0, \nu^M_{na})$, where $\nu^M_{na}$ is the nonatomic part of $\nu^M$, and $M^a$ has local characteristics $(0, \nu^M_{a})$, where $\nu^M_{a}$ is the atomic part of $\nu^M$. Each of three local characteristics are deterministic, so by Theorem \ref{thm:Mhasindinciffweallocchararedeterms} each of $M^c$, $M^q$, and $M^a$ has independent increments.
Let us show that  $M^c$, $M^q$, and $M^a$ are independent. By the L\'evy-Khinchin-type formula \cite[II.4.16]{JS} (see also Remark \ref{rem:LKformdoeinqaindwoe}) and by the fact that $M$, $M^c$, $M^q$, and $M^a$ have independent increments we have that for any $t_0 < t_1 <\ldots <t_N$,
for any numbers $(a_n)_{n=1}^N$, $(b_n)_{n=1}^N$, and $(c_n)_{n=1}^N$, and for any vectors $(x^*_n)_{n=1}^N, (y^*_n)_{n=1}^N, (z^*_n)_{n=1}^N\in X^*$
 \begin{align*}
   \mathbb E &\exp\Bigl\{\sum_{n=1}^N \langle x^*_n, M^c_{t_n} - M^c_{t_{n-1}}\rangle + \langle y^*_n, M^q_{t_n} - M^q_{t_{n-1}}\rangle + \langle z^*_n, M^a_{t_n} - M^a_{t_{n-1}}\rangle\Bigr\}\\
  &\stackrel{(i)}= \Pi_{n=1}^N \mathbb E \exp\bigl\{\langle x^*_n, M^c_{t_n} - M^c_{t_{n-1}}\rangle + \langle y^*_n, M^q_{t_n} - M^q_{t_{n-1}}\rangle + \langle z^*_n, M^a_{t_n} - M^a_{t_{n-1}}\rangle\bigr\}\\
  &\stackrel{(ii)}=\Pi_{n=1}^N \mathbb E e^{\langle x^*_n, M^c_{t_n} - M^c_{t_{n-1}}\rangle} \mathbb E e^{\langle y^*_n, M^q_{t_n} - M^q_{t_{n-1}}\rangle} \mathbb E e^{\langle z^*_n, M^a_{t_n} - M^a_{t_{n-1}}\rangle}\\
  &\stackrel{(iii)}= \mathbb E e^{\sum_{n=1}^N \langle x^*_n, M^c_{t_n} - M^c_{t_{n-1}}\rangle} \mathbb E e^{\sum_{n=1}^N \langle y^*_n, M^q_{t_n} - M^q_{t_{n-1}}\rangle} \mathbb E e^{\sum_{n=1}^N \langle z^*_n, M^a_{t_n} - M^a_{t_{n-1}}\rangle},
 \end{align*}
where $(i)$ follows from the fact that $M$, $M^c$, $M^q$, and $M^a$ have independent increments (so that $M^i_{t_n} - M^i_{t_{n-1}}$ is independent of $M^j_{t_{m}} - M^j_{t_{m-1}}$ for any $i, j \in \{c,q, a\}$, $i\neq j$, and any $n\neq m$, which can be shown by the L\'evy-Khinchin-type formula \cite[II.4.16]{JS} and by \cite[Theorem II.12.4]{ShPr2e}),
$(ii)$ follows by \cite[II.4.16]{JS}, 
and $(iii)$ follows analogously $(i)$. Now the desired independence follows from \cite[Theorem II.12.4]{ShPr2e}.

Now let us show \eqref{eq:phiineqforindincrcandec}. By Lemma \ref{lem:MhasIIthemathbbEsupphiMeqsimphiEphiM} it is sufficient to show that
 \begin{equation}\label{eq:padhfiineqforindincrcandecwithoutsup}
    \mathbb E  \phi\bigl (\|M_t\|\bigr) \eqsim_{\phi}   \mathbb E \phi\bigl ( \|M^c_t\|\bigr) +  \mathbb E \phi \bigl(\|M^q_t\|\bigr) +  \mathbb E \phi \bigl( \|M^a_t\|\bigr),\;\;\; t\geq 0.
 \end{equation}
First $M^c_t$, $M^q_t$, and $M^a_t$ are independent mean zero, so for any $i\in \{c,q,a\}$ we have that $M^i_t = \mathbb E (M_t|\sigma(M^i_t))$, consequently by Jensen's inequality
\[
 \mathbb E \phi(\|M^i_t\|) = \mathbb E \phi\bigl ( \bigl\| \mathbb E (M_t|\sigma(M^i_t)) \bigr\| \bigr) \leq \mathbb E\Bigl[ \mathbb E \bigl(\phi(\|M_t\|)| \sigma(M^i_t) \bigr)\Bigr] = \mathbb E  \phi(\|M_t\|),
\]
so $\gtrsim$ in \eqref{eq:padhfiineqforindincrcandecwithoutsup} follows.
On the other hand as $\phi$ has moderate growth and as $M = M^c + M^q +M^a$ we have $\lesssim_{\phi}$ of \eqref{eq:padhfiineqforindincrcandecwithoutsup}.

\smallskip

{\em Step 2. $X$ is general.}
Now let $X$ be general. We will constrict each part of the canonical decomposition separately and show that each of them has the form predicted in the second part of the theorem.

{\em Step 2. Part 1. Construction of $M^a$.}
Let $(t_m)_{m\geq 1} \subset \mathbb R_+$ be such that $\mathbb P(\Delta M_{t_m}\neq 0) >0$ for any $m\geq 1$ (recall that c\`adl\`ag processes have at most countably many jumps, so the set of such $t_m$'s is at most countable). For each $t\geq 0$ define 
\[
\mathcal F^a := \sigma(\Delta M_{t_m}:m\geq 1),\;\;\; M^a_t := \mathbb E (M_t |\mathcal F^a),\;\;\; t\geq 0.
\]
Let us show that $M^a$ is an $\mathbb F$-martingale. Let 
$$
\mathcal F^a_t := \sigma(\Delta M_{t_m}:0\leq t_m\leq t),\;\;\; \mathcal F^a_{>t} := \sigma(\Delta M_{t_m}:t_m>t),\;\;\; t\geq 0.
$$
Then for any $t\geq 0$
$$
\mathbb E (M_t |\mathcal F^a) \stackrel{(*)}= \mathbb E (M_t |\mathcal F^a_t \otimes \mathcal F^a_{>t}) \stackrel{(**)}=  \mathbb E (M_t |\mathcal F^a_t),
$$
where $(*)$ holds from the fact that $\mathcal F^a_t$ and $\mathcal F^a_{>t}$ are independent, and $(**)$ holds since $M_t$ is independent of $\mathcal F^a_{>t}$. Therefore $M_t^a$ is $\mathcal F_t$-measurable.
Now fix $t\geq s\geq 0$. Then $M_t - M_s$ is independent of $\mathcal F^a_{s}$ and $\mathcal F^a_{>s}$ is independent of $\mathcal F_s$, and thus 
\begin{multline*}
\mathbb E (M^a_t - M^a_s|\mathcal F_s) = \mathbb E \bigl( \mathbb E (M_t -M_s|\mathcal F^a_s \otimes \mathcal F^a_{>s})|\mathcal F_s\bigr)\\
 = \mathbb E \bigl( \mathbb E (M_t -M_s| \mathcal F^a_{>s})|\mathcal F_s\bigr)  = \mathbb E (M_t-M_s)=0,
\end{multline*}
consequently, $M^a$ is a martingale. Let us show that $M^a$ is a purely discontinuous martingale with accessible jumps. To this end it is sufficient to notice that $M^a$ is an ${\mathbb F}^a$-martingales (where ${\mathbb F}^a=  ({\mathcal F}^a_t)_{t\geq 0}$), so for any $x^*\in X^*$ a process $t\mapsto \langle M^a_t, x^*\rangle$ is ${\mathbb F}^a$-adapted, and hence it is purely discontinuous with jumps in the set $(t_m)_{m\geq 1}$ because of the structure ${\mathbb F}^a$. Indeed, let $L:= \langle M^a, x^*\rangle$. For each $k\geq 1$ define 
$$
L^k_t := \sum_{m\leq k} \mathbb E (L_t| \Delta \mathcal F^a_{t_m}),\;\;\; t\geq 0,
$$
where $ \Delta \mathcal F^a_{t_m} := \sigma(\Delta M_{t_m})$. Then $L^k$ is an ${\mathbb F}^a$-martingale which has jumps of the size $(\langle \Delta M_{t_m}, x^* \rangle)_{n=1}^m$ at $(t_m)_{m=1}^k$, and $L^k$ converges to $L$ in $L^1(\Omega)$ by \cite[Theorem 3.3.2]{HNVW1}, the definition of $L$ and $M^a$, and by the fact that $\mathcal F^a = \otimes_n \Delta \mathcal F^a_{t_n}$, so 
$$
L_t- L^k_t = \mathbb E (L_t |\otimes_{m>k} \Delta\mathcal F^a_{t_m}),\;\;\; t\geq 0.
$$
 Thus $L$ is purely discontinuous with jumps of size $(\langle \Delta M_{t_m}, x^* \rangle)_{m\geq 1}$ at $(t_m)_{m\geq 1}$ by the fact that purely discontinuous martingales with accessible jumps form a closed subspace of $L^1(\Omega)$, see e.g.\ \cite[Proposition 3.30]{Y17MartDec} or \cite{Kal}, and hence $M^a$ is purely discontinuous with jumps of the size $( \Delta M_{t_m})_{m\geq 1}$ at $(t_m)_{m\geq 1}$, so it has accessible jumps.
 
 {\em Step 2. Part 2. Construction of $M^c$.} Let us now construct $M^c$. By the Pettis measurability theorem \cite[Theorem 1.1.20]{HNVW1} $X$ can be presumed separable. Let $(x_n)_{n\geq 1}$ be a dense sequence in $X$. Let $(x_n^*)_{n\geq 1}$ be a norming sequence in $X^*$, i.e.\ $\|x^*_n\|=1$ and $\langle x_n^*, x_n\rangle=\|x_n\|$ for any $n\geq 1$. For each $n\geq 1$ let $M^n:= \langle M, x_n^*\rangle$. Let $M^n = M^{n, c} + M^{n, q} + M^{n, a}$ be the corresponding canonical decomposition. By a stopping time argument, by a rescaling argument, and by Lemma \ref{lem:MhasIIthemathbbEsupphiMeqsimphiEphiM} we may assume that $\mathbb E \sup_{t\geq 0} \|M_t\| \leq 1$. Then by \eqref{eq:candecstrongLpestmiaed}, \cite[Theorem 26.12 and 26.14]{Kal} we have that
 \begin{equation}\label{eq:sum1/n2[Mcn]isboddasa}
  \begin{split}
       \mathbb E \sum_{n=1}^{\infty} \frac 1{n^2} [M^{n, c}]_{\infty}^{1/2}& \leq   \mathbb E \sum_{n=1}^{\infty} \frac 1{n^2} [M^{n}]_{\infty}^{1/2} \eqsim \sum_{n=1}^{\infty} \frac 1{n^2}\mathbb E \sup_{t\geq 0} |M^n_t|\\
    &\leq \sum_{n=1}^{\infty} \frac 1{n^2}\mathbb E \sup_{t\geq 0} |\langle M_t, x_n^*\rangle| \leq \sum_{n=1}^{\infty} \frac 1{n^2}\mathbb E \sup_{t\geq 0} \| M_t\| \leq \frac{\pi^2}{6},
  \end{split}
 \end{equation}
so $([M^{n, c}]_{\infty}/n^4)_{n\geq 1}$ are uniformly bounded a.s. Note that $M$ has independent increments, so by Theorem \ref{thm:Mhasindinciffweallocchararedeterms} it has deterministic weak local characteristics, and thus $t\mapsto [M^{n, c}]_{t}/n^4$ equals a finite deterministic constant a.s.\ for any $t\geq 0$ and $n\geq 1$. Therefore without loss of generality we may assume that all the processes $t\mapsto [M^{n, c}]_{t}/n^4$, $t\geq 0$, are continuous (see \cite[Theorem 26.14]{Kal}), deterministic, and uniformly bounded by \eqref{eq:sum1/n2[Mcn]isboddasa}. Let us then define a deterministic function
\[
 A_t := \sum_{n=1}^{\infty}\frac{1}{n^6}[M^{n, c}]_t + t,\;\;\;; t\geq 0,
\]
and let $(\tau^c_s)_{s\geq 0}$ be a deterministic time change defined by $\tau^c_s := \inf\{t\geq 0:A_t = s\}$ for all $s\geq 0$. Then by Lemma \ref{lem:brrepres} there exist a Hilbert space $H$, an enlarged probability space $(\overline{\Omega}, \overline{\mathcal F}, \overline{\mathbb P})$ with a cylindrical Brownian motion $W_H$ living on this space (here we set the enlargement filtration to be $\overline {\mathbb F} = (\overline{\mathcal F}_{t})_{t\geq 0}$ is defined by $\overline{\mathcal F}_{t} := \sigma(\mathcal F_t, W_H|_{[0, A_{t}]})$), and a set of {\em deterministic} functions $f_n:\mathbb R_+ \to H$ (note that by Lemma \ref{lem:brrepres} $(f_n)_{n\geq 1}$ depends on $([M^{n, c}, M^{m, c}])_{m, n\geq 1}$ which are deterministic as $M$ has deterministic weak local characteristics by Theorem \ref{thm:Mhasindinciffweallocchararedeterms}) such that $M^{n, c}\circ \tau^c := f_n \cdot W_H$. Let $M^c_t := \mathbb E (M_t|\sigma(W_H))$ for every $t\geq 0$. Let us show that $M^c$ is a continuous martingale. First $M^c$ is a martingale as we have that for any $t\geq s\geq 0$ the martingale difference $M_t- M_s$ is independent of $\sigma(W_H|_{[0, A_s]})$ as $M$ is  a martingale with independent increments and by the construction of $W_H$, so as $\sigma(W_H|_{[A_s, \infty)})$ is independent of $\overline{\mathcal F}_s$ we have that (here we for simplicity write $\sigma(W_H|_{[A_s, \infty)})$ instead of $\sigma((W_H - W_H(A_s))|_{[A_s, \infty)}) =\sigma(\dd W_H|_{[A_s, \infty)}) $)
\begin{multline*}
 \mathbb E(M^c_t-M^c_s|\overline{\mathcal F}_s) = \mathbb E \bigl( \mathbb E(M_t - M_s|\sigma(W_H)) \big|\overline{\mathcal F}_s \bigr) \\
 = \mathbb E \bigl( \mathbb E(M_t - M_s|\sigma(W_H|_{[A_s, \infty)})) \big|\overline{\mathcal F}_s \bigr) = 0,
\end{multline*}
hence $M^c$ is a martingale. Let us show that it is continuous. As $(x_n^*)_{n\geq 1}$ is a norming sequence,  by Lemma \ref{lem:McontpdqlcaccjiffMxn*forxxn*norming} it is sufficient to show that $\langle M^c, x_n^*\rangle$ is continuous for any $n\geq 1$, so it is enough to prove that $\langle M^c, x_n^*\rangle = M^{n, c}$. First notice that by the construction of $W_H$ in Lemma \ref{lem:brrepres} the latter depends only on $(M^{n, c})_{n\geq 1}$. Next note that the families $(M^{n, q})_{n\geq 1}$ and $(M^{n, a})_{n\geq 1}$ are independent of $(M^{n, c})_{n\geq 1}$ which follows from Step 1 of the present proof (Step 1 proves the independence directly for $(M^{n, q})_{n= 1}^N$, $(M^{n, a})_{n= 1}^N$, and $(M^{n, c})_{n=1}^N$ for any $N\geq 1$, and the desired independence follows by letting $N\to \infty$). Finally, we have that for any $n\geq 1$ and for any $t\geq 0$ a.s.\
\begin{multline*}
 \langle M^c_t, x_n^*\rangle = \langle \mathbb E (M_t|\sigma({W_H})), x_n^*\rangle = \mathbb E (\langle M_t, x_n^*\rangle|\sigma({W_H})) \\
 = \mathbb E (M^{n, c} + M^{n, q} + M^{n, a}|\sigma({W_H})) \stackrel{(*)}= M^{n, c},
\end{multline*}
where $(*)$ follows from the fact that $W_H$ may be assumed to depend only on $(M^{n, c})_{n\geq 1}$ and the fact that $M^{n, q}$ and $M^{n, a}$ are independent of $(M^{n, c})_{n\geq 1}$.

Now let us show that there exists $\Phi \in \gamma(L^2(\mathbb R_+; H), X)$ such that $M^c \circ \tau^c = \Phi \cdot W_H$. First notice that for any $x^*\in X^*$ a martingale $\langle M^c, x^* \rangle \circ \tau^c$ is adapted with respect to the filtration $\mathbb G := (\mathcal G_s)_{s\geq 1}$ generated by $W_H$. Therefore by the martingale representation theorem (see \cite[\S V.3]{RY} for the case of finite dimensional $H$, the infinite dimensional case can be shown analogously) there exists a $\mathbb G$-predictable process $f^{x^*}:\mathbb R_+ \times \Omega \to H$ such that $\langle M^c, x^* \rangle \circ \tau^c = f^{x^*}\cdot W_H$. Note that $f^{x^*}$ is deterministic. Indeed, as $X$ can be assumed separable, the unit ball of $X^*$ is sequentially weak$^*$ compact by sequential Banach-Alaoglu theorem, so we may assume that $(x_n^*)_{n\geq 1}$ is  weak$^*$ dense in the unit sphere of $X^*$. So for a sequence $(y_m)_{m\geq 1} \subset (x_n^*)_{n\geq 1}$ weak$^*$ converging to $x^*$ we have that by Burkholder-Davis-Gundy inequalities \cite[Theorem 26.12]{Kal}, by Lemma \ref{lem:MhasIIthemathbbEsupphiMeqsimphiEphiM}, and by the dominated convergence theorem
\begin{align*}
\mathbb E \Bigl(\int_0^{A_t} \|f^{x^*}(s) - f^{y_m}(s)\|^2 \ud s\Bigr)^{1/2} &\eqsim \mathbb E \sup_{0\leq t\leq T}| \langle M_t, x^* - y_m\rangle| \\
&\eqsim \mathbb E| \langle M_T, x^* - y_m\rangle|  \to 0,\;\;\; m\to \infty,
\end{align*}
so $f^{x^*}$ is deterministic as the limit of $f^{y_m}$ which are deterministic.
Also note that by our assumption from the very beginning of the proof $\mathbb E \|M_{\infty}\| =\mathbb E \|M_{T}\| <\infty$ for some fixed $T>0$. Therefore as we have that $\langle M^c_{\infty}, x^* \rangle = \int_0^{A_T}f^{x^*}\ud W_H$ is a Gaussian random variable for any $x^*\in X^*$ (since $f^{x^*}$ is deterministic), $M^c_{\infty}$ is a Gaussian random variable itself, so by Fernique's inequality \cite[Theorem 2.8.5]{BGaus} we have that $\mathbb E \|M^c_T\|^2 <\infty$. Let $\Phi \in \gamma(L^2(\mathbb R_+; H), X)$ be defined in the following way: $\Phi f = \mathbb E M^c_{\infty}N_{\infty}$, where $N = f\cdot W_H$ for any step deterministic $f\in L^2(\mathbb R_+; H)$. This $\Phi$ is bounded as by It\^o's isometry \cite[Proposition 4.13]{DPZ} (see e.g.\ \cite[Lemma 3.1.5]{Oks98} for the finite dimensional version) and by H\"older's inequality for any $f\in L^2(\mathbb R_+; H)$ step deterministic
\[
\|\Phi f\| = \|\mathbb E M^c_{\infty}N_{\infty}\| \leq (\mathbb E \|M^c_T\|^2)^{1/2} (\mathbb E |N_{\infty}|^2)^{1/2} = (\mathbb E \|M^c_T\|^2)^{1/2} \|f\|_{L^2(\mathbb R_+; X)},
\]
and $\gamma$-radonifying by \cite[Subsection 3.2]{Y18BDG} since 
$$
\mathbb E \langle M^c_{T}, x^*\rangle\langle M^c_{T}, y^*\rangle = \int_0^{\infty} \langle f^{x^*}(s), f^{y^*}(s) \rangle \ud s \stackrel{(*)}= \langle \Phi^*x^*, \Phi^*y^*\rangle,\;\;\; x^*,y^*\in X^*,
$$ 
is a covariation bilinear form of a Gaussian random variable $M^c_T$, where $(*)$ follows from the fact that by It\^o's isometry \cite[Proposition 4.13]{DPZ} and by the definition of~$f^{x^*}$
\begin{equation}\label{eq:Phi*x*fx*svabodu}
\begin{split}
 \langle \Phi^*x^*, f\rangle = \langle x^*, \Phi f\rangle& =\Bigl\langle x^*, \mathbb E M^c_t \int_0^{\infty} f \ud W_H \Bigr\rangle  = \mathbb E \langle x^*, M^c_t \rangle \int_0^{\infty} f \ud W_H \\
 & = \mathbb E \int_0^{\infty} f^{x^*} \ud W_H \int_0^{\infty} f\ud W_H\\
 &  =  \int_0^{\infty} \langle f^{x^*}, f \rangle \ud s,\;\;\; f\in L^2(\mathbb R_+; H), \;\; x^*\in X^*. 
 \end{split}
 \end{equation}
Now in order to show that $\Phi \cdot W_H$ coincides with $M^c \circ \tau^c$ it is sufficient to notice that  by \eqref{eq:Phi*x*fx*svabodu} $\Phi^*x^* = f^{x^*}$, so
$$
(\Phi^*x^*) \cdot W_H = f^{x^*} \cdot W_H = \langle M^c, x^* \rangle \circ \tau^c = \langle M^c\circ\tau^c, x^* \rangle,\;\;\; x^*\in X^*,
$$ 
and thus the desired follows from \cite[Theorem 6.1]{vNW08}.

  {\em Step 2. Part 3. Construction of $M^q$.} 
   Now let us show that $M^q:= M -M^c- M^a$ is quasi-left continuous purely discontinuous and has the following form $M^q = \int x \ud \bar{\mu}^{M^q} = \int x \ud \widetilde N_{\nu_{na}}$ for some Poisson random measure $N_{\nu_{na}}$ with a compensator $\nu_{na}$. First notice that $M^q$ is purely discontinuous quasi-left continuous by Corollary \ref{cor:candeciffcandecforanormingseq} as we have that for $(x_n^*)_{n\geq 1}$ exploited in Step 2. Part 2 $\langle M^c, x_n^* \rangle$ is the continuous part of $\langle M, x_n^* \rangle$ for any $n\geq 1$. Moreover, $\langle M^a, x_n^* \rangle$ is the purely discontinuous with accessible jumps part of $\langle M, x_n^* \rangle$ as $M^a$ collects all the deterministic-time jumps of $M$, and since by Theorem \ref{thm:Mhasindinciffweallocchararedeterms} $\nu^{\langle M, x_n^* \rangle}$ is deterministic for any $n\geq 1$, its atomic part $\nu^{\langle M, x_n^* \rangle}_a$ (which coincides with $\nu^{\langle M, x_n^* \rangle^a}$ by Proposition \ref{prop:Grigcharforcandechowdoesitlooklike} and Remark \ref{rem:whichwhatpartofrmmeans}) has a deterministic support, which is a subset of $(t_m)_{m\geq 1}$ presented in Step 2. Part 1 as if $\mathbb P(\Delta {\langle M, x_n^* \rangle}_t \neq 0)>0$ for some $t\geq 0$, then $\mathbb P(\Delta M_t \neq 0)>0$, so the jump times of $\langle M, x_n^* \rangle^a$ are covered by and coincide with the jump times of $\langle M^a, x_n^* \rangle$, consequently $\langle M^a, x_n^* \rangle$ is the purely discontinuous with accessible jumps part of $\langle M, x_n^* \rangle$ for any $n\geq 1$, and thus $M^q$ is the purely discontinuous quasi-left continuous part of the canonical decomposition of $M$.
   
   Next let us show that $\mu^{M^q}$ is a Poisson random measure with a compensator $\nu^{M^q} = \nu_{na}$ (the letter equality follows from Proposition \ref{prop:Grigcharforcandechowdoesitlooklike}, Lemma \ref{lem:decompofmeasuresontwoparts}, and Remark \ref{rem:whichwhatpartofrmmeans}). First note that $M^q$ is independent of $M^c$ and $M^a$ and that $M^q$ has independent increments. This follows from a standard finite dimensional argument (see the proof of Theorem \ref{thm:tangiffwtangUMDcase}), Step 1, and the Cram\'er-Wold theorem (see \cite[Theorem 29.4]{Bill95}). Now let us fix disjoint cylindrical sets $B_1, \ldots, B_K \in \mathcal B(X)$ (see the proof of Theorem \ref{thm:tangiffwtangUMDcase}) satisfying ${\rm dist}(B_k, \{0\}) >\eps$ for any $k=1,\ldots, K$ for some fixed $\eps>0$. Then for any stopping time $\tau$ we have that
   \begin{equation}\label{eq:B_karelocallyfiniteinnu-na}
   \mathbb E \int_{[0, \tau]\times B_k} 1 \ud \nu^{M^q}  =   \mathbb E \int_{[0, \tau]\times B_k} 1 \ud \mu^{M^q} = \mathbb E \sum_{0\leq s \leq \tau} \mathbf 1_{B_k}(\Delta M^q_s),
   \end{equation}
    and the latter is locally finite if one chooses $\tau$ to be the time of $n$th jump of $M^q$ of value more than $\eps$. Therefore we can define point processes $L^1, \ldots,L^K :\mathbb R_+ \times \Omega \to \mathbb N_0$ satisfying $L^k_t = \mu^{M^q}({[0, t]\times B_k})$ for any $k=1,\ldots, K$ for any $t\geq 0$. But then by \cite[Corollary 25.26]{Kal} and Step 1 these processes are times-changed Poissons, where the time-changes are deterministic as processes $\nu_{na}({[0, t]\times B_k})$ are deterministic since $\nu_{na}$ is so. Therefore $\mu^{M^q}|_{\mathbb R_+ \times X\setminus B(0, \eps)}$ is a Poisson random measure with the compensator $\nu_{na}|_{\mathbb R_+ \times X\setminus B(0, \eps)}$ (here $B(0, \eps) \subset X$ is the ball in $X$ with the radius $\eps$ and the centre in $0$), and then $\mu^{M^q}$ is Poisson as we can send $\eps\to 0$ and use the fact that by \eqref{eq:defofmuM} we have that $\mu^{M^q}(\mathbb R_+ \times \{0\}) = 0$ a.s. Therefore we can set $N_{\nu_{na}}:= \mu^{M^q}$ and $\widetilde N_{\nu_{na}} = \bar{\mu}^{M^q}$.
    
 Finally, let us prove that $M^q = \int x \ud \bar{\mu}^{M^q} = \int x \ud \widetilde N_{\nu_{na}}$. Recall that  the definition of such an integrability was discussed in Subsection \ref{subsec:PoissRMprelim}. Let us show that there exist an increasing family $(A_n)_{n\geq 1}$ of elements of $\mathcal B(\mathbb R_+)\otimes \mathcal B(X)$ such that $\cup_{n} A_n = \mathbb R_+\times X$, $\int_{A_n} \|x\| \ud \nu_{na}<\infty$ for any $n\geq 1$, and $\int_{A_n} x \ud \widetilde N_{\nu_{na}}$ converges in $L^1(\Omega) $ to $M^q_{\infty} = M^q_T$. For every $k\in \mathbb Z$ let $B_k := B(0, 2^k)\setminus B(0, 2^{k-1})$. By \eqref{eq:B_karelocallyfiniteinnu-na} and the discussion thereafter we have that 
 \begin{equation}\label{eq:int||x||wrtnunaontmaptoB_kfiniteandcaos}
 t\mapsto \int_{[0, t] \times B_k} \|x\| \ud \nu_{na} \leq 2^k \int_{[0, t] \times B_k} 1\ud \nu_{na} <\infty,\;\;\; t\geq 0.
 \end{equation}
  Moreover, the process \eqref{eq:int||x||wrtnunaontmaptoB_kfiniteandcaos} is continuous as $\nu_{na}$ is nonatomic in time. Thus for any $n\geq 1$ there exists $t_n^k$ such that $ \int_{[0, t_n^k] \times B_k} \|x\| \ud \nu_{na} \leq  n2^{-k}$. Without loss of generality we may assume that $(t_n^k)_{n\geq 1}$ is an increasing sequence. Moreover,  we may assume that $t_n^k \to \infty$ as $n\to \infty$ for any $k\geq 1$. For each $n\geq 1$ let us set
  \begin{equation}\label{eq:AnforangeneraMqfortheproofofII}
  A_n:=\bigl (\mathbb R_+ \times \{0\} \bigr) \cup_{k\geq 1} \bigl([0, t_n^k] \times B_k\bigr).
  \end{equation}
  Then by the construction of $(t_n^k)_{n, k\geq 1}$ and by the fact that $\nu_{na}(\mathbb R_+ \times \{0\} ) = \mu^{M^q}(\mathbb R_+ \times \{0\})=0 $ a.s.\ by \eqref{eq:defofmuM} we have that $\int_{A_n}\|x\| \ud \nu_{na}<\infty$ by \eqref{eq:int||x||wrtnunaontmaptoB_kfiniteandcaos} and \eqref{eq:AnforangeneraMqfortheproofofII}. Let $\xi_n := \int_{A_n}x \ud \widetilde N_{\nu_{na}}$ for every $n\geq 1$ (see Remark \ref{rem:defofdtochintwrtpoisrmforFoffinitenumes}). Let $\xi := M^q_{\infty}$. By \cite[Theorem 3.3.2]{HNVW1} in order to show that $\xi_n \to \xi$ in $L^1(\Omega; X)$ it is sufficient to prove that 
  \begin{equation}\label{eq:xiNiscondexpxiwithNonA_n}
  \xi_n = \mathbb E (\xi|\sigma(N_{\nu_{na}}|_{A_n})),\;\;\;n\geq 1.
  \end{equation}
   Fix $n\geq 1$. To this end it is enough to show that $\langle \xi_n, x^* \rangle =  \mathbb E (\langle \xi, x^* \rangle|\sigma(N_{\nu_{na}}|_{A_n}))$ for any $x^*\in X^*$. Fix $x^*\in X^*$. Then $\langle \xi, x^* \rangle = \int_{\mathbb R_+ \times X} \langle x, x^* \rangle \ud  \widetilde N_{\nu_{na}}(\cdot, x)$ as by Burkholder-Davis-Gundy inequalities \cite[Theorem 26.12]{Kal} and by the dominated convergence theorem
\begin{multline*}
\mathbb E \sup_{t\geq 0} \Bigl| \langle M^q_t, x^* \rangle - \int_{[0, t]\times X \cap A_n} \langle x, x^* \rangle  \ud  \widetilde N_{\nu_{na}}(\cdot, x) \Bigr|\\
 \eqsim \mathbb E \Bigl( \sum_{t\geq 0} \mathbf 1_{\overline A_n}(t, \Delta M^q)|\langle\Delta M^q, x^* \rangle |^2\Bigr)^{1/2} \to 0\;\;\; n\to \infty,
\end{multline*}
where $\overline A_n \subset \mathbb R_+ \times X$ is the completion of $A_n$. Therefore
\begin{align*}
\mathbb E (\langle \xi, x^* \rangle|\sigma(N_{\nu_{na}}|_{A_n})) &= \mathbb E \Bigl(\int_{\mathbb R_+ \times X} \langle x, x^* \rangle \ud  \widetilde N_{\nu_{na}}(\cdot, x)\Big|\sigma(N_{\nu_{na}}|_{A_n})\Bigl)\\
&=  \mathbb E \Bigl(\int_{A_n} + \int_{\overline A_n} \langle x, x^* \rangle \ud  \widetilde N_{\nu_{na}}(\cdot, x)\Big|\sigma(N_{\nu_{na}}|_{A_n})\Bigl)\\
&\stackrel{(*)}= \int_{A_n}  \langle x, x^* \rangle \ud  \widetilde N_{\nu_{na}}(\cdot, x) = \langle \xi_n, x^* \rangle,
\end{align*}
where $(*)$ holds from the fact that $N$ is a Poisson random measure so $N_{\nu_{na}}|_{A_n}$ and $N_{\nu_{na}}|_{\overline A_n}$ are independent, and the fact that $\mathbb E \int_{\overline A_n} \langle x, x^* \rangle \ud  \widetilde N_{\nu_{na}}(\cdot, x) = 0$. Therefore \eqref{eq:xiNiscondexpxiwithNonA_n} holds true, and thus $\xi_n \to \xi$ in $L^1(\Omega; X)$ by the It\^o-Nisio theorem \cite[Theorem 6.4.1]{HNVW2}, so $M^q= \int x \ud \widetilde N_{\nu_{na}}(\cdot, x)$.

{\em Step 3. Proving \eqref{eq:phiineqforindincrcandec}}. Finally let us show \eqref{eq:phiineqforindincrcandec}. This estimates follow analogously finite dimensional case proven in Step 1, with exploiting the fact that $M^c$, $M^q$, and $M^a$ are independent by the Cram\'er-Wold theorem (see \cite[Theorem 29.4]{Bill95}) and by Step 1.
\end{proof}

 \begin{remark}
Note that $\Phi$ is locally in $\gamma(L^2(\mathbb R_+; H), X)$, so by \cite[Subsection 3.2]{Y18BDG} and by \eqref{eq:[[Mc]]isdefbyPhiforMcwthidnsdincs} we have that $\gamma([\![M^c]\!]_t) = \|\Phi\|_{\gamma(L^2([0, t]; H), X)}<\infty$.
\end{remark}

The following corollary is an extension of the famous result of Grigelionis \cite{Grig77} (see also \cite[p.\ 106]{JS}) to infinite dimensions.

\begin{corollary}\label{cor:GrigcharformartwithIIXgeneral}
 Let $X$ be a Banach space, $M:\mathbb R_+ \times \Omega \to X$ be a martingale. Then $M$ has independent increments of and only if it has local characteristics which are deterministic. 
\end{corollary}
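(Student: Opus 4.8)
The plan is to deduce Corollary \ref{cor:GrigcharformartwithIIXgeneral} directly from the two main results established in this section, namely Theorem \ref{thm:Mhasindinciffweallocchararedeterms} and Theorem \ref{thm:genformofmartwithindince}. Recall that Corollary \ref{cor:GrigcharformartwithIIXgeneral} asserts, for a general Banach space $X$, the equivalence of three-ish conditions wrapped up as: $M$ has independent increments if and only if $M$ has (strong) local characteristics which are moreover deterministic. The subtlety here, compared with Theorem \ref{thm:Mhasindinciffweallocchararedeterms}, is precisely the upgrade from \emph{weak} local characteristics (the family $([\langle M, x^* \rangle^c], \nu^{\langle M, x^* \rangle})_{x^*\in X^*}$, which always exists) to honest local characteristics $([\![M^c]\!], \nu^M)$ in the sense of Subsection \ref{subsec:loccharandtangds}: the existence of such a pair requires, in general, a Meyer-Yoeurp decomposition together with a covariation bilinear form $[\![M^c]\!]$, and this is not available for arbitrary $X$ — except, as Theorem \ref{thm:genformofmartwithindince} shows, when $M$ has independent increments.

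First I would prove the ``only if'' direction. Suppose $M$ has independent increments. By Remark \ref{rem:locmartwithIIisamartwithII} we may treat $M$ as a martingale (no localization issue). By Theorem \ref{thm:genformofmartwithindince} (applied after subtracting the deterministic constant $M_0$, which does not affect jumps, continuous parts, or compensators) $M$ admits the canonical decomposition $M = M^c + M^q + M^a$, with $M^c$ continuous, $M^q$ purely discontinuous quasi-left continuous, $M^a$ purely discontinuous with accessible jumps; in particular $M = M^c + (M^q + M^a)$ is a Meyer-Yoeurp decomposition, so $M^d := M^q + M^a$ is the purely discontinuous part. Moreover Theorem \ref{thm:genformofmartwithindince} furnishes $\Phi \in \gamma(L^2(\mathbb R_+;H), X)$ locally with $M^c \circ \tau^c = \Phi \cdot W_H$; hence $[\![M^c]\!]$ exists and, as noted in the remark following that theorem, $[\![M^c]\!]_t$ is the (deterministic) bilinear form with $[\![M^c]\!]_t(x^*,x^*) = \|\Phi\|^2_{\gamma(L^2([0,t^c_{A_t}];H),X)}$ type expression — in any case a deterministic, time-changed covariance form, so $[\![M^c]\!]$ is deterministic. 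The random measure $\mu^M$ and its compensator $\nu^M$ always exist ($\mu^M$ is $\widetilde{\mathcal P}$-$\sigma$-finite, see Subsection \ref{subsec:ranmeasures}), so $M$ has local characteristics $([\![M^c]\!], \nu^M)$. That $\nu^M$ is deterministic follows from Theorem \ref{thm:Mhasindinciffweallocchararedeterms}: each $\nu^{\langle M, x^*\rangle}$ is deterministic, and $\nu^M$ is determined by the family $(\nu^{\langle M, x^*\rangle})_{x^*}$ via the Cram\'er-Wold/cylinder argument already used in the proof of Theorem \ref{thm:tangiffwtangUMDcase} (reduce to cylinder sets in $\mathcal B(X)$, push forward by finitely many functionals, invoke Lemma \ref{lem:lintransfofmart-->transfofnuM} and the Cram\'er-Wold theorem). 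Alternatively, and more cheaply, once $[\![M^c]\!]$ is deterministic and $\nu^{\langle M, x^*\rangle}$ is deterministic for every $x^*$, the pair $([\![M^c]\!], \nu^M)$ has deterministic ``weak projections'', which by the separability reduction pins down $\nu^M$ itself as deterministic.

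For the ``if'' direction, suppose $M$ has local characteristics $([\![M^c]\!], \nu^M)$ which are deterministic. Then for every $x^*\in X^*$ the real-valued martingale $\langle M, x^*\rangle$ has local characteristics $([\langle M,x^*\rangle^c], \nu^{\langle M,x^*\rangle})$, and these are deterministic: indeed $[\langle M,x^*\rangle^c]_t = [\langle M^c,x^*\rangle]_t = [\![M^c]\!]_t(x^*,x^*)$ is deterministic (using $\langle M^c,x^*\rangle = \langle M,x^*\rangle^c$ a.s., cf.\ the proof of Theorem \ref{thm:tangiffwtangUMDcase}), and $\nu^{\langle M,x^*\rangle}$ is the pushforward of $\nu^M$ under $x\mapsto \langle x,x^*\rangle$, hence deterministic. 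So the weak local characteristics of $M$ are deterministic, and Theorem \ref{thm:Mhasindinciffweallocchararedeterms} immediately gives that $M$ has independent increments. The main obstacle — such as it is — is bookkeeping in the ``only if'' direction: one must be careful that the covariation bilinear form $[\![M^c]\!]$ produced by Theorem \ref{thm:genformofmartwithindince} is genuinely deterministic (not merely that $M^c$ has deterministic \emph{weak} characteristics), and that $\nu^M$ is determined by its one-dimensional projections; both points are handled by the separability reduction and the cylinder-set argument already deployed in Section \ref{sec:WTversusT}, so no genuinely new difficulty arises. This is essentially a repackaging of Theorems \ref{thm:Mhasindinciffweallocchararedeterms} and \ref{thm:genformofmartwithindince} in the language of strong local characteristics, which is why it is stated as a corollary.
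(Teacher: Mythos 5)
Your proof is correct and follows essentially the same route as the paper's: both directions reduce to Theorem~\ref{thm:Mhasindinciffweallocchararedeterms} and Theorem~\ref{thm:genformofmartwithindince}, with the ``if'' direction obtained by noting that deterministic strong characteristics have deterministic weak projections, and the ``only if'' direction by extracting from Theorem~\ref{thm:genformofmartwithindince} the Meyer--Yoeurp decomposition, the explicit form $M^c\circ\tau^c = \Phi\cdot W_H$ (whence existence and determinacy of $[\![M^c]\!]$), and the fact that $\nu^M$ is deterministic. The only meaningful divergence is in how you pin down that $\nu^M$ is deterministic: you run the Cram\'er--Wold/cylinder-set argument on the weak projections $(\nu^{\langle M,x^*\rangle})_{x^*}$, exactly as in the proof of Theorem~\ref{thm:tangiffwtangUMDcase}, whereas the paper instead reads determinacy of $\nu^M$ off the structural output of Theorem~\ref{thm:genformofmartwithindince} (the Poisson random measure for $M^q$ has a deterministic intensity $\nu_{na}$, and $M^a$ has fixed jump times). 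Both are valid; your route is arguably cleaner as a self-contained citation, since the statement of Theorem~\ref{thm:genformofmartwithindince} does not literally say ``$\nu^M$ is deterministic'' but only exhibits a structure from which that follows. The paper also spends a line bounding $|[\![M^c]\!]_t(x^*,y^*)|$ explicitly via the Kazamaki theorem to justify that $[\![M^c]\!]_t$ is a bounded bilinear form; you gesture at this via the remark following Theorem~\ref{thm:genformofmartwithindince}, which is a fair shortcut (though that remark in fact cites back to the very display appearing in the paper's corollary proof, so if you want to avoid circularity you should reproduce the short Kazamaki estimate directly rather than cite the remark).
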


\begin{proof}
The ``if'' part follows from Theorem \ref{thm:Mhasindinciffweallocchararedeterms}. Let us show the ``only if'' part.
The fact that $M$ admits the Meyer-Yoeurp decomposition and that $\nu^M$ is deterministic was shown in Theorem \ref{thm:genformofmartwithindince}. Let us show that $[\![M^c]\!]$ exists. This follows from the fact that for any $t\geq 0$ a.s.\ for any $x^*, y^*\in X^*$ by the Kazamaki theorem \cite[Theorem 17.24]{Kal} and by \eqref{eq:qvvarofstintwrtHcylbrmot} (recall that $\Phi^*x^*$ is locally in $L^2(\mathbb R_+; H)$ for any $x^*\in X^*$)
\begin{equation}\label{eq:[[Mc]]isdefbyPhiforMcwthidnsdincs}
\begin{split}
|[\![M^c]\!]_t(x^*, y^*)|  &= \bigl|[\langle M^c, x^* \rangle,\langle M^c, y^* \rangle]_t\bigr| \\
&= \Bigl|\Bigl[\int \Phi^* x^* \ud W_H, \int \Phi^* y^* \ud W_H \Bigr]_{A_t} \Bigr|\\
& = \Bigl| \int_0^{A_t} \langle \Phi x^*(s) ,\Phi^*y^*(s) \rangle \ud s \Bigr| \leq \|\Phi\|_{\mathcal L(L^2([0, A_t]), X)}^2\|x^*\| \|y^*\|,
\end{split}
\end{equation}
so $[\![M^c]\!]_t$ is a.s.\ a bounded bilinear form. 
\end{proof}

\begin{corollary}\label{cor:ifMIIthendirstofMisundetbylocchas}
Let $X$ be a Banach space, $M:\mathbb R_+ \times \Omega \to X$ be a martingale with independent increments satisfying $M_0=0$. Then the distribution of $M$ is uniquely determined by its local characteristics.
\end{corollary}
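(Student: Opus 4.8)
The plan is to reduce everything to the structural description of martingales with independent increments provided by Theorem~\ref{thm:genformofmartwithindince}, and to check that the law of each piece of the canonical decomposition is a fixed functional of the local characteristics. So let $M, N:\mathbb R_+\times\Omega\to X$ be two martingales with independent increments, $M_0=N_0=0$, having the same local characteristics; these exist and are deterministic by Corollary~\ref{cor:GrigcharformartwithIIXgeneral}, and "the same local characteristics'' means $[\![M^c]\!]=[\![N^c]\!]$ (the continuous part is unique by the Meyer--Yoeurp decomposition, Remark~\ref{rem:MYdecBanach}) and $\nu^M=\nu^N$. By Theorem~\ref{thm:genformofmartwithindince}, $M=M^c+M^q+M^a$ and $N=N^c+N^q+N^a$ are the canonical decompositions, the three summands are mutually independent within each decomposition, and each summand again has independent increments. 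Since mutual independence reduces the joint law of a triple to the product of its marginals, it suffices to show $M^c\stackrel{\mathrm d}{=}N^c$, $M^q\stackrel{\mathrm d}{=}N^q$ and $M^a\stackrel{\mathrm d}{=}N^a$ as $\mathcal D(\mathbb R_+,X)$-valued random elements; then $(M^c,M^q,M^a)\stackrel{\mathrm d}{=}(N^c,N^q,N^a)$, and pushing forward by the addition map $\mathcal D(\mathbb R_+,X)^3\to\mathcal D(\mathbb R_+,X)$, which is continuous for the sup-norm and hence Borel measurable (Definition~\ref{def:defofskotokhodspaxc}), yields $M\stackrel{\mathrm d}{=}N$.

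For $M^a$: by Theorem~\ref{thm:genformofmartwithindince} it is a pure-jump martingale whose jump-time set is the deterministic countable set $(t_m)_{m\geq1}=\{t:\nu^M(\{t\}\times(X\setminus\{0\}))>0\}$, read off directly from $\nu^M$, and $M^a_t=\sum_{t_m\leq t}\Delta M^a_{t_m}$ with $(\Delta M^a_{t_m})_m$ independent. Each $\Delta M^a_{t_m}=\Delta M_{t_m}$ (since $M^c$ is continuous and $M^q$ is quasi-left continuous, so it has no jump at the predictable time $t_m$), and independence of increments makes $\Delta M_{t_m}$ independent of $\mathcal F_{t_m-}$; hence the compensator formula at a predictable time gives $\nu^M(\{t_m\}\times B)=\mathbb P(\Delta M_{t_m}\in B\setminus\{0\})$, and together with $\mathbb P(\Delta M_{t_m}=0)=1-\nu^M(\{t_m\}\times(X\setminus\{0\}))$ this determines the law of $\Delta M^a_{t_m}$, hence of $M^a$, from $\nu^M$. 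For $M^q$: Theorem~\ref{thm:genformofmartwithindince} gives $M^q=\int_{[0,\cdot]\times X}x\ud\widetilde N_{\nu_{na}}$, where $\nu_{na}$ is the non-atomic-in-time part of $\nu^M$, canonically extracted (Lemma~\ref{lem:decompofmeasuresontwoparts}, Remark~\ref{rem:whichwhatpartofrmmeans}), and $N_{\nu_{na}}$ is a Poisson random measure with intensity $\nu_{na}$; the law of a Poisson random measure is determined by its intensity (Subsection~\ref{subsec:PoissRMprelim}), and the integral is built as a fixed measurable functional of $N_{\nu_{na}}$ depending only on $\nu_{na}$ (Definition~\ref{def:defofstochintwrtPoisspoirnprod}, Remark~\ref{rem:sticintwrtPoisrmonercanchosreeanysetws}). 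Thus $\mathrm{Law}(M^q)$ is a fixed functional of $\nu^M$, and the same applies to $N^q$.

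For $M^c$ — the main point — I would argue that $M^c$ is a continuous martingale with independent increments starting at $0$, and by Theorem~\ref{thm:genformofmartwithindince} each $M^c_t=\int_0^{A_t}\Phi\ud W_H$ with deterministic $\Phi\in\gamma(L^2(\mathbb R_+;H),X)$ and deterministic time change; hence for $t_1<\dots<t_n$ the tuple $(M^c_{t_1},\dots,M^c_{t_n})$ is a centered Gaussian random element of $X^n$, so its law is determined by the covariance functional $(s,t,x^*,y^*)\mapsto\mathbb E\langle M^c_s,x^*\rangle\langle M^c_t,y^*\rangle$. Using that $\langle M^c,x^*\rangle\langle M^c,y^*\rangle-[\langle M^c,x^*\rangle,\langle M^c,y^*\rangle]$ is a martingale vanishing at $0$ together with independence of increments and $M^c_0=0$, this covariance equals $[\langle M^c,x^*\rangle,\langle M^c,y^*\rangle]_{s\wedge t}=[\![M^c]\!]_{s\wedge t}(x^*,y^*)$, the bracket being deterministic because $M$ has independent increments. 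So the finite-dimensional distributions of $M^c$ at all times are determined by $[\![M^c]\!]$, and since $M^c$ and $N^c$ are continuous this pins down their laws on $\mathcal D(\mathbb R_+,X)$; alternatively one notes that $A$ and $\Phi$ (the latter up to an irrelevant isometry of $H$) are themselves recovered from $[\![M^c]\!]$ via the explicit construction in the proof of Theorem~\ref{thm:genformofmartwithindince}, and $(\Phi\cdot W_H)\circ A$ has a law depending only on $\Phi$ and $A$. I expect this Gaussian step to be the main obstacle: the work is in confirming that $M^c_t$ is a genuine $X$-valued Gaussian variable (so that second moments determine the law of finite tuples) and that its covariance functional is exactly $[\![M^c]\!]_{s\wedge t}$; everything else is bookkeeping with the canonical decomposition and with facts already established in the excerpt.
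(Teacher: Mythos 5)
Your proposal is correct and takes the same route as the paper: reduce via Theorem~\ref{thm:genformofmartwithindince} to the three mutually independent pieces of the canonical decomposition, and observe that the law of $M^c$ is determined by $[\![M^c]\!]$, that of $M^q$ by $\nu_{na}$, and that of $M^a$ by $\nu_a$. Your Gaussian finite-dimensional-distribution argument for the continuous part is a more explicit justification of what the paper compresses into one line (that $\Phi$ and $(A_t)_{t\geq 0}$ in the representation of $M^c$ depend only on $[\![M^c]\!]$).
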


\begin{proof}
By Theorem \ref{thm:genformofmartwithindince} it is sufficient to show that each part of the canonical decomposition $M = M^c + M^q + M^a$ is uniquely determined by its characteristics. To this end it is enough to notice that the distribution of $M^c$ depends only on $\Phi$ and $(A_t)_{t\geq 0}$ which depends only  on $[\![M^c]\!]$, the distribution of $M^q$ depends only  on $\nu_{na}$, and the  distribution of $M^a$ depends only  on $\nu_{a}$.
\end{proof}

\section{The approach of Jacod, Kwapie\'{n}, and Woyczy\'{n}ski}\label{sec:appJKW}

In the present section we discover the infinite dimensional analogue of the celebrated result of Jacod \cite{Jac84} and Kwapie\'{n} and Woyczy\'{n}ski \cite{KwW91}, which says that if one discretize a real-valued quasi-left continuous martingale $M$ by creating a sequence $d^n = (d^n_k)_{k= 1}^n = (M_{Tk/n} - M_{T(k-1)/n})_{k=1}^n$, and if one considers a decoupled tangent martingale difference sequence $\tilde d^n = (\tilde d^n_k)_{k= 1}^n$ to $d^n$, then $\tilde d^n$ converges in distribution to a decoupled tangent martingale $\widetilde M$. The goal of the present section is to extend this statement to UMD-valued general local martingales.

\bigskip

Before stating the main theorem of the section we will need the following definitions.
 First recall that $\mathcal D([0, T], X)$ denotes the {\em Skorokhod space} of all $X$-valued c\`adl\`ag functions on $[0, T]$ (see Definition~\ref{def:defofskotokhodspaxc}). Throughout this section we will assume the Skorokhod space to be endowed with the {\em Skorokhod topology} (instead of the sup-norm topology, see Remark \ref{rem:supnormisbadforJKW}) which is generated by the Skorokhod metric which has the following form. Let $F, G\in \mathcal D([0, T], X)$. Then
 \[
  d_{J_1}(F,G) := \inf_{\lambda} \Bigl( \sup_{0\leq t\leq T}|\lambda(t) - t| + \sup_{0\leq t\leq T} \bigl\|F(t) -G(\lambda(t))\bigr\| \Bigr),
 \]
where the infimum is taken over all nondecreasing functions $\lambda:[0, T] \to [0, T]$. Note that
\begin{equation}\label{eq:skorleqsup}
  d_{J_1}(F,G)  \leq \|F-G\|_{\infty}.
\end{equation}
We refer the reader to \cite{Skor56,Jak86,Bill95,Wh02,Bill68} for further information on Skorokhod spaces.

\begin{definition}\label{def:convindisrtandweakconv}
Let $(\mathcal D, d)$ be a metric space. A sequence of $\mathcal D$-valued random variables $(\xi_n)_{n\geq 1}$ converges to an $\mathcal D$-valued random variable $\xi$ {\em in distribution} if the distributions of $(\xi_n)_{n\geq 1}$ {\em converge weakly} to the distribution of $\xi$, i.e.\ $\mathbb E f(\xi_n) \to \mathbb E f(\xi)$ as $n\to \infty$ for any bounded continuous function $f:\mathcal D\to \mathbb R$.
\end{definition}

We refer the reader to  \cite{ShPr2e,BMT07} for further details on convergence in distribution and on weak convergence.

\begin{remark}\label{rem:donvindistrcanbecheckedforLip}
Assume that $\mathcal D$ in Definition \ref{def:convindisrtandweakconv} is a locally convex space. Without loss of generality by \cite[Remark 8.3.1 and the proof of Theorem 8.2.3]{BMT07} we may assume that $f$ in the definition above is Lipschitz. Moreover, by multiplying $f$ by a constant we may assume that $\|f\|_{\infty} \leq 1$ and that 
$$
|f(x) - f(y)| \leq d(x, y),\;\;\; x, y\in X.
$$
\end{remark}

\smallskip
 
Let $X$ be a Banach space, $M:\mathbb R_+ \times \Omega \to X$ be a local martingale. Fix $T>0$ and for each $n\geq 1$ define
\begin{equation}\label{eq:dedefofdknforJKW!!}
 d_k^n :=M_{Tk/n} - M_{T(k-1)/n},\;\;\; k=1,\ldots, n.
\end{equation}
For each $n\geq 1$, let $(\tilde d_k^n)_{k=1}^n$ be a decoupled tangent sequence. Let
\begin{equation}\label{eq:defofMnforapproxbydiscretedecoupl}
\widetilde M^n_t := \sum_{k: Tk/n \leq t}\tilde d_k^n,\;\;\; t\geq 0.
\end{equation}

First start with a classical real-valued result.  The following theorem was shown by Jacod in \cite{Jac83,Jac84} (see also Kwapie\'{n} and Woyczy\'{n}ski \cite[p.\ 176]{KwW91}, and \cite[Chapter VI--VIII]{JS}).

\begin{theorem}\label{thm:condtrstepsnsidtrtothepledone}
Let $X$ be a finite dimensional Banach space, $M:\mathbb R_+ \times \Omega \to X$ be a quasi-left continuous local martingale that starts at zero, $T>0$, $(\widetilde M^n)_{n\geq 1}$ be defined by \eqref{eq:defofMnforapproxbydiscretedecoupl}. Then $(\widetilde M^n)_{n\geq 1}$ converges in distribution as random variables with values in $\mathcal D([0,T], X)$ to a decoupled tangent process $\widetilde M$ of $M$.
\end{theorem}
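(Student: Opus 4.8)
\textbf{Proof proposal for Theorem \ref{thm:condtrstepsnsidtrtothepledone}.}

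Since the statement is explicitly attributed to Jacod \cite{Jac83,Jac84} and Kwapie\'{n}--Woyczy\'{n}ski \cite{KwW91} in the finite-dimensional (multidimensional) setting, the plan is \emph{not} to reprove the limit theorem from scratch but to reduce the claim to these references. First I would invoke the uniqueness-in-distribution result: by Theorem \ref{thm:genformofmartwithindince}, Corollary \ref{cor:GrigcharformartwithIIXgeneral}, and Corollary \ref{cor:ifMIIthendirstofMisundetbylocchas}, a quasi-left continuous martingale with the given local characteristics and independent increments (conditionally on the original $\omega$) exists and is unique in distribution; combined with Proposition \ref{prop:dectanglocmaathasuniqerdistr} this identifies the candidate limit $\widetilde M$ unambiguously as the decoupled tangent local martingale of $M$. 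So the only content to verify is the \emph{convergence} $\widetilde M^n \to \widetilde M$ in $\mathcal D([0,T],X)$ under the Skorokhod topology.

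For that I would proceed coordinatewise. Fix a basis $x_1^*,\dots,x_m^*$ of $X^*$ (here $X$ is finite dimensional), set $P := (\langle\cdot,x_1^*\rangle,\dots,\langle\cdot,x_m^*\rangle):X\to\mathbb R^m$, and note that by Theorem \ref{thm:MNtangent==>TMTNaretangentforanylinop} (tangency is preserved under linear maps, and the discrete analogue: if $(\tilde d^n_k)$ is decoupled tangent to $(d^n_k)$ then $(P\tilde d^n_k)$ is decoupled tangent to $(Pd^n_k)$) the pushforward $P\widetilde M^n$ is exactly the discrete-decoupled-tangent approximation of $PM$ in the sense of \eqref{eq:defofMnforapproxbydiscretedecoupl}. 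Since $PM$ is a quasi-left continuous $\mathbb R^m$-valued local martingale, the multidimensional version of Jacod's theorem \cite[Chapter VI--VIII]{JS} gives convergence in distribution of $P\widetilde M^n$ in $\mathcal D([0,T],\mathbb R^m)$ to a decoupled tangent process $\widetilde{PM}$; by the uniqueness statements above, $\widetilde{PM} = P\widetilde M$ in distribution. Because this holds for (a basis of) $X^*$ and the joint law on $\mathbb R^m = X$ is determined by these finitely many linear functionals together with the Cram\'er--Wold theorem \cite[Theorem 29.4]{Bill95} applied to the finite-dimensional distributions of the c\`adl\`ag processes (using that convergence of finite-dimensional distributions plus tightness, which is automatic here from the $\mathbb R^m$-statement, yields convergence in $\mathcal D([0,T],X)$), we conclude $\widetilde M^n \to \widetilde M$ in distribution in $\mathcal D([0,T],X)$.

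The step I expect to be the main (admittedly mild) obstacle is the passage from the real/multidimensional statement of Jacod to the vector-valued one in a way that is genuinely a \emph{reduction} and not a repetition of the whole machinery: one must check that the discretization and decoupling operations commute with $P$ (which is the easy observation above), and, more delicately, that the Skorokhod-topology convergence of each coordinate together with tightness can be assembled into Skorokhod-topology convergence of the $X$-valued paths. In finite dimensions this is standard — tightness in $\mathcal D([0,T],\mathbb R^m)$ follows from tightness of the coordinates plus a joint modulus-of-continuity control, which in turn follows from the $\mathbb R^m$-version of Jacod's theorem applied directly to $PM$ rather than coordinate by coordinate — so in fact the cleanest route is to apply \cite{Jac83,Jac84} (or \cite[Chapter VIII]{JS}) directly to the $\mathbb R^m$-valued martingale $PM \simeq M$, which immediately gives the $\mathcal D([0,T],X)$-convergence and the identification of the limit via uniqueness. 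I would phrase the proof in that order: identify the limit by uniqueness, then quote the multidimensional Jacod--Kwapie\'{n}--Woyczy\'{n}ski theorem verbatim for $M$ itself.
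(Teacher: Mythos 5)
Your final conclusion matches the paper exactly: Theorem~\ref{thm:condtrstepsnsidtrtothepledone} is stated without proof and attributed directly to Jacod \cite{Jac83,Jac84} and Kwapie\'{n}--Woyczy\'{n}ski \cite{KwW91} (see also \cite[Chapters VI--VIII]{JS}), i.e.\ one simply identifies the finite-dimensional $X$ with $\mathbb R^m$ and invokes the multidimensional form of Jacod's limit theorem for $PM\simeq M$. Be aware, though, that the coordinatewise detour you flirt with is a genuine pitfall --- convergence of each $\langle\widetilde M^n, x_j^*\rangle$ in $\mathcal D([0,T],\mathbb R)$ with its own time reparametrization does \emph{not} imply joint convergence in $\mathcal D([0,T],\mathbb R^m)$, and the Cram\'er--Wold device does not repair this at the pathspace level --- so the route you land on at the end (quote the multidimensional theorem for $M$ directly and identify the limit via Proposition~\ref{prop:dectanglocmaathasuniqerdistr} and Corollary~\ref{cor:ifMIIthendirstofMisundetbylocchas}) is not merely the tidiest option but the only correct one.
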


The goal of the present subsection is to extend Theorem \ref{thm:condtrstepsnsidtrtothepledone} to infinite dimensions and to general local martingales.

\begin{theorem}\label{thm:condtrstepsnsidtrtothepledoneUMDcase}
Let $X$ be a UMD Banach space, $M:\mathbb R_+ \times \Omega \to X$ be a local martingale that starts at zero, $T>0$, $(\widetilde M^n)_{n\geq 1}$ be defined by \eqref{eq:defofMnforapproxbydiscretedecoupl}. Then $(\widetilde M^n)_{n\geq 1}$ converges in distribution as random variables with values in $\mathcal D([0,T], X)$ to a decoupled tangent process $\widetilde M$ of $M$.
\end{theorem}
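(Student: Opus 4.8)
The plan is to reduce the general UMD-valued case to the real-valued (finite-dimensional) case of Theorem \ref{thm:condtrstepsnsidtrtothepledone} by combining three ingredients: the canonical decomposition, the explicit description of the decoupled tangent martingale obtained in Section \ref{sec:tangmartconttime}, and the strong $L^p$-estimates for tangent martingales together with the approximation machinery of Section \ref{sec:appmartapprox}. First I would observe that it suffices to prove convergence in distribution in $\mathcal D([0,T],X)$ for each of the three pieces $M^c$, $M^q$, $M^a$ of the canonical decomposition $M = M^c + M^q + M^a$ separately, since: (i) the discretized increments split as $d_k^n = d_k^{n,c} + d_k^{n,q} + d_k^{n,a}$; (ii) by Remark \ref{rem:defofdectanglmartidiscreteandfac,q} and Theorem \ref{thm:candecfortangaretang} one can choose the decoupled tangent sequences compatibly so that $\tilde d_k^n = \tilde d_k^{n,c} + \tilde d_k^{n,q} + \tilde d_k^{n,a}$, and the three limiting processes $N^c, N^q, N^a$ are conditionally independent given the local characteristics (as in Subsection \ref{subsec:CIprocess}); and (iii) the addition map $\mathcal D([0,T],X)^3 \to \mathcal D([0,T],X)$ is continuous on the (almost sure) set where the three summands have no common jump times, so convergence in distribution of each coordinate plus conditional independence passes to the sum (one argues via characteristic functionals, or via the Lipschitz test functions of Remark \ref{rem:donvindistrcanbecheckedforLip}).

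For the continuous part, $\widetilde M^{n,c}$ is a sum of decoupled tangent increments of a continuous martingale; by Theorem \ref{thm:DTMforcontcase} the limit $N^c = (\Phi\cdot \overline W_H)\circ A^c$ is a time-changed Wiener integral. Here I would combine the stopping-time argument (Theorem \ref{thm:MNtangent==>MtauNtautangent+ifNCIINtauCII}) to reduce to $\mathbb E\sup_{t}\|M_t\|^p<\infty$ for some $p>1$, and then prove $\mathbb E\sup_{0\le t\le T}\|\widetilde M^{n,c}_t - N^c_t\|^p \to 0$ using the strong $L^p$-bounds \eqref{eq:Lpboundsforgentangmarraz} applied to differences of decoupled tangent martingales (Proposition \ref{prop:domcontcase} plus Theorem \ref{thm:tangentgencaseUMDuhoditrotasoldat}), together with the discretization estimates of Section \ref{sec:appmartapprox}; strong $L^p$-convergence implies convergence in distribution in $\mathcal D([0,T],X)$ via \eqref{eq:skorleqsup}. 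For the purely discontinuous quasi-left continuous part, Corollary \ref{thm:XUMDPDQLCdectangmart} identifies the limit $N^q = \int x\ud\bar\mu^{M^q}_{\rm Cox}$, and Theorem \ref{thm:mumuCoxcomparable} gives the matching $L^p$-estimates; I would again pass through $L^p$-approximation of $\widetilde M^{n,q}$ by $N^q$, using that the discretized jump measures $\mu^{d^n}$ and their compensators converge appropriately (Lemma \ref{lem:Lipshfuncapprox}, and the time-change representation of random measures via \cite[Corollary 25.26]{Kal}), and that the Cox process is determined by $\nu^{M^q}$. For the accessible-jump part, Theorem \ref{thm:detangmartforMXVpdwithaccjumps} and Proposition \ref{prop:MmapproxMinLpforacccase} reduce matters to finitely many predictable jump times, where the statement becomes essentially a discrete statement covered by Theorem \ref{thm:intromccnnll} and Remark \ref{rem:disctangagreesconttime}; the remaining work is to control the tail of the jumps uniformly in $n$, which \eqref{eq:approxfactforMmforPDwAJcase} and Proposition \ref{prop:pdwajjusttang} provide.

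The main obstacle, I expect, is two-fold and lives in the passage from $L^p$-closeness of $\widetilde M^n$ to a fixed process to genuine convergence in $\mathcal D([0,T],X)$ with the \emph{Skorokhod} topology rather than the sup-norm (Remark \ref{rem:supnormisbadforJKW}): the natural estimates produce a limit whose trajectories have jumps, so the discretized approximants $\widetilde M^n$ (which have jumps only at the grid points $Tk/n$) cannot be sup-norm close to the limit, only $J_1$-close. One handles this by not trying to approximate $N$ in $L^p$ directly, but rather by approximating $N$ by a \emph{grid-jump version} $N^n$ (put all of $N$'s mass in $(T(k-1)/n, Tk/n]$ onto the point $Tk/n$), showing $d_{J_1}(N^n, N)\to 0$ a.s.\ (a deterministic fact about c\`adl\`ag functions and refining partitions), and then showing $\mathbb E\, d_{J_1}(\widetilde M^n, N^n)^p \le \mathbb E\sup_{0\le t\le T}\|\widetilde M^n_t - N^n_t\|^p \to 0$ via \eqref{eq:skorleqsup} and the $L^p$-tangent estimates applied on the common grid — this is where one crucially uses that $\widetilde M^n$ and $N^n$ have jumps at the \emph{same} times, so their difference is again (the grid-jump version of) a decoupled tangent martingale difference sequence. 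The second delicate point is verifying that the family of decoupled tangent sequences $(\tilde d_k^n)_k$ can indeed be constructed on a single enlarged probability space compatibly across $n$ and across the three canonical pieces, so that the conditional-independence argument for the sum goes through; this is handled by Corollary \ref{cor:condindgivenRVsuffcond} and the explicit constructions of Section \ref{sec:tangmartconttime}, together with uniqueness in distribution of the decoupled tangent martingale (Proposition \ref{prop:dectanglocmaathasuniqerdistr}), which guarantees the limit does not depend on the choices made.
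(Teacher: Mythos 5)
Your overall strategy is in the right spirit (canonical decomposition, grid approximation, reliance on the Skorokhod rather than the sup-norm topology, reduction to a dense family of martingales), but the central step is not correctly justified. You claim that \emph{``$\widetilde M^n$ and $N^n$ have jumps at the same times, so their difference is again (the grid-jump version of) a decoupled tangent martingale difference sequence''} and conclude $\mathbb E \sup_{0\le t\le T}\|\widetilde M^n_t - N^n_t\|^p \to 0$. This does not hold as stated. The two processes $\widetilde M^n$ and $N^n$ (the grid version of the limiting process $\widetilde M$) live, a priori, on different enlargements of $\Omega$, and even if coupled there is no reason for $\widetilde M^n - N^n$ to be a \emph{decoupled tangent sequence to something small}. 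Having increments that are conditionally independent given $\mathcal F$ is not sufficient for an $L^p$-estimate; one must exhibit a local martingale to which the difference is tangent. The paper achieves this via an explicit intermediate object: a ``smoothed'' martingale $M^{N_0} := M^{c,N_0} + M^{q,N_0} + M^a$ in which the increment of the time change $(A_t)$ and of $\nu^j$ over each grid cell is replaced by its conditional expectation given $\mathcal F_{T(k-1)/N_0}$ (see the processes $M^{c,N_0}$ and $M^{q,N_0}$ in Steps~1--2 of the proof, controlled by Lemma \ref{lem:Lipshfuncapprox}). The force of this smoothing is twofold: (a) $M - M^{N_0}$ is small in $L^1(\Omega;\mathcal D([0,T],X))$, so by Lemma \ref{lem:tangdecofsumissumoftangdec} (compatible linear choice of decoupled tangent sequences) and Theorem \ref{thm:intromccnnll} the discrete decoupled tangent sequences of $M$ and of $M^{N_0}$ can be chosen jointly so that their difference \emph{is} decoupled tangent to $M - M^{N_0}$, yielding the $L^1$-bound \eqref{eq:tildeMN_0-tildeNN_0||||leqXepsps}; and (b) the conditional expectations make the local characteristics of $M^{N_0}$ on each grid cell measurable one step ahead, which allows the discrete decoupled tangent sequence of $M^{N_0}$ to be \emph{realized exactly} as the grid restriction $L^{N_0}$ of $\widetilde M$, using the same $\widetilde W_H$, the same $\overline\eta_{\rm ind}$, and Lemma \ref{lem:contmartandmuareindepaftertimehacge} for the independence across the three canonical pieces. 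This two-level coupling is the heart of the argument and is missing from your proposal: there is no shortcut through an unqualified ``their difference is a decoupled tangent m.d.s.''

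A second, lesser divergence: you propose to prove distributional convergence of each canonical piece separately and then combine via conditional independence and continuity of addition on $\mathcal D([0,T],X)^3 \to \mathcal D([0,T],X)$. The paper does not split the convergence this way; it builds the differences $e^{N_0}_k = e^{c,N_0}_k + e^{q,N_0}_k + e^{a,N_0}_k$ and applies the tangent machinery to the sum directly. Your route could in principle work (the limit pieces do have a.s.\ disjoint jump sets, so $J_1$-addition is continuous at the limit), but you would need to give a careful argument for convergence-in-distribution of the triple jointly — simply invoking ``characteristic functionals, or Lipschitz test functions'' leaves the essential compatibility of the couplings across $n$ and across the three pieces unaddressed; this is precisely what Lemma \ref{lem:tangdecofsumissumoftangdec} and the explicit noise construction provide. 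You also do not mention the reduction to finite-dimensional $X$ (via \cite[Subsection 7.5]{Y18BDG}), which the paper uses to access the scalar/finite-dimensional estimates such as \eqref{eq:LpboundsforstochintwrtHcylbrm} in Steps~1--2, nor the abstract reduction Lemma \ref{lem:approxMbyMmthesameforDTM} which licenses passing to a dense subclass of martingales in the first place; both are needed to make the argument precise.
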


In order to prove Theorem \ref{thm:condtrstepsnsidtrtothepledoneUMDcase} we will need several intermediate steps outlined here as lemmas.

\begin{lemma}\label{lem:tangdecofsumissumoftangdec}
Let $X$ be a UMD Banach space, $M^1, \ldots, M^n:\mathbb R_+\times \Omega \to X$ be local martingales. Then there exist corresponding decoupled tangent local martingales $\widetilde M^1,\ldots,  \widetilde M^n$ such that $\alpha^1 \widetilde M^1 + \ldots +  \alpha^n \widetilde M^n$ is a decoupled tangent local martingale to $\alpha^1 M^1 + \ldots +  \alpha^n M^n$ for any $\alpha^1,\ldots,\alpha^n\in \mathbb R$.
\end{lemma}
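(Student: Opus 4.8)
The plan is to build all the decoupled tangent local martingales $\widetilde M^1,\ldots,\widetilde M^n$ \emph{simultaneously on one common enlargement}, using the same source of extra randomness for all of them, and then to verify that the linear combination has the three defining properties of a decoupled tangent local martingale (Definition \ref{def:dectanglocmartcontimecased}): it is a local $\overline{\mathbb F}$-martingale, $M:=\alpha^1 M^1+\ldots+\alpha^n M^n$ stays an $\overline{\mathbb F}$-local martingale with unchanged local characteristics, and for a.e.\ fixed $\omega$ the combined process has independent increments with the right local characteristics.

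First I would inspect the explicit construction of a decoupled tangent local martingale carried out in Subsection \ref{subsec:CIprocess}: for a UMD-valued local martingale $M^i$ with canonical decomposition $M^i=M^{i,c}+M^{i,q}+M^{i,a}$, the decoupled tangent martingale is assembled as $\widetilde M^i=N^{i,c}+N^{i,q}+N^{i,a}$, where $N^{i,c}$ comes from an independent cylindrical Brownian motion $\overline W_H$ (Theorem \ref{thm:DTMforcontcase} and Lemma \ref{lem:brrepres}), $N^{i,q}=\int x\ud\bar\mu^{M^{i,q}}_{\rm Cox}$ comes from a Cox process directed by $\nu^{M^{i,q}}$ (Corollary \ref{thm:XUMDPDQLCdectangmart}), and $N^{i,a}$ comes from the extra randomness $(S_t(\omega))_{t\ge0}$ of Theorem \ref{thm:detangmartforMXVpdwithaccjumps}. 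The key point is that all three constructions are \emph{canonical and linear in the underlying data}: the Brownian representation $\Phi$ of Lemma \ref{lem:brrepres} depends only on the family of covariations $([M^j,M^k])$, so it respects linear combinations; a Cox process directed by $\nu$ can be realized on a single probability space carrying enough independent standard Poisson measures (Example \ref{ex:CoxprocessfiniteJ}), and $\mu^{\alpha^1 M^{1,q}+\ldots}$ has compensator determined by the jump structure; and the $\sigma$-algebra $S_t$ together with the kernels $\mathbb P^n_\omega$ of Theorem \ref{thm:detangmartforMXVpdwithaccjumps} can be set up once for the joint family of predictable jump times. So the plan is: enlarge $\Omega$ once by one independent cylindrical Brownian motion $\overline W_H$ (of a big enough Hilbert space), by one family of independent standard Poisson random measures rich enough to direct all the needed Cox processes jointly, and by one product space $X^{\mathbb N}$ carrying the joint jump kernels for a sequence of predictable stopping times exhausting the accessible jumps of all the $M^i$ simultaneously (merging them as in the proof of Proposition \ref{prop:pdwajjusttang}); then define each $\widetilde M^i$ on this enlargement using exactly the recipes above, but reading off the data of $M^i$.

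With this set-up, for any fixed scalars $\alpha^1,\ldots,\alpha^n$ I would put $M:=\sum_i\alpha^i M^i$ and $N:=\sum_i\alpha^i\widetilde M^i$. That $M$ is an $\overline{\mathbb F}$-local martingale with unchanged local characteristics follows exactly as in Subsection \ref{subsec:CIprocess} (the conditioning computations \eqref{eq:whywehavemarqoidq1oimeiqmw}, \eqref{eq:whywehavemarqoidq2oimeiqmw}, \eqref{eq:FFmeasiconsexpwrtwodiasFtthesacaom}), since the enlargement is by objects independent of $\mathcal F$; and $N$ is a local $\overline{\mathbb F}$-martingale for the same reason. For tangency and the "decoupled" property one argues fibrewise: for a.e.\ fixed $\omega\in\Omega$, $N(\omega)=\sum_i\alpha^i\widetilde M^i(\omega)$ is a sum of processes built from \emph{mutually independent} pieces of noise (the $\overline W_H$-part, the Cox-part, the $X^{\mathbb N}$-part), hence by Step 1 of the proof of Theorem \ref{thm:genformofmartwithindince} it has independent increments; its local characteristics are $([\![M^c]\!](\omega),\nu^M(\omega))$ because covariation bilinear forms add for the continuous parts and jump measures add over the disjoint-jump decomposition (Remark \ref{rem:candecsplitsjumps}, Proposition \ref{prop:Grigcharforcandechowdoesitlooklike}, and the computation of $[\![\widetilde M^i_c]\!]$ inside Theorem \ref{thm:DTMforcontcase}). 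Finally, $M$ and $N$ are tangent because they have the same local characteristics. Invoking Proposition \ref{prop:Omega'Omega';arecondindepifondforanyomega'as} upgrades "independent increments for a.e.\ $\omega$" to the conditional-independence formulation in Definition \ref{def:dectanglocmartcontimecased}, so $N$ is genuinely a decoupled tangent local martingale to $M$.

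The main obstacle, I expect, is bookkeeping rather than conceptual: one must check that the three recipes of Subsection \ref{subsec:CIprocess} really \emph{are} natural in the precise sense that reading off the data of $\alpha^1 M^1+\ldots+\alpha^n M^n$ gives back $\alpha^1\widetilde M^1+\ldots+\alpha^n\widetilde M^n$. For the Brownian part this is the assertion in Lemma \ref{lem:brrepres} that $\Phi$ (equivalently the $f_j$'s) depends only on the covariation family, combined with bilinearity of $[\cdot,\cdot]$; one has to be a little careful that the same time-change $(\tau_s)$ can be used for all the $M^i$ at once — this is arranged by taking $\tau$ in \eqref{eq:defoftautforcontcase} built from a countable norming family for the whole span, so that each $M^{i,c}\circ\tau$ has a linearly-growing quadratic variation. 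For the purely-discontinuous-quasi-left-continuous part the subtlety is that $\mu^{M^q}$ for a linear combination is \emph{not} literally $\sum\alpha^i\mu^{M^{i,q}}$ (jumps can cancel or superpose), so one should instead realize $N^q$ as the image of a single Cox process carrying all the relevant intensity information and push it forward by the appropriate jump map — here the natural route is to work with $\mu^{M^q}$ where $M^q=\sum\alpha^i M^{i,q}$ directly and direct its Cox process by $\nu^{M^q}$, using that this $\nu^{M^q}$ is $\mathcal F$-measurably determined. For the accessible-jump part the only care needed is the merging of the predictable stopping times and the observation, already used in Theorem \ref{thm:detangmartforMXVpdwithaccjumps} and Remark \ref{rem:NPDwAJhasindincrgivenNnuMdsa}, that the kernels $\mathbb P^n_\omega$ for the combined family determine those for each $M^i$ by pushforward. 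Once these naturality statements are laid out, the conclusion is immediate.
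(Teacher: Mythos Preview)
Your approach reconstructs each $\widetilde M^i$ component-by-component from the explicit recipes of Subsection~\ref{subsec:CIprocess} and then tries to verify linearity by hand. The paper's proof is dramatically shorter: observe that the $n$-fold product $X^n=X\times\cdots\times X$ is again UMD (see \cite[Proposition 4.2.17]{HNVW1}), form the single $X^n$-valued local martingale $(M^1,\ldots,M^n)$, let $(\widetilde M^1,\ldots,\widetilde M^n)$ be a decoupled tangent local martingale to it (which exists by Theorem~\ref{thm:mainforDTMgencasenananana}), and then apply Theorem~\ref{thm:MNtangent==>TMTNaretangentforanylinop} to the bounded linear operator $(x^1,\ldots,x^n)\mapsto\sum_i\alpha^i x^i$ from $X^n$ to $X$. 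That theorem guarantees that linear operators preserve both tangency and the decoupled property, so $\sum_i\alpha^i\widetilde M^i$ is automatically a decoupled tangent local martingale to $\sum_i\alpha^i M^i$, with no case analysis at all.

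Your route has a genuine gap, which you flag but do not resolve. If $M^1$ and $M^2$ jump simultaneously (nothing forbids this), then $\mu^{\alpha^1 M^1+\alpha^2 M^2}$ records a single jump of size $\alpha^1\Delta M^1+\alpha^2\Delta M^2$, whereas $\widetilde M^1$ and $\widetilde M^2$ built from \emph{separate independent} Cox processes directed by $\nu^{M^{1,q}}$ and $\nu^{M^{2,q}}$ will, for a.e.\ fixed $\omega$, jump at \emph{different} random times; consequently $\alpha^1\widetilde M^1+\alpha^2\widetilde M^2$ has the wrong jump measure and is not tangent to $\alpha^1 M^1+\alpha^2 M^2$. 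Your suggested fix --- directing a Cox process by $\nu^{M^q}$ for $M^q=\sum_i\alpha^i M^{i,q}$ --- makes the auxiliary randomness depend on the $\alpha$'s, so it cannot produce \emph{fixed} $\widetilde M^i$'s working for all coefficients simultaneously. The same defect hits the accessible-jump part: the marginal kernels $\mathbb P(\Delta M^i_{\tau_n}\,|\,\mathcal F_{\tau_n-})$ do not determine the joint law $\mathbb P\bigl((\Delta M^1_{\tau_n},\ldots,\Delta M^n_{\tau_n})\,|\,\mathcal F_{\tau_n-}\bigr)$, so decoupling each coordinate independently again destroys the correlations needed to recover the correct $\nu^M$ under linear combination. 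The only way to retain the joint jump structure is to run the construction on $\mathbb R_+\times X^n$ --- which is exactly the paper's product-space trick, arrived at cleanly and without the bookkeeping.
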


\begin{proof}
First notice that the $n$th power $X \times \dots \times X $ of $X$ (endowed with the $\ell^p_n$ product norm for any $1< p< \infty$) is a UMD Banach space (see e.g.\ \cite[Proposition 4.2.17]{HNVW1}). Then it is sufficient to consider an $X \times \dots \times X $-valued local martingale $(M^1, \ldots, M^n)$ and set $\widetilde M^1, \ldots, \widetilde M^n$ to be such that $(\widetilde M^1, \ldots, \widetilde M^n)$ is a decoupled tangent local martingale to $(M^1, \ldots, M^n)$. Then the lemma follows from Theorem \ref{thm:MNtangent==>TMTNaretangentforanylinop} and the fact that 
$$
(x^1,\ldots,  x^n) \mapsto \alpha^1 x^1 +\ldots+ \alpha^n x^n,\;\;\;\; x^1, \ldots, x^n \in X,
$$ 
is a bounded linear operator from $X \times \dots \times X$ to $X$.
\end{proof}

\begin{lemma}\label{lem:approxMbyMmthesameforDTM}
Let $X$ be a UMD Banach space, $M$, $(M_m)_{m\geq 1}$ be local $X$-valued martingales such that $\mathbb E \sup_{t\geq 0}\|M_t - (M_m)_t\| \to 0$ as $m\to \infty$. Let $T>0$, and let $\widetilde M^n$ and $(\widetilde M^n_m)_{m\geq 1}$ be defined analogously to \eqref{eq:defofMnforapproxbydiscretedecoupl}. Assume additionally that $\widetilde M^n_m$ converges in distribution to a local martingale $\widetilde M_m$ which is decoupled tangent to $M_m$ for any $m\geq 1$. Then $\widetilde M^n$ converges in distribution to a local martingale $\widetilde M$ which is decoupled tangent to $M$.
\end{lemma}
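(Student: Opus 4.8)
\textbf{Proof proposal for Lemma \ref{lem:approxMbyMmthesameforDTM}.} The plan is to combine three ingredients: the strong $L^1$-approximation of $M$ by $M_m$, the already-known $L^1$-comparison between a local martingale and its discretized decoupled tangent sequence, and a standard ``triangle inequality for convergence in distribution'' on the Skorokhod space. First I would reduce to the case where all martingales have finite strong $L^1$-moment by the usual stopping time argument (Theorem \ref{thm:MNtangent==>MtauNtautangent+ifNCIINtauCII} and Remark \ref{rem:locmartislocallyL1DR+X}), so that $\mathbb E \sup_{t\geq 0}\|M_t\|<\infty$, $\mathbb E\sup_{t\geq 0}\|(M_m)_t\|<\infty$, and $\mathbb E\sup_{t\geq 0}\|M_t-(M_m)_t\|\to 0$. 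By Lemma \ref{lem:tangdecofsumissumoftangdec} applied to the pair $M_m$, $M-M_m$ we may choose the decoupled tangent objects compatibly: for each $n$ the discrete decoupled tangent sequences can be taken so that $\widetilde M^n = \widetilde M^n_m + \widehat M^n_m$, where $\widehat M^n_m$ is the discretized decoupled tangent process to the ``error'' martingale $M-M_m$; and similarly we may assume the limiting objects satisfy $\widetilde M = \widetilde M_m + \widehat M_m$ with $\widehat M_m$ decoupled tangent to $M-M_m$ (existence of $\widetilde M$ and $\widehat M_m$ as bona fide local martingales is granted by Theorem \ref{thm:mainforDTMgencasenananana}, and uniqueness in distribution by Proposition \ref{prop:dectanglocmaathasuniqerdistr}).

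Next I would control the error terms in $L^1$ uniformly in $n$. By Theorem \ref{thm:tangentgencaseUMDuhoditrotasoldat} (the $p=1$ case of \eqref{eq:Lpboundsforgentangmarraz}), since $\widehat M^n_m$ is tangent to the discrete martingale $t\mapsto \sum_{k:Tk/n\le t}((M-M_m)_{Tk/n}-(M-M_m)_{T(k-1)/n})$, which is itself an $X$-valued martingale obtained by sampling $M-M_m$, we get
\begin{equation}\label{eq:JKWerrorbound}
\mathbb E \sup_{t\geq 0}\|\widehat M^n_{m,t}\| \eqsim_X \mathbb E \sup_{t\geq 0}\Bigl\|\sum_{k:Tk/n\le t}\bigl((M-M_m)_{Tk/n}-(M-M_m)_{T(k-1)/n}\bigr)\Bigr\| \leq \frac{p}{p-1}\Big|_{p\to 1}\,\cdot\, \mathbb E\sup_{t\ge 0}\|M_t-(M_m)_t\|,
\end{equation}
where the last step uses Doob's inequality \eqref{eq:DoobsineqXBanach} for the sampled martingale together with $\sup_k\|(M-M_m)_{Tk/n}\|\le \sup_{t\ge 0}\|(M-M_m)_t\|$; more carefully one uses \eqref{eq:Lpboundsforgentangmarraz} with $p$ slightly larger than $1$ or the $\phi$-version of Theorem \ref{thm:XUMDiffphimaxtangent} to stay at the level of $L^1$. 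The point is that $\sup_n \mathbb E\sup_{t\ge 0}\|\widehat M^n_{m,t}\| \le C_X\,\varepsilon_m$ with $\varepsilon_m := \mathbb E\sup_{t\ge 0}\|M_t-(M_m)_t\|\to 0$. By passing to the limit in distribution (lower semicontinuity of $\|\cdot\|_{L^1}$ of the sup under weak convergence, via Skorokhod's representation theorem and Fatou) one also gets $\mathbb E\sup_{t\le T}\|\widehat M_{m,t}\|\le C_X\varepsilon_m$.

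Finally I would assemble the convergence in distribution on $(\mathcal D([0,T],X), d_{J_1})$. Fix a Lipschitz bounded test function $f$ with $\|f\|_\infty\le 1$ and $|f(x)-f(y)|\le d_{J_1}(x,y)\le \|x-y\|_\infty$ (Remark \ref{rem:donvindistrcanbecheckedforLip} and \eqref{eq:skorleqsup}). Then
\begin{align*}
\bigl|\mathbb E f(\widetilde M^n) - \mathbb E f(\widetilde M)\bigr|
&\le \bigl|\mathbb E f(\widetilde M^n) - \mathbb E f(\widetilde M^n_m)\bigr| + \bigl|\mathbb E f(\widetilde M^n_m) - \mathbb E f(\widetilde M_m)\bigr| + \bigl|\mathbb E f(\widetilde M_m) - \mathbb E f(\widetilde M)\bigr|\\
&\le \mathbb E\sup_{t\ge 0}\|\widehat M^n_{m,t}\| + \bigl|\mathbb E f(\widetilde M^n_m) - \mathbb E f(\widetilde M_m)\bigr| + \mathbb E\sup_{t\le T}\|\widehat M_{m,t}\|\\
&\le 2C_X\varepsilon_m + \bigl|\mathbb E f(\widetilde M^n_m) - \mathbb E f(\widetilde M_m)\bigr|.
\end{align*}
Given $\delta>0$, choose $m$ with $2C_X\varepsilon_m<\delta/2$, and then (by the hypothesis that $\widetilde M^n_m\to\widetilde M_m$ in distribution) choose $N$ so that the second term is $<\delta/2$ for $n\ge N$; this shows $\mathbb E f(\widetilde M^n)\to \mathbb E f(\widetilde M)$, i.e.\ $\widetilde M^n\to\widetilde M$ in distribution. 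It remains to note that $\widetilde M$ is decoupled tangent to $M$: this follows because $\widetilde M=\widetilde M_m+\widehat M_m$ where $\widetilde M_m$ is decoupled tangent to $M_m$, $\widehat M_m$ is decoupled tangent to $M-M_m$, these are chosen conditionally independent given the local characteristics (Lemma \ref{lem:tangdecofsumissumoftangdec} and Subsection \ref{subsec:CIprocess}), and one passes $m\to\infty$ using uniqueness in distribution of the decoupled tangent local martingale (Proposition \ref{prop:dectanglocmaathasuniqerdistr}) together with the $L^1$-convergence $\widetilde M_m\to\widetilde M$ and $M_m\to M$. I expect the main obstacle to be the bookkeeping in the last step — namely making the decoupled tangent processes $\widetilde M^n_m$, $\widehat M^n_m$ and their limits $\widetilde M_m$, $\widehat M_m$, $\widetilde M$ all compatible on a common enlarged probability space (so that $\widetilde M^n=\widetilde M^n_m+\widehat M^n_m$ literally holds and the conditional independence survives the limit), for which one should invoke Lemma \ref{lem:tangdecofsumissumoftangdec} and the product-space construction of Subsection \ref{subsec:CIprocess}, and to verify that $\widetilde M$ thus obtained indeed has, for a.e.\ $\omega$, independent increments with the prescribed local characteristics $([\![M^c]\!](\omega),\nu^M(\omega))$ — this last point uses that $\nu^{M_m}(\omega)\to\nu^M(\omega)$ and $[\![M_m^c]\!](\omega)\to[\![M^c]\!](\omega)$ in an appropriate sense, which is already implicit in the $L^1$-convergence of $M_m$ to $M$ together with Theorem \ref{thm:genformofmartwithindince}.
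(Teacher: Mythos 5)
Your overall strategy coincides with the paper's: reduce via the strong $L^1$-approximation, invoke Lemma~\ref{lem:tangdecofsumissumoftangdec} to split the decoupled tangent objects compatibly, and close with a three-term triangle inequality for a Lipschitz test function on the Skorokhod space. The endgame is however presented differently — you argue directly with $\varepsilon$-$\delta$ while the paper argues by contradiction along a subsequence — but these are interchangeable.

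There is one genuine gap in how you derive the third-term bound $\mathbb E\sup_{t\le T}\|\widehat M_{m,t}\|\le C_X\varepsilon_m$. You propose to obtain it by ``passing to the limit in distribution'' along $\widehat M^n_m\to\widehat M_m$, using Skorokhod representation and Fatou. But the convergence $\widehat M^n_m\to\widehat M_m$ in distribution is precisely a statement of the same type as the conclusion of the lemma (applied to the error martingale $M-M_m$), and it is \emph{not} part of the hypothesis — only convergence for $\widetilde M^n_m\to\widetilde M_m$ is assumed. So as written this step is circular or at least unjustified. The fix is the one the paper uses: since $\widehat M_m=\widetilde M-\widetilde M_m$ is, by Lemma~\ref{lem:tangdecofsumissumoftangdec}, a decoupled tangent (in particular tangent) local martingale to $M-M_m$, apply the continuous-time $L^1$-estimate \eqref{eq:Lpboundsforgentangmarraz} with $p=1$ directly to the tangent pair $(\widehat M_m, M-M_m)$ and conclude $\mathbb E\sup_{t\ge 0}\|\widehat M_{m,t}\|\lesssim_X \mathbb E\sup_{t\ge 0}\|M_t-(M_m)_t\|=\varepsilon_m$. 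No passage to the limit is needed.

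Two smaller points. First, the factor $p/(p-1)\big|_{p\to 1}$ in \eqref{eq:JKWerrorbound} diverges and is in any case unnecessary: the discretized running maximum $\sup_{k\le n}\|(M-M_m)_{Tk/n}\|$ is trivially $\le\sup_{t\ge 0}\|(M-M_m)_t\|$ pointwise, so no Doob inequality is involved and the chain reads simply $\mathbb E\sup\|\widehat M^n_{m,t}\|\lesssim_X\varepsilon_m$ (from the discrete decoupling $L^1$-estimate plus this pointwise bound), matching the paper's \eqref{eq:ineqfortildeMmnandMn}. Second, the identification of the limit $\widetilde M$ at the very end is automatic: once one defines $\widetilde M:=\widetilde M_m+\widehat M_m$ via Lemma~\ref{lem:tangdecofsumissumoftangdec}, this is a decoupled tangent local martingale of $M=M_m+(M-M_m)$ by construction, and uniqueness in distribution (Proposition~\ref{prop:dectanglocmaathasuniqerdistr}) makes the choice of $m$ irrelevant; there is no further ``passing $m\to\infty$'' argument to run, and one should not need any separate convergence of $\nu^{M_m}$ or $[\![M_m^c]\!]$.
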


\begin{proof}
Fix a continuous bounded function $f:\mathcal D([0,T], X) \to \mathbb R$. Our goal is to show that $\mathbb E f(\widetilde M) = \lim_{n\to \infty} \mathbb E f(\widetilde M^n)$. Thanks to Remark \ref{rem:donvindistrcanbecheckedforLip} we may assume that $f$ is Lipschitz, $\|f\|_{\infty}\leq 1$, and that by \eqref{eq:skorleqsup} $|f(x) - f(y)| \leq d_{J_1}(x, y) \leq  \|x-y\|$ for any $x, y\in \mathcal D([0,T], X)$.

Assume the converse, i.e.\ that $\mathbb E f(\widetilde M) \neq \lim_{n\to \infty} \mathbb E f(\widetilde M^n)$. Then we can identify our sequence with a subsequence such that
\begin{equation}\label{eq:EfMlessthenEfMn-delta}
\mathbb E f(\widetilde M) \leq  \mathbb E f(\widetilde M^n) - \delta,
\end{equation}
for some fixed $\delta>0$ for any $n\geq 1$ (we may use $-f$ instead of $f$ if needed). Fix $\eps>0$. Let $m\geq 1$ be such that $\mathbb E \sup_{0\leq t\leq T}\|M_t - (M_m)_t\| <\eps$. Then by Lemma \ref{lem:tangdecofsumissumoftangdec} and by \eqref{eq:Lpboundsforgentangmarraz} we have that 
\begin{equation}\label{eq:estforMandMmforlipschfunc}
\mathbb E \sup_{0\leq t\leq T}\|\widetilde M_t - ( \widetilde M_m)_t\| < C_X\eps
\end{equation} 
for some universal constant $C_X >0$. For this $m$ by the assumption of the theorem we can find $n\geq 1$ such that 
\begin{equation}\label{eq:Ef(tildeMm)geqEf(tileMmn)}
\mathbb E f(( \widetilde M_m)_t) \geq \mathbb E f(  (\widetilde M_m^n)_t) - \eps.
\end{equation}
Finally, by the construction \eqref{eq:defofMnforapproxbydiscretedecoupl} of $\widetilde M_m^n$ and $\widetilde M^n$ and by Lemma \ref{lem:tangdecofsumissumoftangdec} we may assume that $\widetilde M^n-\widetilde M_m^n$ is a discrete decoupled tangent martingale to $M_t - M_m$ constructed by \eqref{eq:defofMnforapproxbydiscretedecoupl} so analogously to \eqref{eq:estforMandMmforlipschfunc} we have that 
\begin{equation}\label{eq:ineqfortildeMmnandMn}
\mathbb E \sup_{0\leq t\leq T}\| \widetilde M^n_t-(\widetilde M_m^n)_t \|  \leq C_X\mathbb E \sup_{0\leq t\leq T}\|M_t - ( M_m)_t\| < C_X\eps.
\end{equation}
Therefore summing up \eqref{eq:estforMandMmforlipschfunc}, \eqref{eq:Ef(tildeMm)geqEf(tileMmn)}, and \eqref{eq:ineqfortildeMmnandMn} together with the Lipschitzivity of $f$ we have that $\mathbb E f(\widetilde M) \geq  \mathbb E f(\widetilde M^n) - \eps - 2C_X \eps$, so \eqref{eq:EfMlessthenEfMn-delta} does not hold if $\eps$ is small enough.
\end{proof}

\begin{lemma}\label{lem:contmartandmuareindepaftertimehacge}
Let $M^c:\mathbb R_+ \times \Omega \to \mathbb R$ be a continuous local martingale, let $n\geq 1$, $J=\{1,\ldots, n\}$, and let $\mu$ be an integer-valued optional random measure on $\mathbb R_+ \times J$ with a compensator $\nu$. Assume that there exists a random time-change $\tau = (\tau_t)_{t\geq 0}$ such that $[M^c]_{\tau_t} = t$, and random time-changes $(\tau^j_t)_{t\geq 0}$, $j=1,\ldots, n$ such that
\[
\nu\bigl([0, \tau^j_t]\times \{j\}\bigr) = t,\;\;\; t\geq 0,\;\;\; j=1,\ldots,n.
\]
Then the process $t\mapsto M^c_{\tau_t}$ and random measures $(t, j)\mapsto  \tilde {\mu}^j([0, t]\times \{j\}) :=  \mu([0, \tau^j_t]\times \{j\})$, $j=1,\ldots,n$, are mutually independent.
\end{lemma}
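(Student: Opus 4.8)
\textbf{Proof proposal for Lemma \ref{lem:contmartandmuareindepaftertimehacge}.}

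The plan is to reduce the claim about mutual independence of the time-changed processes to a statement about the \emph{untransformed} objects $M^c$ and $\mu$ on the original probability space, which after time-changing become the objects in the statement. First I would recall that $M^c$ is a continuous local martingale with deterministic-in-law time scale: after the time-change $(\tau_t)_{t\geq 0}$ defined through $[M^c]_{\tau_t}=t$, the process $t\mapsto M^c_{\tau_t}$ is a standard Brownian motion by the Dambis--Dubins--Schwarz theorem (see \cite[Theorem 17.1]{Kal}, or \cite[Theorem 3.4.6]{KS}), since $[M^c_{\tau_\cdot}]_t = t$; here I use that $[M^c]$ is continuous and strictly increasing on the relevant interval so that $\tau$ is well defined and invertible. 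Similarly, by \cite[Corollary 25.26]{Kal} (which was already cited in the proof of Theorem \ref{thm:mumuCoxcomparable}), each $(t,j)\mapsto \mu([0,\tau^j_t]\times\{j\})$ is, after the time-change $\tau^j$ through $\nu([0,\tau^j_t]\times\{j\})=t$, a standard Poisson process, and the $n$ of them are mutually independent because their domains in $\widetilde{\mathcal P}$ are disjoint (the atoms $\{1\},\ldots,\{n\}$ of $J$ are disjoint and $\mu$ is integer-valued, so $\mu(\{t\}\times J)\in\{0,1\}$ forces the jumps to be spread over distinct atoms a.s.).

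The core of the argument is then the independence of a continuous martingale part from a purely discontinuous random measure. I would argue as follows. Consider the $\mathbb R^{n+1}$-valued local martingale $L$ whose first coordinate is $M^c$ and whose remaining $n$ coordinates are the compensated point processes $t\mapsto \mu([0,t]\times\{j\})-\nu([0,t]\times\{j\})$, $j=1,\ldots,n$. This is a local martingale with deterministic local characteristics: the continuous part has quadratic variation $[M^c]$ (deterministic by hypothesis, since $[M^c]_{\tau_t}=t$ forces $[M^c]$ to be the deterministic inverse of $\tau$), it has no continuous covariation with the jump part (the jump coordinates are purely discontinuous so $[M^c,\cdot]=0$), and the jump measure has compensator built from the $\nu(\cdot\times\{j\})$, which are deterministic for the same reason. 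By Corollary \ref{cor:GrigcharformartwithIIXgeneral} (Grigelionis' theorem, extended to this setting in the excerpt), $L$ has independent increments; and by the multidimensional L\'evy--Khinchin formula \cite[Theorem II.4.16]{JS} together with the fact that a continuous-part characteristic and a L\'evy-measure characteristic sitting on disjoint carriers produce a product characteristic function (so $L$ decomposes as in Theorem \ref{thm:genformofmartwithindince} into a Gaussian piece plus an independent pure-jump piece), the continuous coordinate $M^c$ is independent of the $n$-tuple of jump processes, and moreover the $n$ jump processes are mutually independent of each other because the $\nu(\cdot\times\{j\})$ sit on disjoint sets and hence the corresponding L\'evy measures are mutually singular. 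This is exactly the independence structure used in Step 1 of the proof of Theorem \ref{thm:genformofmartwithindince}, so I would cite that argument rather than redo it.

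Finally, I would transport this independence through the time-changes. The time-change $\tau$ is measurable with respect to $\sigma([M^c])$, which is deterministic, hence $\tau$ is deterministic; likewise each $\tau^j$ is a deterministic function of $\nu(\cdot\times\{j\})$. Since deterministic time-changes do not destroy independence — applying a fixed (nonrandom) change of clock to each coordinate of an independent family preserves independence, as the $\sigma$-algebra generated by $t\mapsto Y_{\tau_t}$ is contained in the $\sigma$-algebra generated by $Y$ — the mutual independence of $M^c$ and $(\mu(\cdot\times\{j\}))_{j=1}^n$ passes to $M^c_{\tau_\cdot}$ and $(\tilde\mu^j)_{j=1}^n$, which is the assertion. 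The main obstacle I anticipate is a careful verification that all the relevant time-changes are genuinely deterministic (equivalently that $[M^c]$ and the $\nu(\cdot\times\{j\})$ are deterministic), which in turn rests on the hypothesis $[M^c]_{\tau_t}=t$ and $\nu([0,\tau^j_t]\times\{j\})=t$ forcing these processes to be the inverses of deterministic clocks — this deserves a short lemma-level argument but is not deep. The independence-of-Gaussian-from-jump-measure step is then a direct appeal to Theorem \ref{thm:genformofmartwithindince} (Step 1) and \cite[Theorem II.4.16]{JS}.
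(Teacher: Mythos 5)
Your proposal contains a genuine and unfixable error at its core. You claim several times that $[M^c]$ and the processes $t\mapsto\nu([0,t]\times\{j\})$ are deterministic --- e.g.\ ``$[M^c]_{\tau_t}=t$ forces $[M^c]$ to be the deterministic inverse of $\tau$'', and ``the time-change $\tau$ is measurable with respect to $\sigma([M^c])$, which is deterministic, hence $\tau$ is deterministic''. Neither assertion follows from the hypotheses. The lemma only asserts that \emph{some} random time-change $\tau$ satisfies $[M^c]_{\tau_t}=t$, which is precisely the statement that $[M^c]$ is continuous and strictly increasing so that an inverse exists; it says nothing about $[M^c]$ being nonrandom. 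Indeed in the intended application (Step 4 of the proof of Theorem~\ref{thm:condtrstepsnsidtrtothepledoneUMDcase}) both $[M^c]$ and $\nu$ are genuinely random. Since this determinism claim underpins your appeal to Corollary~\ref{cor:GrigcharformartwithIIXgeneral}, to Theorem~\ref{thm:genformofmartwithindince} (Step 1), and to the final ``deterministic time-changes preserve independence'' transport argument, the entire chain collapses.

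What is missed, and what the paper actually exploits, is that while the \emph{un}-time-changed characteristics are random, the \emph{time-changed increments} are deterministic by construction: $[M^c]_{\tau_{t_k}}-[M^c]_{\tau_{t_{k-1}}}=t_k-t_{k-1}$ and $\int_{(\tau^j_{s_{k-1}},\tau^j_{s_k}]\times\{j\}}1\,\ud\nu=s_k-s_{k-1}$. The paper therefore does not invoke Grigelionis' theorem or any ``$L$ has independent increments'' claim about the joint process before time change. Instead it writes out, via the exponential formula \cite[II.4.16]{JS}, the joint characteristic function $\mathbb E\,e^{i(\sum_k\alpha_kW_{t_k}+\sum_k\beta_k\widetilde N_{s_k})}$ directly in terms of the time-changed characteristics, observes that the resulting expression is an a.s.\ constant, and reads off that it factors into the product of the individual characteristic functions. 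The reduction to $n=1$ and the appeals to DDS and to \cite[Corollary 25.26]{Kal} in your first paragraph are correct, and citing \cite[Theorem II.12.4]{ShPr2e} for the final step is the right tool --- but the bridge between the two, the computation of the joint characteristic function, has to go through the exponential formula with random characteristics, not through a deterministic-characteristics theorem that does not apply here.
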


\begin{proof}
The fact that $(\tilde{\mu}^j)_{j=1}^n$ are independent standard Poisson random measures follows from \cite[Corollary 25.26]{Kal}. Let us show that $W := M^c\circ \tau$ is independent of $(\tilde{\mu}^j)_{j=1}^n$. Without loss of generality assume that $n=1$ (the proof for $n>1$ is analogous). Let $\tilde \mu = \tilde \mu^1$, $\widetilde N_t = \tilde \mu([0, t]\times \{1\}) - t$, $t\geq 0$. Then $W$ is a standard Brownian motion by \cite[Theorem 18.3 and 18.4]{Kal} and $\widetilde N$ is a standard compensated Poisson process by \cite[Corollary 25.26]{Kal}, and for any $0\leq t_1 < \ldots < t_N$, $0\leq s_1 < \ldots < s_N$ and for any numbers  $\alpha_1, \ldots, \alpha_N$ and $ \beta_1, \ldots, \beta_N$ we have that (here we set $t_0=s_0 = \alpha_0 = \beta_0=0$ and $d\alpha_k := \alpha_k-\alpha_{k-1}$, $d\beta_k := \alpha_k-\beta_{k-1}$ for any $k=1,\ldots, N$)
\begin{multline*}
e^{-\frac 12 \sum_{k=1}^N \bigl([d\alpha_k M^c]_{\tau_{t_k}} -  [d\alpha_k M^c]_{\tau_{t_{k-1}}}\bigr) + \sum_{k=1}^N\int_{\bigl(\tau^1_{s_{k-1}},\tau^1_{s_k}\bigr]\times \{1\}} e^{id\beta_k} -1 -id\beta_k \ud \nu}\\
 = e^{-\frac 12 \sum_{k=1}^N d\alpha_k^2 (t_k-t_{k-1})  +  \sum_{k=1}^N(e^{id\beta_k} -1 -id\beta_k)(s_k-s_{k-1})}
\end{multline*}
is a constant a.s., so by a stopping time argument and by \cite[II.4.16]{JS} we have that
\begin{align*}
\mathbb E e^{i(\sum_{k=1}^N\alpha_k W_{t_k} + \sum_{k=1}^N\beta_k \widetilde N_{s_k})} &= e^{-\frac 12 \sum_{k=1}^N d\alpha_k^2 (t_k-t_{k-1})  +  \sum_{k=1}^N(e^{id\beta_k} -1 -id\beta_k)(s_k-s_{k-1})}\\
&=  e^{-\frac 12 \sum_{k=1}^N d\alpha_k^2 (t_k-t_{k-1})} e^{ \sum_{k=1}^N(e^{id\beta_k} -1 -id\beta_k)(s_k-s_{k-1})}\\
&=\mathbb E e^{i \sum_{k=1}^N\alpha_k W_{t_k}} \mathbb E e^{\sum_{k=1}^N\beta_k \widetilde N_{s_k}} ,
\end{align*}
so $(W_{t_k})_{k=1}^N$ and $(\widetilde N_{s_k})_{k=1}^N$ are independent by \cite[Theorem II.12.4]{ShPr2e} as $\alpha_1, \ldots, \alpha_N$ and $ \beta_1, \ldots, \beta_N$ are arbitrary. As $t_1, \ldots, t_N$ and $s_1, \ldots, s_N$ are arbitrary, $W$ and $\widetilde N$ (hence, $W$ and $\tilde \mu$) are independent.
\end{proof}

Let us finally prove Theorem \ref{thm:condtrstepsnsidtrtothepledoneUMDcase}.

\begin{proof}[Proof of Theorem \ref{thm:condtrstepsnsidtrtothepledoneUMDcase}] First, by a stopping time argument and by Theorem \ref{thm:MNtangent==>MtauNtautangent+ifNCIINtauCII} we may assume that $\mathbb E \sup_{0\leq t\leq T} \|M_t\| <\infty$.
By Lemma \ref{lem:approxMbyMmthesameforDTM} and by a stopping time argument it is sufficient to show  Theorem \ref{thm:condtrstepsnsidtrtothepledoneUMDcase} for a martingale $M$ from a dense subset of martingales. In particular, by \cite[Subsection 7.5]{Y18BDG} we may assume that $X$ is finite dimensional. Let $M = M^c + M^q + M^a$ be the canonical decomposition. We will approximate each part of the canonical decomposition separately (we are allowed to do so by \eqref{eq:candecstrongLpestmiaed}). Our goal is to exploit Lemma~\ref{lem:tangdecofsumissumoftangdec} and approximate $M$ in such a way that $\widetilde M^n$ almost coincides with $\widetilde M$ for any $n$ big enough.

\smallskip

{\em Step 1: approximation of $M^c$.} By the proof of Theorem \ref{thm:DTMforcontcase} we may assume that there exists an invertible time-change $(\tau_s)_{s\geq 0}$ with an inverse time-change $(A_t)_{t\geq 0}$, a separable Hilbert space $H$, an elementary $(\mathcal F_{\tau_s})_{s\geq 0}$-predictable process $\Phi:\mathbb R_+ \times \Omega \to\mathcal L(H,X)$, and an $(\mathcal F_{\tau_s})_{s\geq 0}$-adapted cylindrical Brownian motion $W_H$ such that $M^c \circ \tau = \Phi \cdot W_H$, or, in other words, $M^c_t = \int_0^{A_t} \Phi \ud W_H$. By an approximation argument and the definition of a stochastic integral (see Subsection \ref{subsec:prelimstint}) we may assume that  $\Phi$ is elementary $(\mathcal F_{\tau_s})_{s\geq 0}$-predictable with respect to the mesh 
\begin{equation}\label{eq:meshforMCN0}
\{A_0, A_{T/N_0},\ldots, A_{T(N_0-1)/N_0}, A_{T}\}
\end{equation}
so $(\Phi\circ A)_t$ is $(\mathcal F_{T(k-1)/{N_0}})_{t\geq 0}$-measurable for any $\tfrac{k-1}{N_0}T \leq t <\tfrac{k}{N_0}T$ for some fixed big natural number $N_0$. Moreover, for $N_0$ big enough we can approximate $M^c$ by a continuous martingale (adapted to another filtration) $M^{c, N_0}$ in the following way.

By a stopping time argument and by the fact that $A$ is continuous we may assume that $A_T \leq C$ a.s\ for some fixed $C>0$. Let $\overline{W}_H$ be a cylindrical Brownian motion such that 
$$
\overline{W}_H|_{[C(k-1), C(k-1) + A_{Tk/{N_0}} - A_{T(k-1)/{N_0}}]}=W_H|_{[A_{T(k-1)/{N_0}},A_{Tk/{N_0}}]},\;\;\; k=1,\ldots, N_0.
$$
Set $A':\mathbb R_+ \times \Omega \to \mathbb R_+$ to be
$$
A'_t :=
\begin{cases}
 C(k-1) + A_{t} - A_{T(k-1)/{N_0}},\;\;\; &\tfrac{k-1}{N_0}T \leq t <\tfrac{k}{N_0}T,\;\; k=1,\ldots, N_0,\\
 C(N_0) + A_{T} - A_{T(N_0-1)/{N_0}},\;\;\; &t\geq T,
\end{cases}
$$
and set $\Phi':\mathbb R_+ \times \Omega \to \mathcal L(H, X)$ to be such that 
$$
\Phi'|_{(C(k-1), C(k-1) + A_{Tk/{N_0}} - A_{T(k-1)/{N_0}}]} = \Phi|_{(A_{T(k-1)/{N_0}},A_{Tk/{N_0}}]},\;\;\; k=1,\ldots, N_0,
$$
and zero otherwise. (Recall that we assumed $\Phi$ to be elementary predictable with respect to the mesh \eqref{eq:meshforMCN0}, so $\Phi$ is a.s.\ a constant on $(A_{T(k-1)/{N_0}},A_{Tk/{N_0}}]$, and thus $\Phi'$ is a.s.\ a constant on $(C(k-1), C(k-1) + A_{Tk/{N_0}} - A_{T(k-1)/{N_0}}]$). By the definition of $\Phi'$ and $\overline W_H$ we have that $M^c_t = \int_0^{A'_t} \Phi' \ud \overline W_H$ for any $t\geq 0$. Now let us construct the desired $M^{c, N_0}$ in the following way. Let $\overline A:\mathbb R_+ \times \Omega \to \mathbb R_+$ be defined by 
$$
\overline A_t :=
\begin{cases}
 C(k-1) + \mathbb E \bigl( A_{t} - A_{T(k-1)/{N_0}}\big|\mathcal F_{T(k-1)/{N_0}}\bigr),\;\; &\tfrac{k-1}{N_0}T \leq t <\tfrac{k}{N_0}T,\\
 &\quad\quad\quad\quad k=1,\ldots, N_0,\\
 C(N_0) +  \mathbb E \bigl( A_{T} - A_{T(N_0-1)/{N_0}}\big|\mathcal F_{T(k-1)/{N_0}}\bigr),\;\; &t\geq T,
\end{cases}
$$
and let $\overline {\Phi}:\mathbb R_+ \times \Omega \to \mathcal L(H, X)$ be
$$
\overline {\Phi}_s :=
\begin{cases}
\Phi'_{C(k-1)},\;\; &C(k-1) \leq s \leq C(k-1) + \mathbb E \bigl( A_{t} - A_{T(k-1)/{N_0}}\big|\mathcal F_{T(k-1)/{N_0}}\bigr),\\
 &\quad\quad\quad\quad\quad\quad\quad\quad\quad\quad\quad\quad\quad\quad\quad\quad\quad\quad\quad\quad\quad k=1,\ldots, N_0,\\
0,\;\; &\text{otherwise}.
\end{cases}
$$
Let $M^{c, N_0}_t:= \int_0^{\overline A_t}\overline {\Phi} \ud \overline W_H$, $t\geq 0$. Let us show that
\begin{equation}\label{eq:supMcN0-Mcto0ueue}
 \mathbb E \sup_{k=1}^{N_0}\|M^c_{Tk/N_0} - M^{c, N_0}_{Tk/N_0}\| \to 0,\;\;\; N_0 \to \infty.
\end{equation}
(Though the limit here cannot be considered literally as $\Phi \circ A$ is step with respect to the mesh \eqref{eq:meshforMCN0} for some fixed $N_0$, but not for any big $N_0$, we still can send $N_0$ to infinity, as for any $N \gg N_0$ we have that $\Phi \circ A$ is {\em almost} step with respect to the mesh $\{A_0, A_{T/N},\ldots, A_{T(N-1)/N}, A_{T}\}$ as for such big $N$ by the boundedness of $\Phi$ the process $\Phi$ can simply be assumed zero on $(A_{T(k-1)/N}, A_{Tk/N}]$ if one of the knots of \eqref{eq:meshforMCN0} turned out to be in $(A_{T(k-1)/N}, A_{Tk/N}]$. This does not change much the norm of the stochastic integral by \cite[Theorem 26.12]{Kal} and \eqref{eq:qvvarofstintwrtHcylbrmot}). 

Let $L^c_t:= \int_0^t \overline {\Phi} a^{c, N_0} \ud \overline W_H$, where 
$$
a^{c, N_0}(t) := \sum_{k=1}^{N_0} \mathbf 1_{[C(k-1) + \mathbb E ( A_{Tk/N_0} - A_{T(k-1)/{N_0}}|\mathcal F_{T(k-1)/{N_0}}), C(k-1) + A_{Tk/N_0} - A_{T(k-1)/{N_0}}]}(t),
$$
for any $t\geq 0$.
Then
\begin{align*}
  \mathbb E \sup_{t\geq 0} \|L^c_t\| & \stackrel{(*)}\lesssim_{X, \Phi} \mathbb E\Big( \sum_{k=1}^{N_0} \big|A_{Tk/N_0} - A_{T(k-1)/{N_0}} \\
  &\quad\quad\quad\quad\quad\quad\quad\quad\quad\quad- \mathbb E ( A_{Tk/N_0} - A_{T(k-1)/{N_0}}|\mathcal F_{T(k-1)/{N_0}})\big|\Big)^{1/2}\\
  &=\mathbb E\Big( \sum_{k=1}^{N_0} \bigl|A_{Tk/N_0}  - \mathbb E ( A_{Tk/N_0} |\mathcal F_{T(k-1)/{N_0}})\bigr|\Big)^{1/2}  \stackrel{(**)}\to 0,\;\;\; N_0 \to \infty,
\end{align*}
where $(*)$ holds by \eqref{eq:LpboundsforstochintwrtHcylbrm}, the fact that $X$ is finite dimensional, and the fact that $\Phi$ is elementary predictable, and hence bounded, while $(**)$ follows from Lemma \ref{lem:Lipshfuncapprox}.
Moreover, if for any $t\geq 0$ we set
\[
b^{c, N_0}(t) := \sum_{k=1}^{N_0} \mathbf 1_{[C(k-1), C(k-1) + \mathbb E ( A_{Tk/N_0} - A_{T(k-1)/{N_0}}|\mathcal F_{T(k-1)/{N_0}}) \wedge (A_{Tk/N_0} - A_{T(k-1)/{N_0}})]}(t),
\]
then
\begin{align*}
 \mathbb E \sup_{k=1}^{N_0}\bigl\|M^c_{Tk/N_0} &- (M^{c, N_0}_{Tk/N_0}-L^c_{TK/N_0})\bigr\| \leq \mathbb E \sup_{t\geq 0} \Bigl\|\int_0^t \Phi' (1-b^{c, N_0}) \ud \overline W_H \Bigr\| \\
 & \stackrel{(*)}\lesssim_{X, \Phi} \mathbb E \Big(\sum_{k=1}^{N_0} \big|A_{Tk/N_0} \wedge \mathbb E ( A_{Tk/N_0} |\mathcal F_{T(k-1)/{N_0}})\\
 &\quad\quad\quad\quad\quad\quad\quad\quad\quad\quad\quad\quad\quad\quad -\mathbb E ( A_{Tk/N_0} |\mathcal F_{T(k-1)/{N_0}} )\big|\Big)^{1/2}\\
 & \leq \mathbb E\Big( \sum_{k=1}^{N_0} \bigl|A_{Tk/N_0}  - \mathbb E ( A_{Tk/N_0} |\mathcal F_{T(k-1)/{N_0}})\bigr|\Big)^{1/2} \stackrel{(**)}\to 0,\;\;\; N_0 \to \infty,
\end{align*}
where $(*)$ follows from \eqref{eq:LpboundsforstochintwrtHcylbrm}, and $(**)$ follows from Lemma \ref{lem:Lipshfuncapprox}. Thus \eqref{eq:supMcN0-Mcto0ueue} follows.

\smallskip

{\em Step 2: approximation of $M^q$.} By Theorem \ref{thm:XisUMDiffMisintxwrtbarmuMvain} and by Lemma \ref{lem:approxMbyMmthesameforDTM} we may assume that $M^q = \int_{[0,\cdot]\times J} F \ud \bar{\mu}$ for some finite set $J = \{1,\ldots,n\}$, for some elementary predictable $F:\mathbb R_+ \times \Omega \times J \to X$, and for some quasi-left continuous random measure $\mu$ on $\mathbb R_+ \times J$ with a compensator $\nu$ so that $\nu([0, t] \times \{j\})<\infty$ a.s.\ for any $t\geq 0$ and $\nu(\mathbb R_+ \times \{j\}) = \infty$ a.s.\ for any $j\in J$ (the latter can be done e.g.\ similarly to Step 4 of the proof of Theorem \ref{thm:mumuCoxcomparable}). By a stopping time argument and by Lemma \ref{lem:approxMbyMmthesameforDTM} we may assume that $\nu([0, T] \times J)<C$ a.s.\ for the same constant $C$ as in Step 1 of the present proof. Analogously to Step 1 of the proof of Theorem \ref{thm:mumuCoxcomparable} we may construct an approximation $M^{q, N_0}$ of $M^q$ such that $\nu^{M^{q, N_0}}([0,t]\times \{j\})$ is $\mathcal F_{T({k-1})/{N_0}}$-measurable for any $\tfrac{k-1}{N_0}T \leq t < \tfrac{k}{N_0}T$. Indeed, as it was done in the proof of Theorem \ref{thm:mumuCoxcomparable}, we can approximate $M^q$ with a process $M^{q,N_0}$ which is constructed analogously to \eqref{eq:defofLapproxMq} in the following way. Let $N_0$ be so big that $M^q$ has the form (which is analogous to the form \eqref{eq:Mqformhalohalolastline})
\begin{equation*}
 M^q_t =  \sum_{k=1}^{N_0} \sum_{j=1}^n \bar{\eta}\Bigl(\bigl[C(k-1),  C(k-1)+ \nu^j[\tfrac{T(k-1)}{N_0} \wedge t, \tfrac{TK}{N_0} \wedge t)\bigr) \times \{j\}\Bigr) \xi_{k, j},\;\;\; t\geq 0,
\end{equation*}
where $\eta$ is some standard Poisson random measure on $\mathbb R_+ \times J$ with a compensator $\nu_{\eta}$, $\bar{\eta} := \eta - \nu_{\eta}$, $\xi_{k, j}$ is $\mathcal F_{{T(k-1)/N_0}}$-measurable and simple. For any $j=1,\ldots, n$ let $\nu^j$ be defined by \eqref{eq:defofnujddsd} and let 
\begin{multline*}
 M^{q, N_0}_t =  \sum_{k=1}^{N_0} \sum_{j=1}^n \bar{\eta}\Big(\big[C(k-1),\\
 C(k-1)
 +\mathbb E ( \nu^j[\tfrac{T(k-1)}{N_0} \wedge t, \tfrac{Tk}{N_0}\wedge t)| \mathcal F_{T(k-1)/N_0})\big) \times \{j\}\Big) \xi_{k, j},\;\;\; t\geq 0.
\end{multline*}
It remains to notice that 
\begin{equation}\label{eq:supMqN0-Mqto0ueuqqqe}
 \mathbb E \sup_{k=1}^{N_0}\|M^q_{Tk/N_0} - M^{q, N_0}_{Tk/N_0}\| \to 0,\;\;\; N_0 \to \infty.
\end{equation}
which follows analogously \eqref{eq:L1approxintFwrtmubar} and \eqref{eq:L2issmall<delta}. (Here we are allowed to send $N_0$ to infinity for the same reason as in Step 1).

\smallskip

{\em Step 3: approximation of $M^a$.} By Lemma \ref{lem:approxMbyMmthesameforDTM} and by \eqref{eq:approxfactforMmforPDwAJcase} we may assume that $M^a$ has its jumps in predictable stopping times $0\leq \sigma_1 < \ldots < \sigma_m \leq T$. Moreover, by Lemma \ref{lem:approxMbyMmthesameforDTM}, Lemma \ref{lem:predsttimetang}, and the proof of Proposition \ref{prop:pdwajjusttang} we may assume that $\Delta M_{\sigma_1},\ldots,\Delta M_{\sigma_m}$ are simple in $L^{\infty}(\Omega; X)$ and thus have values in a finite dimensional subspace of $X$. In addition assume that there exists $\delta>0$ such that for all differences we have that a.s.\
\begin{equation}\label{eq:sigmasaredeltafarfromeachoth}
\sigma_2 - \sigma_1, \ldots, \sigma_m - \sigma_{m-1}\geq \delta.
\end{equation}
We can assume so as we can approximate $M^q$ by the following martingale
\[
 M^{a, \delta}_t:= \Delta M_{\sigma_{1}} \mathbf 1_{[0, t]}(\sigma_{1}) + \sum_{\ell = 2}^m \Delta M_{\sigma_{\ell}} \mathbf 1_{[0, t]}(\sigma_{\ell}) \mathbf 1_{\sigma_{2} \geq \sigma_{1 } + \delta, \ldots, \sigma_{\ell} \geq \sigma_{\ell -1 } + \delta},\;\;\; t\geq 0,
\]
which is a martingale by Lemma \ref{lem:DeltaMtaugiventau-=0}, and set $\sigma_1' := \sigma_1$ and
\[
\sigma_{\ell}':=
\begin{cases}
\sigma_{\ell},\;\;\;\; & \text{if}\; \sigma_{2} \geq \sigma_{1 } + \delta, \ldots, \sigma_{\ell} \geq \sigma_{\ell -1 } + \delta,\\
\sigma_{\ell-1}' + \delta,\;\;\;\;&\text{otherwise},
\end{cases}
\]
for any $\ell = 2, \ldots, m$.
Indeed, as $M$ takes valued in a finite dimensional subspace of $X$, $X$ can be assumed finite dimensional, so by the finite dimensional version of \cite[Theorem 26.12]{Kal}
\[
 \mathbb E \sup_{t\geq 0}\|M^a_t -M^{a, \delta}_t\| \eqsim \mathbb E \Bigl( \sum_{\ell=1}^m \| \Delta M_{\sigma_{\ell}}\|^2 \mathbf 1_{\sigma_{2} \geq \sigma_{1 } + \delta, \ldots, \sigma_{\ell} \geq \sigma_{\ell -1 } + \delta} \Bigr)^{1/2} \to 0,\;\;\; \delta \to 0,
\]
where the latter holds by the dominated convergence theorem and by the fact that~a.s.\
\[
\max\{\sigma_2-\sigma_1, \ldots, \sigma_m - \sigma_{m-1}\}>0.
\]
Thus we can redefine $M^{a} := M^{a, \delta}$ and $\{\sigma_1, \ldots, \sigma_m\} := \{\sigma_1', \ldots, \sigma_m'\}$.

\smallskip

{\em Step 4: the desired convergence in distribution.}
For any $n\geq 1$ let $\widetilde M^n$ be defined by \eqref{eq:defofMnforapproxbydiscretedecoupl}. Let $f:\mathcal D([0,T], X) \to [-1,1]$ be a Lipschitz function such that $|f(x) - f(y)| \leq d_{J_1}(x, y)$ for any $x, y\in \mathcal D([0,T], X)$ (see Remark \ref{rem:donvindistrcanbecheckedforLip}). Our goal is to show that $\mathbb E f(\widetilde M^n)\to \mathbb E f (\widetilde M)$ as $n\to \infty$.

By Step 3 we may assume that there exists $m\geq 1$ and predictable stopping times $0\leq \sigma_1 < \ldots < \sigma_m \leq T$ such that $M^a$ has its jumps in $\{\sigma_1 , \ldots , \sigma_m \}$ satisfying \eqref{eq:sigmasaredeltafarfromeachoth}, and $\Delta M_{\sigma_1},\ldots,\Delta M_{\sigma_m}$ are simple in $L^{\infty}(\Omega; X)$.

Let $M^{c, N_0}$ and $M^{q, N_0}$ be as in Step 1 and 2. These processes are martingales with respect to different filtrations $\mathbb F^{c, N_0}$ and $\mathbb F^{q, N_0}$ (generated by time-changed $\overline W_H$ and $\eta$ respectively). Nonetheless, if we set 
\[
 \mathcal F^{N_0}_k := \sigma(\mathcal F_{Tk/N_0}, \mathcal F^{c, N_0}_{Tk/N_0}, \mathcal F^{q, N_0}_{Tk/N_0}),\;\;\; k=1,\ldots,N_0,
\]
then $(d^{N_0}_{k})_{k=1}^{N_0}$ defined by \eqref{eq:dedefofdknforJKW!!} and
\begin{align*}
 (e^{c,N_0}_{k})_{k=1}^{N_0} &:= \bigl(M^{c, N_0}_{Tk/N_0}  - M^{c, N_0}_{T{k-1}/N_0}\bigr)_{k=1}^{N_0},\;\;\; k=1,\ldots N_0\\
 (e^{q,N_0}_{k})_{k=1}^{N_0} &:= \bigl(M^{q, N_0}_{Tk/N_0}  - M^{q, N_0}_{T{k-1}/N_0}\bigr)_{k=1}^{N_0},\;\;\; k=1,\ldots N_0\\
 (e^{a,N_0}_{k})_{k=1}^{N_0} &:= \bigl(M^{a}_{Tk/N_0}  - M^{a}_{T{k-1}/N_0}\bigr)_{k=1}^{N_0},\;\;\; k=1,\ldots N_0
\end{align*}
are martingale difference sequences with respect to $\mathbb F^{N_0} := (\mathcal F^{N_0}_k)_{k=0}^{N_0}$ (since $\overline W_H|_{[0, Ck]}$ and $\eta|_{[0, Ck]}$ are independent of $\overline W_H|_{[Ck, \infty)} - \overline W_H(Ck)$ and $\eta|_{[Ck, \infty)} - \eta(Ck)$). Let $e^{N_0} := (e^{N_0}_k)_{k=1}^{N_0}$ be a martingale difference sequence defined by
\[
 e^{N_0}_k:=e^{c, N_0}_k + e^{q, N_0}_k+ e^{a, N_0}_k,\;\;\; k=1,\ldots, N_0.
\]
Fix $\eps>0$. Let us show that for $N_0$ big enough  we have that for a decoupled tangent martingale difference sequence $\tilde e^{N_0} := (\tilde e^{N_0}_k)_{k=1}^{N_0}$ the following holds true
\begin{equation}\label{eq:tildeMN0NN0dsa}
 |\mathbb E f(\widetilde M^{N_0}) - \mathbb E f(\widetilde N^{N_0}) | \lesssim_{X} \eps,
\end{equation}
\begin{equation}\label{eq:tildeMNN0dssdssa}
|\mathbb E f(\widetilde M) - \mathbb E f(\widetilde N^{N_0}) |\lesssim_{X} \eps,
\end{equation}
so $|\mathbb E f(\widetilde M) - \mathbb E f(\widetilde M^{N_0}) |\lesssim_{X} \eps$ by a triangle inequality, where
\begin{equation}\label{eq:defoftildeNN_)dsajdfs}
  \widetilde N^{N_0}:= \sum_{k: Tk/N_0 \leq t}\tilde e_k^{N_0},\;\;\; t\geq 0.
\end{equation}

First let us show \eqref{eq:tildeMN0NN0dsa}. As $f$ is Lipschitz and as $M^{N_0}$ and $N^{N_0}$ are pure jump with jumps at $\{T/N_0, \ldots, T(N_0-1)/N_0\}$, by \eqref{eq:skorleqsup} it is sufficient to show that  there exist a version of $\widetilde M^{N_0}$ and a version of the decoupled tangent martingale difference sequence $\tilde e^{N_0} := (\tilde e^{N_0}_k)_{k=1}^{N_0}$ satisfying
\begin{equation}\label{eq:tildeMN_0-tildeNN_0||||leqXepsps}
 \mathbb E \sup_{k\geq 1}^{N_0} \bigl\|\widetilde M^{N_0}_{Tk/N_0} - \widetilde N^{N_0}_{Tk/N_0}\bigr\| \lesssim_X \eps.
\end{equation}
Let $d^{c,N_0}$, $d^{q, N_0}$, and $d^{a, N_0}$ be defined analogously to \eqref{eq:dedefofdknforJKW!!} for martingales $M^c$, $M^q$, and $M^a$ respectively (note that $d^{a, N_0} = e^{a, N_0}$). Let $\tilde d^{c, N_0}$, $\tilde d^{q, N_0}$, and $\tilde d^{a, N_0}$ be the corresponding decoupled tangent martingales. Let $\tilde e^{c, N_0}$, $\tilde e^{q, N_0}$, and $\tilde e^{a, N_0}$ be decoupled tangent martingales to $e^{c, N_0}$, $e^{q, N_0}$, and $e^{a, N_0}$ respectively. 
Then by Lemma \ref{lem:tangdecofsumissumoftangdec}, Theorem \ref{thm:intromccnnll}, \eqref{eq:supMcN0-Mcto0ueue}, and \eqref{eq:supMqN0-Mqto0ueuqqqe} we may find such suitable versions of $\tilde d^{c, N_0}$, $\tilde d^{q, N_0}$, $\tilde e^{c, N_0}$, and $\tilde e^{q, N_0}$, and assume that $N_0$ is so big that
\begin{equation}\label{eq:tildeecN-0isalsmodstildedcN)_0}
 \mathbb E \sup_{k=1}^{N_0} \Bigl\|\sum_{\ell =1}^{k} \tilde e^{c, N_0}_{\ell} - \tilde d^{c, N_0}_{\ell}\Bigr\|\lesssim_X \mathbb E \sup_{k=1}^{N_0} \Bigl\|\sum_{\ell =1}^{k}e^{c, N_0}_{\ell} -  d^{c, N_0}_{\ell}\Bigr\| \leq \eps,
\end{equation}
\begin{equation}\label{eq:tildeeqN_)iosalmstildedqN_0}
 \mathbb E \sup_{k=1}^{N_0} \Bigl\|\sum_{\ell =1}^{k} \tilde e^{q, N_0}_{\ell} - \tilde d^{q, N_0}_{\ell}\Bigr\|\lesssim_X \mathbb E \sup_{k=1}^{N_0} \Bigl\|\sum_{\ell =1}^{k}e^{q, N_0}_{\ell} -  d^{q, N_0}_{\ell}\Bigr\| \leq \eps.
\end{equation}
It remains to show that for $N_0$ big enough there exist such versions of $\tilde d^{a, N_0}$ and $\tilde e^{a, N_0}$ that
\begin{equation}\label{eq:tildeeaN_0====tildedaN_0}
 \mathbb E \sup_{k=1}^{N_0} \Bigl\|\sum_{\ell =1}^{k} \tilde e^{a, N_0}_{\ell} - \tilde d^{a, N_0}_{\ell}\Bigr\| = 0,
\end{equation}
which follows directly from the fact that $d^{a, N_0} = e^{a, N_0}$. Thus \eqref{eq:tildeMN_0-tildeNN_0||||leqXepsps} follows from Lemma \ref{lem:tangdecofsumissumoftangdec} (so that we can set $\tilde d^{N_0} := \tilde d^{c, N_0} + \tilde d^{q,N_0} + \tilde d^{a, N_0}$ and $\tilde e^{N_0} := \tilde e^{c, N_0} + \tilde e^{q,N_0} + \tilde e^{a, N_0}$), \eqref{eq:tildeecN-0isalsmodstildedcN)_0}, \eqref{eq:tildeeqN_)iosalmstildedqN_0}, and \eqref{eq:tildeeaN_0====tildedaN_0}.

\smallskip

Let us show \eqref{eq:tildeMNN0dssdssa}. First note that for any $N_0$ big enough we have that
\begin{equation}\label{eq:EftildemisalmostEfLN_0}
 |\mathbb E f(\widetilde M) - \mathbb E f(L^{N_0}) |\lesssim_{X} \eps,
\end{equation}
where $L^{N_0}$ is a discretization of $\widetilde M$ defined by
\[
  L^{N_0}_t := \sum_{k:Tk/N_0 \leq t} \widetilde M_{Tk/N_0} - \widetilde M_{T(k-1)/N_0},\;\;\; t\geq 0.
\]
Indeed, it is sufficient to notice that a.s.\ as $f$ is Lipschitz
\begin{multline*}
  f(\widetilde M) -  f(L^{N_0}) \leq d_{J_1}(\widetilde M, L^{N_0}) \\
 \stackrel{(*)}\leq \sup_{t\in[0, T]}\bigl|\lambda^{N_0}(t) - t\bigr| + \sup_{k=1}^{N_0}\sup_{t=0}^{T/N_0}\bigl| \widetilde M_{Tk/N_0} - \widetilde M_{Tk/N_0-t} \bigr| \stackrel{(**)}\to 0,\;\;\; N_0 \to \infty,
\end{multline*}
where 
$$
\lambda^{N_0}(t) := \tfrac{T}{N_0}\lceil \tfrac{t N_0}{T}\rceil,\;\;\; t\in[0, T]
$$ 
(here $\lceil a \rceil$ is the smallest integer bigger than $a\in  \mathbb R $),
$(*)$ follows by \eqref{eq:skorleqsup}, and $(**)$ follows from the fact that $\widetilde M$ has c\`adl\`ag paths and the fact that 
$$
\sup_{t\in[0, T]}\bigl|\lambda^{N_0}(t) - t\bigr| \leq T/N_0.
$$
Hence \eqref{eq:EftildemisalmostEfLN_0} follows by the dominated convergence theorem and therefore by \eqref{eq:EftildemisalmostEfLN_0}, \eqref{eq:skorleqsup}, and the fact that $f$ is Lipschitz it is sufficient to show that for any $N_0$ big enough there exist such a version of $\tilde e^{N_0}$ that
\begin{equation}\label{eq:fNN_0-fLNPoidsmall}
  |\mathbb E f(\widetilde N^{N_0}) - \mathbb E f(L^{N_0}) | \leq \mathbb E \sup_{k=1}^{N_0}\|\widetilde N^{N_0}_{Tk/N_0} - \widetilde M_{Tk/N_0}\|\lesssim_{X} \eps.
\end{equation}
By Step 1 and 2, by Lemma \ref{lem:contmartandmuareindepaftertimehacge}, and by the fact that $\mathbb E (A_{Tk/N_0} - A_{T({k-1})/N_0}|\mathcal F_{T({k-1})/N_0})$ and 
$$
\mathbb E\bigl(\nu^j({T({k-1})/N_0},{Tk/N_0}]|\mathcal F_{T({k-1})/N_0}\bigr),\;\;\;j=1,\ldots, n,
$$ 
are $\mathcal F_{T({k-1})/N_0}$-measurable for any $k=1, \ldots, N_0$, we may assume that for any $k=1, \ldots, N_0$
$$
\tilde e^{c, N_0}_k = \int_{C(k-1)}^{C(k-1) + \mathbb E (A_{Tk/N_0} - A_{T({k-1})/N_0}|\mathcal F_{T({k-1})/N_0})} \Phi (T({k-1})/N_0) \ud \widetilde W_H,
$$ 
$$
\tilde e^{q, N_0}_k = \int_{\cup_{j}[C(k-1), C(k-1) + \mathbb E(\nu^j({T({k-1})/N_0},{Tk/N_0}]|\mathcal F_{T({k-1})/N_0})]\times \{j\}}  \ud \overline {\eta}_{\rm ind},
$$ 
where $\widetilde W_H$ and $\overline {\eta}_{\rm ind}$ are independent copies of $W_H$ and $\overline{\eta}$ respectively. Moreover, by Lemma \ref{lem:tangdecofsumissumoftangdec} and by the proofs of Theorem \ref{thm:DTMforcontcase} and \ref{thm:mumuCoxcomparable} we can set for any $1\leq k\leq N_0$
\[
 \widetilde M^c_{Tk/N_0} - \widetilde M^c_{T(k-1)/N_0} := \int_{C(k-1)}^{C(k-1) + A_{Tk/N_0} - A_{T({k-1})/N_0}} \Phi (T({k-1})/N_0) \ud \widetilde W_H,
\]
\[
  \widetilde M^q_{Tk/N_0} - \widetilde M^q_{T(k-1)/N_0} :=\int_{\cup_{j}[C(k-1), C(k-1) + \mathbb \nu^j({T({k-1})/N_0},{Tk/N_0}]]\times \{j\}}  \ud \overline {\eta}_{\rm ind}.
\]
Then similarly to \eqref{eq:supMcN0-Mcto0ueue} and \eqref{eq:supMqN0-Mqto0ueuqqqe} we have that for any $N_0$ big enough (here $\widetilde N^{c, N_0}$ and $\widetilde N^{q, N_0}$ are defined analogously to \eqref{eq:defoftildeNN_)dsajdfs})
\begin{equation}\label{eq:NctildeapproxMctildeindiscr}
  \mathbb E \sup_{k=1}^{N_0}\|\widetilde N^{c,N_0}_{Tk/N_0} - \widetilde M^c_{Tk/N_0}\|\lesssim_{X} \eps,
\end{equation}
\begin{equation}\label{eq:NqtildeapproxMqtildeindiscr}
  \mathbb E \sup_{k=1}^{N_0}\|\widetilde N^{q,N_0}_{Tk/N_0} - \widetilde M^q_{Tk/N_0}\|\lesssim_{X} \eps.
\end{equation}
It remains to show that for $N_0$ big enough there exists a version $\tilde e^{a, N_0}$ such that
\begin{equation}\label{eq:NatildeapproxMatildeindiscr}
  \mathbb E \sup_{k=1}^{N_0}\|\widetilde N^{a,N_0}_{Tk/N_0} - \widetilde M^a_{Tk/N_0}\| = 0.
\end{equation}
To this end notice that we can choose $T$ and $N_0$ big enough so that $T/N_0 \ll \delta$, and hence 
$$
d^{a, N_0}_k = \Delta M_{\sigma_{\ell}},\;\;\; k=1,\ldots, N_0, \;\; \ell \; \text{is such that}\; \sigma_{\ell}\in\bigl(T(k-1)/N_0, Tk/N_0\bigr]. 
$$
For each $k=1, \ldots, N_0$ set
\[
\rho_k:=
\begin{cases}
\sigma_{\ell}, \;\;\; &\text{if there exists}\; \ell\; \text{such that} \sigma_{\ell} \in \bigl(T(k-1)/N_0, Tk/N_0\bigr],\\
Tk/N_0,\;\;\; &\text{otherwise}.
\end{cases}
\]
Then $(\rho_k)_{k=1}^{N_0}$ are predictable stopping times and $d^{a, N_0}_k = e^{a, N_0}_k = \Delta M_{\rho_{k}}$ a.s.\ for any $k=1, \ldots, N_0$.
Let us show that we can set $(\tilde e^{a, N_0}_k)_{k=1}^{N_0}$ and  $(\Delta \widetilde M_{\rho_k})_{k=1}^{N_0}$ have the same distribution (and thus can be assumed to coincide). First notice that $(\Delta \widetilde M_{\rho_k})_{k=1}^{N_0}$ are independent given $\mathcal F$. Moreover, as by \cite[Lemma 25.2]{Kal} $\mathcal F_{T(k-1)/N_0} \subset \mathcal F_{\rho_k} \subset \mathcal F_{Tk/N_0}$ and as $e^{a, N_0}_k = \Delta M_{\rho_k}$  for any $k=1,\ldots, N_0$, for any Borel set $B \in X$ we have that
\begin{align*}
\mathbb P \bigl(\Delta  M_{\rho_k}\big|\mathcal F_{T(k-1)/N_0}\bigr)(B) &= \mathbb E\bigl (\mathbf 1_{B}(\Delta  M_{\rho_k})\big|\mathcal F_{T(k-1)/N_0}\bigr) \\
&=\mathbb E \Bigl(\mathbb E\bigl (\mathbf 1_{B}(\Delta  M_{\rho_k})\big|\mathcal F_{\rho_k}\bigr) \Big|\mathcal F_{T(k-1)/N_0}\Bigr)\\
& \stackrel{(*)}=\mathbb E \Bigl(\mathbb E\bigl (\mathbf 1_{B}(\Delta \widetilde M_{\rho_k})\big|\mathcal F_{\rho_k}\bigr) \Big|\mathcal F_{T(k-1)/N_0}\Bigr)\\
&=  \mathbb E\bigl (\mathbf 1_{B}(\Delta \widetilde  M_{\rho_k})\big|\mathcal F_{T(k-1)/N_0}\bigr)= \mathbb P \bigl(\Delta \widetilde M_{\rho_k}\big|\mathcal F_{T(k-1)/N_0}\bigr)(B),
\end{align*}
where $(*)$ follows from Lemma \ref{lem:predsttimetang}. Therefore by Subsection \ref{subsec:tanmartdiscase} we can set $\tilde e^{a, N_0}_k :=  \Delta\widetilde M_{\rho_k}$ for any $k=1,\ldots, N_0$, and \eqref{eq:NatildeapproxMatildeindiscr} follows.

 \eqref{eq:tildeMNN0dssdssa} follows from \eqref{eq:EftildemisalmostEfLN_0} and  \eqref{eq:fNN_0-fLNPoidsmall}, where \eqref{eq:fNN_0-fLNPoidsmall} holds by Lemma \ref{lem:tangdecofsumissumoftangdec},
\eqref{eq:NctildeapproxMctildeindiscr}, \eqref{eq:NqtildeapproxMqtildeindiscr}, \eqref{eq:NatildeapproxMatildeindiscr}, and the triangle inequality.
\end{proof}

\begin{remark}
What if we endow our Skorokhod space $\mathcal D([0, T]; X)$ with a different Skorokhod topology, say, with the $J_2$, $M_1$, or $M_2$ topology (see \cite{Wh02,Bj14} for the definition)? Then we will still have convergence in distribution in Theorem  \ref{thm:condtrstepsnsidtrtothepledoneUMDcase} as $J_1$ as a topology is stronger than any one of the aforementioned (see e.g.\ \cite[Subsection 11.5.2]{Wh02}).
\end{remark}

\begin{remark}\label{rem:supnormisbadforJKW}
 Note that Theorem \ref{thm:condtrstepsnsidtrtothepledoneUMDcase} does not hold if one changes the topology of the Skorokhod space to the one generated by the sup-norm. If this is the case, then $\mathcal D([0, T], X)$ becomes nonseparable. In particular, if we set $T=1$ and $X = \mathbb R$, and if we choose our martingale $M$ to be a compensated standard Poisson process, then $\widetilde M$ has the same distribution as $M$. Let $f:\mathcal D([0, T], \mathbb R) \to [-1,1]$ be a function such that 
 $$
 f(F) := \max_{t\in [0,T]\setminus \mathbb Q} |\Delta F(t)| \wedge 1,\;\;\; F \in \mathcal D([0, T], \mathbb R).
 $$ 
 Then $f$ is Lipschitz if $\mathcal D([0, T], \mathbb R)$ is endowed with the sup-norm, $\mathbb P(f(\widetilde M) = 1)>0$ as $\widetilde M$ has jumps of size $1$ and as the first jump time has the exponential distribution (recall that this distribution has a density on $\mathbb R_+$ thus $\mathbb Q$ has zero measure with this distribution), but $f(\widetilde M^n) = 0$ as $\widetilde M^n$ has jumps only in rational points, so $\lim_{n\to \infty}\mathbb E f(\widetilde M^n) \neq \mathbb E f(\widetilde M)$.
\end{remark}

\section{Exponential formula}\label{sec:LevyHinchinFormula}

In the present section we are going to provide another elementary characterization of local characteristics. Namely, we will be generalizing a L\'evy-Khinchin-type result for general martingales which is of the form \cite[Theorem II.2.47]{JS} (see also \cite{Jac83}). First recall that 
for a given predictable stopping time $\tau$ a process $V$ is called a {\em local martingale on $[0,\tau)$} if $V^{\tau_n}$ is a local martingale for any $n\geq 1$ and for any announcing sequence $(\tau_n)_{n\geq 1}$ of $\tau$ (see Subsection \ref{subsec:prelimsttimes} and \cite[Definition II.2.46]{JS}). Then the main result of the section is as follows.

\begin{theorem}\label{thm:LevyHinchin}
Let $X$ be a UMD Banach space, $M:\mathbb R_+ \times \Omega \to X$ be a local martingale. Let $V$ be a c\`adl\`ag bilinear form-valued predictable process starting in zero, $\nu$ be a predictable random measure on $\mathbb R_+ \times X$. Then $(V, \nu)$ are the local characteristics $([\![M^c]\!], \nu^M)$ of $M$ if and only if for any $x^*\in X^*$ there exists a process
\begin{equation}\label{eq:defofAtfpwoiqemLKd}
 A_t(x^*) := - \frac 12 V_t(x^*, x^*) + \int_{[0,t]\times X}(e^{i\langle x, x^*\rangle} - 1 - i\langle x, x^*\rangle) \ud \nu(s, x),\; t\geq 0,
\end{equation}
such that for a process $G(x^*):\mathbb R_+ \times \Omega \to \mathbb R$ defined by
\begin{equation}\label{eq:defofG(x^*)vasqf}
 G_t(x^*) = \mathcal E(A(x^*))_t := e^{A_t(x^*)}\Pi_{0\leq s\leq t}(1 + \Delta A_s(x^*)) e^{-\Delta A_s(x^*)},\;\;\; t\geq 0,
\end{equation}
and for a predictable stopping time $\tau_{G(x^*)} := \inf \{t\geq 0: G_t(x^*) = 0\} =  \inf \{t\geq 0: \Delta A_t(x^*) = -1\}$, we have that
\begin{equation}\label{eq:LHforgenmartstrangeform}
t\mapsto e^{i\langle M_t, x^*\rangle} / {G_t(x^*)},\;\;\; t\geq 0,
\end{equation}
is a local martingale on $[0,\tau_{G(x^*)})$.
\end{theorem}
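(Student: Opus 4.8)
The plan is to reduce the statement, separately for each fixed $x^* \in X^*$, to the one‑dimensional exponential characterization of Jacod and Shiryaev, \cite[Theorem II.2.47]{JS} (see also \cite[II.4.16]{JS}), applied to the real‑valued local martingale $N := \langle M, x^* \rangle$, and then, in the ``if'' direction, to reassemble the infinite‑dimensional objects $([\![M^c]\!], \nu^M)$ from the totality of their one‑dimensional projections. First I would make the usual stopping‑time reduction so that $\mathbb E\sup_{t\geq 0}\|M_t\|<\infty$; local martingales, canonical decompositions, covariation bilinear forms and compensators all behave consistently under stopping, so this costs nothing. Recall also that since $A(x^*)$ in \eqref{eq:defofAtfpwoiqemLKd} is complex‑valued of finite variation, the Dol\'eans--Dade exponential $G(x^*)=\mathcal E(A(x^*))$ of \eqref{eq:defofG(x^*)vasqf} and the stopping time $\tau_{G(x^*)}$ are well defined by \cite[Chapter I.4]{JS}.

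Fix $x^*\in X^*$ and put $N=\langle M,x^*\rangle$. Since $X$ is UMD, $M$ has the Meyer--Yoeurp decomposition $M=M^c+M^d$ (Remark \ref{rem:MYdecBanach}, Theorem \ref{thm:candecXvalued}), and by Remark \ref{rem:candec==>candecforanyx*} together with $\langle M^c,x^*\rangle=\langle M,x^*\rangle^c$ a.s.\ this is exactly the Meyer--Yoeurp decomposition of $N$; hence $N^c=\langle M^c,x^*\rangle$ and $[N^c]_t=[\langle M^c,x^*\rangle]_t=[\![M^c]\!]_t(x^*,x^*)$ a.s.\ by Definition \ref{def:prelimdefcovbilform} and Remark \ref{rem:ifUMDthencovbilform}. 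From $\Delta N_t=\langle\Delta M_t,x^*\rangle$ one gets that the jump measure $\mu^N$ on $\mathbb R_+\times\mathbb R$ is the image of $\mu^M$ under $(t,x)\mapsto(t,\langle x,x^*\rangle)$ in the sense that $\int F\,d\mu^N=\int F(t,\cdot,\langle x,x^*\rangle)\,d\mu^M$ for predictable $F$ vanishing on $\{y=0\}$, and by uniqueness of the compensator (\cite[Theorem II.1.8]{JS}), i.e.\ Lemma \ref{lem:lintransfofmart-->transfofnuM}, the same holds for $\nu^N$ and $\nu^M$. Since $e^{iuy}-1-iuy$ vanishes at $y=0$, this identifies, whenever $(V,\nu)=([\![M^c]\!],\nu^M)$, the process $A_t(ux^*)$ with the exponent $-\tfrac12 u^2[N^c]_t+\int_{[0,t]\times\mathbb R}(e^{iuy}-1-iuy)\,d\nu^N(s,y)$ attached to $N$ at frequency $u\in\mathbb R$, with truncation function $h(y)=y$, which is admissible because $N$ is a real‑valued local martingale, so $\int(y^2\wedge|y|)\,d\nu^N$ is locally finite; together with $|e^{iy}-1-iy|\leq\tfrac12 y^2\wedge 2|y|$ this also proves the existence assertion for $A_t(x^*)$ in the forward direction.

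Now \cite[Theorem II.2.47]{JS} gives, for the real‑valued local martingale $N$: $e^{iuN}/\mathcal E(\psi^u)$ is a local martingale on the relevant stochastic interval if and only if $\psi^u$ is built from $[N^c]$ and $\nu^N$ as above, and, conversely, the pair $([N^c],\nu^N)$ is the unique one for which this holds for all $u\in\mathbb R$. Taking $u=1$ this immediately yields the ``only if'' direction: if $(V,\nu)=([\![M^c]\!],\nu^M)$ then by the previous paragraph the exponent attached to $N$ at frequency $1$ is exactly $A(x^*)$, so $e^{i\langle M,x^*\rangle}/G(x^*)$ is a local martingale on $[0,\tau_{G(x^*)})$, which is \eqref{eq:LHforgenmartstrangeform}; as $x^*$ was arbitrary we are done. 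For the ``if'' direction, assume \eqref{eq:LHforgenmartstrangeform} holds for every $x^*$; then, applying it at $ux^*$ for all $u\in\mathbb R$ (equivalently, \eqref{eq:LHforgenmartstrangeform} for the functionals $ux^*$) and invoking the uniqueness part of \cite[Theorem II.2.47]{JS} for $N=\langle M,x^*\rangle$, we obtain $V_t(x^*,x^*)=[\langle M,x^*\rangle^c]_t=[\![M^c]\!]_t(x^*,x^*)$ a.s.\ for every $x^*$, and that the image of $\nu$ under $(t,x)\mapsto(t,\langle x,x^*\rangle)$ agrees a.s.\ with $\nu^N$, i.e.\ with the image of $\nu^M$. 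It then remains to reconstruct $([\![M^c]\!],\nu^M)$ from these one‑dimensional identities, which I would carry out exactly as in the proof of Theorem \ref{thm:tangiffwtangUMDcase}: by the Pettis measurability theorem we may assume $X$ separable, hence $X^*$ separable (UMD $\Rightarrow$ reflexive); polarization over $\mathbb R$ turns equality of quadratic forms into equality of bilinear forms on each fixed pair, and agreement on a countable weak$^*$‑dense subset together with Lemma \ref{lem:[[M]]isweka*consda} forces $V_t=[\![M^c]\!]_t$ a.s.\ for each $t$, hence for all $t$ by right‑continuity; for the random measures, fix Borel $A\subset\mathbb R_+$ and a cylinder $B\subset X$ with $\mathbf 1_B(x)=\mathbf 1_{\widetilde B}(\langle x,x_1^*\rangle,\dots,\langle x,x_m^*\rangle)$, and with $Px:=(\langle x,x_1^*\rangle,\dots,\langle x,x_m^*\rangle)$ note that the compensator of the jump measure of the finite‑dimensional local martingale $PM$ equals both the image of $\nu^M$ and the image of $\nu$ under $P$ (by the one‑dimensional identities, Lemma \ref{lem:lintransfofmart-->transfofnuM}, and the Cram\'er--Wold theorem, \cite[Theorem 29.4]{Bill95}), so $\nu(A\times B)=\nu^{PM}(A\times\widetilde B)=\nu^M(A\times B)$ a.s.; since the cylinders generate the Borel $\sigma$‑algebra of the separable space $X$, a monotone‑class argument as in Theorem \ref{thm:tangiffwtangUMDcase} upgrades this to $\nu=\nu^M$ a.s.

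The Jacod--Shiryaev input functions here as a black box, so the genuine work — and the main obstacle — is the bookkeeping around it: fixing the $\{0\}$‑point conventions in the image‑measure identities correctly, checking admissibility of the truncation $h(y)=y$ via local integrability of $\int(y^2\wedge|y|)\,d\nu^N$, and, above all, the reconstruction of the infinite‑dimensional characteristics $([\![M^c]\!],\nu^M)$ from the totality of their one‑dimensional projections in the ``if'' direction; this last step is the one that really uses separability, polarization, Lemma \ref{lem:[[M]]isweka*consda}, and the cylinder/Cram\'er--Wold argument, and it is essentially a repetition of the mechanism already developed for Theorem \ref{thm:tangiffwtangUMDcase}.
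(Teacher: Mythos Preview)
Your proposal is correct and follows essentially the same route as the paper: for each fixed $x^*$ you reduce to the real-valued local martingale $\langle M,x^*\rangle$ via Definition \ref{def:prelimdefcovbilform} and Lemma \ref{lem:lintransfofmart-->transfofnuM}, invoke \cite[Theorem II.2.47]{JS} (the paper also cites II.2.49 for the uniqueness direction), and then in the ``if'' direction reconstruct the infinite-dimensional characteristics from their one-dimensional projections exactly as in the proof of Theorem \ref{thm:tangiffwtangUMDcase}. The paper packages the admissibility of the truncation $h(y)=y$ into a separate Lemma \ref{lem:nuisweaklygood}, whereas you state it inline, but this is the same argument.
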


Why is Theorem \ref{thm:LevyHinchin} connected to the L\'evy-Khinchin formula? Assume for a moment that $M$ is quasi-left continuous. Then $\nu^M$ is non-atomic in time, so $A$ does not have jumps and $G(x^*) = e^{A(x^*)}$ for any $x^*\in X^*$, and thus by Theorem \ref{thm:LevyHinchin} we have that $\tau_{G(x^*)} = \infty$, hence 
\begin{multline*}
t\mapsto e^{i\langle M_t, x^*\rangle} /e^{A_t(x^*)} \\
= e^{i\langle M_t, x^*\rangle} /e^{- \frac 12 [\![M^c]\!]_t(x^*, x^*) + \int_{[0,t]\times X}(e^{i\langle x, x^*\rangle} - 1 - i\langle x, x^*\rangle) \ud \nu^M(s, x)},\;\;\; t\geq 0
\end{multline*}
 is a local martingale. Furthermore, if $M$ additionally has independent increments and if $M_0=0$, then by Corollary \ref{cor:GrigcharformartwithIIXgeneral} $([\![M^c]\!], \nu^M)$ are deterministic, hence $A_t$ is deterministic, so we have that $e^{A_t(x^*)}$ is deterministic for any $t\geq 1$ and for any $x^*\in X^*$, and as $e^{i\langle M_t, x^*\rangle}$ is integrable and uniformly bounded, we have that $t\mapsto e^{i\langle M_t, x^*\rangle}/e^{A_t(x^*)}$ is a martingale, so for any $t\geq 0$ and $x^*\in X^*$
 \begin{equation}\label{eq:LKhformdorqlcmarindawinfi}
   \mathbb E e^{i\langle M_t, x^*\rangle} = e^{- \frac 12 [\![M^c]\!]_t(x^*, x^*) + \int_{[0,t]\times X}(e^{i\langle x, x^*\rangle} - 1 - i\langle x, x^*\rangle) \ud \nu^M(s, x)},
 \end{equation}
 which is the L\'evy-Khinchin formula (see e.g.\ \cite{JS,Sato}).
 
 \smallskip
 
 Let us shortly recall to the reader the idea of the proof of Theorem \ref{thm:LevyHinchin} in the real-valued setting (for the full proof we refer the reader to \cite[\S II.2d]{JS}). We start with proving  the ``only if'' part, i.e.\ first we show that \eqref{eq:LHforgenmartstrangeform} is a local martingale given the corresponding local characteristics. Fix a local martingale $M:\mathbb R_+\times \Omega \to \mathbb R$ with the local characteristics $([M^c], \nu^M)$ and fix $u\in \mathbb R$. Then by It\^o's formula \cite[Theorem 26.7]{Kal} for any $t\geq 0$ we have that
 \begin{equation}\label{eq:Itosfornforeiumfrq}
  \begin{split}
     e^{iuM_t} = 1 &+ iu \int_0^t e^{iuM_{s-}} \ud M_s - \frac{1}{2} u^2 \int_0^t e^{iuM_{s-}} \ud [M^c]_s\\
     &+ \sum_{0\leq s\leq t} \Delta  e^{iuM_s} - iu e^{iuM_{s-}} \Delta M_s.
  \end{split}
 \end{equation}
Note that 
\begin{equation}\label{eq:deltaeifwmeiooviamuM}
 \sum_{0\leq s\leq t} \Delta  e^{iuM_s} = \int_{\mathbb R_+ \times R} e^{iuM_{s-}}(e^{iux}-1) \ud \mu^M(s, x)
\end{equation}
and 
\begin{equation}\label{iuweqwfdeltaMiicsjcsviamyuM}
 \sum_{0\leq s\leq t} iu e^{iuM_{s-}} \Delta M_s =  \int_{\mathbb R_+ \times R}iu e^{iuM_{s-}}x \ud \mu^M(s, x),
\end{equation}
so as $\nu^M$ is a compensator of $\mu^M$, \eqref{eq:Itosfornforeiumfrq}, \eqref{eq:deltaeifwmeiooviamuM}, and \eqref{iuweqwfdeltaMiicsjcsviamyuM} yield that
\[
 e^{iuM_t} + \frac{1}{2} u^2 \int_0^t e^{iuM_{s-}} \ud [M^c]_s -  \int_{\mathbb R_+ \times R} (e^{iux} - 1- iux )e^{iuM_{s-}} \ud \nu^M(s, x),\;\;\; t\geq 0,
\]
is a local martingale. Denote this local martingale by $N$ and note that 
\begin{equation}\label{eq:N=Y-Y_cdotA}
 N_t = e^{iuM_t}- \int_0^t e^{iuM_{s-}} \ud A_s,\;\;\; t\geq 0,
\end{equation}
where $A = A(u)$ is defined by \eqref{eq:defofAtfpwoiqemLKd}. For simplicity also denote $Y_t:= e^{iuM_t}$ and $G_t := G_t(u)$ for any $t\geq 0$. By a stopping time argument and thanks to the fact that $G$ defined by \eqref{eq:defofG(x^*)vasqf} is predictable, c\`adl\`ag, and starts in 1, we may assume that it almost never vanishes, so $\tau_{G(u)} = \infty$, and thus we need to show that $t \mapsto Y_t/G_t$ is a local martingale in $t\geq 0$. It\^o's formula \cite[Theorem 26.7]{Kal} yields that for any $t\geq 0$
\begin{equation*}
 \begin{split}
   \ud \frac{G_t}{Y_t} &= \frac{1}{G_{t-}}\ud Y_t-\frac{Y_{t-}}{G_{t-}^2} \ud G_t + \Delta \frac{Y_t}{G_t} - \frac{1}{G_{t-}}\Delta Y_t + \frac{Y_{t-}}{G_{t-}^2} \Delta G_t\\
   &\stackrel{(i)}=\frac{1}{G_{t-}}\ud N_t  + \Delta \frac{Y_t}{G_t} - \frac{1}{G_{t-}}\Delta Y_t + \frac{Y_{t-}}{G_{t-}^2} \Delta G_t\\
      &\stackrel{(ii)}=\frac{1}{G_{t-}}\ud N_t  + \frac{1}{G_{t-}(1+A_t)}\Delta N_t \Delta A_t
 \end{split} 
\end{equation*}
where $(i)$ follows from \eqref{eq:N=Y-Y_cdotA} (so $\ud Y_t = \ud N_t + Y_{t-} \ud A_t$) and the fact that $\ud G_t = G_{t-}\ud A_t$ due to the definition of a stochastic exponential (see \cite[Theorem I.4.61]{JS}), while $(ii)$ follows for the similar reason and a direct computation (one needs to simply set $\Delta G_t = G_{t-} \Delta A_t$ and $\Delta Y_t = \Delta N_t + Y_{t-} \Delta A_t$). Therefore we have that $Y_t/G_t = 1 + \int_0^t \tfrac{1}{G_{s-}}\ud N_s + \sum_{0\leq s \leq t} \tfrac{1}{G_{s-}(1+A_s)}\Delta N_s \Delta A_s$, which is a local martingale as $N$ is a local martingale and thanks to \cite[Proposition I.4.49]{JS}. 

The ``if'' part of Theorem \ref{thm:LevyHinchin} follows from the fact that as $Z := Y/G$ is a local martingale, and as $G$ is predictable, $Y = GZ = G_{-} \cdot Z + Z_{-}\cdot G + [Z, G]=G_{-} \cdot Z + Y_{-}\cdot A + [Z, G]$ (here we omit the difficulty with complex values, one should consider real and imaginary parts separately), where $G_{-} \cdot Z$ is a local martingale as a stochastic integral w.r.t.\ a local martingale and $[Z, G]$ is a local martingale by \cite[Proposition I.4.49]{JS}, therefore $Y -  Y_{-}\cdot A $ is a local martingale, and hence $A$ has the form \eqref{eq:defofAtfpwoiqemLKd} with the desired $V$ and $\nu$ due to the uniqueness of a predictable finite variation compensator (see \cite[Theorem I.3.18]{JS}) and the fact that $Y_{-}$ has absolute value 1, so $A$ can be reconstructed from $Y_{-}\cdot A $.

 \smallskip
 
Let us finally prove Theorem \ref{thm:LevyHinchin}. For the proof we will need the following lemma.

\begin{lemma}\label{lem:nuisweaklygood}
Let $M:\mathbb R_+ \times \Omega \to \mathbb R$ be a local martingale. Then we have that for any $t\geq 0$ a.s.
\begin{equation}\label{eq:nuisweaklylevy}
\int_{[0, t]\times \mathbb R} |x|^2\wedge|x|\ud \nu^M(x, s), \int_{[0, t]\times \mathbb R} |x|^2\wedge|x|\ud \mu^M(x, s)<\infty.
\end{equation}
\end{lemma}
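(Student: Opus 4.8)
The plan is to reduce both assertions to the elementary bound on the jumps of $M$ and then to transfer the estimate from $\mu^M$ to its compensator $\nu^M$ by a localization argument, the key point being that along a suitable sequence of stopping times the $\mu^M$-integral is not merely a.s.\ finite but is bounded by a \emph{deterministic} constant.

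First I would treat $\mu^M$. Set $B_t := \int_{[0,t]\times\mathbb R}(|x|^2\wedge|x|)\ud\mu^M = \sum_{0\leq s\leq t}|\Delta M_s|^2\wedge|\Delta M_s|$, and split the sum according to whether $|\Delta M_s|\leq 1$ or $|\Delta M_s|>1$. On $\{|\Delta M_s|\leq 1\}$ one has $|\Delta M_s|^2\wedge|\Delta M_s|=|\Delta M_s|^2$, so this part is at most $\sum_{0<s\leq t}|\Delta M_s|^2\leq[M]_t$, which is finite a.s.\ (recall from Subsection~\ref{subsec:quadrvar} and \cite{JS,Kal} that the quadratic variation exists, is a.s.\ finite, and satisfies $[M]_t=[M^c]_t+\sum_{0<s\leq t}|\Delta M_s|^2$). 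On $\{|\Delta M_s|>1\}$ the sum has only finitely many terms a.s., since a c\`adl\`ag path on the compact interval $[0,t]$ has only finitely many jumps of size exceeding $1$; hence this part is finite a.s.\ as well. This gives $\int_{[0,t]\times\mathbb R}(|x|^2\wedge|x|)\ud\mu^M<\infty$ a.s.

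For $\nu^M$ I would introduce the stopping times $\rho_n:=\inf\{t\geq 0:[M]_t\geq n\}$ for $n\geq 1$. Since $[M]$ is nondecreasing, c\`adl\`ag and a.s.\ finite-valued, one has $\rho_n\nearrow\infty$ a.s.\ and $[M]_{\rho_n-}\leq n$. Now $B_{\rho_n-}$ is bounded by a deterministic constant: letting $t\nearrow\rho_n$ in the previous paragraph gives $\sum_{s<\rho_n}|\Delta M_s|^2\leq[M]_{\rho_n-}\leq n$, so at most $n$ of these jumps have size $>1$, and each of them obeys $|\Delta M_s|^2=\Delta[M]_s\leq[M]_s<n$, i.e.\ $|\Delta M_s|<\sqrt n$; therefore $\sum_{s<\rho_n}|\Delta M_s|\mathbf 1_{|\Delta M_s|>1}\leq n\sqrt n$, while $\sum_{s<\rho_n}|\Delta M_s|^2\mathbf 1_{|\Delta M_s|\leq1}\leq n$, and so $B_{\rho_n-}\leq n+n^{3/2}$. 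The map $(s,\omega,x)\mapsto(|x|^2\wedge|x|)\mathbf 1_{[\![0,\rho_n[\![}(s,\omega)$ is nonnegative and $\widetilde{\mathcal P}$-measurable (the stochastic interval $[\![0,\rho_n[\![$ is predictable), so the defining property \eqref{eq:defofcompofrandsamdsaer} of the compensator yields
\[
\mathbb E\int_{[0,\rho_n)\times\mathbb R}(|x|^2\wedge|x|)\ud\nu^M=\mathbb E\int_{[0,\rho_n)\times\mathbb R}(|x|^2\wedge|x|)\ud\mu^M=\mathbb E B_{\rho_n-}\leq n+n^{3/2}<\infty .
\]
Hence $\int_{[0,\rho_n)\times\mathbb R}(|x|^2\wedge|x|)\ud\nu^M<\infty$ a.s., and since $\rho_n\nearrow\infty$ a.s., for a.e.\ $\omega$ and every $t\geq 0$ we may pick $n$ with $\rho_n(\omega)>t$ to conclude $\int_{[0,t]\times\mathbb R}(|x|^2\wedge|x|)\ud\nu^M\leq\int_{[0,\rho_n)\times\mathbb R}(|x|^2\wedge|x|)\ud\nu^M<\infty$, which is \eqref{eq:nuisweaklylevy}.

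The only delicate point is precisely this passage from $\mu^M$ to $\nu^M$: applying \eqref{eq:defofcompofrandsamdsaer} directly would only equate two possibly infinite expectations, so one must localize so that the $\mu^M$-side has finite expectation, and the deterministic bound $B_{\rho_n-}\leq n+n^{3/2}$ (rather than mere a.s.\ finiteness) is exactly what makes the stopping times $\rho_n$ do the job; everything else is routine.
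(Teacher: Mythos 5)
Your reduction to the $\mu^M$-side and the deterministic bound $B_{\rho_n-}\leq n+n^{3/2}$ are both fine, but the step that transfers finiteness to $\nu^M$ has a genuine gap: the stochastic interval $[\![0,\rho_n[\![$ is \emph{not} predictable in general. For a stopping time $\tau$, the sets $[\![0,\tau]\!]$ and $]\!]\tau,\infty[\![$ are always predictable, but $[\![0,\tau[\![=[\![0,\tau]\!]\setminus[\![\tau]\!]$ is predictable if and only if $\tau$ is a \emph{predictable} stopping time. Your $\rho_n=\inf\{t:[M]_t\geq n\}$ need not be predictable: if $M$ is a compensated Poisson process, $[M]$ is the Poisson process itself and $\rho_n$ is its $n$-th jump time, which is totally inaccessible. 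So you cannot apply \eqref{eq:defofcompofrandsamdsaer} to $(|x|^2\wedge|x|)\mathbf 1_{[\![0,\rho_n[\![}$.

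Replacing $[\![0,\rho_n[\![$ by the always-predictable $[\![0,\rho_n]\!]$ does not immediately repair things: then $B_{\rho_n}=B_{\rho_n-}+\Delta B_{\rho_n}$, and the jump $\Delta B_{\rho_n}\leq|\Delta M_{\rho_n}|$ destroys your clean deterministic bound (there is no control on the size of the jump that makes $[M]$ cross level $n$). To close the gap you would have to combine $\rho_n$ with a localizing sequence $(\tau_k)$ for $M$ (e.g.\ one as in Remark \ref{rem:locmartislocallyL1DR+X} with $\mathbb E\sup_t\|M^{\tau_k}_t\|<\infty$), so that $|\Delta M_{\rho_n\wedge\tau_k}|\leq 2\sup_t|M^{\tau_k}_t|$ is integrable. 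The paper sidesteps all of this by localizing $M$ up front so that $\mathbb E\sup_t|M_t|<\infty$ \emph{and} (by stopping at $\tau_C:=\inf\{t:|\Delta M_t|>C\}$) there is a.s.\ at most one jump exceeding $C$; then the $\mathbf 1_{|x|\leq C}$-part of the integral has jumps bounded by $C^2$ and is locally integrable by the elementary hitting-time argument, while the $\mathbf 1_{|x|>C}$-part has finite expectation directly, and \eqref{eq:defofcompofrandsamdsaer} applies to each piece.
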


\begin{proof}
By a stopping time argument and by Doob's maximal inequality \eqref{eq:DoobsineqXBanach} we may assume that $\mathbb E \sup_{t\geq 0} |M_t| <\infty$ and that for some constant $C$ there is a.s.\ at most one jumps exceeding $C$ by the absolute value e.g.\ by setting $M := M^{\tau_C}$ where $\tau_C := \inf\{t\geq 0: |\Delta M_t|>C\}$. First let us show that
\begin{equation}\label{eq:MRvaluemarand|x|2CisintwermuMnuM}
\int_{[0, t]\times \mathbb R} |x|^2\mathbf 1_{|x|\leq C}\ud \nu^M(x, s), \int_{[0, t]\times \mathbb R} |x|^2\mathbf 1_{|x|\leq C}\ud \mu^M(x, s)<\infty,
\end{equation}
which follows from the fact that 
$$
t \mapsto A_t := \int_{[0, t]\times \mathbb R} |x|^2\mathbf 1_{|x|\leq C}\ud \mu^M(x, s) = \sum_{0\leq s\leq t} |\Delta M_s|^2 \leq [M]_t <\infty,
$$
and $A_t$ locally has the first moment (so \eqref{eq:MRvaluemarand|x|2CisintwermuMnuM} for $\nu^M$ follows from \eqref{eq:defofcompofrandsamdsaer}) as $A_t$ has jumps of at most value $C^2$.

Let us show that
\begin{equation}\label{eq:MRvaluemarand|x|1CsaisintwermuMnuM}
\int_{[0, t]\times \mathbb R} |x|\mathbf 1_{|x|> C}\ud \nu^M(x, s), \int_{[0, t]\times \mathbb R} |x|\mathbf 1_{|x|> C}\ud \mu^M(x, s)<\infty,
\end{equation}
which follows from our assumption on $M$, the fact that consequently 
\[
\mathbb E\int_{[0, t]\times \mathbb R} |x|\mathbf 1_{|x|> C}\ud \mu^M(x, s) = \mathbb E \sum_{0\leq s\leq t} |\Delta M_s| \mathbf 1_{|\Delta M_s| >C} \leq 2 \mathbb E\sup_{t\geq 0} |M_t| <\infty,
\]
and from \eqref{eq:defofcompofrandsamdsaer}. \eqref{eq:nuisweaklylevy} follows from \eqref{eq:MRvaluemarand|x|2CisintwermuMnuM} and \eqref{eq:MRvaluemarand|x|1CsaisintwermuMnuM}.
\end{proof}

\begin{proof}[Proof of Theorem \ref{thm:LevyHinchin}]
Let us start with the ``only if'' part. Let $(V, \nu) = ([\![ M^c]\!], \nu^M)$.
Notice that for any $x^*\in X^*$ by Taylor's formula
\[
e^{i\langle x, x^*\rangle} - 1 - i\langle x, x^*\rangle \approx \frac{1}{2}| \langle x, x^*\rangle|^2,\;\;\; x\in X,
\]
for small $\langle x, x^*\rangle$ and
\[
|e^{i\langle x, x^*\rangle} - 1 - i\langle x, x^*\rangle| \leq |\langle x, x^*\rangle| + 2 \lesssim  |\langle x, x^*\rangle|, \;\;\;x\in X,
\]
for big $\langle x, x^*\rangle$, so by Lemma \ref{lem:nuisweaklygood} and \ref{lem:lintransfofmart-->transfofnuM} the integral
\begin{equation}\label{eq:AfromLKHmakesence}
 \int_{[0,t]\times X}e^{i\langle x, x^*\rangle} - 1 - i\langle x, x^*\rangle \ud \nu^M(s, x),\;\;\; t\geq 0,
\end{equation}
is well defined. Consequently, $A$ is well defined.

Now let us show that \eqref{eq:LHforgenmartstrangeform} is a local martingale. As by Definition \ref{def:prelimdefcovbilform}  and by Remark \ref{rem:candec==>candecforanyx*} we have that a.s.
\[
 [\![ M^c]\!](x^*, x^*) = [\langle M^c, x^*\rangle]_t = [\langle M, x^*\rangle^c]_t,\;\;\; t\geq 0,
\]
and since by Lemma \ref{lem:lintransfofmart-->transfofnuM} we have that
\[
 \int_{[0,t]\times X}e^{i\langle x, x^*\rangle} - 1 - i\langle x, x^*\rangle \ud \nu^M(s, x) =  \int_{[0,t]\times \mathbb R}(e^{ir} - 1 - ir) \ud \nu^{\langle M, x^*\rangle}(s, r) ,\;\;\; t\geq 0,
\]
we can restrict ourselves to the one dimensional setting. Let $X = \mathbb R$, $M$ be one dimensional. We will be using the setting of \cite[Section II.2]{JS}. Let $h(r) = r$, $r\in \mathbb R$ (though $h$ is assumed to be bounded in \cite[Section II.2]{JS}, in our case both $\int(e^{ir} - 1 - ih(r)) \ud \nu^{\langle M, x^*\rangle}(s, r) $ and $\int h\ud \bar{\mu}^M$ are well defined by \eqref{eq:AfromLKHmakesence} and by Theorem \ref{thm:XisUMDiffMisintxwrtbarmuMvain}, so we can set $h$ to be as defined). Then we have that the representation of $M$ given by formula \cite[II.2.35]{JS} is
\begin{equation*}\label{eq:reprofMII235}
 M = M_0 + M^c + \int h \ud \bar{\mu}^M + \int_{[0, \cdot] \times \mathbb R} r - h(r) \ud \mu^M(s, r) + B = M^c + \int h \ud \bar{\mu}^M,
\end{equation*}
so by the uniqueness of this representation and by \cite[Theorem II.2.47]{JS} the desired follows.

\smallskip

Let us now show the ``if'' part. It is sufficient to show that $V(x^*, x^*) = [\![M^c]\!](x^*, x^*)$ a.s.\ for any $x^*\in X^*$, and analogously to the proof of Theorem \ref{thm:tangiffwtangUMDcase} it is sufficient to show that $\nu^{x^*}= \nu^{\langle M, x^*\rangle}$ for any $x^*\in X^*$, where $\nu^{x^*}$ is defined as a unique predictable random measure on $\mathbb R_+ \times \mathbb R$ such that for any elementary predictable $F:\mathbb R_+\times \Omega \times \mathbb R \to \mathbb R_+$ one has that a.s.\
\[
 \int_{\mathbb R_+ \times Y} F(s, \cdot, y) \ud \nu^{x^*}(s, \cdot, y) = \int_{\mathbb R_+ \times X} F(s, \cdot, \langle x, x^*\rangle) \ud \nu(s, \cdot, x).
\]
In order to construct such a random measure it is sufficient to set a.s.\
$$
 \int \mathbf 1_{A}(s, \cdot) \mathbf 1_{B}(y) \ud \nu^{x^*}(s, \cdot, y) :=  \int \mathbf 1_{A}(s, \cdot) \mathbf 1_{B}(\langle x, x^*\rangle) \ud \nu(s, \cdot, x)
$$
for any $A \in \mathcal P$ and $B\in \mathcal B(\mathbb R)$. To this end note that $V(x^*, x^*) = [\![M^c]\!](x^*, x^*)$ a.s.\ for any $x^*\in X^*$ as well as $\nu^{x^*} = \nu^{\langle M, x^*\rangle}$ a.s.\ by the fact that \eqref{eq:LHforgenmartstrangeform} is a local martingale and by \cite[Theorem II.2.47 and II.2.49]{JS}.
\end{proof}

\begin{remark}
 Note that $\tau_{G(\eps x^*)} \to \infty$ a.s.\ as $\eps\to 0$ for any $x^*\in X^*\setminus\{0\}$. Indeed, due to the definition of $\tau_{G(\eps x^*)}$ is it sufficient to show that 
 \[
 \int_{[0,t]\times X}|e^{i\langle x,\eps x^*\rangle} - 1 - i\langle x, \eps x^*\rangle| \ud \nu^M(s, x) \to 0,\;\;\; \eps \to 0,
 \]
which follows from the fact that for $\eps<1$
$$
|e^{i\langle x,\eps x^*\rangle} - 1 - i\langle x, \eps x^*\rangle| \lesssim  |\langle x,\eps x^*\rangle|^2 \wedge |\langle x,\eps x^*\rangle| \lesssim \eps \bigl(|\langle x,x^*\rangle|^2 \wedge |\langle x, x^*\rangle|\bigr)
$$ 
 and from Lemma \ref{lem:nuisweaklygood}. It remains unknown for the author whether $\tau_{G( x^*)}\to \infty$ as $x^*\to 0$.
\end{remark}

\begin{remark}\label{rem:LKformdoeinqaindwoe}
 Let $M$ have independent increments and let $M_0=0$. In this case one can show that analogously to \eqref{eq:LKhformdorqlcmarindawinfi}
 \begin{equation}\label{eq:LKhfoemrioforMIIinsgenfoqw}
  \mathbb E e^{i\langle M_t, x^*\rangle} = G_t(x^*) = e^{A_t(x^*)}\Pi_{0\leq s\leq t}(1 + \Delta A_s(x^*)) e^{-\Delta A_s(x^*)},
 \end{equation}
for any $t\geq 0$ and $x^*\in X^*$ even if $t>\tau_{G(x^*)}$. First note that $\tau_{G(x^*)}$ in this case is a deterministic stopping time as $G(x^*)$ is deterministic by Corollary \ref{cor:GrigcharformartwithIIXgeneral}. Second, in order to prove \eqref{eq:LKhfoemrioforMIIinsgenfoqw} for $t\geq \tau_{G(x^*)}$ (the case $t< \tau_{G(x^*)}$ follows from Theorem \ref{thm:LevyHinchin}) let $r = \tau_{G(x^*)}$. Then by the discussions in Subsection \ref{subsec:dectanPDwithAJ}
\begin{align*}
 0 = 1 + \Delta A_r(x^*) &= 1 + \int_{\mathbb R_+\times X} \mathbf 1_{\{r\}}(s) (e^{i\langle x, x^*\rangle} - 1 - i\langle x, x^*\rangle) \ud \nu^M(s, x) \\
&=1 + \mathbb E (e^{i\langle \Delta M_r, x^*\rangle} - 1 - i\langle \Delta M_r, x^*\rangle|\mathcal F_{r-})\\
&=\mathbb E e^{i\langle \Delta M_r, x^*\rangle} ,
\end{align*}
where the latter follows as $M$ has independent increments and as $M$ is a martingale (so $\mathbb E (\langle \Delta M_r, x^*\rangle|\mathcal F_{r-})=0$). Therefore $\mathbb E e^{i\langle \Delta M_r, x^*\rangle} =0$ and as $M$ has independent increments
\[
 \mathbb E e^{i\langle M_t, x^*\rangle} = \mathbb E e^{i\langle M_{r-}, x^*\rangle} \mathbb E e^{i\langle \Delta M_r, x^*\rangle} \mathbb E e^{i\langle (M_t-M_r), x^*\rangle}=0,
\]
so \eqref{eq:LKhfoemrioforMIIinsgenfoqw} is satisfied.
Note that thanks to the techniques from Section \ref{sec:indincrements} one can omit the UMD assumption.
\end{remark}

\section{Characteristic subordination and characteristic domination\\ of martingales}\label{sec:charsubandchardomofmart}

In the present section we prove basic $L^p$-estimates concerning characteristic subordination and characteristic domination.

\subsection{Characteristic subordination}\label{subsec:charsub}

Let $X$ be a Banach space, $M, N:\mathbb R_+ \times \Omega \to X$ be local martingales. Then $N$ is called to be {\em weakly differentially subordinate} to $M$ if $|\langle N_0, x^* \rangle| \leq |\langle M_0, x^* \rangle|$ a.s.\ and if $[\langle M, x^* \rangle]_t-[\langle N, x^* \rangle]_t$ is nondecreasing a.s.\ for any $x^*\in X^*$. Weak differential subordination was intensively studied during past two years in \cite{Y17FourUMD,Y17MartDec,OY18,Y18BDG,YPhD,Y19weakL1}. In particular, it was shown in \cite[Subsection 7.4]{Y18BDG} that if this is the case and $X$ is UMD then
\begin{equation}\label{eq:WDSstrongLpineqRG}
\mathbb E \sup_{t\geq 0}\|N_t\|^p \lesssim_{p, X} \mathbb E \sup_{t\geq 0}\|M_t\|^p.
\end{equation} 
In the present section we will consider a predictable analogue of weak differential subordination which exploits local characteristics -- {\em characteristic subordination}.

\begin{definition}\label{def:charsubdefinb}
Let $X$ be a Banach space, $M, N:\mathbb R_+ \times \Omega \to X$ be local martingales. Then $N$ is {\em characteristicly subordinate to} $M$ if for any $x^*\in X^*$ a.s.\
\begin{enumerate}[\rm (A)]
\item $|\langle N_0, x^* \rangle| \leq |\langle M_0, x^* \rangle|$,
\item $[\langle N, x^*\rangle^c]_t - [\langle N, x^*\rangle^c]_s \leq [\langle M, x^*\rangle^c]_t - [\langle M, x^*\rangle^c]_s$,
\item $\nu^{\langle N, x^*\rangle} \leq \nu^{\langle M, x^*\rangle}$.
\end{enumerate}
\end{definition}

(Recall that though $M$ and $N$ take their values in a general (not necessarily UMD) Banach space $X$ (so $M^c$ and $N^c$ may not have sense), $\langle M, x^*\rangle$ and $\langle N, x^*\rangle$ are real-valued martingales, so $\langle M, x^*\rangle^c$ and $\langle N, x^*\rangle^c$ exist, see Theorem \ref{thm:candecXvalued}).

Note that if $M$ and $N$ are continuous, then $N$ is characteristically subordinate to $M$ if and only if $N$ is weakly differentially subordinate to $M$. The following two propositions show that weak differential subordination coincides with characteristic subordination only in continuous case.

\begin{proposition}
Weak differential subordination does not imply characteristic subordination.
\end{proposition}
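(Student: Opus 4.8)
The plan is to exhibit a simple counterexample built from a compensated Poisson process. Take $X = \mathbb{R}$, let $\widetilde N$ be a standard compensated Poisson process (jumps of size $1$ at the Poisson times, compensator $\lambda_{\mathbb{R}_+}$), and set $M := \widetilde N$. For $N$ we want a process whose pointwise quadratic variation increments are dominated by those of $M$ (so that weak differential subordination holds) but whose jump measure has a strictly larger compensator on some predictable set. The natural choice is to put the jumps of $M$ into a smaller (in modulus) but more frequent form: for instance let $N$ be a stochastic integral of a deterministic predictable function against $\widetilde N$, say $N_t := \int_0^t a_s \, \mathrm{d}\widetilde N_s$ where $a$ is a deterministic $\{-1,0\}$- or $[0,1]$-valued function — actually the cleanest is to time-change. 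Concretely, first I would recall that for $N = \Phi \cdot \widetilde N$ with $|\Phi| \le 1$ one has $[N]_t - [N]_s = \sum_{s < u \le t} |\Phi_u|^2 |\Delta \widetilde N_u|^2 \le [M]_t - [M]_s$ a.s., so $[M] - [N]$ is nondecreasing and $N$ is weakly differentially subordinate to $M$ (and $N_0 = 0 = M_0$). So weak differential subordination is easy to arrange.

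The point of the counterexample is that weak differential subordination only controls the \emph{realized} quadratic variation, i.e. $[M^c]$ (which is zero here) and the values $\sum |\Delta N|^2 \le \sum |\Delta M|^2$, but says nothing about the predictable compensator $\nu^N$ versus $\nu^M$. So I would instead let $N$ have \emph{more frequent, smaller} jumps: e.g. run a compensated Poisson process $\widetilde N'$ with intensity $2$ and jumps of size $1/\sqrt{2}$, independent-looking from $M$ but we can even reuse the same noise. Then $[N]_t - [N]_s = \tfrac12 (N'_t\text{-count jumps in }(s,t])$ has the same expectation increments as $[M]$, but the compensator is $\nu^N((s,t]\times\{1/\sqrt 2\}) = 2(t-s)$, concentrated at the point $1/\sqrt 2 \notin \{1\}$, while $\nu^M$ is concentrated at $\{1\}$. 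Hence $\nu^N \le \nu^M$ \emph{fails}: take the Borel set $A = \{1/\sqrt 2\} \subset \mathbb R$ and any interval $I$; then $\nu^N(I \times A) = 2|I| > 0 = \nu^M(I \times A)$. Thus condition (C) of Definition \ref{def:charsubdefinb} is violated. The only care needed is to check that weak differential subordination genuinely holds: we need $[M]_t - [N]_t$ nondecreasing a.s., which requires a \emph{pathwise} comparison of the jump sets, not just in expectation. This forces me to couple $M$ and $N$ on the same probability space so that every jump time of $N$ is a jump time of $M$ — i.e. thin the Poisson process of $M$ rather than using an independent richer one.

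So the actual construction is: let $M = \widetilde N$ be standard compensated Poisson (intensity $1$, jump size $1$), let $(U_k)$ be an i.i.d. $\mathrm{Unif}\{-1,1\}$ sequence independent of $M$, enlarge the filtration, and set $N$ to have a jump of size $U_k/1 \cdot 1$... no — to make the compensator differ I should split each jump of $M$. The correct recipe: define $N$ by $N_t := \int_{[0,t]\times\mathbb R} \varphi(x)\,\mathrm{d}\bar\mu^M(s,x)$ where $\varphi(1) = 1$ — that gives $N = M$. To get a \emph{different compensator supported on a different point}, I must move the jump \emph{location}, which changes $\mu^N$ but keeps $|\Delta N| = |\Delta M|$ when $|\varphi(x)| = |x|$. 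Take $\varphi(x) = -x$: then $N = -M$, $[N] = [M]$, weak differential subordination holds with equality, $N_0 = M_0$, but $\mu^N$ is supported on $\{-1\}$ while $\mu^M$ on $\{1\}$, so $\nu^N(I \times \{-1\}) = |I| > 0 = \nu^M(I \times \{-1\})$ — condition (C) fails. That is the whole proof. I expect the main (only mild) obstacle to be writing the verification that $-M$ is \emph{not} characteristically subordinate to $M$ cleanly: one simply notes $\nu^{\langle -M, x^*\rangle} = \nu^{-\langle M,x^*\rangle}$ need not be $\le \nu^{\langle M, x^*\rangle}$, exhibiting $x^* = 1$ and the Borel set $\{-1\}$, and invoking Lemma \ref{lem:lintransfofmart-->transfofnuM} (or just the definition \eqref{eq:defofmuM}) to compute $\mu^{-M}$ from $\mu^M$; everything else is immediate from the definitions. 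I would then remark that $M$ itself is of course not continuous, which is essential since in the continuous case the two notions coincide.

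\begin{proof}
Take $X = \mathbb{R}$ and let $M := \widetilde N$ be a standard compensated Poisson process, i.e.\ $M$ has jumps of size $1$ occurring at the jump times of a rate-$1$ Poisson process and $\mu^M$ has compensator $\nu^M = \lambda_{\mathbb{R}_+} \otimes \delta_1$, where $\delta_1$ is the Dirac measure at $1 \in \mathbb{R}$. Set $N := -M$. Then $N_0 = 0 = M_0$ and for any $x^* \in \mathbb{R}$ we have $\langle N, x^* \rangle = -x^* M$ and $\langle M, x^* \rangle = x^* M$, so $[\langle N, x^* \rangle]_t = (x^*)^2 [M]_t = [\langle M, x^* \rangle]_t$ for all $t \geq 0$ a.s. In particular $[\langle M, x^* \rangle]_t - [\langle N, x^* \rangle]_t \equiv 0$ is nondecreasing and $|\langle N_0, x^* \rangle| = 0 = |\langle M_0, x^* \rangle|$, so $N$ is weakly differentially subordinate to $M$.

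On the other hand, $N = -M$ has jumps of size $-1$ at the same times as $M$, so by \eqref{eq:defofmuM} the jump measure $\mu^N$ is supported on $\mathbb{R}_+ \times \{-1\}$ and its compensator is $\nu^N = \lambda_{\mathbb{R}_+} \otimes \delta_{-1}$. Now fix $x^* = 1$ and the Borel set $A := \{-1\} \subset \mathbb{R}$ together with, say, the interval $I := (0, 1]$. Since $\langle N, x^* \rangle = -M$ and $\langle M, x^* \rangle = M$, we have $\nu^{\langle N, x^* \rangle} = \nu^{-M} = \lambda_{\mathbb{R}_+} \otimes \delta_{-1}$ while $\nu^{\langle M, x^* \rangle} = \nu^M = \lambda_{\mathbb{R}_+} \otimes \delta_1$, and therefore a.s.
\[
\nu^{\langle N, x^* \rangle}(I \times A) = \lambda_{\mathbb{R}_+}(I) \cdot \delta_{-1}(\{-1\}) = 1 > 0 = \lambda_{\mathbb{R}_+}(I) \cdot \delta_1(\{-1\}) = \nu^{\langle M, x^* \rangle}(I \times A).
\]
Hence condition $\mathrm{(C)}$ of Definition \ref{def:charsubdefinb} fails, so $N$ is \emph{not} characteristically subordinate to $M$. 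Thus weak differential subordination does not imply characteristic subordination. (Note that $M$ is purely discontinuous here, which is essential: if $M$ and $N$ are continuous, then characteristic subordination and weak differential subordination coincide.)
\end{proof}
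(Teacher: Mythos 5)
Your proof is correct, and your counterexample is different from the paper's in a way worth noting. The paper takes $N := \tfrac12 M$ with $M$ a compensated Poisson process stopped at time $1$; by Lemma \ref{lem:lintransfofmart-->transfofnuM}, $\nu^N(\cdot\times B)=\nu^M(\cdot\times 2B)$, so $\nu^N$ is a dyadic dilation of $\nu^M$ and the paper shows $\nu^N\nleq\nu^M$ by a contradiction: otherwise the decreasing chain $\nu^M(\cdot\times C_n)\geq\nu^M(\cdot\times C_{n+1})$ over the annuli $C_n=2^nB\setminus2^{n-1}B$ would force $\nu^M$ to be infinite, contradicting finiteness. Your choice $N:=-M$ is simpler on both counts: weak differential subordination holds trivially with \emph{equality} ($[M]\equiv[N]$ pathwise), and the failure of condition \textrm{(C)} is verified by directly exhibiting the set $\{-1\}$, where $\nu^{-M}$ has strictly more mass than $\nu^M$, with no finiteness argument needed. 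One small remark on what each example buys: the paper's $\tfrac12 M$ illustrates that even a strict pathwise contraction of jumps (which one might naively expect to be ``smaller in every sense'') fails to be characteristically subordinate, whereas your $-M$ is arguably a more striking illustration of the \emph{orthogonality} of the two notions, since the realized quadratic variations are literally identical while the jump compensators live on disjoint supports. Both are correct and self-contained; yours is the shorter route.
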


\begin{proof}
Let $M$ be a purely discontinuous nonzero martingale  with an a.s.\ finite measure $\nu^M$ (e.g.\ a compensated standard Poisson process which stopped at time point $1$) and $N = \tfrac 12 M$. Then $N$ is weakly differentially subordinate to $M$, but is not  characteristically subordinate. Indeed, by Lemma \ref{lem:lintransfofmart-->transfofnuM} we have that a.s.\ for any Borel $B\in X$
\begin{equation}\label{eq:nu12MB=nuM12B}
\nu^N([0,t]\times B) = \nu^M([0,t]\times 2 B),\;\;\; t\geq 0.
\end{equation}
Thus we have that a.s.\ $\nu^M([0,t]\times X \setminus\{0\})=\nu^N([0,t]\times X \setminus\{0\})$, and as $\nu^M$ is finite, by \eqref{eq:nu12MB=nuM12B} we have that $\nu^N \nleq \nu^M$ on a set of positive probability, as if we assume the converse, then for the sets $C_n = 2^nB \setminus 2^{n-1}B$, $-\infty<n<\infty$, where $B\in X$ is the unit ball,  we have that $C_n = 2C_{n-1}$, and hence by \eqref{eq:nu12MB=nuM12B} for any $-\infty<n<\infty$
\[
\nu^M([0,t]\times C_n) \geq \nu^N([0,t]\times C_n) = \nu^M([0,t]\times C_{n+1}),\;\;\; t\geq 0,
\]
so $\nu^M$ is infinite (as $C_n$'s are disjoint, $\cup C_n = X \setminus \{0\}$, and as $\nu^M \neq 0$, there exists $n$ and $t$ such that $ \nu^M([0,t]\times C_{n}) >0$), which contradicts our assumption.
\end{proof}

\begin{proposition}
Characteristic subordination does not imply weak differential subordination.
\end{proposition}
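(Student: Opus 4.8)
The plan is to exhibit a concrete example of a pair of local martingales $M$ and $N$ such that $N$ is characteristically subordinate to $M$ but $N$ fails to be weakly differentially subordinate to $M$. The obstruction to weak differential subordination should come from the quadratic variation of the \emph{continuous} part (or, in the discrete-jump picture, from the predictable compensators not controlling the actual jumps path\-by\-path). First I would recall the key asymmetry: characteristic subordination only compares the \emph{compensators} $\nu^{\langle N,x^*\rangle} \leq \nu^{\langle M,x^*\rangle}$ and the compensators of the continuous parts, i.e.\ predictable objects, whereas weak differential subordination is a pathwise requirement on $[\langle N,x^*\rangle]_t - [\langle N,x^*\rangle]_s \leq [\langle M,x^*\rangle]_t - [\langle M,x^*\rangle]_s$. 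So I want the compensators to coincide (hence trivially $\nu^{\langle N,x^*\rangle}\leq\nu^{\langle M,x^*\rangle}$) while the realized paths of the jump part of $N$ are, on an event of positive probability, strictly larger in quadratic variation than those of $M$.

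Concretely, I would take $X=\mathbb R$ and let $M$ be a compensated standard Poisson process stopped at time $1$, so that $\mu^M$ has unit jumps at the Poisson times and $\nu^M = \lambda|_{[0,1]}\otimes\delta_1$. For $N$ I would take another purely discontinuous quasi-left continuous martingale whose jump measure has the \emph{same} compensator $\nu^N=\nu^M$ but whose jumps are not deterministic in size: for instance, $N$ could be constructed so that $\mu^N$ places, at each atom time, a jump of size $\sqrt{2}$ or $0$ with conditional probabilities $1/2$ each — equivalently a compound-Poisson-type martingale with jump-size law $\frac12\delta_0+\frac12\delta_{\sqrt 2}$ scaled so the compensator matches. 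Since $[\langle N,x^*\rangle]_t=\sum_{s\le t}|\Delta N_s|^2$ and with positive probability $N$ has a jump of size $\sqrt2$ at some time where $M$ has no jump (the Poisson times of $M$ and the atom times of $N$ being, after matching compensators, a common deterministic-intensity point process but the jump \emph{sizes} independent), on that event $[\langle N,x^*\rangle]$ increases by $2$ while $[\langle M,x^*\rangle]$ does not increase at all, violating condition of weak differential subordination. I would verify via Lemma \ref{lem:lintransfofmart-->transfofnuM} (transformation of $\nu^M$ under linear maps) and the discussion in Subsection \ref{subsec:ranmeasures} that the compensator of $\mu^N$ is indeed $\le\nu^M$, and via Proposition \ref{prop:GigcharforPDQLC} that both martingales are genuinely characterized by $(0,\nu)$.

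An even cleaner route, avoiding any delicate construction of $N$, is to use a continuous companion: let $M$ be a one-dimensional Brownian motion on $[0,1]$ and let $N_t := \int_0^t a_s\,dW_s$ where $a$ is a predictable process with $|a_s|\le 1$ but such that $[N]_t-[N]_s = \int_s^t a_r^2\,dr$ is \emph{not} dominated by $t-s=[M]_t-[M]_s$ — wait, that is automatic, so that does not work. Instead I would keep $M$ continuous and take $N$ to be purely discontinuous with $\nu^N$ small enough that $\nu^N\le\nu^M$ holds vacuously (e.g.\ $\nu^M$ has no continuous-part constraint issue since $[\langle M,x^*\rangle^c]$ is large), while $N$ has an actual jump of fixed size on a positive-probability event where $[\langle M,x^*\rangle^c]$ does not instantaneously jump — again breaking the pathwise increment comparison. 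I expect the main obstacle to be bookkeeping: one must ensure simultaneously that (A), (B), (C) of Definition \ref{def:charsubdefinb} genuinely hold for the chosen $M,N$ — in particular that $\nu^{\langle N,x^*\rangle}\le\nu^{\langle M,x^*\rangle}$ as measures on $\mathbb R_+\times\mathbb R$ rather than merely in total mass — while producing a quantitative event of positive probability on which the weak-differential-subordination inequality $[\langle N,x^*\rangle]_t-[\langle N,x^*\rangle]_s\le[\langle M,x^*\rangle]_t-[\langle M,x^*\rangle]_s$ fails for some $0\le s\le t$ and some $x^*$. Since the statement is merely a non-implication, a single carefully checked example suffices, and I would present the compound-Poisson construction above as the cleanest instance, closing with the observation that $N$ is characteristically subordinate to $M$ by Proposition \ref{prop:GigcharforPDQLC} and Lemma \ref{lem:lintransfofmart-->transfofnuM}, yet $\mathbb P\big([\langle N,x^*\rangle]_1 > [\langle M,x^*\rangle]_1\big)>0$, ruling out weak differential subordination.
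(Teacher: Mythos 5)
Your core intuition is right — jumps occurring at different times for $M$ and $N$ make the pathwise increment comparison of quadratic variations fail even when the compensators coincide — and this is indeed the mechanism the paper uses. But your concrete construction does not work as stated, for a reason worth pinning down precisely. You write that $N$ is ``another purely discontinuous quasi-left continuous martingale whose jump measure has the same compensator $\nu^N=\nu^M$ but whose jumps are not deterministic in size,'' and then propose jump-size law $\tfrac12\delta_0+\tfrac12\delta_{\sqrt2}$. This is internally inconsistent: the compensator $\nu^N$ is a measure on $\mathbb R_+\times X$ which encodes both the arrival rate and the jump-size distribution, so $\nu^N=\lambda|_{[0,1]}\otimes\delta_1$ forces the jumps of $N$ to have size exactly $1$. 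If instead $N$ genuinely has jumps of size $\sqrt2$ occurring at rate $\tfrac12$, then $\nu^N=\tfrac12\lambda|_{[0,1]}\otimes\delta_{\sqrt2}$, which is not $\le\nu^M=\lambda|_{[0,1]}\otimes\delta_1$ as measures on $\mathbb R_+\times\mathbb R$ — the two are mutually singular in the jump variable, and condition (iii) of characteristic subordination fails. No rescaling repairs this, since you cannot simultaneously change the jump size and keep the same $\delta_1$ marginal.

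The fix is to drop the jump-size tinkering entirely: take $M$ and $N$ to be two \emph{independent} compensated standard Poisson processes. Then $\nu^M=\nu^N=\lambda\otimes\delta_1$, so they are characteristically subordinate to each other, and since a.s.\ $\Delta M_t\ne0\Rightarrow\Delta N_t=0$ and vice versa, on any interval where $N$ jumps and $M$ does not the increment of $[\langle N,x^*\rangle]$ is $1$ while that of $[\langle M,x^*\rangle]$ is $0$, so weak differential subordination fails. This is exactly the paper's proof; your proposal has the right idea but the example you chose violates the very hypothesis you want to retain.
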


\begin{proof}
It is sufficient to consider two independent compensated standard Poisson processes $\widetilde N_1$ and $\widetilde N_2$, as they are characteristically subordinate to each other (because they have the same local characteristics), but they are not weakly differentially subordinate to each other as they have jumps at different times a.s., i.e.\ $\Delta \widetilde N_1 \neq 0 \Rightarrow \Delta \widetilde N_2 = 0$ and $\Delta \widetilde N_2 \neq 0 \Rightarrow \Delta \widetilde N_1 = 0$ a.s.\ for any $t\geq 0$.
\end{proof}

\begin{remark}
What do the aforementioned examples demonstrate? These examples show that $N$ is weakly differentially subordinate to $M$ if $N$ {\em has smaller jumps than} $M$, and $N$ is characteristically subordinate to $M$ if $N$ {\em has the same jumps as} $M$ {\em but these jumps occur less often}.
\end{remark}

Let us now formulate the main theorem of the present section.

\begin{theorem}\label{thm:charsubforXvalued}
Let $X$ be a Banach space. Then $X$ is UMD if and only if for any $1\leq p<\infty$ and for any local martingales $M,N:\mathbb R_+\times \Omega \to X$ such that $N$ is characteristicly subordinate to $M$ one has that
\[
\mathbb E \sup_{t\geq 0}\|N_t\|^p \lesssim_{p, X}\mathbb E \sup_{t\geq 0}\|M_t\|^p.
\]
\end{theorem}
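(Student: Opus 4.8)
The plan is to prove both implications, with the hard direction resting on the canonical decomposition together with coupling arguments for the jump parts. First I would treat the \emph{``only if''} direction: assuming the displayed $L^p$-inequality holds for every characteristically subordinate pair (take $p=1$, say), it suffices by \cite[(1.7)]{Burk81} (cf.\ \eqref{eq:XisUMDiffEf*==Eg*}) to bound $\mathbb E\sup_n\|g_n\|$ by $\mathbb E\sup_n\|f_n\|$ for $\{0,1\}$-transforms $g_n-g_{n-1}=\eps_n(f_n-f_{n-1})$, $f_0=0$. I would embed these in continuous time as $M_t:=\sum_{n\leq t}(f_n-f_{n-1})$ and $N_t:=\sum_{n\leq t}(g_n-g_{n-1})$, purely discontinuous with jumps at natural points; then $N$ is characteristically subordinate to $M$, since (A) and (B) are trivial ($M_0=N_0=0$, $M^c=N^c=0$) and (C) holds because at each time $n$ the mass of $\nu^{\langle N,x^*\rangle}$ on $\{n\}\times(\mathbb R\setminus\{0\})$ equals $\mathbb P(\eps_n\langle f_n-f_{n-1},x^*\rangle\mid\mathcal F_{n-1})|_{\mathbb R\setminus\{0\}}$, which is $\mathbb P(\langle f_n-f_{n-1},x^*\rangle\mid\mathcal F_{n-1})|_{\mathbb R\setminus\{0\}}$ when $\eps_n=1$ and is $0$ when $\eps_n=0$, hence always dominated by $\nu^{\langle M,x^*\rangle}$. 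The assumed inequality for $(M,N)$ is the desired one, so $X$ is UMD.

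For the \emph{``if''} direction I would first reduce. Characteristic subordination is stable under stopping, so by Remark \ref{rem:locmartislocallyL1DR+X} we may assume $\mathbb E\sup_t\|M_t\|^p<\infty$; and since $AN$ is characteristically subordinate to $AM$ for every $A\in\mathcal L(X,\mathbb R^m)$, the Pettis theorem and the standard reduction to finite dimensions (see e.g.\ \cite[Subsection 7.5]{Y18BDG}) let us assume $X$ finite dimensional. By the canonical decomposition (Theorem \ref{thm:candecXvalued}) and \eqref{eq:candecstrongLpestmiaed} it then suffices to prove $\mathbb E\sup_t\|N^i_t\|^p\lesssim_{p,X}\mathbb E\sup_t\|M^i_t\|^p$ for $i\in\{c,q,a\}$, and one checks that $N^c$ is characteristically (equivalently weakly differentially) subordinate to $M^c$, while $\nu^{\langle N^q,x^*\rangle}\leq\nu^{\langle M^q,x^*\rangle}$ and $\nu^{\langle N^a,x^*\rangle}\leq\nu^{\langle M^a,x^*\rangle}$ for all $x^*$: indeed $[\langle N^c,x^*\rangle]=[\langle N,x^*\rangle^c]$, and splitting $\nu^{\langle N,x^*\rangle},\nu^{\langle M,x^*\rangle}$ into their non-atomic-in-time and atomic-in-time parts (Lemma \ref{lem:decompofmeasuresontwoparts}, Remark \ref{rem:whichwhatpartofrmmeans}, Proposition \ref{prop:Grigcharforcandechowdoesitlooklike}) and comparing on the countable union of the two time-supports and on its complement yields the claim. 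A measure-theoretic lemma then upgrades ``$\nu^{\langle N^q,x^*\rangle}\leq\nu^{\langle M^q,x^*\rangle}$ for all $x^*$'' to the $X$-valued domination $\nu^{N^q}(\cdot\times B)\leq\nu^{M^q}(\cdot\times B)$ for all Borel $B\subset X$, and similarly for the accessible parts.

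Granting this, the three cases go as follows. The continuous case is immediate from \eqref{eq:WDSstrongLpineqRG} (\cite[Subsection 7.4]{Y18BDG}), or from Proposition \ref{prop:domcontcase} since $[\![N^c]\!]_\infty\leq[\![M^c]\!]_\infty$. For the quasi-left continuous case, Theorem \ref{thm:mumuCoxcomparable} gives $\mathbb E\sup_t\|N^q_t\|^p\eqsim_{p,X}\mathbb E\|\int x\ud\bar\mu^{N^q}_{\rm Cox}\|^p$ and likewise for $M^q$; since $\nu^{M^q}-\nu^{N^q}\geq0$ is $\widetilde{\mathcal P}$-$\sigma$-finite I would build both Cox processes on a common enlargement with $\mu^{M^q}_{\rm Cox}=\mu^{N^q}_{\rm Cox}+\mu''$, $\mu''$ directed by $\nu^{M^q}-\nu^{N^q}$ and, given $\mathcal F$, independent of $\mu^{N^q}_{\rm Cox}$ (additivity of Poisson intensities for a.e.\ $\omega$), whence $\int x\ud\bar\mu^{N^q}_{\rm Cox}=\mathbb E(\int x\ud\bar\mu^{M^q}_{\rm Cox}\mid\sigma(\mathcal F,\mu^{N^q}_{\rm Cox}))$ and Jensen's inequality closes the estimate (integrability of $x$ against $\bar\mu^{N^q}_{\rm Cox}$ and finiteness of the left side following from this representation and Theorem \ref{thm:XisUMDiffMisintxwrtbarmuMvain}). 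For the accessible case, by Proposition \ref{prop:MmapproxMinLpforacccase} we may assume finitely many jumps, at predictable times $\tau_1<\dots<\tau_m$ after merging and reordering (as in the proof of Proposition \ref{prop:pdwajjusttang}); with $d_n:=\Delta M^a_{\tau_n}$, $e_n:=\Delta N^a_{\tau_n}$ we have $\mathbb P(e_n\mid\mathcal F_{\tau_n-})|_{X\setminus\{0\}}\leq\mathbb P(d_n\mid\mathcal F_{\tau_n-})|_{X\setminus\{0\}}$. Taking a decoupled tangent sequence $(\tilde d_n)$ of $(d_n)$ (conditionally independent given $\mathcal G:=\sigma((d_n))$) I would set $\tilde e_n:=\tilde d_n\mathbf 1_{A_n}$, where $A_n$ is, given $\tilde d_n$ and $\mathcal G$, an event of probability equal to the Radon--Nikod\'ym density of $\mathbb P(e_n\mid\mathcal F_{\tau_n-})|_{X\setminus\{0\}}$ with respect to $\mathbb P(d_n\mid\mathcal F_{\tau_n-})$ evaluated at $\tilde d_n$ (a density $\leq1$, by the domination), with independent auxiliary randomness across $n$. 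A direct computation shows $(\tilde e_n)$ has conditional law $\mathbb P(e_n\mid\mathcal F_{\tau_n-})$ and is conditionally independent given $\mathcal G$, hence a decoupled tangent sequence of $(e_n)$ (Remark \ref{rem:discrdefislikecontdafa}), and that $\mathbb E(\tilde d_n\mid\sigma(\mathcal G,(\tilde e_k)_k))=\tilde e_n$ (using that, given $\tilde e_n=0$ and $\mathcal G$, $\tilde d_n$ has the normalized law of $\mathbb P(d_n\mid\mathcal F_{\tau_n-})-\mathbb P(e_n\mid\mathcal F_{\tau_n-})|_{X\setminus\{0\}}$, which is mean zero). Thus $\sum_n\tilde e_n=\mathbb E(\sum_n\tilde d_n\mid\sigma(\mathcal G,(\tilde e_k)_k))$, and Jensen's inequality together with Theorem \ref{thm:intromccnnll} (applied to $(d_n)$ and to $(e_n)$) and \cite[Proposition 6.1.12]{HNVW2} gives $\mathbb E\sup_N\|\sum_{n\leq N}e_n\|^p\eqsim_{p,X}\mathbb E\|\sum_n\tilde e_n\|^p\leq\mathbb E\|\sum_n\tilde d_n\|^p\eqsim_{p,X}\mathbb E\sup_N\|\sum_{n\leq N}d_n\|^p$.

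The hard part will be the measure-theoretic lemma invoked in the reduction: that finite (resp.\ $\sigma$-finite) Borel measures $\mu,\nu$ on $\mathbb R^m$ with $\mu\circ(y^*)^{-1}\leq\nu\circ(y^*)^{-1}$ for every linear functional $y^*$ satisfy $\mu\leq\nu$ (equivalently, that a finite signed measure all of whose one-dimensional marginals are non-negative is itself non-negative). This is exactly what lets one pass from the per-functional hypothesis in condition (C) of Definition \ref{def:charsubdefinb} to the domination of the jump (Cox) measures needed to couple $N^q$ with $M^q$ and the discrete sequences $(\tilde e_n)$ with $(\tilde d_n)$; once it is in place, everything else reduces, via the canonical decomposition, to results already established in the paper.
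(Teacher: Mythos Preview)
Your overall architecture matches the paper's: reduce via the canonical decomposition and \eqref{eq:candecstrongLpestmiaed}, handle $M^c,N^c$ via weak differential subordination, and for the two jump pieces upgrade characteristic subordination to a pointwise domination of compensators so that a Poisson thinning (for $q$) or a conditional thinning of decoupled jumps (for $a$) can be run. Your ``only if'' direction, embedding $\{0,1\}$-transforms as purely discontinuous martingales with accessible jumps, is correct.

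The gap is exactly where you flagged it, and it is fatal to this route: the lemma ``\,$L_*\mu\le L_*\nu$ for every linear $L:\mathbb R^m\to\mathbb R$ implies $\mu\le\nu$\,'' is \emph{false} for $m\ge2$. Take $c\in(0,\tfrac12)$ and $f(x,y)=(x^2+y^2-c)\,e^{-(x^2+y^2)}$ on $\mathbb R^2$; then $f<0$ on $\{r<\sqrt c\}$, while its Radon transform (in every direction, by rotational symmetry) is $\sqrt\pi\,e^{-s^2}(s^2+\tfrac12-c)\ge0$. With $\mu:=f_-\cdot\mathrm{Leb}$ and $\nu:=f_+\cdot\mathrm{Leb}$ one has $L_*\mu\le L_*\nu$ for every linear $L$, yet $\mu\not\le\nu$. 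After multiplying $f$ by a small constant so that $|\mu|,|\nu|<1$, let $M$ (resp.\ $N$) be the purely discontinuous martingale that jumps at time $1$ according to $\nu$ (resp.\ $\mu$) and otherwise sits at the origin; the jumps are mean zero by rotational symmetry, and since both densities vanish on hyperplanes one checks $\nu^{\langle N,x^*\rangle}\le\nu^{\langle M,x^*\rangle}$ for every $x^*$, while $\nu^N\not\le\nu^M$. So condition (C) of Definition~\ref{def:charsubdefinb} does \emph{not} yield the $X$-level domination $\nu^{N^q}\le\nu^{M^q}$ (resp.\ $\mathbb P(\Delta N^a_{\tau_n}\in\cdot\mid\mathcal F_{\tau_n-})\le\mathbb P(\Delta M^a_{\tau_n}\in\cdot\mid\mathcal F_{\tau_n-})$) that your Cox-coupling and your Bernoulli-thinning of $(\tilde d_n)$ require; both of those steps genuinely need the full measure inequality, not just its one-dimensional shadows. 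The paper's proof cites only ``$N^q$ is characteristically subordinate to $M^q$'' when invoking Proposition~\ref{prop:charsubordforrandommeasures}, and likewise leans on Proposition~\ref{prop:indeprvcharsubimpliesLp} for the accessible part, so the same passage is left unjustified there as well; your proposal does not close it, because the bridging lemma you isolate is simply not true.
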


The proof of the theorem is based on the canonical decomposition (see Subsection \ref{subsec:candec}) and treating each case of the canonical decomposition separately. Therefore we will need the following propositions.

\begin{proposition}\label{prop:charsubordforrandommeasures}
Let $X$ be a UMD Banach space, $(J, \mathcal J)$ be a measurable space, $\mu$ and $\mu'$ be quasi-left continuous optional random measures on $\mathbb R_+ \times J$ such that for the corresponding compensators $\nu$ and $\nu'$ we have that $\nu' \leq \nu$ a.s. Then for any elementary predictable $F:\mathbb R_+ \times \Omega \times J \to X$ and for any $1\leq p<\infty$ we have that
\[
\mathbb E \sup_{t\geq 0} \Bigl\| \int_{[0,t]\times J} F \ud \bar {\mu}' \Bigr\|^p \lesssim_{p, X} \mathbb E \sup_{t\geq 0} \Bigl\| \int_{[0,t]\times J} F \ud \bar {\mu} \Bigr\|^p,
\]
where $\bar {\mu} := \mu - \nu$ and $\bar {\mu}' := \mu' - \nu'$.
\end{proposition}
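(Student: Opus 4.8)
The plan is to reduce the statement for general random measures to the Cox-process comparison of Theorem~\ref{thm:mumuCoxcomparable} combined with a pointwise conditional comparison of Poisson integrals with different intensities. First I would fix an elementary predictable $F$ and, as in the proof of Theorem~\ref{thm:mumuCoxcomparable}, reduce to the case where $J$ is finite, $X$ is finite dimensional, and $F$ has the step form \eqref{eq:formofFforJsmall}; moreover by the approximation/stopping arguments from Steps~3 and~4 of that proof I may assume $\nu$ (and hence $\nu'$, which is dominated by $\nu$ a.s.) is finite on $\mathbb R_+\times J$ and even that $\nu([s,t]\times J)\le t-s$ a.s. Then by Theorem~\ref{thm:mumuCoxcomparable} applied to both $\mu$ and $\mu'$,
\[
\mathbb E\sup_{t\ge0}\Bigl\|\int_{[0,t]\times J}F\ud\bar\mu\Bigr\|^p\eqsim_{p,X}\mathbb E\Bigl\|\int_{\mathbb R_+\times J}F\ud\bar\mu_{\rm Cox}\Bigr\|^p,\qquad
\mathbb E\sup_{t\ge0}\Bigl\|\int_{[0,t]\times J}F\ud\bar\mu'\Bigr\|^p\eqsim_{p,X}\mathbb E\Bigl\|\int_{\mathbb R_+\times J}F\ud\bar\mu'_{\rm Cox}\Bigr\|^p,
\]
so it suffices to prove
\[
\mathbb E\Bigl\|\int_{\mathbb R_+\times J}F\ud\bar\mu'_{\rm Cox}\Bigr\|^p\lesssim_{p,X}\mathbb E\Bigl\|\int_{\mathbb R_+\times J}F\ud\bar\mu_{\rm Cox}\Bigr\|^p,
\]
where $\mu_{\rm Cox}$ and $\mu'_{\rm Cox}$ are Cox processes directed by $\nu$ and $\nu'$ respectively. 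Since the distribution of a Cox process depends only on its directing measure, I am free to build them on a common enlarged space from the same driving Poisson data, so that for a.e.\ fixed $\omega$ we have $\mu_{\rm Cox}(\omega)$ Poisson with intensity $\nu(\omega)$ and $\mu'_{\rm Cox}(\omega)$ Poisson with intensity $\nu'(\omega)\le\nu(\omega)$.

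The core is then a deterministic (i.e.\ for fixed $\omega$) statement: if $\rho'\le\rho$ are two finite measures on a finite space $\mathbb R_+\times J$ and $g$ is a bounded deterministic function, then $\mathbb E\|\int g\ud(\widetilde N_{\rho'})\|^p\lesssim_{p,X}\mathbb E\|\int g\ud(\widetilde N_\rho)\|^p$ for a UMD space $X$. I would prove this by a thinning construction: realize $N_{\rho}$ and then, independently at each atom, keep it with probability $\tfrac{d\rho'}{d\rho}$ (using a predictable version of the Radon--Nikod\'ym derivative built from the Cox time-change as in Example~\ref{ex:CoxprocessfiniteJ}), which produces a Poisson random measure with intensity $\rho'$. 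At the level of the stochastic integral, after the deterministic time-changes that turn both $\widetilde N_\rho$ and $\widetilde N_{\rho'}$ into stochastic integrals with respect to a standard compensated Poisson random measure, this thinning corresponds exactly to multiplying the (elementary predictable) integrand by a $\{0,1\}$-valued predictable process; then the required inequality is precisely the kind of ``contraction by a $\{0,1\}$-valued predictable multiplier'' estimate for stochastic integrals with respect to a standard Poisson random measure that is contained in (the proof of) Proposition~\ref{prop:XUMDiffdecouplforPois} and its good-$\lambda$ argument, combined with the $L^p$-bounds \eqref{eq:CoxmeasiffUMD} of Theorem~\ref{thm:mumuCoxcomparable}. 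Integrating this conditional inequality over $\omega$ (using $\mathbb E\mathbb E_{\rm Cox}=\mathbb E$ as in Remark~\ref{rem:CoxidlikePoisson}) gives the desired bound.

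Finally I would assemble the pieces: the $\lesssim_{p,X}$ comparison of the two Cox integrals, together with the two two-sided equivalences from Theorem~\ref{thm:mumuCoxcomparable}, yields the claimed inequality for finite $\nu$; the reduction to finite $\nu$ and finite $J$ and finite-dimensional $X$ is handled exactly as in Steps~3 and~4 of the proof of Theorem~\ref{thm:mumuCoxcomparable} (truncating $\mu,\mu'$ to sets $A_m\in\widetilde{\mathcal P}$, adding an auxiliary independent Poisson piece $\eps\zeta$ whose contribution vanishes as $\eps\to0$, and passing to the limit via $\gamma$-dominated convergence \cite[Theorem 9.4.2]{HNVW2} and Burkholder--Davis--Gundy inequalities \cite{Y18BDG}), noting that the domination $\nu'\le\nu$ is preserved under all these operations. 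The main obstacle I anticipate is making the thinning/Radon--Nikod\'ym construction genuinely predictable and compatible with the time-change representation simultaneously for $\mu$ and $\mu'$ — that is, arranging one common driving standard Poisson random measure and two predictable time-changes (from $\nu$ and $\nu'$) so that the ``kept'' atoms of the $\nu'$-process are literally a sub-family of the atoms of the $\nu$-process; once this is set up correctly the reduction to a $\{0,1\}$-multiplier estimate, and hence to already-established results, is routine.
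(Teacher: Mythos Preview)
Your reduction to Cox processes via Theorem~\ref{thm:mumuCoxcomparable} is exactly the paper's first move, and the identification of the problem as a pointwise (for fixed $\omega$) comparison of compensated Poisson integrals with ordered intensities $\rho'\le\rho$ is also correct. The preliminary reductions to finite $J$, finite-dimensional $X$, and bounded $\nu$ are unnecessary here (Theorem~\ref{thm:mumuCoxcomparable} applies directly to elementary predictable $F$), but they are harmless.

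Where your argument diverges from the paper is the pointwise step, and there you are working much harder than needed while leaving a real gap. Your thinning construction is fine as a coupling of $N_{\rho'}$ and $N_\rho$, but the Bernoulli marks attached to the atoms of $N_\rho$ are \emph{not} predictable in the natural filtration of $N_\rho$: the mark $\xi_i$ is revealed at the jump time $T_i$, so it is $\mathcal F_{T_i}$-measurable but not $\mathcal F_{T_i-}$-measurable. The time-change to a standard Poisson does not cure this; you would have to further enlarge the filtration to make the marks visible in advance, and then justify that the UMD multiplier estimate still applies there. This is doable but delicate, and it is the obstacle you yourself flag.

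The paper sidesteps all of this with a one-line conditional-expectation argument. For fixed $\omega$ set $\nu''=\nu-\nu'$ and build $\mu'_{\rm Cox}$ and $\mu''_{\rm Cox}$ as \emph{independent} Poisson random measures with intensities $\nu'(\omega)$ and $\nu''(\omega)$; then $\mu'_{\rm Cox}+\mu''_{\rm Cox}$ has the same law as $\mu_{\rm Cox}$. Since $F(\omega)$ is deterministic, $\int F\ud\bar\mu'_{\rm Cox}$ and $\int F\ud\bar\mu''_{\rm Cox}$ are independent mean-zero, hence
\[
\int_{\mathbb R_+\times J} F \ud \bar\mu'_{\rm Cox}
=\mathbb E\Bigl(\int_{\mathbb R_+\times J} F \ud \bar\mu_{\rm Cox}\,\Big|\,\sigma(\mu'_{\rm Cox})\Bigr),
\]
and contractivity of conditional expectation on $L^p$ gives \eqref{eq:domCoxmeasurepointwise} with constant $1$. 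No predictability issue, no multiplier estimate, no UMD constant in this step. The only place the UMD property enters is through the two applications of Theorem~\ref{thm:mumuCoxcomparable} that you already have.
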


\begin{proof}
Let $\mu_{\rm Cox}$ and $\mu'_{\rm Cox}$ be Cox processes directed by $\nu$ and $\nu'$ respectively, and set $\bar{\mu}_{\rm Cox} := \mu_{\rm Cox} - \nu$, $\bar{\mu}'_{\rm Cox} := \mu'_{\rm Cox} - \nu'$. Then by Theorem \ref{thm:mumuCoxcomparable}
\[
\mathbb E \sup_{t\geq 0} \Bigl\| \int_{[0,t]\times J} F \ud \bar {\mu} \Bigr\|^p \eqsim_{p, X} \mathbb E  \mathbb E_{\rm Cox} \Bigl\| \int_{\mathbb R+\times J} F \ud \bar {\mu}_{\rm Cox}\Bigr\|^p,
\]
\[
\mathbb E \sup_{t\geq 0} \Bigl\| \int_{[0,t]\times J} F \ud \bar {\mu}' \Bigr\|^p \eqsim_{p, X} \mathbb E \mathbb E_{\rm Cox}  \Bigl\| \int_{\mathbb R_+\times J} F \ud \bar {\mu}'_{\rm Cox}\Bigr\|^p,
\]
where $ \mathbb E_{\rm Cox}$ is defined in Example \ref{ex:defofExiforrvxis}. Thus it is sufficient to show that for a.e.\ fixed $\omega\in \Omega$
\begin{equation}\label{eq:domCoxmeasurepointwise}
 \mathbb E_{\rm Cox} \Bigl\| \int_{\mathbb R_+\times J} F \ud \bar {\mu}'_{\rm Cox}\Bigr\|^p \leq  \mathbb E_{\rm Cox} \Bigl\| \int_{\mathbb R_+\times J} F \ud \bar {\mu}_{\rm Cox}\Bigr\|^p.
\end{equation}
Let us now show \eqref{eq:domCoxmeasurepointwise}. Fix $\omega\in \Omega$ such that $\nu'(\omega) \leq \nu(\omega)$. Then both $\mu_{\rm Cox}$ and $\mu_{\rm Cox}'$ are time changed Poisson. Let $\nu'' = \nu-\nu'$, $\mu_{\rm Cox}''$ be the Cox process directed by $\nu''$. As $\omega$ is fixed, $\mu'_{\rm Cox}$ and $\mu''_{\rm Cox}$ are independent and $\mu'_{\rm Cox} + \mu''_{\rm Cox}$ has the same compensator and hence coincides in distribution with $\mu_{\rm Cox}$ so we can set $\mu_{\rm Cox} = \mu'_{\rm Cox} + \mu''_{\rm Cox}$, $\bar{\mu}_{\rm Cox} =\bar{\mu}'_{\rm Cox} + \bar{\mu}''_{\rm Cox}$. Therefore \eqref{eq:domCoxmeasurepointwise} follows from the fact that for a fixed $\omega\in \Omega$ the process $F$ is deterministic, the fact that a  conditional expectation operator is a contraction (see \cite[Section 2.6]{HNVW1}), and
\[
\int_{\mathbb R_+\times J} F \ud \bar {\mu}'_{\rm Cox} = \mathbb E \Bigl(\int_{\mathbb R_+\times J} F \ud \bar {\mu}_{\rm Cox}\Big| \sigma(\bar {\mu}'_{\rm Cox})\Bigr),
\]
as $ \mu'_{\rm Cox}$ and $\mu''_{\rm Cox}$ are independent for any fixed $\omega\in \Omega$.
\end{proof}

We will also need the following proposition which is some sense extends stochastic domination inequality \cite[Theorem 2]{PrA97} (see also \cite{MSP01}).

\begin{proposition}\label{prop:indeprvcharsubimpliesLp}
Let $X$ be a Banach space, $(\xi_n)_{n=1}^N$ and $(\xi'_n)_{n=1}^N$ be independent $X$-valued symmetric random variable such that for any Borel set $A \subset X \setminus \{0\}$ and for any $n=1,\ldots,N$ one has that $\mathbb P(\xi'_n\in A) \leq \mathbb P(\xi_n\in A)$. Then for any convex symmetric function $\phi:X \to \mathbb R_+$ one has that
\begin{equation}\label{eq:compforsymindrvoneisbigge}
\mathbb E \phi\Bigl( \sum_{n=1}^N \xi'_n\Bigr) \leq \mathbb E \phi\Bigl( \sum_{n=1}^N \xi_n\Bigr).
\end{equation}
\end{proposition}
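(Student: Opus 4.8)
The plan is to reduce the statement to a one-variable replacement lemma and then iterate it. First I would observe that the hypothesis $\mathbb P(\xi'_n \in A) \le \mathbb P(\xi_n \in A)$ for all Borel $A \subset X\setminus\{0\}$ means precisely that the law of $\xi'_n$ is obtained from the law of $\xi_n$ by moving some mass to the atom $\{0\}$; equivalently, there is a $\{0,1\}$-valued random variable $\eta_n$ independent of $\xi_n$ with $\xi'_n \stackrel{d}{=} \eta_n \xi_n$. (Here one uses that both laws are symmetric, so the ``defect'' $\mathbb P(\xi_n \in \cdot) - \mathbb P(\xi'_n \in \cdot)$ is a symmetric subprobability measure on $X\setminus\{0\}$, and one can realize the thinning by an independent Bernoulli factor; this is where I would be careful about measurability, appealing to the Polishness of $X$ as in the proof of Corollary~\ref{cor:condindgivenRVsuffcond}.) Since $\phi$ is symmetric and convex, for each fixed value of the other coordinates the map $x \mapsto \phi(x + \text{const})$ is convex, and averaging over the sign of $\xi_n$ together with a conditional Jensen step gives
\[
\mathbb E_{\eta_n, \xi_n}\,\phi\Bigl(\textstyle\sum_{k\ne n}\xi_k + \eta_n \xi_n\Bigr) \le \mathbb E_{\xi_n}\,\phi\Bigl(\textstyle\sum_{k\ne n}\xi_k + \xi_n\Bigr),
\]
because conditioning on $\{0,1\}^{-1}$-information and using $\mathbb E(\eta_n\xi_n \mid \xi_n \text{ absent}) $ symmetric lets us dominate the thinned version by the full one (the contraction property of conditional expectation on the convex functional $\phi$, exactly as in Proposition~\ref{prop:charsubordforrandommeasures} and in the displayed computation with $\int F\ud\bar\mu'_{\rm Cox} = \mathbb E(\int F\ud\bar\mu_{\rm Cox}\mid \cdot)$).

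Next I would set this up as a clean hybrid/telescoping argument. Define the intermediate sums $S_m := \xi'_1 + \dots + \xi'_m + \xi_{m+1} + \dots + \xi_N$ for $m = 0, 1, \dots, N$, so $S_0 = \sum \xi_n$ and $S_N = \sum \xi'_n$. Using independence of all the variables involved, I would show $\mathbb E\phi(S_m) \le \mathbb E\phi(S_{m-1})$ by conditioning on $(\xi'_1,\dots,\xi'_{m-1},\xi_{m+1},\dots,\xi_N)$, which reduces precisely to the one-variable replacement statement above applied to the single pair $(\xi_m, \xi'_m)$ against the ``frozen'' shift vector. Chaining these $N$ inequalities yields $\mathbb E\phi(S_N) \le \mathbb E\phi(S_0)$, which is exactly \eqref{eq:compforsymindrvoneisbigge}. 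An alternative and perhaps even cleaner route, which I would mention, is to realize all $\xi'_n$ simultaneously as $\eta_n \xi_n$ with $(\eta_n)$ an independent Bernoulli sequence independent of $(\xi_n)$, then condition on $(\xi_n)_{n=1}^N$ and apply a single conditional Jensen inequality: given $(\xi_n)$, the vector $(\eta_1\xi_1,\dots,\eta_N\xi_N)$ has $(\xi_1,\dots,\xi_N)$ (up to signs) as a conditional mean after the further randomization by independent signs, and $\phi$ convex symmetric gives the bound in one line via $\mathbb E(\phi(\sum \eta_n \xi_n)\mid (\xi_n)) \le \phi(\sum \xi_n)$ after symmetrizing.

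The main obstacle I anticipate is the measurable construction of the thinning, i.e.\ producing a version of the statement ``$\xi'_n \stackrel{d}{=} \eta_n \xi_n$ for an independent Bernoulli $\eta_n$'' that is valid for arbitrary Banach-space-valued symmetric laws, not just discrete ones. The clean way is: let $\mu_n = \mathcal L(\xi_n)$ and $\mu_n' = \mathcal L(\xi_n')$; by hypothesis $\mu_n'|_{X\setminus\{0\}} \le \mu_n|_{X\setminus\{0\}}$, so $p_n(x) := \tfrac{\dd\mu_n'}{\dd\mu_n}(x) \in [0,1]$ exists $\mu_n$-a.e.\ on $X\setminus\{0\}$ by Radon--Nikod\'ym (the relevant $\sigma$-finiteness and the Polish structure of $X$, invoked as in \cite[Theorem 10.2.2]{Dud89}, make this legitimate), and one takes $\eta_n$ conditionally Bernoulli$(p_n(\xi_n))$ given $\xi_n$ on an enlarged probability space; then $\eta_n \xi_n$ has law $\mu_n'$ on $X\setminus\{0\}$ and the atom at $0$ absorbs the rest, using $\mu_n'(\{0\}) \ge \mu_n(\{0\})$. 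After that the convexity/symmetry manipulations are routine and amount to repeated use of conditional Jensen plus the fact that a conditional expectation is an $L^1$-contraction, exactly the toolkit already used in Propositions~\ref{prop:charsubordforrandommeasures} and in the various $\mathbb E(\cdot\mid\cdot)$ arguments earlier in the paper. One should also note that the symmetry of $\xi_n'$ is automatically compatible with this construction because $p_n$ can be chosen symmetric ($p_n(x) = p_n(-x)$), since both $\mu_n$ and $\mu_n'$ are symmetric.
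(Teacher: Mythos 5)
Your proposal follows essentially the same path as the paper's proof: both hinge on the thinning representation $\xi'_n \stackrel{d}{=} \eta_n\xi_n$ with a Bernoulli factor $\eta_n$ independent of $\xi_n$, on Rademacher symmetrization, and on a conditional Jensen step. The paper constructs $\eta_n$ after first reducing to finitely-valued $\xi_n,\xi'_n$ by approximation, whereas you go directly via the Radon--Nikod\'ym derivative $p_n=\dd\mu'_n/\dd\mu_n\in[0,1]$; that is a slightly cleaner and more general way to get the same object, and your remark that $p_n$ can be taken symmetric is correct, though not actually needed for the argument.

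Two points deserve caution. First, your telescoping route is fine, but the one-line justification you give for the single-variable inequality (``conditioning on $\{0,1\}^{-1}$-information\ldots'') is too vague to constitute a proof; what actually makes it work is that, with the $\xi_k$ for $k\ne n$ and with $\eta_n,\xi_n$ frozen, averaging over the Rademacher $r$ in $\phi(z+r\eta_n\xi_n)$ gives $\phi(z)$ when $\eta_n=0$, which is dominated by $\tfrac12(\phi(z+\xi_n)+\phi(z-\xi_n))$ by convexity. Second, the displayed inequality in your ``all at once'' variant,
$\mathbb E\bigl(\phi(\sum_n \eta_n\xi_n)\mid(\xi_n)\bigr)\le\phi(\sum_n\xi_n)$,
is false as written (take $\eta_n\equiv 0$): the comparison only holds after Rademacher randomization on \emph{both} sides, and the correct statement is that for each fixed realization of $(\xi_n)$ and $(\eta_n)$ one has $\sum_n r_n\eta_n\xi_n=\mathbb E\bigl(\sum_n r_n\xi_n\,\big|\,\sigma(r_n:\eta_n=1)\bigr)$, whence Jensen gives $\mathbb E_r\phi(\sum_n r_n\eta_n\xi_n)\le\mathbb E_r\phi(\sum_n r_n\xi_n)$, and then one averages over $(\xi_n),(\eta_n)$. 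This is exactly the structure of the paper's argument, which you gesture at (``after symmetrizing'') but do not make precise. With those two fixes, both of your routes are correct and match the paper's strategy.
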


\begin{proof}
As $(\xi_n)_{n=1}^N$ and $(\xi'_n)_{n=1}^N$ are symmetric, \eqref{eq:compforsymindrvoneisbigge} is equivalent to
\begin{equation}\label{eq:compforsymindrvoneisbiggewithrademach}
\mathbb E \phi\Bigl( \sum_{n=1}^N r_n \xi'_n\Bigr) \leq \mathbb E \phi\Bigl( \sum_{n=1}^N r_n\xi_n\Bigr),
\end{equation}
where $(r_n)_{n=1}^N$ is an independent sequence of i.i.d.\ Rademachers (see Definition \ref{def:ofRadRV}). Thus it is sufficient to show \eqref{eq:compforsymindrvoneisbiggewithrademach}. By an approximation argument we may assume that $(\xi_n)_{n=1}^N$ and $(\xi'_n)_{n=1}^N$ take finitely many values. By the assumption of the proposition we have that for any $n=1,\ldots,N$ the random variable $\xi'_n$ has the same distribution as $\eta_n(\xi_n)\xi_n$, where for any $x\in X\setminus \{0\}$ we define a random variable $\eta_n(x)$ on an independent probability space $(\widetilde {\Omega}, \widetilde {\mathcal F}, \widetilde{\mathbb P})$ to be such that $\eta_n(x) \in \{0, 1\}$ a.s.\ and $\mathbb E \eta_n(x) = \tfrac{\mathbb P(\xi'_n=x)}{\mathbb P(\xi_n=x)}$, where we set $\tfrac{0}{0}:=0$. Fix $\omega\in \Omega$ and $\widetilde \omega \in \widetilde {\Omega}$. Then in order to show \eqref{eq:compforsymindrvoneisbiggewithrademach} it remains to prove that
\[
\mathbb E_r \phi\Bigl( \sum_{n=1}^N r_n\eta_n(x)(\widetilde \omega) \xi_n(\omega)\Bigr) \leq \mathbb E \phi\Bigl( \sum_{n=1}^N r_n\xi_n(\omega)\Bigr),
\]
and as all the coefficients $(\eta_n(x)(\widetilde \omega))_{n=1}^N$ are either 0 or 1, the latter follows from Jensen's inequality (see \cite[Proposition 2.6.29]{HNVW1}) as $ \sum_{n=1}^N r_n\eta_n(x)(\widetilde \omega) \xi_n(\omega)$ is just a conditional expectation of $ \sum_{n=1}^N r_n\xi_n(\omega)$ given $\sigma (r_n\eta_n(x)(\widetilde \omega))_{n=1}^N$.
\end{proof}

\begin{proof}[Proof of Theorem \ref{thm:charsubforXvalued}]
Without loss of generality (as $\|N_0\| \leq \|M_0\|$, see \cite[Lemma 3.6]{Y17FourUMD}) we can set $M_0 = N_0 = 0$. Let $M = M^c + M^q + M^a$ and $N=N^c + N^q + N^a$ be the canonical decompositions. Note that due to Definition \ref{def:charsubdefinb} and  Subsection \ref{subsec:charandthecandecbe} we have that $\nu^{\langle N^q, x^*\rangle}\leq \nu^{\langle M^q, x^*\rangle}$ and $\nu^{\langle N^a, x^*\rangle}\leq \nu^{\langle M^a, x^*\rangle}$ a.s.\ for any $x^*\in X^*$, so $N^{i}$ is characteristically subordinate to $M^i$ for any $i\in \{c,q,a\}$. By \eqref{eq:candecstrongLpestmiaed} it is sufficient to show that
\begin{equation}\label{eq:chardomforcandeccontcase}
\mathbb E \sup_{t\geq 0} \|N^c_t\|^p \lesssim_{p, X}\mathbb E \sup_{t\geq 0} \|M^c_t\|^p,
\end{equation}
\begin{equation}\label{eq:chardomforcandecpdqlccase}
\mathbb E \sup_{t\geq 0} \|N^q_t\|^p \lesssim_{p, X}\mathbb E \sup_{t\geq 0} \|M^q_t\|^p,
\end{equation}
\begin{equation}\label{eq:chardomforcandecpdwajcase}
\mathbb E \sup_{t\geq 0} \|N^a_t\|^p \lesssim_{p, X}\mathbb E \sup_{t\geq 0} \|M^a_t\|^p.
\end{equation}
First of all, \eqref{eq:chardomforcandeccontcase} follows from \eqref{eq:WDSstrongLpineqRG}. \eqref{eq:chardomforcandecpdqlccase} follows from Proposition \ref{prop:charsubordforrandommeasures}, the fact that $M^q = \int x \ud \bar{\mu}^{M^q}$ and $N^q = \int x \ud \bar{\mu}^{N^q}$ by Theorem \ref{thm:XisUMDiffMisintxwrtbarmuMvain}, and the fact that $N^q$ is characteristically subordinate to $M^q$. Finally, \eqref{eq:chardomforcandecpdwajcase} follows from a standard approximation argument (see e.g.\ Proposition \ref{prop:MmapproxMinLpforacccase}), the fact that any purely discontinuous martingale with finitely many predictable jumps has a discrete representation (see e.g.\ the proof of Proposition \ref{prop:pdwajjusttang}), Proposition \ref{prop:indeprvcharsubimpliesLp}, the construction of a decoupled tangent martingale from the proof of Theorem \ref{thm:detangmartforMXVpdwithaccjumps}, the symmetrization argument (see the proof of Theorem \ref{thm:Ephifordiscretetantdandee}), and the fact that $N^a$ is characteristically subordinate to $M^a$. 
\end{proof}

\subsection{Characteristic domination}\label{subsec:chardom}

We can straighten characteristic subordination in the following way. Let $X$ be a Banach space, $M$ and $N$ be $X$-valued martingales. Then $N$ is {\em characteristically dominated} by $M$ if a.s.\ $|\langle N_0, x^* \rangle| \leq |\langle M_0, x^* \rangle|$ for any $x^*\in X^*$, $[\![N^c]\!]_{\infty} \leq [\![M^c]\!]_{\infty}$ and $\nu^N(\mathbb R_+ \times \cdot)\leq \nu^M(\mathbb R_+ \times \cdot)$. Then the following theorem 
holds true.

\begin{theorem}\label{thm:chardomqlccase}
 Let $X$ be a Banach space. Then $X$ has the UMD property if and only if for any (equivalently, for some) $1\leq p<\infty$ and for any $X$-valued quasi-left continuous local martingales such that $N$ is characteristically dominated by $M$ one has that
 \begin{equation}\label{eq:chardomqlccasemamamamari}
    \mathbb E \sup_{0\leq t <\infty} \|N_t\|^p \lesssim_{p, X}   \mathbb E \sup_{0\leq t <\infty} \|M_t\|^p.
 \end{equation}
\end{theorem}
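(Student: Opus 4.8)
The plan is to prove Theorem~\ref{thm:chardomqlccase} by reducing to the two elementary pieces of the quasi-left continuous canonical decomposition (the continuous part and the purely discontinuous quasi-left continuous part, since $M^a=N^a=0$ by quasi-left continuity), and in each piece replace ``domination at infinity'' by a decoupled tangent martingale with a \emph{deterministic, smaller} set of local characteristics for a.e.\ fixed $\omega$. The ``only if'' direction is immediate: characteristic domination with $N=M$ (respectively $N=\tfrac12 M$, or $N$ a weakly differentially subordinate martingale) degenerates into already-established characterizations of UMD, e.g.\ Proposition~\ref{prop:domcontcase} and Corollary~\ref{cor:LstropboundforPDQLStangmartUMD}, so \eqref{eq:chardomqlccasemamamamari} for all such $M,N$ forces $X$ to be UMD. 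So the work is the ``if'' part.

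Assume $X$ is UMD. By Theorem~\ref{thm:candecXvalued} and a stopping time argument (Remark~\ref{rem:locmartislocallyL1DR+X}) we may assume $M,N$ have finite strong $L^p$-moments, $M_0=N_0=0$, and write $M=M^c+M^q$, $N=N^c+N^q$. As in the proof of Theorem~\ref{thm:charsubforXvalued}, the domination $[\![N^c]\!]_\infty\le[\![M^c]\!]_\infty$ and $\nu^N(\mathbb R_+\times\cdot)\le\nu^M(\mathbb R_+\times\cdot)$ split across the canonical decomposition (the first inequality concerns only the continuous parts; for the second, $\nu^{N}=\nu^{N^q}$ and $\nu^M=\nu^{M^q}$ since $N,M$ are quasi-left continuous, by Proposition~\ref{prop:GigcharforPDQLC} and \ref{prop:Grigcharforcandechowdoesitlooklike}). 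By \eqref{eq:candecstrongLpestmiaed} it suffices to prove
\begin{equation}\label{eq:chardomcplanpiece}
\mathbb E\sup_{t\geq 0}\|N^c_t\|^p\lesssim_{p,X}\mathbb E\sup_{t\geq 0}\|M^c_t\|^p,\qquad
\mathbb E\sup_{t\geq 0}\|N^q_t\|^p\lesssim_{p,X}\mathbb E\sup_{t\geq 0}\|M^q_t\|^p.
\end{equation}
The continuous estimate is exactly Proposition~\ref{prop:domcontcase} applied to the pair $(M^c,N^c)$ (note $[\![N^c]\!]_\infty=[\![N^c]\!]_\infty$ and the hypothesis $[\![N^c]\!]_\infty\le[\![M^c]\!]_\infty$ a.s.\ is precisely what Proposition~\ref{prop:domcontcase} requires). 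For the quasi-left continuous piece, pass to decoupled tangent martingales: by Theorem~\ref{thm:tangentgencaseUMDuhoditrotasoldat} and Lemma~\ref{lem:MhasIIthemathbbEsupphiMeqsimphiEphiM} we have $\mathbb E\sup_t\|M^q_t\|^p\eqsim_{p,X}\mathbb E\mathbb E_{\rm Cox}\|\int_{\mathbb R_+\times X}x\,\ud\bar\mu^{M^q}_{\rm Cox}\|^p$ and likewise for $N^q$ with a Cox process directed by $\nu^{N^q}$; here we use Theorem~\ref{thm:XisUMDiffMisintxwrtbarmuMvain} and Theorem~\ref{thm:mumuCoxcomparable}. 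So it remains to compare the two Cox integrals fibrewise: for a.e.\ fixed $\omega\in\Omega$ with $\nu^{N^q}(\omega)\le\nu^{M^q}(\omega)$ as measures on $\mathbb R_+\times X$, write $\nu''=\nu^{M^q}-\nu^{N^q}$ and realize $\mu^{M^q}_{\rm Cox}=\mu^{N^q}_{\rm Cox}+\mu''_{\rm Cox}$ as a sum of two independent Poisson random measures (legitimate for fixed $\omega$ since the compensator then determines the distribution); then $\int x\,\ud\bar\mu^{N^q}_{\rm Cox}=\mathbb E\bigl(\int x\,\ud\bar\mu^{M^q}_{\rm Cox}\,\big|\,\sigma(\bar\mu^{N^q}_{\rm Cox})\bigr)$ and the $L^p$-contractivity of conditional expectation gives the fibrewise inequality, which integrates over $\omega$ to the second estimate in \eqref{eq:chardomcplanpiece}. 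This is exactly the mechanism of Proposition~\ref{prop:charsubordforrandommeasures}, now applied with $\mu=\mu^{M^q}$, $\mu'=\mu^{N^q}$ and compensators $\nu^{M^q}\ge\nu^{N^q}$ on $J=X$; one only needs to note the integrand $(t,x)\mapsto x$ is not elementary predictable, so one first truncates via the family $(A_n)$ as in Theorem~\ref{thm:XisUMDiffMisintxwrtbarmuMvain} (Definition~\ref{def:Fisintagrablewrtbarmugendef}), applies Proposition~\ref{prop:charsubordforrandommeasures} on each $A_n$, and passes to the limit using $\gamma$-dominated convergence \cite[Theorem 9.4.2]{HNVW2} and \cite[Subsection 6.1]{Y18BDG} as in the ``only if'' part of Theorem~\ref{thm:XisUMDiffMisintxwrtbarmuMvain}.

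The main obstacle I expect is the truncation-and-limit step for the integrand $x$ in the quasi-left continuous part: one must check that the truncating sets $(A_n)$ can be chosen simultaneously compatible with both $\bar\mu^{M^q}$ and $\bar\mu^{N^q}$ (using $\nu^{N^q}\le\nu^{M^q}$, so $\mathbb E\int_{A_n}\|x\|\,\ud\mu^{N^q}\le\mathbb E\int_{A_n}\|x\|\,\ud\mu^{M^q}<\infty$, which is automatic), that the Cox processes $\mu^{M^q}_{\rm Cox}$ and $\mu^{N^q}_{\rm Cox}$ can be coupled so that $\mu^{N^q}_{\rm Cox}=\mu^{M^q}_{\rm Cox}|$ restricted appropriately is \emph{not} available here (unlike the $\mu^m$-versus-$\mu$ situation in Step 4 of Theorem~\ref{thm:mumuCoxcomparable}, the two compensators are genuinely different), so instead the coupling must go through the additive decomposition $\nu^{M^q}=\nu^{N^q}+\nu''$ fibrewise and the conditional-expectation identity above; and finally that after truncation the convergence of $\int_{A_n}x\,\ud\bar\mu^{N^q}$ to $N^q$ in $L^p(\Omega;\mathcal D(\mathbb R_+,X))$ holds, which again is Theorem~\ref{thm:XisUMDiffMisintxwrtbarmuMvain} together with Burkholder--Davis--Gundy \cite[Subsection 7.2]{Y18BDG}. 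Everything else is a direct assembly of results already in the paper. I would also remark, as the theorem's phrasing (``for any, equivalently for some $p$'') suggests, that the equivalence over $p$ is automatic once the UMD characterization is in place, since $X$ UMD is a $p$-free property and \eqref{eq:chardomqlccasemamamamari} holds for all $p$ simultaneously by the above.
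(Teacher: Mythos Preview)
There is a genuine gap in your treatment of the purely discontinuous piece. Characteristic \emph{domination} only gives you the inequality of the \emph{time-marginals} $\nu^{N^q}(\mathbb R_+\times B)\le\nu^{M^q}(\mathbb R_+\times B)$ for Borel $B\subset X$; it does \emph{not} give $\nu^{N^q}\le\nu^{M^q}$ as measures on $\mathbb R_+\times X$. Hence the sentence ``for a.e.\ fixed $\omega$ with $\nu^{N^q}(\omega)\le\nu^{M^q}(\omega)$ as measures on $\mathbb R_+\times X$, write $\nu''=\nu^{M^q}-\nu^{N^q}$'' is not justified, and Proposition~\ref{prop:charsubordforrandommeasures} (which requires the full ordering $\nu'\le\nu$) cannot be invoked. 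The additive Poisson coupling you describe collapses precisely because $\nu''$ need not be a nonnegative measure.

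The paper repairs this through a different, dedicated result (Proposition~\ref{prop:chardomforrandommeasures}) whose hypothesis is exactly the marginal inequality $\nu'(\mathbb R_+\times A)\le\nu(\mathbb R_+\times A)<\infty$, together with an integrand that is constant in time (elementary $\mathcal B(\mathbb R_+)\otimes\mathcal F_0\otimes\mathcal J$-measurable). For such $F$ and \emph{finite} compensators the fibrewise Cox integral is a finite sum $\sum_m x_m(\xi_m-\lambda_m)$ of independent compensated Poissons with parameters $\lambda_m=\nu(\mathbb R_+\times B_m)$, and the marginal ordering gives $\lambda'_m\le\lambda_m$; the conditional-expectation trick then applies at the level of these Poisson parameters, not of the full space-time measures. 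To reach the finite-compensator regime the paper first proves, via UMD $\Rightarrow$ finite Gaussian cotype and BDG, that $\nu^{M}(\mathbb R_+\times\overline B)<\infty$ a.s.\ for every centred ball $B$, and then truncates $M^q,N^q$ to jumps of size $\ge 1/m$ using \eqref{eq:approxforMqlcpdbigjumps} before passing to the limit via Proposition~\ref{prop:approxforPDQLC}. Both of these ingredients---the finiteness argument and the time-constant reduction---are missing from your outline and are not supplied by Proposition~\ref{prop:charsubordforrandommeasures}. Your handling of the continuous part and of the ``if'' direction is fine.
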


For the proof we will need the following proposition.

\begin{proposition}\label{prop:chardomforrandommeasures}
Let $X$ be a UMD Banach space, $(J, \mathcal J)$ be a measurable space, $\mu$ and $\mu'$ be optional quasi-left continuous random measures on $\mathbb R_+ \times J$ such that for the corresponding compensators $\nu$ and $\nu'$ we have that $\nu' (\mathbb R_+ \times A) \leq \nu(\mathbb R_+ \times A) <\infty$ a.s.\ for any $A\in \mathcal J$. Then for any elementary $\mathcal B(\mathbb R_+) \otimes \mathcal F_0 \otimes J$-measurable $F:\mathbb R_+ \times \Omega \times J \to X$ and for any $1\leq p<\infty$ we have that
\[
\mathbb E \sup_{t\geq 0} \Bigl\| \int_{[0,t]\times J} F \ud \bar {\mu}' \Bigr\|^p \lesssim_{p, X} \mathbb E \sup_{t\geq 0} \Bigl\| \int_{[0,t]\times J} F \ud \bar {\mu} \Bigr\|^p,
\]
where $\bar {\mu} := \mu - \nu$ and $\bar {\mu}' := \mu' - \nu'$.
\end{proposition}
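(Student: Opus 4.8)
The plan is to reduce Proposition~\ref{prop:chardomforrandommeasures} to the already-established Cox-process comparison (Theorem~\ref{thm:mumuCoxcomparable}) together with a pointwise-in-$\omega$ argument, exactly in the spirit of the proof of Proposition~\ref{prop:charsubordforrandommeasures}. First I would let $\mu_{\rm Cox}$ and $\mu'_{\rm Cox}$ be Cox processes directed by $\nu$ and $\nu'$ respectively, set $\bar{\mu}_{\rm Cox} := \mu_{\rm Cox} - \nu$ and $\bar{\mu}'_{\rm Cox} := \mu'_{\rm Cox} - \nu'$, and apply Theorem~\ref{thm:mumuCoxcomparable} to both $\mu$ and $\mu'$ (both are quasi-left continuous, so their compensators are non-atomic in time and the theorem applies, and $F$ being elementary $\mathcal B(\mathbb R_+) \otimes \mathcal F_0 \otimes J$-measurable is in particular elementary predictable). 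This gives
\[
\mathbb E \sup_{t\geq 0} \Bigl\| \int_{[0,t]\times J} F \ud \bar {\mu} \Bigr\|^p \eqsim_{p, X} \mathbb E  \mathbb E_{\rm Cox} \Bigl\| \int_{\mathbb R_+\times J} F \ud \bar {\mu}_{\rm Cox}\Bigr\|^p
\]
and the analogous equivalence for $\mu'$, so it suffices to prove the pointwise inequality
\[
 \mathbb E_{\rm Cox} \Bigl\| \int_{\mathbb R_+\times J} F \ud \bar {\mu}'_{\rm Cox}\Bigr\|^p \leq  \mathbb E_{\rm Cox} \Bigl\| \int_{\mathbb R_+\times J} F \ud \bar {\mu}_{\rm Cox}\Bigr\|^p
\]
for a.e.\ fixed $\omega\in \Omega$.

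Next I would fix $\omega\in\Omega$ with $\nu'(\omega)(\mathbb R_+\times A)\leq\nu(\omega)(\mathbb R_+\times A)<\infty$ for all $A\in\mathcal J$. For such $\omega$ the Cox processes $\mu_{\rm Cox}(\omega)$ and $\mu'_{\rm Cox}(\omega)$ are genuine Poisson random measures with finite intensities $\nu(\omega)$ and $\nu'(\omega)$. Here is the key structural point that distinguishes this from Proposition~\ref{prop:charsubordforrandommeasures}: the hypothesis $\nu'\leq\nu$ on each $\mathbb R_+\times A$ does \emph{not} give $\nu-\nu'\geq 0$ as a measure on $\mathbb R_+\times J$, so I cannot simply split $\mu_{\rm Cox}=\mu'_{\rm Cox}+\mu''_{\rm Cox}$. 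Instead I would use the fact that $F$ is $\mathcal F_0$-measurable in its $\Omega$-variable, hence for fixed $\omega$ it is a deterministic function of $(t,j)$ of the form $F(t,\omega,j)=\sum_{n}\mathbf 1_{B_n}(j)\,x_n(\omega)$ with $B_1,\dots,B_N$ a partition of $J$ — in particular $F$ does \emph{not} depend on the time variable (or can be taken piecewise constant in time), so the integral $\int F\ud\bar\mu_{\rm Cox}$ only sees the total masses $\mu_{\rm Cox}(\mathbb R_+\times B_n)$, which are independent Poisson random variables with parameters $\nu(\omega)(\mathbb R_+\times B_n)$. (If $F$ genuinely depends on time I would first use a time-change on each block $\mathbb R_+\times B_n$ reducing $\nu$ and $\nu'$ to Lebesgue-type intensities, as in Step~1 of the proof of Theorem~\ref{thm:mumuCoxcomparable}, after which the relevant increments become independent Poissons of comparable parameters.) Then $\int F\ud\bar\mu_{\rm Cox}$ is (in distribution) $\sum_n x_n(\omega)\widetilde P_n$ with $\widetilde P_n$ centered Poisson of parameter $\lambda_n:=\nu(\omega)(\mathbb R_+\times B_n)$, and similarly $\int F\ud\bar\mu'_{\rm Cox}$ is $\sum_n x_n(\omega)\widetilde P'_n$ with $\lambda'_n:=\nu'(\omega)(\mathbb R_+\times B_n)\leq\lambda_n$. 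A centered Poisson of parameter $\lambda'_n$ can be written as the sum of a centered Poisson of parameter $\lambda'_n$ and $-(\lambda_n-\lambda'_n)$ times... — more cleanly, $\widetilde P_n$ equals $\widetilde P'_n$ plus an independent centered Poisson of parameter $\lambda_n-\lambda'_n$, so $\sum_n x_n(\omega)\widetilde P'_n=\mathbb E\bigl(\sum_n x_n(\omega)\widetilde P_n\,\big|\,\sigma(\widetilde P'_n,\,n)\bigr)$, and the pointwise inequality follows from Jensen's inequality (\cite[Proposition 2.6.29]{HNVW1}) since $x\mapsto\|x\|^p$ is convex.

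Finally I would assemble: the two applications of Theorem~\ref{thm:mumuCoxcomparable}, the pointwise Jensen inequality integrated over $\omega\in\Omega$, and the implicit constant absorbed into $\lesssim_{p,X}$. The step I expect to be the main obstacle is the reduction to a time-independent (or block-constant) integrand: the statement as given only assumes $F$ elementary $\mathcal B(\mathbb R_+)\otimes\mathcal F_0\otimes J$-measurable, so $F$ \emph{can} depend on time, and then $\int F\ud\bar\mu_{\rm Cox}$ is a genuine compensated Poisson \emph{integral} rather than a finite sum; handling the time dependence requires either the time-change trick of Steps~1--3 of the proof of Theorem~\ref{thm:mumuCoxcomparable} (reducing $\nu,\nu'$ to intensities dominated by Lebesgue measure and then comparing parameters on matching sub-blocks) or a direct coupling of the two Poisson random measures realizing $\mu'_{\rm Cox}$ as a thinning/restriction of a measure with the same law as $\mu_{\rm Cox}$. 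Once $F$ is reduced to block form the Jensen argument is routine. Note that, unlike in the characteristic-subordination case, the finiteness assumption $\nu(\mathbb R_+\times A)<\infty$ is essential: it is what makes the total masses well-defined Poisson variables and what lets the coupling be built; this is why the theorem (Theorem~\ref{thm:chardomqlccase}) is restricted to the quasi-left continuous setting and why the general characteristic-domination case remains open.
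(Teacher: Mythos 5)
Your proposal is correct and follows essentially the same approach as the paper: pass to Cox processes via Theorem~\ref{thm:mumuCoxcomparable}, fix~$\omega$, observe that for elementary $\mathcal B(\mathbb R_+)\otimes\mathcal F_0\otimes J$-measurable $F$ that is constant in time the integrals reduce to finite sums over independent centered Poisson random variables with parameters the total masses $\nu(\omega)(\mathbb R_+\times B_n)$ and $\nu'(\omega)(\mathbb R_+\times B_n)$, split the larger Poisson as the smaller one plus an independent Poisson of parameter $\lambda_n-\lambda'_n$, and conclude by Jensen. The paper also reduces to time-constant $F$ by a brief WLOG, so the one step you flagged as delicate — handling genuine time dependence — is handled in the paper just as loosely as you feared; in the only application (Theorem~\ref{thm:chardomqlccase}) the integrand $F(t,\omega,x)=x\,\mathbf 1_{\|x\|\geq 1/m}$ is time-independent, which is why this suffices.
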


\begin{proof}
Without loss of generality we may assume that $t\mapsto F(t, \cdot, \cdot)$ is a constant a.e.\ on $ \Omega \times J$  as otherwise we just approximate $F$ by a step $\mathcal F_0$-measurable function and apply the whole proof below for each step of $F$ separately.

The proposition follows analogously Proposition \ref{prop:charsubordforrandommeasures}, but then we need to show \eqref{eq:domCoxmeasurepointwise} in a difference way. Fix $\omega\in \Omega$ such that 
\begin{equation}\label{eq:nu'ismallerthennuintheend}
 \nu' (\mathbb R_+ \times A) \leq \nu(\mathbb R_+ \times A) <\infty,\;\;\;\;A\in \mathcal J.
\end{equation}
Then by the definition of a stochastic integral \eqref{eq:stochintwrtranmeasdefof}, by the fact that $F$ is elementary predictable, and by the definition of a Cox process (see Subsubsection \ref{subsubsec:Coxprocess}) there exist $x_1, \ldots, x_M\in X$, independent Poisson random variables $(\xi_m)_{m=1}^M$ with parameters $(\lambda_m)_{m=1}^M$ and independent Poisson random variables $(\xi'_m)_{m=1}^M$ with parameters $(\lambda'_m)_{m=1}^M$ satisfying $\lambda'_m \leq \lambda_m$ for any $m=1,\ldots, M$ by \eqref{eq:nu'ismallerthennuintheend} such that
\begin{align*}
 \int_{[0,t]\times J} F \ud \bar {\mu}_{\rm Cox} &= \sum_{m=1}^M x_m (\xi_m-\lambda_m),\\
  \int_{[0,t]\times J} F \ud \bar {\mu}'_{\rm Cox} &= \sum_{m=1}^M x_m (\xi'_m-\lambda'_m)
\end{align*}
(see e.g.\ \cite[p.\ 88]{Kal} or \cite[Section 23]{Bill95} for details on Poisson distributions). Now by the fact that sum of two independent Poisson random variable is again has Poisson distribution with parameter being the sum of the corresponding parameters and by independence of all $\xi_m$'s and $\xi_m'$'s we can assume that there exists a sequence of independent Poisson random variables $(\xi''_m)_{m=1}^M$ with parameters $(\lambda''_m)_{m=1}^M = (\lambda_m -\lambda'_m)_{m=1}^M$ such that $\xi_m-\lambda_m = (\xi'_m-\lambda'_m) + (\xi''_m-\lambda''_m)$, and then the desired follows from the same conditional expectation trick used in the end of the proof of Proposition \ref{prop:charsubordforrandommeasures} and Theorem \ref{thm:mumuCoxcomparable}.
\end{proof}

\begin{proof}[Proof of Theorem \ref{thm:chardomqlccase}]
The ``if'' part follows directly from Proposition \ref{prop:domcontcase} as the latter is a particular case of Theorem \ref{thm:chardomqlccase}. 

Let us show the ``only if'' part. By Proposition \ref{prop:domcontcase} we have that for the continuous terms of the Meyer-Yoeurp decompositions $M=M^c + M^d$ and $N=N^c + N^d$ (see Remark \ref{rem:MYdecBanach}) 
\begin{equation}\label{eq:chardomcontproofofgeneralfascase}
  \mathbb E \sup_{0\leq t <\infty} \|N^c_t\|^p \lesssim_{p, X}   \mathbb E \sup_{0\leq t <\infty} \|M^c_t\|^p.
\end{equation}
Also note that as $M$ and $N$ are quasi-left continuous, $M^d = M^q$ and $N^d=N^q$ (see Subsection \ref{subsec:candec} for the definition of $M^q$ and $N^q$). Thus by Theorem \ref{thm:candecXvalued}, by Proposition \ref{prop:domcontcase}, and by the considerations above it is sufficient to show \eqref{eq:chardomqlccasemamamamari} for $M$ and $N$ being purely discontinuous quasi-left continuous.
First let us show that~if
\[
   \mathbb E \sup_{0\leq t <\infty} \|M_t\|^p <\infty,
\]
then $\nu^M (\mathbb R_+ \times \overline B)<\infty$ a.s.\ for any centered ball $B \subset X$ (here $\overline B$ is the complement of $B$). Indeed, as $M$ is purely discontinuous, then by the fact that any UMD Banach space has a finite Gaussian cotype $q\geq 2$ (see \cite[Definition 7.1.17, Corollary 7.2.11, and Proposition 7.3.15]{HNVW2}) and by Burkholder-Davis-Gundy inequalities \cite[Subsection 6.1]{Y18BDG} for a family $(\gamma_t)_{t\geq 0}$ of i.i.d.\ standard Gaussians and for any $\delta>0$ we have that
\begin{align*}
 \mathbb E \sup_{0\leq t <\infty} \|M_t\|^p& \eqsim_{p, X}\mathbb E \mathbb E_{\gamma} \Bigl\| \sum_{t\geq 0} \gamma_t \Delta M_t \Bigr\|^p \geq \mathbb E \mathbb E_{\gamma} \Bigl\| \sum_{t\geq 0} \gamma_t \Delta M_t \mathbf 1_{\|\Delta M_t\|>\delta}\Bigr\|^p,\\
 &\stackrel{(i)}\eqsim_{p,q}\mathbb E \Bigl(\mathbb E_{\gamma} \Bigl\| \sum_{t\geq 0} \gamma_t \Delta M_t \mathbf 1_{\|\Delta M_t\|>\delta}\Bigr\|^q\Bigr)^{p/q}\\
 &\stackrel{(ii)}=\mathbb E \Bigl(\mathbb E_{\gamma} \Bigl\| \int_{\mathbb R_+\times X}\gamma_t x \mathbf 1_{\|x\|>\delta}\ud \mu^M(t,x)\Bigr\|^q\Bigr)^{p/q}\\
 &\stackrel{(iii)}\gtrsim_{X}\mathbb E \Bigl( \int_{\mathbb R_+\times X} \|x\|^q \mathbf 1_{\|x\|>\delta}\ud \mu^M(t,x)\Bigr)^{p/q}\\
 &\gtrsim_{\delta}\mathbb E \Bigl( \int_{\mathbb R_+\times X} \mathbf 1_{\|x\|>\delta}\ud \mu^M(t,x)\Bigr)^{p/q},
\end{align*}
where $\mathbb E_{\gamma}$ is defined by Example \ref{ex:defofExiforrvxis}, $(i)$ follows from Kahane-Khinchin inequalities \cite[Theorem 6.2.6]{HNVW2}, $(ii)$ holds by the definition of $\mu^M$ (see \eqref{eq:defofmuM}), and $(iii)$ follows from the definition of a Gaussian cotype \cite[Definition 7.1.17]{HNVW2}. Therefore we have that 
\begin{equation}\label{eq:int>deltawrtmuMisfinitea.s.}
 \int_{\mathbb R_+\times X} \mathbf 1_{\|x\|>\delta}\ud \mu^M(t,x) \;\; \text{is finite a.s.,}
\end{equation}
and hence its compensator, 
\begin{equation}\label{eq:int>deltawrtnuMisfinitea.s.votvet}
 \int_{\mathbb R_+\times X} \mathbf 1_{\|x\|>\delta}\ud \nu^M(t,x) = \nu^M (\mathbb R_+ \times \overline B)\;\; \text{is finite a.s.,}
\end{equation}
as well because for stopping times
\[
 \tau_n:= \inf\Bigl\{\int_{[0,t]\times X} \mathbf 1_{\|x\|>\delta}\ud \mu^M(\cdot,x)>n\Bigr\}
\]
we have that by \eqref{eq:int>deltawrtmuMisfinitea.s.} $\{\tau_n = \infty\}\nearrow \Omega$ up to a negligible set, and then \eqref{eq:int>deltawrtnuMisfinitea.s.votvet} follows by \cite[Theorem I.3.17]{JS}.

Now let $M^m$ and $N^m$ be as defined by \eqref{eq:approxforMqlcpdbigjumps}. Then by Proposition \ref{prop:chardomforrandommeasures}, by the definition of the characteristic domination, by Theorem \ref{thm:XisUMDiffMisintxwrtbarmuMvain},
and by \eqref{eq:int>deltawrtnuMisfinitea.s.votvet} we have that
\[
  \mathbb E \sup_{0\leq t <\infty} \|N^m_t\|^p \lesssim_{p, X}   \mathbb E \sup_{0\leq t <\infty} \|M^m_t\|^p,
\]
and thus
\begin{equation}\label{eq:chardompdqsdqproofofgeneralfascase}
   \mathbb E \sup_{0\leq t <\infty} \|N^m_t\|^p \lesssim_{p, X}   \mathbb E \sup_{0\leq t <\infty} \|M^m_t\|^p,
\end{equation}
holds true by Proposition \ref{prop:approxforPDQLC} and by letting $m\to \infty$. Then \eqref{eq:chardomqlccasemamamamari} follows from \eqref{eq:chardomcontproofofgeneralfascase}, \eqref{eq:chardompdqsdqproofofgeneralfascase}, \eqref{eq:candecstrongLpestmiaed}, and the fact that $M$ and $N$ are quasi-left continuous.
\end{proof}

\begin{remark}\label{rem:chardomfordiscretecase}
 It is not known whether Theorem \ref{thm:chardomqlccase} holds for general local martingales. By Theorem \ref{thm:detangmartforMXVpdwithaccjumps} and by Proposition \ref{prop:MmapproxMinLpforacccase} the main issue here is in proving a similar statement for discrete martingales with independent increments. Let us state this problem here as open. Let $X$ be a Banach space, $1\leq p<\infty$. Let $(\xi_n)_{n\geq 1}$ and $(\eta_n)_{n\geq 1}$ be $X$-valued mean-zero independent random variables such that for any Borel $B \in X\setminus\{0\}$
 \[
 \sum_{n} \mathbb P(\xi_n \in B) \leq  \sum_{n} \mathbb P(\eta_n \in B).
 \]
 Does there exists a constant $C$ (perhaps depending on $p$ and $X$) such that
 \[
 \mathbb E \Bigl\| \sum_n \xi_n \Bigr\|^p \leq C \mathbb E \Bigl\| \sum_n \eta_n \Bigr\|^p?
 \]
 By a standard symmetrization trick \cite[Lemma 6.3]{LT11} one can assume that $\xi_n$'s and $\eta_n$'s are symmetric. But even the symmetric case is not known for the author.
\end{remark}

\appendix

\section{Tangency under linear operators}\label{sec:tangunderlinoper}

The goal of this section is to show that $TM$ and $TN$ are tangent for any linear operator $T$ from a certain family given $M$ and $N$ are tangent. Let us start with bounded linear operators between Banach spaces.

\begin{theorem}\label{thm:MNtangent==>TMTNaretangentforanylinop}
Let $X$ be a Banach space, $M, N:\mathbb R_+ \times \Omega \to X$ be tangent local martingales. Let $Y$ be a Banach space, $T \in \mathcal L(X, Y)$. Then $TM$ and $TN$ have local characteristics and are tangent. Moreover, if $N$ is a decoupled tangent local martingale to $M$, then $TN$ is a decoupled tangent local martingale to $TM$.
\end{theorem}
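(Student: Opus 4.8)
The plan is to reduce everything to the statement about the ``weak'' objects that make up the local characteristics, namely the covariation bilinear form of the continuous part and the compensator of the jump measure, and then to verify the push-forward behaviour of each of these under a bounded operator $T\in\mathcal L(X,Y)$. First I would recall that, since $X$ is not assumed UMD, I cannot directly use $[\![M^c]\!]$ for the $X$-valued martingales; but $TM$ and $TN$ are $Y$-valued and I still have to make sense of their local characteristics, so I would reduce to the case where $Y$ is UMD or, more economically, work with the weak local characteristics $([\langle M,x^*\rangle^c],\nu^{\langle M,x^*\rangle})_{x^*}$ throughout. The key observation is that for any $y^*\in Y^*$ one has $\langle TM, y^*\rangle = \langle M, T^*y^*\rangle$, so the family of real-valued martingales obtained from $TM$ by testing against $Y^*$ is a subfamily (up to $T^*$) of the family obtained from $M$. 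Hence $\langle TM,y^*\rangle$ and $\langle TN,y^*\rangle$ are tangent real-valued martingales because $\langle M,T^*y^*\rangle$ and $\langle N,T^*y^*\rangle$ are, by hypothesis; that is, $TM$ and $TN$ are weakly tangent. If additionally $TM$ (equivalently $TN$) has local characteristics — which I would check by either invoking the hypothesis that allows one to speak of local characteristics of $TM$, or simply noting that the statement of the theorem already asserts $TM$ and $TN$ have local characteristics — then weak tangency upgrades to tangency by Theorem \ref{thm:tangiffwtangUMDcase}. So the first block of the proof is: establish existence of local characteristics of $TM$ and $TN$, then apply Theorem \ref{thm:tangiffwtangUMDcase}.

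For the existence of local characteristics of $TM$, I would argue via the Meyer–Yoeurp-type reduction: $T$ maps continuous martingales to continuous martingales and purely discontinuous ones to purely discontinuous ones (a fact one can read off functional-by-functional using Lemma \ref{lem:McontpdqlcaccjiffMxn*forxxn*norming} once $Y$ is taken separable by Pettis), so a Meyer–Yoeurp decomposition of $M$ pushes forward to one of $TM$; and $T[\![M^c]\!]T^*$ gives a candidate covariation bilinear form for $(TM)^c = TM^c$, whose existence I would justify by the identity $[\langle TM^c,y^*\rangle]_t = [\langle M^c, T^*y^*\rangle]_t = [\![M^c]\!]_t(T^*y^*,T^*y^*)$ together with continuity in the weak$^*$ topology (Lemma \ref{lem:[[M]]isweka*consda}); and $\nu^{TM}$ is simply the image of $\nu^M$ under $(t,x)\mapsto(t,Tx)$, which follows from Lemma \ref{lem:lintransfofmart-->transfofnuM} applied with the linear transformation $T$ (the excerpt invokes this lemma in exactly this way elsewhere, e.g.\ in the proof of Theorem \ref{thm:tangiffwtangUMDcase}). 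Once this is in place, $([\![TM^c]\!],\nu^{TM}) = (T[\![M^c]\!]T^*, T_*\nu^M)$ and likewise for $N$; since $M$ and $N$ are tangent, $[\![M^c]\!]=[\![N^c]\!]$ and $\nu^M=\nu^N$ a.s., hence the pushed-forward data coincide and $TM$, $TN$ are tangent.

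For the ``moreover'' part, suppose $N$ is a decoupled tangent local martingale to $M$, living on an enlargement $(\overline\Omega,\overline{\mathcal F},\overline{\mathbb P})$ with an enlarged filtration $\overline{\mathbb F}$. By Definition \ref{def:dectanglocmartcontimecased}, $M$ is an $\overline{\mathbb F}$-local martingale with its original local characteristics, and $N(\omega)$ has independent increments with local characteristics $([\![M^c]\!](\omega),\nu^M(\omega))$ for a.e.\ $\omega$ from the original space. Applying $T$: $TM$ is still an $\overline{\mathbb F}$-local martingale, and its local characteristics are unchanged by the enlargement because $T[\![M^c]\!]T^*$ and $T_*\nu^M$ are the push-forwards of objects that are themselves unchanged (this is the same reasoning as in Remark \ref{rem:Mmightchangelocalcharacp}, only transported through $T$); and $TN(\omega) = T\,(N(\omega))$ has independent increments for a.e.\ $\omega$ — since a bounded linear image of a process with independent increments has independent increments — with local characteristics $(T[\![M^c]\!](\omega)T^*,T_*\nu^M(\omega))$, which are exactly the local characteristics of $TM$ evaluated at $\omega$. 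Hence $TN$ meets all the requirements of Definition \ref{def:dectanglocmartcontimecased} relative to $TM$, and is a decoupled tangent local martingale to $TM$.

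The main obstacle I anticipate is not any single deep step but the bookkeeping around the local characteristics of $TM$: one must be careful that the notion is well posed (separability reductions, weak$^*$ continuity of the bilinear form, the correct form of the push-forward Lemma \ref{lem:lintransfofmart-->transfofnuM} for a possibly non-injective $T$) and that the push-forward of the compensator genuinely is the compensator of $\mu^{TM}$ rather than merely an upper bound; this last point is handled exactly as in Lemma \ref{lem:lintransfofmart-->transfofnuM} and Proposition \ref{prop:Grigcharforcandechowdoesitlooklike}, using that $\mu^{TM}$ is the image of $\mu^M$ under $x\mapsto Tx$ restricted to the nonzero part, so I expect it to go through cleanly once the image-measure identity is stated precisely. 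Everything else then follows formally from Theorem \ref{thm:tangiffwtangUMDcase} and Definition \ref{def:dectanglocmartcontimecased}.
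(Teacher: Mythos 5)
Your proposal is correct and follows essentially the same approach as the paper's proof: push the Meyer--Yoeurp decomposition forward through $T$, observe that $[\![TM^c]\!]_t(y^*,y^*)=[\![M^c]\!]_t(T^*y^*,T^*y^*)$ gives a bounded bilinear form so that $[\![TM^c]\!]$ exists and equals $[\![TN^c]\!]$, obtain $\nu^{TM}=\nu^{TN}$ from Lemma~\ref{lem:lintransfofmart-->transfofnuM}, and for the ``moreover'' part use that a bounded linear operator preserves both martingality and independence of increments, so Definition~\ref{def:dectanglocmartcontimecased} is verified for $TN$ relative to $TM$. One small remark: your opening detour through weak tangency and Theorem~\ref{thm:tangiffwtangUMDcase} is unnecessary, and your worry that $[\![M^c]\!]$ might be unavailable without a UMD assumption is unfounded --- the hypothesis that $M$ and $N$ are tangent already guarantees (by Definition~\ref{def:tangconttimecase}) that both have local characteristics, and indeed you implicitly drop the detour when you later derive tangency of $TM$ and $TN$ directly from the push-forward identities.
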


The proof needs the following lemma.

\begin{lemma}\label{lem:lintransfofmart-->transfofnuM}
Let $X$, $Y$, and $Z$ be Banach spaces, $M:\mathbb R_+ \times \Omega \to X$ be a local martingale, $T \in \mathcal L(X, Y)$. Then for any predictable function $F:\mathbb R_+ \times \Omega \times Y \to Z$ we have that
\[
\int_{[0, \cdot]\times Y} \|F(s, \cdot, y)\| \ud \mu^{TM}(s, y),
\]
is locally finite if and only if
\[
\int_{[0, \cdot]\times X} \|F(s, \cdot, Tx)\| \ud \mu^{M}(s, x),
\]
and if this is the case then
\begin{equation}\label{eq:F(y)dmuTM=F(TX)dmuM}
\int_{[0, \cdot]\times X} F(s, \cdot, y) \ud \mu^{TM}(s, y) = \int_{[0, \cdot]\times X} F(s, \cdot, Tx) \ud \mu^{M}(s, x).
\end{equation}
Moreover, if 
$$
\mathbb E \int_{[0, t]\times Y} \|F(s, \cdot, y)\| \ud \mu^{TM}(s, y) <\infty
$$ 
or, equivalently, 
$$
\mathbb E \int_{[0, t]\times X} \|F(s, \cdot, Tx)\| \ud \mu^{M}(s, x) < \infty
$$ 
for any $t\geq 0$, then
\begin{equation}\label{eq:F(y)dnuTM=F(TX)dnuM}
\int_{[0, t]\times Y} F(s, \cdot, y) \ud \nu^{TM}(s, y) = \int_{[0, t]\times X} F(s, \cdot, Tx) \ud \nu^{M}(s, x)<\infty,\;\;\; t\geq 0.
\end{equation}
\end{lemma}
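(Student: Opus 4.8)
The plan is to reduce everything to the jump-counting measures and then use the uniqueness of a compensator. The key observation is that both $\mu^{TM}$ and the pushforward of $\mu^M$ under the map $x \mapsto Tx$ count exactly the same jumps, weighted by $F$ in the same way, because $\Delta(TM)_t = T(\Delta M_t)$ for all $t$ (since $T$ is bounded linear and $M$ is c\`adl\`ag, so $TM$ is c\`adl\`ag with these jumps). This gives the pathwise identity \eqref{eq:F(y)dmuTM=F(TX)dmuM} essentially for free, together with the equivalence of the two integrability conditions.

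First I would establish the pathwise statement. For an elementary predictable $F$ of the form $F(s,\omega,y) = \mathbf 1_{(u,v]}(s)\mathbf 1_{C}(\omega)\mathbf 1_{B}(y) z$ with $B \in \mathcal B(Y)$ and $z \in Z$, by the definition \eqref{eq:defofmuM} of the jump measure we have
\[
\int_{[0,t]\times Y} F(s,\cdot,y)\ud\mu^{TM}(s,y) = \mathbf 1_C \, z \sum_{u < s \le v\wedge t} \mathbf 1_{B\setminus\{0\}}(\Delta (TM)_s) = \mathbf 1_C\, z \sum_{u < s\le v\wedge t}\mathbf 1_{B\setminus\{0\}}(T\Delta M_s),
\]
and the right-hand side is exactly $\int_{[0,t]\times X} F(s,\cdot,Tx)\ud\mu^M(s,x)$ provided one is careful about the point $0$: if $Tx = 0$ for $x \neq 0$ then the jump is not counted on the left, and $\mathbf 1_{B\setminus\{0\}}(Tx) = \mathbf 1_{B\setminus\{0\}}(0) = 0$ on the right as well when $0 \notin B$, while if $0 \in B$ one checks the term vanishes anyway since $\mu^{TM}(\{s\}\times\{0\}) = 0$; so the identity is consistent. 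Then I would pass to general strongly $\widetilde{\mathcal P}$-measurable $F$: absolute value versions of both sides are monotone limits over simple functions, which gives the equivalence of the local finiteness conditions, and on the event where the integrals are finite the linearity of the integral in $F$ together with dominated convergence upgrades the identity to \eqref{eq:F(y)dmuTM=F(TX)dmuM}. One should also note $y$ ranges effectively over $\Ran T$, so no measurability issue arises with $F(s,\cdot,Tx)$ since $x \mapsto Tx$ is continuous hence Borel.

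Next, for the compensator statement, suppose $\mathbb E\int_{[0,t]\times Y}\|F\|\ud\mu^{TM} < \infty$ for all $t$. I would first treat nonnegative $\mathbb R$-valued $F$: by the defining property \eqref{eq:defofcompofrandsamdsaer} of a compensator applied to $\mu^M$ with the $\widetilde{\mathcal P}$-measurable integrand $(s,\omega,x)\mapsto F(s,\omega,Tx)\mathbf 1_{[0,t]}(s)$ (which is indeed $\widetilde{\mathcal P}$-measurable, being a composition of the Borel map $x\mapsto Tx$ with $F$), and using the already-proven identity for $\mu$, we get
\[
\mathbb E\int_{[0,t]\times Y}F(s,\cdot,y)\ud\nu^{TM}(s,y) = \mathbb E\int_{[0,t]\times Y}F\ud\mu^{TM} = \mathbb E\int_{[0,t]\times X}F(s,\cdot,Tx)\ud\mu^M(s,x) = \mathbb E\int_{[0,t]\times X}F(s,\cdot,Tx)\ud\nu^M(s,x).
\]
To promote this from an identity of expectations to the pathwise identity \eqref{eq:F(y)dnuTM=F(TX)dnuM}, I would argue that both processes $t\mapsto\int_{[0,t]\times Y}F\,\ud\nu^{TM}$ and $t\mapsto\int_{[0,t]\times X}F(\cdot,\cdot,Tx)\,\ud\nu^M$ are predictable, nondecreasing (for $F\ge 0$), and both compensate the same optional integrable increasing process $t\mapsto\int_{[0,t]\times Y}F\,\ud\mu^{TM}$; by the uniqueness of the predictable compensator (dual predictable projection), cf.\ \cite[Theorem I.3.17]{JS}, they agree up to indistinguishability. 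The general vector-valued case follows by writing $F = F^+ - F^-$ componentwise after testing against functionals $z^* \in Z^*$, or simply by linearity since the integrands $\|F\|$ already control both integrals.

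The main obstacle is purely bookkeeping rather than conceptual: one must handle the exclusion of the origin consistently on the two sides (the subtlety that $T$ may map nonzero vectors to $0$), and one must be careful that the map $x \mapsto Tx$ pushes $\mu^M$ forward to $\mu^{TM}$ only after discarding the fiber over $0$, which is $\mu^{TM}$-null and whose preimage is $\mu^M$-handled by the $\{0\}$-exclusion in \eqref{eq:defofmuM}. Once the pathwise identity \eqref{eq:F(y)dmuTM=F(TX)dmuM} is in place, the compensator identity is a direct application of the uniqueness of the compensator; no UMD hypothesis or deep structural result is needed here, only Fubini-type arguments and \cite[Theorem I.3.17]{JS}.
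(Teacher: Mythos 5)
Your overall plan matches the paper's terse proof: compute both integrals pathwise using $\Delta(TM)_s = T\Delta M_s$, then invoke the definition of the compensator together with the uniqueness of the dual predictable projection \cite[Theorem I.3.17]{JS}. That part is sound.

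However, your resolution of the $\ker T$ subtlety is not correct. Expanding both sides carefully for your elementary $F$, one gets
\[
\int_{[0,t]\times Y} F\,\ud\mu^{TM} = \mathbf 1_C z\!\!\sum_{u<s\le v\wedge t}\!\! \mathbf 1_B(T\Delta M_s)\,\mathbf 1_{\{T\Delta M_s\neq 0\}},\qquad
\int_{[0,t]\times X} F(s,\cdot,Tx)\,\ud\mu^M = \mathbf 1_C z\!\!\sum_{u<s\le v\wedge t}\!\! \mathbf 1_B(T\Delta M_s)\,\mathbf 1_{\{\Delta M_s\neq 0\}}.
\]
The indicator on the left is over $\{T\Delta M_s\neq 0\}$, the one on the right over $\{\Delta M_s\neq 0\}$. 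If $T$ has nontrivial kernel and $M$ has a jump $\Delta M_s\in\ker T\setminus\{0\}$, then for $B\ni 0$ the right sum picks up the extra term $\mathbf 1_B(0)=1$ while the left sum does not. Your remark that ``the term vanishes anyway since $\mu^{TM}(\{s\}\times\{0\})=0$'' is about the \emph{left} side, which indeed has no such term; the discrepancy sits on the \emph{right} side and does not vanish. Likewise, ``whose preimage is $\mu^M$-handled by the $\{0\}$-exclusion in \eqref{eq:defofmuM}'' is wrong: the $\{0\}$-exclusion in $\mu^M$ only removes the origin of $X$, not $\ker T\setminus\{0\}$. So as you (and, strictly speaking, the lemma) state it, \eqref{eq:F(y)dmuTM=F(TX)dmuM} requires either $F(s,\cdot,0)=0$ or the insertion of a factor $\mathbf 1_{\{Tx\neq 0\}}$ under the $\mu^M$-integral; in the paper's applications this is harmless because $\nu^M$, $\nu^N$ never charge the fiber over the origin, so the correction term is identical on both sides and cancels, but the pathwise equality as written is not ``consistent'' on the nose. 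Everything else in your argument (measurability of $x\mapsto F(s,\omega,Tx)$, monotone approximation by simple $F$, passage from the expectation identity to the pathwise one via uniqueness of the compensator) is fine.
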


\begin{proof}
The first part of the lemma follows directly from the definition of $\mu^M$ and $\mu^{TM}$ (see \eqref{eq:defofmuM}). \eqref{eq:F(y)dmuTM=F(TX)dmuM} follows for a similar reason. \eqref{eq:F(y)dnuTM=F(TX)dnuM} follows from \eqref{eq:F(y)dmuTM=F(TX)dmuM}, the definition of a compensator random measure (see Subsection \ref{subsec:ranmeasures}), the definition of a compensator process \cite[Theorem I.3.17]{JS}, and the uniqueness of the compensator process.
\end{proof}

\begin{proof}[Proof of Theorem \ref{thm:MNtangent==>TMTNaretangentforanylinop}]
Let us start with the first part of the theorem. We need to show that $TM$ and $TN$ have local characteristics which coincide. First let us show that $TM$ and $TN$ have the Meyer-Yoeurp decomposition. As $M$ and $N$ are tangent, they have the Meyer-Yoeurp decomposition (see Subsection \ref{subsec:loccharandtangds}). Let $M= M^c + M^d$ and $N=N^c + N^d$ be this decomposition. Then $TM = TM^c + TM^d$ and $TN = TN^c + TN^d$ are the Meyer-Yoeurp decomposition as well since $TM^c$ and $TN^c$ are continuous and $TM^d$ and $TN^d$ are purely discontinuous as for any $y^*\in Y^*$ we have that both $\langle TM^d, y^*\rangle = \langle M^d, T^*y^*\rangle$ and $\langle TN^d, y^*\rangle = \langle N^d, T^*y^*\rangle$ are purely discontinuous (see Definition \ref{def:purelydiscmart}). Let us show that both $[\![TM^c]\!]$ and $[\![TN^c]\!]$ exist and coincide. To this end it is sufficient to notice that for any $y^*\in Y^*$ we have that a.s.\
\begin{equation}\label{eq:[[TMc]]y*idbssby[[Mc]]Ty*}
\begin{split}
[\![TM^c]\!]_t(y^*, y^*) = [\langle &TM^c, y^*\rangle]_t  = [\langle M^c, T^*y^*\rangle]_t \\
&\leq \|[\![M^c]\!]_t\|\|T^*y^*\|^2 \leq \|[\![M^c]\!]_t\|\|T\|^2\|y^*\|^2,\;\;\; t\geq 0,
\end{split}
\end{equation}
where we define $\|V\| := \sup_{z^*\in Z^*, \|z^*\| \leq 1}V(z^*,z^*)$ for any symmetric bilinear form $V:Z^*\times Z^* \to \mathbb R$ for any Banach space $Z$. Therefore $\|[\![TM^c]\!]_t\| \leq \|T\|^2 \|[\![M^c]\!]_t\| $ for any $t\geq 0$, and $[\![TM^c]\!]_t$ defines a bounded bilinear form. The same holds for $[\![TN^c]\!]_t$. Equality $[\![TM^c]\!]_t = [\![TN^c]\!]_t$ follows directly from the fact that for any $y^*\in Y^*$ a.s.\ for any $t\geq 0$ by \eqref{eq:[[TMc]]y*idbssby[[Mc]]Ty*}
\[
[\![TM^c]\!]_t(y^*, y^*) = [\![M^c]\!]_t(T^*y^*, T^*y^*) = [\![N^c]\!]_t(T^*y^*, T^*y^*) = [\![TN^c]\!]_t(y^*,y^*).
\]

The fact that $\nu^{TM} = \nu^{TN}$ a.s.\ follows from Lemma \ref{lem:lintransfofmart-->transfofnuM}. Therefore $TM$ and $TN$ have the same local characteristics, and thus are tangent.

\smallskip

Let us show the second part of the theorem. This part follows from Definition \ref{def:dectanglocmartcontimecased} and the fact that action of a bounded linear operator does not ruin independence and martingality (so if $N(\omega)$ is a martingale with independent increments, $TN(\omega)$ is so as well).
 \end{proof}

Another important type of operators are stopping time operators. Apparently, they also preserve tangency.

\begin{theorem}\label{thm:MNtangent==>MtauNtautangent+ifNCIINtauCII}
Let $X$ be a Banach space, $M, N:\mathbb R_+ \times \Omega \to X$ be tangent local martingales. Then $M^{\tau}$ and $N^{\tau}$ are tangent. Moreover, if $N$ is a decoupled tangent local martingale to $M$, and if $\tau$ is an $\mathbb F$-stopping time (where $\mathbb F$ is the original filtration where $M$ used to live), then $N^{\tau}$ is a decoupled tangent local martingale to $M^{\tau}$.
\end{theorem}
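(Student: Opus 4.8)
The plan is to reduce everything to the two defining ingredients of tangency, namely the covariation bilinear form of the continuous part and the compensator of the jump measure, and to check that both are stable under stopping. First I would recall that if $M$ and $N$ are tangent, then both have the Meyer-Yoeurp decomposition $M=M^c+M^d$, $N=N^c+N^d$ with $[\![M^c]\!]=[\![N^c]\!]$ and $\nu^M=\nu^N$ a.s.; in particular $X$ can be assumed UMD (or at least that these objects exist) exactly as in the hypotheses under which the notion of tangency is defined. Stopping a local martingale at $\tau$ preserves the local martingale property, and $(M^\tau)^c=(M^c)^\tau$, $(M^\tau)^d=(M^d)^\tau$ is again the Meyer-Yoeurp decomposition of $M^\tau$ (continuity of $(M^c)^\tau$ and pure discontinuity of $(M^d)^\tau$ are immediate, the latter because $\langle (M^d)^\tau,x^*\rangle=\langle M^d,x^*\rangle^\tau$ is purely discontinuous for every $x^*$). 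So it suffices to check the two equalities $[\![(M^c)^\tau]\!]=[\![(N^c)^\tau]\!]$ and $\nu^{M^\tau}=\nu^{N^\tau}$.

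For the continuous part: for any fixed $x^*\in X^*$ one has $[\langle (M^c)^\tau,x^*\rangle]_t=[\langle M^c,x^*\rangle]_{t\wedge\tau}$ a.s.\ by the standard behavior of quadratic variation under stopping (see e.g.\ \cite{Kal,JS}), hence $[\![(M^c)^\tau]\!]_t(x^*,x^*)=[\![M^c]\!]_{t\wedge\tau}(x^*,x^*)$, and the same identity holds for $N$. Since $[\![M^c]\!]=[\![N^c]\!]$ a.s.\ as bilinear-form-valued processes (here one uses separability of $X$, as in the proof of Theorem \ref{thm:tangiffwtangUMDcase}, to make the exceptional null set independent of $x^*$), we get $[\![(M^c)^\tau]\!]=[\![(N^c)^\tau]\!]$ a.s.\ For the jump measure: directly from the definition \eqref{eq:defofmuM}, $\mu^{M^\tau}(A)=\int_{[0,\tau]\times X}\mathbf 1_A\ud\mu^M$ for any Borel $A$, i.e.\ $\mu^{M^\tau}=\mu^M\mathbf 1_{\llbracket 0,\tau\rrbracket}$, and similarly for $N$. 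The compensator of $\mu^M\mathbf 1_{\llbracket 0,\tau\rrbracket}$ is $\nu^M\mathbf 1_{\llbracket 0,\tau\rrbracket}$ (the indicator of the stochastic interval $\llbracket 0,\tau\rrbracket$ is predictable, so one may pull it inside the compensator; this is the defining property \eqref{eq:defofcompofrandsamdsaer} combined with uniqueness of the compensator, cf.\ Lemma \ref{lem:onejumpformu-->onejumpfornu}). Hence $\nu^{M^\tau}=\nu^M\mathbf 1_{\llbracket 0,\tau\rrbracket}=\nu^N\mathbf 1_{\llbracket 0,\tau\rrbracket}=\nu^{N^\tau}$ a.s.\ Combining, $M^\tau$ and $N^\tau$ have the same local characteristics, so they are tangent.

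For the second assertion, assume $N$ is a decoupled tangent local martingale to $M$, living on an enlargement $(\overline\Omega,\overline{\mathcal F},\overline{\mathbb P})$ with filtration $\overline{\mathbb F}$, and let $\tau$ be an $\mathbb F$-stopping time; then $\tau$ is also an $\overline{\mathbb F}$-stopping time. By Definition \ref{def:dectanglocmartcontimecased}, $M$ is a local $\overline{\mathbb F}$-martingale with local characteristics $([\![M^c]\!],\nu^M)$, so $M^\tau$ is a local $\overline{\mathbb F}$-martingale and, by the first part of the present proof applied to the enlarged filtration, $M^\tau$ and $N^\tau$ are tangent with local characteristics $([\![M^c]\!]^{\tau},\nu^M\mathbf 1_{\llbracket 0,\tau\rrbracket})=([\![(M^\tau)^c]\!],\nu^{M^\tau})$. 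It remains to check that $N^\tau(\omega)$ is, for a.e.\ $\omega\in\Omega$, a local martingale with independent increments and local characteristics $([\![M^c]\!](\omega),\nu^M(\omega))$ stopped at $\tau(\omega)$ — and here the point is that $\tau(\omega)$ is a deterministic time for each fixed $\omega\in\Omega$ (since $\tau$ is $\mathbb F$-measurable and $\Omega$ carries the original coordinate). A process with independent increments stopped at a deterministic time still has independent increments, and its local characteristics are the original ones restricted to $[0,\tau(\omega)]$; the same deterministic-time restriction applied to $([\![M^c]\!](\omega),\nu^M(\omega))$ gives precisely the local characteristics of $M^\tau(\omega)$. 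I expect the only genuinely delicate point to be this last step: one must use that $\tau$ is $\mathbb F$-measurable (not merely $\overline{\mathbb F}$-measurable) so that on the fiber over $\omega\in\Omega$ the stopping time degenerates to a constant — Remark \ref{rem:Mmightchangelocalcharacp} shows that without care one can spoil local characteristics by enlarging the filtration, so this hypothesis is essential and should be invoked explicitly. Everything else is a routine transcription of the standard real-valued facts about stopping (quadratic variation, compensators, pure discontinuity) to the vector-valued setting, using separability of $X$ to handle null sets uniformly in $x^*$.
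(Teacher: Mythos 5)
Your proof is correct and follows essentially the same route as the paper: reduce to the Meyer–Yoeurp decomposition, use the stability of quadratic variation under stopping for the continuous part, use $\mu^{M^\tau}=\mathbf 1_{\llbracket 0,\tau\rrbracket}\mu^M$ together with predictability of the stochastic interval for the jump measure, and for the decoupled case observe that $\tau$ being an $\mathbb F$-stopping time makes $\tau(\omega)$ deterministic on each fiber so that $N^\tau(\omega)=N(\omega)^{\tau(\omega)}$ is a stopped process with independent increments. The only (welcome, minor) extra in your write-up is the explicit discussion of separability of $X$ to handle exceptional null sets uniformly over $x^*$ and the explicit appeal to Remark~\ref{rem:Mmightchangelocalcharacp} to justify why $\tau$ must be $\mathbb F$-measurable rather than merely $\overline{\mathbb F}$-measurable.
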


\begin{proof}
Let $M= M^c + M^d$ and $N = N^c + N^d$ be the Meyer-Yoeurp decompositions. Then $M^{\tau} = (M^c)^{\tau} + (M^d)^{\tau}$ is the Meyer-Yoeurp decomposition as $\langle M^{\tau}, x^* \rangle = \langle (M^c)^{\tau}, x^* \rangle + \langle (M^d)^{\tau}, x^* \rangle$ is the Meyer-Yoeurp decomposition since $\langle (M^c)^{\tau}, x^* \rangle = \langle M^c, x^* \rangle^{\tau}$ is continuous and since $\langle (M^d)^{\tau}, x^* \rangle = \langle M^d, x^* \rangle^{\tau}$ is purely discontinuous by \cite[Theorem 26.6]{Kal}. For the same reason $N^{\tau} = (N^c)^{\tau} + (N^d)^{\tau}$ is the Meyer-Yoeurp decomposition as well. Moreover,  by \cite[Theorem 26.6]{Kal} we have that $[\![ (M^c)^{\tau}]\!] = [\![ M^c]\!]^{\tau} =[\![ N^c]\!]^{\tau} = [\![ (N^c)^{\tau}]\!]  $ a.s. It remains to show that $\nu^{M^{\tau}} = \nu^{N^{\tau}}$. To this end it is sufficient to notice that $\mu^{M^{\tau}} = \mathbf 1_{[0, \tau]} \mu^{M}$ and $\mu^{N^{\tau}} = \mathbf 1_{[0, \tau]} \mu^{N}$, so by \cite[Proposition II.1.30]{JS} (see also \cite[Subsection 5.4]{DY17}) we have that $\nu^{M^{\tau}} = \mathbf 1_{[0,\tau]} \nu^M = \mathbf 1_{[0,\tau]} \nu^N  =  \nu^{N^{\tau}}$, so $M^{\tau}$ and $N^{\tau}$ are tangent.

Let us show the second part. First recall that by Definition \ref{def:dectanglocmartcontimecased} $N$ is a decoupled tangent local martingale if and only if $N(\omega)$ is a martingale with independent increments with local characteristics $([\![M^c(\omega)]\!], \nu^M(\omega))$ for any $\omega\in \Omega$ (where $\Omega, \mathcal F, \mathbb P$ is a probability space where $M$ lives). As $\tau$ is an $\mathbb F$-stopping time, it depends only on $\omega$, $N^{\tau}(\omega) = N(\omega)^{\tau(\omega)}$ is a martingale with independent increments having $([\![M^c(\omega)]\!]^{\tau(\omega)}, \mathbf 1_{[0, \tau(\omega)]}\nu^M(\omega))$ as its local characteristics, so the desired holds true.
\end{proof}

\section{Martingale approximations}\label{sec:appmartapprox}

Here we present certain martingales approximation techniques shown in \cite{Y18BDG}. Recall that a function $\phi:\mathbb R_+ \to \mathbb R_+$ is called to have a {\em moderate growth} if there exists $\alpha >0$ such that $\phi(2t)\leq \alpha \phi(t)$ for any $t\geq 0$.

\subsection{Purely discontinuous martingales with accessible jumps}\label{subsec:appforMarAppPDMAJ}

Let $X$ be a Banach space, $M:\mathbb R_+ \times \Omega\to X$ be a purely discontinuous martingale with accessible jumps. Then by Lemma \ref{lem:PDmAJhasjumpsatprsttimes} there exist finite predictable stopping times $(\tau_n)_{n\geq 1}$ with disjoint graphs which exhaust jumps of $M$. For any $m\geq 1$ let us define
\begin{equation}\label{eq:defofMmforaccjase}
M^m_t := \sum_{n=1}^m \Delta M_{\tau_n} \mathbf 1_{[\tau_n,\infty)} (t)\;\;\; t\geq 0.
\end{equation}
Then due to Lemma \ref{lem:DeltaMtaugiventau-=0} $M^m$ is a local martingale for any $m\geq 1$ and by \cite[Subsubsection 7.5.2]{Y18BDG} the following proposition holds true.

\begin{proposition}\label{prop:MmapproxMinLpforacccase}
Let $X$ be a UMD Banach space, $\phi:\mathbb R_+ \to \mathbb R_+$ be a convex function of moderate growth with $\phi(0)=0$, $M:\mathbb R_+ \times \Omega \to X$ be a purely discontinuous martingale with accessible jumps such that
\begin{equation}\label{eq:phi(supM)isfiniteforPDwAJ}
\mathbb E \sup_{t\geq 0}\phi\bigl( \|M_t\|\bigr) <\infty.
\end{equation}
For any $m\geq 1$ let $M^m$ be defined by \eqref{eq:defofMmforaccjase}. Then
\begin{equation}\label{eq:approxofPDwAJmartbyjumps}
\mathbb E \sup_{t\geq 0} \phi\bigl(\|M^m_t\|\bigr) <\infty,\;\;\;\; m\geq 1,
\end{equation}
and moreover
\begin{equation}\label{eq:approxfactforMmforPDwAJcase}
\mathbb E \sup_{t\geq 0} \phi\bigl(\|M_t - M^m_t\|\bigr) \to 0,\;\;\; m\to \infty.
\end{equation}
\end{proposition}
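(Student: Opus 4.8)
The plan is to prove Proposition \ref{prop:MmapproxMinLpforacccase} by reducing everything to the $\gamma$-function estimates for purely discontinuous martingales with accessible jumps established in \cite{Y18BDG}. First I would recall that since $X$ is UMD and $M$ is purely discontinuous with accessible jumps, the covariation bilinear form $[\![M]\!]$ exists (Remark \ref{rem:ifUMDthencovbilform}), and by the Burkholder--Davis--Gundy-type estimates of \cite{Y18BDG} (see \cite[Subsection 6.1 and 6.2]{Y18BDG}) one has, for any convex $\phi$ of moderate growth with $\phi(0)=0$,
\[
\mathbb E \sup_{t\geq 0}\phi\bigl(\|M_t\|\bigr) \eqsim_{\phi, X} \mathbb E\, \phi\bigl(\gamma([\![M]\!]_{\infty})\bigr) \eqsim_{\phi, X} \mathbb E\, \phi\Bigl(\Bigl\|\bigl(\Delta M_{\tau_n}\bigr)_{n\geq 1}\Bigr\|_{\gamma(\ell^2, X)}\Bigr),
\]
where the last step uses that $M$ is pure jump with jumps exhausted by $(\tau_n)_{n\geq1}$ and that the jump times are predictable with disjoint graphs, so that the Gaussian characteristic of $[\![M]\!]_\infty$ is computed from the sequence of jumps (this is exactly the accessible-jump case of \cite[Subsubsection 7.5.2]{Y18BDG}). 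The same chain of equivalences applies to each $M^m$ whose jumps are $(\Delta M_{\tau_n})_{n=1}^m$ and to the difference $M - M^m$ whose jumps are $(\Delta M_{\tau_n})_{n> m}$, since $M^m$ and $M-M^m$ are purely discontinuous martingales with accessible jumps by Lemma \ref{lem:DeltaMtaugiventau-=0} and Remark \ref{rem:candecsplitsjumps}.

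Granting this, the proof of \eqref{eq:approxofPDwAJmartbyjumps} is immediate: since the sequence $(\Delta M_{\tau_n})_{n=1}^m$ is obtained from $(\Delta M_{\tau_n})_{n\geq1}$ by zeroing out all but finitely many coordinates, the ideal property of $\gamma$-norms gives $\|(\Delta M_{\tau_n})_{n=1}^m\|_{\gamma(\ell^2,X)} \leq \|(\Delta M_{\tau_n})_{n\geq1}\|_{\gamma(\ell^2,X)}$ pointwise, and then monotonicity of $\phi$ together with \eqref{eq:phi(supM)isfiniteforPDwAJ} and the equivalence above yields $\mathbb E\sup_{t\geq0}\phi(\|M^m_t\|) \lesssim_{\phi,X} \mathbb E\sup_{t\geq0}\phi(\|M_t\|) < \infty$. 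For \eqref{eq:approxfactforMmforPDwAJcase}, I would apply the same equivalence to $M - M^m$, reducing the claim to
\[
\mathbb E\, \phi\Bigl(\Bigl\|\bigl(\Delta M_{\tau_n}\bigr)_{n> m}\Bigr\|_{\gamma(\ell^2, X)}\Bigr) \to 0,\;\;\; m\to \infty.
\]
Here the tail sequence $(\Delta M_{\tau_n})_{n>m}$ decreases to zero coordinatewise as $m\to\infty$, so by the $\gamma$-dominated convergence theorem \cite[Theorem 9.4.2]{HNVW2} (dominated by $(\Delta M_{\tau_n})_{n\geq1}$, which lies in $\gamma(\ell^2,X)$ a.s.\ thanks to $\gamma([\![M]\!]_\infty)<\infty$ a.s., Remark \ref{rem:ifUMDthencovbilform}) one gets $\|(\Delta M_{\tau_n})_{n>m}\|_{\gamma(\ell^2,X)}\to 0$ a.s.; then $\phi$ continuous with $\phi(0)=0$ gives $\phi(\|(\Delta M_{\tau_n})_{n>m}\|_{\gamma(\ell^2,X)})\to 0$ a.s.

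The remaining point is to pass the a.s.\ convergence to convergence in $L^1$, and this is where the main care is needed: one must produce an integrable dominating function. Since $\phi$ has moderate growth, $\phi(\|(\Delta M_{\tau_n})_{n>m}\|_{\gamma(\ell^2,X)}) \leq \phi(\|(\Delta M_{\tau_n})_{n\geq1}\|_{\gamma(\ell^2,X)}) \lesssim_{\phi,X} \sup_{t\geq0}\phi(\|M_t\|)$ up to the equivalence constants, and the right-hand side is integrable by \eqref{eq:phi(supM)isfiniteforPDwAJ}. Hence the dominated convergence theorem applies and \eqref{eq:approxfactforMmforPDwAJcase} follows. I expect the only genuinely delicate step to be making the equivalence $\mathbb E\phi(\sup_t\|M-M^m_t\|)\eqsim_{\phi,X}\mathbb E\phi(\|(\Delta M_{\tau_n})_{n>m}\|_{\gamma(\ell^2,X)})$ uniform in $m$ — i.e.\ checking that the implicit constants from \cite{Y18BDG} depend only on $\phi$ and $X$ and not on the particular martingale — but this is exactly the content of the cited results in \cite[Subsection 7.5]{Y18BDG}, so it may be quoted directly; everything else is routine.
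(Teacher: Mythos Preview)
Your proposal is correct and follows essentially the same approach as the paper's own proof: both reduce to the $\phi$-BDG inequalities of \cite{Y18BDG} relating $\mathbb E\sup_t\phi(\|M_t\|)$ to $\mathbb E\,\phi(\gamma([\![M]\!]_\infty))$, then use monotonicity of the Gaussian characteristic under restriction of jumps together with the $\gamma$-dominated convergence theorem and ordinary dominated convergence. The only cosmetic difference is that you write the Gaussian characteristic as the $\gamma(\ell^2,X)$-norm of the jump sequence $(\Delta M_{\tau_n})_{n\geq 1}$, whereas the paper works directly with $\gamma([\![M]\!]_\infty)$ and $\gamma([\![M-M^m]\!]_\infty)$; for purely discontinuous martingales with accessible jumps these are the same quantity, so the arguments coincide.
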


\begin{proof}
The case of $\phi(t) = t^p$, $1\leq p<\infty$, was covered  \cite[Subsubsection 7.5.2]{Y18BDG}. The general case follows analogously. Indeed, first notice that $[\![M^m]\!]_{\infty} \leq [\![M]\!]_{\infty}$ a.s.\ for any $m\geq 1$ by \cite[Theorem 26.6 and Corollary 26.15]{Kal}. Thus $\gamma([\![M^m]\!]_{\infty}) \leq \gamma([\![M]\!]_{\infty})$ by \cite[Subsection 3.2]{Y18BDG}, so by \cite[Section 5]{Y18BDG} we have that
\begin{multline*}
\mathbb E \sup_{t\geq 0} \phi\bigl(\|M^m_t\|\bigr) \eqsim_{\phi, X} \mathbb E \phi\bigl(\gamma([\![M^m]\!]_{\infty})\bigr) \\
\leq \mathbb E \phi\bigl(\gamma([\![M]\!]_{\infty})\bigr) \eqsim_{p, X}\mathbb E \sup_{t\geq 0} \phi\bigl(\|M_t\|\bigr) <\infty,
\end{multline*}
and \eqref{eq:approxofPDwAJmartbyjumps} holds true. Moreover, by \cite[Subsubsection 7.5.2]{Y18BDG} we know that $[\![M- M^m]\!]_{\infty} \to 0$ monotonically a.s., so by \cite[Subsection 3.2]{Y18BDG} we have that $\gamma([\![M-M^m]\!]_{\infty}) \to 0$ monotonically a.s., and hence by the dominated convergence theorem, the fact that $[\![M- M^m]\!]_{\infty} \leq [\![M]\!]_{\infty}$ a.s., by \eqref{eq:phi(supM)isfiniteforPDwAJ}, and \cite[Section 5 and Subsection 3.2]{Y18BDG} we have that
\[
\mathbb E \sup_{t\geq 0} \phi\bigl(\|M - M^m_t\|\bigr) \eqsim_{\phi, X} \mathbb E \phi\bigl(\gamma([\![M-M^m]\!]_{\infty})\bigr) \to 0,\;\;\; m\to \infty,
\]
so \eqref{eq:approxfactforMmforPDwAJcase} follows.
\end{proof}

\subsection{Purely discontinuous quasi-left continuous martingales}

Now let $M:\mathbb R_+ \times \Omega\to X$ be a purely discontinuous quasi-left continuous martingale. Then by Theorem \ref{thm:XisUMDiffMisintxwrtbarmuMvain}   (see also \cite[Subsection 5.4]{DY17} and \cite[Subsection 7.2 and 7.5]{Y18BDG}) $M_t = \int_{[0, t] \times X} x \ud \bar{\mu}^M$, where $\mu^M$ is as defined by \eqref{eq:defofmuM}, $\nu^M$ is the corresponding compensator, $ \bar{\mu}^M = \mu^M - \nu^M$. For each $m\geq 1$ let
\begin{equation}\label{eq:approxforMqlcpdsmalljumps}
 M^m_t := \int_{[0, t] \times X} x \mathbf 1_{[0, m]}(\|x\|)\ud \bar{\mu}^M,\;\;\; t\geq 0,
\end{equation}
or
\begin{equation}\label{eq:approxforMqlcpdbigjumps}
 M^m_t := \int_{[0, t] \times X} x \mathbf 1_{[1/m, \infty)}(\|x\|)\ud \bar{\mu}^M,\;\;\; t\geq 0.
\end{equation}
Then due to \cite[Subsubsection 7.5.1]{Y18BDG} and \cite{DY17} $M^m$ is a local martingale and the following proposition holds true by \cite[Subsubsection 7.5.1]{Y18BDG}.

\begin{proposition}\label{prop:approxforPDQLC}
Let $X$ be a UMD Banach space,  $\phi:\mathbb R_+ \to \mathbb R_+$ be a convex function of moderate growth with $\phi(0)=0$, $M:\mathbb R_+ \times \Omega \to X$ be a purely discontinuous quasi-left continuous martingale such that
\[
\mathbb E \sup_{t\geq 0}\phi\bigl( \|M_t\|\bigr) <\infty.
\]
For any $m\geq 1$ let $M^m$ be defined by \eqref{eq:approxforMqlcpdsmalljumps} or \eqref{eq:approxforMqlcpdbigjumps}. Then
\[
\mathbb E \sup_{t\geq 0} \phi\bigl(\|M^m_t\|\bigr) <\infty,\;\;\;\; m\geq 1,
\]
and moreover
\[
\mathbb E \sup_{t\geq 0} \phi\bigl(\|M_t - M^m_t\|\bigr) \to 0,\;\;\; m\to \infty.
\]
\end{proposition}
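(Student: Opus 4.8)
The plan is to follow the proof of Proposition \ref{prop:MmapproxMinLpforacccase} almost verbatim, with the jump-size truncations playing the rôle that the finite predictable stopping times played there. First I would recall that by Theorem \ref{thm:XisUMDiffMisintxwrtbarmuMvain} the representation $M_t = \int_{[0,t]\times X} x\ud\bar\mu^M$ makes sense, and that the truncated integrand $x\mathbf 1_{[0,m]}(\|x\|)$ (resp.\ $x\mathbf 1_{[1/m,\infty)}(\|x\|)$) is again strongly $\widetilde{\mathcal P}$-measurable and dominated in absolute value by $x$, hence integrable with respect to $\bar\mu^M$ by \cite[Subsection 7.2]{Y18BDG} and $\gamma$-domination \cite[Theorem 9.4.1]{HNVW2}. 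Thus each $M^m$ is a well-defined purely discontinuous quasi-left continuous martingale and $M-M^m = \int_{[0,\cdot]\times X} x\mathbf 1_{(m,\infty)}(\|x\|)\ud\bar\mu^M$ in the first case, $M-M^m = \int_{[0,\cdot]\times X} x\mathbf 1_{[0,1/m)}(\|x\|)\ud\bar\mu^M$ in the second.

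Next I would invoke the $\phi$-Burkholder--Davis--Gundy inequalities from \cite[Section 5 and Subsection 6.1]{Y18BDG}: since $X$ is UMD, $M$, $M^m$, and $M-M^m$ all have covariation bilinear forms (Remark \ref{rem:ifUMDthencovbilform}) and
\[
\mathbb E\sup_{t\geq 0}\phi\bigl(\|M^m_t\|\bigr)\eqsim_{\phi,X}\mathbb E\phi\bigl(\gamma([\![M^m]\!]_\infty)\bigr),
\]
and similarly for $M$ and for $M-M^m$. Because $M^m$ collects only a subfamily of the jumps of $M$, for every $x^*\in X^*$ one has a.s.\ $[\langle M^m,x^*\rangle]_\infty=\sum_{s\geq 0}|\langle\Delta M_s,x^*\rangle|^2\mathbf 1_{\{\|\Delta M_s\|\in[0,m]\}}\leq[\langle M,x^*\rangle]_\infty$, i.e.\ $[\![M^m]\!]_\infty\leq[\![M]\!]_\infty$ a.s.\ as bilinear forms, whence $\gamma([\![M^m]\!]_\infty)\leq\gamma([\![M]\!]_\infty)$ a.s.\ by monotonicity of the Gaussian characteristic \cite[Subsection 3.2]{Y18BDG}. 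Feeding this into the $\phi$-BDG estimate and using the hypothesis gives $\mathbb E\sup_t\phi(\|M^m_t\|)\eqsim_{\phi,X}\mathbb E\phi(\gamma([\![M^m]\!]_\infty))\leq\mathbb E\phi(\gamma([\![M]\!]_\infty))\eqsim_{\phi,X}\mathbb E\sup_t\phi(\|M_t\|)<\infty$, which is the first assertion.

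For the convergence I would argue in the same spirit: $[\![M-M^m]\!]_\infty\leq[\![M]\!]_\infty$ a.s.\ and $[\![M-M^m]\!]_\infty$ is nonincreasing in $m$. Since $\mu^M(\mathbb R_+\times\{0\})=0$ a.s.\ by \eqref{eq:defofmuM}, the sets $\{\|x\|>m\}$ (resp.\ $\{0<\|x\|<1/m\}$) decrease to a $\mu^M$-null set, so $[\langle M-M^m,x^*\rangle]_\infty=\sum_s|\langle\Delta M_s,x^*\rangle|^2\mathbf 1_{\{\,\cdot\,\}}\downarrow 0$ a.s.\ for each $x^*$; hence $[\![M-M^m]\!]_\infty\to 0$ a.s.\ as bilinear forms, and by $\gamma$-dominated convergence \cite[Theorem 9.4.2]{HNVW2} (with $\gamma([\![M]\!]_\infty)<\infty$ a.s.\ as dominating bound, Remark \ref{rem:ifUMDthencovbilform}) we get $\gamma([\![M-M^m]\!]_\infty)\downarrow 0$ a.s. As $\gamma([\![M-M^m]\!]_\infty)\leq\gamma([\![M]\!]_\infty)$ and $\mathbb E\phi(\gamma([\![M]\!]_\infty))\eqsim_{\phi,X}\mathbb E\sup_t\phi(\|M_t\|)<\infty$, the dominated convergence theorem yields $\mathbb E\phi(\gamma([\![M-M^m]\!]_\infty))\to 0$, and one more application of the $\phi$-BDG estimate gives $\mathbb E\sup_t\phi(\|M_t-M^m_t\|)\eqsim_{\phi,X}\mathbb E\phi(\gamma([\![M-M^m]\!]_\infty))\to 0$. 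The main obstacle is not any single estimate but the bookkeeping needed to transfer the elementary a.s.\ convergence of the truncated quadratic variations to the a.s.\ convergence of the (possibly infinite-dimensional) Gaussian characteristics and then to pass this under $\mathbb E$ against the nonlinear $\phi$; this is exactly what the $\gamma$-dominated convergence machinery of \cite{Y18BDG,HNVW2} together with the moderate growth of $\phi$ is designed to handle, and it also requires that the two-sided $\phi$-BDG bounds of \cite[Section 5]{Y18BDG} are available for these stochastic integrals with respect to a random measure.
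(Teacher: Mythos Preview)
Your proposal is correct and follows essentially the same approach as the paper, which simply states that the proof is fully analogous to that of Proposition \ref{prop:MmapproxMinLpforacccase}. You have spelled out precisely the details that this analogy entails: the comparison $[\![M^m]\!]_\infty\leq[\![M]\!]_\infty$, the monotone a.s.\ convergence $[\![M-M^m]\!]_\infty\to 0$, the passage to $\gamma$-characteristics, and the $\phi$-BDG inequalities from \cite[Section 5]{Y18BDG} together with dominated convergence.
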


\begin{proof}
The proof is fully analogous to the proof of Proposition \ref{prop:MmapproxMinLpforacccase}.
\end{proof}

\bibliographystyle{plain}

\begin{thebibliography}{100}

\bibitem{AH78}
O.O. Aalen and J.M. Hoem.
\newblock Random time changes for multivariate counting processes.
\newblock {\em Scand. Actuar. J.}, (2):81--101, 1978.

\bibitem{App07}
D.~Applebaum.
\newblock L\'evy processes and stochastic integrals in {B}anach spaces.
\newblock {\em Probab. Math. Statist.}, 27(1):75--88, 2007.

\bibitem{ApRi}
D.~Applebaum and M.~Riedle.
\newblock Cylindrical {L}\'evy processes in {B}anach spaces.
\newblock {\em Proc. Lond. Math. Soc. (3)}, 101(3):697--726, 2010.

\bibitem{AP11}
K.B. Athreya and J.R. Peters.
\newblock Continuity of translation operators.
\newblock {\em Proc. Amer. Math. Soc.}, 139(11):4027--4040, 2011.

\bibitem{BMR97}
C.~B\'{e}lisle, J.-C. Mass\'{e}, and T.~Ransford.
\newblock When is a probability measure determined by infinitely many
  projections?
\newblock {\em Ann. Probab.}, 25(2):767--786, 1997.

\bibitem{Bill68}
P.~Billingsley.
\newblock {\em Convergence of probability measures}.
\newblock John Wiley \& Sons, Inc., New York-London-Sydney, 1968.

\bibitem{Bill95}
P.~Billingsley.
\newblock {\em Probability and measure}.
\newblock Wiley Series in Probability and Mathematical Statistics. John Wiley
  \& Sons, Inc., New York, third edition, 1995.
\newblock A Wiley-Interscience Publication.

\bibitem{BGaus}
V.I. Bogachev.
\newblock {\em Gaussian measures}, volume~62 of {\em Mathematical Surveys and
  Monographs}.
\newblock American Mathematical Society, Providence, RI, 1998.

\bibitem{BMT07}
V.I. Bogachev.
\newblock {\em Measure theory. {V}ol. {I}, {II}}.
\newblock Springer-Verlag, Berlin, 2007.

\bibitem{Bj14}
B.~B\"{o}ttcher.
\newblock Embedded {M}arkov chain approximations in {S}korokhod topologies.
\newblock {\em Probab. Math. Statist.}, 39(2):259--277, 2019.

\bibitem{Bour83}
J.~Bourgain.
\newblock Some remarks on {B}anach spaces in which martingale difference
  sequences are unconditional.
\newblock {\em Ark. Mat.}, 21(2):163--168, 1983.

\bibitem{BN88}
T.C. Brown and M.G. Nair.
\newblock A simple proof of the multivariate random time change theorem for
  point processes.
\newblock {\em J. Appl. Probab.}, 25(1):210--214, 1988.

\bibitem{BH09}
Z.~Brze\'{z}niak and E.~Hausenblas.
\newblock Maximal regularity for stochastic convolutions driven by {L}\'{e}vy
  processes.
\newblock {\em Probab. Theory Related Fields}, 145(3-4):615--637, 2009.

\bibitem{Bur73}
D.L. Burkholder.
\newblock Distribution function inequalities for martingales.
\newblock {\em Ann. Probability}, 1:19--42, 1973.

\bibitem{Burk81}
D.L. Burkholder.
\newblock A geometrical characterization of {B}anach spaces in which martingale
  difference sequences are unconditional.
\newblock {\em Ann. Probab.}, 9(6):997--1011, 1981.

\bibitem{Burk83}
D.L. Burkholder.
\newblock A geometric condition that implies the existence of certain singular
  integrals of {B}anach-space-valued functions.
\newblock In {\em Conference on harmonic analysis in honor of {A}ntoni
  {Z}ygmund, {V}ol. {I}, {II} ({C}hicago, {I}ll., 1981)}, Wadsworth Math. Ser.,
  pages 270--286. Wadsworth, Belmont, CA, 1983.

\bibitem{Burk84}
D.L. Burkholder.
\newblock Boundary value problems and sharp inequalities for martingale
  transforms.
\newblock {\em Ann. Probab.}, 12(3):647--702, 1984.

\bibitem{Burk86}
D.L. Burkholder.
\newblock Martingales and {F}ourier analysis in {B}anach spaces.
\newblock In {\em Probability and analysis ({V}arenna, 1985)}, volume 1206 of
  {\em Lecture Notes in Math.}, pages 61--108. Springer, Berlin, 1986.

\bibitem{Burk87}
D.L. Burkholder.
\newblock Sharp inequalities for martingales and stochastic integrals.
\newblock {\em Ast\'erisque}, (157-158):75--94, 1988.
\newblock Colloque Paul L\'evy sur les Processus Stochastiques (Palaiseau,
  1987).

\bibitem{Burk01}
D.L. Burkholder.
\newblock Martingales and singular integrals in {B}anach spaces.
\newblock In {\em Handbook of the geometry of {B}anach spaces, {V}ol. {I}},
  pages 233--269. North-Holland, Amsterdam, 2001.

\bibitem{BDG}
D.L. Burkholder, B.J. Davis, and R.F. Gundy.
\newblock Integral inequalities for convex functions of operators on
  martingales.
\newblock pages 223--240, 1972.

\bibitem{Cox55}
D.R. Cox.
\newblock Some statistical methods connected with series of events.
\newblock {\em J. Roy. Statist. Soc. Ser. B.}, 17:129--157; discussion,
  157--164, 1955.

\bibitem{CI80}
D.R. Cox and V.~Isham.
\newblock {\em Point processes}.
\newblock Chapman \& Hall, London-New York, 1980.
\newblock Monographs on Applied Probability and Statistics.

\bibitem{CG}
S.G. Cox and S.~Geiss.
\newblock On decoupling in {B}anach spaces.
\newblock {\em arXiv:1805.12377}, 2018.

\bibitem{CV07}
S.G. Cox and M.C. Veraar.
\newblock Some remarks on tangent martingale difference sequences in
  {$L^1$}-spaces.
\newblock {\em Electron. Comm. Probab.}, 12:421--433, 2007.

\bibitem{CV}
S.G. Cox and M.C. Veraar.
\newblock Vector-valued decoupling and the {B}urkholder-{D}avis-{G}undy
  inequality.
\newblock {\em Illinois J. Math.}, 55(1):343--375 (2012), 2011.

\bibitem{DPZ}
G.~Da~Prato and J.~Zabczyk.
\newblock {\em Stochastic equations in infinite dimensions}, volume 152 of {\em
  Encyclopedia of Mathematics and its Applications}.
\newblock Cambridge University Press, Cambridge, second edition, 2014.

\bibitem{dlP94}
V.H. de~la Pe\~{n}a.
\newblock A bound on the moment generating function of a sum of dependent
  variables with an application to simple random sampling without replacement.
\newblock {\em Ann. Inst. H. Poincar\'{e} Probab. Statist.}, 30(2):197--211,
  1994.

\bibitem{dlPG}
V.H. de~la Pe\~{n}a and E.~Gin\'{e}.
\newblock {\em Decoupling}.
\newblock Probability and its Applications (New York). Springer-Verlag, New
  York, 1999.
\newblock From dependence to independence, Randomly stopped processes.
  $U$-statistics and processes. Martingales and beyond.

\bibitem{Dirk14}
S.~Dirksen.
\newblock It\^o isomorphisms for {$L^p$}-valued {P}oisson stochastic integrals.
\newblock {\em Ann. Probab.}, 42(6):2595--2643, 2014.

\bibitem{DMY18}
S.~Dirksen, C.~Marinelli, and I.S. Yaroslavtsev.
\newblock Stochastic evolution equations in ${L}^p$-spaces driven by jump
  noise.
\newblock {\em In preparation}.

\bibitem{DY17}
S.~Dirksen and I.S. Yaroslavtsev.
\newblock ${L}^q$-valued {B}urkholder-{R}osenthal inequalities and sharp
  estimates for stochastic integrals.
\newblock {\em Proc. Lond. Math. Soc. (3)}, 119(6):1633--1693, 2019.

\bibitem{Dud89}
R.M. Dudley.
\newblock {\em Real analysis and probability}, volume~74 of {\em Cambridge
  Studies in Advanced Mathematics}.
\newblock Cambridge University Press, Cambridge, 2002.
\newblock Revised reprint of the 1989 original.

\bibitem{EMT04}
Yu. Eidelman, V.~Milman, and A.~Tsolomitis.
\newblock {\em Functional analysis}, volume~66 of {\em Graduate Studies in
  Mathematics}.
\newblock American Mathematical Society, Providence, RI, 2004.
\newblock An introduction.

\bibitem{FTT10}
D.~Filipovi\'{c}, S.~Tappe, and J.~Teichmann.
\newblock Jump-diffusions in {H}ilbert spaces: existence, stability and
  numerics.
\newblock {\em Stochastics}, 82(5):475--520, 2010.

\bibitem{Gar85}
D.J.H. Garling.
\newblock Brownian motion and {UMD}-spaces.
\newblock In {\em Probability and {B}anach spaces ({Z}aragoza, 1985)}, volume
  1221 of {\em Lecture Notes in Math.}, pages 36--49. Springer, Berlin, 1986.

\bibitem{Gar90}
D.J.H. Garling.
\newblock Random martingale transform inequalities.
\newblock In {\em Probability in {B}anach spaces 6 ({S}andbjerg, 1986)},
  volume~20 of {\em Progr. Probab.}, pages 101--119. Birkh\"auser Boston,
  Boston, MA, 1990.

\bibitem{Geiss99}
S.~Geiss.
\newblock A counterexample concerning the relation between decoupling constants
  and {UMD}-constants.
\newblock {\em Trans. Amer. Math. Soc.}, 351(4):1355--1375, 1999.

\bibitem{GM-SS}
S.~Geiss, S.~Montgomery-Smith, and E.~Saksman.
\newblock On singular integral and martingale transforms.
\newblock {\em Trans. Amer. Math. Soc.}, 362(2):553--575, 2010.

\bibitem{GY19}
S.~Geiss and I.S. Yaroslavtsev.
\newblock Dyadic and stochastic shifts and {V}olterra-type operators.
\newblock {\em In preparation}.

\bibitem{Grig71}
B.~Grigelionis.
\newblock The representation of integer-valued random measures as stochastic
  integrals over the {P}oisson measure.
\newblock {\em Litovsk. Mat. Sb.}, 11:93--108, 1971.

\bibitem{Grig77}
B.~Grigelionis.
\newblock Martingale characterization of random processes with independent
  increments.
\newblock {\em Litovsk. Mat. Sb.}, 17(1):75--86, 212, 1977.

\bibitem{Hau11}
E.~Hausenblas.
\newblock Maximal inequalities of the {I}t\^{o} integral with respect to
  {P}oisson random measures or {L}\'{e}vy processes on {B}anach spaces.
\newblock {\em Potential Anal.}, 35(3):223--251, 2011.

\bibitem{HitUP}
P.~Hitczenko.
\newblock On tangent sequences of {UMD}-space valued random vectors.
  {U}npublished manuscript.

\bibitem{Hit88}
P.~Hitczenko.
\newblock Comparison of moments for tangent sequences of random variables.
\newblock {\em Probab. Theory Related Fields}, 78(2):223--230, 1988.

\bibitem{HNVW1}
T.P. Hyt\"{o}nen, J.M.A.M. van Neerven, M.C. Veraar, and L.~Weis.
\newblock {\em Analysis in {B}anach spaces. {V}ol. {I}. {M}artingales and
  {L}ittlewood-{P}aley theory}, volume~63 of {\em Ergebnisse der Mathematik und
  ihrer Grenzgebiete.}
\newblock Springer, 2016.

\bibitem{HNVW2}
T.P. Hyt\"{o}nen, J.M.A.M. van Neerven, M.C. Veraar, and L.~Weis.
\newblock {\em Analysis in {B}anach spaces. {V}ol. {II}. {P}robabilistic
  methods and operator theory}, volume~67 of {\em Ergebnisse der Mathematik und
  ihrer Grenzgebiete. 3. Folge. A Series of Modern Surveys in Mathematics}.
\newblock Springer, 2017.

\bibitem{Jac79}
J.~Jacod.
\newblock {\em Calcul stochastique et probl\`emes de martingales}, volume 714
  of {\em Lecture Notes in Mathematics}.
\newblock Springer, Berlin, 1979.

\bibitem{Jac83}
J.~Jacod.
\newblock Processus \`a accroissements ind\'{e}pendants: une condition
  n\'{e}cessaire et suffisante de convergence en loi.
\newblock {\em Z. Wahrsch. Verw. Gebiete}, 63(1):109--136, 1983.

\bibitem{Jac84}
J.~Jacod.
\newblock Une g\'{e}n\'{e}ralisation des semimartingales: les processus
  admettant un processus \`a accroissements ind\'{e}pendants tangent.
\newblock In {\em Seminar on probability, {XVIII}}, volume 1059 of {\em Lecture
  Notes in Math.}, pages 91--118. Springer, Berlin, 1984.

\bibitem{JH87}
J.~Jacod and H.~Sadi.
\newblock Processus admettant un processus \`a accroissements ind\'{e}pendants
  tangent: cas g\'{e}n\'{e}ral.
\newblock In {\em S\'{e}minaire de {P}robabilit\'{e}s, {XXI}}, volume 1247 of
  {\em Lecture Notes in Math.}, pages 479--514. Springer, Berlin, 1987.

\bibitem{JS}
J.~Jacod and A.N. Shiryaev.
\newblock {\em Limit theorems for stochastic processes}, volume 288 of {\em
  Grundlehren der Mathematischen Wissenschaften}.
\newblock Springer-Verlag, Berlin, second edition, 2003.

\bibitem{Jak86}
A.~Jakubowski.
\newblock On the {S}korokhod topology.
\newblock {\em Ann. Inst. H. Poincar\'{e} Probab. Statist.}, 22(3):263--285,
  1986.

\bibitem{JKFR}
A.~Jakubowski, S.~Kwapie{\'n}, P.R. de~Fitte, and J.~Rosi{\'n}ski.
\newblock Radonification of cylindrical semimartingales by a single
  {H}ilbert-{S}chmidt operator.
\newblock {\em Infin. Dimens. Anal. Quantum Probab. Relat. Top.},
  5(3):429--440, 2002.

\bibitem{JLhbBs}
W.B. Johnson and J.~Lindenstrauss, editors.
\newblock {\em Handbook of the geometry of {B}anach spaces. {V}ol. 2}.
\newblock North-Holland, Amsterdam, 2003.

\bibitem{Kal}
O.~Kallenberg.
\newblock {\em Foundations of modern probability}.
\newblock Probability and its Applications (New York). Springer-Verlag, New
  York, second edition, 2002.

\bibitem{KalRM}
O.~Kallenberg.
\newblock {\em Random measures, theory and applications}, volume~77 of {\em
  Probability Theory and Stochastic Modelling}.
\newblock Springer, Cham, 2017.

\bibitem{Kal17}
O.~Kallenberg.
\newblock Tangential existence and comparison, with applications to single and
  multiple integration.
\newblock {\em Probab. Math. Statist.}, 37(1):21--52, 2017.

\bibitem{KLW19}
N.J. Kalton, E.~Lorist, and L.~Weis.
\newblock Euclidean structures and operator theory in {B}anach spaces.
\newblock {\em arXiv:1912.09347}, 2019.

\bibitem{KS}
I.~Karatzas and S.E. Shreve.
\newblock {\em Brownian motion and stochastic calculus}, volume 113 of {\em
  Graduate Texts in Mathematics}.
\newblock Springer-Verlag, New York, second edition, 1991.

\bibitem{KingPois}
J.F.C. Kingman.
\newblock {\em Poisson processes}, volume~3 of {\em Oxford Studies in
  Probability}.
\newblock The Clarendon Press, Oxford University Press, New York, 1993.
\newblock Oxford Science Publications.

\bibitem{KwW89}
S.~Kwapie\'{n} and W.~A. Woyczy\'{n}ski.
\newblock Tangent sequences of random variables: basic inequalities and their
  applications.
\newblock In {\em Almost everywhere convergence ({C}olumbus, {OH}, 1988)},
  pages 237--265. Academic Press, Boston, MA, 1989.

\bibitem{KwW86}
S.~Kwapie\'{n} and W.A. Woyczy\'{n}ski.
\newblock Decoupling of martingale transforms and stochastic integrals for
  processes with independent increments.
\newblock In {\em Probability theory and harmonic analysis ({C}leveland,
  {O}hio, 1983)}, volume~98 of {\em Monogr. Textbooks Pure Appl. Math.}, pages
  139--148. Dekker, New York, 1986.

\bibitem{KwW91}
S.~Kwapie\'{n} and W.A. Woyczy\'{n}ski.
\newblock Semimartingale integrals via decoupling inequalities and tangent
  processes.
\newblock {\em Probab. Math. Statist.}, 12(2):165--200 (1992), 1991.

\bibitem{KwW92}
S.~Kwapie\'{n} and W.A. Woyczy\'{n}ski.
\newblock {\em Random series and stochastic integrals: single and multiple}.
\newblock Probability and its Applications. Birkh\"{a}user Boston, Inc.,
  Boston, MA, 1992.

\bibitem{LT11}
M.~Ledoux and M.~Talagrand.
\newblock {\em Probability in {B}anach spaces}.
\newblock Classics in Mathematics. Springer-Verlag, Berlin, 2011.
\newblock Isoperimetry and processes, Reprint of the 1991 edition.

\bibitem{Len77}
E.~Lenglart.
\newblock Relation de domination entre deux processus.
\newblock {\em Ann. Inst. H. Poincar\'{e} Sect. B (N.S.)}, 13(2):171--179,
  1977.

\bibitem{LVY18}
N.~Lindemulder, M.C. Veraar, and I.S. Yaroslavtsev.
\newblock The {UMD} property for {M}usielak-{O}rlicz spaces.
\newblock In {\em Positivity and noncommutative analysis}, Trends Math., pages
  349--363. Birkh\"{a}user/Springer, Cham, [2019] \copyright 2019.

\bibitem{Marinelli13}
C.~Marinelli.
\newblock {On maximal inequalities for purely discontinuous $L_q$-valued
  martingales}.
\newblock {\em arXiv:1311.7120}, 2013.

\bibitem{MPR10}
C.~Marinelli, C.~Pr\'{e}v\^{o}t, and M.~R\"{o}ckner.
\newblock Regular dependence on initial data for stochastic evolution equations
  with multiplicative {P}oisson noise.
\newblock {\em J. Funct. Anal.}, 258(2):616--649, 2010.

\bibitem{MarRo}
C.~Marinelli and M.~R{\"o}ckner.
\newblock On maximal inequalities for purely discontinuous martingales in
  infinite dimensions.
\newblock In {\em S\'eminaire de {P}robabilit\'es {XLVI}}, volume 2123 of {\em
  Lecture Notes in Math.}, pages 293--315. Springer, 2014.

\bibitem{MarRo16}
C.~Marinelli and M.~R\"{o}ckner.
\newblock On the maximal inequalities of {B}urkholder, {D}avis and {G}undy.
\newblock {\em Expo. Math.}, 34(1):1--26, 2016.

\bibitem{MC}
T.R. McConnell.
\newblock Decoupling and stochastic integration in {UMD} {B}anach spaces.
\newblock {\em Probab. Math. Statist.}, 10(2):283--295, 1989.

\bibitem{MetSemi}
M.~M\'etivier.
\newblock {\em Semimartingales}, volume~2 of {\em de Gruyter Studies in
  Mathematics}.
\newblock Walter de Gruyter \&\ Co., Berlin-New York, 1982.
\newblock A course on stochastic processes.

\bibitem{MP}
M.~M{\'e}tivier and J.~Pellaumail.
\newblock {\em Stochastic integration}.
\newblock Academic Press [Harcourt Brace Jovanovich, Publishers], New
  York-London-Toronto, Ont., 1980.
\newblock Probability and Mathematical Statistics.

\bibitem{Mey76}
P.-A. Meyer.
\newblock {\em Un cours sur les int\'egrales stochastiques}.
\newblock Springer, Berlin, 1976.

\bibitem{Mey71}
P.A. Meyer.
\newblock D\'{e}monstration simplifi\'{e}e d'un th\'{e}or\`eme de {K}night.
\newblock pages 191--195. Lecture Notes in Math., Vol. 191, 1971.

\bibitem{Mey77}
P.A. Meyer.
\newblock Notes sur les int\'egrales stochastiques. {I}. {I}nt\'egrales
  hilbertiennes.
\newblock In {\em S\'eminaire de {P}robabilit\'es, {XI} ({U}niv. {S}trasbourg,
  {S}trasbourg, 1975/1976)}, pages 446--462. Lecture Notes in Math., Vol. 581.
  Springer, Berlin, 1977.

\bibitem{MSP01}
S.J. Montgomery-Smith and A.R. Pruss.
\newblock A comparison inequality for sums of independent random variables.
\newblock {\em J. Math. Anal. Appl.}, 254(1):35--42, 2001.

\bibitem{Ngamma}
J.M.A.M.~van Neerven.
\newblock {$\gamma$}-radonifying operators---a survey.
\newblock In {\em The {AMSI}-{ANU} {W}orkshop on {S}pectral {T}heory and
  {H}armonic {A}nalysis}, volume~44 of {\em Proc. Centre Math. Appl. Austral.
  Nat. Univ.}, pages 1--61. Austral. Nat. Univ., Canberra, 2010.

\bibitem{NVW}
J.M.A.M.~van Neerven, M.~C. Veraar, and L.W. Weis.
\newblock Stochastic integration in {UMD} {B}anach spaces.
\newblock {\em Ann. Probab.}, 35(4):1438--1478, 2007.

\bibitem{NVW15}
J.M.A.M.~van Neerven, M.C. Veraar, and L.W. Weis.
\newblock Stochastic integration in {B}anach spaces -- a survey.
\newblock In {\em Stochastic Analysis: A Series of Lectures}, volume~68 of {\em
  Progress in Probability}. Birkh\"auser Verlag, 2015.

\bibitem{vNW08}
J.M.A.M.~van Neerven and L.~Weis.
\newblock Stochastic integration of operator-valued functions with respect to
  {B}anach space-valued {B}rownian motion.
\newblock {\em Potential Anal.}, 29(1):65--88, 2008.

\bibitem{Nov75}
A.A. Novikov.
\newblock Discontinuous martingales.
\newblock {\em Teor. Verojatnost. i Primemen.}, 20:13--28, 1975.

\bibitem{Now02}
P.~Nowak.
\newblock On {J}acod-{G}rigelionis characteristics for {H}ilbert space valued
  semimartingales.
\newblock {\em Stochastic Anal. Appl.}, 20(5):963--998, 2002.

\bibitem{Now03}
P.~Nowak.
\newblock Integration with respect to {H}ilbert space-valued semimartingales
  via {J}acod-{G}rigelionis characteristics.
\newblock {\em Stochastic Anal. Appl.}, 21(5):1141--1168, 2003.

\bibitem{Oks98}
B.~{\O}ksendal.
\newblock {\em Stochastic differential equations}.
\newblock Universitext. Springer-Verlag, Berlin, fifth edition, 1998.
\newblock An introduction with applications.

\bibitem{Ond}
M.~Ondrej{\'a}t.
\newblock Brownian representations of cylindrical local martingales, martingale
  problem and strong {M}arkov property of weak solutions of {SPDE}s in {B}anach
  spaces.
\newblock {\em Czechoslovak Math. J.}, 55(130)(4):1003--1039, 2005.

\bibitem{Os11b}
A.~Os{\k{e}}kowski.
\newblock On relaxing the assumption of differential subordination in some
  martingale inequalities.
\newblock {\em Electron. Commun. Probab.}, 16:9--21, 2011.

\bibitem{Os12}
A.~Os{\k{e}}kowski.
\newblock {\em Sharp martingale and semimartingale inequalities}, volume~72 of
  {\em Instytut Matematyczny Polskiej Akademii Nauk. Monografie Matematyczne
  (New Series)}.
\newblock Birkh\"auser/Springer Basel AG, Basel, 2012.

\bibitem{OY18}
A.~Os{\k{e}}kowski and I.S. Yaroslavtsev.
\newblock The {H}ilbert transform and orthogonal martingales in {B}anach
  spaces.
\newblock {\em Int. Math. Res. Not. IMRN}, https://doi.org/10.1093/imrn/rnz187,
  2019.

\bibitem{Pap72}
F.~Papangelou.
\newblock Integrability of expected increments of point processes and a related
  random change of scale.
\newblock {\em Trans. Amer. Math. Soc.}, 165:483--506, 1972.

\bibitem{PZ}
S.~Peszat and J.~Zabczyk.
\newblock {\em Stochastic partial differential equations with {L}\'evy noise.
  An evolution equation approach}, volume 113 of {\em Encyclopedia of
  Mathematics and its Applications}.
\newblock Cambridge University Press, Cambridge, 2007.

\bibitem{Pis16}
G.~Pisier.
\newblock {\em Martingales in Banach spaces}, volume 155.
\newblock Cambridge University Press, 2016.

\bibitem{Prot}
P.E. Protter.
\newblock {\em Stochastic integration and differential equations}, volume~21 of
  {\em Stochastic Modelling and Applied Probability}.
\newblock Springer-Verlag, Berlin, 2005.
\newblock Second edition. Version 2.1, Corrected third printing.

\bibitem{PrA97}
A.R. Pruss.
\newblock Comparisons between tail probabilities of sums of independent
  symmetric random variables.
\newblock {\em Ann. Inst. H. Poincar\'{e} Probab. Statist.}, 33(5):651--671,
  1997.

\bibitem{RY}
D.~Revuz and M.~Yor.
\newblock {\em Continuous martingales and {B}rownian motion}, volume 293 of
  {\em Grundlehren der Mathematischen Wissenschaften [Fundamental Principles of
  Mathematical Sciences]}.
\newblock Springer-Verlag, Berlin, third edition, 1999.

\bibitem{RieGaa}
M.~Riedle and O.~van Gaans.
\newblock Stochastic integration for {L}\'evy processes with values in {B}anach
  spaces.
\newblock {\em Stochastic Process. Appl.}, 119(6):1952--1974, 2009.

\bibitem{Ros70}
H.P. Rosenthal.
\newblock On the subspaces of {$L^{p}$} {$(p>2)$} spanned by sequences of
  independent random variables.
\newblock {\em Israel J. Math.}, 8:273--303, 1970.

\bibitem{Roz90}
B.L. Rozovski\u\i.
\newblock {\em Stochastic evolution systems}, volume~35 of {\em Mathematics and
  its Applications (Soviet Series)}.
\newblock Kluwer Academic Publishers Group, Dordrecht, 1990.
\newblock Linear theory and applications to nonlinear filtering, Translated
  from the Russian by A. Yarkho.

\bibitem{Rubio86}
J.L. Rubio~de Francia.
\newblock Martingale and integral transforms of {B}anach space valued
  functions.
\newblock In {\em Probability and {B}anach spaces ({Z}aragoza, 1985)}, volume
  1221 of {\em Lecture Notes in Math.}, pages 195--222. Springer, Berlin, 1986.

\bibitem{Sato}
K.-i. Sato.
\newblock {\em L\'evy processes and infinitely divisible distributions},
  volume~68 of {\em Cambridge Studies in Advanced Mathematics}.
\newblock Cambridge University Press, Cambridge, 2013.
\newblock Translated from the 1990 Japanese original, Revised edition of the
  1999 English translation.

\bibitem{ShPr2e}
A.N. Shiryaev.
\newblock {\em Probability}, volume~95 of {\em Graduate Texts in Mathematics}.
\newblock Springer-Verlag, New York, second edition, 1996.
\newblock Translated from the first (1980) Russian edition by R. P. Boas.

\bibitem{Skor56}
A.V. Skorohod.
\newblock Limit theorems for stochastic processes.
\newblock {\em Teor. Veroyatnost. i Primenen.}, 1:289--319, 1956.

\bibitem{VerPhD}
M.C. Veraar.
\newblock {\em Stochastic integration in Banach spaces and applications to
  parabolic evolution equations}.
\newblock PhD thesis, Delft University of Technology, 2006.

\bibitem{Ver}
M.C. Veraar.
\newblock Continuous local martingales and stochastic integration in {UMD}
  {B}anach spaces.
\newblock {\em Stochastics}, 79(6):601--618, 2007.

\bibitem{Ver07}
M.C. Veraar.
\newblock Randomized {UMD} {B}anach spaces and decoupling inequalities for
  stochastic integrals.
\newblock {\em Proc. Amer. Math. Soc.}, 135(5):1477--1486, 2007.

\bibitem{VY16}
M.C. Veraar and I.S. Yaroslavtsev.
\newblock Cylindrical continuous martingales and stochastic integration in
  infinite dimensions.
\newblock {\em Electron. J. Probab.}, 21:Paper No. 59, 53, 2016.

\bibitem{VY18}
M.C. Veraar and I.S. Yaroslavtsev.
\newblock Pointwise properties of martingales with values in {B}anach function
  spaces.
\newblock In {\em High Dimensional Probability VIII}, pages 321--340, Cham,
  2019. Springer International Publishing.

\bibitem{Wei92}
F.~Weisz.
\newblock Martingale {H}ardy spaces with continuous time.
\newblock In {\em Probability theory and applications}, volume~80 of {\em Math.
  Appl.}, pages 47--75. Kluwer Acad. Publ., Dordrecht, 1992.

\bibitem{Wei94}
F.~Weisz.
\newblock {\em Martingale {H}ardy spaces and their applications in {F}ourier
  analysis}, volume 1568 of {\em Lecture Notes in Mathematics}.
\newblock Springer-Verlag, Berlin, 1994.

\bibitem{Wh02}
W.~Whitt.
\newblock {\em Stochastic-process limits}.
\newblock Springer Series in Operations Research. Springer-Verlag, New York,
  2002.
\newblock An introduction to stochastic-process limits and their application to
  queues.

\bibitem{Y19weakL1}
I.S. Yaroslavtsev.
\newblock Weak ${L}^1$-estimates for weakly differentially subordinated
  martingales.
\newblock {\em In preparation}.

\bibitem{Yar16Br}
I.S. Yaroslavtsev.
\newblock Brownian representations of cylindrical continuous local martingales.
\newblock {\em Infin. Dimens. Anal. Quantum Probab. Relat. Top.},
  21(2):1850013, 25, 2018.

\bibitem{Y17FourUMD}
I.S. Yaroslavtsev.
\newblock Fourier multipliers and weak differential subordination of
  martingales in {UMD} {B}anach spaces.
\newblock {\em Studia Math.}, 243(3):269--301, 2018.

\bibitem{Y17MartDec}
I.S. Yaroslavtsev.
\newblock Martingale decompositions and weak differential subordination in
  {UMD} {B}anach spaces.
\newblock {\em Bernoulli}, 25(3):1659--1689, 2019.

\bibitem{YPhD}
I.S. Yaroslavtsev.
\newblock {\em Martingales and stochastic calculus in {B}anach spaces}.
\newblock PhD thesis, Delft University of Technology, 2019.

\bibitem{Y17GMY}
I.S. Yaroslavtsev.
\newblock On the martingale decompositions of {G}undy, {M}eyer, and {Y}oeurp in
  infinite dimensions.
\newblock {\em Ann. Inst. Henri Poincar\'{e} Probab. Stat.}, 55(4):1988--2018,
  2019.

\bibitem{Y18BDG}
I.S. Yaroslavtsev.
\newblock Burkholder--{D}avis--{G}undy inequalities in {UMD} {B}anach spaces.
\newblock {\em Comm. Math. Phys.}, https://doi.org/10.1007/s00220-020-03845-7,
  2020.

\bibitem{Yoe76}
Ch. Yoeurp.
\newblock D\'ecompositions des martingales locales et formules exponentielles.
\newblock In {\em S\'eminaire de {P}robabilit\'es, {X} ({S}econde partie:
  {T}h\'eorie des int\'egrales stochastiques, {U}niv. {S}trasbourg,
  {S}trasbourg, ann\'ee universitaire 1974/1975)}, pages 432--480. Lecture
  Notes in Math., Vol. 511. Springer, Berlin, 1976.

\bibitem{ZBL19}
J.~Zhu, Z.~Brze\'{z}niak, and W.~Liu.
\newblock Maximal {I}nequalities and {E}xponential {E}stimates for {S}tochastic
  {C}onvolutions {D}riven by {L}\'{e}vy-type {P}rocesses in {B}anach {S}paces
  with {A}pplication to {S}tochastic {Q}uasi-{G}eostrophic {E}quations.
\newblock {\em SIAM J. Math. Anal.}, 51(3):2121--2167, 2019.

\bibitem{Zinn85}
J.~Zinn.
\newblock Comparison of martingale difference sequences.
\newblock In {\em Probability in {B}anach spaces, {V} ({M}edford, {M}ass.,
  1984)}, volume 1153 of {\em Lecture Notes in Math.}, pages 453--457.
  Springer, Berlin, 1985.

\end{thebibliography}

\def\cprime{$'$} \def\polhk#1{\setbox0=\hbox{#1}{\ooalign{\hidewidth
  \lower1.5ex\hbox{`}\hidewidth\crcr\unhbox0}}}
  \def\polhk#1{\setbox0=\hbox{#1}{\ooalign{\hidewidth
  \lower1.5ex\hbox{`}\hidewidth\crcr\unhbox0}}} \def\cprime{$'$}

\end{document}